\newcommand{\scT}{\mathscr{T}}
\newcommand{\LL}{\mathbb{L}}
\newcommand{\cZ}{\mathcal{Z}}
\newcommand{\cQ}{\mathcal{Q}}
\newcommand{\TP}{\mathrm{TP}}
\newcommand{\RR}{\mathbb{R}}
\newcommand{\hor}{\mathrm{hor}}
\newcommand{\PP}{\mathbb{P}}
\newcommand{\cJ}{\mathcal{J}}
\newcommand{\fp}{\mathfrak{p}}
\newcommand{\fraki}{\mathfrak{i}}
\newcommand{\frakm}{\mathfrak{m}}
\newcommand{\fB}{\mathfrak{B}}
\newcommand{\cB}{\mathcal{B}}
\newcommand{\vt}{\mathrm{vert}}
\newcommand{\cT}{\mathcal{T}}
\newcommand{\EE}{\mathbb{E}}
\newcommand{\peak}{\mathrm{Peak}}
\newcommand{\NN}{\mathbb{N}}
\newcommand{\latapx}{\mathrm{LatApx}}
\newcommand{\cP}{\mathcal{P}}
\newcommand{\ZZ}{\mathbb{Z}}
\newcommand{\cI}{\mathcal{I}}
\newcommand{\fav}{\mathrm{Fav}}
\newcommand{\cK}{\mathcal{K}}
\newcommand{\switch}{\mathrm{Switch}}
\newcommand{\basin}{\mathrm{Basin}}
\newcommand{\hitset}{\mathrm{HitSet}}
\newcommand{\thinexc}{\mathrm{ThinExc}}
\newcommand{\geodwt}{\mathrm{GeodWt}}
\newcommand{\pivot}{\mathrm{Pivot}}
\newcommand{\loc}{\mathrm{Loc}}
\newcommand{\disvol}{\mathrm{DisVol}}
\newtheorem{theorem}{Theorem}
\newtheorem{lemma}[theorem]{Lemma}
\newtheorem{proposition}[theorem]{Proposition}
\theoremstyle{definition}
\newtheorem{remark}[theorem]{Remark}
\newcommand{\ind}{\mathbbm{1}}
\newcommand{\wgt}{\mathrm{Wgt}}
\newcommand{\cF}{\mathcal{F}}
\newcommand{\cA}{\mathcal{A}}
\newcommand{\cC}{\mathcal{C}}
\newcommand{\cE}{\mathcal{E}}
\newcommand{\frakf}{\mathfrak{f}}
\newcommand{\cM}{\mathcal{M}}
\newcommand{\cS}{\mathcal{S}}
\newcommand{\fL}{\mathfrak{L}}
\newcommand{\cover}{\mathrm{Cover}}
\newcommand{\slope}{\mathrm{slope}}
\newcommand{\0}{\mathbf{0}}
\newcommand{\n}{\mathbf{n}}
\newcommand{\exc}{\mathrm{Exc}}
\newcommand{\coarse}{\mathrm{Coarse}}
\newcommand{\bddflips}{\mathrm{BddFlips}}
\newcommand{\smallbasin}{\mathrm{SmallBasin}}
\newcommand{\nearmax}{\mathrm{NearMax}}
\newcommand{\nbd}{\mathrm{Nbd}}
\newcommand{\bigchange}{\mathrm{BigChange}}
\newcommand{\hightf}{\mathrm{HighTF}}
\newcommand{\TF}{\mathrm{TF}}
\newcommand{\euc}{\mathrm{euc}}
\newcommand{\NI}{\mathrm{NI}}
\DeclareMathOperator*{\argmax}{\arg\! \max}
\begin{document}
\title[]{Geodesic switches and exceptional times in dynamical Brownian last passage percolation}%

\author[]{Manan Bhatia}
\address{Manan Bhatia, Department of Mathematics, Massachusetts Institute of Technology, Cambridge, MA, USA}
\email{mananb@mit.edu}
\date{}
\makeatletter
\let\thefootnote\relax
\footnotetext{This preprint is one of two works that together replace the earlier preprint [arXiv:2504.12293v1]. The companion article \cite{Bha25} proves a quantitative ``near-existence'' result for non-trivial bigeodesics in dynamical exponential last passage percolation.}
\begin{abstract}
  We consider Brownian last passage percolation evolving dynamically via a discrete resampling procedure. Using $\Gamma_{(0,0)}^{(n,n),r}$ to denote a geodesic from $(0,0)$ to $(n,n)$ at time $r$, we prove that the expected total number of coarse-grained changes (or ``switches'') accumulated by $\Gamma_{(0,0)}^{(n,n),r}$ away from its endpoints during a time interval $[s,t]$ is at most $n^{5/3+o(1)}(t-s)$; we expect the exponent $5/3$ to be tight. Using the above estimate, we establish that the set $\scT$ of exceptional times at which a non-trivial bi-infinite geodesic exists a.s.\ has Hausdorff dimension at most $1/2$. Further, for any fixed direction $\theta$, we show that the set $\scT^\theta\subseteq \scT$ of times at which a non-trivial bi-infinite geodesic directed along $\theta$ exists a.s.\ has Hausdorff dimension equal to $0$.
\end{abstract}
\maketitle
\setcounter{tocdepth}{1}
\tableofcontents
\section{Introduction}
\label{sec:intro}
Exponential last passage percolation (LPP) is a simple planar model of random geometry defined as follows. Let $\{\omega_z\}_{z\in \ZZ^2}$ be a field of i.i.d.\ $\mathrm{exp}(1)$ random variables. For any $u\leq v\in \ZZ^2$, where the inequality holds co-ordinate wise, and any lattice path $\gamma$ from $u$ to $v$ taking only up and right steps, one defines $\ell(\gamma)=\sum_{z\in \gamma}\omega_z$. Thereafter, one defines the passage time $T_u^v$ as the maximum of $\ell(\gamma)$ over all such up-right paths from $u$ to $v$. Any path $\gamma$ attaining the above maximum is called a geodesic and is denoted as $\Gamma_u^v$.

Last passage percolation models such as exponential LPP are expected to belong to the Kardar-Parisi-Zhang (KPZ) \cite{KPZ86} university class-- a family of growth models believed to have common universal behaviour. In fact, this has been rigorously established for exponential LPP-- the work \cite{DV21} shows that exponential LPP has a unique scaling limit, the directed landscape \cite{DOV18}, which is believed to be the universal scaling limit of all last passage percolation models amongst other models in the KPZ universality class.

While the ``static'' model of exponential LPP has been well-studied, one can also consider a dynamically evolving variant. Namely, attach independent clocks to every vertex $z\in \ZZ^2$ and simply resample the vertex weight $\omega_z$ whenever the corresponding clock rings. This leads to a process of vertex weights $\{\omega_z^t\}_{z\in \ZZ^2,t\in \RR}$ and corresponding dynamically evolving passage times $\{T_{p}^{q,t}\}_{p\leq q\in \ZZ^2,t\in \RR}$ and geodesics $\{\Gamma_{p}^{q,t}\}_{p\leq q\in \ZZ^2,t\in \RR}$. In contrast to the static exponential LPP, the behaviour of the dynamical version is not very well-understood. Following the notion of noise-sensitivity as introduced in \cite{BKS99}, it is expected that geodesics in dynamical LPP are noise-sensitive in the sense that a microscopic amount of noise leads to a macroscopic change in the structure of geodesics. To be specific, with $|\cdot|$ denoting the cardinality of a set, the behaviour of the expected overlap $\EE|\Gamma_{(0,0)}^{(n,n),0}\cap \Gamma_{(0,0)}^{(n,n),t}|$ has recently been studied for LPP models \cite{Cha14,GH24,ADS24}. Specifically, for the model of Brownian LPP (BLPP) endowed with the Ornstein-Uhlenbeck dynamics, the work \cite{GH24} shows that the analogously defined expected overlap exhibits a phase transition at the critical time scale $n^{-1/3}$ in the sense that it is $\Theta(n)$ for $t\ll n^{-1/3-o(1)}$ and is $o(n)$ for $t\gg n^{-1/3}$.

Instead of comparing $\Gamma_{(0,0)}^{(n,n),0}$ and $\Gamma_{(0,0)}^{(n,n),t}$ for a fixed time $t$, one can attempt to analyse the sample paths of the trajectory $\Gamma_{(0,0)}^{(n,n),r}$ as $r$ varies across the time interval $[0,t]$. Heuristically, in a neighbourhood of any time $r$ at which $\Gamma_{(0,0)}^{(n,n),r}\neq \Gamma_{(0,0)}^{(n,n),r^-}$, where $r^-$ refers to the configuration just before $r$, we expect the geodesic to be chaotic in the sense that it rapidly oscillates between the two available near-optimal choices.
The goal of this paper is to analyse this phenomenon for a dynamical model of BLPP and to accurately estimate (Theorem \ref{prop:30}) the expected total number of such rapid oscillations that the geodesic undergoes as time is varied, a quantity which we shall later call geodesic switches. Roughly, we shall show that as the time parameter $r$ is varied in an interval $[s,t]$, the geodesic $\Gamma_{(0,0)}^{(n,n),r}$, in expectation, accumulates at most $n^{5/3+o(1)}(t-s)$ many switches, where these switches are measured at a coarse-grained scale; we expect the $5/3$ exponent herein to be optimal.

Thereafter, we shall use this understanding of geodesics switches to prove new upper bounds (Theorems \ref{thm:3}, \ref{thm:5}) on the dimension $\dim \scT$ of the set $\scT$ of exceptional times at which non-trivial bi-infinite geodesics (or bigeodesics) exist in a dynamical model of BLPP. Note that for \emph{static} models of first and last passage percolation, bigeodesics are conjectured to a.s.\ not exist, and this has been confirmed for exponential LPP \cite{BHS22,BBS20} and BLPP \cite{RS24}. Our results for $\scT$ complement those obtained in the companion paper \cite{Bha25}, where for dynamical exponential LPP, we establish a near-existence result for the analogously defined exceptional times. We now give a precise definition of the dynamical BLPP model considered in this paper; note that all the rigorous results of the paper are for this model and \emph{not} for exponential LPP.
\subsection{Dynamical BLPP}
\label{sec:model-definition}
Regarding notation, for $a<b\in \RR$, we define the discrete interval $[\![a,b]\!]=[a,b]\cap \ZZ$ and for sets $A,B\subseteq \RR$, we shall use $B_A$ to denote $A\times B\subseteq \RR^2$. Now, start with a bi-infinite sequence of i.i.d.\ standard Brownian motions $\{W_n\}_{n\in \ZZ}$. For points $(x,m)\leq (y,n)\in \ZZ_\RR$, and any non-decreasing list $\{z_k: z_k\in [x,y], k\in [\![m-1,n]\!]$ such that $z_{m-1}=x, z_n=y$, consider the set
\begin{equation}
  \label{eq:632}
  \xi=\bigcup_{k\in [\![m,n]\!]}\{k\}_{[z_{k-1},z_k]} \cup \bigcup_{k\in [\![m,n-1]\!]}[k,k+1]_{\{z_k\}},
\end{equation}
which we shall refer to as a ``staircase'' from $(x,m)$ to $(y,n)$ following the terminology often used in previous works. In short, we shall say that $\xi\colon (x,m)\rightarrow (y,n)$ is a staircase. For $i\in [\![m-1,n]\!]$, we simply define $\xi(i)=z_i$. For any staircase $\xi\colon (x,m)\rightarrow (y,n)$, we now associate the weight
\begin{equation}
  \label{eq:331}
  \wgt(\xi)=\sum_{i=m}^{n} W_i(\xi(i))-W_i(\xi(i-1)),
\end{equation}
and define the last passage time 
\begin{equation}
  \label{eq:332}
  T_{(x,m)}^{(y,n)}=\sup_{\xi:(x,m)\rightarrow (y,n)}\wgt(\xi),
\end{equation}
where the supremum above is over all staircases between $(x,m)$ and $(y,n)$. It can be shown that almost surely, a staircase attaining the above supremum always exists for all points $(x,m)\leq (y,n)$-- such a staircase is known as a geodesic and is denoted by $\Gamma_{(x,m)}^{(y,n)}$. Further, for any fixed $(x,m)\leq (y,n)$, there is a unique geodesic $\Gamma_{(x,m)}^{(y,n)}$ (see \cite[Lemma B.1]{Ham19}). This completes the definition of the static model of BLPP and we now move to defining a discrete dynamics on this model.

We shall now define a process $\{W_n^t\}_{n\in \ZZ,t\in \RR}$ such that for each fixed $t\in \RR$, $\{W_n^t\}_{n\in \ZZ}$ is simply a sequence of i.i.d.\ Brownian motions. To define the above process, we first define the sequence $\{X_{i,n}^0\}_{i,n\in \ZZ}$-- a family of i.i.d.\ standard Brownian motions on the interval $[0,1]$. Now, we associate i.i.d.\ exponential clocks to each $(i,n) \in \ZZ^2$ independently of $\{X_{i,n}^0\}_{i,n\in \ZZ}$. Whenever the clock associated to $(i,n)$ rings (say at time $t>0$), we independently resample the path $X_{i,n}^t$ (see Figure \ref{fig:W_to_X}). This defines the process $\{X_{i,n}^t\}_{i,n\in \ZZ,t\geq 0}$. By a simple Kolmogorov extension argument, we can in fact extend the above to define the process $\{X_{i,n}^t\}_{i,n\in \ZZ,t\in \RR}$-- note that this process is stationary in $t$.
\begin{figure}
  \begin{subfigure}{0.6\textwidth}
    \centering
    \adjustbox{valign=c}{\includegraphics[width=0.8\linewidth]{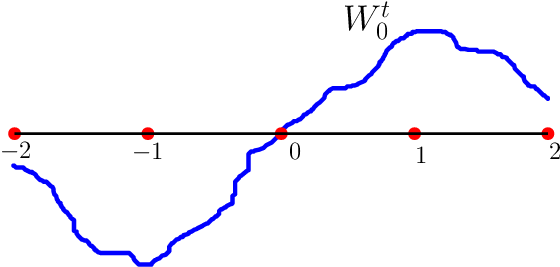}}
  \end{subfigure}\begin{subfigure}{0.4\textwidth}
    \centering
    \adjustbox{valign=c}{\includegraphics[width=0.8\linewidth]{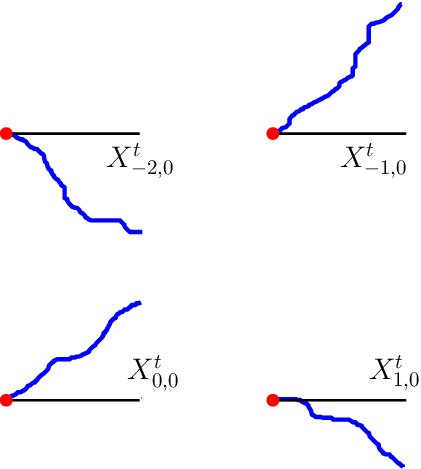}}
  \end{subfigure}
  \caption{Given the Brownian motions $W_n^t$ associated to the dynamical BLPP, we can consider the processes $X_{i,n}^t(x)= W_n^t(x+i)-W_n(x)$ for $x\in [0,1]$. The BLPP dynamics is defined by associated an independent exponential clock to each such $(i,n)\in \ZZ^2$ and then freshly resampling $X_{i,n}^t$ at any $t$ at which the clock corresponding to $(i,n)$ rings.}
\label{fig:W_to_X}
\end{figure}
With the above at hand, we simply define $W_n^t$ such that for all $x\in \RR$, we have
\begin{equation}
  \label{eq:333}
  W_n^t(x)-W_n^t(\lfloor x\rfloor)=X_{\lfloor x \rfloor,n }^t(x-\lfloor x \rfloor).
\end{equation}
It is easy to see that the process $\{W_n^t\}_{n\in \ZZ,t\in \RR}$ is stationary in $t$. Now, we simply define $T^t$ by replacing the family $\{W_n\}_{n\in \ZZ}$ in the definition \eqref{eq:332} to $\{W_n^t\}_{n\in \ZZ}$. Similarly, we define geodesics $\Gamma_{(x,m)}^{(y,n),t}$ associated to the BLPP $T^t$. Note that for any fixed $t$, the BLPP $T^t$ is marginally distributed as a static BLPP-- that is, for any fixed $t$, we have $T^t\stackrel{d}{=}T$.

\subsection{Main results 1: Geodesics switches and hitsets}
\label{sec:estim-geod-hits}
We shall be measuring geodesic switches at a coarse-grained scale, and in order to discuss this, we now develop some notation. First, for sets $K_1,K_2\subseteq \RR^2$, let $\cM_{K_1}^{K_2}$ be defined by
\begin{equation}
  \label{eq:63}
  \cM_{K_1}^{K_2}=\{(i,m)\in \ZZ^2: \exists p\in K_1, q\in K_2, w\in \{m\}_{[i,i+1]}:  p\leq w\leq q\}. 
\end{equation}
In case $K_1,K_2$ are singletons, say $K_1=\{p\}, K_2=\{q\}$, we shall simply write $\cM_p^q$ instead of $\cM_{\{p\}}^{\{q\}}$; we shall use this notational convention for all the objects defined in this paper whenever we are working with singletons. Now, let $\cT_{K_1}^{K_2,[s,t]}$ denote the discrete set of times $r$ at which $X_{i,m}^r$ is resampled for some $(i,m)\in \cM_{K_1}^{K_2}$. To be precise, for any $r$ as above, $X_{i,m}^{r}$ shall denote the Brownian path obtained after the resampling has occurred and $X_{i,m}^{r^-}$ denotes the path just prior to the resampling. Similarly, for $r$ as above, the LPPs $T^{r^-}$ and $T^r$ shall refer to the LPPs just before and after the resampling, and we shall also consider the corresponding geodesics $\Gamma_p^{q,r^-},\Gamma_p^{q,r}$. Now, for a set $K\subseteq \RR^2$, consider the coarse-grained approximation of $K$ defined by
\begin{equation}
  \label{eq:351}
  \coarse(K)=\{(i,m)\in \ZZ^2: \{m\}_{[i,i+1]}\cap A \neq \emptyset\}.
\end{equation}
We are now ready to define geodesic switches. For fixed points $p\leq q\in \ZZ_\RR$, we define
  \begin{equation}
  \label{eq:89}
  \switch_{p}^{q,[s,t]}(K)=\sum_{r\in \cT_{p}^{q,[s,t]}}\left|\coarse(K\cap \Gamma_{p}^{q,r})\setminus \coarse(K\cap \Gamma_{p}^{q,r^-})\right|,
\end{equation}
where we note that the above is well-defined since it turns out that $\Gamma_p^{q,r}$ is a.s.\ unique for all $r\in \RR$ (see Lemma \ref{lem:104}). 
Intuitively, measuring at a coarse-grained scale and only in the set $K$, $\switch_{p}^{q,[s,t]}(K)$ counts the total number of changes accumulated by the geodesic $\Gamma_p^{q,r}$ as we vary time from $r=s$ to $r=t$; we refer the reader to Figure \ref{fig:coarse} for a visual depiction of the above definition. We now state the first main result of this paper; note we shall use the boldface letters $\0,\mathbf{n}$ to denote $(0,0),(n,n)\in \ZZ^2$ respectively. 
\begin{theorem}
  \label{prop:30}
   Fix $\beta\in (0,1/2)$ and $\varepsilon>0$. For all $n$ large enough, and all $[s,t]\subseteq \RR$, we have
  \begin{equation}
    \label{eq:32}
    \EE[\switch_{\0}^{\n,[s,t]}([\![\beta n, (1-\beta)n]\!]_\RR)]\leq n^{5/3+\varepsilon}(t-s).
    \end{equation}
  \end{theorem}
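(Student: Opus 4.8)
The plan is to bound the expected number of switches by a union bound over the resampling times $r \in \cT_{\0}^{\n,[s,t]}$, and for each such time, to bound the expected size of the coarse-grained symmetric difference $|\coarse(K \cap \Gamma_{\0}^{\n,r}) \setminus \coarse(K \cap \Gamma_{\0}^{\n,r^-})|$ of the geodesic before and after a single resampling. The key observation is that the resampling times $r$ form a Poisson process, so by Fubini and stationarity it suffices to control the expected change at a single ``typical'' resampling, multiplied by the expected number of relevant resamplings in $[s,t]$. The number of resamplings of the clocks $(i,m) \in \cM_{\0}^{\n}$ during $[s,t]$ has expectation of order $|\cM_{\0}^{\n}| \cdot (t-s)$, which is $O(n^2 (t-s))$ since the geodesic passes through an $O(n) \times O(n)$ region, so heuristically I would need the expected change per resampling to be $n^{-1/3 + o(1)}$.

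First I would set up the single-resampling estimate. Fix a clock site $(i,m)$ and condition on the environment just before it rings. The geodesic $\Gamma_{\0}^{\n, r^-}$ changes only if the resampled weight $X_{i,m}^r$ is such that the geodesic through the interval $\{m\}_{[i,i+1]}$ becomes a better or worse competitor than the old optimal path — this is essentially a competition-interface/overlap phenomenon. The number of coarse-grained cells in which the new geodesic differs from the old is governed by two factors: the probability that the resampling at $(i,m)$ causes \emph{any} change at all (this requires the old geodesic to be ``close to switching'', which by transversal-fluctuation and KPZ one-point estimates happens with probability of order $n^{-1/3+o(1)}$ per site on the geodesic, because the weight of the second-best path within a unit column differs from the best by an amount of order the local fluctuation scale), and conditionally on a change, the expected \emph{size} of the discrepancy, which is the length of the new geodesic's excursion away from the old one. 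I would control the latter using the fact that geodesics in BLPP that agree at two endpoints and disagree in between form an ``excursion'' whose length has a controlled tail (a key input here is that near-geodesics are close to the geodesic, i.e., the ``thin excursion'' / geodesic coalescence estimates that I expect are established as lemmas earlier in the full paper, e.g.\ the $\thinexc$ and $\coal$ machinery suggested by the macros). Summing over the $O(n)$ sites the geodesic actually visits in $K$ (only those matter, by locality), I would get expected change per unit time of order $n \cdot n^{-1/3+o(1)} \cdot (\text{typical excursion length})$, and I would need the typical excursion length weighted appropriately to contribute only $n^{o(1)}$.

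The main obstacle, I expect, is controlling the \emph{joint} contribution: a single resampling at a site \emph{off} the current geodesic can still cause a switch if it creates a new near-optimal competitor, and the sites whose resampling matters are all sites in $\cM_{\0}^{\n}$, not just those on $\Gamma_{\0}^{\n,r^-}$ — but among the $O(n^2)$ such sites, only those within transversal-fluctuation distance $O(n^{2/3})$ of the geodesic have a non-negligible chance of mattering, and even among those, the probability of inducing a change decays with the spatial distance from the geodesic. So the real work is a careful double sum: over the distance scale $\ell$ from the geodesic (contributing $O(n \cdot \ell^{?})$ sites at scale $\ell$) times the probability $p(\ell)$ that resampling at such a site flips the geodesic (decaying in $\ell$ via the cost of a sub-optimal detour, roughly $\exp(-c \ell^{3/2}/n \cdot (\text{something}))$ type bounds or polynomial bounds from parabolic curvature) times the conditional expected size of the resulting change. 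Balancing these and showing the total is $n^{5/3 + o(1)} (t-s)$ — in particular that the contributions from large $\ell$ are negligible and the dominant term comes from $\ell = O(n^{2/3})$ giving $n \cdot n^{2/3} \cdot n^{-2/3+o(1)} \cdot n^{o(1)} \cdot (t-s)$-ish, which must be reconciled with the target $n^{5/3}$ — is where the delicate probabilistic estimates (sharp one-point tail bounds, geodesic fluctuation bounds, and possibly a chaining/coarse-graining argument over dyadic scales) will be concentrated. I would also need to verify the a.s.\ uniqueness of geodesics for all $r$ simultaneously (referenced as Lemma \ref{lem:104}) to ensure the switch count is well-defined, and handle the subtlety that $\cT_{\0}^{\n,[s,t]}$ is random and correlated with the environment by an appropriate conditioning/optional-stopping argument.
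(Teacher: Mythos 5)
Your proposal correctly identifies the general plan (reduce to the expected change per single resampling via the Poisson structure, invoke a twin-peaks-type estimate, and control the resulting excursion size using thin-excursion machinery), and this matches the heuristic outline the paper gives in Section \ref{sec:out-fixed}. However, there is a genuine gap, and it is exactly the one the paper itself flags there as ``misleading'': the first-moment heuristic $(\text{flip probability at distance } k) \cdot (\text{excursion size at distance } k) = k^{-3/2}\cdot k^{3/2}=O(1)$ can fail, because conditional on a flip, the excursion size $|\Gamma^{+}\setminus\Gamma|$ may have heavy enough tails that its conditional expectation grows polynomially in $n$, and you do not propose a mechanism to control this. Saying the arithmetic ``must be reconciled'' is precisely where the real work lies, and your proposal does not close it.

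The paper takes a route that avoids this issue and that your proposal does not anticipate. First, it decomposes the switch count not by distance of the resampled site from the geodesic, but by the \emph{scale} $\ell$ and \emph{location} $m$ of the excursion $\Gamma_{\0}^{\n,r}\setminus\Gamma_{\0}^{\n,r^-}$ (Proposition \ref{lem:49}), summing dyadically in $\ell$ at the end. Second, for each $(\ell,m)$ it reduces, via the Poisson structure (Lemma \ref{prop:48}), to resampling a single \emph{uniformly random} cell $(\fraki,\frakm)\in\cM_{\0}^{\n}$. The key observation you are missing is \eqref{eq:637}: on the relevant event, not only is the change set $\coarse(\widetilde{\Gamma}_{\0}^{\n})\setminus\coarse(\Gamma_{\0}^{\n})$ contained in a ``pivot'' set $\pivot_\delta^\ell(m)$ built from the peak set of the routed weight profile, but the random resampled cell $(\fraki,\frakm)$ itself must also lie in $\pivot_\delta^\ell(m)$. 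Since $(\fraki,\frakm)$ is uniform on $\cM_{\0}^{\n}$ and independent of the BLPP, this converts the expected change per resampling into $|\cM_{\0}^{\n}|^{-1}\EE[|\pivot_\delta^\ell(m)|^2]$ — a \emph{second moment} rather than a first moment. That second moment is then bounded by $\ell^{5/3}n^{o(1)}$ (Lemma \ref{lem:29}) using the twin-peaks estimate $\PP(\exc_\delta^\ell(m))\lesssim\ell^{-1/3+o(1)}$ (Proposition \ref{lem:2}) \emph{together with} a per-column peak-count bound $|\peak(n^\delta)\cap\{m\}_\RR|\leq n^{O(\delta)}$ (Proposition \ref{lem:31}), which you do not mention at all; the latter is what prevents the pathological scenario depicted in Figure \ref{fig:peak} where the rare twin-peaks event, when it occurs, produces an enormous pivot set. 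Without the pivot/second-moment trick and the per-column peak bound, the excursion-size tail problem you raise remains unresolved.
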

  The crucial aspect in the result above is the exponent $5/3$ which we expect to be optimal. To build intuition, %
  we note that the much weaker bound $\EE[\switch_{\0}^{\n,[s,t]}( [\![\beta n,(1-\beta)n]\!])]\leq (t-s)O(n^3)$ is easy-- indeed, it can be checked that $\EE|\cT_{\0}^{{\n},[s,t]}|= (t-s)O(|\cM_{\0}^{\n}|)= (t-s)O(n^2)$ and at every time $r\in \cT_{\0}^{\n,[s,t]}$, for some absolute constant $C$, we have the deterministic bound $|\coarse(\Gamma_{\0}^{\n,r})\setminus \coarse(\Gamma_{\0}^{\n,r^-})|\leq |\coarse(\Gamma_{\0}^{\n,r})|\leq Cn$.

\begin{figure}
  \centering
  \captionsetup{width=0.95\linewidth}
  \includegraphics[width=0.7\linewidth]{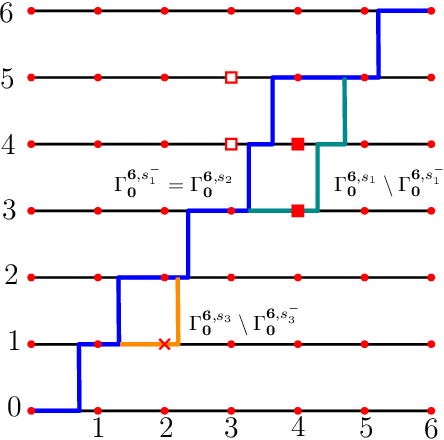}
  \caption{Here, in the setting of dynamical BLPP, we look at the geodesic between $\0=(0,0)$ and $\mathbf{6}=(6,6)$ as time is varied. In the given figure, it happens to be the case that $\cT_{\0}^{\mathbf{6},[0,1]}=\{s_1,s_2,s_3\}$ for some $s_1<s_2<s_3\in (0,1)$. Here, $\coarse(\Gamma_{\0}^{\mathbf{6},s_1^-})=\coarse(\Gamma_{\0}^{\mathbf{6},0})\subseteq \cM_{\0}^{\mathbf{6}}$ is equal to the set $\{(-1,0),(0,0), (0,1), (1,1), (1,2), (2,2), (2,3), (3,3), (3,4), (3,5), (4,5), (5,5), (5,6), (6,6)\}$, and thus $|\hitset_{\0}^{\mathbf{6},\{0\}}(\RR^2)|=14$. At time $s_1$, the geodesic changes from $\Gamma_{\0}^{\mathbf{6},s_1^-}$ to $\Gamma_{\0}^{\mathbf{6},s_1}$ and the set $\Gamma_{\0}^{\mathbf{6},s_1}\setminus \Gamma_{\0}^{\mathbf{6},s_1^-}$ is shown in cyan. Note that $\coarse(\Gamma_{\0}^{\mathbf{6},s_1})\setminus \coarse(\Gamma_{\0}^{\mathbf{6},s_1^-})= \{(4,3), (4,4)\}$-- this is marked by red squares. Thereafter, at time $s_2$, the geodesic happens to change back to the original blue path $\Gamma_{\0}^{\mathbf{6},s_1^-}$, and the set $\coarse(\Gamma_{\0}^{\mathbf{6},s_2})\setminus \coarse(\Gamma_{\0}^{\mathbf{6},s_2^-})$ consists of two elements and is marked by red hollow squares. Finally, at time $s_3$, there is another change in the geodesic and the set $\coarse(\Gamma_{\0}^{\mathbf{6},s_3})\setminus \coarse(\Gamma_{\0}^{\mathbf{6},s_3^-})$ is a singleton and is marked by a red cross. Here, $\switch_{\0}^{\mathbf{6},[0,1]}(\RR^2)= 2+2+1=5$ but $|\hitset_{\0}^{\mathbf{6},[0,1]}(\RR^2)|=14+2+1=17$. Note that $17=|\hitset_{\0}^{\mathbf{6},[0,1]}(\RR^2)|\leq |\hitset_{\0}^{\mathbf{6},\{0\}}(\RR^2)|+ \switch_{\0}^{\mathbf{6},[0,1]}(\RR^2)=19$, and the difference $19-17=2$ is explained by the intervals $\{4\}_{[3,4]}, \{5\}_{[3,4]}$ corresponding to the hollow red squares being revisited by the geodesic at time $s_2$.
  }
  
  \label{fig:coarse}
\end{figure}

The estimate Theorem \ref{prop:30} on the number of switches will allow us to obtain an estimate on the total trace of geodesics between points in disjoint sets as time is varied. To state this, we now introduce some notation. With the aim of having a coarse-grained approximation of the set swept by all geodesics $\Gamma_{p}^{q,r}$ for $p\in K_1,q\in K_2$ as the time $r$ varies over the interval $[s,t]$, we define the set $\hitset_{K_1}^{K_2,[s,t]}(K)$ by
\begin{equation}
  \label{eq:703}
    \hitset_{K_1}^{K_2,[s,t]}(K)= \bigcup_{p\in K_1\cap \ZZ_{\RR}, q\in K_2\cap \ZZ_{\RR},r\in [s,t]} \coarse(\Gamma_p^{q,r}\cap K),
  \end{equation}
  where in the above, the union is being taken over all possible geodesics between the points $p$ and $q$. For $n\in \ZZ$, with $L_n$ denoting the line segment $\{n\}_{[n-|n|^{2/3}, n+|n|^{2/3}]}$, we have the following result on the cardinality of the hitset between $L_{-n}$ and $L_{n}$.
\begin{theorem}
  \label{thm:6}
Fix $\gamma\in (0,1)$. Then for any $\varepsilon>0$ and for all large $n$, we have
    \begin{equation}
      \label{eq:706}
      \EE[|\hitset_{L_{-n}}^{L_n,[s,t]}([\![-(1-\gamma)n,(1-\gamma)n]\!]_{\RR})|]\leq n^{1+\varepsilon}+ n^{5/3+\varepsilon}(t-s).
    \end{equation}
  \end{theorem}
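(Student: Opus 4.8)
The plan is to write the hitset as its value at the initial time $s$ together with the cells that are created as the dynamics runs on $(s,t]$, to bound the static contribution by a transversal-fluctuation estimate and the dynamic contribution by the switch bound of Theorem~\ref{prop:30}, and to reduce the continuum of endpoints $p\in L_{-n}$, $q\in L_n$ to a slowly varying finite family of reference pairs using the a.s.\ uniqueness, hence non-crossing property, of geodesics.

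First I would record the single-pair estimate. For a fixed pair $p\le q$ and times $r'<r$, the set $\coarse(\Gamma_p^{q,r}\cap K)$ changes relative to $\coarse(\Gamma_p^{q,r'}\cap K)$ only at resampling times of clocks in $\cM_p^q$ lying in $(r',r]$, and at each such time the number of newly added cells is at most $|\coarse(K\cap\Gamma_p^{q,r})\setminus\coarse(K\cap\Gamma_p^{q,r^-})|$; summing over these times,
\[
 |\hitset_p^{q,[s,t]}(K)|\ \le\ |\coarse(\Gamma_p^{q,s}\cap K)|+\switch_p^{q,[s,t]}(K).
\]
A single staircase from $L_{-n}$ to $L_n$ meets at most $Cn$ coarse cells in $K$, so the first term is $O(n)$ deterministically, while since the proof of Theorem~\ref{prop:30} is insensitive to $O(n^{2/3})$-perturbations of the endpoints $\0,\n$ it gives, after a translation, $\EE[\switch_p^{q,[s,t]}(K)]\le n^{5/3+\varepsilon}(t-s)$ for any $p\in L_{-n}$, $q\in L_n$. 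Thus each fixed pair already satisfies $\EE|\hitset_p^{q,[s,t]}(K)|\le n^{1+\varepsilon}+n^{5/3+\varepsilon}(t-s)$.

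The real content is promoting this to the union over all endpoints. At the fixed time $s$ the field is a static BLPP; using a.s.\ uniqueness of geodesics (Lemma~\ref{lem:104}, \cite[Lemma B.1]{Ham19}) and the resulting non-crossing property, one sandwiches the bundle $\{\Gamma_p^{q,s}:p\in L_{-n},q\in L_n\}$ between the two extremal geodesics determined by the endpoints of $L_{-n}$ and $L_n$, and then, combining with transversal-fluctuation and coalescence estimates for BLPP at the KPZ scale, one argues that the restriction of the bundle to the macroscopically interior strip $K$ (which lies at distance $\gamma n$ from both $L_{\pm n}$) is captured, on an event of probability $1-e^{-n^{\Omega(1)}}$, by $n^{o(1)}$ reference geodesics, so that $|\hitset_{L_{-n}}^{L_n,\{s\}}(K)|\le n^{1+\varepsilon}$ there; off this event one uses the deterministic bound $|\hitset_{L_{-n}}^{L_n,\{s\}}(K)|\le Cn^2$. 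For the dynamic contribution, each cell of $\hitset_{L_{-n}}^{L_n,[s,t]}(K)\setminus\hitset_{L_{-n}}^{L_n,\{s\}}(K)$ is created at some $r\in\cT_{L_{-n}}^{L_n,[s,t]}$, and since the coalescence structure of the bundle in $K$ is constant between consecutive resampling times, the non-crossing property shows such a cell is already recorded in $\coarse(\Gamma_{p_i}^{q_i,r}\cap K)\setminus\coarse(\Gamma_{p_i}^{q_i,r^-}\cap K)$ for one of a bounded family of reference pairs $(p_i,q_i)$ (the current coalescence points, together with the extreme endpoints). Hence the number of newly created cells is at most $\sum_i\switch_{p_i}^{q_i,[s,t]}(K)$ up to an $n^{o(1)}$ factor, and Theorem~\ref{prop:30} applied to each summand yields $\EE|\hitset_{L_{-n}}^{L_n,[s,t]}(K)\setminus\hitset_{L_{-n}}^{L_n,\{s\}}(K)|\le n^{5/3+\varepsilon}(t-s)$; combined with the static bound and relabelling $\varepsilon$, this gives \eqref{eq:706}.

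The hard part is precisely the passage from a single pair to the whole bundle: one must show that, restricted to the interior strip $K$, the family of geodesics between the KPZ-scale windows $L_{-n}$ and $L_n$ collapses, up to an $n^{o(1)}$ factor, onto only $n^{o(1)}$ geodesics, and that the switches it accumulates are likewise dominated by those of $n^{o(1)}$ individual pairs, uniformly over the random and time-varying coalescence geometry and over the choice of which reference pair witnesses a given cell. Making this quantitative --- by combining the geodesic-ordering structure with sufficiently sharp coalescence and transversal-fluctuation estimates, and by tracking how the reference pairs evolve with $r$ --- is where the main effort is concentrated.
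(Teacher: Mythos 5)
Your plan is correct up to the key decomposition: the elementary inequality $|\hitset_p^{q,[s,t]}(K)|\le |\coarse(\Gamma_p^{q,s}\cap K)|+\switch_p^{q,[s,t]}(K)$ is exactly Lemma \ref{lem:143} in the paper, the translation-invariance extension of Theorem \ref{prop:30} to general endpoints is Proposition \ref{prop:911}, and the deterministic $O(n)$ bound on a single static hitset is what the paper uses. The real gap is in how you pass from a single pair to the bundle, and as written the plan would not go through.

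You propose to cover the bundle $\{\Gamma_p^{q,r}:p\in L_{-n},q\in L_n\}$ by $n^{o(1)}$ ``reference pairs'' which are the current coalescence points of the bundle, and then apply the switch estimate to each reference pair. The problem is that these reference pairs are random and environment-dependent --- they are defined by where the geodesics of the random bundle happen to coalesce at a given time $r$. Theorem \ref{prop:30} (Proposition \ref{prop:911}) is an unconditional expectation bound for a \emph{fixed, deterministic} pair $(p,q)$; it says nothing about $\EE[\switch_{p_i}^{q_i,[s,t]}(K)]$ when $(p_i,q_i)$ are chosen adaptively as a function of the dynamics, and conditioning on $(p_i,q_i)$ being a coalescence point of the bundle can bias the law in precisely the direction that inflates the switch count. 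The paper flags exactly this issue in Section~\ref{sec:out-segment}: ``the $O(1)$ geodesics above could possibly be highly exceptional and thus not behave at all as geodesics between fixed points.'' The same objection applies to your static bound, which similarly relies on collapsing the bundle onto an unspecified finite family of (random) geodesics. Your closing paragraph acknowledges that ``tracking how the reference pairs evolve with $r$'' is where the effort lies, but this is not merely effort to be filled in --- the approach as stated needs a new idea to be made rigorous.

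The paper's resolution is to make the reference pairs \emph{independent} of the dynamics. One sprinkles a Poisson point process $\cQ_{n,\nu}$ on pairs of lattice-line points at rate $n^{-10/3+o(1)}$, independent of the dynamical BLPP. The crucial input is the basin-of-attraction volume lower bound (Proposition \ref{prop:12}, built from Proposition \ref{prop:11}, the BLPP version of a result of \cite{BB23}): with stretched-exponentially high probability, every geodesic $\Gamma_p^{q,r}$ with endpoints in $L_{\pm n}$ has a basin $\basin_n^\delta(\Gamma_p^{q,r})$ of volume at least $n^{10/3-o(1)}$, consisting of pairs $(p',q')$ whose geodesic agrees with $\Gamma_p^{q,r}$ in the central strip. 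Since the Poisson rate times the basin volume is $n^{\Omega(1)}$, with high probability $\cQ_{n,\nu}$ simultaneously hits every basin for every $r\in[s,t]$ and every $p\in L_{-n}$, $q\in L_n$ (Proposition \ref{lem:36}). The cover set $\cS_n^\delta$ has expected size $n^{o(1)}$, and because it is independent of the environment the point-to-point bound applies to each element unconditionally (Lemma \ref{lem:142}). This sprinkling step is the substantive idea you need and your proposal does not supply a substitute for it.
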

  \subsection{Main results 2: Dimension upper bounds on exceptional times}
\label{sec:main-results-2}
For BLPP, a bigeodesic simply refers to a doubly-infinite staircase whose every finite portion is a geodesic between its endpoints. Note that any entirely vertical or horizontal staircase is trivially a bigeodesic and thus we are only interested in non-trivial ones. It is believed that in static LPP models, (non-trivial) bigeodesics almost surely do not exist, and this is known for both exponential LPP \cite[Theorem 1]{BHS22} (see \cite[Theorem 1]{BBS20} for another proof) and BLPP \cite[Theorem 4.16]{RS24}. The companion paper \cite{Bha25} analyses exceptional times at which analogous bigeodesics exist in exponential LPP, and shows that, in a certain quantitative sense, such exceptional times are ``very close'' to existing.

The result Theorem \ref{thm:6} allows us to obtain a quantitative upper bound on the analogous set of exceptional times for dynamical BLPP defined as
\begin{equation}
  \label{eq:1}
  \scT=\{t\in \RR: \exists \textrm{ a non-trivial bigeodesic for the BLPP } T^t\}.
\end{equation}
\begin{theorem}
  \label{thm:3}
  For dynamical BLPP, we have $\dim \scT\leq 1/2$ almost surely.
\end{theorem}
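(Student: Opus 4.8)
The plan is to feed the hitset estimate of Theorem~\ref{thm:6} into a first-moment covering argument for Hausdorff measure. As a sanity check, note that the static non-existence of bigeodesics in BLPP \cite{RS24} already gives, via Fubini, that $\scT$ is Lebesgue-null; the task is to upgrade ``measure zero'' to ``dimension $\le 1/2$''. By the countable stability of Hausdorff dimension and the stationarity in time of the dynamics it suffices to bound $\dim(\scT\cap[0,1])$. I would then reduce, using spatial stationarity, the Brownian scaling of the horizontal axis (which rescales a bigeodesic's asymptotic direction to the diagonal at the cost of moving the clock grid by a bounded factor, an operation to which the exponents in Theorems~\ref{prop:30} and \ref{thm:6} are insensitive), and a countable decomposition over the possible directions of bigeodesics, to the following situation: bound the dimension of the set of times $r$ at which there is a non-trivial bigeodesic whose restriction to the strip $-n\le(\text{height})\le n$ is, for every large $n$, a geodesic between $L_{-n}$ and $L_n$. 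Below, hitsets are always taken in the strip $B_n:=[\![-(1-\gamma)n,(1-\gamma)n]\!]_\RR$ for a small fixed $\gamma$.

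The core estimate to establish is
\[
  \PP\bigl(\scT\cap[s,s+\Delta]\ne\emptyset\bigr)\ \le\ \Delta^{1/2-o(1)}\qquad(\Delta\to0).
\]
Suppose a bigeodesic as above exists at some $r\in[s,s+\Delta]$. By uniqueness of geodesics and the planar ordering of geodesics, its restriction to $B_n$ coincides with $\Gamma_{p_-}^{p_+,r}\cap B_n$ for certain $p_\pm\in L_{\pm n}$; in particular the coarse trace of this geodesic lies in $\hitset_{L_{-n}}^{L_n,[s,s+\Delta]}(B_n)$, whose expectation Theorem~\ref{thm:6} controls by $n^{1+\varepsilon}+n^{5/3+\varepsilon}\Delta$. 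The crux is to argue that the existence of the bigeodesic is genuinely costly, i.e.\ forces this hitset to be atypically large. Two inputs are needed. First, a quantitative form of \cite{RS24}: at a fixed time, the probability that some geodesic between $L_{-n}$ and $L_n$ is ``on the verge of being bi-infinite'', that is, consistent with the geodesic structure out to a much larger scale, is at most $n^{-c}$ for some $c>0$ (extractable from the interface/coalescence estimates behind \cite{RS24,BHS22}). Second, the dynamical fact that near an exceptional time the candidate geodesics are maximally chaotic: the transversal window of width $\sim n^{2/3}$ available to them is swept on a time scale so short that $\hitset_{L_{-n}}^{L_n,[s,s+\Delta]}(B_n)$ attains its saturation value $n^{5/3-o(1)}$, far above the typical size $n^{1+o(1)}+n^{5/3+o(1)}\Delta$ on a window with $\Delta\ll n^{-2/3}$. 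Applying Markov's inequality to Theorem~\ref{thm:6} with threshold $\asymp n^{5/3}$ and then choosing $n\asymp\Delta^{-3/4}$ — which balances the term $n^{1+\varepsilon}$, the switch term $n^{5/3+\varepsilon}\Delta$, and the threshold — produces the displayed bound; this is exactly the arithmetic that turns the exponent $5/3$ of Theorem~\ref{thm:6} into the exponent $1/2$ here.

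Granting the core estimate, the conclusion is standard. Partition $[0,1]$ into $N=\lceil1/\Delta\rceil$ intervals of length $\Delta$; since $\scT\cap[0,1]$ meets only those intervals containing an exceptional time, for every $\alpha>1/2$,
\[
  \EE\bigl[\cH^{\alpha}_{\Delta}(\scT\cap[0,1])\bigr]\ \le\ N\,\Delta^{\alpha}\,\PP\bigl(\scT\cap[0,\Delta]\ne\emptyset\bigr)\ \le\ \Delta^{\alpha-1+1/2-o(1)}\ \xrightarrow[\ \Delta\to0\ ]{}\ 0,
\]
so by Fatou $\cH^{\alpha}(\scT\cap[0,1])=0$ a.s.\ for all $\alpha>1/2$, hence $\dim(\scT\cap[0,1])\le1/2$, and the reductions give $\dim\scT\le1/2$ almost surely.

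The main obstacle is the second step — the quantitative link between ``a bigeodesic exists somewhere in the short window $[s,s+\Delta]$'' and ``the coarse trace between $L_{-n}$ and $L_n$ over that window is anomalously large.'' This needs both a polynomial rate in the static non-existence theorem and, more delicately, a quantitative statement that near an exceptional time the relevant geodesics oscillate fast enough to saturate the hitset on the $\Delta\asymp n^{-4/3}$ time scale; the competition between the $n^{2/3}$-fold excess demanded of the hitset and the switch growth rate $n^{5/3}$ of Theorem~\ref{thm:6} is what pins the exponent to $1/2$. It is also plausible that the direction reduction is implemented not by a fixed decomposition but by letting the reference segments $L_{\pm n}$ track a scale-dependent family of directions, which would only affect the bookkeeping.
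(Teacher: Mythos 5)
Your proposal correctly identifies the two main ingredients (Theorem~\ref{thm:6} and a first-moment covering argument), and the final Hausdorff-measure arithmetic is routine, but the mechanism you propose to connect ``a bigeodesic exists somewhere in a short time window'' to ``the hitset is anomalously large'' is not what the paper does, and it contains a genuine gap that you yourself flag. Your core estimate relies on two unproven inputs: a polynomial-rate quantitative version of the static non-existence theorem \cite{RS24}, and a ``saturation'' statement that near an exceptional time the hitset attains its maximal order $n^{5/3-o(1)}$ over a window of length only $\Delta\asymp n^{-4/3}$. Neither is available, and the second looks genuinely hard: the decorrelation time scale for the geodesic is $n^{-1/3}$, so there is no reason the geodesic should sweep the entire $n^{2/3}$-wide strip in the much shorter window $n^{-4/3}$, even conditionally on a bigeodesic existing somewhere in that window.

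The paper sidesteps both obstacles with a translation-averaging argument rather than a conditional costliness argument, and it works at the time scale $n^{-2/3}$, not $n^{-4/3}$. After restricting (by countable stability of Hausdorff dimension) to bigeodesics $\Gamma^t$ with $\0\in\coarse(\Gamma^t)$ and direction in a fixed rational interval $(\theta_1,\theta_2)$, Proposition~\ref{prop:22} places $\Gamma^t\cap\{\pm n\}_\RR$ inside one of $\Theta(n^{1/3-\delta})$ overlapping directional segments $\pm\cJ^\delta_{n,j}$. Hence at an exceptional time $t\in I_{n,i}$ (with $|I_{n,i}|=n^{-2/3}$) one has $\0\in\hitset_{-\cJ^\delta_{n,j}}^{\cJ^\delta_{n,j},I_{n,i}}$ for at least one $j$. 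The decisive step, Lemma~\ref{lem:42}/\ref{lem:114}, is purely an averaging observation: by translation invariance $\0$ is a typical cell of the $\Theta(n^{5/3})$-cell parallelogram $\cB_n$, so the probability it is hit over a window of length $n^{-2/3}$ is at most $\EE|\hitset|/|\cB_n|\lesssim(n^{1+o(1)}+n^{5/3+o(1)}n^{-2/3})/n^{5/3}=n^{-2/3+o(1)}$. Summing over the $n^{2/3}$ time sub-intervals and the $n^{1/3-\delta}$ directions gives $\EE|\cI_n|\lesssim n^{1/3+o(1)}$, and covering by intervals of length $n^{-2/3}$ yields $\dim\leq 1/2$. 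In particular, the exponent $1/2$ comes purely from the directional union bound (for a single fixed direction the same argument gives dimension $0$, Theorem~\ref{thm:5}); no anomalous hitset growth near exceptional times, and no quantitative strengthening of \cite{RS24}, is needed. You should replace your two conjectural inputs with this averaging step.
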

Instead of looking at the set $\scT$ above, one might also fix a direction $\theta\in (0,\infty)$ and look at the more restrictive set $\scT^\theta\subseteq \scT$ defined as the set of times $t\in \RR$ such that there exists a $\theta$-directed bigeodesic $\Gamma^t$ for the BLPP $T^t$, where we say that an unbounded set $A\subseteq \RR^2$ is $\theta$-directed if for any sequence $(x_n,y_n)\in A$ with $|y_n|\rightarrow \infty$, we have $\lim_{n\rightarrow \infty}x_n/y_n=\theta$. In static LPP and first passage percolation models, since one expects there a.s.\ to be no bigeodesics at all, one also expects there to be no bigeodesics in a fixed direction, and indeed, this comparatively weaker statement is also known \cite{LN96} for first passage percolation. A priori, it is possible that in the dynamical BLPP case, there is a positive Hausdorff dimension set of exceptional times when bigeodesics in a fixed direction do exist, but this is not so, as is established in the following result.
\begin{theorem}
  \label{thm:5}
  For dynamical BLPP and a fixed direction $\theta\in (0,\infty)$, we a.s.\ have $\dim \scT^\theta =0$.
\end{theorem}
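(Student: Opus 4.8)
\smallskip
\noindent\textbf{Proof strategy for Theorem \ref{thm:5}.}
By stationarity of the dynamics in time and countable stability of Hausdorff dimension, it suffices to show that $\mathcal{H}^\alpha(\scT^\theta\cap[0,1])=0$ almost surely for each fixed $\alpha>0$. I would do this by a first-moment covering argument: for a large integer $N$, partition $[0,1]$ into the intervals $I_j=[(j-1)/N,j/N]$, $j=1,\dots,N$, and show $\PP(\scT^\theta\cap I_j\neq\emptyset)\le N^{-1+o(1)}$ uniformly in $j$. Then $\EE\,|\{j:\scT^\theta\cap I_j\neq\emptyset\}|\le N^{o(1)}$, the intervals $I_j$ meeting $\scT^\theta$ form a cover of $\scT^\theta\cap[0,1]$ of expected $\alpha$-cost at most $N^{o(1)-\alpha}$, and letting $N=2^k\to\infty$ gives the claim via Fatou.

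\smallskip
To bound $\PP(\scT^\theta\cap I_j\neq\emptyset)$ I would decompose $\scT^\theta$ into countably many pieces $\scT^\theta_{z,K}$, indexed by a lattice point $z=(i^\ast,m^\ast)$ and an integer $K$, and bound each separately, so that $\dim\scT^\theta=\sup\dim\scT^\theta_{z,K}$. Here $z$ is meant to be the site whose resampling brings the bigeodesic into existence, and $K$ is a threshold past which $\theta$-directedness is quantitatively effective (say, the bigeodesic stays within half-linear neighbourhoods of the $\theta$-line beyond scale $K$); every $\theta$-directed bigeodesic then lies in some $\scT^\theta_{z,K}$. Since at the deterministic left endpoint $s_j$ of $I_j$ there is a.s.\ no bigeodesic (static BLPP has none \cite{RS24}), any $t\in\scT^\theta_{z,K}\cap I_j$ forces the bigeodesic to be created at some resampling time $r\in I_j$, and the resampled site $z$ must then lie on the new bigeodesic, since otherwise its weight is unchanged at $r^-$ and the bigeodesic already existed. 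Working at a finite scale $n\ge K$, let $u,v$ be the vertices of the new bigeodesic at heights $m^\ast-n$ and $m^\ast+n$; then $z$ lies strictly inside the window $[m^\ast-n,m^\ast+n]$, so $\Gamma_u^{v,r}$ routes through $z$ while $\Gamma_u^{v,r^-}$ cannot agree with it on that window (agreement would again make the bigeodesic exist at $r^-$), and hence the geodesic between $u$ and $v$ undergoes a coarse change inside the corresponding middle strip at time $r$. By quantitative directedness the pair $(u,v)$ ranges, for $n\ge K$, over a net of only $n^{O(1)}$ lattice pairs around $z\mp n(\theta,1)$; using translation invariance and the shear invariance of BLPP — adding a common linear drift to the $W_m$ preserves geodesics, coarse traces, and the resampling dynamics — Theorem \ref{prop:30} bounds the expected switch of each such $\Gamma_u^{v,\cdot}$ over $I_j$ (restricted to the relevant middle strip) by $n^{5/3+\varepsilon}/N$, so a union bound gives $\PP(\scT^\theta_{z,K}\cap I_j\neq\emptyset)\le n^{O(1)}/N$ for all $n\ge K$.

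\smallskip
The point of fixing $\theta$ is that the scale $n$ is now a free parameter. Taking $n=\max(K,\lceil N^\delta\rceil)$ for an arbitrarily small $\delta>0$ gives $\PP(\scT^\theta_{z,K}\cap I_j\neq\emptyset)\le N^{O(\delta)-1}$ for all large $N$, hence $\EE\,|\{j:\scT^\theta_{z,K}\cap I_j\neq\emptyset\}|\le N^{O(\delta)}$ and expected $\alpha$-cost $\le N^{O(\delta)-\alpha}\to0$ once $\alpha>O(\delta)$; letting $\delta\downarrow0$ yields $\dim\scT^\theta_{z,K}=0$, and a countable union over $(z,K)$ gives $\dim\scT^\theta=0$. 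By contrast, for the undirected set $\scT$ of Theorem \ref{thm:3} one cannot anchor the argument at a freely chosen small scale — a bigeodesic may head off in an arbitrary direction — which forces $n$ to be a genuine power of $N$ and produces only $\dim\scT\le1/2$; this is exactly where the gain for a fixed direction comes from.

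\smallskip
\noindent\textit{Main obstacle.} The delicate content sits in the second paragraph. First, ``quantitative directedness past scale $K$'' must be worded so that every $\theta$-directed bigeodesic genuinely falls into some $\scT^\theta_{z,K}$ \emph{and} so that it confines $(u,v)$ to an $n^{O(1)}$ net at the chosen scale $n$; since directedness alone supplies only an $o(n)$ deviation bound, one must also establish a transversal-fluctuation estimate valid simultaneously at \emph{all} times $t\in[0,1]$, which I would obtain by union-bounding the static moderate-deviation bounds over the almost-surely finite set of resamplings touching a given bounded region together with a polynomial net of candidate endpoints. Second, the implication ``the bigeodesic is created at $r$'' $\Rightarrow$ ``$\Gamma_u^{v,\cdot}$ changes its coarse trace in the strip at $r$'' requires excluding the coincidence that $\Gamma_u^{v,r^-}$ and $\Gamma_u^{v,r}$ have the same coarse trace despite the weight at $z$ changing — a short deterministic argument exploiting that the pre-resampling weight made the realised route strictly suboptimal, keeping track of the precise way resampling $X_{i^\ast,m^\ast}$ alters $W_{m^\ast}$. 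Finally, replacing the genuinely $\bigcap_n$-type event ``a non-trivial bigeodesic exists'' by a single finite-scale event needs a bit of care but is routine given the local-in-time stationarity of the field. The remaining ingredients — the covering bookkeeping, the shear reduction, and the appeal to Theorem \ref{prop:30} — are standard.
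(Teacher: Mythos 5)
Your strategy diverges from the paper's, which works with \emph{hitsets} rather than switches, and this difference is not cosmetic. The step you need is: if a $\theta$-directed bigeodesic through the coarse cell $(i^*,m^*)$ exists at some $t\in I_j$ but not at the left endpoint $s_j$, then for the chosen finite scale $n\sim N^\delta$ some geodesic $\Gamma_{u'}^{v',\cdot}$, with $u',v'$ drawn from a deterministic $n^{O(1)}$-net near $z\mp n(\theta,1)$, must undergo a coarse change inside the middle strip during $I_j$. This implication is false. Non-existence of the bigeodesic at $s_j$ (or at $r^-$, if one could isolate a ``creation time'' $r$) is a statement about all scales simultaneously, and is entirely compatible with $\Gamma_{u'}^{v',s_j}$ and $\Gamma_{u'}^{v',t}$ having identical coarse traces on $[m^*-n,m^*+n]_\RR$: the failure of bigeodesicity may occur only at scales far larger than $n$. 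Thus $\switch_{u'}^{v',I_j}$ in the strip can vanish even though $\0\in\coarse(\Gamma^t)$, and Theorem~\ref{prop:30} alone cannot control $\PP(\scT^\theta\cap I_j\neq\emptyset)$. The paper sidesteps this by going through $\hitset$: the bigeodesic forces $\0\in\hitset_{\cdot}^{\cdot,I_{n,i}}$, and the inequality $|\hitset_p^{q,[s,t]}|\leq|\hitset_p^{q,\{s\}}|+\switch_p^{q,[s,t]}$ (Lemma~\ref{lem:143}), fed through the averaging of Lemmas~\ref{lem:40}--\ref{lem:42} and Proposition~\ref{prop:14}, gives the $O(n^{-2/3+o(1)})$ per-interval bound. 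The static hitset term is of the same order as the switch term and is indispensable; your argument silently discards it.

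Two further issues. First, the decomposition of $\scT^\theta$ by ``the site whose resampling creates the bigeodesic'' is not well-defined: bigeodesic existence is a countable intersection of geodesic conditions, resamplings are dense in time, and resampling a site \emph{off} the new bigeodesic can create it by weakening a competitor, so the resampled site at the purported creation time need not lie on the bigeodesic at all. Second, the endpoints $u,v$ of the bigeodesic at heights $m^*\pm n$ lie in $\ZZ\times\RR$ with continuous transversal coordinates, so replacing them by lattice pairs from a net and invoking Theorem~\ref{prop:30} for each requires a coalescence argument ensuring that the net geodesic agrees with the bigeodesic segment on the middle window; this is precisely what the basin and Poissonian-sprinkling machinery of Section~\ref{sec:sprinkling} supplies and what a bare union bound omits. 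The pieces of your proposal that are sound and match the paper are the reduction to a fixed coarse cell via countable stability, the use of Proposition~\ref{prop:22} to quantify directedness, the $I_{n,i}$ covering and first-moment bookkeeping, and the correct observation that fixing $\theta$ frees the scale $n$ so that $n\sim N^\delta$ produces dimension~$0$.
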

We mention that we do not expect Theorems \ref{thm:3}, \ref{thm:5} to be tight-- indeed, as conjectured in \cite[Conjecture 4]{Bha25}, we expect that $\dim \scT=0$ almost surely and from the heuristic calculation therein, one can also intuit that $\scT^\theta$ in Theorem \ref{thm:5} should almost surely be empty.

\paragraph{\textbf{Notational comments}} For $p\neq q\in \RR^2$, we use $\LL_p^q$ to denote the line joining $p$ and $q$. For a set $A\subseteq \RR^2$ and $r>0$, we shall often consider the $r$-horizontal neighbourhood $B_r(A)=\{(x,y): \exists (x',y)\in A \textrm{ satisfying } |x-x'|\leq r\}$. Frequently, for $a<b\in \RR$, we shall work with the discrete intervals $[\![a,b]\!]=[a,b]\cap \ZZ$.
  Often, we shall use the boldface letters $\0,\mathbf{m},\mathbf{n}$ to denote $(0,0),(m,m),(n,n)\in \RR^2$. For sets $A,B\subseteq \RR$, we use $B_A$ to denote $A\times B\subseteq \RR^2$.
  For points $p=(x_1,y_1),q=(x_2,y_2)\in \RR^2$, we shall write $p\leq q$ if $x_1\leq x_2$ and $y_1\leq y_2$. For points $p=(x_1,y_1)\neq (x_2,y_2)\in \RR^2$, we define $\slope(p,q)=\frac{x_2-x_1}{y_2-y_1}$: note that this is the inverse of the usual definition of the slope of a line. %
For a bounded set $A\subseteq \RR^2$, we define $|A|_{\vt}=\inf\{b-a: a<b\in \RR, A\subseteq [a,b]_{\RR}\}$ and define $|A|_{\hor}=\sum_{n\in \ZZ}\textrm{Leb}(A\cap \{n\}_{\RR})$, where $\textrm{Leb}$ here refers to one dimensional Lebesgue measure on $\{n\}_{\RR}$. For a finite set $A\subseteq \ZZ^2$, we shall use $|A|$ to denote the cardinality of $A$. For $\theta\in \RR$, an unbounded set $A\subseteq \RR^2$ is said to be $\theta$-directed if for any sequence $(x_n,y_n)\in A$ with $|y_n|\rightarrow \infty$, we have $\lim_{n\rightarrow \infty}x_n/y_n=\theta$.
  Throughout the paper, to avoid unnecessary clutter, we shall use the same notations $T_p^{q,t}$ and $\Gamma_p^{q,t}$ to denote passage times and geodesics for both dynamical exponential LPP and dynamical BLPP.

\paragraph{\textbf{Acknowledgements}} We thank Riddhipratim Basu for the discussions and Duncan Dauvergne for the email exchange regarding the extension of results from \cite{Dau23} to BLPP. The author acknowledges the partial support of the NSF grant DMS-2153742 and the MathWorks fellowship.

\section{Last passage percolation preliminaries}
\label{sec:usef-estim-last}
In this section, we shall collect certain useful results and estimates related to BLPP.
\subsection{The line ensemble $\cP$ associated to BLPP}
\label{sec:ensemble}
BLPP comes associated with a non-intersecting line ensemble that will be very useful for us, and we now discuss this. For a point $(x,m)\leq (y,n)\in \ZZ_\RR$ and $k\in \NN$, we first consider the set $\Pi_{(x,m)}^{(y,n),k}$ consisting of tuples $\mathbf{\xi}=(\xi_1,\dots,\xi_k)$, where the $\xi_i$ are staircases from $(x,m)$ to $(y,n)$ with the additional property that the sets $\xi_i\cap \RR_{(x,y)}$ are mutually disjoint. Then we define
\begin{equation}
  \label{eq:338}
  T( (x,m)^k;(y,n)^k)=\sup_{\mathbf{\xi}\in \Pi_{(x,m)}^{(y,n),k}}\sum_{i=1}^k\wgt(\xi_i).
\end{equation}
Now, for $k\in [\![1,n+1]\!]$ and $x\geq 0$, we define $P_{k,n}(x)= T( \0^k;(x,n)^k)- T( \0^{k-1};(x,n)^{k-1})$, where we use the convention $T( \0^0;(x,n)^0)=0$. The utility of the above is that the line ensemble $\{P_{k,n}\}_{k=1}^{n+1}$ turns out to have the same law \cite{OY02} as a Dyson's Brownian motion-- a sequence of $k+1$ independent Brownian motions conditioned not to intersect. The following moderate deviation estimate for BLPP, obtained via the connection to Dyson's Brownian motion, will be useful throughout the paper.
\begin{proposition}[{\cite{LR10},\cite[Theorem 3.1]{DV21+}}]
  \label{prop:47}
  Fix $k\in \NN$. Then for some constants $C_1,C_2,c_1,c_2$ depending on $k$, for all $x>0$ and for all $0 < \alpha < 5n^{2/3}$, we have
  \begin{align}
    \label{eq:578}
    \PP(P_{k,n}(x)\geq 2\sqrt{nx}+ \alpha\sqrt{x}n^{-1/6}) &\leq C_1e^{-c_1\alpha^{3/2}},\nonumber\\
    \PP(P_{k,n}(x)\leq 2\sqrt{nx}- \alpha\sqrt{x}n^{-1/6}) &\leq C_2e^{-c_2\alpha^{3}}.
  \end{align}
  Further, for some $k$-dependent constants $C_3,c_3$, all $\alpha \geq 5n^{2/3}$ and all $n$, we have
  \begin{equation}
    \label{eq:579}
    \PP(|P_{k,n}(x)-2\sqrt{nx}|\geq \alpha \sqrt{x} n^{-1/6})\leq C_3e^{-c_3\alpha^2 n^{-1/3}}.
  \end{equation}
\end{proposition}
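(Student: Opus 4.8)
The plan is to reduce Proposition \ref{prop:47} to standard moderate- and large-deviation estimates for the eigenvalues of the GUE. First I would use Brownian scaling: replacing each $W_i(\cdot)$ by $\sqrt{x}\,W_i(\cdot/x)$ shows that $P_{k,n}(x)\stackrel{d}{=}\sqrt{x}\,P_{k,n}(1)$, so it is enough to prove all three displayed bounds with $x=1$, i.e.\ to control $\PP(P_{k,n}(1)\geq 2\sqrt n+\alpha n^{-1/6})$, the matching lower-tail quantity, and the two-sided version. Then I would invoke the O'Connell--Yor identity recalled above: $(P_{1,n}(1),\dots,P_{n+1,n}(1))$ is the time-$1$ configuration of an $(n{+}1)$-particle Dyson Brownian motion started from the origin, hence is distributed as the ordered eigenvalues $\lambda_1\geq\cdots\geq\lambda_{n+1}$ of an $(n{+}1)\times(n{+}1)$ GUE matrix, normalized so that the limiting spectral measure is the semicircle law on $[-2\sqrt n,2\sqrt n]$ (the discrepancy between $\sqrt{n+1}$ and $\sqrt n$ is $O(n^{-1/2})$, hence negligible against $n^{-1/6}$). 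Under the partition-function--to--eigenvalue dictionary one has $P_{k,n}(1)=\lambda_k$, so the claim becomes: for each fixed $k$, the eigenvalue $\lambda_k$ sits within $O(n^{-1/6})$ of the edge $2\sqrt n$, with upper-tail exponent $3/2$ and lower-tail exponent $3$.

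For the moderate-deviation window $0<\alpha<5n^{2/3}$ --- equivalently, deviations of order $\alpha n^{-1/6}$ that stay inside the bulk --- this is precisely the edge behaviour of GUE. For $k=1$ it is classical: the bounds $\PP(\lambda_1\geq 2\sqrt n+\alpha n^{-1/6})\leq C_1 e^{-c_1\alpha^{3/2}}$ and $\PP(\lambda_1\leq 2\sqrt n-\alpha n^{-1/6})\leq C_2 e^{-c_2\alpha^{3}}$ are the finite-$n$ counterparts of the Tracy--Widom tail asymptotics $\PP(\mathrm{TW}_2>s)\asymp e^{-\frac{4}{3}s^{3/2}}$ and $\PP(\mathrm{TW}_2<-s)\asymp e^{-\frac{1}{12}s^{3}}$, and can be extracted uniformly in $n$ either from the determinantal formula for $\lambda_1$ (steepest descent on the Hermite kernel) or directly from the extreme-eigenvalue small-deviation estimates for $\beta$-ensembles in \cite{LR10}. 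The asymmetry of the exponents is structural: pushing $\lambda_1$ above the edge is a single-particle event, whereas pushing $\lambda_1$ below the edge forces the entire right end of the spectrum to be depleted. To pass from $k=1$ to a general fixed $k$ I would use Cauchy interlacing together with the analogous edge estimates for the $k$-th largest eigenvalue (whose fluctuations are again of order $n^{-1/6}$ with Tracy--Widom-type tails), absorbing the dependence on $k$ into the constants.

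For the genuine large-deviation regime $\alpha\geq 5n^{2/3}$ the event $\{|\lambda_k-2\sqrt n|\geq\alpha n^{-1/6}\}$ forces $\lambda_k$ to be a macroscopic distance (a constant multiple of $\sqrt n$) from the edge, and here a soft argument suffices: the $k$-th largest eigenvalue is $1$-Lipschitz in the operator norm, hence $O(1)$-Lipschitz as a function of the independent real Gaussian coordinates of the matrix (which have variance of order one), so Gaussian concentration gives $\PP(|\lambda_k-\EE\lambda_k|\geq t)\leq 2e^{-ct^2}$; since $\EE\lambda_k=2\sqrt n+O(n^{-1/6})$ for each fixed $k$, taking $t=\tfrac12\alpha n^{-1/6}$ yields a bound of the form $C_3 e^{-c_3\alpha^2 n^{-1/3}}$, valid for all $n$. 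At the crossover $\alpha\asymp n^{2/3}$ both regimes produce $e^{-\Theta(n)}$, so the two estimates patch together consistently.

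The \emph{only} non-routine input is the uniform-in-$n$, sharp-exponent moderate-deviation bound for the edge eigenvalues (the exponents $3/2$ and $3$); Brownian scaling, the O'Connell--Yor identification, Gaussian concentration, and the interlacing step from $k=1$ to general $k$ are all soft. In practice one invokes \cite{LR10} (or \cite[Theorem 3.1]{DV21+}, which states the BLPP estimate directly) for this input, which is exactly what the proposition does; I would do the same rather than reprove the edge analysis.
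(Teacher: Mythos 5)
The paper gives no proof of Proposition~\ref{prop:47}; it is quoted directly from \cite{LR10} and \cite[Theorem 3.1]{DV21+}, which is exactly the conclusion you reach in your final sentence. Your reconstruction of how the cited bounds are derived --- diffusive scaling to reduce to $x=1$, the O'Connell--Yor identification of $(P_{1,n}(1),\dots,P_{n+1,n}(1))$ with the ordered eigenvalues of a GUE$(n{+}1)$ matrix, Tracy--Widom-type edge moderate-deviation tails with exponents $3/2$ and $3$, Cauchy interlacing to pass from $k=1$ to fixed $k$, and Gaussian concentration via Lipschitz dependence of $\lambda_k$ on the matrix entries for the large-deviation window $\alpha\geq 5n^{2/3}$ --- is a faithful and correct sketch of the standard argument. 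One cosmetic slip: at the crossover $\alpha\asymp n^{2/3}$ only the upper tail produces $e^{-\Theta(n)}$ in both regimes; the moderate-deviation lower-tail bound there is $e^{-\Theta(n^2)}$, which is strictly stronger than the $e^{-\Theta(n)}$ concentration bound, so the two estimates are mutually consistent but do not literally match. This does not affect the validity of the argument.
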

Typically, one is interested in the values $P_{k,n}(y)$ with $y$ lying a $n^{2/3}$ window of $n$. For this reason, for $x\in [-n^{1/3}/2,\infty)$, it is convenient to define
\begin{equation}
  \label{eq:339}
  \cP_{k}(x)=n^{-1/3}(P_{k,n}(n+2n^{2/3}x)-2n-2n^{2/3} x),
\end{equation}
where we note that in the notation $\cP_k$, we have suppressed the dependency on the parameter $n$. Often, we shall write $\cP=\{\cP_{k}\}_{k=1}^{n+1}$. It can be checked that for all $x>-n^{1/3}/2$, the line ensemble is non-intersecting in the sense that one has the ordering $\cP_{1}(x)> \cP_{2}(x)> \dots > \cP_{n+1}(x)$.

The line ensemble $\cP$ encodes a wealth of information about the geodesic structure and is very useful. Further, in the limit $n\rightarrow \infty$, $\cP$ converges to the so-called Airy-line ensemble \cite{CH14}, which is an ensemble consisting of infinitely many non-interescting lines. Just as $\cP$ encodes passage times in BLPP, the Airy line ensemble encodes passage times in the directed landscape-- which is expected to be the universal scaling limit of all LPP models, and is known \cite{DV21} to be the scaling limit of BLPP and exponential LPP. It is often the case that one can convert problems regarding geodesics in BLPP and the directed landscape to understanding the above line ensembles, and this approach has been often been fruitful over the past decade with some examples being \cite{Ham20,Bha22,Dau23+,Bus24}.

By using Proposition \ref{prop:47} along with a Taylor expansion, one can obtain (see \cite[Proposition 4.6]{Heg21}) the following moderate deviation estimate for the line ensemble $\cP$.

\begin{lemma}
  \label{lem:74}
Fix $k\in \NN$. There exist $k$-dependent constants $C,c$ such that for all $x$ satisfying $|x|\leq n^{1/9}$ and all $n\in \NN$ and $\alpha>0$, we have
  \begin{equation}
    \label{eq:231}
    \PP(|\cP_{k}(x)+x^2|\geq \alpha)\leq Ce^{-c\min\{\alpha^{3/2}, \alpha^{2}n^{-1/3}\}}.
  \end{equation}
\end{lemma}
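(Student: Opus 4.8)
The plan is to transport the estimate of Proposition~\ref{prop:47} across the rescaling \eqref{eq:339}, the only real work being to control the discrepancy between $2\sqrt{ny}$ and its quadratic Taylor approximation; this is essentially \cite[Proposition 4.6]{Heg21}.

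Fix $k\in\NN$, a point $x$ with $|x|\le n^{1/9}$, a parameter $\alpha>0$, and take $n$ large (in particular $n\ge k-1$). We may assume $\alpha$ exceeds a fixed absolute constant $A$ to be chosen, since otherwise \eqref{eq:231} holds trivially after enlarging $C$. Set $y=n+2n^{2/3}x$, so that $y>0$ for $n$ large (as $2n^{2/3}x\ge -2n^{7/9}$), and, writing $u=2n^{-1/3}x$, note that $|u|\le 2n^{-2/9}\le\tfrac12$, whence $\sqrt y=\sqrt n\,\sqrt{1+u}\in\big[\tfrac12\sqrt n,\,2\sqrt n\big]$. First I would Taylor expand $2\sqrt{ny}=2n\sqrt{1+u}=2n\big(1+\tfrac u2-\tfrac{u^2}{8}+\theta\big)$ with Lagrange remainder $|\theta|\le C_0|u|^3$ (valid since $|u|\le\tfrac12$), which rearranges to $2\sqrt{ny}=2n+2n^{2/3}x-n^{1/3}x^2+E$ with $|E|\le C_0'|x|^3\le C_0' n^{1/3}$, using the hypothesis $|x|^3\le n^{1/3}$. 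Substituting into \eqref{eq:339} gives
\[
\cP_k(x)+x^2=n^{-1/3}\big(P_{k,n}(y)-2\sqrt{ny}\big)+n^{-1/3}E,\qquad |n^{-1/3}E|\le C_0'.
\]
Hence, provided $A\ge 2C_0'$, the event $\{|\cP_k(x)+x^2|\ge\alpha\}$ is contained in the event $\{|P_{k,n}(y)-2\sqrt{ny}|\ge\tfrac12 n^{1/3}\alpha\}$.

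Next I would write $\tfrac12 n^{1/3}\alpha=\tilde\alpha\,\sqrt y\,n^{-1/6}$ with $\tilde\alpha=\tfrac{\sqrt n\,\alpha}{2\sqrt y}\in[\alpha/4,\alpha]$, and apply Proposition~\ref{prop:47} to $P_{k,n}(y)$. If $\tilde\alpha<5n^{2/3}$, then both tail bounds in \eqref{eq:578}, combined via $\tilde\alpha\ge A/4\ge 1$ (so $\tilde\alpha^3\ge\tilde\alpha^{3/2}$), give
\[
\PP\big(|P_{k,n}(y)-2\sqrt{ny}|\ge\tfrac12 n^{1/3}\alpha\big)\le Ce^{-c\tilde\alpha^{3/2}}\le Ce^{-c'\alpha^{3/2}}\le Ce^{-c'\min\{\alpha^{3/2},\,\alpha^2 n^{-1/3}\}},
\]
the last inequality simply because $\min\{\cdot,\cdot\}\le\alpha^{3/2}$. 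If instead $\tilde\alpha\ge 5n^{2/3}$, then $\alpha\ge\tilde\alpha\ge 5n^{2/3}>n^{2/3}$, so $\min\{\alpha^{3/2},\alpha^2 n^{-1/3}\}=\alpha^2 n^{-1/3}$, and \eqref{eq:579} gives
\[
\PP\big(|P_{k,n}(y)-2\sqrt{ny}|\ge\tfrac12 n^{1/3}\alpha\big)\le C_3 e^{-c_3\tilde\alpha^2 n^{-1/3}}\le C_3 e^{-(c_3/16)\,\alpha^2 n^{-1/3}}.
\]
Combining the two cases and adjusting constants yields \eqref{eq:231}.

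I do not expect a genuine obstacle here. The two points that require care are: (i) that after the $n^{-1/3}$ normalization the Taylor remainder coming from $2\sqrt{ny}$ is only $O(1)$ — which is precisely why the hypothesis $|x|\le n^{1/9}$, equivalently $|x|^3\le n^{1/3}$, is imposed; and (ii) that the crossover at $\alpha\asymp n^{2/3}$ between the two tail regimes of Proposition~\ref{prop:47} coincides with the crossover between $\alpha^{3/2}$ and $\alpha^2 n^{-1/3}$ in the target exponent, which it does. The asymmetry between the $e^{-c\alpha^{3/2}}$ upper tail and the stronger $e^{-c\alpha^3}$ lower tail in \eqref{eq:578} plays no role, since we retain only the weaker of the two.
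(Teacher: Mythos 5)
Your proof is correct and is exactly the route the paper indicates: Taylor-expand $2\sqrt{ny}$ against the quadratic in \eqref{eq:339}, absorb the $O(|x|^3 n^{-1/3})=O(1)$ remainder (here the hypothesis $|x|\le n^{1/9}$ enters), and transfer Proposition~\ref{prop:47}; the paper itself simply cites \cite[Proposition 4.6]{Heg21} for this. One harmless slip worth flagging: in the regime $\tilde\alpha\ge 5n^{2/3}$ you assert $\min\{\alpha^{3/2},\alpha^2 n^{-1/3}\}=\alpha^2 n^{-1/3}$, whereas for $\alpha\ge n^{2/3}$ the minimum is actually $\alpha^{3/2}$; the conclusion is unaffected since $e^{-c\alpha^2 n^{-1/3}}\le e^{-c\min\{\alpha^{3/2},\alpha^2 n^{-1/3}\}}$ holds regardless of which term attains the minimum.
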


\subsection{Brownianity of the line ensemble $\cP$}
\label{sec:brown}
The primary reason why BLPP is often more tractable than exponential LPP is that the line ensemble $\cP$ enjoys the so-called Brownian Gibbs property \cite{CH14}. Roughly, this property states that if we start with a fixed set of intervals $\{[a_i,b_i]\}_{i=1}^{n+1}$ and only reveal each $\cP_i$ outside the respective interval $[a_i,b_i]$, then to construct the entirety of $\cP$, we set $\cP_i\lvert_{[a_i,b_i]}=B_i$, where the $\{B_i\}_{i=1}^{n+1}$ are independent Brownian Bridges respectively connecting $\cP_i(a_i)$ to $\cP_i(b_i)$ which are additionally conditioned on the event that the resulting $\cP$ be a non-intersecting line ensemble.

It turns out, that by exploiting the above resampling property, it can be shown that for a fixed $k\in \NN$, each individual line $\cP_k$ itself locally ``looks like'' Brownian motion in the sense of local absolute continuity. In the past few years, there have been a series of works \cite{Ham22, CHH23, Dau23} establishing rigorous and progressively stronger versions of the above and a particularly fruitful strategy has been to use such comparison results to estimate probabilities of events for $\cP$ via a corresponding calculation for Brownian motion.

In this work, we shall also require such a Brownian comparison result. Specifically, we shall need certain recently proven Brownianity estimates from \cite{Dau23}-- a work in the setting of the Airy line ensemble as opposed to the prelimiting BLPP line ensemble $\cP$. It turns out that the arguments from \cite{Dau23} for the Airy line ensemble can be adapted to yield corresponding results for the line ensemble $\cP$ as well, and we give outline these adaptations in an appendix (Section \ref{sec:brownian-regularity}). The main results from this appendix which we shall require are Proposition \ref{prop:24} and Proposition \ref{prop:25}, and these shall only be used to prove Proposition \ref{prop:26}, a twin peaks result for routed weight profiles in BLPP. Since the precise statements of the results from Section \ref{sec:brownian-regularity} are involved, we refrain from stating them here-- we suggest that the reader refers to the appendix later as needed.

\subsection{Invariance of static BLPP under Brownian scaling}
\label{sec:scaling}
Applying diffusive scaling to the constituent Brownian motions in static BLPP yields the following useful invariance statement.
\begin{proposition}
  \label{prop:46}
 For any $\beta>0$, as processes in $ (x,m)\leq (y,n)\in \ZZ_\RR$, we have the distributional equality
  \begin{equation}
    \label{eq:581}
    T_{(\beta x,m)}^{(\beta x, n)}\stackrel{d}{=} \sqrt{\beta} T_{(x,m)}^{(y,n)}.
  \end{equation}
\end{proposition}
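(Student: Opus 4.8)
The plan is to deduce the statement directly from the scaling invariance of Brownian motion, combined with a deterministic rescaling bijection on the set of staircases. First I would fix $\beta>0$ and introduce the rescaled family $\widetilde W_i(x):=\beta^{-1/2}W_i(\beta x)$, $i\in\ZZ$, $x\in\RR$. Since each $W_i$ is a two-sided standard Brownian motion and the $W_i$ are independent, $\{\widetilde W_i\}_{i\in\ZZ}$ is again an i.i.d.\ family of two-sided standard Brownian motions, so $\{\widetilde W_i\}_{i\in\ZZ}\stackrel{d}{=}\{W_i\}_{i\in\ZZ}$. Let $\widetilde T$ denote the BLPP passage-time process obtained by substituting $\{\widetilde W_i\}$ for $\{W_i\}$ in \eqref{eq:331}--\eqref{eq:332}.

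The core (deterministic) step is a rescaling bijection on staircases. Given $(x,m)\le (y,n)\in\ZZ_\RR$, map a staircase $\xi\colon (x,m)\rightarrow(y,n)$ with non-decreasing height list $(z_{m-1},\dots,z_n)$ (so $z_{m-1}=x$, $z_n=y$, in the notation of \eqref{eq:632}) to the staircase $\Phi_\beta(\xi)$ with height list $(\beta z_{m-1},\dots,\beta z_n)$. Horizontal scaling by $\beta$ preserves monotonicity of the list and transports the endpoint constraints, so $\Phi_\beta$ is a bijection from the staircases $(x,m)\rightarrow(y,n)$ onto the staircases $(\beta x,m)\rightarrow(\beta y,n)$. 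Using the telescoping form of \eqref{eq:331} and the identity $W_i(\beta z)=\sqrt\beta\,\widetilde W_i(z)$, the $W$-weight of $\Phi_\beta(\xi)$ equals
\begin{equation*}
  \sum_{i=m}^{n}\bigl(W_i(\beta z_i)-W_i(\beta z_{i-1})\bigr)=\sqrt{\beta}\sum_{i=m}^{n}\bigl(\widetilde W_i(z_i)-\widetilde W_i(z_{i-1})\bigr),
\end{equation*}
i.e.\ exactly $\sqrt\beta$ times the $\widetilde W$-weight of $\xi$. Taking the supremum over all $\xi\colon (x,m)\rightarrow(y,n)$ and using surjectivity of $\Phi_\beta$ yields the pathwise identity $T_{(\beta x,m)}^{(\beta y,n)}=\sqrt{\beta}\,\widetilde T_{(x,m)}^{(y,n)}$, valid almost surely and simultaneously for all $(x,m)\le(y,n)$.

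Finally, since $\{\widetilde W_i\}\stackrel{d}{=}\{W_i\}$ and the whole passage-time process is a measurable deterministic functional of the Brownian family, $(\widetilde T_{(x,m)}^{(y,n)})_{(x,m)\le(y,n)}\stackrel{d}{=}(T_{(x,m)}^{(y,n)})_{(x,m)\le(y,n)}$ as processes; combining this with the pathwise identity from the previous paragraph gives the claim. I do not expect a genuine obstacle here: the only points deserving a careful line are (i) checking that $\Phi_\beta$ really maps staircases to staircases bijectively, which is immediate from \eqref{eq:632} once one notes that horizontal scaling preserves the monotonicity of the height list and the endpoint conditions, and (ii) that the distributional equality holds jointly in the endpoints, which is automatic because the single field $\widetilde W$ realizes the identity for every endpoint pair at once. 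I would also note in passing that $\Phi_\beta$ carries the $\widetilde W$-geodesic from $(x,m)$ to $(y,n)$ to the $W$-geodesic from $(\beta x,m)$ to $(\beta y,n)$, so the scaling relation lifts to geodesics should that be needed elsewhere.
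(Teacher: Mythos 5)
Your proof is correct and is exactly the argument one would expect: the paper states Proposition \ref{prop:46} without proof, treating it as an immediate consequence of the diffusive scaling of Brownian motion, and your write-up supplies precisely that standard verification (Brownian scaling of the constituent $W_i$'s, the horizontal rescaling bijection on staircases, and the pathwise identity relating $T$ under $W$ to $T$ under $\widetilde W$). One tiny remark: the display \eqref{eq:581} in the paper has an evident typo ($T_{(\beta x,m)}^{(\beta x,n)}$ should read $T_{(\beta x,m)}^{(\beta y,n)}$), which you correctly read through.
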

\subsection{Transversal fluctuation estimates for BLPP}
\label{sec:transv-fluct-estim}
We shall also frequently need estimates controlling the deviation of geodesics in LPP from the straight line connecting their endpoints. Such estimates are by now standard and we now state a version for BLPP; recall from the notational comments earlier that for $A\subseteq \RR^2$, $B_r(A)\coloneqq \{(x,y): \exists (x',y)\in A \textrm{ satisfying } |x-x'|\leq r\}$ and that $\LL_p^q$ refers to the line joining $p$ and $q$.
\begin{proposition}[{\cite[Corollary 1.5]{GH20}}]
  \label{prop:38}
 There exist constants $C,c$ such that for all $n$ and all $\alpha\leq n^{1/10}$, we have
  \begin{equation}
    \label{eq:461}
    \PP(\Gamma_{\0}^{\n}\not\subseteq B_{\alpha n^{2/3}}(\LL_{\0}^{\n}))\leq Ce^{-c\alpha^3}.
  \end{equation}
\end{proposition}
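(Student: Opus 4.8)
The plan is to reconstruct the standard derivation of a transversal fluctuation bound of this form; the statement is quoted from \cite{GH20}, and the clean cubic exponent (with no spurious polynomial-in-$n$ prefactor) is exactly what the bootstrapping machinery there supplies, so the proposal below describes the mechanism and then indicates which step is the genuine obstacle. Throughout, write $\xi(\cdot)$ for the horizontal coordinate of the staircase $\Gamma_\0^\n$, so that at level $m$ the geodesic occupies the segment $\{m\}_{[\xi(m-1),\xi(m)]}$; then $\{\Gamma_\0^\n\not\subseteq B_{\alpha n^{2/3}}(\LL_\0^\n)\}$ is contained in $\bigcup_{0\le m\le n}\{|\xi(m)-m|\ge \alpha n^{2/3}-1\}$. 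Since the curvature estimate below does not see the sign of $\xi(m)-m$, it suffices to fix a level $m$ and control the event that $(w,m)\in\Gamma_\0^\n$ for some $w$ with $|w-m|\ge r:=\alpha n^{2/3}-1$.

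First I would prove the single-level estimate, which is the engine of the whole argument. On $\{(w,m)\in\Gamma_\0^\n\}$, additivity of passage times gives $T_\0^\n=T_\0^{(w,m)}+T_{(w,m)}^\n$ (with the level-$m$ contribution counted in the first term). The two summands are passage times of sub-percolations over $m$ and $n-m$ levels, so by Proposition~\ref{prop:47} (with $k=1$, in both of its regimes) they concentrate around $2\sqrt{mw}$ and $2\sqrt{(n-m)(n-w)}$ respectively, up to $O(n^{1/3})$; hence $\EE T_\0^{(w,m)}+\EE T_{(w,m)}^\n\le 2\sqrt{mw}+2\sqrt{(n-m)(n-w)}+O(n^{1/3})$, while $\EE T_\0^\n=2n+O(n^{1/3})$. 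The elementary identity
\[
2n-2\sqrt{mw}-2\sqrt{(n-m)(n-w)}=(\sqrt m-\sqrt w)^2+(\sqrt{n-m}-\sqrt{n-w})^2\ \ge\ \frac{(w-m)^2}{4n}
\]
is the parabolic-curvature input: it shows that $(w,m)\in\Gamma_\0^\n$ forces either a total upper deviation of the two summands, or a lower deviation of $T_\0^\n$, of size at least of order $(w-m)^2/n$. Passing from the continuum of admissible $w$ to a polynomial grid and absorbing the roughness of the constituent Brownian motions on each grid cell by an elementary Gaussian maximal inequality, Proposition~\ref{prop:47} bounds the probability of such a deviation by $\exp(-c(w-m)^3/n^2)$ — the upper-tail exponent $3/2$ is the binding one, the lower-tail exponent $3$ contributing a negligible term — and summing over the grid and over $|w-m|\in[r,n]$ (a geometric tail, dominated by its first term) yields $\PP(|\xi(m)-m|\ge r)\le n^{C}\exp(-c\alpha^3)$, which is already non-trivial once $\alpha\gtrsim(\log n)^{1/3}$.

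Two upgrades then promote this to the stated bound, uniformly over all levels and with no prefactor. For the passage from a single level to the whole geodesic, split $\Gamma_\0^\n$ at level $\lfloor n/2\rfloor$: on the event (controlled by the previous paragraph with $\alpha$ replaced by $\epsilon\alpha$) that $\xi(\lfloor n/2\rfloor)$ lies within $\epsilon\alpha n^{2/3}$ of $n/2$, the line joining $\0$ to $(\xi(\lfloor n/2\rfloor),\lfloor n/2\rfloor)$ stays within $\epsilon\alpha n^{2/3}$ of $\LL_\0^\n$ over levels $\le n/2$, so if $\Gamma_\0^\n$ leaves $B_{\alpha n^{2/3}}(\LL_\0^\n)$ on those levels then the corresponding half-geodesic, a BLPP geodesic over $\lfloor n/2\rfloor$ levels, leaves the $(1-\epsilon)$-shrunk neighbourhood of its own diagonal; iterating halves the number of levels while multiplying the deviation parameter by $2^{2/3}(1-\epsilon)>1$, so the resulting series of single-level estimates over scales is geometrically summable. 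The second upgrade — removing the prefactor $n^{C}$ — is the bootstrap of \cite{GH20}: feed the crude strip bound (that $\Gamma_\0^\n\subseteq B_{\beta n^{2/3}}(\LL_\0^\n)$ with high probability, initially for $\beta$ of order $(\log n)^{1/3}$) back into the additivity argument, now restricting every competing staircase to the known strip; this shrinks the length of the relevant supremum from order $n$ to order $\beta n^{2/3}$ and the per-cell roughness correspondingly, shaving the prefactor at each pass, and iterating drives $\beta$ down to the $O(1)$ regime and produces $C\exp(-c\alpha^3)$ for all $\alpha\le n^{1/10}$.

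The hard part is precisely this last step. Proposition~\ref{prop:47} only controls each passage time up to a polynomial prefactor (it is sharp only above the fluctuation scale $\sim(\log n)^{1/3}$), so extracting the optimal cubic exponent requires the careful, self-improving bootstrap, together with its delicate interaction with the multi-scale decomposition near the endpoints $\0,\n$, where the true curvature coefficient $\tfrac{n}{m(n-m)}$ blows up and the naive uniform bound $(w-m)^2/(4n)$ used above is badly lossy. Since this machinery is exactly the content of \cite[Corollary~1.5]{GH20}, I would simply invoke it; everything upstream — additivity, parabolic curvature, and the one-point moderate deviation input — is standard.
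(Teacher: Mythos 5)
Your proposal ultimately does exactly what the paper does: Proposition~\ref{prop:38} is quoted verbatim from \cite[Corollary~1.5]{GH20}, with no proof supplied in this paper, and you too conclude by invoking that citation. The lengthy sketch preceding your invocation (parabolic curvature, grid discretisation, the Ganguly--Hegde bootstrap to remove the polynomial prefactor) is an accurate outline of how such a bound is actually established, but it is background commentary rather than an independent argument; since you correctly identify the bootstrap as the genuine obstacle and correctly defer to \cite{GH20} for it, your answer coincides with the paper's.
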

The following mesoscopic transversal fluctuation estimate shall also be useful for us.
\begin{proposition}[{\cite[Lemma 2.4]{BBBK25}}]
  \label{prop:53}
  There exist constants $C,c,m_0$ such that for all $n$ large enough, all $m_0\leq m \leq n$, and all $\alpha\leq m^{1/10}$,
  \begin{equation}
    \label{eq:629}
   \PP (\Gamma_{\0}^{\n}\cap [0,m]_{\RR}\not\subseteq B_{\alpha m^{2/3}}(\LL_{\0}^{\mathbf{m}}))\leq Ce^{-c\alpha^3}.
  \end{equation}
\end{proposition}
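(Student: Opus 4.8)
The plan is to reduce the statement to a single ``exit-point'' estimate at height $m$, and then propagate that control down the strip using monotonicity of geodesics together with the macroscopic transversal fluctuation estimate in Proposition~\ref{prop:38}. First I would dispose of the regime $m\ge n/2$: since $\LL_\0^{\mathbf{m}}$ and $\LL_\0^{\n}$ are the same line (the diagonal $\{(t,t):t\in\RR\}$), Proposition~\ref{prop:38} at scale $n$ already yields the conclusion there, after adjusting $\alpha$ by a constant using $m^{2/3}\gtrsim n^{2/3}$. Hence I may assume $m\le n/2$, so that $n-m\ge m$. Writing $(Y_m,m)$ for a point at which $\Gamma_\0^{\n}$ meets height $m$, the restriction property and a.s.\ uniqueness of geodesics give $\Gamma_\0^{\n}\cap[0,m]_\RR=\Gamma_\0^{(Y_m,m)}$. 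The propagation step is then purely deterministic: on $\{|Y_m-m|\le\alpha m^{2/3}\}$, monotonicity of geodesics in their endpoints sandwiches $\Gamma_\0^{(Y_m,m)}$ between $\Gamma_\0^{(m-\alpha m^{2/3},m)}$ and $\Gamma_\0^{(m+\alpha m^{2/3},m)}$, each of which is in turn dominated by, resp.\ dominates, a genuinely diagonal geodesic $\Gamma_\0^{(m',m')}$ with $m'$ an integer close to $m$; since $\LL_\0^{(m',m')}$ is again the diagonal and $(m')^{2/3}\le 2m^{2/3}$, Proposition~\ref{prop:38} with $n$ replaced by $m'$ controls the transversal fluctuations of these diagonal geodesics, and combining everything gives $\Gamma_\0^{\n}\cap[0,m]_\RR\subseteq B_{O(\alpha m^{2/3})}(\LL_\0^{\mathbf{m}})$ off an event of probability $Ce^{-c\alpha^3}$. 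After rescaling $\alpha$, the whole proposition reduces to the exit-point estimate $\PP(|Y_m-m|>\alpha m^{2/3})\le Ce^{-c\alpha^3}$.

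To prove that, I would treat $\{Y_m>m+\alpha m^{2/3}\}$ (the other side is symmetric). Because $\Gamma_\0^{\n}$ passes through $(Y_m,m)$ whereas a staircase routed through $(m,m)$ merely competes, this event forces the existence of some $x\ge m+\alpha m^{2/3}$ with $D_1(x)+D_2(x)\ge 0$, where $D_1(x)=T_\0^{(x,m)}-T_\0^{\mathbf{m}}$ and $D_2(x)=T_{(x,m)}^{\n}-T_{\mathbf{m}}^{\n}$ are \emph{independent}, depending on disjoint families of the underlying Brownian motions. The decisive point is that $D_1(\cdot)$ is the increment, over a horizontal window of width $x-m$ adjacent to the diagonal, of the top curve of the BLPP line ensemble over a rectangle of height $m$, and $D_2(\cdot)$ similarly over height $n-m$; its mean is the matching increment of the parabola $y\mapsto 2\sqrt{my}$, so $\EE[D_1(x)+D_2(x)]=-\Theta((x-m)^2/m)$, while by the Brownian regularity of the line ensemble $\cP$ (Section~\ref{sec:brown}) --- with Proposition~\ref{prop:47} pinning the curves at the two ends of the window so the comparison applies --- the sum $D_1(x)+D_2(x)$ fluctuates about its mean only on the Gaussian scale $\sqrt{x-m}$. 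A Gaussian tail bound then gives $\PP(D_1(x)+D_2(x)\ge 0)\le C\exp(-c(x-m)^3/m^2)$, which at $x-m=\alpha m^{2/3}$ is precisely $Ce^{-c\alpha^3}$; since this exponent grows in $x-m$, a union bound over a discretization of $x\in[m+\alpha m^{2/3},n]$ --- using the large-deviation regime of Proposition~\ref{prop:47} once $x-m\gtrsim m$ --- costs only a polynomial factor and closes the estimate. The assumptions $m\ge m_0$ and $\alpha\le m^{1/10}$ keep all of the above within the ranges of validity of the cited estimates.

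The hard part will be exactly this Gaussian fluctuation bound on $D_1(x)+D_2(x)$. One cannot afford to control $T_\0^{(x,m)}$ and $T_\0^{\mathbf{m}}$ separately, since each fluctuates on scale $m^{1/3}$, which already exceeds the parabolic deficit $\Theta((x-m)^2/m)=\Theta(\alpha^2 m^{1/3})$ for $\alpha$ a large constant; and $T_{(x,m)}^{\n},T_{\mathbf{m}}^{\n}$ separately fluctuate on the far larger scale $(n-m)^{1/3}$. It is essential that these passage times enter only in strongly correlated pairs whose \emph{differences} are line-ensemble increments over a window of width merely $x-m$, and therefore fluctuate on the much smaller scale $\sqrt{x-m}$; capturing this requires genuine use of the Brownian Gibbs regularity of $\cP$ rather than the pointwise moderate-deviation bounds alone, and the bookkeeping must be carried out uniformly over all $m\in[m_0,n]$ and all $\alpha\le m^{1/10}$.
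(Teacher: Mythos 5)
The paper does not actually prove Proposition~\ref{prop:53}; it is imported wholesale from \cite[Lemma 2.4]{BBBK25}. The only description the paper gives of the underlying argument is in the proof sketch of Lemma~\ref{lem:57}: the BBBK25 proof shows that a large mesoscopic transversal fluctuation forces one to find two auxiliary points such that the geodesic between them has a macroscopically large transversal fluctuation, after which Proposition~\ref{prop:38} finishes the job. Your proposal takes a genuinely different route --- it is a passage-time curvature argument in the spirit of the classical exit-point bounds, replacing the macroscopic-TF comparison by a direct estimate on $D_1(x)+D_2(x)$. The trade-off is real: the BBBK25 route needs only Proposition~\ref{prop:38} (hence only moderate deviations), whereas your route genuinely needs the Brownian regularity of the increments of the line ensemble (an estimate of the type in Lemma~\ref{lem:75}), which is a heavier tool that this paper develops only in Appendix~2 and would be awkward to invoke at the point in the text where Proposition~\ref{prop:53} is actually used. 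Your observation that one must bound the pair increments $D_1,D_2$ rather than the four passage times separately, because the $T_{(\cdot,m)}^{\n}$ fluctuations live on scale $(n-m)^{1/3}\gg m^{1/3}$ when $m\ll n$, is exactly the right diagnostic, and it is precisely this obstruction that the BBBK25 auxiliary-points trick is designed to sidestep.

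Two concrete points deserve flagging. First, the sandwich step against a diagonal geodesic $\Gamma_\0^{(m',m')}$ is not correct as stated: $(m+\alpha m^{2/3},m)$ and $(m',m')$ with $m'\ge m+\alpha m^{2/3}$ are ordered componentwise but not in the antidiagonal sense that geodesic planarity orders, so there is no deterministic domination between $\Gamma_\0^{(m+\alpha m^{2/3},m)}$ and $\Gamma_\0^{(m',m')}$. The fix is easy and does not need a diagonal proxy at all: apply Proposition~\ref{prop:38} after Brownian rescaling (Proposition~\ref{prop:46}) directly to each of the two fixed-endpoint geodesics $\Gamma_\0^{(m\pm\alpha m^{2/3},m)}$ to control them around their own straight lines $\LL_\0^{(m\pm\alpha m^{2/3},m)}$, then note that these lines lie within horizontal distance $\alpha m^{2/3}$ of $\LL_\0^{\mathbf{m}}$ on $[0,m]_\RR$; together with the genuine sandwich of $\Gamma_\0^{(Y_m,m)}$ between these two (planarity at the common fixed height $m$ is legitimate), this gives the reduction. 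Second, the Gaussian-tail step is borderline: the parabolic deficit $\Theta((x-m)^2/m)$ and the applicability threshold $a\gtrsim r^2$ in Lemma~\ref{lem:75} both scale like $\alpha^2 m^{1/3}$ at $x-m=\alpha m^{2/3}$, so the constant $c_k$ in that hypothesis matters, and a careful split of the target deviation between $D_1$ and $D_2$ (or a direct variance-summing argument using independence) is required; it should go through, but it is not a black-box invocation of ``Brownian regularity.'' Finally, passing from the random exit location $Y_m$ to a union over a deterministic discretization of $x$ needs a modulus-of-continuity input for the weight profiles (cf.\ Lemma~\ref{lem:115}); your sketch gestures at this but should make it explicit.
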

\subsection{A useful result relating dynamical and static BLPP}
\label{sec:usef-result-relat}
Recall that, just after \eqref{eq:63}, for any sets $K_1,K_2\subseteq \RR^2$ and any finite interval $[s,t]\subseteq \RR$, we had defined the set $\cT_{K_1}^{K_2,[s,t]}$. By using the definition of the discrete resampling dynamics on BLPP, it is easy to see that $\cT_{K_1}^{K_2,[s,t]}\sim \mathrm{Poi}((t-s)|\cM_{K_1}^{K_2}|)$ and thus $\cT_{K_1}^{K_2,[s,t]}$ is a.s.\ a finite set. The following simple result shall be very useful for extracting information about dynamical BLPP using results on static BLPP.
\begin{lemma}
  \label{prop:48}
  Fix a finite interval $[s,t]\subseteq \RR$ and bounded sets $K_1,K_2\subseteq \RR^2$. Then conditional on the random finite set $\cT_{K_1}^{K_2,[s,t]}$, for any $r\in \cT_{K_1}^{K_2,[s,t]}$, $T^r$ is distributed as a static BLPP.
\end{lemma}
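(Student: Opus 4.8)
The plan is to reduce the statement to the following two facts about the construction of the dynamical BLPP: first, that for each fixed $t$ the field $\{W_n^t\}_{n\in\ZZ}$ is distributed as a family of i.i.d.\ standard Brownian motions (so that $T^t$ is a static BLPP for fixed $t$), and second, that conditioning on the ringing times of clocks \emph{only} at sites in $\cM_{K_1}^{K_2}$ does not bias the Brownian paths at those sites. The key observation is that the set $\cT_{K_1}^{K_2,[s,t]}$ is a measurable function of the Poisson clock processes attached to the finitely many sites $(i,m)\in\cM_{K_1}^{K_2}$, and these clock processes are independent of the entire family $\{X_{i,m}^u\}_{i,m\in\ZZ,u\in\RR}$ of resampled Brownian paths (this independence is exactly how the dynamics was constructed in Section \ref{sec:model-definition}).

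First I would fix $r$ and condition on the event $\{r\in\cT_{K_1}^{K_2,[s,t]}\}$ together with the full realization of $\cT_{K_1}^{K_2,[s,t]}$. By the memorylessness and independence structure of the exponential clocks, knowing the clock rings at the sites of $\cM_{K_1}^{K_2}$ on $[s,t]$ tells us nothing about the underlying collection of candidate Brownian paths $\{X_{i,m}^r\}$: for a site $(i,m)\in\cM_{K_1}^{K_2}$ that rings at time $r$, the path $X_{i,m}^r$ is by construction a fresh standard Brownian bridge/motion sampled independently of everything revealed so far, including the clock information; for a site that does not ring in $[s,t]$, its path at time $r$ equals its path at time $s$, which is again a standard Brownian motion independent of the clocks. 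For sites \emph{outside} $\cM_{K_1}^{K_2}$, the conditioning event $\{r\in\cT_{K_1}^{K_2,[s,t]}\}$ involves only clocks at sites in $\cM_{K_1}^{K_2}$, hence is independent of those paths as well. Therefore, conditionally on $\cT_{K_1}^{K_2,[s,t]}$ and on $r\in\cT_{K_1}^{K_2,[s,t]}$, the whole family $\{X_{i,m}^r\}_{i,m\in\ZZ}$ has exactly its unconditional law, namely i.i.d.\ standard Brownian paths. Passing through the definition \eqref{eq:333} of $W_n^r$ in terms of the $X_{i,m}^r$, we conclude that $\{W_n^r\}_{n\in\ZZ}$ is, conditionally, a family of i.i.d.\ standard Brownian motions, and hence $T^r$ (defined via \eqref{eq:332} with $W_n^r$ in place of $W_n$) is distributed as a static BLPP.

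The one point that needs a little care — and which I expect to be the main (minor) obstacle — is the measurability/filtration bookkeeping: one must verify that $\cT_{K_1}^{K_2,[s,t]}$ really is a function of the clock processes at $\cM_{K_1}^{K_2}$ alone and does not secretly depend on the Brownian paths (it does not, since the construction resamples $X_{i,m}$ \emph{at} a clock ring rather than letting the ring depend on the path), and that the conditioning on the \emph{specific} times $\{r_1<r_2<\cdots\}=\cT_{K_1}^{K_2,[s,t]}$ — as opposed to merely its cardinality — does not introduce dependence between successive resampled paths at a common site. The latter follows because, given the ring times at a site, the successive resampled paths are drawn i.i.d.\ and independently of those times; since $K_1,K_2$ are bounded, $\cM_{K_1}^{K_2}$ is finite and there are only finitely many such sites to track, so no additional integrability or limiting argument is needed. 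Once these measurability remarks are in place, the conclusion is immediate from the independence and the fact that a freshly resampled (or untouched) path is a standard Brownian motion.
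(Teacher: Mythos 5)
The paper states this lemma without proof (labeling it a ``simple result''), so there is no official proof to compare against; judged on its own terms, your proof is essentially correct and is the argument one would expect. The core idea---that $\cT_{K_1}^{K_2,[s,t]}$ is determined entirely by the exponential clocks on the finitely many sites of $\cM_{K_1}^{K_2}$, while the Brownian paths in play at any fixed time are drawn from an underlying i.i.d.\ family independent of all clocks---is exactly right, and the reduction through \eqref{eq:333} to $\{W_n^r\}_{n\in\ZZ}$ being i.i.d.\ standard Brownian is the correct final step.

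One imprecision you should clean up: the sentence asserting that the clock processes are ``independent of the entire family $\{X_{i,m}^u\}_{i,m\in\ZZ,u\in\RR}$'' is false as stated. The process $u\mapsto X_{i,m}^u$ jumps precisely at the ring times of the clock at $(i,m)$, so that family determines the clocks. What is independent of the clocks is the auxiliary array of fresh samples---say $\{X_{i,m}^{(k)}\}_{k\geq 0}$, with $X_{i,m}^{(0)}=X_{i,m}^0$ and $X_{i,m}^{(k)}$ the path installed at the $k$-th ring---and the process is the composition $X_{i,m}^u=X_{i,m}^{(N_{i,m}(u))}$ where $N_{i,m}$ is the clock's counting process. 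Your next paragraph does argue via exactly this decomposition (``the underlying collection of candidate Brownian paths''), so the argument itself is sound; just restate the independence at the level of the sample array rather than the composed process. Also, a small typo: the candidate paths are fresh standard Brownian \emph{motions} on $[0,1]$, not bridges.
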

\subsection{Uniqueness of geodesics in dynamical BLPP}
For static BLPP, it is true \cite[Lemma B.1]{Ham19} that almost surely, for all rational points $p\leq q\in \ZZ_\RR$ the geodesic $\Gamma_p^q$ is unique. In fact, the same holds for dynamical BLPP as well, and we now record this for later use.
\label{sec:uniq-geod-dynam}
\begin{lemma}
  \label{lem:104}
  Almost surely, for all rational points $p\leq q\in \ZZ_\RR$ and for all $t\in \RR$, there is unique geodesic $\Gamma_p^{q,t}$.
\end{lemma}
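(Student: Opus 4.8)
The plan is to reduce the dynamical statement to the static one (Hammond's \cite[Lemma B.1]{Ham19}) via a countable union, exploiting the fact that on any finite time window only countably many resamplings affect a given pair of points, and that the law of the configuration just before and just after any such resampling time is that of a static BLPP. First I would fix a countable dense set of rational points and a countable exhaustion of $\RR$ by intervals $[-N,N]$, so that it suffices to prove: for each fixed rational pair $p\leq q\in \ZZ_\RR$ and each $N\in \NN$, almost surely $\Gamma_p^{q,t}$ is unique for all $t\in [-N,N]$. Since a countable intersection of almost-sure events is almost sure, this reduces the lemma to a statement for a single pair on a single finite window.

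Next, for the fixed pair $p\leq q$ and window $[-N,N]$, let $\cT\coloneqq \cT_{\{p\}}^{\{q\},[-N,N]}$ be the (a.s.\ finite, Poisson-many) set of times in $[-N,N]$ at which some $X_{i,m}^r$ with $(i,m)\in \cM_p^q$ is resampled; these are exactly the times at which the BLPP $T^r$ restricted to staircases from $p$ to $q$ can change, since the weight of any such staircase depends only on the increments of the Brownian motions indexed by $\cM_p^q$. Between consecutive times of $\cT$ the relevant configuration is constant, so $\Gamma_p^{q,t}$ is unique for all $t$ in a given inter-ring interval as soon as it is unique at one point of that interval. Thus it suffices to check uniqueness at: (i) the left endpoint $t=-N$, (ii) each $r\in \cT$, and (iii) each $r^-$ for $r\in \cT$ — a countable (indeed a.s.\ finite) collection of times. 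For (i), $T^{-N}$ is marginally a static BLPP, so \cite[Lemma B.1]{Ham19} gives a.s.\ uniqueness of $\Gamma_p^{q,-N}$. For (ii), Lemma \ref{prop:48} states that conditional on the finite set $\cT$, for each $r\in \cT$ the configuration $T^r$ is distributed as a static BLPP, so again \cite[Lemma B.1]{Ham19} applies, and a union over the a.s.\ finite set $\cT$ preserves the almost-sure conclusion. For (iii), one argues identically: just before the ring at $r$ the weight $X_{i,m}^{r^-}$ is an independent Brownian path with the fresh-sample law, so $T^{r^-}$ is also distributed as a static BLPP conditional on $\cT$ (this is the natural ``other half'' of Lemma \ref{prop:48}, and can be cited or proved in the same one line from the strong Markov property of the resampling dynamics).

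Assembling the pieces: on the almost-sure event that $\cT$ is finite, that $\Gamma_p^{q,-N}$ is the unique geodesic, and that for every $r\in \cT$ both $\Gamma_p^{q,r}$ and $\Gamma_p^{q,r^-}$ are unique, the function $t\mapsto \Gamma_p^{q,t}$ is well-defined and constant on each maximal inter-ring subinterval of $[-N,N]$, hence $\Gamma_p^{q,t}$ is unique for every $t\in[-N,N]$. Taking the countable intersection over rational pairs $p\leq q$ and over $N\in\NN$ completes the proof. The only mildly delicate point — and the place where a careful writer should be explicit rather than routine — is item (iii): one must justify that conditioning on the resampling clock configuration (which determines $\cT$) does not bias the distribution of the pre-ring Brownian path away from the stationary product-of-Brownian-motions law, so that the static uniqueness result genuinely applies at times $r^-$; this follows from the independence of the clocks from the weight processes together with stationarity of the dynamics in $t$, but it is worth stating cleanly.
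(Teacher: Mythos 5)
Your proof is correct and takes essentially the same route as the paper: reduce by countable union to a fixed rational pair and a finite time window, observe that the geodesic can only change at the a.s.\ finitely many resampling times $\cT_p^{q,[-N,N]}$, and invoke Lemma~\ref{prop:48} together with static uniqueness at those times. Your item (iii) is in fact redundant — the geodesic at $r^-$ coincides with the geodesic at the previous ring time (or at $-N$) since the environment is constant on the inter-ring interval — but including it does no harm.
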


\begin{proof}
By a countable union argument, it suffices to work with fixed points $p\leq q\in \ZZ_\RR$. Again, by a countable union argument, we need only show that for any fixed $K>0$, there is a unique geodesic $\Gamma_p^{q,t}$ for all $t\in [-K,K]$. Consider the finite set of times $\cT_{p}^{q,[-K,K]}$ at which some $(i,m)\in \cM_{p}^{q}$ gets resampled. Now, by Lemma \ref{prop:48}, we know that conditional on the set $\cT_{p}^{q,[-K,K]}$, for any $t\in \cT_{p}^{q,[-K,K]}$, $T^t$ is distributed as a static BLPP. However, for static BLPP, we already know that the geodesic $\Gamma_p^q$ is almost surely unique. This completes the proof.
\end{proof}

\subsection{Directedness of infinite geodesics in dynamical BLPP}
\label{sec:dynam-blpp-geod}
We shall now discuss semi-infinite geodesics in BLPP, by which we mean any semi-infinite staircase $\xi$ emanating from a point $p\in \ZZ_\RR$ such that any segment of it is a geodesic between its endpoints. For static BLPP, it can be shown \cite[Theorem 3.1]{SS23} that any semi-infinite geodesic is $\theta$-directed for some $\theta\in [0,\infty]$ and that, almost surely, simultaneously for all $\theta\in [0,\infty]$ and $p\in \ZZ_\RR$, a $\theta$-directed semi-infinite geodesics emanating from $p$ exists. 

 A priori, it is not clear whether the same holds uniformly in time for dynamical LPP; for dynamical BLPP. The following result which we shall establish later in an appendix, shows that the above indeed does hold.
\begin{proposition}
  \label{prop:21}
  Almost surely, for all $t\in \RR$, every semi-infinite geodesic $\Gamma^t$ is $\theta$-directed for some $\theta\in [0,\infty]$. Further, for any fixed $p\in \ZZ_\RR$, almost surely, simultaneously for every $\theta\in [0,\infty]$ and $t\in \RR$, there exists a $\theta$-directed semi-infinite geodesic emanating from $p$.
\end{proposition}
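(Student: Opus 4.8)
The plan is to bootstrap the two corresponding statements for \emph{static} BLPP --- both available by \cite[Theorem 3.1]{SS23} --- to all times simultaneously, the main tools being Lemma \ref{prop:48} (at a resampling time the configuration is a static BLPP), the transversal fluctuation estimates of Propositions \ref{prop:38} and \ref{prop:53}, and a scale-by-scale Borel--Cantelli argument. It suffices to fix $K>0$ and prove both assertions for all $t\in[-K,K]$, then take a countable union over $K$. The point that makes this non-trivial is that a semi-infinite geodesic depends on infinitely many of the driving clocks, so the relevant configuration is \emph{not} piecewise constant in $t$; the remedy is that the restriction of any geodesic to levels $[m,N]$ depends only on the finitely many clocks whose cells lie in a bounded window of that slab, and we control these restrictions one scale $N$ at a time.

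\textbf{Every semi-infinite geodesic is directed.} The key step is to uniformize the mesoscopic transversal fluctuation estimate over time. For a fixed base point $p=(x,m)$, a fixed $N$, and a target $q$ at level $N$ with $\slope(p,q)\le\log N$, the geodesic $\Gamma_p^{q,r}$ depends only on the $O(N^2\log N)$ clocks in $\cM_p^{q}$, whose resampling times in $[-K,K]$ form a Poisson set of that expected size; applying Lemma \ref{prop:48} at each of its atoms together with Proposition \ref{prop:53} (rescaled via Proposition \ref{prop:46}), and taking a union bound over those atoms, over a net of such $q$, over a net of base points $p$, and over $N$ with deviation parameter $\alpha_N=(A\log N)^{1/3}$ for $A$ large, Borel--Cantelli gives: a.s., for all $r\in[-K,K]$, all such $p,q$ and all large $N$, the geodesic $\Gamma_p^{q,r}$ stays within $N^{2/3+o(1)}$ of $\LL_p^{q}$. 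On this event, suppose some semi-infinite geodesic $\xi$ at a time $r$, restricted to start at an integer level and so emanating from some $p\in\ZZ_\RR$, were not directed; then it has two limiting slopes $\theta_1<\theta_2$ along two infinite sequences of levels. But the restriction of $\xi$ to any level $M$ is a geodesic from $p$ lying within $M^{2/3+o(1)}$ of the line through $p$ of slope $\slope(p,\xi\cap\{M\}_\RR)$, so its level-$N$ trace (for $N<M$) is pinned to within $M^{2/3+o(1)}$ of $\slope(p,\xi\cap\{M\}_\RR)\cdot N$; choosing $N,M$ in the two sequences with $N\gg M^{2/3+o(1)}$, which is possible by comparing consecutive slope-$\theta_1$ and slope-$\theta_2$ portions of $\xi$, this contradicts $\xi\cap\{N\}_\RR$ being close to both $\theta_1 N$ and $\theta_2 N$. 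The cases where $\xi$ escapes to slope $0$ or $\infty$ are handled identically using the symmetry between the two coordinates.

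\textbf{Existence of a $\theta$-directed geodesic from $p$.} Fix $p$. It suffices to treat $\theta$ in a countable dense subset of $[0,\infty]$, the general case following by sandwiching between rational directions via the ordering of geodesics, and by coordinate symmetry we may take $\theta\in(0,\infty)$. For such $\theta$ one constructs a $\theta$-directed semi-infinite geodesic at time $r$ as a limit of the finite geodesics $\Gamma_p^{q_N,r}$ with $q_N=(x+\theta(N-m),N)$: by Lemma \ref{prop:48}, for each $r\in[-K,K]$ and each $N$ the geodesic $\Gamma_p^{q_N,r}$ coincides with $\Gamma_p^{q_N}$ computed in the configuration frozen at the last resampling time (among the finitely many clocks in $\cM_p^{q_N}$) before $r$, and as $r$ ranges over $[-K,K]$ only finitely many such configurations arise, so the static transversal fluctuation estimates apply uniformly over $r$ at each scale; combining these with the static existence result \cite[Theorem 3.1]{SS23} applied at those finitely many configurations to control the limit and identify its direction, together with a diagonal/compactness argument over levels, one produces such a geodesic for every $r\in[-K,K]$. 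In particular this also yields that a semi-infinite geodesic from $p$ exists at every time, so that the first part applies non-vacuously.

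\textbf{Main obstacle.} The essential difficulty is the passage from one time --- where everything is a static BLPP and \cite{SS23} applies verbatim --- to the uncountable family $[-K,K]$: as semi-infinite geodesics see infinitely many independent clocks, there is no finite list of change times, which forces the scale-by-scale organization and a union bound over the $O(N^{2}\,\mathrm{polylog})$ clock-atoms and nets; this is what dictates $\alpha_N=(A\log N)^{1/3}$ and is the only place the cubic-exponential tails of Propositions \ref{prop:38} and \ref{prop:53} are genuinely used. The most delicate point is in the existence statement: the transversal fluctuation windows at a fixed intermediate level grow (slowly) with the target level $N$, so one cannot naively pass to the limit $N\to\infty$ while keeping the traces in a fixed compact set --- one must either use the monotonicity of leftmost geodesics in their endpoints to obtain honest rather than merely subsequential limits, or work at resampling times and patch the remaining times via the above freezing property, and in either case one must still verify that the direction of the limiting semi-infinite geodesic equals $\theta$ uniformly in $r$.
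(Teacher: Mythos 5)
Your proposal is correct and takes essentially the same route as the paper: you uniformize the transversal-fluctuation estimates over dynamical time by conditioning on the Poissonian set of resampling times via Lemma~\ref{prop:48}, apply Borel--Cantelli scale-by-scale, and then run the Newman/Howard--Newman cone argument (as in \cite{FP05}) for directedness and for existence via limits of finite geodesics. The paper's exposition packages the uniform-in-time transversal estimate as Lemmas~\ref{lem:63} and~\ref{lem:65} and reduces directedness to the cone condition in Lemma~\ref{lem:68}, with a polynomial window $|p|^{2/3+\delta}$ in place of your logarithmic one, but these are cosmetic differences rather than a different method.
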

In this paper, the primary objects of interest are bigeodesics as opposed to semi-infinite geodesics and the following result for bigeodesics shall be very useful to us.
 \begin{proposition}
   \label{prop:22}
  Fix $\varepsilon>0$. Almost surely, for all $t\in \RR$, any non-trivial bigeodesic $\Gamma^t$ is $\theta$-directed for some $\theta\in (0,\infty)$ and satisfies $\Gamma^t\cap [-n,n]_{\RR}\subseteq B_{n^{2/3+\varepsilon}}(\LL_{(-\theta n,-n)}^{(\theta n,n)})$ for all $n$ large enough. %
\end{proposition}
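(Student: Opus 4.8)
The plan is to build the argument on top of Proposition~\ref{prop:21}, using the static transversal-fluctuation estimates (Propositions~\ref{prop:38} and~\ref{prop:53}) to obtain the quantitative bound and Lemma~\ref{prop:48} to pass from static to dynamical BLPP uniformly in time. Fix $\varepsilon>0$; by a countable union over rational endpoints it suffices to treat a fixed compact time window $[-K,K]$. Let $\Gamma^t$ be a non-trivial bigeodesic, fix a rational base point $p=(a,0)$ lying on it, and write $z_k=z_k(t)$ for the horizontal coordinate of $\Gamma^t$ at integer height $k$, so that $\{z_k\}_{k\in\ZZ}$ is non-decreasing with $z_0=a$. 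Splitting $\Gamma^t$ at $p$ produces a forward semi-infinite geodesic (heights $\ge0$) and a reversed backward one (heights $\le0$); applying Proposition~\ref{prop:21} both directly and under the reflection $(x,y)\mapsto(-x,-y)$, which preserves the law of dynamical BLPP, shows that almost surely, simultaneously for all $t$, $z_k/k\to\theta_+(t)$ as $k\to+\infty$ and $z_k/k\to\theta_-(t)$ as $k\to-\infty$ for some $\theta_\pm(t)\in[0,\infty]$.

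Assume first that $\theta_\pm(t)<\infty$; I would then show the two directions coincide and upgrade this to the quantitative inclusion. Writing $\Gamma^t\cap[-n,n]_\RR=\Gamma_{u_n}^{v_n}$ with $u_n=(z_{-n},-n)$ and $v_n=(z_n,n)$, and noting $z_n=\theta_+n+o(n)$, $z_{-n}=-\theta_-n+o(n)$, a union bound of (the directed form of) Proposition~\ref{prop:38} over a net of candidate endpoint locations, together with Borel--Cantelli, gives that almost surely, for all large $n$, $\Gamma_{u_n}^{v_n}$ lies within $n^{2/3+\varepsilon}$ of the chord $\LL_{u_n}^{v_n}$. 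Evaluating this containment at height $0$ and using $z_0=a$ yields $\tfrac12(z_n+z_{-n})=a+O(n^{2/3+\varepsilon})$, i.e.\ $\tfrac12(\theta_+-\theta_-)n=o(n)$, which forces $\theta_+(t)=\theta_-(t)=:\theta(t)$. With the directions equal, evaluating the same containment at an intermediate height such as $n/2$ gives the dyadic recursion $\bigl|(z_n-\theta n)-\tfrac12(z_{2n}-2\theta n)\bigr|=O(n^{2/3+\varepsilon})$; summing this geometrically and using $z_k-\theta k=o(|k|)$ produces $|z_n-\theta n|=O(n^{2/3+\varepsilon})$ for all large $n$, so that $\LL_{u_n}^{v_n}$ lies within $O(n^{2/3+\varepsilon})$ of $\LL_{(-\theta n,-n)}^{(\theta n,n)}$ and the claimed inclusion follows after relabelling $\varepsilon$. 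The remaining --- and most delicate --- point is to exclude the degenerate values $\theta(t)\in\{0,\infty\}$, as well as directions near them (so that the localization in the next paragraph may assume $\theta$ in a fixed compact subinterval of $(0,\infty)$): running the recursion above with $\theta=0$ confines such a bigeodesic to a thin neighbourhood of a fixed vertical line (and analogously for $\theta=\infty$), and one rules out these configurations via the Busemann-function/competition-interface analysis underpinning the non-existence of non-trivial bigeodesics for static BLPP \cite{RS24}, in a form quantitative enough to survive the union over resampling times.

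For the uniformity in $t$, note that at a fixed scale $n$, with $\theta$ confined to a fixed compact subinterval of $(0,\infty)$, the segment $\Gamma^t\cap[-n,n]_\RR$ --- whenever $\Gamma^t$ exists --- is a geodesic between two points in a bounded region $R=R(n)$, so the passage times $\{T^t_{u,v}:u,v\in R\}$ depend on only finitely many of the driving Brownian motions and are piecewise constant in $t\in[-K,K]$ with breakpoints at the almost surely finite set $\cT_R^{R,[-K,K]}$; by Lemma~\ref{prop:48} the configuration at each breakpoint is that of a static BLPP. Hence the ``bad'' events at scale $n$, quantified over all $t\in[-K,K]$, form a union of at most $|\cT_R^{R,[-K,K]}|$ static bad events, and since $\EE|\cT_R^{R,[-K,K]}|$ is polynomial in $n$ while each static bad event has probability at most $\mathrm{poly}(n)\,e^{-cn^{c'}}$, Borel--Cantelli over $n$ --- and then over $K$ --- completes the proof.

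The main obstacle is precisely this interplay between the \emph{infinite} extent of a bigeodesic and the \emph{localization} needed before Lemma~\ref{prop:48} and a finite union bound can be applied: one cannot localize the bigeodesic to a bounded space-time region without first knowing it is roughly directed, which is why the uniform-in-time Proposition~\ref{prop:21} --- itself the genuinely new input, established separately in the appendix --- must come first, after which the quantitative statement reduces to scale-by-scale static estimates handled via Lemma~\ref{prop:48}. A secondary difficulty is the exclusion of the degenerate directions $0$ and $\infty$, where the straight-line comparison degenerates and one must import, and make uniform in $t$, the static non-existence machinery.
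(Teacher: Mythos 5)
Your overall skeleton matches the paper's: split the bigeodesic at a base point into an up-right and a down-left semi-infinite geodesic, invoke Proposition~\ref{prop:21} (and its reflection) for directedness of each piece, then use uniform-in-time transversal-fluctuation estimates together with Borel--Cantelli both to force $\theta_+=\theta_-$ and to get the $n^{2/3+\varepsilon}$ localization. Your dyadic-recursion route to the quantitative bound is a valid alternative to the paper's cone argument (Lemmas~\ref{lem:68}--\ref{lem:69}), and your mechanism for time-uniformity via Lemma~\ref{prop:48} and the Poisson resampling count is the same one the paper uses throughout Section~\ref{sec:direction}.

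The genuine gap is the exclusion of $\theta\in\{0,\infty\}$, which you yourself flag as ``the most delicate point'' but then dispatch with a one-sentence appeal to the Busemann-function/competition-interface machinery of \cite{RS24} ``in a form quantitative enough to survive the union over resampling times.'' That is not an argument, and the approach is the wrong tool here. The \cite{RS24} machinery proves non-existence of \emph{all} non-trivial bigeodesics for a fixed static environment; it is not at all clear that it can be made uniform in $t$, and indeed it cannot in the naive sense, since the whole point of this paper is that $\scT$ may be nonempty --- a too-aggressive uniformization would erroneously give $\scT=\emptyset$ and contradict the framework. What is actually needed, and what the paper proves as Proposition~\ref{lem:135}, is the much weaker statement that no non-trivial \emph{axially} directed semi-infinite geodesic exists at any time. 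This is established not via Busemann functions but by an elementary transversal-fluctuation/planarity argument (following \cite[Section 5]{BHS22}): one separates the infinite-width and finite-width cases of a putative $0$-directed geodesic (Proposition~\ref{prop:19}, Lemma~\ref{lem:59}) and the infinite-height and finite-height cases of a putative $\infty$-directed one (Proposition~\ref{prop:20}, Lemma~\ref{lem:60}), and in each case derives a contradiction from a mesoscopic transversal-fluctuation bound (Lemma~\ref{lem:57}) that has superpolynomial decay and is therefore robust to the union over the Poisson clock rings. Without this piece, your proof of Proposition~\ref{prop:22} is incomplete --- and, as a downstream consequence, so is the reduction in Theorem~\ref{thm:3} to Proposition~\ref{prop:43} over rational angle windows $(\theta_1,\theta_2)\subset(0,\infty)$, which relies on knowing a priori that bigeodesic directions avoid the axes.
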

Note that the above, in particular, states that non-trivial bigeodesics in dynamical BLPP are never axially directed, that is, their corresponding angle $\theta$ is never equal to $0$ or $\infty$. In an appendix (Section \ref{sec:direction}), we will provide the proofs of Propositions \ref{prop:21}, \ref{prop:22}-- the proof of Proposition \ref{prop:21}, consists of adapting the classical argument by Newman \cite{New95} and Howard-Newman \cite{HN01} for first passage percolaion to the dynamical BLPP case, where now one needs to ensure that the transversal fluctuation estimates used therein all hold uniformly in time. For the part of Proposition \ref{prop:22}, where we rule out axially directed non-trivial bigeodesics, we shall undertake an adaptation of the corresponding arguments for the static exponential LPP case from \cite[Section 5]{BHS22}.

\subsection{Routed distance profiles and an estimate for their number of peaks}
\label{s:routed}
Understanding the structure of near-geodesics, or paths which are close to being geodesics but are not quite geodesics, shall be crucial for the proofs of Theorems \ref{thm:3}, \ref{thm:5}. To do so, the following definition will be useful-- for points $p_1\leq q \leq  p_2\in \ZZ_\RR$, we define
\begin{equation}
  \label{eq:580}
  Z_{p_1}^{p_2}(q)= T_{p_1}^{q}+ T_{q}^{p_2},
\end{equation}
that is, $Z_{p_1}^{p_2}(q)$ refers to the optimal weight of a staircase $\xi\colon p_1\rightarrow p_2$ which in addition is forced to pass via $q$.
Typically, we shall fix an $m\in \ZZ$ and look at the profile $x\mapsto Z_{p_1}^{p_2}(x,m)$-- borrowing terminology from \cite{GH24}, we refer to the above as a routed weight profile. Later, shall require an estimate showing that the routed weight profile $Z_{\0}^{\n}(\cdot,m)$ does not have too many well-separated peaks for any $m\in [\![0,n]\!]$. Such an estimate was shown for exponential LPP in \cite[Proposition 3.10]{SSZ24} and a similar result, but for BLPP weight profiles (as opposed to routed weight profiles), was shown in \cite[Theorem 1.5]{CHH23}. We now give a definition and then state the result that we shall require. For any fixed $m$, let $\peak(\alpha)$ denote the set of $(i,m)\in \cM_{\0}^{\n}$ for which there exists $x\in [i,i+1]$ such that $|T_{\0}^{\n}-Z_{\0}^{\n}(x,m)|\leq \alpha$. Note that in the above, it is of course needed that $\0\leq (x,m)\leq \n$ and as a result, in the above, we must have $[i,i+1]\subseteq [-1,n+1]$. As a result, for any $m\in \ZZ$, we have the deterministic inequality
\begin{equation}
  \label{eq:589}
  |\peak(\alpha)\cap \{m\}_\RR|\leq n+2.
\end{equation}
In fact, the above quantity is typically much smaller and the following estimate in this direction will be useful for us.
\begin{proposition}
  \label{lem:31}
Fix $\delta>0$. Then for all $n$ large enough, with probability at least $1-Ce^{-cn^{3\delta/4}}$, we have $|\peak(n^{\delta})\cap \{m\}_{\RR}|\leq n^{200\delta}$ for all $m\in [\![0,n]\!]$.
\end{proposition}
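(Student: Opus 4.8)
The plan is to bound the number of well-separated near-maximal peaks of the routed weight profile $Z_{\0}^{\n}(\cdot,m)$ by comparing it to a profile for which a twin-peaks type estimate is available, and then reduce to controlling a bounded (in fact $n^{o(1)}$) number of such comparisons. Recall that $Z_{\0}^{\n}(x,m) = T_{\0}^{(x,m)} + T_{(x,m)}^{\n}$ is a sum of two independent last passage times. The first step is a localization reduction: by Proposition \ref{prop:38} (or the mesoscopic version Proposition \ref{prop:53}), the geodesic $\Gamma_{\0}^{\n}$ passes through $\{m\}_\RR$ within a window of width $O(n^{2/3+\delta/100})$ around the point $(m,m)$ with probability at least $1 - Ce^{-cn^{3\delta'}}$ for suitable $\delta'$; moreover, if a peak $(i,m)$ satisfies $|T_{\0}^{\n} - Z_{\0}^{\n}(x,m)| \le n^\delta$ for some $x \in [i,i+1]$, then the $(i,m)$-constrained geodesic is a near-geodesic and by a standard near-geodesic transversal fluctuation estimate must also lie in a window of width $O(n^{2/3 + O(\delta)})$ around $(m,m)$ on the scale of the crossing. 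Hence outside this window $\peak(n^\delta) \cap \{m\}_\RR$ is empty with high probability, so it suffices to bound the number of peaks inside an interval of length $\ell := C n^{2/3 + O(\delta)}$.

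Next I would rescale. Inside this window, after applying Proposition \ref{prop:46} (Brownian scaling invariance) to rescale horizontal coordinates by $\ell$ and vertical ones appropriately, the two profiles $x \mapsto T_{\0}^{(x,m)}$ and $x \mapsto T_{(x,m)}^{\n}$ on a unit-order interval become, up to the $n^{1/3}$-type rescaling familiar from \eqref{eq:339}, objects that are uniformly comparable to the line ensemble $\cP$ — indeed, the top line of the line ensemble built from $T_{\0}^{(\cdot,m)}$ is exactly a rescaled $P_{1,m}$, and similarly for the backward profile. The sum $Z_{\0}^{\n}(\cdot,m)$ is then, after centering, comparable to a sum of two independent rescaled $\cP_1$-type profiles plus a deterministic parabola, restricted to an interval of length $n^{O(\delta)}$ after the rescaling. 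A peak in $\peak(n^\delta)$ translates to a point where this rescaled sum comes within $n^{-1/3 + \delta + O(\delta)} = n^{O(\delta)}$ of its maximum (the loss of the $n^{1/3}$ factor goes the right way since we are measuring near-maximality at scale $n^\delta \ll n^{1/3}$ relative to the fluctuation scale). I would then invoke the twin-peaks estimate for routed weight profiles — which in the paper is Proposition \ref{prop:26}, proved via Propositions \ref{prop:24} and \ref{prop:25} — to say that the probability that there exist two peaks separated by at least a unit distance (in the rescaled coordinate) is at most $C e^{-c n^{3\delta/4}}$ or so; iterating/union-bounding over a net of $n^{O(\delta)}$ scales and locations then shows that all peaks are confined to at most $n^{O(\delta)}$ unit intervals, and within each unit interval the deterministic bound \eqref{eq:589} (suitably localized) or a further Brownian comparison gives at most $n^{O(\delta)}$ lattice peaks, yielding the claimed $n^{200\delta}$ after absorbing constants. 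Finally I would take a union bound over the $n+1$ values of $m \in [\![0,n]\!]$, which costs only a polynomial factor and is absorbed by the stretched-exponential error $Ce^{-cn^{3\delta/4}}$.

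The main obstacle I anticipate is the transfer of the twin-peaks estimate from the Airy-line-ensemble / Brownian-Gibbs setting to the \emph{routed} weight profile in prelimiting BLPP — i.e., making rigorous the statement that $Z_{\0}^{\n}(\cdot,m)$, restricted to the relevant window and properly rescaled, is comparable (in the sense of local absolute continuity with controlled Radon–Nikodym derivative, as furnished by the appendix results adapted from \cite{Dau23}) to a sum of two independent Brownian-Gibbs top curves minus a parabola, \emph{and} that the absolute-continuity cost does not destroy the stretched-exponential probability bound. Handling the conditioning on the rescaling window (the geodesic location through $\{m\}_\RR$) so that it is independent of, or at least mildly coupled with, the profile on that window is the delicate point; the cleanest route is probably to prove the peak bound on a fixed deterministic window of width $n^{2/3+O(\delta)}$ unconditionally (absorbing the event that this window is not wide enough into the error term via Propositions \ref{prop:38} and \ref{prop:53}), thereby decoupling the localization from the Brownian comparison. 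The remaining steps — the rescaling bookkeeping, the net/union-bound over scales, and the within-unit-interval count — are routine given the moderate deviation input of Proposition \ref{prop:47} and Lemma \ref{lem:74}.
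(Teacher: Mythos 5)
Your overall shape — localize to an $n^{2/3+O(\delta)}$ window via transversal fluctuation control, then count peaks inside the window via Brownian comparison — is the right one and is what the paper does (Lemma \ref{lem:121}, then the comparison in Lemma \ref{lem:129}). But there are two genuine gaps.

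First, you assert that $Z_{\0}^{\n}(x,m)=T_{\0}^{(x,m)}+T_{(x,m)}^{\n}$ is a sum of two \emph{independent} passage times. It is not: both summands depend on $W_m$, since the forward profile uses $W_0,\dots,W_m$ and the backward profile uses $W_m,\dots,W_n$. Precisely because of this, the paper introduces the variant $Z_{\0}^{\n,\bullet}(q)=T_{\0}^{q}+T_{q+(0,1)}^{\n}$, which genuinely splits into independent pieces and is therefore locally absolutely continuous to Brownian motion, and then proves a separate reduction (Lemma \ref{lem:137}) showing that every peak of $Z_{\0}^{\n}$ has a nearby peak of $Z_{\0}^{\n,\bullet}$. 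Your proposal skips this reduction; without it the Brownian-Gibbs comparison you invoke does not apply to the object you are working with, and there is no clean way to patch it since the shared line $W_m$ destroys the ``sum of two independent Gibbs top curves'' structure you rely on.

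Second, you invoke Proposition \ref{prop:26} (the twin-peaks estimate) as the engine and claim it gives a stretched-exponential bound of order $e^{-cn^{3\delta/4}}$. That proposition only gives a \emph{polynomial} bound, $\PP(\TP_{\ell,m})\le C\ell^{-1/3+2\delta}$, which is far too weak: you need to show the number of peaks exceeds $n^{200\delta}$ with only stretched-exponentially small probability, not that one extra peak exists with probability $\ell^{-1/3+o(1)}$, and no union bound over scales can upgrade a polynomial single-event bound to a stretched-exponential many-events bound. The paper's mechanism is different: it applies the absolute-continuity comparison Proposition \ref{prop:52} (with polynomially controlled Radon--Nikodym cost) to the quantity $\nearmax^{\alpha}(\frakf_{n^\delta}^m)$, and the decisive input is the Brownian-motion tail bound $\PP(\nearmax^{\alpha}(B)\ge k)\le De^{-dk}$ from \cite[Proposition 2.5]{CHH23}, which is \emph{exponential in the count $k$}. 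Taking $k=n^{8\delta}$ is what yields the stretched-exponential bound. Also note the logical structure: Propositions \ref{lem:31} and \ref{prop:26} are proved independently of each other (in Appendices 3 and 4 respectively) and both feed into Section \ref{sec:switches}, so building \ref{lem:31} out of \ref{prop:26} is not how the paper is organized. Finally, a minor point: the paper handles small $m$ (say $m<n^{100\delta}$) with a trivial counting bound, since after localization the window contains only $n^{O(\delta)}$ lattice points; if you want a uniform statement over all $m\in[\![0,n]\!]$ you will need some version of that case split.
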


Given the well-developed understanding of the Brownianity of BLPP weight profiles, the above result is not hard to obtain-- we shall provide a proof in an appendix (Section \ref{sec:app-peaks}). Throughout the paper, and especially for proving Proposition \ref{lem:31}, we shall also need to work with a close variant of the routed distance profile $Z_{p_1}^{p_2}$ that we defined in this section. Indeed, for points $p_1\leq q\leq q+(0,1)\leq p_2\in \ZZ_\RR$, we define
\begin{equation}
  \label{eq:688}
  Z_{p_1}^{p_2,\bullet}(q)= T_{p_1}^{q}+ T_{q+(0,1)}^{p_2}.
\end{equation}
Again, for $m\in [\![0,n-1]\!]$, we shall often work with the process $x\mapsto Z_{\0}^{\n,\bullet}(x,m)$, and the advantage now is that the above process a sum of two independent ``locally Brownian processes'', and is thus itself locally absolutely continuous to Brownian motion. We note that this property is not true for the routed profile $x\mapsto Z_{\0}^{\n}(x,m)$.%

\subsection{A twin peaks estimate for routed weight profiles}
\label{sec:twin-peaks-out}
As we shall outline in Section \ref{sec:out-fixed}, for the proof of Theorems \ref{thm:3}, \ref{thm:5}, it will be important to argue that the profile $x\mapsto Z_{\0}^{\n,\bullet}(x,m)$ cannot have many well-separated peaks. Specifically, we shall need the following result.
\begin{proposition}
  \label{prop:26}
Fix $\beta'\in (0,1/2)$ and $\delta\in (0,1/6)$. For all $\ell\leq n$ and all $m\in [\![\beta'n, (1-\beta')n]\!]$, %
consider the event $\TP_{\ell,m}$ defined by
  \begin{equation}
    \label{eq:305}
    \TP_{\ell,m}=\{\exists x:
|x-\Gamma_{\0}^{\n}(m)|\geq \ell^{2/3-\delta}, |T_\0^\n-Z_{\0}^{\n,\bullet}(x,m)|\leq
\ell^{\delta}\}.
\end{equation}
Then there exists a constant $C$ such that for all $n$ large enough and all $\ell,m$ as above, we have
  \begin{equation}
    \label{eq:299}
      \PP(\TP_{\ell,m})\leq C \ell^{-1/3+2\delta}.
  \end{equation}
\end{proposition}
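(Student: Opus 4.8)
The plan is to reduce $\TP_{\ell,m}$ to a ``twin peaks'' statement for the single routed profile $f(x):=Z_\0^{\n,\bullet}(x,m)=T_\0^{(x,m)}+T_{(x,m+1)}^\n$ and then exploit the Brownianity of $f$. Two facts make $f$ the natural object here, both deterministic. First, any staircase routed through $(x,m)$ and $(x,m+1)$ is in particular routed through $(x,m)$, so $f(x)\le T_\0^\n$ for every $x$. Second, splitting the geodesic $\Gamma_\0^\n$ at heights $m$ and $m+1$ and using that sub-staircases of geodesics are geodesics gives $f\big(\Gamma_\0^\n(m)\big)=T_\0^{(\Gamma_\0^\n(m),m)}+T_{(\Gamma_\0^\n(m),m+1)}^{\n}=T_\0^\n$. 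Thus $\max_x f(x)=T_\0^\n$, attained at $x_\star:=\Gamma_\0^\n(m)$, and $\TP_{\ell,m}$ is \emph{exactly} the event that $f$ returns to within $\ell^\delta$ of its maximum at some point at transversal distance at least $\ell^{2/3-\delta}$ from the maximizer $x_\star$; in particular
\[
  \TP_{\ell,m}\ \subseteq\ \big\{\exists\, x,y:\ |x-y|\ge\ell^{2/3-\delta},\ f(x)\ge\max_z f(z)-\ell^\delta,\ f(y)\ge\max_z f(z)-\ell^\delta\big\}.
\]
Moreover, since $T_\0^{(\cdot,m)}$ and $T_{(\cdot,m+1)}^\n$ are driven by disjoint families of the underlying Brownian motions, $f$ is a sum of two \emph{independent} weight profiles, each of which—after subtracting its smooth parabolic mean—is locally absolutely continuous to Brownian motion.

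We may assume $\ell$ exceeds an absolute constant, since otherwise $C\ell^{-1/3+2\delta}\ge1$ once $C$ is large. Fix $\alpha:=A(\log\ell)^{1/3}$ with $A$ a large absolute constant. By the transversal fluctuation bound (Proposition~\ref{prop:38}), $|x_\star-m|\le n^{2/3}\alpha$ with probability at least $1-Ce^{-c\alpha^3}\ge 1-\ell^{-cA^3}$. On this event, any witness $x$ of $\TP_{\ell,m}$ with $|x-x_\star|\ge 4n^{2/3}\alpha$ has $|x-m|\ge 3n^{2/3}\alpha$; such far witnesses are ruled out by covering $\{|x-m|\ge 3n^{2/3}\alpha\}$ dyadically and, on each dyadic annulus, combining the parabolic upper bound on $f$ (from Proposition~\ref{prop:47}) with a modulus-of-continuity bound for the weight profile at scale $n^{2/3}$ (obtainable from Lemma~\ref{lem:74})—a \emph{coarse} net of mesh of order $n^{2/3}$ suffices here, so no factor of $n$ enters, and the resulting contribution is at most $\alpha\,\ell^{-cA^3}$. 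Taking $A$ large makes both error terms negligible against $\ell^{-1/3+2\delta}$. It remains to treat witnesses with $\ell^{2/3-\delta}\le|x-x_\star|\le 4n^{2/3}\alpha$, which we split into dyadic scales $w_j:=2^j\ell^{2/3-\delta}$ for $0\le j\le J=O(\log n)$, writing $I_j:=[x_\star-4w_j,\,x_\star+4w_j]$, an interval of length at most of order $n^{2/3}\alpha$, i.e.\ of width $O\big((\log\ell)^{1/3}\big)$ in KPZ units. It then suffices to bound, for each $j$,
\[
  \PP\Big(\exists\,x,y\in I_j:\ |x-y|\ge w_j,\ f(x),f(y)\ge\max_{z\in I_j}f(z)-\ell^\delta\Big).
\]

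The event in this last display is measurable with respect to $f|_{I_j}$, which is a sum of two independent processes, each of which—after subtracting its parabolic mean—is absolutely continuous on $I_j$ to a standard Brownian motion with Radon--Nikodym derivative bounded in every $L^q$; this is precisely the content of Propositions~\ref{prop:24} and~\ref{prop:25} (adapted from \cite{Dau23} to the line ensemble $\cP$), and since $I_j$ has KPZ-width $O\big((\log\ell)^{1/3}\big)$ the relevant $L^q$ bound is $\ell^{o(1)}$. Hence $f|_{I_j}$ is absolutely continuous to a Brownian motion of variance $2$ plus the parabolic mean, with an $L^q$ density bound of $\ell^{o(1)}$, and Hölder's inequality bounds the displayed probability by $\ell^{o(1)}\,\PP_{\mathrm{BM}}(E_j)^{1-1/q}$, where $E_j$ is the corresponding twin-peaks event—two near-maxima at separation $\ge w_j$ and within window $C\ell^\delta$—for a Brownian motion on an interval of length comparable to $w_j$. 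The classical Brownian twin-peaks estimate, obtained via Williams' decomposition of Brownian motion at its maximum into two independent $\mathrm{BES}(3)$ processes (together with the elementary fact that a $\mathrm{BES}(3)$ started at $0$ returns below level $\eta$ at some time after $w$ with probability of order $\eta/\sqrt w$), and which survives the addition of a fixed concave parabolic drift, gives $\PP_{\mathrm{BM}}(E_j)\le C\,\ell^\delta/\sqrt{w_j}=C\,2^{-j/2}\,\ell^{-1/3+3\delta/2}$. Plugging this in and choosing $q=q(\delta)$ large enough that $\tfrac{\delta}{4}+\big(-\tfrac13+\tfrac32\delta\big)\big(1-\tfrac1q\big)\le-\tfrac13+2\delta$—possible since $\delta<\tfrac16$ makes $-\tfrac13+\tfrac32\delta$ strictly less than $-\tfrac13+2\delta$, leaving room of order $\delta$—yields a per-scale bound of $C_\delta\,2^{-j/3}\,\ell^{-1/3+2\delta}$; summing the geometric series over $j$ and adding the localization errors from the previous step gives \eqref{eq:299}.

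The principal obstacle is the Brownian comparison invoked above, carried out uniformly over the dyadic band with a loss of only $\ell^{o(1)}$: this is exactly what the adaptation of \cite{Dau23} in Section~\ref{sec:brownian-regularity} (Propositions~\ref{prop:24} and~\ref{prop:25}) is designed to provide, the key point being that one only ever needs it on windows of KPZ-width $(\log\ell)^{O(1)}$, on which even a comparison constant growing exponentially in the width remains $\ell^{o(1)}$. The mildly generous exponent $-1/3+2\delta$ in the statement—in place of the $-1/3+3\delta/2$ coming from the bare Brownian computation—is precisely the $\ell^{\delta/2}$ of slack that absorbs this comparison loss and the Hölder loss. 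The remaining ingredients (the identity $f(\Gamma_\0^\n(m))=T_\0^\n$, the $\ell$-dependent transversal localization of $x_\star$, the coarse-net argument ruling out far witnesses, and the $\mathrm{BES}(3)$ estimate) are routine.
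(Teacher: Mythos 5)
Your reformulation is correct and matches the paper's: since $Z_\0^{\n,\bullet}(x,m)\le T_\0^\n$ with equality at $x_\star=\Gamma_\0^\n(m)$, the event $\TP_{\ell,m}$ is exactly a twin-peaks event for the routed profile. The transversal-fluctuation localization and the ruling out of far witnesses are also sound and close in spirit to the paper's Lemmas~\ref{lem:89} and~\ref{lem:121}. The $\mathrm{BES}(3)$ heuristic giving $\ell^\delta/\sqrt{w}$ for the Brownian twin-peaks probability is the right calculation. However, the core technical step has a genuine gap.

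The central problem is the assertion that the Brownian comparison costs only $\ell^{o(1)}$ on windows of KPZ-width of order $(\log\ell)^{1/3}$, ``even [for] a comparison constant growing exponentially in the width.'' The comparison constant in Propositions~\ref{prop:24}--\ref{prop:25} (inherited from the non-intersection probability, cf.\ \eqref{eq:276} and Lemma~\ref{lem:100}) is of size $\exp\!\bigl(O(t^3)\bigr)$ where $t$ is the KPZ-width of the window, not $\exp\!\bigl(O(t)\bigr)$. Since the transversal-fluctuation step forces the localization window to have KPZ-width $\alpha\asymp A(\log\ell)^{1/3}$ (so that $e^{-c\alpha^3}\lesssim\ell^{-1/3}$), a single-window comparison costs $\exp\!\bigl(O(A^3\log\ell)\bigr)=\ell^{O(A^3)}$, which is polynomially large in $\ell$ and cannot be absorbed into the $\ell^{\delta/2}$ of slack. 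There is no way to shrink $\alpha$ without losing the TF estimate, and no $q$ in the Hölder step fixes this, since the $L^q$ constant from Lemma~\ref{lem:100} grows like $\exp\!\bigl(O(q^5t^3)\bigr)$. The paper circumvents this by \emph{never} applying the Brownian comparison to a window of width $\alpha$: it covers the TF window by $O((\log\ell)^{1/3})$ intervals $J_k$ of \emph{unit} KPZ-width (Lemma~\ref{lem:90}), where the comparison constant is $O(1)$, and pays only the $(\log\ell)^{1/3}=\ell^{o(1)}$ union-bound factor.

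A second, related problem is that your intervals $I_j=[x_\star-4w_j,x_\star+4w_j]$ are centered at the random argmax $x_\star$, so ``$f|_{I_j}$'' is not a process on a fixed interval and the event you display is not measurable with respect to the restriction of $f$ to any fixed interval smaller than the whole TF window. Passing to a fixed grid of intervals of width $\sim w_j$ and union bounding costs a factor $\sim\alpha m^{2/3}/w_j$ per scale, which is far too large at the small scales unless it is compensated by the probability that the argmax lands in the given grid cell. Making that compensation precise is exactly what is needed, and it is supplied in the paper by Lemma~\ref{lem:139} (taken from \cite{GH20}), which bounds the \emph{joint} probability that the local argmax lies in a prescribed middle third \emph{and} that there is a near-maximum at a prescribed distance, with a factor $e^{-hR^2r}$ doing the compensating. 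Your proposal contains no analogue of this joint estimate; invoking Propositions~\ref{prop:24}--\ref{prop:25} alone on a random interval does not produce it. These two gaps are exactly the reasons the paper routes through the fixed unit-width decomposition of Lemma~\ref{lem:90}, the GH20 joint estimate in Lemma~\ref{lem:91}, and the separate boundary analysis of Lemma~\ref{lem:92}.
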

Results of the above type are by now standard, and the basic idea is to exploit the local absolute continuity of the routed profile with Brownian motion and do a computation for the latter. In particular, the result \cite[Theorem 1.3]{GH20} is for routed distance profiles in BLPP and is very similar to the above. However, there are still a significant subtle difference because of which we need to provide an argument for Proposition \ref{prop:26}. Namely, \eqref{eq:305} considers all $x$ satisfying the lower bound $|x-\Gamma_{\0}^{\n}(m)|\geq \ell^{2/3-\delta}$ and in particular, this not require any upper bound on $x$; for instance, it also covers considerably large values of $|x-\Gamma_{\0}^{\n}(m)|$, e.g.\ $|x-\Gamma_{\0}^{\n}(m)|\sim \ell^{2/3} \log^{1/3}(\ell)$, which the estimate \cite[Theorem 1.3]{GH20} does not cover since the profile $y\mapsto Z_{\0}^{\n}(y,m)$ cannot be effectively compared to Brownian motion over such large stretches of $y$, in the sense that the associated Radon-Nikodym derivative is rather large.

As mentioned in Section \ref{sec:brown}, in order to prove Proposition \ref{prop:26}, we shall first, in an appendix (Section \ref{sec:brownian-regularity}), obtain a BLPP version of certain recently proven Brownianity estimates \cite{Dau23} for weight profiles in the directed landscape. Subsequently, in another appendix (Section \ref{sec:twin-peaks}), we shall prove Proposition \ref{prop:26} with the help of the above.

\section{Outline of the proofs}
\label{sec:outline-proofs}
In this section, we give a detailed outline of the proofs of all the main results of this paper.
 \subsection{Estimating the expected number of geodesic switches}
 \label{sec:out-fixed}
The first goal is to outline the proof of Theorem \ref{prop:30}. Let us first give a heuristic explanation of why we expect the above to hold-- since the intuition is very discrete in nature, we work with exponential LPP to fix ideas and shall later shift to BLPP, which is the model for which the formal statements of Theorem \ref{prop:30} and Theorem \ref{thm:5} hold. %
 Locally, we consider the quantity
 \begin{equation}
   \label{eq:570}
   S_{-\n}^{\n,[s,t]}= \sum_{r\in [s,t]} |\Gamma_{-\n}^{\n,r}\setminus \Gamma_{-\n}^{\n,r^-}|,
 \end{equation}
which represents geodesic switches in exponential LPP. Note that in \eqref{eq:570}, the sum though seemingly over a continuous set, is in fact discrete since there are only finitely many $r\in [s,t]$ for which the summand is non-zero. Now, let us intuitively discuss why we expect to have
 \begin{equation}
   \label{eq:554}
   \EE S_{-\n}^{\n,[s,t]}\leq n^{5/3+\delta}(t-s).
 \end{equation}
First, by the KPZ 1:2:3 exponents, we know that for any fixed $r$, the geodesic $\Gamma_{-\n}^{\n,r}$ stays in an $O(n^{2/3})$ spatial window around the line $\LL_{-\n}^{\n}$. To advance the heuristic discussion, let us assume that such a geodesic always stays in the set $B_{n^{2/3}}(\LL_{-\n}^{\n})$, which is a set that contains $n^{5/3}$ many lattice points-- thus, we assume that the geodesic is determined by the weights of the above vertices.

 With the above in mind, suppose that we are working with the static exponential LPP model and now, uniformly at random, we resample the weight of a uniform point $\fp\in B_{n^{2/3}}(\LL_{-\n}^{\n})$-- let us locally use $\Gamma_{-\n}^{\n}$ and $\Gamma_{-\n}^{\n,+}$ to denote the geodesic before and after the above resampling. The question now is -- what is the expectation $\EE |\Gamma_{-\n}^{\n,+}\setminus \Gamma_{-\n}^{\n}|$? As we shall explain now, we expect the above to be an $O(1)$, or more formally, an $O(n^\delta)$ quantity. Note that this would suffice for proving \eqref{eq:554} since by the previous paragraph, in time $t-s$, in expectation, we have $O((t-s)n^{5/3})$ many updates in the region $B_{n^{2/3}}(\LL_{-\n}^{\n})$. Thus, we would have
 \begin{equation}
   \label{eq:555}
   \EE S_{-\n}^{\n,[s,t]}\leq  \EE [|\Gamma_{-\n}^{\n,+}\setminus \Gamma_{-\n}^{\n}|]O((t-s)n^{5/3}) \leq (t-s)n^{5/3+\delta}.
 \end{equation}
 
 We now focus on the quantity $\EE |\Gamma_{-\n}^{\n,+}\setminus \Gamma_{-\n}^{\n}|$. For doing so, we shall need some notation-- for a point $p$ such that $-\n\leq p\leq \n$, we define the routed passage time
 \begin{equation}
   \label{eq:659}
   \cZ_{-\n}^{\n}(p)=T_{-\n}^{p}+T_{p}^{\n}-\omega_p,
 \end{equation}
 which we note is simply the weight of the best path from $-\n$ to $\n$ which in addition is forced to go via $p$; note that $\omega_p$ is subtracted to avoid counting it twice. In order to have $\EE |\Gamma_{-\n}^{\n,+}\setminus \Gamma_{-\n}^{\n}|\neq 0$, we must have $|T_{-\n}^{\n}- \cZ_{-\n}^{\n}(\fp)|\leq |\omega_\fp-\omega^+_{\fp}|$
 , where we are using $\omega^+_{\fp}$ to denote the weight obtained after resampling. Since the exponential distribution has light tails, intuitively, we can think of all of $\omega_\fp,\omega^+_{\fp}$ and $|\omega_\fp-\omega^+_{\fp}|$ as $O(1)$ quantities. Thus, in order to have $\EE |\Gamma_{-\n}^{\n,+}\setminus \Gamma_{-\n}^{\n}|\neq 0$, we must have
 \begin{equation}
   \label{eq:548}
   |T_{-\n}^\n- \cZ_{-\n}^{\n}(\fp)|=O(1).
 \end{equation}
 that is, we need to have a ``twin-peak'' event in the static LPP environment. %
Indeed, since $T_{-\n}^{\n}=\max_{q: \phi(q)=\phi(\fp)}\cZ_{-\n}^{\n}(q)$, and since $\fp\notin \Gamma_{-\n}^{\n}$, \eqref{eq:548} expresses that on the line $\{q: \phi(q)=\phi(\fp)\}$, the value of the routed weight profile $q\mapsto \cZ_{-\n}^{\n}(q)$ at the point $q=\fp$ is within $O(1)$ of its global maximum.

 In order to estimate $\EE [|\Gamma_{-\n}^{\n,+}\setminus \Gamma_{-\n}^{\n}|]$, we now do a conditioning on the location of the point $\fp$ with respect to the geodesic $\Gamma_{-\n}^{\n}$; to be specific, for $k\in \NN$, we condition on the event $F_k=\{|\Gamma_{-\n}^{\n}(\phi(\fp)) - \psi(\fp)|=k\}$, which simply asks that $\fp$ be exactly at a spatial distance $k$ from the geodesic. Now, conditional on the above, if we do have $|\Gamma_{-\n}^{\n,+}\setminus \Gamma_{-\n}^{\n}|\neq 0$, then intuitively, by the KPZ 1:2:3 scaling, we should have (see Figure \ref{fig:outlinetwin})
 \begin{figure}
   \centering
   \includegraphics[width=0.6\linewidth]{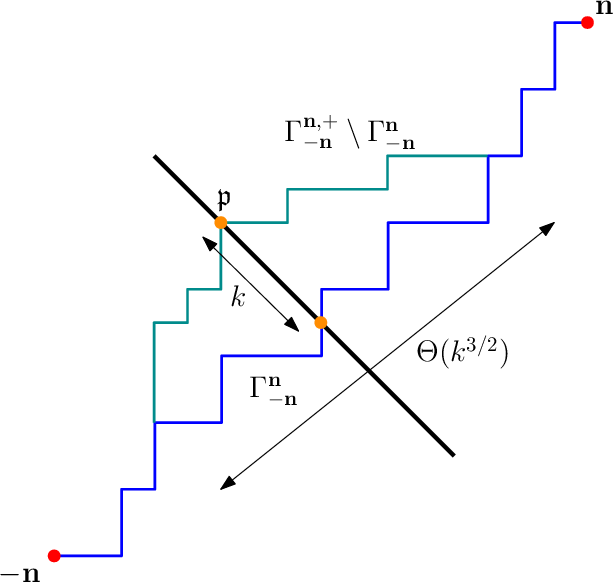}
   \caption{Here, the weight of a point $\mathfrak{p}$ at a horizontal distance $k$ from the geodesic $\Gamma_{-\n}^{\n}$ has been resampled. For the geodesic to undergo a change, that is, in order to have $\Gamma_{-\n}^{\n}\neq \Gamma_{-\n}^{\n,+}$, it is extremely likely that a ``twin-peaks'' event has to occur on the anti-diagonal line passing through $\mathfrak{p}$. That is, we must have $|T_{-\n}^{\n}-\mathcal{Z}_{-\n}^{\n}(\mathfrak{p})|=O(1)$; by heuristic calculations for a random walk conditioned to be positive, we expect the above probability to be $\Theta(k^{-3/2})$. Now, if we indeed have $\Gamma_{-\n}^{\n}\neq \Gamma_{-\n}^{\n,+}$, then by the KPZ 1:2:3 scaling, we expect $|\Gamma_{-\n}^{\n,+}\setminus \Gamma_{-\n}^{\n}|=\Theta(k^{3/2})$. As a result, we should have $\EE |\Gamma_{-\n}^{\n,+}\setminus \Gamma_{-\n}^{\n}|= \Theta( k^{-3/2}\times k^{3/2})=\Theta(1)$, which we note does not depend on $k$.}
   \label{fig:outlinetwin}
 \end{figure}
 \begin{equation}
   \label{eq:556}
   |\Gamma_{-\n}^{\n,+}\setminus \Gamma_{-\n}^{\n}|\approx k^{3/2}.
 \end{equation}
 Thus, in view of \eqref{eq:548}, we should have
\begin{align}
  \label{eq:549}
  \EE [|\Gamma_{-\n}^{\n,+}\setminus \Gamma_{-\n}^{\n}|\big\lvert F_k, \phi(\fp)]\leq k^{3/2}\PP(|T_{-\n}^{\n}- \cZ_{-\n}^{\n}(\fp)|=O(1)\lvert F_k,\phi(\fp))
\end{align}
Now for simplicity of notation, we assume $\phi(\fp)=0$-- it shall be evident later that the upcoming discussion is valid verbatim for all $\phi(\fp)$ bounded away and between $-2n$ and $2n$. With $\mathfrak{f}_{-\n}^{\n}$ denoting the one dimensional profile defined by $\mathfrak{f}_{-\n}^{\n}(x)=\cZ_{-\n}^{\n}((x,-x))$, we have
\begin{align}
  \label{eq:552}
  &\PP(|T_{-\n}^{\n}- \cZ_{-\n}^{\n}(\fp)|=O(1)\lvert F_k,\phi(\fp))\nonumber\\
  &\leq  \PP( \max \mathfrak{f}_{-\n}^{\n}(x) - \mathfrak{f}_{-\n}^{\n}( \argmax \mathfrak{f}_{-\n}^{\n} + k)=O(1)).
\end{align}
Thus, the task now is to estimate the probability above, where the weight profile $\mathfrak{f}_{-\n}^{\n}$ admits a near maximum at a distance $k$ from its global maximizer. Unfortunately, for exponential LPP, the local behaviour of such weight profiles is not well-understood and as a result, it appears difficult to estimate the above probability. In BLPP and in the directed landscape, such routed distance profiles (see Proposition \ref{prop:52}) have locally Brownian behaviour owing to the Brownian Gibbs property (see Section \ref{sec:brown}) and thus one might expect that for the discrete exponential LPP model, the profiles $\mathfrak{f}_{-\n}^{\n}$ above to locally behave as a simple random walk and thereby, the profile $x\mapsto \mathfrak{f}_{-\n}^{\n}(\argmax \mathfrak{f}_{-\n}^{\n} + x)$ to behave as a simple random walk around its maximum. By a calculation for walks conditioned to stay positive, it can be computed that the random walk probability above is $\Theta(k^{-3/2})$. As a result of this calculation and \eqref{eq:549}, it can be obtained that $\EE [|\Gamma_{-\n}^{\n,+}\setminus \Gamma_{-\n}^{\n}|\big\lvert F_k] \leq k^{3/2}\times k^{-3/2} = O(1)$ for all $k$ and thereby $\EE [|\Gamma_{-\n}^{\n,+}\setminus \Gamma_{-\n}^{\n}|]=O(1)$ which is what we set out to show.

As mentioned earlier, the above picture is not rigorous since the above comparison of routed weight profiles to random walks is not available for exponential LPP. For this reason, we shall instead work with BLPP where, due to the Brownian Gibbs property, a corresponding comparison to Brownian motion can be made. This is the primary reason why Theorems \ref{thm:3}, \ref{thm:5} are proved in the setting of BLPP as opposed to exponential LPP.

Further, we caution that the simplistic picture presented above is heuristic and the actual argument proceeds considerably differently due to additional difficulties-- the primary among them being the misleading expression \eqref{eq:556}. Indeed, in the notation used above, it is possible that conditional on $F_k$ and on $\{|\Gamma_{-\n}^{\n,+}\setminus \Gamma_{-\n}^{\n}|\neq 0\}$, the quantity $k^{-3/2}|\Gamma_{-\n}^{\n,+}\setminus \Gamma_{-\n}^{\n}|$ has heavy enough tails such that its expectation grows as a power of $n$. In order to handle this, we take a different route where we do an additional averaging argument using that the geometry of the excursion $\Gamma_{-\n}^{\n,+}\setminus \Gamma_{-\n}^{\n}$ cannot be too ``thin''-- for this, in part, we use estimates from \cite{GH20} (see Proposition \ref{prop:10}).
\subsection{Upper bounding the size of the hitset by geodesic switches}
\label{sec:out-ubhitset}
We now outline the proof of Theorem \ref{thm:6}; first let us discuss the simpler case when one looks at the hitset between the points $-\n$ and $\n$ as opposed to the segments $L_{-n}$ and $L_{n}$; also, we set $\gamma=1/2$. The following relation between hitsets and switches shall form the backbone of our strategy:
 \begin{equation}
   \label{eq:544}
   |\hitset_{-\n}^{\n,[s,t]}([\![-n/2,n/2]\!]_\RR)|\leq |\hitset_{-\n}^{\n,\{s\}}([\![-n/2,n/2]\!]_\RR)|+ \switch_{-\n}^{\n,[s,t]}([\![-n/2,n/2]\!]_\RR).
 \end{equation}
We refer the reader to Figure \ref{fig:coarse} for a depiction of an inequality of the above type. Indeed, the reasoning behind the above inequality is as follows-- any interval $\{m\}_{[i,i+1]}$ that is hit by a geodesic from $-\n$ to $\n$ during the dynamical time interval $[s,t]$ must either already be hit by such a geodesic in the static environment $T^s$ or there must exist a time $r\in (s,t]$ for which $(i,m)\in \coarse(\Gamma_{u}^{v,r})\setminus \coarse(\Gamma_{u}^{v,r^-})$. In other words, if we define the field $(i,m)\mapsto H_n(i,m)$ by
 \begin{equation}
   \label{eq:538}
   H_n(i,m)=\#\{r\in [s,t]: (i,m)\in \coarse(\Gamma_{u}^{v,r})\setminus \coarse(\Gamma_{u}^{v,r^-})\},
 \end{equation}
 then, since $H_n$ is integer valued, we have the identity $\mathbbm{1}(H_n(i,m)>0)\leq H_n(i,m)$ for all $(i,m)$. Now, one can take an expectation on both sides of the above and additionally summing up over $(i,m)\in [\![-n/2,n/2]\!]_{\ZZ}$ would yield \eqref{eq:544}.

Now, by using that $\Gamma_{-\n}^{\n,s}$ is a staircase from $-\n$ to $\n$, it is easy to see that there is a fixed $C>0$ for which one has the deterministic bound
 \begin{equation}
   \label{eq:545}
   \EE|\hitset_{-\n}^{\n,\{s\}}([\![-n/2,n/2]\!]_\RR)|\leq Cn.
 \end{equation}
 Combining the above with \eqref{eq:544} and Theorem \ref{prop:30} yields that for any fixed $\varepsilon>0$, for all $n$ large enough, we have
 \begin{equation}
   \label{eq:2}
   \EE|\hitset_{-\n}^{\n,[s,t]}([\![-n/2,n/2]\!]_\RR)|\leq n^{1+\varepsilon}+n^{5/3+\varepsilon}(t-s),
 \end{equation}
and this is a simplified version of Theorem \ref{thm:6}, where the segments $L_{-n},L_{n}$ are replaced by $-\n,\n$ respectively.

 \subsection{Accessing geodesics between on-scale segments by those between sprinkled Poissonian points}
\label{sec:out-segment}
Having obtained \eqref{eq:2}, the goal now is to discuss how one could upgrade to Theorem \ref{thm:6}.
That is, we want to go from a point-to-point estimate to a corresponding estimate between two segments of length $n^{2/3}$ each-- the intuition being that of geodesic coalescence. Indeed, intuitively (see \cite[Theorem 3.10]{BHS22} for a statement in exponential LPP), the set of all geodesics from $L_{-n}$ to $L_{n}$ coalescence into $O(1)$ many segments in the middle region $[-n/2,n/2]_{\RR}$ and thus, up to constants, we expect the point-to-point estimate to hold unchanged modulo an extra multiplicative constant. However, the above is not rigorous since the $O(1)$ geodesics above could possibly be highly exceptional and thus not behave at all as geodesics between fixed points.

Now, for a moment, let us pretend that there exists a deterministic family of pairs of points $\{(p_i,q_i)\}_{i\in \cI}$ with $|\cI|\leq n^{o(1)}$ for which we are assured that for each $r\in [s,t]$, we always have
\begin{equation}
  \label{eq:720}
  \bigcup_{p\in L_{-n},q\in L_{n}}\Gamma_p^{q,r}\cap [\![-n/2,n/2]\!]_{\RR}\subseteq \bigcup_{i\in \cI}\Gamma_{p_i}^{q_i,r}\cap [\![-n/2,n/2]\!]_{\RR}.
\end{equation}
If the above holds, then we can directly upgrade \eqref{eq:2} to Theorem \ref{thm:6} as we would a.s.\ have
\begin{equation}
  \label{eq:719}
  |\hitset_{L_{-n}}^{L_n,[s,t]}([\![-n/2,n/2]\!]_{\RR})|\leq \sum_{p\in \cI}|\hitset_{p_i}^{q_i,[s,t]}([\![-n/2,n/2]\!]_{\RR})|,
\end{equation}
and could take expectations of both sides, where we note that the right hand side now only considers point-to-point hitsets. While it is too much to hope that a \emph{deterministic} family $\{(p_i,q_i)\}_{i\in \cI}$ as above exists, it turns out that if one instead sprinkles the points $\{(p_i,q_i)\}_{i\in \cI}$ according to a appropriate Poisson process that is \emph{independent} of the dynamical BLPP, then with superpolynomially high probability, a version of \eqref{eq:720} does hold, and this is enough for our application. In the remainder of this section, we discuss the above in more detail, and in order to elucidate the ideas, we shall again work in the simpler setting of dynamical exponential LPP instead of dynamical BLPP. Correspondingly, instead of the sets $L_n$, we shall work with $\ell_{n}=\{z: \phi(z)=2n, |\psi(z)|\leq |n|^{2/3}\}$.

Suppose that we want to control the cardinality of the set
\begin{equation}
  \label{eq:722}
  \bigcup_{p\in \ell_{-n}q\in \ell_{n},r\in [0,1]}(\Gamma_p^{q,r}\cap \{z: |\phi(z)|\leq n\}),
\end{equation}
and want to use the strategy alluded to in the previous paragraph for doing so.
The key ingredient is the following coalescence estimate for exponential LPP proved recently in the work \cite{BB23}; note that in the following, we fix $\mu\in (0,1)$ and use $S_{n,\mu}$ to denote the set of all $(p,q)$ such that $\phi(p)\in [-2n,-3n/2]$ and $\phi(q)\in [3n/2,2n]$ with $\slope(p,q)\in (\mu^{-1},\mu)$. %

\begin{proposition}[{\cite[Proposition 53]{BB23}}]
  \label{prop:45}
For any $(p,q)\in S_{n,\mu}$, let $\underline{V}^{\mathrm{ELPP}}_n(p,q)$ denote the set of points $z$ which are to the right of $\Gamma_p^q$ and satisfy $\Gamma_z^q\cap \{z: |\phi(z)|\leq n\}=\Gamma_p^q\cap \{z: |\phi(z)|\leq n\}$. Then there exist positive constants $C,c,K,\theta,\alpha$ such that for all $\varepsilon^{\alpha} n\geq K$, we have
  \begin{equation}
    \label{eq:564}
    \PP(|\underline{V}^{\mathrm{ELPP}}_n(p,q)|\leq \varepsilon n^{5/3})\leq Ce^{-c\varepsilon^{-\theta}}.
  \end{equation}
\end{proposition}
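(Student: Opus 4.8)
The final statement to prove is Proposition \ref{prop:45}, a coalescence estimate for exponential LPP taken from \cite{BB23}. Since it is quoted verbatim from another paper, I will outline the standard approach for proving a lower bound on the volume of the basin of attraction of a geodesic.

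\textbf{Overall strategy.} The plan is to show that the set $\underline{V}^{\mathrm{ELPP}}_n(p,q)$ of points $z$ to the right of $\Gamma_p^q$ whose geodesic $\Gamma_z^q$ coalesces with $\Gamma_p^q$ inside the strip $\{|\phi(z)|\le n\}$ is typically macroscopic, of volume $\Theta(n^{5/3})$. The natural route is: (i) identify a deterministic region $\mathcal{R}$ of area $\Theta(n^{5/3})$ lying just to the right of $\LL_p^q$ in the bulk strip, which should contain most of $\underline{V}^{\mathrm{ELPP}}_n(p,q)$ with high probability; (ii) for a point $z\in\mathcal R$, use the planarity/ordering of geodesics together with a coalescence mechanism: if $\Gamma_z^q$ ever touches $\Gamma_p^q$ before exiting the strip, then by uniqueness and the ordering of geodesics they must agree from that point onward, hence $z$ is in the basin; (iii) estimate the probability that $\Gamma_z^q$ fails to touch $\Gamma_p^q$ inside the strip. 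The failure event forces either $\Gamma_z^q$ or $\Gamma_p^q$ to have an atypically large transversal fluctuation, or for the two geodesics to stay ordered and separated across a whole macroscopic window, which is a rare event controlled by transversal-fluctuation and coalescence-time estimates (these are the BLPP analogues of Propositions \ref{prop:38}, \ref{prop:53} in the exponential setting, available in the references of \cite{BB23}). A union bound over a net of points $z$, combined with the tail $Ce^{-c\varepsilon^{-\theta}}$ coming from the stretched-exponential transversal fluctuation bounds evaluated at scale $\varepsilon$, gives \eqref{eq:564}.

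\textbf{Key steps in order.} First, set up the geometry: work with the line $\LL_p^q$, which passes through the strip $\{|\phi(z)|\le n\}$ at slope bounded by $\mu$, and fix the parallelogram $\mathcal R$ of horizontal width $\sim \varepsilon n^{2/3}$ to the right of $\LL_p^q$ and length $\sim n$ along the strip, so $\mathrm{area}(\mathcal R)\asymp \varepsilon n^{5/3}$. Second, show $\Gamma_p^q$ stays within $O(n^{2/3})$ of $\LL_p^q$ with probability $1-Ce^{-cn^{\text{const}}}$ using a transversal-fluctuation estimate; on this event $\Gamma_p^q$ is well inside the strip. Third — the heart of the argument — fix $z\in\mathcal R$ and show $z\in\underline V^{\mathrm{ELPP}}_n(p,q)$ with high probability: here one uses that $z$ is to the right of $\Gamma_p^q$, that $\Gamma_z^q$ and $\Gamma_p^q$ share the endpoint $q$, and that two geodesics with a common endpoint which cross must coalesce (uniqueness + planarity). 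Since $z$ is only $O(\varepsilon n^{2/3})$ to the right of $\LL_p^q$, a mesoscopic transversal fluctuation bound (scale $\varepsilon n^{2/3}$ over length $\sim n$) forces $\Gamma_z^q$ to be pushed to the left of $\Gamma_p^q$ at some point inside the strip with probability $1-Ce^{-c\varepsilon^{-\theta}}$, so coalescence occurs. Fourth, take a union bound over an $O(1)$-net of $z$'s in $\mathcal R$ and upgrade to a volume statement: on the good event, at least a constant fraction of $\mathcal R$ lies in the basin, giving $|\underline V^{\mathrm{ELPP}}_n(p,q)|\ge c\varepsilon n^{5/3}$, which after relabeling $\varepsilon$ yields \eqref{eq:564}.

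\textbf{Main obstacle.} The delicate point is step three: controlling the probability that $\Gamma_z^q$ does \emph{not} coalesce with $\Gamma_p^q$ inside the strip uniformly over all $z\in\mathcal R$ and with the correct $Ce^{-c\varepsilon^{-\theta}}$ tail. The naive transversal-fluctuation bound only says each geodesic is near its line; the subtlety is that $z$ is at distance $\sim\varepsilon n^{2/3}$, which is \emph{small} compared to the typical $n^{2/3}$ scale, so one cannot simply say the geodesics are far apart — rather one must argue that a geodesic started at such a point is overwhelmingly likely to wander to the left of $\Gamma_p^q$ and get absorbed, which requires a two-sided / local fluctuation estimate and careful handling of the boundary of $\mathcal R$ near $p$ and near $q$. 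This is precisely why \cite{BB23} needs the full strength of modern coalescence and transversal-fluctuation technology; for the present paper we simply invoke their Proposition 53 as a black box.
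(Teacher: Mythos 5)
The paper itself does not prove Proposition~\ref{prop:45}; it is a verbatim citation of \cite[Proposition 53]{BB23}, and the companion discussion in Section~\ref{sec:app-vol} only outlines the substitutions needed to port the \emph{BLPP} analogue (Proposition~\ref{prop:11}) from the same source. You correctly flag that the result is being used as a black box, but your sketch of how it \emph{would} be proved diverges from what \cite{BB23} actually does, and contains a step that would not go through.

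The gap is in your ``step three.'' You claim that for a point $z$ lying $\sim\varepsilon n^{2/3}$ to the right of $\Gamma_p^q$, ``a mesoscopic transversal fluctuation bound \dots forces $\Gamma_z^q$ to be pushed to the left of $\Gamma_p^q$ at some point inside the strip with probability $1-Ce^{-c\varepsilon^{-\theta}}$.'' This does not work on two counts. First, by planar ordering $\Gamma_z^q$ never crosses to the left of $\Gamma_p^q$; what one wants is that they \emph{coalesce} (touch and then agree) before exiting the strip. Second and more fundamentally, the transversal fluctuation estimates available (Propositions~\ref{prop:38}, \ref{prop:53} and their analogues) are \emph{upper} bounds on the deviation of a geodesic from a line; they cannot by themselves force $\Gamma_z^q$ to touch $\Gamma_p^q$. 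To argue for coalescence one needs a lower bound on the ``sideways drift'' of $\Gamma_z^q$ toward $\Gamma_p^q$, and this is precisely the hard part. The mechanism in \cite{BB23} (summarized in the paper's Section~\ref{sec:app-vol}) is quite different from what you describe: one conditions on the geodesic $\Gamma_p^q$, uses the FKG inequality to show the off-geodesic environment is stochastically dominated, and then builds a \emph{barrier} that pushes $\Gamma_z^q$ into $\Gamma_p^q$; this is combined with moderate deviation ``parallelogram'' lower-tail estimates for constrained passage times (\cite[Theorem 4.2(iii)]{BGZ19}) and a Gaussian-tail regularity estimate for the weight/Busemann profile (\cite[Lemma 49]{BB23}). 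None of these ingredients appears in your outline, and your union bound over a net of points $z$ would not produce the stretched-exponential rate $e^{-c\varepsilon^{-\theta}}$ without them.
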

\begin{figure}
  \centering
  \includegraphics[width=0.6\linewidth]{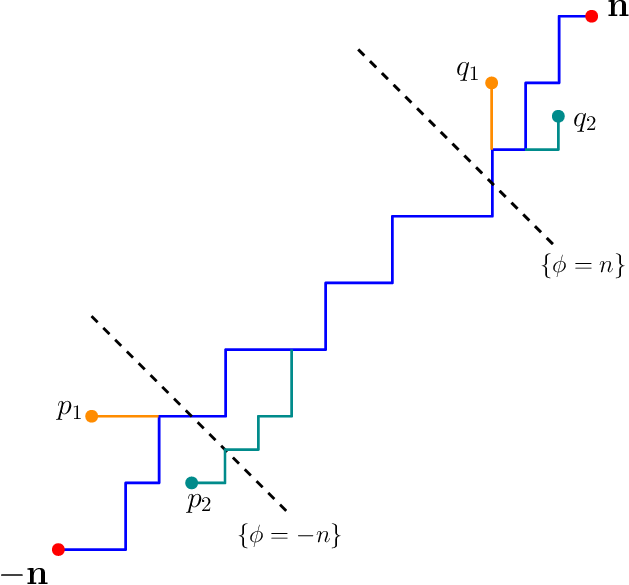}
  \caption{Here, we have $(p_1,q_1)\in \basin_n^{\mathrm{ELPP}}(\Gamma_{-\n}^{\n})$ since the geodesic $\Gamma_{p_1}^{q_1}$ only disagrees with $\Gamma_{-\n}^{\n}$ outside the region between the dashed lines. This should be contrasted with the pair $(p_2,q_2)$ which does not belong to the set $\basin_n^{\mathrm{ELPP}}(\Gamma_{-\n}^{\n})$. With some work, Proposition \ref{prop:45} can be used to show that $\basin_n^{\mathrm{ELPP}}(\Gamma_{-\n}^{\n})\geq \varepsilon^2 n^{10/3}$ with stretched exponentially high probability in $\varepsilon^{-1}$.}
  \label{fig:outlinebasin}
\end{figure}
The utility of the above estimate is that, with some effort, it translates to a bound on the `basin of attraction' of a geodesic, which we now define. For $(p,q)\in S_{n,\mu}$, we define $\basin_n^{\mathrm{ELPP}}(\Gamma_p^q)$ as the set of points $(p',q')\in S_{n,\mu}$ such that
\begin{equation}
  \label{eq:721}
  \Gamma_{p'}^{q'}\cap \{z: |\phi(z)|\leq n\}=\Gamma_p^q\cap \{z: |\phi(z)|\leq n\}.
\end{equation}
We refer the reader to Figure \ref{fig:outlinebasin} for an illustration of the above definition. With some work (see Proposition \ref{prop:12} for the corresponding BLPP result), it can be shown that for all $(p,q)\in S_{n,\mu}$, as long as $\varepsilon \geq n^{-\zeta}$ for a constant $\zeta$, we have
\begin{equation}
  \label{eq:565}
  \PP(|\basin_n^{\mathrm{ELPP}}(\Gamma_p^q)|\leq \varepsilon^2 n^{10/3})\leq Ce^{-c\varepsilon^{-\theta}}.
\end{equation}
The above estimate is very useful since with superpolynomially high probability (say if we take $\varepsilon=n^{-\delta}$), it yields a large volume set of pairs $\basin_n^{\mathrm{ELPP}}(\Gamma_p^q)$ such that geodesics between them coalesce with the geodesic $\Gamma_p^q$ in the central region. In fact, since the decay in \eqref{eq:565} is sufficiently rapid, by a simple union bound argument, one can obtain a version of \eqref{eq:565} which holds uniformly in the dynamics, in the sense that there is an event $\cE_n$ with $\PP(\cE_n)\geq 1-Ce^{-cn^{\theta\delta}}$, on which we have
\begin{equation}
  \label{eq:566}
  |\basin_n^{\mathrm{ELPP}}(\Gamma_p^{q,r})|\geq n^{10/3-2\delta}
\end{equation}
for all $p\in \ell_{-n}, q\in \ell_n$ and $r\in [0,1]$. %
On this event $\cE_n$, the idea is to use a Poisson process of typical pairs to access all geodesics corresponding to all the geodesics between points in $\ell_{-n}$ and $\ell_n$. Indeed, let $\cQ_{n}^{\mathrm{ELPP}}$ be a Poisson point process on $\ZZ^2\times \ZZ^2$ with intensity $n^{-10/3+3\delta}$ which is independent of the dynamical exponential LPP. By the properties of Poisson processes, for any fixed set $A\subseteq \ZZ^2\times \ZZ^2$, $|\cQ_n^{\mathrm{ELPP}}\cap A|$ is distributed as $\mathrm{Poi}(n^{-10/3+3\delta}|A|)$. As a result, conditional on the event $\cE_n$, for each $p\in \ell_{-n},\ell_{n},r\in [0,1]$, we have the stochastic domination
\begin{equation}
  \label{eq:567}
  |\cQ_n^{\mathrm{ELPP}}\cap \basin_n^{\mathrm{ELPP}}(\Gamma_p^{q,r})|\stackrel{\mathrm{S.D.}}{\geq} \mathrm{Poi}(n^{\delta}),
\end{equation}
where we note that the $n^\delta$ above is obtained by multiplying $n^{10/3-2\delta}$ with $n^{-10/3+3\delta}$. As a result, by using the tails of the Poisson distribution along with a union bound over $p\in \ell_{-n},q\in \ell_n$, we have for some constants $C,c$
\begin{equation}
  \label{eq:568}
\PP(  \cQ_n^{\mathrm{ELPP}}\cap \basin_n^{\mathrm{ELPP}}(\Gamma_p^{q,r})\neq\emptyset\textrm{ for all } p\in \ell_{-n}, q\in \ell_{n},r\in [0,1]\big\lvert \cE_n)\geq 1-Ce^{-cn^{\delta}}.
\end{equation}
\begin{figure}
  \centering
  \includegraphics[width=0.7\linewidth]{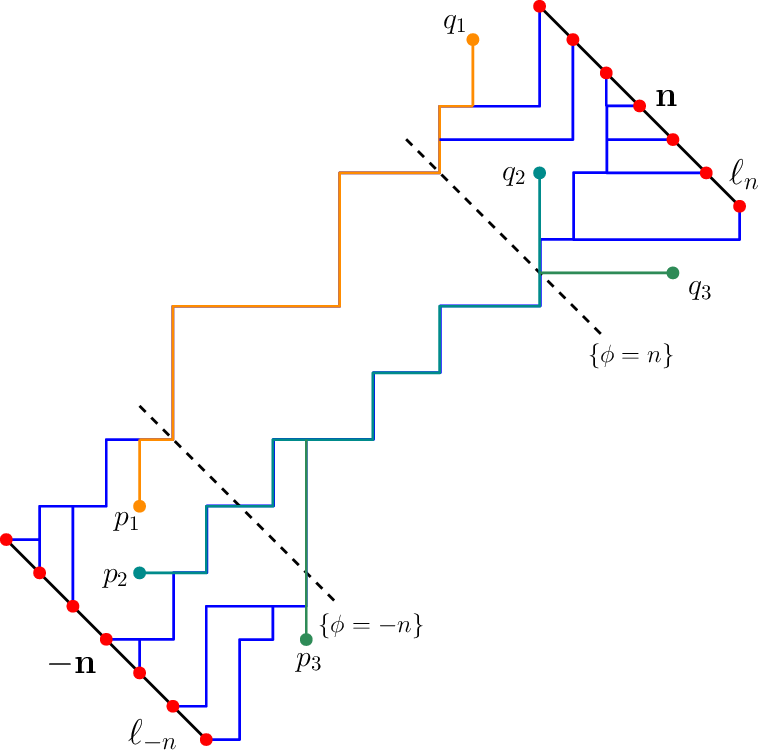}
  \caption{Here, the points $(p_1,q_1), (p_2,q_2), (p_3,q_3)$ all belong to the Poisson process $\cQ_n^{\mathrm{ELPP}}$, and we depict geodesics for $T^0$ between the points $u\in \ell_{-n}$ and $u+2\n\in \ell_{n}$. Here, the high probability event from \eqref{eq:568} occurs and thus the portion of all such geodesics $\Gamma_u^{u+2\n,0}$ in between the dotted lines is entirely covered by the union $\Gamma_{p_1}^{q_1,0}\cup \Gamma_{p_2}^{q_2,0}\cup\Gamma_{p_3}^{q_3,0}$.}
  \label{fig:outlinepoisson}
\end{figure}
Locally, we use $\cA_n$ to denote the event considered above. Now, by the definition of the set $\basin_n^{\mathrm{ELPP}}(\Gamma_p^q)$, on the event $\cA_n\cap \cE_n$, for any $p\in \ell_{-n},q\in \ell_{n}$, there exists $(p',q')\in \cQ_n^{\mathrm{ELPP}}\cap S_{n,\mu}$ for which \eqref{eq:721} holds (see Figure \ref{fig:outlinepoisson}). Further, by a transversal fluctuation argument, we can further assume on a high probability event $\mathrm{Tran}_{n}$ that $p',q'\in B_{n^{2/3+\delta}}(\LL_{-\n}^{\n})$. As a result, on the event $\cA_n\cap \cE_n\cap \mathrm{Tran}_{n}$, we have
  \begin{equation}
  \label{eq:723}
  \bigcup_{p\in \ell_{-n}q\in \ell_{n},r\in [0,1]}(\Gamma_p^q\cap \{z: |\phi(z)|\leq n\})\subseteq \bigcup_{(p',q')\in \cQ_n^{\mathrm{ELPP}}\cap S_{n,\mu}\cap B_{n^{2/3+\delta}}(\LL_{-\n}^{\n})^2} (\Gamma_p^q\cap \{z: |\phi(z)|\leq n\}).
\end{equation}
Finally, note that since $\cQ_n^{\mathrm{ELPP}}$ is a Poisson process of rate $n^{-10/3+3\delta}$, and since we have the inequality $|S_{n,\mu}\cap B_{n^{2/3+\delta}}(\LL_{-\n}^{\n})^2|\leq n^{10/3+2\delta}$, $|\cQ_n^{\mathrm{ELPP}}\cap B_{n^{2/3+\delta}}(\LL_{-\n}^{\n})^2|$ should typically contain at most $O(n^{5\delta})$ many pairs of points. Thus, we have obtained the exponential LPP version of the family of $n^{o(1)}$ many independently sprinkled points that were alluded to in the discussion just before \eqref{eq:722}.

The strategy presented above can be viewed as a general tool which could be potentially useful in any setting where one knows how to prove an estimate that holds for the geodesic between fixed points and wants to upgrade it to one that holds uniformly for all geodesics between on-scale regions. To summarise, one first independently sprinkles a Poissonian family of pairs and then uses the result Proposition \ref{prop:45} to cover the central portion of all geodesics using geodesics only between the sprinkled points, which can be controlled by using the point-to-point estimate that one already knows how to obtain.

Finally, we note that in the above presentation, we worked with exponential LPP to make the ideas easier to follow. In the actual proofs, we shall be working with BLPP. Since the result Proposition \ref{prop:45} from \cite{BB23} is in the context of exponential LPP, we shall first need to state an analogous result for BLPP. This is stated as Proposition \ref{prop:11}, and in an appendix (Section \ref{sec:app-vol}), we shall give a short discussion of how the proof from \cite{BB23} directly adapts to yield this BLPP statement.

\subsection{The relation between the hitset and $\dim \scT^\theta$}
\label{sec:relat-betw-hits}
We now discuss the proof of Theorem \ref{thm:5}-- we shall not separately discuss Theorem \ref{thm:3} here since the core ideas in the proofs of both these results are the same. Further, we shall discuss Theorem \ref{thm:5} for the case $\theta=1$, and the same proof technique applies to other values of $\theta$ as well. By a countable union argument, it suffices to consider the set $\scT_\0^1$ defined as the set of times $t\in [0,1]$ at which there exists a bigeodesic $\Gamma^t$ additionally satisfying $\0\in \coarse(\Gamma^t)$ and show that $\dim \scT_\0^1=0$ almost surely. Further, by KPZ scaling, one might expect that for any $t\in \scT_\0^1$ and the corresponding bigeodesic $\Gamma^t$,
\begin{equation}
  \label{eq:658}
  |\Gamma^t(m)-m|=O(m^{2/3})
\end{equation}
holds for large values of $|m|$, where we recall that $\Gamma^t(m)$ is simply the largest value for which $(\Gamma^t(m),m)\in \Gamma^t$. Indeed, as is stated in Proposition \ref{prop:22}, \eqref{eq:658} can be made rigorous, and this is done in an appendix (Section \ref{sec:direction}) by uniformly controlling the transversal fluctuation of geodesics as the dynamics proceeds.

\begin{figure}
  \centering
  \includegraphics[width=0.5\linewidth]{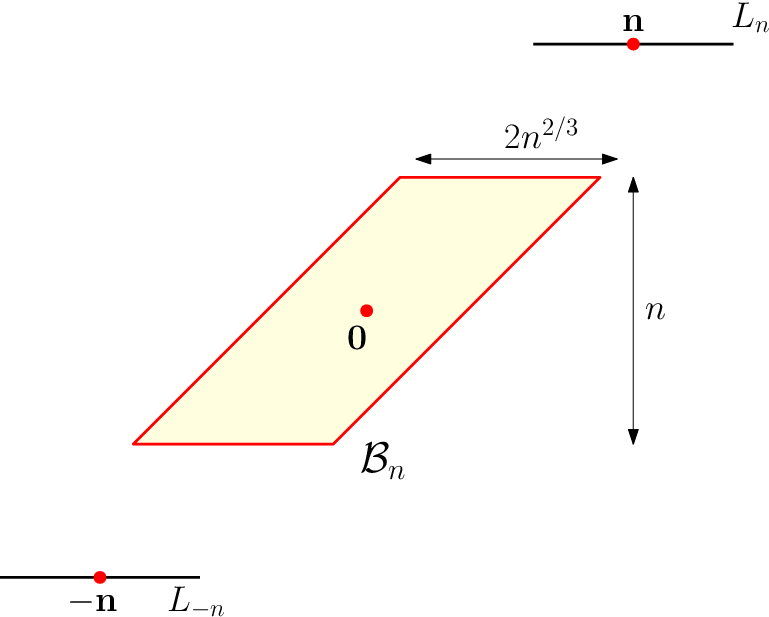}
  \caption{Here, $L_{-n}$ and $L_{n}$ are horizontal intervals of length $2n^{2/3}$ around the points $-\n$ and $\n$ respectively. Further, $\cB_n$ is an on-scale parallelogram around the point $\0$. Note that $|\cB_n|= \Theta(n^{5/3})$.}
  \label{fig:box}
\end{figure}
Now, for $n\in \ZZ$, consider the line segment $L_n$ defined by $L_n=\{n\}_{[n-|n|^{2/3},n+|n|^{2/3}]}$ (see Figure \ref{fig:box}). In view of the previous paragraph, for $s<t$, we now need to obtain an estimate on the quantity $\PP(\0\in \Gamma_u^{v,r} \textrm{ for some } r\in [s,t], u\in L_{-n}, v\in L_{n})$. It is easy to see that this is upper bounded by the quantity
\begin{equation}
  \label{eq:536}
    \PP(\0\in \coarse(\Gamma_u^{v,r}) \textrm{ for some } r\in [s,t], u\in L_{-n}, v\in L_{n}),
  \end{equation}
  and this is the quantity that we shall bound instead. Now, at an intuitive level, it is plausible that if we replace the point $\0$ in the above by any other point $p\in \ZZ^2$ which is in an on-scale $n^{2/3}\times n$ parallelogram $\cB_n$ around $\0$, then the above quantity has the same order-- we shall simply define $\cB_n= B_{n^{2/3}}(\LL_{-\n}^{\n})\cap [\![-n/2,n/2]\!]_\RR$ (see Figure \ref{fig:box}). %

In view of the above discussion, it is plausible that we would have
  \begin{align}
    \label{eq:527}
    &\PP(\0\in \coarse(\Gamma_u^{v,r}) \textrm{ for some } r\in [s,t], u\in L_{-n}, v\in L_{n})\nonumber\\
    &\sim \frac{1}{|\ZZ^2\cap\cB_n|}\sum_{p\in \ZZ^2\cap \cB_n} \PP(p\in \coarse(\Gamma_u^{v,r}) \textrm{ for some } r\in [s,t], u\in L_{-n}, v\in L_{n})\nonumber\\
    &\leq \frac{1}{|\ZZ^2\cap \cB_n|}\EE|\hitset_{L_{-n}}^{L_n,[s,t]}([\![-n/2,n/2]\!]_\RR)|\nonumber\\
    &\lesssim n^{-5/3}(n^{1+\delta}+n^{5/3+\delta}(t-s)),
  \end{align}
where to obtain the above, we have used Theorem \ref{thm:6} with $\gamma=1/2$ and have also used that $|\ZZ^2\cap \cB_n|=\Theta(n^{5/3})$. 
Applying the above with $[s,t]=[0,n^{-2/3}]$ yields $\PP(\0\in \coarse(\Gamma_u^{v,r}) \textrm{ for some } r\in [0,n^{-2/3}], u\in L_{-n}, v\in L_{n})=O(n^{-2/3+\delta})$. Thus, by covering $[0,1]$ by $n^{2/3}$ many intervals $I_i$ each of size $n^{-2/3}$, then in expectation, we would have $O(n^\delta)$ many intervals $I_i$ for which we have $\{\0\in \coarse(\Gamma_u^{v,r}) \textrm{ for some } r\in I_i, u\in L_{-n}, v\in L_{n}\}$, thereby yielding $\dim \scT_\0^1=0$ and proving Theorem \ref{thm:5}. 
\section{Geodesic switches in dynamical BLPP}
\label{sec:switches}
The goal of this section is to prove Theorem \ref{prop:30}. We shall do this by carefully tracking the contribution to the quantity $\EE[\switch_{\0}^{\n,[s,t]}([\![\beta n, (1-\beta)n]\!]_\RR)]$ originating from different scales and locations, and we now introduce some notation to make this precise. For a bounded set $A\subseteq \RR^2$, $0<\ell_1<\ell_2$, and $m\in \ZZ$, we say that the event $\loc^{\ell_1,\ell_2,m}(A)$ occurs, if we have
\begin{equation}
  \label{eq:593}
  A\subseteq [m-\ell_2,m+\ell_2]_{\RR}, |A|_{\vt}\in [\ell_1,\ell_2],
\end{equation}
where in the above, we use the notation $|A|_{\vt}$ defined at the end of Section \ref{sec:intro}. Now, for $n\in \NN$, $m\in [\![0,n]\!]$, and $1\leq \ell \leq n$, we consider the quantity (see Figure \ref{fig:loc})
\begin{figure}
  \centering
  \includegraphics[width=0.7\linewidth]{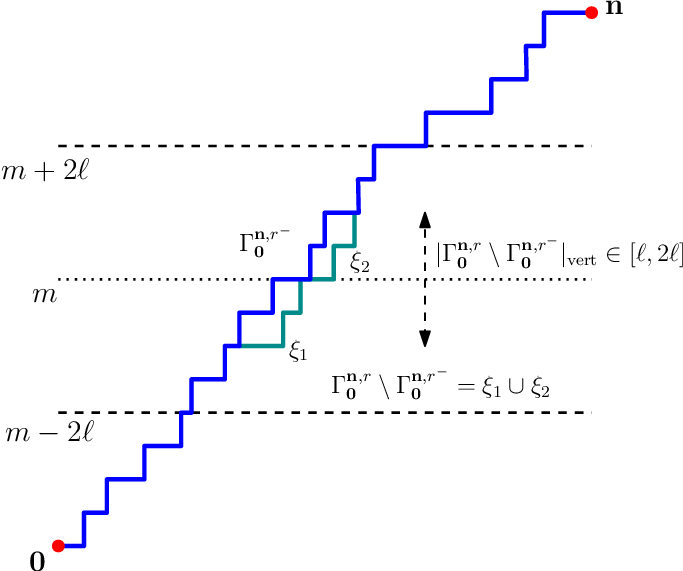}
  \caption{Here, we have a $r\in \cT_{\0}^{\n,[0,1]}$ for which the event $\loc^{\ell,2\ell,m}( \Gamma_{\0}^{\n,r}\setminus\Gamma_{\0}^{\n,r^-})$ does occur. The blue path is the geodesic $\Gamma_{\0}^{\n,r^-}$ while the set $\Gamma_{\0}^{\n,r}\setminus \Gamma_{\0}^{\n,r^-}$ is depicted in cyan. Note that here, $\overline{\Gamma_{\0}^{\n,r}\setminus \Gamma_{\0}^{\n,r^-}}$ is not an excursion about $\Gamma_{\0}^{\n,r^-}$ but is (see Lemma \ref{lem:116}) the union of two excursions $\xi_1,\xi_2$ and necessarily, at least one of the events $\loc^{\lceil\ell/2\rceil,2\ell,m}(\xi_1)$ and $\loc^{\lceil\ell/2\rceil,2\ell,m}(\xi_2)$ must occur (see Lemma \ref{lem:130}). We note that in the figure, $\ell$ is comparable to $n$, but we are also interested in the case when $\ell$ is large but much smaller than $n$.}
  \label{fig:loc}
\end{figure}
\begin{equation}
  \label{eq:142}
  \switch_{\0}^{\n,[s,t]}(\ell,m)=\sum_{r\in \cT_\0^{\n,[s,t]}}|\coarse(\Gamma_\0^{\n,r})\setminus \coarse(\Gamma_\0^{\n,r^-})|\ind(\loc^{\ell,2\ell,m}( \Gamma_{\0}^{\n,r}\setminus\Gamma_{\0}^{\n,r^-})).
\end{equation}
The entirety of this section shall be focused on proving the following estimates on the expectation of the above quantity.
\begin{proposition}
  \label{lem:49}
Fix $\beta\in (0,1/2)$ and $\delta>0$. There exists a constant $C$ such that for all $m\in [\![\beta n,(1-\beta)n]\!]$, all $\ell\in [n^\delta,n]$, and all $n$ large enough, we have
  \begin{equation}
    \label{eq:147}
    \EE[\switch_{\0}^{\n,[s,t]}(\ell,m)]\leq C(t-s)\ell^{5/3}n^{500\delta}.
  \end{equation}
  Further, for all $\ell \leq n^{\delta}$, we have
  \begin{equation}
    \label{eq:158}
    \EE[\switch_{\0}^{\n,[s,t]}(\ell,m)]\leq C(t-s)n^{500\delta}.
  \end{equation}
\end{proposition}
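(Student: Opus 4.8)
The plan is to follow the heuristic from Section \ref{sec:out-fixed} rigorously, separating the contribution to $\switch_\0^{\n,[s,t]}(\ell,m)$ by the scale $\ell$ of the excursion and by the horizontal distance $k$ between the resampled column and the current geodesic. First I would invoke Lemma \ref{prop:48} to reduce to the static picture: conditional on the Poissonian resampling times $\cT_\0^{\n,[s,t]}$, each $T^r$ is a static BLPP, and since $\EE|\cT_\0^{\n}\cap (\cM_{\0}^{\n}\cap\{m':|m'-m|\le 2\ell\})| = (t-s)\,O(\ell n)$ (the relevant columns that can support an excursion satisfying $\loc^{\ell,2\ell,m}$ lie in an $O(\ell)$-window of $m$ and have $O(n)$ lattice points each, but a transversal-fluctuation argument via Proposition \ref{prop:53} lets us further restrict to columns within $O(\ell^{2/3+\delta})$ of the line $\LL_\0^\n$, giving $(t-s)O(\ell^{5/3+\delta})$ relevant resampling events up to superpolynomially small error), it suffices to show that a single resampling of a column $\fp$ in this on-scale region contributes, in expectation, at most $n^{O(\delta)}$ to $|\coarse(\Gamma_\0^{\n,r})\setminus\coarse(\Gamma_\0^{\n,r^-})|\ind(\loc^{\ell,2\ell,m}(\cdots))$. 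I would then use Lemma \ref{lem:116} (cited in Figure \ref{fig:loc}) to write the symmetric difference $\overline{\Gamma_\0^{\n,r}\setminus\Gamma_\0^{\n,r^-}}$ as a union of at most two excursions about the old geodesic, so that by Lemma \ref{lem:130} at least one excursion $\xi_j$ satisfies $\loc^{\lceil\ell/2\rceil,2\ell,m}(\xi_j)$; it then suffices to bound the expected coarse-grained length of a single such excursion.

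Next I would condition on the event $F_k=\{|\Gamma_\0^{\n,r^-}(\phi(\fp))-\psi(\fp)|=k\}$ that $\fp$ sits at horizontal distance $k$ from the pre-resampling geodesic. For the resampling to change the geodesic at all in a way creating an excursion through (or near) $\fp$, the routed profile $x\mapsto Z_\0^{\n,\bullet}(x,m')$ (for the appropriate row $m'$ determined by $\fp$) must attain a value within $O(1)$ of its max at horizontal distance $\sim k$ from its argmax — but since $\fp$ is being resampled, the cleaner device is to apply the twin-peaks bound Proposition \ref{prop:26}: with $\ell'\asymp k$ and the $\delta$ there chosen small, $\PP(\TP_{\ell',m'})\le C k^{-1/3+2\delta}$, i.e.\ the probability that some $x$ with $|x-\Gamma_\0^{\n,\bullet}(m')|\ge k$ is within $k^{\delta}$ of optimal is $O(k^{-1/3+2\delta})$. (One has to be slightly careful to pass between the profile $Z^{\n,\bullet}$, which is genuinely locally Brownian, and the routed profile relevant to the geodesic through a single column; here the $\bullet$-version is exactly the right object since resampling affects one $X_{i,m}$ at a time.) I would upgrade the $O(1)$ in the heuristic to $n^{\delta}$ to absorb the lower-order fluctuations in the Brownian increments at $\fp$, which costs only polynomial-in-$n^\delta$ factors in the twin-peaks probability. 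Conditional on $F_k$ and on the geodesic actually changing, the KPZ scaling gives that the excursion has transversal extent $\asymp k$ and hence coarse-grained cardinality $\asymp k$ (not $k^{3/2}$: a single excursion of transversal width $k$ lives over an interval of $m'$-values of length $O(k^{3/2})$ but its \emph{coarse-grained} size, counting columns $\{(i,m'):\{m'\}_{[i,i+1]}$ meets the excursion$\}$, is $O(k^{3/2})$ as well — and summing over the roughly $\ell/k \cdot \ldots$); this is the delicate accounting the authors warn about after \eqref{eq:556}.

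The main obstacle, exactly as flagged in the outline, is that the conditional length $|\Gamma_\0^{\n,r}\setminus\Gamma_\0^{\n,r^-}|$ given $F_k$ and given that a change occurs is not deterministically $k^{3/2}$ — it has a random, potentially heavy-tailed distribution, and multiplying a $k^{-1/3}$-type probability against a moment of this length could produce a power of $n$. To handle this I would \emph{not} estimate $\EE[\text{length}\mid F_k,\ \text{change}]$ directly; instead I would run an averaging/Fubini argument over the resampled column position within the excursion: the excursion $\Gamma_\0^{\n,r}\setminus\Gamma_\0^{\n,r^-}$ is produced whenever $\fp$ lands anywhere "underneath" it, and the estimates from \cite{GH20} (Proposition \ref{prop:10}) guarantee that such an excursion cannot be too thin — its horizontal cross-section is comparable to its transversal scale over a constant fraction of its length — so that the number of columns $\fp$ that would produce a given excursion $E$ is $\gtrsim (|E|_{\vt})^{1+2/3}$ times a power of $|E|_{\vt}$, comparable to $|\coarse(E)|$ up to $n^{O(\delta)}$. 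Dividing the contribution of $E$ by the number of columns that produce it then replaces the problematic $\EE[\text{length}]$ by an $O(n^{O(\delta)})$ bound, and summing the resulting geometric-type series over dyadic scales $k\in[1,\ell]$ (the sum $\sum_k k\cdot k^{-1/3+2\delta}\cdot(\text{number of columns at distance }k)$, each factor polynomially controlled) yields the per-resampling bound $n^{O(\delta)}$, and hence \eqref{eq:147} for $\ell\ge n^\delta$; the case $\ell\le n^\delta$ of \eqref{eq:158} follows a fortiori since then the deterministic bound $|\coarse(\cdots)|\le O(\ell)\le n^\delta$ on each of the $(t-s)O(\ell n)\le (t-s)n^{1+2\delta}$ relevant resampling events already suffices after the same transversal-fluctuation truncation.
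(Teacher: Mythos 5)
Your high-level plan is faithful to the heuristic in Section~\ref{sec:out-fixed} and you correctly identify several ingredients (Lemma~\ref{prop:48}, Lemmas~\ref{lem:116} and \ref{lem:130}, Proposition~\ref{prop:26}, Proposition~\ref{prop:10}) and the central difficulty (the heavy tail of $|\Gamma_\0^{\n,r}\setminus\Gamma_\0^{\n,r^-}|$ conditional on a change). However, there is a genuine gap in the averaging step, and the paper's resolution is materially different from what you propose.

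The device you describe — ``the excursion $E$ is produced whenever $\fp$ lands anywhere underneath it, so divide by the number of producing columns'' — is not well-posed. For a fixed static $T$ and a fixed resampled path $\widetilde X$, the pair determines the new geodesic and hence the excursion uniquely; different columns $\fp$ produce different (or no) changes, so there is no canonical ``set of columns that produce $E$'' to average over. Moreover, even informally, the columns that could have caused a change must lie on one of the two geodesics (if $\fp \notin \coarse(\Gamma_\0^{\n,r^-}) \cup \coarse(\Gamma_\0^{\n,r})$, resampling $X_{\fp}$ does not change the passage time of either competing path), so the relevant count is $\Theta(|E|_{\vt})$, not the $\Theta(|E|_{\vt}^{5/3})$ you write; and your claim that this count is ``comparable to $|\coarse(E)|$'' is arithmetically inconsistent, since $|\coarse(E)| \asymp |E|_{\vt}$ for a staircase. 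The paper's Fubini argument instead introduces the set $\pivot_\delta^\ell(m)$ (the near-peaks of the routed profile $Z_\0^{\n,\bullet}$ in the relevant window, a quantity determined by the \emph{static} $T$), shows that on a good event both $\coarse(\widetilde\Gamma_\0^\n)\setminus\coarse(\Gamma_\0^\n) \subseteq \pivot_\delta^\ell(m)$ \emph{and} $(\fraki,\frakm)\in\pivot_\delta^\ell(m)$, and then uses that $(\fraki,\frakm)$ is uniform on $\cM_\0^\n$ and independent of $T$ to obtain $\EE[|\pivot_\delta^\ell(m)|\,\ind((\fraki,\frakm)\in\pivot_\delta^\ell(m))] = |\cM_\0^\n|^{-1}\EE[|\pivot_\delta^\ell(m)|^2]$. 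The required second moment bound $\EE[|\pivot_\delta^\ell(m)|^2]\lesssim\ell^{5/3}n^{O(\delta)}$ (Lemma~\ref{lem:29}) is then assembled from the peak-count bound (Proposition~\ref{lem:31}) together with the probability estimate $\PP(\exc_\delta^\ell(m))\lesssim\ell^{-1/3+2\delta}$ (Proposition~\ref{lem:2}, where Propositions~\ref{prop:26} and \ref{prop:10} are used). This is the missing idea.

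Two secondary problems. First, the claim that you can restrict to resampled columns within $O(\ell^{2/3+\delta})$ of the straight line $\LL_\0^\n$ via Proposition~\ref{prop:53} is incorrect: at an interior row $m\asymp\beta n$, the geodesic sits at distance $O(n^{2/3})$, not $O(\ell^{2/3})$, from $\LL_\0^\n$, and Proposition~\ref{prop:53} controls fluctuations at mesoscopic heights near the endpoints rather than at the bulk row $m$. The paper never makes such a restriction; the ``localization'' is encoded automatically by the constraint $(\fraki,\frakm)\in\pivot_\delta^\ell(m)$. Second, your argument for the $\ell\le n^\delta$ case does not close: with $O(\ell n)$ relevant resamplings each contributing at most $O(\ell)$ deterministically, you get $(t-s)O(\ell^2 n)\lesssim (t-s)n^{1+2\delta}$, which is far larger than the target $(t-s)n^{500\delta}$; the improvement to an on-scale count of $\ell^{5/3}$ relies on the same incorrect restriction. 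The paper instead uses the modified definition of $\pivot_\delta^\ell(m)$ for small $\ell$ and the crude second-moment bound of Lemma~\ref{lem:93}.
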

In the above, it is the $\ell^{5/3}$ term that is crucial for us. The term $n^{500\delta}$ above has not been carefully optimized and is unimportant for us as $\delta$ will be taken to be small. Now, we use Proposition \ref{lem:49} to complete the proof of Theorem \ref{prop:30} and then spend the rest of the section proving Proposition \ref{lem:49}.
\begin{proof}[Proof of Theorem \ref{prop:30} assuming Proposition \ref{lem:49}]
  We write
  \begin{equation}
    \label{eq:159}
    \EE[\switch_{\0}^{\n,[s,t]}( [\![\beta n , (1-\beta) n]\!]_\RR)]\leq \sum_{i=0}^{\log_2 n} \sum_{m\in [\![\beta n,(1-\beta)n]\!]\cap 2^i\ZZ} \EE [\switch_{\0}^{\n,[s,t]}(2^i,m)].
  \end{equation}
  We split the right hand side above into two parts and bound them separately. First, by using \eqref{eq:158} in Proposition \ref{lem:49}, for some constant $C$, we have
  \begin{align}
    \label{eq:160}
    \sum_{i=0}^{\delta\log_2 n} \sum_{m\in [\![\beta n,(1-\beta) n]\!]\cap 2^i\ZZ} \EE [\switch_{\0}^{\n,[s,t]}(2^i,m)]&\leq \sum_{i=0}^{\delta\log_2 n} \sum_{m\in [\![\beta n,(1-\beta)n]\!]\cap 2^i\ZZ} C(t-s)n^{500\delta}\nonumber\\
                                                                                                                                                        &\leq (t-s)\sum_{i=0}^{\delta \log_2 n}C2^{-i}n^{1+500\delta}\leq 2Cn^{1+500\delta}(t-s).
  \end{align}
  Also, by using \eqref{eq:147} in Proposition \ref{lem:49}, for some constants $C,C'$, we have
  \begin{align}
    \label{eq:161}
    \sum_{i=\delta\log_2 n}^{\log_2 n} \sum_{m\in [\![\beta n,(1-\beta)n]\!]\cap 2^i\ZZ} \EE [\switch_{\0}^{\n,[s,t]}(2^i,m)]&\leq \sum_{i=\delta\log_2 n}^{\log_2 n} \sum_{m\in [\![\beta n,(1-\beta) n]\!]\cap 2^i\ZZ} C(t-s)(2^i)^{5/3}n^{500\delta}\nonumber\\
    &\leq  C(t-s)\sum_{i=\delta\log_2 n}^{\log_2 n}n(2^i)^{2/3}n^{500\delta}\leq C'n^{5/3+500\delta}(t-s).
  \end{align}
Adding up the estimates \eqref{eq:160} and \eqref{eq:161} and replacing $\delta$ by $\delta/500$ now completes the proof.
\end{proof}
The proof of Proposition \ref{lem:49} shall be broken down into a few steps. First, in Section \ref{sec:dyn-exc}, we shall discuss the connection between the sets $\Gamma_{\0}^{\n,r}\setminus \Gamma_{\0}^{\n,r^-}$ appearing in the definition of geodesic switches with ``excursions'' about the path $\Gamma_{\0}^{\n,r^-}$ which, in addition, are also  ``near-geodesics''. Next, by using the twin peaks estimate Proposition \ref{prop:26}, we shall obtain (Section \ref{sec:excursions}) an estimate on the probability of such excursions being present, where we shall need quantification based on the scale and location of the excursions. In Section \ref{sec:qualifying}, we shall use this along with a result (Proposition \ref{lem:31}) on the total number of peaks for routed distance profiles to obtain an estimate on the size of the union of all possible excursions discussed above. Subsequently, in Section \ref{sec:geodswitch-scale}, we shall use this estimate to complete the proof of Proposition \ref{lem:49}. %
We now begin with the first step.

\subsection{Relating the sets $\Gamma_{\0}^{\n,r}\setminus \Gamma_{\0}^{\n,r^-}$ to excursions}
\label{sec:dyn-exc}
As we shall see soon, we shall undertake a quantitative analysis of ``excursions'' about the geodesic $\Gamma_{\0}^{\n}$ which are not quite geodesics but whose weight closely rivals the passage time between their endpoints. In order for this to be useful later, we shall need to establish a connection between the objects $\Gamma_{\0}^{\n,r}\setminus \Gamma_{\0}^{\n,r^-}$ appearing in the definition of geodesic switches with excursions, and doing so is the goal of this section.

We begin by giving the precise definition of an \textbf{excursion}; recall the definition and notation corresponding to staircases from Section \ref{sec:model-definition}. For a staircase $\xi'$ and points $u\leq v\in \xi'\cap \ZZ_\RR$, a staircase $\xi$ from $u$ to $v$ is said to be an excursion about $\xi'$ if it satisfies
\begin{equation}
  \label{eq:591}
  \xi\cap \xi'= \{ u, v\},
\end{equation}
We are now ready to make a connection between $\Gamma_{\0}^{\n,r}\setminus\Gamma_{\0}^{\n,r^-}$ and excursions; note that we use $\overline{A}$ to denote the usual topological closure of a set $A\subseteq \RR^2$.
\begin{figure}
  \centering
  \begin{subfigure}{0.5\textwidth}
    \centering
    \adjustbox{valign=c}{\includegraphics[width=\linewidth]{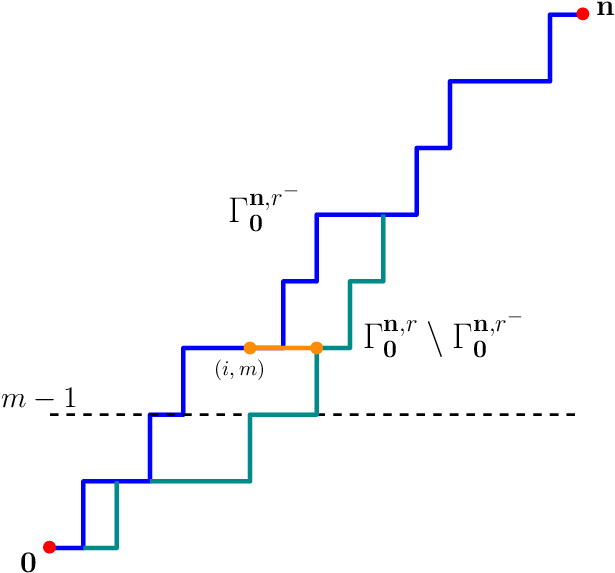}}
  \end{subfigure}\begin{subfigure}{0.5\textwidth}
    \centering
    \adjustbox{valign=c}{\includegraphics[width=\linewidth]{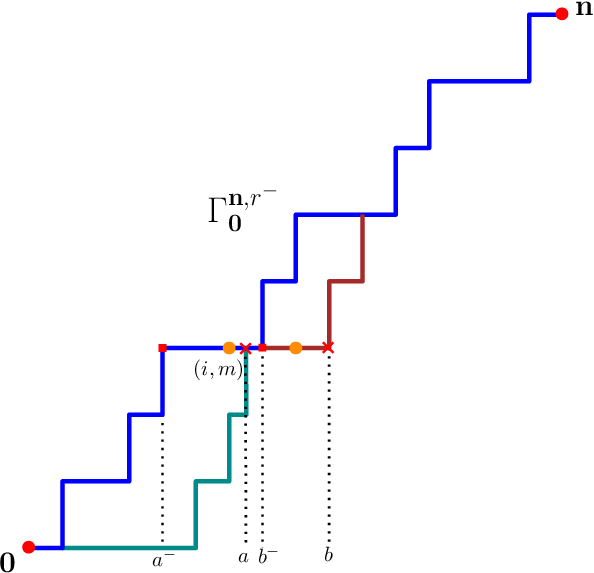}}
  \end{subfigure}
  \caption{In these figures, $(i,m)\in \cM_{\0}^{\n}$ is the unique point for which the path $W_{i,m}^r$ has been resampled. \textit{Left panel}: The displayed configuration is impossible since it leads to the geodesic $\Gamma_{\0}^{\n,r^-}$ being non-unique. %
    \textit{Right panel}: In contrast, this scenario is possible and here, $\overline{\Gamma_{\0}^{\n,r}\setminus \Gamma_{\0}^{\n,r^-}}$ consists of two excursions-- one shown in cyan and the other in brown. Note that here, $[a^-,b^-]\cap [a,b]\neq \emptyset$ (see \eqref{eq:633}).}
  \label{fig:oneortwo}
\end{figure}
\begin{lemma}
  \label{lem:116}
  Fix $[s,t]\subseteq \RR$. Almost surely, for every $r\in \cT_{\0}^{\n,[s,t]}$, exactly one of the following hold.
  \begin{enumerate}
  \item $\Gamma_{\0}^{\n,r}=\Gamma_{\0}^{\n,r^-}$.
  \item $\overline{\Gamma_{\0}^{\n,r}\setminus \Gamma_{\0}^{\n,r^-}}$ is an excursion about $\Gamma_{\0}^{\n,r^-}$. %
  \item There exist points $p_1=(x_1,t_1), p_2=(x_2,t_2), p_3 = (x_3,t_3), p_4=(x_4,t_4) \in \Gamma_{\0}^{\n,r^-}\cap \ZZ_\RR$ such that $t_1<t_2=t_3<t_4$ and $\overline{\Gamma_{\0}^{\n,r}\setminus \Gamma_{\0}^{\n,r^-}}$ is precisely the union of two excursions about $\Gamma_{\0}^{\n,r^-}$, one being a staircase from $p_1$ to $p_2$ and one being a staircase from $p_3$ to $p_4$.
  \end{enumerate}
\end{lemma}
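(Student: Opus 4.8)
The plan is to reduce the assertion to a deterministic question about static BLPP and then analyse combinatorially how a single‑strip resampling can alter a geodesic. For the reduction, condition on the (a.s.\ finite) set $\cT_{\0}^{\n,[s,t]}$; since a.s.\ no two clocks ring simultaneously, at each $r\in\cT_{\0}^{\n,[s,t]}$ exactly one path $X_{i,m}$ with $(i,m)\in\cM_{\0}^{\n}$ is resampled, and by Lemma \ref{prop:48} the environment $T^{r^-}$ is then distributed as a static BLPP while $T^{r}$ is obtained from it by replacing that single $X_{i,m}$ by an independent fresh copy. As the $\0$--$\n$ geodesic of a static BLPP is a.s.\ unique \cite[Lemma B.1]{Ham19}, a union bound over the finitely many times in $\cT_{\0}^{\n,[s,t]}$ reduces matters to the following statement: for a static BLPP with unique geodesic $\Gamma:=\Gamma_{\0}^{\n}$, if one resamples a single strip $\{m\}_{[i,i+1]}$ and $\Gamma'$ denotes the (a.s.\ unique) geodesic of the new environment, then a.s.\ the trichotomy holds for $(\Gamma,\Gamma')$.

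Next I would record the purely structural facts. Since $\Gamma$ and $\Gamma'$ are finite staircases from $\0$ to $\n$, the set $\overline{\Gamma'\setminus\Gamma}$ is a finite union of maximal arcs of $\Gamma'$ lying off $\Gamma$; it is routine (divergence and re‑merging of two staircases occur at points of integer height) that each such arc is an excursion of $\Gamma'$ about $\Gamma$ with both endpoints on $\Gamma\cap\ZZ_\RR$. Writing $\xi_1,\dots,\xi_K$ for these excursions, with $\xi_k$ running from $u_k$ to $v_k$, monotonicity of $\Gamma$ and $\Gamma'$ forces them to be linearly ordered along $\Gamma$, so in particular $\mathrm{ht}(u_1)\le\mathrm{ht}(v_1)\le\mathrm{ht}(u_2)\le\cdots\le\mathrm{ht}(u_K)\le\mathrm{ht}(v_K)$ (ties in height broken by the first coordinate), and each $\xi_k$ has $\mathrm{ht}(u_k)\le\mathrm{ht}(v_k)$. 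The cases $K=0$ and $K=1$ give alternatives (1) and (2) of the lemma, so the content is that $K\le 2$ and that $K=2$ forces alternative (3).

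The key input is a weight‑relevance property coming from uniqueness of $\Gamma$. Fix an excursion $\xi=\xi_k$ with endpoints $u\le v$ and let $\eta:=\Gamma|_{[u,v]}$ be the portion of $\Gamma$ it replaces. If the resampling changed neither $\wgt(\xi)$ nor $\wgt(\eta)$, then $\xi$, being a sub‑staircase of the geodesic $\Gamma'$, is optimal from $u$ to $v$ in the new environment, hence (its weight being unchanged) also optimal from $u$ to $v$ in the old environment; as $\eta$ is likewise optimal there, splicing $\xi$ into $\Gamma$ in place of $\eta$ produces a geodesic from $\0$ to $\n$ in the old environment distinct from $\Gamma$, contradicting uniqueness (this is what rules out configurations such as the left panel of Figure \ref{fig:oneortwo}). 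So a.s.\ every excursion $\xi$ has $\wgt(\xi)$ or $\wgt(\eta)$ changed by the resampling. Now by the explicit form \eqref{eq:333} of the dynamics, resampling $X_{i,m}$ changes only $W_m$, hence alters the weight of a staircase solely through its height‑$m$ increment. Consequently, if an excursion $\xi$ does not reach height $m$ then $m\notin[\mathrm{ht}(u),\mathrm{ht}(v)]$, and the same holds for $\eta$ (which has the same endpoints), so neither weight changes -- contradicting the previous sentence. Hence a.s.\ every excursion reaches height $m$, i.e.\ $\mathrm{ht}(u_k)\le m\le\mathrm{ht}(v_k)$ for all $k$.

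Combining this with the ordering $\mathrm{ht}(v_k)\le\mathrm{ht}(u_{k+1})$ squeezes $\mathrm{ht}(v_k)=\mathrm{ht}(u_{k+1})=m$ for every consecutive pair; thus for $2\le k\le K-1$ the excursion $\xi_k$ would have both endpoints at height $m$. But an up‑right staircase with equal‑height endpoints is a horizontal segment, and such a segment with endpoints on the horizontal portion of $\Gamma$ at height $m$ would be contained in $\Gamma$, contradicting that an excursion lies off $\Gamma$ in its interior. Hence $K\le 2$; and when $K=2$ one gets $\mathrm{ht}(v_1)=\mathrm{ht}(u_2)=m$ together with $\mathrm{ht}(u_1)<m<\mathrm{ht}(v_2)$ (strictness again because $\xi_1,\xi_2$ cannot have equal‑height endpoints), so $(u_1,v_1,u_2,v_2)$ plays the role of $(p_1,p_2,p_3,p_4)$ in alternative (3); mutual exclusivity of the three cases is immediate from the number of excursions. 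I expect the main obstacle to be making the weight‑relevance/splicing step and the underlying staircase--excursion bookkeeping fully rigorous -- in particular, verifying that $\overline{\Gamma'\setminus\Gamma}$ decomposes into ordered excursions with integer‑height endpoints, that splicing preserves optimality, and handling the boundary heights $m\in\{0,n\}$ (where there is no ``below'' or ``above'' region and only alternatives (1), (2) can occur). Beyond the reduction to static BLPP, the a.s.\ uniqueness of the relevant $\0$--$\n$ geodesics, and the a.s.\ statement that no two clocks ring simultaneously, no quantitative estimates are needed.
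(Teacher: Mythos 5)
Your proof is correct and works, but it takes a genuinely different route from the paper's. Both arguments share the same reduction (conditioning on the a.s.\ finite resampling-time set, applying a.s.\ uniqueness of the static geodesic, and using that the resampling of a single $X_{i,m}$ only affects weights via height-$m$ increments), but they diverge in how the combinatorics is organised.

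The paper works directly with $\Gamma^r$ (rather than decomposing $\overline{\Gamma^r\setminus\Gamma^{r^-}}$ up front): it observes that $\Gamma^r\cap [0,m-1]_\RR$ is itself a $T^{r^-}$ geodesic emanating from $\0$, so by uniqueness it can only diverge from $\Gamma^{r^-}$ once and never remerge below height $m-1$; hence $S^\downarrow=(\overline{\Gamma^r\setminus\Gamma^{r^-}})\cap [0,m-1]_\RR$ is either empty or a single staircase ending on $\{m-1\}_\RR$, and symmetrically for $S^\uparrow$ above $m+1$. The final case analysis then proceeds by comparing the two horizontal segments at height $m$. Your proof instead begins by decomposing $\overline{\Gamma^r\setminus\Gamma^{r^-}}$ into ordered excursions $\xi_1,\dots,\xi_K$ and shows, via a weight-relevance argument (if $\xi_k$ never reaches height $m$, then $\wgt(\xi_k)$, $\wgt(\eta_k)$ and $T_{u_k}^{v_k}$ are all unchanged, making $\xi_k$ an alternative old-environment geodesic and contradicting uniqueness), that every $\xi_k$ must straddle height $m$. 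Combined with the height ordering, this forces $K\le 2$ and identifies the meeting heights with $m$, yielding exactly case (3). What your approach buys is that it is directly in the "excursion/near-geodesic'' language used throughout Section 4; what the paper's buys is brevity, as it avoids introducing an explicit excursion decomposition and handles the bookkeeping in one pass via the coalescence of geodesics from a common base point. Two minor points worth tightening in your write-up: the step "hence (its weight being unchanged) also optimal from $u$ to $v$ in the old environment'' implicitly also uses that $T_{u_k}^{v_k}$ is unchanged (which you can derive from $\wgt(\eta_k)$ unchanged plus the sandwich $T_{u}^{v,\mathrm{new}}\ge \wgt'(\eta)=\wgt(\eta)=T_u^{v,\mathrm{old}}\ge \wgt(\xi)=\wgt'(\xi)=T_u^{v,\mathrm{new}}$); and the claim that consecutive excursion components have distinct endpoints should be dropped — the statement of case (3) allows $p_2=p_3$, and indeed $v_1=u_2$ can occur, but your argument handles this without modification.
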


\begin{proof}
  We begin by noting that by a standard argument (see Lemma \ref{lem:104}), almost surely, the geodesics $\Gamma_{\0}^{\n,r}$ and $\Gamma_{\0}^{\n,r^-}$ are unique for all $r\in \cT_{\0}^{\n,[s,t]}$. Also, note that almost surely, for every $r\in \cT_{\0}^{\n,[s,t]}$, there exists precisely one $(i,m)\in \cM_{\0}^{\n}$ for which $X_{i,m}^{r^-}\neq X_{i,m}^{r}$. For the remainder of the argument, we shall work on the almost sure sets where both the above hold. That is, we shall now work with a fixed $r\in \cT_{\0}^{\n,[s,t]}$ and the corresponding $(i,m)\in \cM_{\0}^{\n}$. %

  An easy but very useful observation is the following-- since $(i,m)$ is the unique element of $\cM_{\0}^{\n}$ for which $X_{i,m}^{r^-}\neq X_{i,m}^r$, for any two points $p\leq q\in [\![-\infty,m-1]\!]_\RR$, we must have $T_p^{q,r^-}=T_p^{q,r}$ and thus must also have $\Gamma_p^{q,r^-}=\Gamma_p^{q,r}$. As a result, $\Gamma_{\0}^{\n,r}\cap [0,m-1]_\RR$ is also a $T^{r^-}$ geodesic, and thus, since $\Gamma_{\0}^{\n,r^-}$ is unique, the set $S^\downarrow=(\overline{\Gamma_{\0}^{\n,r}\setminus \Gamma_{\0}^{\n,r^-}})\cap [0,m-1]_{\RR}$ is either empty or a staircase with its upper endpoint lying on $\{m-1\}_\RR$. By an analogous argument, the set $S^\uparrow=(\overline{\Gamma_{\0}^{\n,r}\setminus \Gamma_{\0}^{\n,r^-}})\cap [m+1,n]_{\RR}$ is either empty or a staircase with its lower endpoint lying on $\{m+1\}_\RR$.

  We are now ready to complete the proof. First, if both the sets $S^\downarrow$ and $S^\uparrow$ are empty, then we must necessarily have $\Gamma_{\0}^{\n,r}=\Gamma_{\0}^{\n,r^-}$ and (1) in statement of the lemma must hold.

  Now, we consider the case when exactly one of the above sets is non-empty. Without loss of generality, let us assume that $S^\downarrow \neq\emptyset$. In this case, since $S^\uparrow=\emptyset$, we must have $\Gamma_{\0}^{\n,r}(m)= \Gamma_{\0}^{\n,r^-}(m)$, and as a result, $\overline{\Gamma_{\0}^{\n,r}\setminus \Gamma_{\0}^{\n,r^-}} \subseteq [0,m]_{\RR}$ must be an excursion about $\Gamma_{\0}^{\n,r^-}$, and this is (2) in the statement of the lemma.

  Finally, we consider the case when both the sets $S^\downarrow$ and $S^\uparrow$ are non-empty. Consider the intervals $[a,b]$ and $[a^-,b^-]$ defined by $\{m\}_{[a,b]}= \Gamma_{\0}^{\n,r}\cap \{m\}_{\RR}$ and $\{m\}_{[a^-,b^-]}= \Gamma_{\0}^{\n,r^-}\cap \{m\}_{\RR}$ and note that both $[a,b]$ and $[a^-,b^-]$ must necessarily be non-empty. If we have $[a,b]\cap [a^-,b^-]=\emptyset$, then we have $\overline{\Gamma_{\0}^{\n,r}\setminus \Gamma_{\0}^{\n,r^-}}= S^\downarrow \cup (\Gamma_{\0}^{\n,r}\cap [m-1,m+1]_{\RR}) \cup S^{\uparrow}$ which is an excursion about $\Gamma_{\0}^{\n,r^-}$. Otherwise, if $[a,b]\cap [a^-,b^-]\neq \emptyset$ (see the right panel in Figure \ref{fig:oneortwo}), then we have two excursions about $\Gamma_{\0}^{\n,r^-}$, namely
  \begin{equation}
    \label{eq:633}
   S^\downarrow\cup [m-1,m]_{\{a\}}\cup\{m\}_{[a,a^-]} \textrm{ and } S^\uparrow\cup [m,m+1]_{\{b\}}\cup\{m\}_{[b^-,b]},
  \end{equation}
where we note that the intervals $[a,a^-]$, $[b^-,b]$ are possibly empty. Thus, we have (3) as in the statement of the lemma with $t_2=t_3=m$. This completes the proof.

\end{proof}
With the help of the above result, we can now obtain the following lemma.
\begin{lemma}
  \label{lem:130}
 Fix an interval $[s,t]\subseteq \RR$. For all $m\in [\![0,n]\!]$ and $1\leq \ell\leq n$, almost surely, for any $t\in \cT_{\0}^{\n,[s,t]}$ such that $\loc^{\ell,2\ell,m}( \Gamma_{\0}^{\n,r}\setminus\Gamma_{\0}^{\n,r^-})$ occurs, there must exist an excursion $\xi\subseteq \overline{\Gamma_{\0}^{\n,r}\setminus\Gamma_{\0}^{\n,r^-}}$ about $\Gamma_{\0}^{\n,r^-}$ for which $\loc^{\lceil \ell/2\rceil ,2\ell,m}(\xi)$ occurs (see Figure \ref{fig:loc}).
\end{lemma}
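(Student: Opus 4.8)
The plan is to reduce everything to the trichotomy provided by Lemma \ref{lem:116} and then run a short pigeonhole argument on vertical extents. First I would observe that for the purpose of checking a $\loc$ event we may freely pass to topological closures: both the quantity $|\cdot|_{\vt}$ and the property of being contained in the closed horizontal strip $[m-2\ell,m+2\ell]_{\RR}=\RR\times[m-2\ell,m+2\ell]$ are unchanged when a set is replaced by its closure. Hence it is equivalent to show that whenever $A:=\overline{\Gamma_{\0}^{\n,r}\setminus\Gamma_{\0}^{\n,r^-}}$ satisfies $\loc^{\ell,2\ell,m}(A)$, some excursion $\xi\subseteq A$ about $\Gamma_{\0}^{\n,r^-}$ satisfies $\loc^{\lceil\ell/2\rceil,2\ell,m}(\xi)$. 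Working on the almost sure event on which Lemma \ref{lem:116} holds, I would split into its three cases. Case (1) cannot occur here, since then $A=\emptyset$ and $|A|_{\vt}=0\notin[\ell,2\ell]$ as $\ell\geq 1$. In Case (2), $A$ is itself an excursion about $\Gamma_{\0}^{\n,r^-}$; since $\lceil\ell/2\rceil\leq\ell$, the hypothesis $\loc^{\ell,2\ell,m}(A)$ immediately yields $\loc^{\lceil\ell/2\rceil,2\ell,m}(A)$, so $\xi=A$ works.

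The only substantive case is Case (3), where $A=\xi_1\cup\xi_2$ with $\xi_1$ a staircase (and an excursion about $\Gamma_{\0}^{\n,r^-}$) from $p_1=(x_1,t_1)$ to $p_2=(x_2,t_2)$ and $\xi_2$ likewise from $p_3=(x_3,t_3)$ to $p_4=(x_4,t_4)$, with $t_1<t_2=t_3<t_4$. Then $\xi_1$ has vertical range $[t_1,t_2]$ and $\xi_2$ vertical range $[t_3,t_4]=[t_2,t_4]$, so $|\xi_1|_{\vt}+|\xi_2|_{\vt}=(t_2-t_1)+(t_4-t_2)=t_4-t_1=|A|_{\vt}\geq\ell$. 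Since the endpoints lie in $\ZZ_{\RR}$, the two summands are nonnegative integers, so their maximum is at least $\lceil\ell/2\rceil$; without loss of generality $|\xi_1|_{\vt}\geq\lceil\ell/2\rceil$. On the other hand $|\xi_1|_{\vt}\leq|A|_{\vt}\leq 2\ell$ and $\xi_1\subseteq A\subseteq[m-2\ell,m+2\ell]_{\RR}$, so $\loc^{\lceil\ell/2\rceil,2\ell,m}(\xi_1)$ holds, which completes the argument.

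The argument is short, so there is no real obstacle beyond careful bookkeeping, but two small points deserve attention. First, the parameter $m$ in the $\loc$ events is arbitrary and need not coincide with the vertical coordinate of the Brownian path resampled at time $r$ (which is where the common level $t_2=t_3$ in Lemma \ref{lem:116}(3) sits); this causes no difficulty, because $m$ enters a $\loc$ event only through the vertical-strip containment, which is automatically inherited by any subset. Second, obtaining the bound $\lceil\ell/2\rceil$ rather than merely $\ell/2$ relies on the relevant vertical extents being integers, which they are since all endpoints involved have integer second coordinate.
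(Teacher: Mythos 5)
Your proof is correct and follows essentially the same route as the paper's: reduce via Lemma \ref{lem:116} to the case of one or two excursions and run a pigeonhole argument on $|\xi_1|_{\vt}+|\xi_2|_{\vt}=|\Gamma_{\0}^{\n,r}\setminus\Gamma_{\0}^{\n,r^-}|_{\vt}\geq\ell$. You are slightly more explicit than the paper on a few minor points --- that passing to closures does not affect $\loc$ events, that the integrality of the vertical extents is what upgrades $\ell/2$ to $\lceil\ell/2\rceil$, and that the upper bound $|\xi_i|_{\vt}\leq 2\ell$ and the strip containment are inherited by the sub-excursion --- but none of this changes the argument.
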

\begin{proof}
  If $\loc^{\ell,2\ell,m}( \Gamma_{\0}^{\n,r}\setminus\Gamma_{\0}^{\n,r^-})$ occurs, then in particular, $|\Gamma_{\0}^{\n,r}\setminus\Gamma_{\0}^{\n,r^-}|_{\vt}\geq \ell$ and thus $\Gamma_{\0}^{\n,r}\neq \Gamma_{\0}^{\n,r^-}$. Thus, by Lemma \ref{lem:116}, either (2) or (3) therein must occur. In case (2) occurs, $\overline{\Gamma_{\0}^{\n,r}\setminus \Gamma_{\0}^{\n,r^-}}$ is an excursion about $\Gamma_{\0}^{\n,r^-}$ in which case we set $\xi= \overline{\Gamma_{\0}^{\n,r}\setminus\Gamma_{\0}^{\n,r^-}}$ and this implies that the event $\loc^{\lceil \ell/2\rceil,2\ell,m}(\xi)\subseteq \loc^{\ell,2\ell,m}(\xi)$ occurs.

  If (3) from Lemma \ref{lem:116} holds instead, then we obtain two staircases  $\xi_1,\xi_2\subseteq \overline{\Gamma_{\0}^{\n,r}\setminus\Gamma_{\0}^{\n,r^-}}$, which are both excursions about the path $\Gamma_{\0}^{\n,r^-}$ and satisfy
  \begin{equation}
    \label{eq:634}
    |\xi_1|_{\vt}+|\xi_2|_{\vt}= |\Gamma_{\0}^{\n,r}\setminus\Gamma_{\0}^{\n,r^-}|_{\vt}.
  \end{equation}
  Since we are assuming that $\loc^{\ell,2\ell,m}( \Gamma_{\0}^{\n,r}\setminus\Gamma_{\0}^{\n,r^-})$ occurs, we must have $|\Gamma_{\0}^{\n,r}\setminus\Gamma_{\0}^{\n,r^-}|_{\vt}\geq \ell$ and as a result of \eqref{eq:634}, for at least one $i\in \{1,2\}$, we must have
  \begin{equation}
    \label{eq:635}
    |\xi_i|_{\vt}\geq \lceil \ell/2\rceil.
  \end{equation}
Finally, any such excursion $\xi_i$ must satisfy $\loc^{\lceil \ell/2\rceil ,2\ell,m}(\xi_i)$, and this completes the proof.  
\end{proof}

As mentioned in the beginning, we shall in fact have to focus on excursions which are also ``near-geodesics''. The primary reason behind this is the following basic lemma arguing that BLPP passage times are unlikely to change much if only one weight increment is resampled; recall that for a static BLPP $T$, we often work with the processes defined for $i,m\in \ZZ$ and $x\in [0,1]$ by $X_{i,m}(x)= W_m(x+i)-W_m(i)$.
\begin{lemma}
  \label{lem:30}
There exist constants $C,c$ such that for any $i,m\in \ZZ$, with $T$ denoting an instance of BLPP and $\widetilde{T}$ denoting the BLPP obtained by resampling just $X_{i,m}$ to a fresh sample $\widetilde{X}_{i,m}$, we have, for all $r>0$,
  \begin{equation}
    \label{eq:139}
    \PP(|\widetilde T_\0^\n-T_\0^\n| \geq r)\leq Ce^{-cr^2}.
  \end{equation}
\end{lemma}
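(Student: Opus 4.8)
The plan is to reduce the estimate to the elementary fact that $\sup_{x\in[0,1]}|B(x)|$ has Gaussian tails for a standard Brownian motion $B$, together with a crude deterministic comparison of the two environments. First I would dispose of the trivial case: if $(i,m)\notin\cM_\0^\n$, then none of the increments appearing in $\wgt(\xi)$ for a staircase $\xi\colon\0\to\n$ involves the path $X_{i,m}$, so $T_\0^\n$ is almost surely unchanged by the resampling and there is nothing to prove. Hence I may assume $(i,m)\in\cM_\0^\n$; in particular $m\in[\![0,n]\!]$ and every staircase $\xi\colon\0\to\n$ has $\xi(m-1),\xi(m)\in[0,n]$.

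The core step is a deterministic bound. Write $M=\sup_{x\in[0,1]}|X_{i,m}(x)|$ and $\widetilde M=\sup_{x\in[0,1]}|\widetilde X_{i,m}(x)|$. From the defining relation \eqref{eq:333}, resampling $X_{i,m}$ to $\widetilde X_{i,m}$ leaves all $W_k$ with $k\ne m$ untouched and replaces $W_m$ by $\widetilde W_m$, where $\widetilde W_m-W_m$ vanishes on $(-\infty,i]$, equals $\widetilde X_{i,m}(\cdot-i)-X_{i,m}(\cdot-i)$ on $[i,i+1]$, and equals the constant $\widetilde X_{i,m}(1)-X_{i,m}(1)$ on $[i+1,\infty)$; in every case $|\widetilde W_m(x)-W_m(x)|\le M+\widetilde M$ for all $x\ge 0$. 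Since the weight of a staircase $\xi\colon\0\to\n$ feels $W_m$ only through the single increment $W_m(\xi(m))-W_m(\xi(m-1))$, this yields $|\widetilde\wgt(\xi)-\wgt(\xi)|\le 2(M+\widetilde M)$ for every such $\xi$. Applying this to the geodesic $\widetilde\Gamma$ of $\widetilde T$ gives $\widetilde T_\0^\n=\widetilde\wgt(\widetilde\Gamma)\le\wgt(\widetilde\Gamma)+2(M+\widetilde M)\le T_\0^\n+2(M+\widetilde M)$, and the symmetric argument using the geodesic of $T$ gives $T_\0^\n\le\widetilde T_\0^\n+2(M+\widetilde M)$; hence $|\widetilde T_\0^\n-T_\0^\n|\le 2(M+\widetilde M)$.

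Finally I would invoke the reflection principle: $M$ and $\widetilde M$ are each distributed as $\sup_{[0,1]}|B|$ for a standard Brownian motion, so $\PP(M\ge a)\le Ce^{-ca^2}$ for absolute constants $C,c$, and likewise for $\widetilde M$. Combining this with the deterministic bound via $\PP(|\widetilde T_\0^\n-T_\0^\n|\ge r)\le\PP(M\ge r/4)+\PP(\widetilde M\ge r/4)$ produces the claimed $Ce^{-cr^2}$ estimate after adjusting the constants. I do not expect any serious obstacle here; the only point requiring a little care is keeping track of the global additive shift $\widetilde X_{i,m}(1)-X_{i,m}(1)$ that the resampling introduces into $W_m$ on $[i+1,\infty)$ and noting that it (like everything else on that ray) is still bounded by $M+\widetilde M$ and in fact cancels in the increment $W_m(\xi(m))-W_m(\xi(m-1))$, which is why the bound comes out uniform in $n$ and in the location $i$.
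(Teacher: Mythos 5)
Your proof is correct and follows essentially the same route as the paper's: bound $|\widetilde\wgt(\xi)-\wgt(\xi)|$ uniformly over staircases $\xi$ by the fluctuation of $X_{i,m}$ and $\widetilde X_{i,m}$, then invoke Gaussian tails for Brownian suprema. The only cosmetic difference is that the paper works with the oscillation $\max X_{i,m}-\min X_{i,m}$ rather than $\sup|X_{i,m}|$, and states the conclusion as a uniform bound on weights, leaving the passage to $|\widetilde T_\0^\n - T_\0^\n|$ implicit; your geodesic argument supplies that passage explicitly.
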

\begin{proof}
  Using $\wgt$ and $\widetilde{\wgt}$ to denote weights of staircases for the BLPPs $T$ and $\widetilde{T}$ respectively, note that for any staircase $\xi$ from $\0$ to $\n$, we must have
  \begin{equation}
    \label{eq:661}
    |\wgt(\xi)-\widetilde{\wgt}(\xi)|\leq (\max_x X_{i,m}(x)-\min_x X_{i,m}(x))+ (\max_x \widetilde{X}_{i,m}(x)- \min_x \widetilde{X}_{i,m}(x)).
  \end{equation}
  By standard estimates, we know that if $B$ is a standard Brownian motion on $[0,1]$, then $\PP(\max_x B(x)-\min_x B(x)\geq r)\leq Ce^{-cr^2}$. We now apply this estimate to $W_{i,m}$ and $\widetilde{W}_{i,m}$ and as a result, obtain that with probability at least $1-Ce^{-cr^2}$, we have $|\wgt(\xi)-\widetilde{\wgt}(\xi)|\leq r$ for all staircases $\xi$ from $\0$ to $\n$. This completes the proof.
\end{proof}
The broad intuition now is as follows-- for any fixed $[s,t]\subseteq \RR$ and any $r\in \cT_{\0}^{\n,[s,t]}$, almost surely, only one increment changes from its previous value $X_{i,m}^{r^-}$ to $X_{i,m}^{r}$. Further, by the above lemma, the difference $T_{\0}^{\n,r}-T_{\0}^{\n,r^-}$ is very small. As a result, if the difference $\Gamma_{\0}^{\n,r}\setminus \Gamma_{\0}^{\n,r^-}$ is indeed non-empty, then its $T^{r^-}$ weight must be very close to the $T^{r^-}$ passage time between its endpoints. However, by Lemma \ref{lem:116}, the set $\Gamma_{\0}^{\n,r}\setminus \Gamma_{\0}^{\n,r^-}$ naturally consists of excursions and as a result, it is vital for us to consider excursions about $\Gamma_{\0}^{\n,r^-}$ which are also ``near-geodesics''.

\subsection{Excursions about $\Gamma_{\0}^{\n}$  which are also near-geodesics}
\label{sec:excursions}

As discussed above, we will be interested in excursions $\xi$ about $\Gamma_\0^\n$ whose weight is very close to the passage times between their endpoints. Further, we will need to carefully track their scale and location, and in view of this, for $\delta>0,1\leq \ell\leq n,m\in [\![0,n]\!]$, we define the event $\exc_{\delta}^\ell(m)$ by
\begin{equation}
    \label{eq:57.1}
    \left\{
      \exists u\leq v\in \Gamma_{\0}^{\n}\cap \ZZ_\RR, \textrm{ an excursion } \xi: u\rightarrow v \textrm{ about } \Gamma_{\0}^{\n}\big \lvert   T_u^v-\wgt(\xi)\leq \ell^{\delta}, \loc^{\lceil \ell/2\rceil ,2\ell,m}(\xi) \textrm{ occurs}\right\}.
  \end{equation}

The goal of this section is to prove the following estimate on the probability of the above event.
\begin{proposition}
  \label{lem:2}  
  Fix $\beta\in (0,1/2),\delta\in (0,1/40)$. Then there exists a constant $C$ such that for all $n$ large enough, all $\ell\in [n^{\delta},n]$ and all $m\in [\![\beta n,(1-\beta)n]\!]$, we have $\PP(\exc_\delta^{\ell}(m))\leq C\ell^{-1/3+2\delta}$.
\end{proposition}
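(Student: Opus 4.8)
\textbf{Proof plan for Proposition \ref{lem:2}.}
The plan is to reduce the existence of a near-geodesic excursion of vertical extent $\sim \ell$ at location $m$ to a twin-peaks event for a routed weight profile, and then to apply Proposition \ref{prop:26}. First I would observe that if an excursion $\xi\colon u\to v$ about $\Gamma_{\0}^{\n}$ satisfies $\loc^{\lceil \ell/2\rceil,2\ell,m}(\xi)$, then both endpoints $u,v$ lie in $[m-2\ell,m+2\ell]_{\RR}$ and the vertical extent of $\xi$ is at least $\lceil \ell/2\rceil$, so at least one of $u,v$ is vertically separated from $m$ by $\geq \ell/4$; say $u=(x_1,t_1)$ with $|t_1-m|$ small and $v=(x_2,t_2)$ with $t_2-m\geq \ell/4$ (the other cases are symmetric, and there are only $O(1)$ of them after a union bound). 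Since $\xi$ is an excursion about $\Gamma_{\0}^{\n}$ with $T_u^v-\wgt(\xi)\leq \ell^\delta$, the staircase $\xi$ passes through some point $(x,m')$ with $m'$ a fixed row in the middle of $[t_1,t_2]$ — concretely take $m'$ to be a dyadic-type row at vertical distance $\Theta(\ell)$ from both endpoints — and $(x,m')$ is \emph{not} on $\Gamma_{\0}^{\n}$, so $|x-\Gamma_{\0}^{\n}(m')|$ is forced to be $\gtrsim \ell^{2/3-\delta'}$ on a high-probability event via the transversal fluctuation estimates Proposition \ref{prop:38} and Proposition \ref{prop:53} (the excursion of vertical span $\Theta(\ell)$ around $\Gamma_{\0}^{\n}$ necessarily reaches a transversal displacement of order $\ell^{2/3}$, or at least $\ell^{2/3-\delta'}$, because a typical excursion/detour of that length costs $\Theta(\ell^{1/3})$ in weight and so must wander that far; this is a standard on-scale detour lemma, and I would cite or reprove the relevant statement from \cite{GH20}). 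Then, routing through $(x,m')$, the weight bound $T_u^v-\wgt(\xi)\leq \ell^\delta$ together with $\wgt(\Gamma_{\0}^{\n}) = T_{\0}^{\n}$ gives $|T_{\0}^{\n}-Z_{\0}^{\n,\bullet}(x,m')|\leq \ell^\delta + O(\ell^\delta)$ (the $\bullet$-version appears because splitting a staircase at a lattice row $m'$ naturally produces $T_{\0}^{(x,m')}+T_{(x,m')+(0,1)}^{\n}$ plus one increment which contributes an additive $O(\ell^\delta)$ by Lemma \ref{lem:30}-type control, absorbed into the $\ell^{O(\delta)}$ slack); and $|x-\Gamma_{\0}^{\n}(m')|\geq \ell^{2/3-\delta'}$.

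Next I would invoke Proposition \ref{prop:26}: with $\beta'=\beta/2$ (say), for every row $m'$ in the relevant window $[\![\beta'n,(1-\beta')n]\!]$ — which is guaranteed since $m\in [\![\beta n,(1-\beta)n]\!]$, $\ell\le n$, and $m'$ is within $O(\ell)$ of $m$ — the event $\TP_{\ell,m'}$ has probability at most $C\ell^{-1/3+2\delta}$, after possibly enlarging $\delta$ by a constant factor to swallow the $\delta'$ losses from the transversal fluctuation step. The point is that $\exc_\delta^\ell(m)$, on the high-probability event carved out by the transversal-fluctuation and on-scale-detour lemmas, is contained in a union of $O(1)$ events each of the form $\TP_{c\ell, m'}$ (over the $O(1)$ choices of which endpoint is far and which intermediate row $m'$ to route through — note we can take $m'$ from a fixed $O(1)$-size deterministic set of rows, one for each dyadic block, so there is no loss from a large union over $m'$). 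Hence $\PP(\exc_\delta^\ell(m))\leq C\ell^{-1/3+2\delta}$ plus the probability of the bad transversal-fluctuation event, which is $\leq Ce^{-c\ell^{c}}$ by Propositions \ref{prop:38}, \ref{prop:53}, and this is far smaller than $\ell^{-1/3+2\delta}$ for $\ell\geq n^\delta$ and $n$ large; absorbing it finishes the bound.

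The main obstacle, and the step requiring genuine care, is the \emph{on-scale detour lemma}: showing that an excursion $\xi$ about $\Gamma_{\0}^{\n}$ of vertical span $\Theta(\ell)$ whose weight is within $\ell^\delta$ of optimal must, at some intermediate row $m'$, have already separated from $\Gamma_{\0}^{\n}$ by at least $\ell^{2/3-\delta'}$. Intuitively this holds because if $\xi$ stayed within $\ell^{2/3-\delta'}$ of $\Gamma_{\0}^{\n}$ throughout, then $\xi$ together with the parallel segment of $\Gamma_{\0}^{\n}$ would enclose a thin strip, and a comparison of point-to-point passage times across a thin on-scale strip (using the moderate deviation estimates Proposition \ref{prop:47}/Lemma \ref{lem:74} for $\cP$, exactly as in the constancy-of-transversal-fluctuation arguments of \cite{GH20}) forces the weight deficit $T_u^v-\wgt(\xi)$ to be $\gg \ell^\delta$ with overwhelming probability, contradicting the defining bound of $\exc_\delta^\ell(m)$. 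I would either cite the precise form of this from \cite{GH20} or include a short self-contained argument: condition on $\Gamma_{\0}^{\n}$, and on the sub-strip use that the best weight of a staircase from $u$ to $v$ confined to within $\ell^{2/3-\delta'}$ of $\Gamma_{\0}^{\n}$ but disjoint from it (apart from endpoints) is, with high probability, at least $\ell^{1/3 - C\delta'}$ below $T_u^v$ — a standard "cost of a constrained detour" estimate. Getting the exponents to line up — so that $2/3-\delta'$ in the detour lemma feeds into the $2/3-\delta$ slot of $\TP_{\ell,m'}$ with only an $O(\delta)$ total loss — is the bookkeeping that needs to be done carefully, but there is no essential difficulty beyond it.
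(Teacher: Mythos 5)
There is a genuine gap in the union-bound step, and it is exactly the point where the paper's proof requires a different idea.

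Your plan correctly reduces $\exc_\delta^\ell(m)$ to the twin-peaks event for the routed profile $Z_{\0}^{\n,\bullet}$ at an intermediate row, and the ``on-scale detour lemma'' you invoke is essentially Proposition~\ref{prop:10} (from \cite{GH20}) combined with the concentration of Lemma~\ref{lem:32}. The routing identity \eqref{eq:350}/\eqref{eq:594} is also cleaner than you suggest -- the $\bullet$-profile picks up no extra additive error, since $Z_{\0}^{\n,\bullet}(\xi(j),j)\geq \wgt(\Gamma_\0^u\cup\xi\cup\Gamma_v^\n)$ exactly. The problem is in the sentence ``we can take $m'$ from a fixed $O(1)$-size deterministic set of rows, one for each dyadic block, so there is no loss from a large union over $m'$.'' This is not justified and, as stated, is false. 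The excursion $\xi$ has random endpoints $u=(x_1,t_1),v=(x_2,t_2)$ on $\Gamma_\0^\n$ (with $t_2-t_1\in[\ell/2,2\ell]$ and the interval $[t_1,t_2]$ anywhere inside $[m-2\ell,m+2\ell]$), and what the detour lemma buys you is only that the separation $|\xi(r)-\Gamma_\0^\n(r)|$ exceeds $\ell^{2/3-\delta}$ at \emph{some} row $r$, or, in the sharper Proposition~\ref{prop:10} form, at a $\chi$-fraction of the $\Theta(\ell)$ rows spanned by $\xi$. Nothing pins down which rows these are. An excursion can hug $\Gamma_\0^\n$ for most of its span and make its detour in an arbitrary sub-window, so a deterministic ``middle'' row need not see large separation. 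Consequently you cannot reduce to $O(1)$ applications of Proposition~\ref{prop:26}; the honest union is over $\Theta(\ell)$ rows, which costs a factor of $\ell$ and turns the bound into $\ell^{2/3+2\delta}$ -- useless.

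The paper sidesteps the union bound entirely with a first-moment averaging argument, and this is the ingredient your proposal is missing. On the good event $\exc_\delta^\ell(m)\cap(\thinexc_\delta^\ell)^c\cap\geodwt_\delta^\ell$, Proposition~\ref{prop:10} guarantees that \emph{at least} $\chi\ell/2$ rows $j$ within $[\![m-2\ell,m+2\ell]\!]$ have both $|\xi(j)-\Gamma_\0^\n(j)|\geq\ell^{2/3-\delta}$ and $T_\0^\n-Z_\0^{\n,\bullet}(\xi(j),j)\leq\ell^\delta$, i.e.\ $\TP_{\ell,j}$ holds. Taking expectations of the count $\#\{j:\TP_{\ell,j}\}$ and using Proposition~\ref{prop:26} term-by-term gives $(\chi\ell/4)\,\PP(\exc_\delta^\ell(m)\cap\dots)\leq 4\ell\cdot C\ell^{-1/3+2\delta}$, and dividing by $\chi\ell/4$ recovers $\PP(\exc_\delta^\ell(m))\lesssim\ell^{-1/3+2\delta}$ with no loss. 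The factor $\ell$ in the sum over rows is cancelled by the factor $\ell$ in the guaranteed number of twin-peaks rows, which is why the $\chi$-fraction statement of Proposition~\ref{prop:10} (rather than the single-row detour lemma you sketch) is essential. You should replace your ``choose a deterministic $m'$'' step with this averaging.
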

As we described in Section \ref{sec:out-fixed}, the twin peaks estimate Proposition \ref{prop:26} shall be a core ingredient in the proof of the above. In order to be able to use the above result, we first need to argue that it is unlikely for an excursion as in $\exc_\delta^\ell(m)$ to be too ``thin'', and we now give a definition. First, we shall need some notation-- for points $(x,s)\leq (y,t)\in \ZZ_\RR$, we define
\begin{equation}
  \label{eq:586}
  Q_{(x,s)}^{(y,t)}= 2(t-s) + (y-x).
\end{equation}
We shall use the above quantity frequently and it can be thought of as the first order term in the Taylor expansion of $\EE T_{(x,s)}^{(y,t)}$ when $\slope( (x,s) , (y,t) )$ is very close to $1$.
Now, for constants $\chi\in (0,1),D>0$, and $\delta>0$, let $\thinexc_\delta^\ell$ denote the event (see Figure \ref{fig:thinthick}) that there exist points $(x,s)\leq (y,t)\in \Gamma_{\0}^{\n}\cap \ZZ_\RR$ and an excursion $\xi\colon (x,s)\rightarrow (y,t)$ about $\Gamma_\0^\n$ such that we have the following:
\begin{enumerate}
\item $t-s\in [\ell/2,2\ell]$
\item $|\xi(r)-\Gamma_{\0}^{\n}(r)|\leq \ell^{2/3-\delta}$ for more than a $(1-\chi)$ fraction of $r\in [\![s-1,t]\!]$.
\item $\wgt(\xi)-Q_{(x,s)}^{(y,t)}\geq -D\ell^{1/3+\delta}$.
\end{enumerate}
In the work \cite{GH20}, the following result on the probability of the event $\thinexc_\delta^\ell$ was obtained.
\begin{figure}
  \centering
  \includegraphics[width=0.7\linewidth]{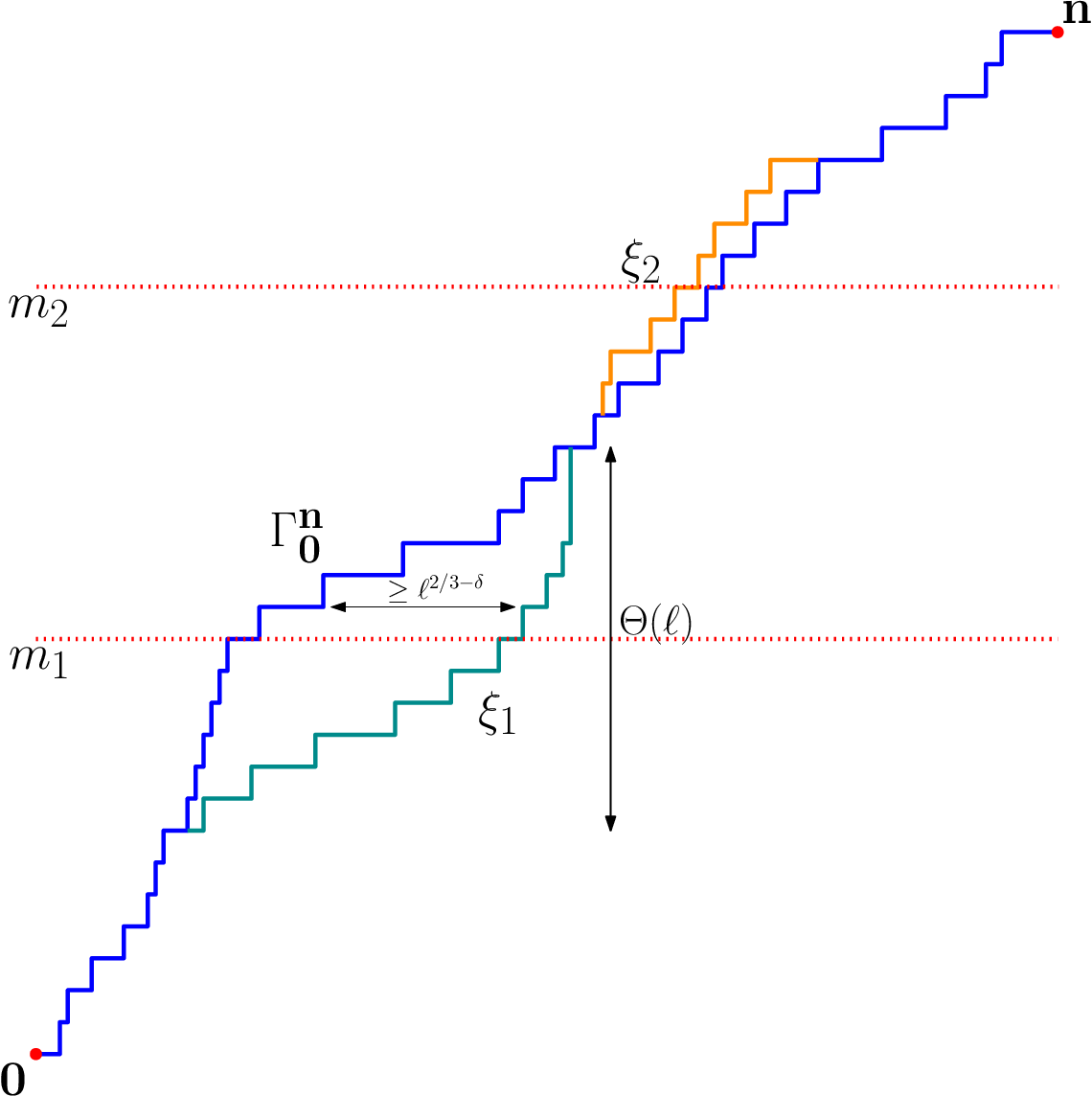}
  \caption{Here, the excursion $\xi_1$ (cyan) maintains at least an $\ell^{2/3-\delta}$ separation from $\Gamma_{\0}^{\n}$ for at least a $\chi$ fraction of its life time-- such an excursion which is also a near geodesic would be guaranteed on the event $\exc_{\delta}^{\ell}(m_1)\cap (\thinexc_\delta^\ell)^c\cap \geodwt^\ell_\delta$. In contrast, the excursion $\xi_2$ (orange) is consistently closer than $\ell^{2/3-\delta}$ to $\Gamma_{\0}^{\n}$ and is thus considered ``thin''.}
  \label{fig:thinthick}
\end{figure}
\begin{proposition}[{\cite[Theorem 1.9]{GH20}}]
  \label{prop:10}
 There exists a choice of $\chi\in (0,1),D>0$ in the definition of $\thinexc_\delta^\ell$, and there there exists a constant $d$ such that for any $\delta\in (0,1/40)$, for all $\ell \in [n^\delta,n]$ %
  and all $n$ large enough depending on $\delta$, we have $\PP(\thinexc_\delta^\ell)\leq e^{-d \ell^{\delta/2}}$. %
\end{proposition}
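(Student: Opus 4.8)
\emph{Overall approach.} The plan is to argue that on the event $\thinexc_\delta^\ell$ the three defining properties are incompatible except with probability $e^{-d\ell^{\delta/2}}$, by extracting a forbidden ``thin confined disjoint near-geodesic''. Indeed, on $\thinexc_\delta^\ell$ there is an excursion $\xi$ about $\Gamma_\0^\n$, of vertical extent $L\in[\ell/2,2\ell]$, which by property (2) stays within the tube of horizontal width $w:=\ell^{2/3-\delta}$ about $\Gamma_\0^\n$ on all but a $\chi$ fraction of levels, and which --- being an excursion, hence meeting $\Gamma_\0^\n$ only at its endpoints, and by staircase monotonicity lying strictly on one side of $\Gamma_\0^\n$ in the interior --- is \emph{disjoint} from the geodesic there. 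The heuristic is that a staircase squeezed into such a sub-on-scale tube must forfeit weight of order $L/w=\ell^{1/3+\delta}$ relative to the first-order value $Q$ (the case $w\asymp L^{2/3}$ recovering the natural fluctuation scale $L^{1/3}$, smaller $w$ costing more). For $D$ smaller than the implicit constant, this contradicts property (3) ($\wgt(\xi)\ge Q-D\ell^{1/3+\delta}$). Since the target probability $e^{-d\ell^{\delta/2}}$ is far weaker than the deficit gives, it will also absorb a union bound over a $\mathrm{poly}(n)$-sized net of discretized configurations, using $\ell\ge n^\delta$.

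\emph{Step 1: regularity and discretization.} By Propositions \ref{prop:38} and \ref{prop:53}, union-bounded over dyadic scales $\le n$, there is an event of probability at least $1-e^{-\ell^{c_0}}$ (for a constant $c_0=c_0(\delta)>0$) on which $\Gamma_\0^\n$ is ``regular'': on every window of vertical extent in $[\ell/2,2\ell]$ it stays within $\ell^{2/3+\delta/100}$ of the chord through its endpoints, with endpoint coordinates near their first-order locations; moreover, the $Q$-optimality in property (3) together with standard curvature estimates forces any candidate excursion $\xi$ to lie within $O(\ell^{2/3+\delta})$ of $\Gamma_\0^\n$ throughout its window. Rounding the integer endpoint levels and the real endpoint coordinates of $\xi$ to a grid of spacing $\ell^{-100}$ --- the resulting error being negligible by Brownian-increment oscillation bounds as in Lemma \ref{lem:30} --- it suffices to bound, for each of the $\mathrm{poly}(n)$ resulting pieces of endpoint data $u\le v$, the probability that a thin, near-optimal excursion about $\Gamma_\0^\n$ with endpoints (approximately) $u,v$ exists.

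\emph{Step 2: the confinement deficit via a multiscale estimate.} Fix such $u\le v$ and write $\Gamma$ for $\Gamma_\0^\n$ restricted to the corresponding window. One natural route is to condition on $\Gamma$ together with the Brownian field outside the $O(\ell^{2/3+\delta})$-wide corridor which, on the regular event, contains both $\Gamma$ and $\xi$; by the Brownian Gibbs property of $\cP$ (Section \ref{sec:brown}) the field inside the corridor is then a family of non-crossing Brownian bridges with $\Gamma$ held fixed. Partition the window into $\ell^{o(1)}$ sub-blocks, each of vertical extent slightly \emph{above} the on-scale value $w^{3/2}$ for width $w$, so that within a sub-block the confinement penalty ($\sim$ (block extent)$/w$) dominates the $O(\sqrt w)$ error from letting a sub-staircase's endpoints float within $w$ of $\Gamma$'s. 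By pigeonhole all but a controlled fraction of sub-blocks have $\xi$ confined to the width-$w$ tube on a $(1-\chi')$ fraction of their levels, and taking $\chi$ small relative to $\chi'$ leaves at least half the sub-blocks of this type. On each such sub-block, rescaling to the unit on-scale box by Proposition \ref{prop:46} and invoking the moderate-deviation estimates of Proposition \ref{prop:47}, the maximal weight of a staircase lying on one side of $\Gamma$ within width $w$ is, conditionally, below $\Gamma$'s weight on that sub-block by at least a fixed positive power of $\ell$, except with conditionally stretched-exponentially small probability. Summing over the good sub-blocks --- crucially, $\Gamma$'s weight is \emph{exactly} additive across blocks --- together with the deterministic bound $\wgt(\xi)\le T_u^v$ and the upper bound $T_u^v\le Q+\ell^{1/3+\delta/100}$ from Proposition \ref{prop:47}, and a concentration estimate for the sum of the essentially independent per-block deficits, yields $\wgt(\xi)\le Q-c''\ell^{1/3+\delta}$ off an event of probability at most $e^{-d\ell^{\delta/2}}$. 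Choosing $D<c''$ gives the contradiction, and a final union bound over the $\mathrm{poly}(n)$ configurations, valid since $\ell^{\delta/2}\ge n^{\delta^2/2}\gg\log n$ for $n$ large depending on $\delta$, completes the proof.

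\emph{Main obstacle.} The real difficulty --- the technical heart of \cite[Theorem 1.9]{GH20} --- is controlling what $\xi$ can \emph{recover} on the $\chi$ fraction of levels where the thin-tube confinement is relaxed (and where $\xi$ may stray up to $O(\ell^{2/3+\delta})$ from $\Gamma$), as well as on the short sub-blocks discarded above: one must show this recovery is $o(\ell^{1/3+\delta})$, hence cannot offset the deficit. A naive level-by-level or excursion-by-excursion estimate appears to overcount this recovery by polynomial factors; the correct argument must use the non-decreasing staircase structure of $\xi$ and the $Q$-optimality in property (3) to constrain simultaneously the horizontal displacement and the detour geometry of the ``escape'' sub-excursions, and must handle the dependence between confined and escaped stretches through the Brownian Gibbs resampling rather than by splitting into independent pieces. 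A secondary point is arranging the per-block estimates in Step 2 to be sufficiently decoupled (e.g.\ by conditioning on alternate sub-blocks) to run the concentration step that upgrades the per-block failure rate to the stretched-exponential rate $e^{-d\ell^{\delta/2}}$.
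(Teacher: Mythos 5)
The paper does not prove Proposition \ref{prop:10}: it is imported verbatim from Ganguly--Hammond \cite[Theorem 1.9]{GH20}, exactly as Propositions \ref{prop:15}, \ref{prop:38}, \ref{prop:52}, and \ref{lem:139} are imported from their sources, and no argument for it appears anywhere in the text. Your proposal is therefore not an alternative to the paper's proof so much as an attempt to re-derive a substantial external theorem. On its merits, the high-level shape is reasonable: pin a one-sided excursion of vertical extent $\Theta(\ell)$ into a tube of width $\ell^{2/3-\delta}$, expect a confinement cost of order $\ell/\ell^{2/3-\delta}=\ell^{1/3+\delta}$, and hope to make this rigorous with a multiscale Brownian--Gibbs argument and a $\mathrm{poly}(n)$-sized discretization of endpoint data.

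But the proof does not close, and in two places the decisive input is simply missing. First, your per-block deficit estimate asserts that the best weight of a staircase confined to a width-$w$ tube on one side of $\Gamma_\0^\n$ falls below $\Gamma$'s weight on that block by a fixed power of $\ell$, and you cite Proposition \ref{prop:47} for this; but that is a one-point moderate-deviation bound for $P_{k,n}$ and says nothing about confined or one-sided passage times. What is actually needed is a barrier version of the Brownian--Gibbs comparison --- that Brownian bridges repelled by the geodesic as a lower boundary lose $\Theta(L/w)$ of supremum over a window of height $L$ --- and no such statement is available in this paper nor supplied by you. Second, as you yourself flag under ``Main obstacle,'' you have no control over what $\xi$ recovers on the $\chi$ fraction of unconfined levels or the discarded short sub-blocks, no resolution of the dependence between confined and escape stretches, and no concentration argument upgrading per-block failures to the $e^{-d\ell^{\delta/2}}$ rate. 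These are precisely the hard steps of \cite[Theorem 1.9]{GH20}, so what you have written is a heuristic roadmap rather than a proof. Since the paper deliberately treats this result as a black box, you should too, and simply cite it.
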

The above result fixes the choice of $\chi,D$ used in the definition of the event $\thinexc_\delta^\ell$ for the rest of the section. Now, intuitively, due to the above result, we know that whenever we have an excursion $\xi$ for which $\loc^{\lceil \ell/2\rceil,2\ell,m}(\xi)$ holds and which is additionally ``thin'' in the sense of satisfying (3) in the definition above, it is very likely that it at least has a $D\ell^{1/3+\delta}$ shortfall in weight compared to the mean of the passage time between its endpoints (say we call them $u\leq v$). We emphasize that the above shortfall is measured with respect to $Q_u^v$. Ideally, we would like to have a version of the above where the shortfall is measured with respect to the weight $T_u^v$, and for this, it suffices to show that $T_u^v$ cannot be much smaller than $Q_u^v$ (note that the points $u,v\in \Gamma_{\0}^{\n}$ and are thus not deterministic). The following lemma from \cite{GH20} provides the concentration estimate needed for the above.

\begin{proposition}[{\cite[Theorem 1.6]{GH20}}]
  \label{prop:15}
  There exist constants $H,h,r_0,n_0\in \NN$ such that for all $n\geq n_0$ and $\ell$ satisfying $\ell \geq h$ and $r\in \RR$ satisfying $r\geq r_0$, with probability at least $1-He^{-h^{-1}r^3\log(n/\ell)}$, the following occurs. For any points $(x,s)\leq (y,t)\in \Gamma_\0^\n\cap \ZZ_\RR$ and $\ell/2\leq t-s\leq 2\ell$ along with $r\leq \ell^{1/64}$, we have
  \begin{equation}
    \label{eq:122}
    |T_{(x,s)}^{(y,t)}-Q_{(x,s)}^{(y,t)}|\leq H^2r^2\ell^{1/3}\log^{2/3} (n/\ell).
  \end{equation}
\end{proposition}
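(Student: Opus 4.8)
\emph{Overview.} The statement is a uniform two--sided concentration estimate: over all pairs of points on the \emph{single} geodesic $\Gamma_{\0}^{\n}$ whose vertical separation is of order $\ell$, the passage time $T_{(x,s)}^{(y,t)}$ should stay within $O(r^{2}\ell^{1/3}\log^{2/3}(n/\ell))$ of the affine profile $Q_{(x,s)}^{(y,t)}$ of \eqref{eq:586}, which reproduces $\EE T_{(x,s)}^{(y,t)}$ to first order for pairs lying near $\LL_{\0}^{\n}$. The plan is to \textbf{(a)} use the transversal fluctuation estimates to confine $\Gamma_{\0}^{\n}$ and thereby replace the family of \emph{random} pairs of geodesic points by a \emph{deterministic} net $\cN$ of size $n^{O(1)}$; \textbf{(b)} for each fixed pair in $\cN$, translate to the origin, apply the Brownian scaling of Proposition~\ref{prop:46}, and invoke the line--ensemble moderate deviation bounds of Proposition~\ref{prop:47} (with $k=1$) to control $|T-Q|$; and \textbf{(c)} conclude by a union bound, in which the factor $\log^{2/3}(n/\ell)$ in the bound and the term $r^{3}\log(n/\ell)$ in the failure probability are produced precisely by pairing the chosen deviation scale with the $3/2$--exponent of the upper tail. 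Throughout I would assume $\ell\le n^{1-\eta}$ for a small fixed $\eta>0$, since when $\ell$ lies within a poly--logarithmic factor of $n$ the asserted failure probability exceeds $1$ for bounded $r$ and nothing needs to be shown.

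\emph{Step (a): reduction to a deterministic net.} Proposition~\ref{prop:38} supplies a high--probability event on which $\Gamma_{\0}^{\n}\subseteq B_{\rho n^{2/3}}(\LL_{\0}^{\n})$, while Proposition~\ref{prop:53} together with the scaling in Proposition~\ref{prop:46} supplies, after a union bound over the $O(n/\ell)$ windows of vertical extent $2\ell$, a high--probability event on which $\Gamma_{\0}^{\n}$ stays within $\rho\ell^{2/3}\log^{1/3}(n/\ell)$ of every local chord. On the intersection of these events, any pair $(x,s)\le(y,t)\in\Gamma_{\0}^{\n}\cap\ZZ_{\RR}$ with $t-s\in[\ell/2,2\ell]$ has $x-s\in[\![-\rho n^{2/3},\rho n^{2/3}]\!]$ and $(y-x)-(t-s)\in[\![-\rho\ell^{2/3}\log^{1/3}(n/\ell),\rho\ell^{2/3}\log^{1/3}(n/\ell)]\!]$. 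Since the restriction of $\Gamma_{\0}^{\n}$ between two of its own points is a geodesic, $T_{(x,s)}^{(y,t)}$ equals the $T$--weight of that sub-segment, so this reduction loses nothing: it suffices to prove \eqref{eq:122} simultaneously for every integer pair in the deterministic set $\cN$ cut out by the two displayed constraints together with $s\in[\![0,n]\!]$ and $t-s\in[\![\lceil\ell/2\rceil,2\ell]\!]$, and one checks $|\cN|\le n^{C}$ for an absolute $C$.

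\emph{Steps (b)--(c): per-pair bound and union bound.} Fixing a pair of $\cN$, translation invariance and Proposition~\ref{prop:46} give $T_{(x,s)}^{(y,t)}\stackrel{d}{=}\sqrt{(y-x)/(t-s)}\;T_{(0,0)}^{(t-s,\,t-s)}$, a diagonal BLPP time on scale $\ell':=t-s\in[\ell/2,2\ell]$; Proposition~\ref{prop:47} applied to the top line $P_{1,\ell'}$ bounds the probability that $|T_{(0,0)}^{(\ell',\ell')}-2\ell'|$ exceeds $\beta_0(\ell')^{1/3}$ by $Ce^{-c\beta_0^{3/2}}$ from the upper tail and $Ce^{-c\beta_0^{3}}$ from the lower tail, provided $\beta_0<5(\ell')^{2/3}$. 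Undoing the scaling and expanding $\sqrt{(y-x)/(t-s)}$ to second order replaces the centring $2\ell'$ by $Q_{(x,s)}^{(y,t)}$ up to an error $O\!\big((y-x-(t-s))^{2}/(t-s)\big)$, which by the net constraint is $O(\ell^{1/3}\log^{2/3}(n/\ell))$. I would then set $\beta_0=H^{2}r^{2}\log^{2/3}(n/\ell)$ --- permissible because $r\le\ell^{1/64}$ forces $\beta_0\le H^{2}\ell^{1/32}(\log n)^{2/3}\le 5(\ell')^{2/3}$ for large $n$, so the moderate-deviation regime of \eqref{eq:578} applies rather than the Gaussian one of \eqref{eq:579} --- whereupon the chance that $|T_{(x,s)}^{(y,t)}-Q_{(x,s)}^{(y,t)}|>H^{2}r^{2}\ell^{1/3}\log^{2/3}(n/\ell)$ for that pair is at most $Ce^{-cH^{3}r^{3}\log(n/\ell)}$, the upper tail dominating and the exponent $\beta_0^{3/2}=(H^{2}r^{2}\log^{2/3}(n/\ell))^{3/2}\propto r^{3}\log(n/\ell)$ supplying the claimed $r^{3}$. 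A union bound over the $\le n^{C}$ pairs of $\cN$ costs a factor $e^{C\log n}\le e^{(C/\eta)\log(n/\ell)}$; choosing $H$ with $cH^{3}\ge h^{-1}+C/\eta$ and intersecting with the transversal-fluctuation event from Step (a) yields \eqref{eq:122} with failure probability at most $He^{-h^{-1}r^{3}\log(n/\ell)}$.

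\emph{Main obstacle.} The delicate point is the supremum over the \emph{starting} point $x$, which inside $\cN$ ranges over a window of width $\asymp n^{2/3}$, far exceeding the local scale $\ell^{2/3}$; handled naively this would cost $\log n$ rather than $\log(n/\ell)$ in the exponent and spoil the estimate. For the upper bound on $T$ the resolution is to not union--bound over individual $x$: the profile $x\mapsto T_{(x,s)}^{(y,t)}-Q_{(x,s)}^{(y,t)}$ is, away from its maximiser, dominated by the negative parabolic term $-c\,(y-x-(t-s))^{2}/(t-s)$, so its supremum over a window of width $w\le n$ is governed by the supremum of the locally Brownian fluctuation field over $\asymp w/\ell^{2/3}$ blocks, which exceeds $\beta_0(\ell')^{1/3}$ with probability only $\lesssim (w/\ell^{2/3})e^{-c\beta_0^{3/2}}$, costing merely $\log(w/\ell^{2/3})\le\log(n/\ell)$ inside $\beta_0$. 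Turning this ``parabola plus locally Brownian fluctuation'' picture into an actual estimate --- via a chaining argument for the passage-time profile built on Lemma~\ref{lem:74}, or via the Brownian Gibbs comparison discussed in Section~\ref{sec:brown} --- is the one genuinely non-routine ingredient; the matching lower bound on $T$ is easier, since for each $x$ only the $O(\ell^{2/3}\log^{1/3}(n/\ell))$ values of $y$ allowed by the local transversal fluctuation matter and the cubic lower tail $e^{-c\beta_0^{3}}$ absorbs that small net without effort. Everything else reduces to bookkeeping with the cited moderate-deviation and transversal-fluctuation estimates.
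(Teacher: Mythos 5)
The paper does not actually prove Proposition~\ref{prop:15}: it is imported verbatim as \cite[Theorem~1.6]{GH20}, so there is no in-paper argument to compare against and your sketch must be judged on its own merits.

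Assessed that way, your proposal has a genuine gap, and it is one you flag yourself. The opening reduction ``assume $\ell\le n^{1-\eta}$'' is not a legitimate WLOG. The claimed failure bound $He^{-h^{-1}r^3\log(n/\ell)}$ stays strictly below $1$ whenever $\log(n/\ell)\ge h\log H/r_0^3$, i.e.\ whenever $\ell\le n/H^{h/r_0^3}$; that threshold is a \emph{constant} fraction of $n$, so the proposition has content for every $\ell\in(n^{1-\eta},\,n/H^{h/r_0^3}]$, a range your reduction silently discards. In that range the net of Step~(a) contains $\gtrsim n\cdot n^{2/3}$ starting points $(x,s)$ alone (before discretisation in the horizontal coordinates), so the union bound of Step~(c) pays $e^{\Theta(\log n)}$, which overwhelms $e^{-h^{-1}r^3\log(n/\ell)}$ there; Steps~(a)--(c) as written therefore prove nothing for those $\ell$. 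Your ``Main obstacle'' paragraph correctly names the remedy --- bound the \emph{supremum} of $x\mapsto T_{(x,s)}^{(y,t)}-Q_{(x,s)}^{(y,t)}$ using the parabolic decay away from its maximiser and a chaining estimate over $\asymp n^{2/3}/\ell^{2/3}$ blocks, so that only a $\log(n/\ell)$ entropy factor is charged --- but this is a description of what must be done, not an executed argument, and it is precisely the step that manufactures the required $\log(n/\ell)$ in the exponent. Until it is carried out, the proposal is incomplete exactly where the statement is sharp. One incidental point: your Taylor bookkeeping replaces $2\sqrt{(y-x)(t-s)}$ by the linearisation $(t-s)+(y-x)$ up to an admissible $O\big((y-x-(t-s))^2/(t-s)\big)=O(\ell^{1/3}\log^{2/3}(n/\ell))$ remainder, which is correct; but note that \eqref{eq:586} as printed reads $Q_{(x,s)}^{(y,t)}=2(t-s)+(y-x)$, which at the diagonal equals $3\ell$ while $\EE T_{(x,s)}^{(y,t)}\approx 2\ell$ --- an $\Theta(\ell)$ discrepancy incompatible with \eqref{eq:122} --- so the normalisation of $Q$ should be double-checked against \cite{GH20} before it is used as a centring.
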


Now, let $\geodwt_\delta^\ell$ denote the event that for any points $(x,s)\leq  (y,t)\in \Gamma_{\0}^{\n}\cap \ZZ_\RR$ with $t-s\in [\ell/2,2\ell]$, we have $T_{(x,s)}^{(y,t)}-Q_{(x,s)}^{(y,t)}\geq -(D/2)\ell^{1/3+\delta}$. Now, the above lemma immediately yields the following.
\begin{lemma}
  \label{lem:32}
  There exist constants $C,c$ such that for any fixed $\delta\in (0,1/40)$, all $\ell\in [n^\delta,n]$ and all $n$ large enough, we have $\PP(\geodwt_\delta^\ell)\geq 1-Ce^{-c\ell^{3\delta/2}}$.
\end{lemma}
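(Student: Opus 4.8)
The plan is to obtain Lemma~\ref{lem:32} directly from Proposition~\ref{prop:15} by substituting an optimally tuned deviation parameter. Fix $\delta\in(0,1/40)$ and first consider $\ell\in[n^{\delta},n/2]$. I would choose $r=r(\ell,n)$ so that the error bound in \eqref{eq:122} coincides exactly with the shortfall threshold $(D/2)\ell^{1/3+\delta}$ permitted in the definition of $\geodwt_\delta^\ell$; solving $H^2r^2\ell^{1/3}\log^{2/3}(n/\ell)=(D/2)\ell^{1/3+\delta}$ yields
\[
  r=\Big(\tfrac{D}{2H^2}\Big)^{1/2}\ell^{\delta/2}\log^{-1/3}(n/\ell).
\]
The one point of the computation is that with this choice the exponent controlling the failure probability in Proposition~\ref{prop:15} loses its dependence on the logarithm:
\[
  h^{-1}r^{3}\log(n/\ell)=h^{-1}\Big(\tfrac{D}{2H^2}\Big)^{3/2}\ell^{3\delta/2}.
\]
Hence, once the hypotheses of Proposition~\ref{prop:15} are checked, it returns exactly $\PP(\geodwt_\delta^\ell)\ge 1-He^{-h^{-1}(D/2H^2)^{3/2}\ell^{3\delta/2}}$, which is the claim with $C=H$ and $c=h^{-1}(D/2H^2)^{3/2}$.

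Verifying the two conditions $r_{0}\le r\le\ell^{1/64}$ is routine and uses only $\delta<1/40$, $\ell\ge n^{\delta}$ and $\ell\le n/2$. For the upper bound: $\ell\le n/2$ forces $\log(n/\ell)\ge\log 2$, so $r\le 2^{1/3}(D/2H^{2})^{1/2}\ell^{\delta/2}$, and since $\delta/2<1/64$ and $\ell\ge n^{\delta}\to\infty$ this is at most $\ell^{1/64}$ for $n$ large. For the lower bound: $\log(n/\ell)\le\log n$, so $r\ge(D/2H^{2})^{1/2}n^{\delta^{2}/2}(\log n)^{-1/3}\to\infty$, whence $r\ge r_{0}$ for $n$ large; the conditions $\ell\ge h$ and $n\ge n_{0}$ likewise hold for $n$ large. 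This settles the main regime.

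For $\ell\in(n/2,n]$, where $\log(n/\ell)$ in \eqref{eq:122} degenerates, I would pass to smaller scales by monotonicity. Any $(x,s)\le(y,t)\in\Gamma_{\0}^{\n}\cap\ZZ_{\RR}$ has $0\le s\le t\le n$, so the requirement $t-s\in[\ell/2,2\ell]$ already forces $t-s\in(n/4,n]$; moreover $(D/2)\ell^{1/3+\delta}\ge(D/2)(\lfloor n/2\rfloor)^{1/3+\delta}$ and $\lfloor n/2\rfloor\le\ell$. Consequently the event $\geodwt_\delta^{\lfloor n/2\rfloor}$ (already handled, as $\lfloor n/2\rfloor\in[n^{\delta},n/2]$) implies the inequality defining $\geodwt_\delta^\ell$ for every relevant pair except possibly the single pair $(\0,\n)$ arising when $t-s=n$; the latter is controlled directly by the one-point lower tail of Proposition~\ref{prop:47} (failure probability $\le Ce^{-cn^{3\delta}}$). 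Intersecting these events and using $\lfloor n/2\rfloor\ge\ell/3$ to compare exponents gives $\PP(\geodwt_\delta^\ell)\ge 1-Ce^{-c'\ell^{3\delta/2}}$ here too.

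The one subtlety worth flagging is precisely this degeneration of $\log(n/\ell)$ as $\ell\uparrow n$, which is what forces the short separate treatment of the macroscopic scales; apart from that I do not anticipate any real obstacle, the lemma being an immediate consequence of Proposition~\ref{prop:15} via the substitution above. (If Proposition~\ref{prop:15} is understood with the customary $\log(2n/\ell)$ in place of $\log(n/\ell)$, the third and fourth paragraphs become unnecessary.)
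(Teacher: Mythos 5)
Your proof is correct and follows essentially the same strategy as the paper: both plug a carefully tuned deviation parameter $r$ into Proposition~\ref{prop:15}. The paper takes $r$ to be a fixed small power of $\ell$ and then sandwiches $\log(n/\ell)$ between $1$ and $D\ell^{\delta/2}$; you instead build the $\log^{-1/3}(n/\ell)$ factor into $r$ so that the error bound matches $(D/2)\ell^{1/3+\delta}$ exactly and the $\log$ cancels out of the exponent. Both routes give the same conclusion for $\ell\le n/2$.

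The one substantive addition in your write-up is the explicit treatment of $\ell\in(n/2,n]$, where $\log(n/\ell)$ degenerates. The paper's claim that $1\le\log(n/\ell)$ ``since $\ell\ge n^\delta$'' is not literally true for $\ell$ in this top range, so your monotonicity reduction to $\geodwt_\delta^{\lfloor n/2\rfloor}$ (together with the separate handling of the single pair $(\0,\n)$ via Proposition~\ref{prop:47}) patches a point the paper's proof glosses over. This is a legitimate observation, though, as you note, it disappears if the $\log(n/\ell)$ in \eqref{eq:122} is in fact $\log(2n/\ell)$ or similar in the source \cite{GH20}. In summary: same approach, with a bit more care at the macroscopic edge case.
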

\begin{proof}
  We take $r=(H^{-1}\ell^{\delta}/2)^{1/2}$ in Proposition \ref{prop:15}. Now, for all $n$ large enough, we obtain that, for some constant $c$, with probability at least $1-He^{-c\ell^{3\delta/2}\times \log(n/\ell)}$, we have
  \begin{equation}
    \label{eq:123}
    |T_{(x,s)}^{(y,t)}-Q_{(x,s)}^{(y,t)}|\leq \ell^{\delta/2}\ell^{1/3}\log^{2/3}(n/\ell)/2
  \end{equation}
  for all $(x,s),(y,t)$ as in the definition of $\geodwt_\delta^\ell$. Finally, since we are working with $\ell\geq n^{\delta}$, it is easy to see that $1\leq \log(n/\ell)\leq D\ell^{\delta/2}$ for all $n$ large enough, and this implies that with probability at least $1-He^{-c\ell^{3\delta/2}}$, we have
  \begin{equation}
    \label{eq:126}
    |T_{(x,s)}^{(y,t)}-Q_{(x,s)}^{(y,t)}|\leq (D/2)\ell^{1/3+\delta}
  \end{equation}
for all $(x,s),(y,t)$ as before. This completes the proof.
\end{proof}
Equipped with the twin peaks estimate Proposition \ref{prop:26} and the above results, we are now ready to prove Proposition \ref{lem:2}.
\begin{proof}[Proof of Proposition \ref{lem:2}]
Note that on the event
  \begin{equation}
    \label{eq:349}
    \exc_{\delta}^{\ell}(m)\cap (\thinexc_\delta^\ell)^c\cap \geodwt^\ell_\delta,
  \end{equation}
  the excursion $\xi$ from the event $\exc_\delta^\ell(m)$ can have $|\xi(s)-\Gamma_{\0}^{\n}(s)|\leq \ell^{2/3-\delta}$ for at most a $(1-\chi)$ fraction of the length of $\xi$ (see Figure \ref{fig:thinthick}). Indeed, if this were not true, then by the definition of $\thinexc_\delta^\ell(m)$, with $u,v$ denoting the endpoints of $\xi$, we would have $\wgt(\xi)-Q_u^v<-D\ell^{1/3+\delta}$. Further, by the definition of $\geodwt^\ell_{\delta}$, then we have $T_u^v-Q_u^v\geq -(D/2)\ell^{1/3+\delta}$. As a result of this, we obtain that $T_u^v-\wgt(\xi)\geq (D/2)\ell^{1/3+\delta}$ but this contradicts the definition of $\exc_\delta^\ell(m)$ which requires that $T_u^v-\wgt(\xi)\leq \ell^\delta$ which is strictly smaller than $(D/2) \ell^{1/3+\delta}$ for all $n$ large enough and $\ell\geq n^{\delta}$.

Now, we note that whenever we have an excursion $\xi\colon u\rightarrow v$ as above with $T_u^v-\wgt(\xi)\leq \ell^{\delta}$, then for all $j$ for which $\xi(j)$ is defined, we must necessarily have
  \begin{equation}
    \label{eq:350}
    T_\0^\n-Z_{\0}^{\n,\bullet}(\xi(j),j)\leq \ell^\delta,
  \end{equation}
  where we are using the routed distance profile $Z_{\0}^{\n,\bullet}$ as defined in Section \ref{s:routed}. Indeed, to see this, consider the staircase $\Gamma_{\0}^u\cup \xi \cup \Gamma_v^{\n}$ passing through $(\xi(j),j)$ and note that
  \begin{equation}
    \label{eq:594}
    \wgt(\Gamma_{\0}^u\cup \xi \cup \Gamma_v^{\n})= \wgt(\Gamma_{\0}^u\cup \Gamma_u^v \cup \Gamma_v^{\n})+ (\wgt(\xi)-T_u^v)= T_\0^\n+(\wgt(\xi)-T_u^v)\geq T_\0^\n-\ell^\delta.
  \end{equation}

A consequence of the above discussion is the following-- on the event $\exc_\delta^\ell(m)\cap
(\thinexc^\ell_\delta)^c\cap \geodwt_\delta^\ell)$, there must exist an excursion $\xi$ as in the definition of the event $\exc_\delta^\ell(m)$ with $|\xi|_{\vt}\geq \ell/2$ such that, for at least $\chi\ell/2$ many choices of $j\in [\![m-2\ell,m+2\ell]\!]$, the event
\begin{equation}
  \label{eq:471}
  \{\exists x:
|x-\Gamma_{\0}^{\n}(j)|\geq \ell^{2/3-\delta}, T_\0^\n-Z_{\0}^{\n,\bullet}(x,j)\leq
\ell^{\delta}\}
\end{equation}
holds. We are now ready to bring the twin peaks estimate Proposition \ref{prop:26} into the picture. Indeed, by the discussion above along with \eqref{eq:350} and Proposition \ref{prop:26}, for some constant $C$, we can write
  \begin{align}
    \label{eq:61}
    &\EE[(\chi \ell/4) \mathbbm{1}(\exc_\delta^\ell(m)\cap
(\thinexc^\ell_\delta)^c\cap \geodwt_\delta^\ell)]\nonumber\\ &\leq
\EE[\sum_{j=(m-2\ell)\vee (\chi \ell/8)}^{(m+2\ell)\wedge (n-\chi\ell/8)}\mathbbm{1}(\exists x:
|x-\Gamma_{\0}^{\n}(j)|\geq \ell^{2/3-\delta}, T_\0^\n-Z_{\0}^{\n,\bullet}(x,j)\leq
\ell^{\delta})]\nonumber\\ &= \sum_{j=(m-2\ell)\vee (\chi\ell /8)}^{(m+2\ell)\wedge (n-\chi \ell/8)}\PP(\exists x:
|x-\Gamma_{\0}^{\n}(j)|\geq \ell^{2/3-\delta}, T_\0^\n-Z_{\0}^{\n,\bullet}(x,j)\leq
\ell^{\delta})\nonumber\\ &\leq 4\ell \times C\ell^{-1/3+2\delta}.
  \end{align}
  Note that for applying Proposition \ref{prop:26}, it is important that for some constant $\beta'\in (0,1/2)$, we only work with $j$ satisfying $j\in [\![\beta' n, (1-\beta')n]\!]$, and this is true for the values of $j$ used above with $\beta'=(\chi \beta)/(16+\chi)$.
  Finally, on rearranging, \eqref{eq:61} immediately implies that
  \begin{equation}
    \label{eq:62}
    \PP(\exc_\delta^\ell(m)\cap (\thinexc^\ell_\delta)^c\cap \geodwt_\delta^\ell(m))\leq 16C\chi^{-1}\ell^{-1/3+2\delta}.
  \end{equation}
  Combining this with Proposition \ref{prop:10} and Lemma \ref{lem:32} now completes the proof.
\end{proof}

\subsection{Estimates on the size of the union of all qualifying excursions}
\label{sec:qualifying}

The primary difficulty with using Proposition \ref{lem:2} is the following-- on the event $\exc_\delta^\ell(m)$ which has $O(\ell^{-1/3+2\delta})$ probability, there might be a large number of possibilities of the excursion $\xi$. That is, a priori, it is possible (see Figure \ref{fig:peak}) that there is a low probability of having one qualifying excursion $\xi$, but when they exist, they are abundant and hit many distinct intervals $\{m\}_{[i,i+1]}$ for $(i,m)\in \cM_\0^\n$. The goal now is to provide estimates which rule this out.

\begin{figure}
  \centering
  \includegraphics[width=0.6\linewidth]{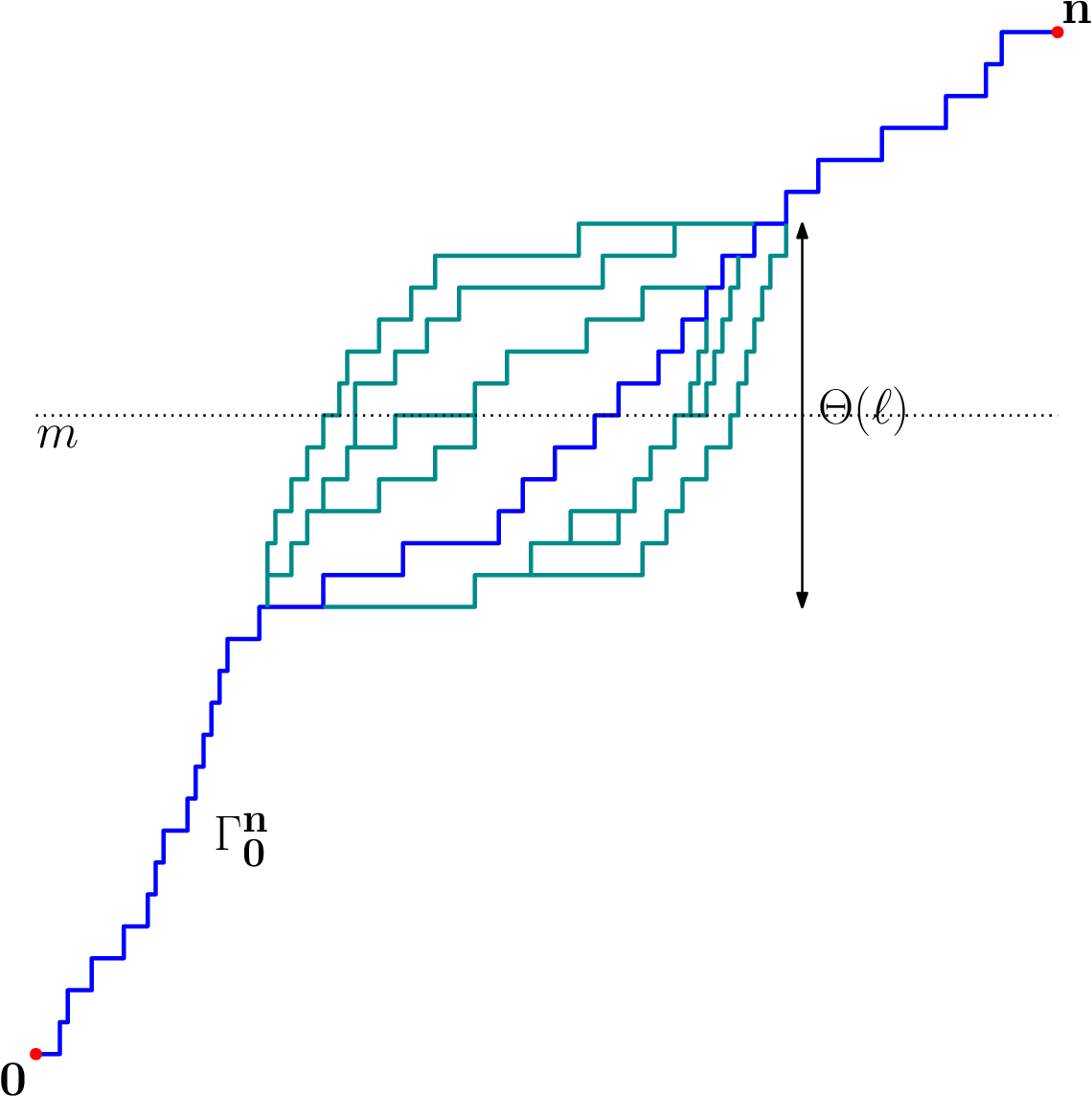}
  \caption{%
   As shown in the figure, it is, a priori, possible that on the rare event $\exc_\delta^\ell(m)$, there typically are numerous available choices (cyan) of the excursion $\xi$ appearing in the definition of $\exc_\delta^\ell(m)$, and that the union $\coarse(\bigcup \xi)$ is very large. The goal of Section \ref{sec:qualifying} is to rule out the above described pathological behaviour.}
  \label{fig:peak}
\end{figure}
Recall the sets $\peak(\alpha)\subseteq \cM_\0^\n$ defined in Section \ref{s:routed}. We now define a set $\pivot_\delta^\ell(m)\subseteq [\![m-2\ell,m+2\ell]\!]_\ZZ$ as follows,%
\begin{equation}
  \label{eq:133}
  \pivot_\delta^\ell(m)=
  \begin{cases}
    \bigcup_{j=m-2\ell}^{m+2\ell} \peak(\ell^\delta)\cap \{j\}_{\RR}, & \textrm{ if } \ell\geq n^\delta \textrm{ and } \exc_\delta^{\ell}(m) \textrm{ occurs},\\
   \bigcup_{j=m-2\ell}^{m+2\ell} \peak(n^{\delta^2})\cap \{j\}_{\RR}, &\textrm{ if } \ell<n^\delta,\\
    \emptyset, & \textrm{ otherwise}.
  \end{cases}  
\end{equation}
Intuitively, $\pivot_\delta^\ell(m)$ can be thought of a coarse grained version of the set of all qualifying excursions. %
We now have the following results bounding the cardinality of the above.
\begin{lemma}
  \label{lem:29}
Fix $\beta\in (0,1/2)$ and $\delta\in (0,1/40)$. There exist a constant $C'$ such that the following estimates hold for all $n$ large enough, $\ell\in [n^\delta,n]$, and $m\in [\![\beta n,(1-\beta)n]\!]$,
  \begin{equation}
    \label{eq:353}
    \EE[|\pivot_\delta^\ell(m)|]\leq C'\ell^{2/3}n^{300\delta}, \EE[|\pivot_\delta^\ell(m)|^2]\leq C'\ell^{5/3}n^{500\delta}.
  \end{equation}
\end{lemma}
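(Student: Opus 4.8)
The plan is to reduce the lemma to the two estimates already in hand: the small-probability bound $\PP(\exc_\delta^\ell(m))\leq C\ell^{-1/3+2\delta}$ from Proposition~\ref{lem:2}, and the bound on the number of peaks from Proposition~\ref{lem:31}. First I would record that in the regime $\ell\in[n^\delta,n]$ considered here, the definition of $\pivot_\delta^\ell(m)$ directly gives, using that the slices $\peak(\ell^\delta)\cap\{j\}_\RR$ have pairwise distinct second coordinates and are hence disjoint,
\[
|\pivot_\delta^\ell(m)|=\ind\!\big(\exc_\delta^\ell(m)\big)\cdot N_\ell(m),\qquad N_\ell(m)\coloneqq\sum_{j=m-2\ell}^{m+2\ell}\big|\peak(\ell^\delta)\cap\{j\}_\RR\big|.
\]
Since $\peak(\cdot)\cap\{j\}_\RR=\emptyset$ unless $j\in[\![0,n]\!]$, the sum effectively ranges over at most $4\ell+1$ values of $j$ in $[\![0,n]\!]$. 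Next I would apply Proposition~\ref{lem:31} with exponent $\delta$: writing $\good$ for the event it furnishes, $\PP(\good^c)\leq Ce^{-cn^{3\delta/4}}$ and, on $\good$, $|\peak(n^\delta)\cap\{j\}_\RR|\leq n^{200\delta}$ for every $j\in[\![0,n]\!]$. Since $\ell\leq n$ forces $\peak(\ell^\delta)\subseteq\peak(n^\delta)$ ($\peak(\cdot)$ is increasing in its argument), on $\good$ we get $N_\ell(m)\leq(4\ell+1)n^{200\delta}$, whereas unconditionally \eqref{eq:589} gives only $N_\ell(m)\leq(4\ell+1)(n+2)$.

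The final step is a routine split of the expectation over $\good$ and $\good^c$, requiring no independence between $\exc_\delta^\ell(m)$ and $\good$: I would bound $N_\ell(m)$ (resp.\ $N_\ell(m)^2$) on $\{\exc_\delta^\ell(m)\}\cap\good$ by its value on $\good$ and on $\good^c$ by the deterministic bound, obtaining
\begin{align*}
\EE[|\pivot_\delta^\ell(m)|]&\leq(4\ell+1)n^{200\delta}\,\PP(\exc_\delta^\ell(m))+(4\ell+1)(n+2)\,\PP(\good^c)\\
&\leq C\ell^{2/3+2\delta}n^{200\delta}+C\ell n\,e^{-cn^{3\delta/4}},\\
\EE[|\pivot_\delta^\ell(m)|^2]&\leq(4\ell+1)^2n^{400\delta}\,\PP(\exc_\delta^\ell(m))+(4\ell+1)^2(n+2)^2\,\PP(\good^c)\\
&\leq C\ell^{5/3+2\delta}n^{400\delta}+C\ell^2n^2\,e^{-cn^{3\delta/4}}.
\end{align*}
Using $\ell\leq n$ to absorb $\ell^{2\delta}\leq n^{2\delta}$ into the remaining powers of $n$, and observing that for $n$ large the stretched-exponential error terms are at most $1\leq\ell^{2/3}n^{300\delta}$ (resp.\ $\leq\ell^{5/3}n^{500\delta}$), since $\ell\geq n^\delta\geq1$, I arrive at $\EE[|\pivot_\delta^\ell(m)|]\leq C'\ell^{2/3}n^{300\delta}$ and $\EE[|\pivot_\delta^\ell(m)|^2]\leq C'\ell^{5/3}n^{500\delta}$, as required; the hypothesis $m\in[\![\beta n,(1-\beta)n]\!]$ enters only through its use in Proposition~\ref{lem:2}.

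I do not expect a real obstacle in this step — the content has been front-loaded into Propositions~\ref{lem:2} and \ref{lem:31}, and what remains is a first/second-moment repackaging via an indicator. The two points needing mild care are matching the threshold $\ell^\delta$ against $n^\delta$ so that Proposition~\ref{lem:31} applies (handled by monotonicity of $\peak$ together with $\ell\leq n$), and checking that the $\good^c$ contribution, which carries the large deterministic prefactors $\ell n$ and $\ell^2n^2$, is swamped by the stretched-exponentially small $\PP(\good^c)$; both are immediate. The exponents $300\delta$ and $500\delta$ are very far from tight, but this is of no consequence since $\delta$ is eventually sent to $0$.
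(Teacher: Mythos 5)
Your proposal is correct and matches the paper's argument essentially line for line: both introduce the high-probability event from Proposition~\ref{lem:31} (your $\good$, the paper's $\cC$), use $\peak(\ell^\delta)\subseteq\peak(n^\delta)$ and the deterministic bound \eqref{eq:589} to control the peak count on and off that event, then apply Proposition~\ref{lem:2} to $\PP(\exc_\delta^\ell(m))$ and absorb the stretched-exponential remainder. The only cosmetic difference is that you factor out the indicator as $|\pivot_\delta^\ell(m)|=\ind(\exc_\delta^\ell(m))\,N_\ell(m)$ before splitting, whereas the paper carries the indicator inside the expectation; the computation is otherwise identical.
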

\begin{proof}
  Consider the event $\cC$ defined by
  \begin{equation}
    \label{eq:361}
    \cC= \{|\peak(n^{\delta})\cap \{j\}_{\RR}|\leq n^{200\delta} \textrm{ for all }j\in [\![m-2\ell,m+2\ell]\!]\}.
  \end{equation}
  and note that $\peak(\ell^\delta)\subseteq \peak(n^{\delta})$ as $\ell\leq n$. Then by applying Proposition \ref{lem:31}, we obtain that there exist constants $C,c$ such that we have  \begin{equation}
    \label{eq:590}
    \PP(\cC)\geq 1- Ce^{-cn^{3\delta/4}}.
  \end{equation}
  Also, recall that (see \eqref{eq:589}) deterministically, we have $|\peak(\ell^\delta)\cap \{j\}_\RR|\leq n+2$ almost surely for all $j$-- we shall use this crude bound on the event $\cC^c$.
Now, by the above discussion, for all $\ell \geq n^\delta$, we have
  \begin{align}
    \label{eq:67}
    \EE\left[\sum_{j=m-2\ell}^{m+2\ell}|\peak(\ell^{\delta})\cap \{j\}_{\RR}| \mathbbm{1}(\exc_\delta^\ell(m))\right]&\leq   \EE\left[\sum_{j=m-2\ell}^{m+2\ell}|\peak(n^{\delta})\cap \{j\}_{\RR}| \mathbbm{1}(\exc_\delta^\ell(m)\cap \cC)\right]+ 4\ell (n+2) \PP(\cC^c)\nonumber\\                                                                            &\leq 4\ell n^{200\delta}\PP(\exc_\delta^\ell(m)) + Cn^2 e^{-cn^{3\delta/4}}\nonumber\\
                                                                                                                     &\leq C'\ell^{2/3+2\delta}n^{200\delta}\nonumber\\
    &\leq C'\ell^{2/3}n^{300\delta},
  \end{align}
  where the last term in the first line uses the deterministic bound $|\peak(\ell^{\delta})\cap \{j\}_{\RR}|\leq n+2$ and the second term in the second line is obtained using the bound \eqref{eq:590}. The third line uses $\PP(\exc_\delta^\ell(m))= O(\ell^{-1/3+2\delta})$ from Proposition \ref{lem:2} and that $\ell\geq n^\delta$. Finally, the last inequality holds as $\ell\leq n$. %
  This completes the proof of  the first inequality in \eqref{eq:353}.

  Similarly, to obtain the second inequality, we write
  \begin{align}
    \label{eq:362}
    &\EE\left[(\sum_{j=m-2\ell}^{m+2\ell}|\peak(\ell^{\delta})\cap \{j\}_{\RR}| \mathbbm{1}(\exc_\delta^\ell(m))^2\right]\nonumber\\
    &=  \EE\left[(\sum_{j=m-2\ell}^{m+2\ell}|\peak(n^{\delta})\cap \{j\}_{\RR}| \mathbbm{1}(\exc_\delta^\ell(m)\cap \cC))^2\right]+ (4\ell n)^2 \PP(\cC^c)\nonumber\\                                                                            &\leq (4\ell n^{200\delta})^2\PP(\exc_\delta^\ell(m)) + 16Cn^4 e^{-cn^{3\delta/4}}\nonumber\\
    &\leq C'\ell^{5/3+2\delta}n^{400\delta}\leq C'\ell^{5/3}n^{500\delta}.
  \end{align}
\end{proof}
Note that the exponents $2/3$ and $5/3$ in the above result are important for us; indeed, as we shall see soon, the $5/3$ exponent in Theorem \ref{prop:30} is the same as the $5/3$ above. Also note that the $n^{300\delta}$ and $n^{500\delta}$ terms appearing above are not important and thus have not been carefully optimised-- indeed, these can be replaced by a more optimal $\ell^{o(1)}$ term but for convenience, we make do with the above result.
Now, in our estimates, we will also need to handle the case of small values of $\ell$, that is, $\ell\leq n^\delta$ and for this, we use the following crude result.%
\begin{lemma}
  \label{lem:93}
Fix $\beta\in (0,1/2)$ and $\delta\in (0,1/40)$. There exists a constant $C'$ such that for all $\ell\leq n^\delta$, all $m\in [\![\beta n,(1-\beta)n]\!]$ and all $n$ large enough, we have
  \begin{equation}
    \label{eq:354}
        \EE[|\pivot_\delta^\ell(m)|^2]\leq C'n^{500\delta}.
  \end{equation}
\end{lemma}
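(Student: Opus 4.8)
The plan is to deduce Lemma \ref{lem:93} directly from the total-peak-count estimate Proposition \ref{lem:31} together with the crude deterministic bound \eqref{eq:589}, so essentially no new probabilistic input is required. Recall that for $\ell<n^\delta$ the definition \eqref{eq:133} gives
\[
  |\pivot_\delta^\ell(m)| \;=\; \sum_{j=m-2\ell}^{m+2\ell}\bigl|\peak(n^{\delta^2})\cap \{j\}_{\RR}\bigr|,
\]
which is a sum of at most $4\ell+1\le 5n^\delta$ terms; moreover, since $m\in [\![\beta n,(1-\beta)n]\!]$ and $\ell\le n^\delta$, for all large $n$ the index set $[\![m-2\ell,m+2\ell]\!]$ lies inside $[\![0,n]\!]$, so every summand above is of the type controlled by Proposition \ref{lem:31}.

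First I would apply Proposition \ref{lem:31} with its parameter $\delta$ replaced by the fixed positive number $\delta^2$; this produces constants $C,c$ (depending on $\delta$) and an event $\cC'$ with $\PP(\cC')\ge 1-Ce^{-cn^{3\delta^2/4}}$ on which $|\peak(n^{\delta^2})\cap\{j\}_\RR|\le n^{200\delta^2}$ for every $j\in[\![0,n]\!]$. On $\cC'$ this gives $|\pivot_\delta^\ell(m)|\le 5n^{\delta+200\delta^2}$, and since $\delta<1/40$ we have $200\delta^2<5\delta$, hence $|\pivot_\delta^\ell(m)|\le 5n^{6\delta}$ and so $|\pivot_\delta^\ell(m)|^2\le 25n^{12\delta}$ there. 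On the complementary event $(\cC')^c$ I would instead invoke the deterministic inequality $|\peak(n^{\delta^2})\cap\{j\}_\RR|\le n+2$ from \eqref{eq:589}, which yields $|\pivot_\delta^\ell(m)|\le 5n^\delta(n+2)$ and therefore the deterministic bound $|\pivot_\delta^\ell(m)|^2\le Cn^4$ on $(\cC')^c$.

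Putting the two cases together, for all $n$ large enough I would conclude
\[
  \EE\bigl[|\pivot_\delta^\ell(m)|^2\bigr] \;\le\; 25n^{12\delta} + Cn^4\,\PP\bigl((\cC')^c\bigr) \;\le\; 25n^{12\delta}+C^2n^4e^{-cn^{3\delta^2/4}} \;\le\; C'n^{500\delta},
\]
since the second term tends to $0$ and $12\delta<500\delta$. There is no substantive obstacle here: the only points needing care are the elementary exponent bookkeeping ($\delta+200\delta^2<500\delta$, which follows from $\delta<1/40$) and the observation that the stretched-exponential decay in Proposition \ref{lem:31} dominates the trivial polynomial-in-$n$ bound used on the bad event.
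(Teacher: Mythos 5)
Your proof is correct and takes essentially the same route as the paper: split on a high-probability good event supplied by Proposition \ref{lem:31}, bound the sum defining $\pivot_\delta^\ell(m)$ term by term on that event, and fall back to the deterministic bound \eqref{eq:589} on the complementary event. The only cosmetic difference is that you apply Proposition \ref{lem:31} with parameter $\delta^2$ (matching the threshold $n^{\delta^2}$ in the definition of $\pivot_\delta^\ell(m)$ for $\ell<n^\delta$), whereas the paper reuses the event $\cC$ from Lemma \ref{lem:29} defined via $\peak(n^\delta)$ together with the inclusion $\peak(n^{\delta^2})\subseteq \peak(n^{\delta})$; both choices work, and yours in fact gives a slightly sharper final exponent than needed.
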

\begin{proof}
  Recall the event $\cC$ from the proof of Lemma \ref{lem:29}. We have
  \begin{align}
    \label{eq:617}
    \EE\left[(\sum_{j=m-2\ell}^{m+2\ell}|\peak(n^{\delta^2})\cap \{j\}_{\RR}|)^2\right]%
                                                                                         &\leq (4\ell n^{200\delta})^2\PP(\cC) + (4\ell n)^2\PP(\cC^c)\nonumber\\
                                        &\leq C'\ell^2n^{400\delta}\nonumber\\
                                                                                                                        &\leq C' n^{500\delta},
  \end{align}
  where in the last line, we use $\ell\leq n^{\delta}$.
\end{proof}

\subsection{The proof of Proposition \ref{lem:49}}
\label{sec:geodswitch-scale}

We are now ready to provide the proof of Proposition \ref{lem:49}. %
\begin{proof}[Proof of Proposition \ref{lem:49}]
  We first use Lemma \ref{prop:48} to transform the question to one about static BLPP as opposed to dynamical BLPP. Let $T$ be a static BLPP and let $\{W_n\}_{n\in \ZZ}$ be the associated Brownian motions. Let $(\fraki,\frakm)$ be chosen uniformly over the set $\cM_{\0}^{\n}$ independently of the BLPP $T$. We use tildes to denote quantities with respect the BLPP where just the Brownian motion $X_{\frak i,\frakm}\colon [0,1]\rightarrow \RR$ defined by $X_{\frak i,\frakm}(x)=W_{\frakm}(\fraki+x)-W_{\frakm}(\fraki)$ has been resampled to a fresh independent sample $\widetilde X_{\fraki,\frakm}$. Now, by Lemma \ref{prop:48}, we have
  \begin{align}
    \label{eq:143}
    \EE[\switch_{\0}^{\n,[s,t]}(\ell,m)]=&\EE|\cT_\0^{\n,[s,t]}| \times \EE [|\coarse(\widetilde{\Gamma}_\0^\n)\setminus \coarse(\Gamma_\0^\n)| \ind(\loc^{\ell,2\ell,m}( \widetilde{\Gamma}_{\0}^{\n}\setminus \Gamma_{\0}^{\n}))]\nonumber\\
    &=(t-s)|\cM_\0^\n|\times \EE [|\coarse(\widetilde{\Gamma}_\0^\n)\setminus \coarse(\Gamma_\0^\n)| \ind(\loc^{\ell,2\ell,m}( \widetilde{\Gamma}_{\0}^{\n}\setminus \Gamma_{\0}^{\n}))],
  \end{align}
  where to obtain the $(t-s)|\cM_\0^\n|$ term in the second line above, we have use that $|\cT_\0^{\n,[s,t]}|\sim \mathrm{Poi}((t-s)|\cM_\0^\n|)$. Let $\bigchange_\delta^\ell$ be defined by
  \begin{equation}
    \label{eq:144}
  \bigchange_\delta^\ell=\{|\widetilde{T}_{\0}^{\n}-T_{\0}^{\n}|\geq (\ell\vee n^\delta)^{\delta}\}
  \end{equation}
  and we note that, by Lemma \ref{lem:30}, for some constants $C,c$,
  \begin{equation}
    \label{eq:638}
    \PP(\bigchange_\delta^\ell)\leq Ce^{-cn^{2\delta^2}}.
  \end{equation}
  Now, we claim that on the event $(\bigchange_\delta^\ell)^c\cap \loc^{\ell,2\ell,m}( \widetilde{\Gamma}_{\0}^{\n}\setminus \Gamma_{\0}^{\n})$, we must have
  \begin{equation}
    \label{eq:145}
    \coarse(\widetilde{\Gamma}_\0^\n)\setminus \coarse(\Gamma_\0^\n)\subseteq \pivot_\delta^\ell(m).
  \end{equation}
  Indeed, to see the above, first note that on the event $(\bigchange_\delta^\ell)^c$, we must have $\coarse(\widetilde\Gamma_{\0}^{\n})\subseteq \peak((\ell \vee n^\delta)^\delta)$ and therefore, $\coarse(\widetilde{\Gamma}_\0^\n)\setminus \coarse(\Gamma_\0^\n)\subseteq \peak((\ell \vee n^\delta)^\delta)$ as well. Further, on the event $\loc^{\ell,2\ell,m}( \widetilde{\Gamma}_{\0}^{\n}\setminus \Gamma_{\0}^{\n})$, we necessarily have $\widetilde{\Gamma}_{\0}^{\n}\setminus \Gamma_{\0}^{\n}\subseteq [m-2\ell,m+2\ell]_{\RR}$. Finally, by using Lemma \ref{lem:130} along with the definition of the event $\exc_\delta^\ell(m)$, if $\ell\geq n^\delta$, we also have $\exc_\delta^\ell(m)\supseteq (\bigchange_\delta^\ell)^c\cap \loc^{\ell,2\ell,m}( \widetilde{\Gamma}_{\0}^{\n}\setminus \Gamma_{\0}^{\n})$, and this establishes \eqref{eq:145}.

  Also, as we now explain, on the event $(\bigchange_\delta^\ell)^c\cap \loc^{\ell,2\ell,m}( \widetilde{\Gamma}_{\0}^{\n}\setminus \Gamma_{\0}^{\n})$, we must have
  \begin{equation}
    \label{eq:637}
    (\fraki,\frakm)\in \pivot_\delta^\ell(m).
  \end{equation}
  Indeed, on $\loc^{\ell,2\ell,m}( \widetilde{\Gamma}_{\0}^{\n}\setminus \Gamma_{\0}^{\n})$, we must have $(\fraki,\frakm)\in \coarse(\widetilde{\Gamma}_{\0}^{\n}\cup \Gamma_{\0}^{\n})\cap [\![m-2\ell,m+2\ell]\!]_\RR$ and further, if we are additionally working on $(\bigchange_\delta^\ell)^c$, then we must have $\coarse(\widetilde{\Gamma}_{\0}^{\n}\cup \Gamma_{\0}^{\n})\subseteq \peak((\ell \vee n^\delta)^\delta)$. We emphasize that \eqref{eq:637} is crucial and will be very useful shortly.

  Finally, we note for some absolute constant $C'$, we have the easy worst case bound $|\coarse(\widetilde{\Gamma}_\0^\n)\setminus \coarse(\Gamma_\0^\n)|\leq |\coarse(\widetilde{\Gamma}_\0^\n)|\leq C'n$. Thus, we can write
  \begin{align}
    \label{eq:146}
    &\EE |\coarse(\widetilde{\Gamma}_\0^\n)\setminus \coarse(\Gamma_\0^\n)| \ind(\loc^{\ell,2\ell,m}( \widetilde{\Gamma}_{\0}^{\n}\setminus \Gamma_{\0}^{\n})) \nonumber\\
    &\leq \EE [|\coarse(\widetilde{\Gamma}_\0^\n)\setminus \coarse(\Gamma_\0^\n)| \ind(\loc^{\ell,2\ell,m}( \widetilde{\Gamma}_{\0}^{\n}\setminus \Gamma_{\0}^{\n})\cap (\bigchange_\delta^\ell)^c)]+ C'n\PP(\bigchange_\delta^\ell)\nonumber\\
    &\leq \EE[|\pivot_\delta^\ell(m)|\ind((\fraki,\frakm)\in \pivot_\delta^\ell(m))]+ Cne^{-cn^{2\delta^2}}\nonumber\\
    &=|\cM_\0^{\n}|^{-1}\EE[|\pivot_\delta^\ell(m)|^2]+Cne^{-cn^{2\delta^2}}.%
  \end{align}
  The second term in the second line has been obtained by using \eqref{eq:144} along with the worst case bound $|\coarse(\widetilde{\Gamma}_\0^\n)\setminus \coarse(\Gamma_\0^\n)|\leq C'n$ mentioned above. To obtain the third line, we have used \eqref{eq:145} along with \eqref{eq:637}. The last line follows by simply recalling that $(\fraki,\frakm)$ is chosen uniformly from the set $\cM_{\0}^{\n}$ independently of the BLPP $T$. Now, on combining \eqref{eq:146} along with \eqref{eq:143}, we obtain
  \begin{equation}
    \label{eq:639}
    \EE[\switch_{\0}^{\n,[s,t]}(\ell,m)]\leq (t-s)(\EE[|\pivot_\delta^\ell(m)|^2] + Cn^3e^{-cn^{2\delta^2}}),
  \end{equation}
  where the second term above is obtained by using that $|\cM_{\0}^{\n}|=O(n^2)$. Now, in the case when $\ell\geq n^\delta$, we simply invoke Lemma \ref{lem:29} and this yields the desired expression \eqref{eq:147}. In the case $\ell\leq n^\delta$, we invoke Lemma \ref{lem:93} and this yields \eqref{eq:158}. This completes the proof.
\end{proof}

\subsection{A version of Theorem \ref{prop:30} for general points}
\label{sec:vers}
Though we have chosen to state Theorem \ref{prop:30} for geodesic switches between the points $\0$ and $\n$, the argument directly generalises to yield a corresponding bound for the expectation of geodesic switches between any two points $p,q$ for which $\slope(p,q)$ is bounded away from $0$ and $\infty$. Indeed, we have the following result.
\begin{proposition}
  \label{prop:911}
  Fix $\beta\in (0,1/2)$, $\mu\in(0,1)$ and $\varepsilon>0$. For any $p\in \{0\}_{\RR}$ and $q\in \{n\}_{\RR}$ with $\slope(p,q)\in (\mu,\mu^{-1})$, and for all $n$ large enough and all $[s,t]\subseteq \RR$, we have
  \begin{equation}
    \label{eq:32}
    \EE[\switch_{p}^{q,[s,t]}([\![\beta n, (1-\beta)n]\!]_\RR)]\leq n^{5/3+\varepsilon}(t-s).
    \end{equation}
  \end{proposition}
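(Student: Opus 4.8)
The plan is to repeat the proof of Theorem \ref{prop:30} with $\0,\n$ replaced throughout by $p,q$, after checking that every static-BLPP ingredient invoked along the way has a counterpart for endpoint pairs with $\slope(p,q)$ in a fixed compact subset of $(0,\infty)$. First I would establish the analogue of Proposition \ref{lem:49}: with $\switch_p^{q,[s,t]}(\ell,m)$ and the event $\loc^{\ell,2\ell,m}$ defined relative to the segment $\Gamma_p^q$ exactly as in \eqref{eq:142}, one has $\EE[\switch_p^{q,[s,t]}(\ell,m)]\leq C(t-s)\ell^{5/3}n^{500\delta}$ for $\ell\in[n^\delta,n]$ and $\EE[\switch_p^{q,[s,t]}(\ell,m)]\leq C(t-s)n^{500\delta}$ for $\ell\leq n^\delta$, uniformly over $m\in[\![\beta n,(1-\beta)n]\!]$, with $C$ depending on $\beta,\delta,\mu$. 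Granting this, summing over the $O(\log n)$ dyadic scales $\ell=2^i$ and over $m\in[\![\beta n,(1-\beta)n]\!]\cap 2^i\ZZ$ as in the proof of Theorem \ref{prop:30} yields the bound $n^{5/3+\varepsilon}(t-s)$; here one uses $|\cM_p^q|=O(n^2)$ and $|\coarse(\Gamma_p^{q,r})|=O(n)$, both valid because $\slope(p,q)$ is bounded away from $0$ and $\infty$, together with a.s.\ uniqueness of $\Gamma_p^{q,r}$ for all $r$ (the argument of Lemma \ref{lem:104} applies verbatim to any fixed pair $p\leq q$).

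For the analogue of Proposition \ref{lem:49} I would follow Section \ref{sec:geodswitch-scale} line by line. The reduction from dynamical to static BLPP via Lemma \ref{prop:48}, conditioning on $\cT_p^{q,[s,t]}\sim\mathrm{Poi}((t-s)|\cM_p^q|)$ and resampling a uniformly chosen increment $X_{\fraki,\frakm}$ with $(\fraki,\frakm)\in\cM_p^q$, carries over unchanged. The excursion-decomposition statements Lemmas \ref{lem:116} and \ref{lem:130}, and the one-increment stability estimate Lemma \ref{lem:30}, use only that a single increment in $\cM_p^q$ is resampled together with the combinatorics of staircases, so they hold verbatim for $\Gamma_p^q$. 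Thus, exactly as in Section \ref{sec:geodswitch-scale}, matters reduce to a bound $\PP(\exc_\delta^\ell(m))\leq C\ell^{-1/3+2\delta}$ for excursions about $\Gamma_p^q$ (the analogue of Proposition \ref{lem:2}) and to the moment bounds on $|\pivot_\delta^\ell(m)|$ relative to $\Gamma_p^q$ (analogues of Lemmas \ref{lem:29} and \ref{lem:93}).

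The inputs behind these are the twin-peaks bound (Proposition \ref{prop:26}), the peak-count bound (Proposition \ref{lem:31}), the thin-excursion bound (Proposition \ref{prop:10}), and the passage-time concentration bound (Proposition \ref{prop:15}), each now for the routed profile $Z_p^{q,\bullet}$ and the segment $\Gamma_p^q$. Each transfers by the Brownian scaling invariance Proposition \ref{prop:46}: translating $p$ to $\0$ and rescaling the spatial coordinate by $1/\theta$, where $\theta=\slope(p,q)$, sends $q$ to $\n$ and multiplies all passage times, hence all routed profiles, by the $O(1)$ factor $\sqrt{\theta}$, while leaving the vertical level of the middle slab $[\![\beta n,(1-\beta)n]\!]_\RR$ unchanged; this converts each estimate for $p,q$ into the corresponding estimate for $\0,\n$, at the price of adjusting the various $\ell^{\delta}$-thresholds and $\ell^{2/3-\delta}$-separations by bounded multiplicative constants (one may shrink $\delta$ if needed). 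Equivalently, one can note directly that the arguments of \cite{GH20} behind Propositions \ref{prop:10}, \ref{prop:15} and \ref{prop:26}, and those of Section \ref{sec:app-peaks} behind Proposition \ref{lem:31}, are insensitive to replacing $\0,\n$ by $p,q$ with slope in a fixed compact subset of $(0,\infty)$. With these in hand, the proofs of Proposition \ref{lem:2} and of Lemmas \ref{lem:29}, \ref{lem:93} and \ref{lem:32} go through unchanged.

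I do not expect an essential new difficulty: the only point requiring care is the bookkeeping in the transfer to general slopes, namely verifying that the spatial rescaling distorts the coarse-graining map $\coarse$ and the cardinalities of $\cM_p^q$, $\peak(\cdot)$ and $\pivot_\delta^\ell(m)$ by at most $\mu$-dependent multiplicative constants, so that the exponents $2/3$ and $5/3$ in Lemma \ref{lem:29} — and hence the exponent $5/3$ in the final bound — are preserved.
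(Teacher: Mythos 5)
Your proposal is correct and takes the approach the paper itself indicates: the paper provides no separate proof of Proposition \ref{prop:911}, stating only that the argument for Theorem \ref{prop:30} "directly generalises" to general endpoints with bounded slope, and you have correctly identified the structure of this generalisation --- that the dynamical reduction (Lemma \ref{prop:48} and the Poisson/resampling framework) and the purely combinatorial excursion lemmas (Lemmas \ref{lem:116}, \ref{lem:130}, \ref{lem:30}) carry over unchanged, while the quantitative static inputs (Propositions \ref{prop:10}, \ref{prop:15}, \ref{prop:26}, \ref{lem:31}) transfer via translation and Brownian scaling (Proposition \ref{prop:46}) at the price of $\mu$-dependent multiplicative constants, with the bookkeeping on $\coarse$, $\cM_p^q$ and $\pivot$ absorbing into the $n^{O(\delta)}$ slack. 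One small misattribution: Proposition \ref{prop:26} is not from \cite{GH20} but is proved in Section \ref{sec:twin-peaks} of the paper using the Brownianity estimates of Section \ref{sec:brownian-regularity}; this does not affect the substance of your argument.
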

  We shall make heavy use of the above proposition in the next section.

\section{Covering geodesics between on-scale regions by geodesics between typical points}
\label{sec:sprinkling}
While the point-to-point estimate Theorem \ref{prop:30} can directly be used to control $\EE |\hitset_{-\n}^{\n}( [\![-(1-\gamma)n, (1-\gamma)n]\!]_\RR)|$, what we actually wish to prove is Theorem \ref{thm:6} which considers the hitset between the on-scale segments $L_{-n},L_{n}$ around $-\n$ and $\n$ respectively. In order to remedy this, in this section, we shall develop a result (Proposition \ref{lem:36}) which will, with stretched exponentially high probability, allow us to simultaneously access all $\Gamma_{p}^{q,t}$ for $p\in L_{-n}, q\in L_{n}, t\in [0,1]$ by just considering geodesics between certain independently sprinkled typical points. In order to state and prove this result, we shall need a strong tail estimate on the volume of the ``basin of attraction'' around a geodesic, which we now introduce.
\subsection{A tail estimate on the basin of attraction around a geodesic}
\label{sec:proof-basin}
We now introduce the set whose volume we shall lower bound; throughout this section, we shall work with a parameter $\gamma\in (0,1)$ which we shall hold fixed. For $\delta>0$ and points $p=(x,s)\in [\![-(1+\gamma/4)n,-(1-\gamma/4)n]\!]_\RR, q=(y,t)\in [\![(1-\gamma/4)n,(1+\gamma/4)n]\!]_\RR$ and a geodesic $\Gamma_p^q$, we define $\basin^\delta_n(\Gamma_p^q)\subseteq (\ZZ_\RR)^2$ to be the set of points $p'\in [\![s,-(1-\gamma/2)n]\!]_\RR, q'\in  [\![(1-\gamma/2)n,t]\!]_\RR$ such that for some geodesic $\Gamma_{p'}^{q'}$, we have $\Gamma_{p'}^{q'}\setminus \Gamma_{p}^{q}\subseteq  [-(1-\gamma) n,(1-\gamma) n]_\RR^c$ and additionally
\begin{equation}
  \label{eq:714}
  p',q'\in B_{n^{2/3-4\delta/11}}(\Gamma_p^q).
\end{equation}
Now, note that for any measurable sets $A,B\subseteq \ZZ_\RR$, we can define $|A\times B|_{\hor}=|A|_{\hor}|B|_{\hor}$, and this naturally allows us to define $|R|_{\hor}$ for any measurable set $R\subseteq (\ZZ_{\RR})^2$. The goal now is to obtain the following result providing a lower bound on $|\basin^\delta_n(\Gamma_p^q)|_{\hor}$.
      \begin{proposition}
  \label{prop:12}
  Fix $\mu\in (0,1)$.  There exist positive constants $c,C,\delta_0$ such that for any fixed $0<\delta<\delta_0$, points $p\in [\![-(1+\gamma/4)n,-(1-\gamma/4)n]\!]_\RR, q\in [\![(1-\gamma/4)n,(1+\gamma/4)n]\!]_\RR$ with $\slope(p,q)\in (\mu,\mu^{-1})$, we have for all $n$ large enough,
  \begin{equation}
    \label{eq:71}
    \PP(|\basin^\delta_n(\Gamma_p^q)|_{\hor}\leq n^{10/3-2\delta})\leq Ce^{-cn^{3\delta/11}}.
  \end{equation}
\end{proposition}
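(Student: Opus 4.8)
The plan is to deduce Proposition~\ref{prop:12} from the one-sided coalescence-volume estimate for BLPP --- namely the BLPP analogue of Proposition~\ref{prop:45} (the estimate \cite[Proposition 53]{BB23}), which is Proposition~\ref{prop:11} --- used at a suitable mesoscopic scale, together with the left--right ($x\mapsto -x$) and up--down (level-reversing) symmetries of static BLPP, the transversal fluctuation bounds (Propositions~\ref{prop:38} and~\ref{prop:53}), and the standard planar ordering of geodesics. The statement concerns static BLPP, so there is no reduction to the dynamical model: fix $p=(x,s)$, $q=(y,t)$ as in the statement, put $S=[-(1-\gamma)n,(1-\gamma)n]_\RR$, and let $w$, resp.\ $w'$, be the points at which $\Gamma_p^q$ enters, resp.\ exits, $S$. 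Recall also that staircases are non-decreasing in the transversal coordinate, so $\Gamma_p^q$ is non-decreasing.

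The first step is a \emph{localized} one-sided volume bound. Let $\mathcal S_p$ be the set of points $p'$ lying weakly to the right of $\Gamma_p^q$, at levels in $[\![s,-(1-\gamma/2)n]\!]$ and inside $B_{n^{2/3-4\delta/11}}(\Gamma_p^q)$, for which $\Gamma_{p'}^q$ coincides with $\Gamma_p^q$ on all of $S$. To bound $|\mathcal S_p|_{\hor}$ from below I would slice the band of levels below $S$ into $\asymp n^{7\delta/11}$ consecutive sub-bands of height $h\asymp n^{1-7\delta/11}$ and invoke Proposition~\ref{prop:11} at scale $h$ inside each. Working at scale $h$ rather than $n$ is precisely what confines the contribution to the thin tube: the coalescing subtree produced within a sub-band funnels into $\Gamma_p^q$ over a height-$h$ window, hence by Proposition~\ref{prop:53} stays within $B_{h^{2/3+o(1)}}(\Gamma_p^q)\subseteq B_{n^{2/3-4\delta/11}}(\Gamma_p^q)$, while its horizontal measure is at least a small power of $h$ times $h^{5/3}$ with probability $1-Ce^{-ch^{\theta'}}$ for a small power $\theta'$. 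Summing over sub-bands gives $|\mathcal S_p|_{\hor}\geq n^{5/3-\delta}$, and a union bound over the $\asymp n^{7\delta/11}$ sub-bands leaves a failure probability at most $Ce^{-cn^{3\delta/11}}$ once the exponents are arranged compatibly. Running the same argument with the two endpoints interchanged (level reversal) and with left and right interchanged produces a set $\mathcal S_q$ of endpoints $q'$ lying weakly to the left of $\Gamma_p^q$, at levels in $[\![(1-\gamma/2)n,t]\!]$, inside $B_{n^{2/3-4\delta/11}}(\Gamma_p^q)$, for which $\Gamma_p^{q'}$ coincides with $\Gamma_p^q$ on $S$, with $|\mathcal S_q|_{\hor}\geq n^{5/3-\delta}$ on an event of the same probability; then intersect the two events.

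The second step is a sandwiching argument showing $\mathcal S_p\times\mathcal S_q\subseteq\basin^\delta_n(\Gamma_p^q)$ on this event. Fix $p'\in\mathcal S_p$ and $q'\in\mathcal S_q$. Since $p'$ lies weakly to the right of $\Gamma_p^q$ and $\Gamma_p^q$ is non-decreasing, $p\leq p'$ coordinatewise, and similarly $q'\leq q$. Both $w$ and $w'$ lie on $\Gamma_{p'}^q$ (by $p'\in\mathcal S_p$) and on $\Gamma_p^{q'}$ (by $q'\in\mathcal S_q$). By the ordering of geodesics ending at $q'$ with $p\leq p'$, any geodesic $\Gamma_{p'}^{q'}$ stays weakly right of $\Gamma_p^{q'}$ throughout $S$; by the ordering of geodesics starting at $p'$ with $q'\leq q$, it stays weakly left of $\Gamma_{p'}^q$ throughout $S$. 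Since $\Gamma_p^{q'}$ and $\Gamma_{p'}^q$ both pass through $w$ and through $w'$, this forces $\Gamma_{p'}^{q'}$ to pass through $w$ and $w'$ as well, so $T_{p'}^{q'}=T_{p'}^{w}+T_{w}^{w'}+T_{w'}^{q'}$. Hence the staircase obtained by concatenating the $p'\to w$ portion of $\Gamma_{p'}^q$, the $w\to w'$ portion of $\Gamma_p^q$ (which is $\Gamma_p^q\cap S$), and the $w'\to q'$ portion of $\Gamma_p^{q'}$ is a geodesic from $p'$ to $q'$ whose intersection with $S$ is exactly $\Gamma_p^q\cap S$; taking this as the witnessing geodesic gives $(p',q')\in\basin^\delta_n(\Gamma_p^q)$. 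Therefore, on the good event, $|\basin^\delta_n(\Gamma_p^q)|_{\hor}\geq|\mathcal S_p|_{\hor}\,|\mathcal S_q|_{\hor}\geq n^{10/3-2\delta}$, and the complementary event has probability at most $Ce^{-cn^{3\delta/11}}$.

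I expect the main obstacle to be the first step: transferring the \cite{BB23}-type coalescence estimate from the macroscopic scale to the intermediate scale $h\asymp n^{1-7\delta/11}$ and to sub-bands of the right shape (including the bookkeeping needed to pass from ``$\Gamma_{p'}^{\,\cdot}$ coincides with $\Gamma_p^q$ within a sub-band'' to ``$\Gamma_{p'}^{q}$ coincides with $\Gamma_p^q$ on all of $S$''), so that the summed sub-band contributions simultaneously total at least $n^{5/3-\delta}$ in horizontal measure and remain inside $B_{n^{2/3-4\delta/11}}(\Gamma_p^q)$, while the union bound over $\asymp n^{7\delta/11}$ sub-bands costs only $e^{-cn^{3\delta/11}}$; making the three exponents $7\delta/11$, $4\delta/11$, $3\delta/11$ close consistently is the delicate point. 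A secondary subtlety is that the ordering argument in the second step must not invoke uniqueness of the geodesic between the \emph{random} points $w$ and $w'$; this is sidestepped by exhibiting the concatenated staircase directly as a geodesic coinciding with $\Gamma_p^q$ on $S$.
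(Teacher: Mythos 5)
Your second step---forming the product $\mathcal S_p\times\mathcal S_q$ and showing it lies in the basin via the planar sandwiching $\Gamma_p^{q'}\preceq\Gamma_{p'}^{q'}\preceq\Gamma_{p'}^q$ (using $p\le p'$, $q'\le q$)---is a genuine and arguably cleaner alternative to what the paper does. The paper instead proceeds by nesting: for each level $j$ it picks a point $(b_j,j)\in\underline V_n^*(p,q)$ and then applies Proposition~\ref{prop:11} a second time to $\overline V_n^*((b_j,j),q)$, the upper volume \emph{anchored at the already-moved source}. Because $(b_j,j)$ is random, the paper must union-bound over a fine mesh of candidate anchors $z$ (the event $\disvol$), together with the transversal-fluctuation event $\hightf^c$ to guarantee the mesh actually covers all the $b_j$'s. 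Your version anchors both volume sets at the fixed pair $(p,q)$, so Proposition~\ref{prop:11} is used only for that pair, no mesh and no $\hightf$ are needed, and the pair $(p',q')$ is certified directly by the sandwiching. You do need the one-sided restriction that $q'$ lies weakly left of $\Gamma_p^q$ (so that $q'\le q$); Proposition~\ref{prop:11} does not phrase $\overline V_n$ one-sidedly, but this is available by the $x\mapsto-x$ symmetry of BLPP, as you note. Your remark that no uniqueness of $\Gamma_w^{w'}$ is invoked is also correct, since the basin only asks for some witnessing geodesic.

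However, your first step is where you have over-engineered, and it is precisely there that the genuine gaps you flag would live. The mesoscopic slicing into $\asymp n^{7\delta/11}$ sub-bands of height $h\asymp n^{1-7\delta/11}$ is unnecessary: the confinement to the thin tube is already built into the statement of Proposition~\ref{prop:11}, whose bounded quantity is $|\underline{V}_n(p,q)\cap B_{\beta\varepsilon^{5/11}n^{2/3}}(\Gamma_{p}^{q})|_{\hor}$. Applying it once at the macroscopic scale with $\varepsilon=n^{-\delta}$ gives
\begin{equation*}
\PP\bigl(|\underline{V}_n(p,q)\cap B_{\beta n^{2/3-5\delta/11}}(\Gamma_p^q)|_{\hor}\le n^{5/3-\delta}\bigr)\le Ce^{-cn^{3\delta/11}},
\end{equation*}
and since $\beta n^{2/3-5\delta/11}\le n^{2/3-4\delta/11}$ for $n$ large, you immediately obtain $|\underline V_n^*(p,q)|_{\hor}\ge n^{5/3-\delta}$ with the required failure probability; the same for the one-sided $\overline V_n^*$. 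In contrast, the slicing would require you to transfer ``$\Gamma_{p'}^{\,\cdot}$ coincides with $\Gamma_p^q$ within a sub-band'' to ``$\Gamma_{p'}^q$ coincides with $\Gamma_p^q$ on all of $S$'', which is not mere bookkeeping but exactly the geodesic-tree control that Proposition~\ref{prop:11} already packages, and it would also cost you a separate union bound over the sub-bands. Dropping the slicing and replacing it with a single application of Proposition~\ref{prop:11} at scale $n$, your argument is correct and somewhat simpler than the paper's nested one.
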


The key ingredient in the proof of the above is a one-sided ``volume accumulation'' result proved in \cite{BB23}. While the setting in \cite{BB23} is that of semi-infinite and finite geodesics in exponential LPP (see Proposition \ref{prop:45}), the same proof technique yields an analogous result for finite geodesics in BLPP, and we shall now state this. For fixed points $p=(x,s)\in [\![-(1+\gamma/2)n,-(1-\gamma/2)n]\!]_{\RR}, q=(y,t)\in [\![(1-\gamma/2)n,(1+\gamma/2)n]\!]_\RR$, we shall use $\underline{V}_n(p,q)$ to denote the set of points $z\in [\![s, s+\gamma n/4]\!]_\RR$ which are to the ``right'' of $\Gamma_{p}^{q}$ and satisfy $\Gamma_{z}^q\setminus \Gamma_{p}^{q}\subseteq [s,-(1-\gamma) n]_\RR$ for some geodesic $\Gamma_{z}^q$. Analogously, %
we define $\overline{V}_n(p,q)$ as the set of points $z\in [\![t-\gamma n/4, t]\!]_\RR$ such that $\Gamma_{p}^z\setminus \Gamma_{p}^{q}\subseteq [(1-\gamma)n,  t]_\RR$ for some geodesic $\Gamma_{p}^z$. We now have the following result.

\begin{proposition}[{\cite{BB23}}]
  \label{prop:11}
  Fix $\mu\in (0,1)$. There exist positive constants $c,C,K,\delta_0,\beta$ such that for any $p=(x,s) \in [\![-(1+\gamma/2)n,-(1-\gamma/2)n]\!]_\RR, q\in [\![(1-\gamma/2)n,(1+\gamma/2)n]\!]_\RR$ additionally satisfying $\slope(p,q)\in (\mu,\mu^{-1})$, and for all $\varepsilon, n$ satisfying $\varepsilon^{1/\delta_0} n\geq K$, we have
  \begin{align}
    \label{eq:69}
    &\PP(|\underline{V}_n(p,q)\cap B_{\beta\varepsilon^{5/11}n^{2/3}}(\Gamma_{p}^{q})|_{\hor}\leq \varepsilon n^{5/3})\leq Ce^{-c\varepsilon^{-3/11}},\nonumber\\
    &\PP(|\overline{V}_n(p,q)\cap B_{\beta\varepsilon^{5/11}n^{2/3}}(\Gamma_{p}^{q})|_{\hor}\leq \varepsilon n^{5/3})\leq Ce^{-c\varepsilon^{-3/11}}.
  \end{align}
\end{proposition}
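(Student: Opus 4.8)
The plan is to obtain Proposition \ref{prop:11} by adapting, almost verbatim, the proof of the exponential LPP statement \cite[Proposition 53]{BB23} (recorded above as Proposition \ref{prop:45}) to the BLPP setting, and this is what will be carried out in the appendix (Section \ref{sec:app-vol}). It suffices to treat the first inequality, for $\underline{V}_n(p,q)$: the bound for $\overline{V}_n(p,q)$ then follows by the identical argument after applying the reflection symmetry of BLPP under $(x,k)\mapsto(-x,-k)$, which gives $T_{(x,m)}^{(y,n)}\stackrel{d}{=}T_{(-y,-n)}^{(-x,-m)}$ as processes and exchanges the bottom window near level $s$ with the top window near level $t$. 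Using the vertical and horizontal translation invariance of BLPP we may normalize so that the geodesic $\Gamma_p^q$ runs between level $0$ and level $\Theta(n)$; the transversal-fluctuation estimates of Propositions \ref{prop:38} and \ref{prop:53}, which extend by standard arguments to geodesics of slope in $(\mu,\mu^{-1})$ with $\mu$-dependent constants, are then available throughout.

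The heart of the argument is a scale-by-scale coalescence estimate. Fix a vertical scale $\ell$ and a height $h_0$ in the bottom window, let $P=(\Gamma_p^q(h_0),h_0)$ be the point of $\Gamma_p^q$ at that height, and let $z=P+(w,0)$ with $w\asymp \ell^{2/3}$, so that $z$ lies just to the right of $\Gamma_p^q$. One shows that, conditionally on $\Gamma_p^q$ below level $h_0$ and on a high-probability regularity event, with probability at least a constant $c_0=c_0(\mu)>0$ the block ``succeeds'', in the sense that the geodesics from all points at horizontal distance $\leq w$ over (a constant fraction of) the block $[h_0,h_0+\ell]$ coalesce with $\Gamma_p^q$ before level $h_0+C_0\ell$, which, since $C_0\ell\ll \gamma n$, is well before level $-(1-\gamma)n$. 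The mechanism is planarity: since $z$ is to the right of $\Gamma_p^q$ and both geodesics end at $q$, the geodesic $\Gamma_z^q$ must stay weakly to the right of $\Gamma_p^q$ until the two meet, and a meeting forces a merge (by uniqueness of geodesics between fixed points, \cite[Lemma B.1]{Ham19}); it therefore suffices that the ``unconstrained'' geodesic emanating from $z$ would wander left of $\Gamma_p^q$ within vertical distance $C_0\ell$, which is an anti-concentration statement for its transversal displacement. That statement, with a \emph{constant} lower bound, is extracted from the moderate-deviation estimates of Proposition \ref{prop:47} (comparing the weight of the best staircase from $z$ that is forced to stay strictly to the right of $\Gamma_p^q$ with the weight of the unconstrained one), together with the transversal-fluctuation upper bounds of Propositions \ref{prop:38} and \ref{prop:53} applied to $\Gamma_p^q$ and $\Gamma_z^q$ over the block. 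Exact Brownian scaling (Proposition \ref{prop:46}) reduces all of these statements at scale $\ell$ to a single reference scale, which streamlines the argument relative to the exponential LPP case, where only an approximate scaling invariance is available.

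Granting the building block, one partitions the bottom window, of vertical extent $\Theta(n)$, into $\asymp n/\ell$ consecutive blocks of height $\ell$, with $\ell=\ell(\varepsilon)$ a suitable power of $\varepsilon$ times $n$. Running the building block in each block and using that the conditional success probabilities are uniformly $\geq c_0$, a comparison with a sequence of $\mathrm{Bernoulli}(c_0)$ trials shows that, off an event of probability $\leq Ce^{-c\,n/\ell}$, a positive fraction of the blocks succeed; each successful block adjoins to $\underline{V}_n(p,q)$ a region of horizontal width $\asymp \ell^{2/3}$ over a vertical stretch $\asymp \ell$, i.e.\ horizontal volume $\asymp \ell^{5/3}$, and all the points so produced lie within horizontal distance $O(\ell^{2/3})$ of $\Gamma_p^q$. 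Summing over blocks yields $|\underline{V}_n(p,q)\cap B_{O(\ell^{2/3})}(\Gamma_p^q)|_{\hor}\gtrsim (n/\ell)\ell^{5/3}=n\ell^{2/3}$. Balancing the requirement $n\ell^{2/3}\gtrsim \varepsilon n^{5/3}$ against the demands that the localization width $\ell^{2/3}$ be at most $\beta\varepsilon^{5/11}n^{2/3}$ and that the failure probability $e^{-c\,n/\ell}$ be at most $e^{-c'\varepsilon^{-3/11}}$ fixes the choice of $\ell(\varepsilon)$ exactly as in \cite{BB23} and produces all three claimed conclusions simultaneously; the hypothesis $\varepsilon^{1/\delta_0}n\geq K$ just guarantees that the blocks are large enough for the mesoscopic estimate Proposition \ref{prop:53} and the Brownian-regularity inputs to apply.

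The step I expect to be the main obstacle is the coalescence building block, and within it the \emph{lower} bound $c_0$ on the per-block success probability: the transversal-fluctuation upper bounds are routine, but one needs a genuine anti-concentration statement --- that with non-negligible probability the geodesic from $z$ really does swing over and touch $\Gamma_p^q$ within the allotted vertical distance, uniformly across scales $\ell$. In exponential LPP \cite{BB23} this is read off from explicit lattice structure; for BLPP it must be produced from the one-point estimates of Proposition \ref{prop:47} (equivalently Lemma \ref{lem:74}) via a constrained-versus-unconstrained passage-time comparison. The remaining adaptations are bookkeeping: replacing each one-point and transversal-fluctuation input of \cite{BB23} by its BLPP counterpart above, transporting the argument from the lattice to the continuum spatial coordinate (where ``to the right of $\Gamma_p^q$'' and the functional $|\cdot|_{\hor}$ are the natural continuum analogues of the lattice counting), and checking that every BLPP substitute carries a tail of the same order as the exponential-LPP estimate it replaces, so that the exponents $5/11$ and $3/11$ are not degraded. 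The resulting estimate is then fed, via a union bound over a sufficiently fine net of pairs $(p,q)$, into the proof of Proposition \ref{prop:12}.
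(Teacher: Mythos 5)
Your high-level plan, adapt \cite[Section 5]{BB23} verbatim with BLPP substitutes for the exponential-LPP inputs, is exactly the paper's strategy in Section \ref{sec:app-vol}.  However, your reconstruction of the per-block mechanism does not match what \cite{BB23} actually does, and in the process you omit precisely the two ingredients the paper flags as requiring genuine care in the BLPP setting.

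First, the ``building block'' in \cite{BB23} is not an anti-concentration argument read off from the one-point estimates.  The engine of the per-scale coalescence step is a \emph{barrier construction after conditioning on the geodesic}: one conditions on $\Gamma_p^q$, uses the FKG inequality to argue that, given $\Gamma_p^q$, the environment off the geodesic is stochastically dominated by the unconditioned environment, and then plants barriers that force nearby geodesics to swing and merge.  The paper explicitly lists this as one of the four required substitutions and points to \cite[Lemma~4.17]{GH20} as the BLPP analogue; your proposal never mentions FKG or conditioning on the geodesic, and the raw anti-concentration statement you invoke is carried out in the unconditioned environment, which is not the measure one is working under after revealing $\Gamma_p^q$.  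This is not a cosmetic difference: without the FKG step the claim ``with probability $\geq c_0$ the unconstrained geodesic from $z$ wanders left of $\Gamma_p^q$'' is not justified, because $\Gamma_p^q$ is a random path correlated with the environment near $z$.

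Second, you assert that the needed quantitative input can be ``produced from the one-point estimates of Proposition~\ref{prop:47} via a constrained-versus-unconstrained passage-time comparison.''  The paper makes the opposite point: the \emph{constrained} parallelogram lower-tail estimate, i.e.\ the analogue of part (iii) of \cite[Theorem~4.2]{BGZ19} where the passage time is computed only over paths confined to a thin parallelogram, is not a consequence of one-point moderate deviations, is not currently in the BLPP literature, and has to be re-proved by running the tree construction of \cite[Appendix~C]{BGZ19} (equivalently the argument of \cite{BSS14}) with Proposition~\ref{prop:38} replacing the exponential-LPP transversal estimate.  You also omit entirely the remaining substitution the paper highlights: a Gaussian-tail regularity estimate for BLPP point-to-line weight profiles (Lemma~\ref{lem:115}), which replaces the Busemann-function regularity used in \cite[Lemma~49]{BB23} and is what ultimately fixes the exponents $5/11$ and $3/11$.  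Your block-counting and $\varepsilon$--$\ell$ bookkeeping are plausible, but without the FKG-barrier step and the constrained parallelogram input the proposal as written does not reproduce the proof of \cite[Proposition~53]{BB23}, and those two items are exactly where the BLPP adaptation has nontrivial content.
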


We reiterate that the setting in \cite{BB23} is different-- in an appendix (Section \ref{sec:app-vol}), we discuss the slight modifications needed in the argument therein to obtain the above result. Now, we use Proposition \ref{prop:11} to prove Proposition \ref{prop:12}.

\begin{proof}[Proof of Proposition \ref{prop:12}]
The broad proof strategy is to first use Proposition \ref{prop:11} with $\varepsilon =n^{-\delta}$ for a small value of $\delta$, to obtain that $|\underline{V}_n(p,q)|_{\hor}\geq n^{5/3-\delta}$ with high probability. Thereafter, we use Proposition \ref{prop:11} along with an appropriate union bound argument to argue that for a large collection of points $u\in \underline{V}_n(p,q)$, we also have $|\overline{V}_n(u,q)|_{\hor} \geq n^{5/3-\delta}$. As we shall see, with a few extra conditions, we can ensure that such points $u$ also satisfy $\{u\}\times \overline{V}_n(u,q)\subseteq \basin^\delta_n(\Gamma_p^q)$, and this will allow us to get the desired probability bound. We now begin with the formal proof; it might be helpful for the reader to concurrently refer to Figure \ref{fig:Vset}.
  \begin{figure}
    \centering
    \includegraphics[width=0.7\linewidth]{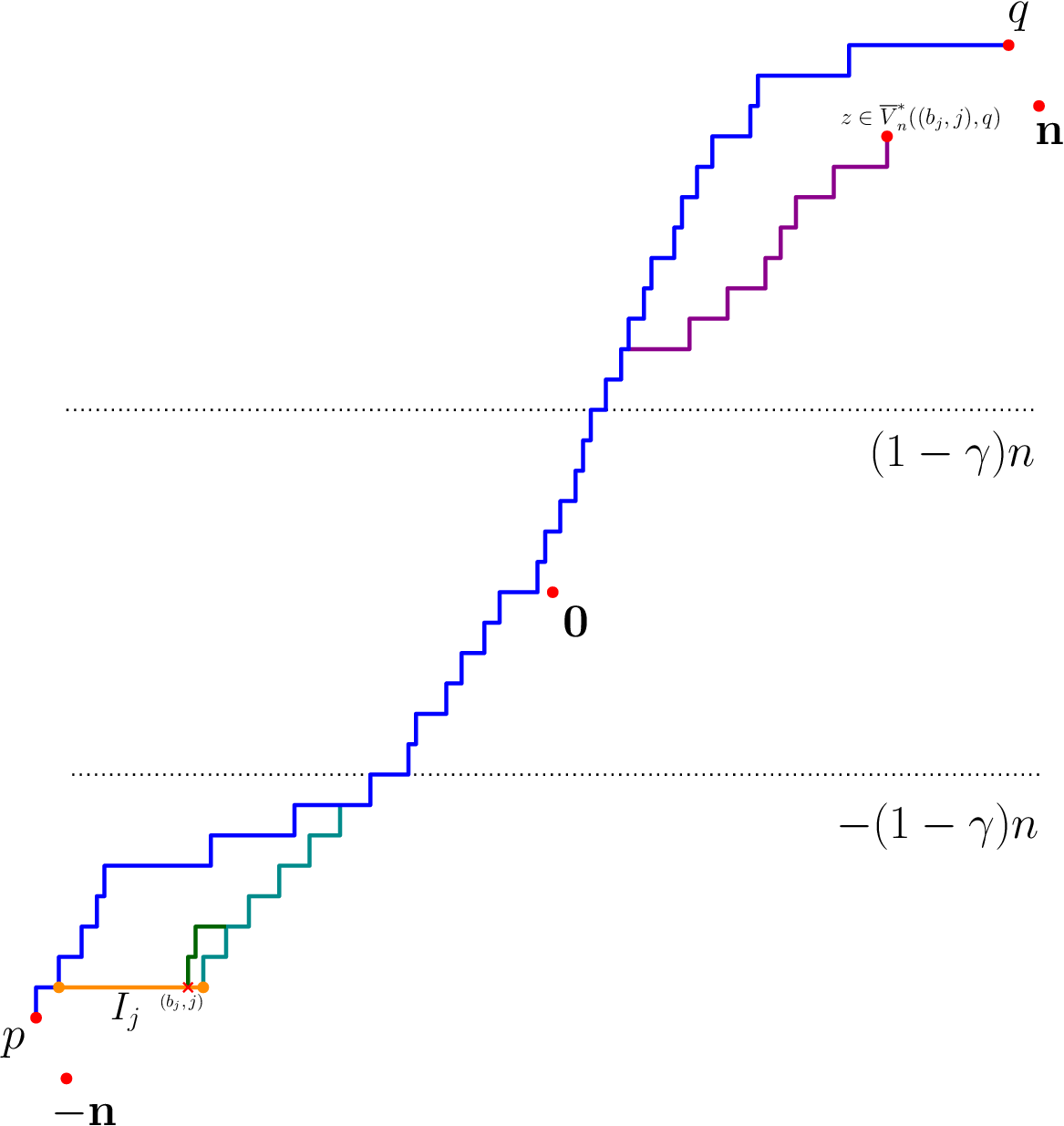}
    \caption{\textit{Proof of Proposition \ref{prop:12}}: Here, the orange interval $I_j$ is defined by $I_j=\underline{V}_n^*(p,q)\cap \{j\}_{\RR}$ and $b_j=\max\{x:(x,j)\in I_j\cap n^{-1}\ZZ\}$. The crucial point is the for any $\{u\}$ lying strictly between $\Gamma_{p}^{q}(j)$ and $(b_j,j)$, by planarity, we must have $u\times \overline{V}_n^*((b_j,j),q)\subseteq \basin^\delta_n(p,q)$.}
    \label{fig:Vset}
  \end{figure}

  Consider the event $\hightf$ defined by the condition $\Gamma_{p}^{q}\not\subseteq B_{n^{2/3+2\delta/11}}(\LL_p^q)$. For convenience, we locally define
  \begin{equation}
    \label{eq:153}
    \underline{V}_n^*(p,q)=\underline{V}_n(p,q)\cap B_{n^{2/3-4\delta/11}}(\Gamma_p^q), \overline{V}_n^*(p,q)=\overline{V}_n(p,q)\cap B_{n^{2/3-4\delta/11}}(\Gamma_p^q).
  \end{equation}
  By a transversal fluctuation estimate (Proposition \ref{prop:38}), we have $\PP(\hightf)\leq Ce^{-cn^{3\delta/11}}$. Now, we work on the event $\cE=\hightf^c\cap \{|\underline{V}^*_n(p,q)|_{\hor}\geq n^{5/3-\delta}\}$ and apply Proposition \ref{prop:11} with $\varepsilon=n^{-\delta}$, to obtain that for any fixed $\delta< \delta_0$, for all $n$ large enough, we have
  \begin{equation}
    \label{eq:74}
    \PP(\cE)\geq 1-2Ce^{-cn^{3\delta /11}}.
  \end{equation}
  Note that on this event, every $z\in \underline{V}^*_n(p,q)$ satisfies $z\in B_{2n^{2/3+2\delta/11}}(\LL_p^q)$ as long as $n$ is large enough. Writing $p=(x_0,s_0)$, for $j\in [\![s_0,s_0+\gamma n/4]\!]$, define $I_j=\underline{V}^*_n(p,q)\cap \{j\}_{\RR}$. Now, note that due to planarity, if $(x,s)\in \underline{V}^*_n(p,q)$, then necessarily $(y,s)\in \underline{V}^*_n(p,q)$ for all $y\in [\Gamma_p^q(s),x]$. Due to this property, note that for each $j$, $I_j$ must be a (possibly degenerate) interval with its left endpoint being on the geodesic $\Gamma_p^q$. Now, by the definition of the event $\cE$, we have
  \begin{equation}
    \label{eq:81}
    \sum_{j=s_0}^{s_0+ \gamma n/4}|I_j|_{\hor}\geq n^{5/3-\delta}.
  \end{equation}
  Now, for any $j$ with $|I_j|_{\hor}\geq n^{-1}$, we define $b_j=\max\{x:(x,j)\in I_j\cap n^{-1}\ZZ\}$. Let $\disvol$ be the event
  \begin{equation}
    \label{eq:82}
    \disvol=\bigcap_{z\in B_{2n^{2/3+2\delta/11}}(\LL_p^q)\cap ( [\![s_0,s_0+\gamma n/4]\!]_{n^{-1}\ZZ})}\{|\overline{V}_n^*(z,q)|_{\hor}\geq 2n^{5/3-\delta}\},
  \end{equation}
  Note that in the above, since the point $q$ is fixed and since the points $z$ are rational, the geodesic $\Gamma_z^q$ is a.s.\ unique. Further, note that since $p\in [\![-(1+\gamma/4)n, -(1-\gamma/4)n]\!]_\RR$, any point $q$ as above must satisfy $q\in [\![-(1+\gamma/2)n, -(1-\gamma/2)n]\!]_\RR$, and as a result, all the sets $\overline{V}_n^*(z,q)$ are well-defined. By using Proposition \ref{prop:11} along with a union bound, we have $\PP(\disvol)\geq 1- C_2e^{-cn^{3\delta/11}}$.

  Now, on the event $\cE\cap \disvol$, and for any $j$ with $|I_j|_{\hor}\geq n^{-1}$, we claim that
  \begin{equation}
    \label{eq:83}
   \{(b_j,j)\}\times \overline{V}_n^*((b_j,j),q)\subseteq \basin^\delta_n(\Gamma_p^q).%
  \end{equation}
  Indeed, since $(b_j,j)\in \underline{V}_n^*(p,q)$, we have
  \begin{equation}
    \label{eq:640}
    \Gamma_{(b_j,j)}^q\setminus \Gamma_p^q \subseteq [s_0,-(1-\gamma)n]_\RR.
  \end{equation}
  Also, writing $q=(y_0,t_0)$, we know that for any point $u\in \overline{V}_n^*((b_j,j),q)$, we have $u\in B_{n^{2/3-4\delta/11}}(\Gamma_{(b_j,j)}^u)$ and there is a geodesic $\Gamma_{(b_j,j)}^u$ such that $\Gamma_{(b_j,j)}^u\setminus \Gamma_{(b_j,j)}^q\subseteq [(1-\gamma)n,t_0]$. As a result of this and \eqref{eq:640}, we obtain that $\Gamma_{(b_j,j)}^u\cap [-(1-\gamma) n, (1-\gamma)n]_{\RR}= \Gamma_{p}^q\cap [-(1-\gamma) n, (1-\gamma)n]_{\RR}$, thereby establishing \eqref{eq:83}. In fact, by planarity, since the points $(b_j,j)$ are to the right of $\Gamma_p^q$, the following upgraded version of \eqref{eq:83} holds-- on the event $\cE\cap \disvol$, we have
  \begin{equation}
    \label{eq:84}
    \bigcup_{j: |I_j|_{\hor}\geq n^{-1}}  \{j\}_{[\Gamma_{p}^{q}(j),b_j]}\times \overline{V}^*_n((b_j,j),q)\subseteq \basin^\delta_n(\Gamma_p^q).%
  \end{equation}
  On the event $\cE\cap \disvol$, since $|\overline{V}^*_n((b_j,j),q)|_{\hor}\geq 2n^{5/3-\delta}$ for all $b_j$ as above, it suffices to show that on this event, we also have
  \begin{equation}
    \label{eq:85}
    \sum_{j:|I_j|_{\hor}\geq n^{-1}}(b_j-\Gamma_p^q(j))\geq n^{5/3-\delta}/2.
  \end{equation}
  However, this is easy to obtain by using \eqref{eq:81}. Indeed, since $b_j=\max\{x:x\in I_j\cap n^{-1}\ZZ\}$ and since we are working with $j\in [\![s_0,s_0+\gamma n/4]\!]$, we have
  \begin{equation}
    \label{eq:86}
    \sum_{j:|I_j|_{\hor}\geq n^{-1}}(b_j-\Gamma_p^q(j))\geq \sum_{j=s_0}^{s_0+\gamma n/4}(|I_j|_{\hor}-n^{-1})\geq \sum_{j=s_0}^{s_0+\gamma n/4}|I_j|_{\hor}-(\gamma n/4) n^{-1} = \sum_{j=s_0}^{s_0+\gamma n/4}|I_j|_{\hor}-\gamma/4,
  \end{equation}
  and further,
and the proof is now completed by applying \eqref{eq:81} and noting that $n^{5/3-\delta}-\gamma/4\geq n^{5/3-\delta}/2$ for all large enough $n$.
\end{proof}

\subsection{Capturing all geodesics via geodesics between Poissonian points}
\label{sec:capt-all-geod}
Though Theorem \ref{thm:6} is stated for the $\Theta(n^{2/3})$ length line segments $L_{-n}, L_{n}$ around $-\n,\n$ respectively, in order to prove it, we shall have to consider slightly larger regions around the above points. Indeed, with $\cK^\delta_n$ defined by
\begin{equation}
  \label{eq:726}
  \cK^\delta_n=B_{n^{2/3+\delta}}(\LL_{-\gamma \n/8}^{\gamma \n/8})\cap \ZZ_\RR,
\end{equation}
we shall often work with the regions $-\n+\cK^\delta_n, \n+\cK^\delta_n$ for a small value of $\delta$. The goal now is to argue that all geodesics $\Gamma_{p}^{q,t}$ for $p\in -\n+\cK^\delta_n, q\in \n + \cK^\delta_n$ and $t\in \cT_{-\n+\cK^\delta_n}^{\n+\cK^\delta_n,[0,1]}$ can be captured by the corresponding geodesics $\Gamma_{\tilde p}^{\tilde q,t}$ where $\tilde p$ and $\tilde q$ are now restricted amongst a Poissonian cloud of points sprinkled independently of the dynamical BLPP. Indeed, the goal of this section is to prove the following result which formalises the above. %
  \begin{proposition}
    \label{lem:36}
    Fix $\nu>0$ and let $\cQ_{n,\nu}$ be a Poisson point process on $(\ZZ_\RR)^2$ with intensity $n^{-10/3+2\nu}$ sampled independently of the dynamical LPP $\{T^t\}_{t\in \RR}$. Let $\cover_n^{\delta,\nu}$ denote the event that for all $t\in [0,1]$, all $p\in -\n+\cK^\delta_n$, all $q\in \n+\cK^\delta_n$, and any geodesic $\Gamma_p^{q,t}$, there exist $(\tilde p, \tilde q)\in \cQ_{n,\nu}\cap \basin^\delta_{n}(\Gamma_p^{q,t})$ which additionally satisfy $\tilde p\in B_{3n^{2/3+\delta}}(\LL_{-2\n}^{2\n})\cap [\![-(1+\gamma/2)n, -(1-\gamma/2)n]\!]_{\RR}$, $\tilde q\in B_{3n^{2/3+\delta}}(\LL_{-2\n}^{2\n})\cap [\![(1-\gamma/2)n,(1+\gamma/2)n]\!]_{\RR}$.

    Then there exists a $\delta_0>0$ and positive constants $C,c$ such that for any fixed $\delta<\delta_0$ and all $n$ large enough, we have%
    \begin{equation}
      \label{eq:91}
      \PP(\cover_n^{\delta,\nu})\geq 1-Ce^{-cn^{3\delta/11}}-Ce^{-cn^{2\nu-2\delta}}.
    \end{equation}
  \end{proposition}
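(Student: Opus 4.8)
The plan is to reduce, via the resampling structure of the dynamics and Lemma~\ref{prop:48}, to a \emph{static} statement at finitely many times, to apply the fixed-endpoint basin estimate Proposition~\ref{prop:12} along a polynomial net of endpoints, and finally to use the independence of the Poisson cloud $\cQ_{n,\nu}$ from the dynamics. First, for $p\in-\n+\cK^\delta_n$, $q\in\n+\cK^\delta_n$ one has $\cM_p^q\subseteq\cM_{-\n+\cK^\delta_n}^{\n+\cK^\delta_n}$, so $\Gamma_p^{q,t}$ changes, as $t$ ranges over $[0,1]$, only at times in the finite set $\cT\coloneqq\cT_{-\n+\cK^\delta_n}^{\n+\cK^\delta_n,[0,1]}$; hence it suffices to check the covering property at the times $r\in\cT\cup\{0\}$ (for $t$ between consecutive such times, $\Gamma_p^{q,t}$ equals $\Gamma_p^{q,r}$ for the largest such $r\le t$). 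Since $|\cT|\sim\mathrm{Poi}((t-s)|\cM_{-\n+\cK^\delta_n}^{\n+\cK^\delta_n}|)$ with $|\cM_{-\n+\cK^\delta_n}^{\n+\cK^\delta_n}|=O(n^2)$, we have $\PP(|\cT|>n^3)\le e^{-cn^2}$, negligible against the asserted error; and by Lemma~\ref{prop:48}, conditionally on $\cT$, each $T^r$ with $r\in\cT$, as well as $T^0$, is distributed as a static BLPP. So it remains to prove, for each such fixed static time, a covering estimate with failure probability $\le Ce^{-cn^{3\delta/11}}+Ce^{-cn^{2\nu-2\delta}}$, and union bound over the $\le n^3+1$ times.

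Fix such an $r$, with $T^r$ a static BLPP. Because $\cK^\delta_n=B_{n^{2/3+\delta}}(\LL_{-\gamma\n/8}^{\gamma\n/8})\cap\ZZ_\RR$ lies in the $n^{2/3+\delta}$-horizontal neighbourhood of the diagonal $\LL_{-2\n}^{2\n}=\{y=x\}$ and spans heights in $[-\gamma n/8,\gamma n/8]$, the regions $-\n+\cK^\delta_n$ and $\n+\cK^\delta_n$ have heights in $[\![-(1+\gamma/2)n,-(1-\gamma/2)n]\!]$ and $[\![(1-\gamma/2)n,(1+\gamma/2)n]\!]$ respectively, and for all such $p,q$ the line $\LL_p^q$ stays within $O(n^{2/3+\delta})$ of $\LL_{-2\n}^{2\n}$ over the relevant heights. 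By the transversal fluctuation estimate Proposition~\ref{prop:38} (in scaled/shifted form, uniformised over the continuum of endpoints by a net argument using monotonicity of geodesics in their endpoints), on an event of probability $\ge 1-Ce^{-cn^{3\delta}}$ every geodesic $\Gamma_p^{q,r}$ with $p\in-\n+\cK^\delta_n$, $q\in\n+\cK^\delta_n$ stays in $B_{2n^{2/3+\delta}}(\LL_{-2\n}^{2\n})$; since every element of $\basin^\delta_n(\Gamma_p^{q,r})$ is $B_{n^{2/3-4\delta/11}}$-close to $\Gamma_p^{q,r}$ and its heights are pinned by the definition of $\basin^\delta_n$, each such basin is then contained in the localised product region appearing in the statement. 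Next, take the net $\mathcal N_-\times\mathcal N_+$ consisting of $(n^{-1}\ZZ\times\ZZ)\cap(-\n+\cK^\delta_n)$ and the analogous net of $\n+\cK^\delta_n$, of size $O(n^{10/3+2\delta})$; every pair $(p_0,q_0)$ in it satisfies the hypotheses of Proposition~\ref{prop:12} (the heights lie in the required intervals and $\slope(p_0,q_0)=1+O(n^{-1/3+\delta})$), so a union bound over the net gives that, on an event of probability $\ge 1-Ce^{-cn^{3\delta/11}}$, $|\basin^\delta_n(\Gamma_{p_0}^{q_0,r})|_{\hor}\ge n^{10/3-2\delta}$ for all net pairs simultaneously. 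One then checks, via a planarity and coalescence argument, that on these events, for \emph{every} $(p,q)$ in the two regions and every geodesic $\Gamma_p^{q,r}$ there is a net pair $(p_0,q_0)$ (with $\mathrm{height}(p_0)=\mathrm{height}(p)$, $\mathrm{height}(q_0)=\mathrm{height}(q)$) such that $\basin^\delta_n(\Gamma_p^{q,r})$ contains $\basin^\delta_n(\Gamma_{p_0}^{q_0,r})$ up to removing its slices at the two heights of $p$ and $q$; here one uses that $\basin^\delta_n$ depends on a geodesic only through its restriction to the height band strictly between the heights of $p$ and $q$ (those two slices contributing only $O(n^{5/3+2\delta})$ to $|\cdot|_{\hor}$, negligible against $n^{10/3-2\delta}$), and that this restriction is piecewise constant in $p$ and in $q$ with the number of distinct pieces controlled, through transversal fluctuation and Proposition~\ref{prop:11}, by a fixed power of $n$.

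It remains to bring in the Poisson cloud. Condition on $\{T^t\}_{t\in\RR}$, hence on $\cT$ and on all the $\sigma(\{T^t\})$-measurable events above, which together occur with probability $\ge 1-Ce^{-cn^{3\delta/11}}$. On them, taking the union over $r\in\cT\cup\{0\}$, the event that $\cQ_{n,\nu}\cap\basin^\delta_n(\Gamma_p^{q,r})=\emptyset$ for some $p,q,r$ and some geodesic is contained in the event that $\cQ_{n,\nu}$ misses one of a family $\mathcal B$, of size $N\le n^{K}$ for an absolute constant $K$, of sets each of the form $\basin^\delta_n(\Gamma_{p_0}^{q_0,r})$ with two height-slices deleted, each with $|\cdot|_{\hor}$-volume at least $\tfrac12 n^{10/3-2\delta}$ and contained in the localised region. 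Since $\cQ_{n,\nu}$ is an independent Poisson process of intensity $n^{-10/3+2\nu}$ with respect to $|\cdot|_{\hor}$, for each $B\in\mathcal B$ the count $|\cQ_{n,\nu}\cap B|$ stochastically dominates $\mathrm{Poi}(\tfrac12 n^{2\nu-2\delta})$ — here we use $\delta<\nu$, which we ensure by taking $\delta_0\le\nu$ (for $\delta\ge\nu$ the asserted bound is vacuous) — so $\PP(\cQ_{n,\nu}\cap B=\emptyset\mid\{T^t\})\le e^{-cn^{2\nu-2\delta}}$, and a union bound over $\mathcal B$ gives failure probability $\le n^{K}e^{-cn^{2\nu-2\delta}}\le Ce^{-c'n^{2\nu-2\delta}}$. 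On the complementary event, every $\basin^\delta_n(\Gamma_p^{q,r})$ contains a point of $\cQ_{n,\nu}$, which automatically satisfies the localisation constraints since the basin lies in the localised region; that is, $\cover_n^{\delta,\nu}$ holds. Combining these error contributions with the Poisson tail for $|\cT|$ yields $\PP(\cover_n^{\delta,\nu})\ge 1-Ce^{-cn^{3\delta/11}}-Ce^{-cn^{2\nu-2\delta}}$.

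The main obstacle is the last step of the second paragraph: upgrading the fixed-endpoint estimate Proposition~\ref{prop:12} to the uniform inclusion $\basin^\delta_n(\Gamma_{p_0}^{q_0,r})\subseteq\basin^\delta_n(\Gamma_p^{q,r})$ (up to negligible slices) holding simultaneously over the continuum of endpoints $p,q$ and over all, possibly non-unique, geodesics, together with the polynomial bound on the number of distinct basins. Carrying this out cleanly requires exploiting planarity of geodesics — their ordering under ordering of endpoints, and the piecewise-constancy of the portion of $\Gamma_p^q$ above (respectively below) the height of $p$ (respectively $q$) as that endpoint moves horizontally — in tandem with the coalescence content already available through Proposition~\ref{prop:11}; the remaining ingredients (transversal fluctuation, Poisson tail estimates, union bounds, Lemma~\ref{prop:48}) are routine.
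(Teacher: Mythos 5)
Your high-level architecture matches the paper's: restrict to the finitely many times $\{0\}\cup\cT_{-\n+\cK^\delta_n}^{\n+\cK^\delta_n,[0,1]}$ via Lemma~\ref{prop:48}, bound $|\cT|$ by a Poisson tail, transfer Proposition~\ref{prop:12} to a polynomial net of endpoints by a union bound, and then use independence of $\cQ_{n,\nu}$ together with the Poisson tail to hit every basin. Those steps are essentially the same as in the paper (which packages them as the events $\bddflips_n$, $\smallbasin_n^c$, $\bigcap_t\latapx_n^t$ and the event $\cA_n$).

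The gap is in the step you yourself flag as the obstacle: passing from the net estimate to the uniform inclusion. Your proposed mechanism is to pick net points $p_0,q_0$ at \emph{the same heights} as $p,q$ and then argue that $\basin^\delta_n(\Gamma_p^{q,r})\supseteq\basin^\delta_n(\Gamma_{p_0}^{q_0,r})$ ``up to removing two height slices,'' appealing to (i) the claim that $\basin^\delta_n$ depends on the geodesic only through its restriction to the interior height band, and (ii) piecewise constancy of that restriction in the starting and ending points. Claim (i) is false as stated: the constraint $p'',q''\in B_{n^{2/3-4\delta/11}}(\Gamma_p^q)$ in the definition of $\basin^\delta_n(\Gamma_p^q)$ is evaluated at the heights of $p''$ and $q''$, which range over the full intervals $[\![s,-(1-\gamma/2)n]\!]$ and $[\![(1-\gamma/2)n,t]\!]$, not just the central band $[-(1-\gamma)n,(1-\gamma)n]$, so the basin genuinely depends on $\Gamma_p^q$ in the outer height bands. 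And the piecewise-constancy argument in (ii), while true in the sense that $\Gamma_{(x,s)}^q\lvert_{>s}$ is piecewise constant in $x$, does not give you control: the constancy intervals can be arbitrarily short, so rounding $x$ to the nearest point of the $n^{-1}\ZZ$ mesh can land in a different piece and hence produce a geodesic whose outer portions --- precisely where the basin constraint lives --- differ from those of $\Gamma_p^q$. You would then need a separate probabilistic argument bounding, uniformly over all endpoints and times, the number and positions of these breakpoints, which is nontrivial and not supplied.

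The paper's Lemma~\ref{lem:38} sidesteps all of this by choosing the net points $p',q'$ \emph{on the geodesic} $\Gamma_p^q$ itself, in the height bands $[\![-(1+\gamma/4)n,-(1-\gamma/4)n]\!]_{n^{-1}\ZZ}$ and $[\![(1-\gamma/4)n,(1+\gamma/4)n]\!]_{n^{-1}\ZZ}$. A short transversal-fluctuation argument (equation~\eqref{eq:165}) shows that on $\latapx_n$ the geodesic must intersect the $n^{-1}\ZZ$ mesh in those bands --- if every level-slice of $\Gamma_p^q$ had horizontal length $<n^{-1}$, its total horizontal progression across the band would be $O(1)$, contradicting the fact that both its endpoint $p$ and its location at the other end of the band are within $O(n^{2/3+\delta})$ of the diagonal, whose $x$-coordinate changes macroscopically across the band. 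With $p',q'\in\Gamma_p^q$, the geodesic $\Gamma_{p'}^{q'}$ is literally a sub-path of $\Gamma_p^q$, so $\basin^\delta_n(\Gamma_{p'}^{q'})\subseteq\basin^\delta_n(\Gamma_p^q)$ is immediate from the definition, with no slice removals, no piecewise-constancy argument, and no extra use of Proposition~\ref{prop:11}. You should replace your rounding-by-height step with this construction; the rest of your outline then goes through.

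Two minor points: your net size should be $O(n^{16/3+2\delta})$, not $O(n^{10/3+2\delta})$ (each factor $\cK^\delta_n\cap(n^{-1}\ZZ\times\ZZ)$ has $O(n^{8/3+\delta})$ points), though this only changes constants in the union bound; and the two removed height slices have $|\cdot|_{\hor}$-volume $O(n^{7/3})$ rather than $O(n^{5/3+2\delta})$, which would still be negligible against $n^{10/3-2\delta}$ but is moot once you adopt the on-geodesic net-point construction.
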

Note that in the above, $\basin^\delta_n(\Gamma_p^{q,t})$ refers to the basin of the path $\Gamma_p^{q,t}$ with respect to the BLPP $T^t$. In reference to \eqref{eq:91}, we shall take $\delta$ and $\nu$ to be both small but such that $2\nu-2\delta>0$-- this shall ensure that both the terms above decay stretched exponentially in $n$. We now start preparing for the proof of Proposition \ref{lem:36}-- the broad reasoning is to use Proposition \ref{prop:12} to obtain that with high probability, $\basin^\delta_n(\Gamma_p^{q,t})$ cannot be too small simultaneously for all $p\in -\n+\cK^\delta_n, q\in \n+\cK^\delta_n$, $t\in \cT_{-\n+\cK^\delta_n}^{\n+\cK^\delta_n,[0,1]}$ and then use basic properties of Poisson processes to argue that it is very likely that $\cQ_{n,\nu}$ simultaneously intersects all the above basins. In order to use the above strategy, we first need a result which allows us to simultaneously control all the above sets $\basin^\delta_n(\Gamma_p^{q,t})$ by looking at basins only corresponding to $p,q$ lying in a fine mesh of spacing $n^{-1}$. Indeed, we have the following result in the setting of static BLPP.

  \begin{lemma}
    \label{lem:38}
    There exists a $\delta_0>0$ such that for any fixed $\delta<\delta_0$, the event $\latapx_n$ defined by
        \begin{equation}
      \label{eq:164}
      \latapx_n=\{\Gamma_p^q\subseteq B_{2n^{2/3+\delta}}(\LL_{-2\n}^{2\n}) \textrm{ for all } p\in -\n+\cK^\delta_n, q\in \n+\cK^\delta_n, \textrm{ all geodesics } \Gamma_p^q\}
    \end{equation}
    satisfies $\PP(\latapx_n)\geq 1-Ce^{-cn^{3\delta}}$ for some constants $C,c$ and all $n$. Further, for all $n$ large enough, on $\latapx_n$, for every $p\in -\n+\cK^\delta_n, q\in \n+\cK^\delta_n$ and geodesic $\Gamma_p^q$, there exists a $p'\in B_{2n^{2/3+\delta}}(\LL_{-2\n}^{2\n})\cap [\![-(1+\gamma/4) n,-(1-\gamma/4)n]\!]_{n^{-1}\ZZ}$ and a $q'\in  B_{2n^{2/3+\delta}}(\LL_{-2\n}^{2\n})\cap [\![(1-\gamma/4)n,(1+\gamma/4)n]\!]_{n^{-1}\ZZ}$ such that we have
    \begin{align}
      \label{eq:713}
      \basin^\delta_n(\Gamma_{p'}^{q'})&\subseteq \basin^\delta_n(\Gamma_p^q)\nonumber\\
                                &\subseteq (B_{3n^{2/3+\delta}}(\LL_{-2\n}^{2\n}))^2\cap ([\![-(1+\gamma/2)n, -(1-\gamma/2)n]\!]_{\RR})\times [\![(1-\gamma/2)n,(1+\gamma/2)n]\!]_{\RR}). 
    \end{align}
  \end{lemma}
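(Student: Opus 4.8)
The plan is to establish the two assertions of the lemma separately: the probability bound $\PP(\latapx_n)\ge 1-Ce^{-cn^{3\delta}}$, which is the quantitative heart of the statement, and the (essentially deterministic) construction of the rational points $p',q'$ together with the verification of the inclusions \eqref{eq:713}. For the first, I would run a net-plus-union-bound argument driven by the transversal fluctuation estimate Proposition \ref{prop:38}, upgrading from a fixed pair of endpoints to all $p\in -\n+\cK^\delta_n$, $q\in \n+\cK^\delta_n$ and all geodesics by the standard sandwiching of an arbitrary geodesic between two net geodesics. For the second, I would work on $\latapx_n$ (intersected with the almost sure event that geodesics between rational endpoints are unique and that such geodesics sharing a nearby endpoint coalesce, cf.\ Lemma \ref{lem:104}), and produce $p',q'$ by rounding to $n^{-1}\ZZ$ suitable points of $\Gamma_p^q$ lying at heights $\approx -n+\gamma n/8$ and $\approx n-\gamma n/8$.

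\emph{The probability bound.} First observe that translating the diagonal $\LL_{-2\n}^{2\n}$ by $\pm\n$ returns the diagonal, so every point of $-\n+\cK^\delta_n$ and of $\n+\cK^\delta_n$ lies within horizontal distance $n^{2/3+\delta}$ of $\LL_{-2\n}^{2\n}$; by convexity the whole segment $\LL_p^q$ then lies in $B_{n^{2/3+\delta}}(\LL_{-2\n}^{2\n})$ whenever $p,q$ are admissible, and moreover $\slope(p,q)=1+O(n^{-1/3+\delta})$. Let $\cN^-$ (resp.\ $\cN^+$) be the set of points of $-\n+\cK^\delta_n$ (resp.\ $\n+\cK^\delta_n$) with integer $x$-coordinate; since these regions have vertical extent $\Theta(n)$ and horizontal width $\Theta(n^{2/3+\delta})$ we have $|\cN^\pm|=O(n^{5/3+\delta})$. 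For each fixed pair $(\bar p,\bar q)\in\cN^-\times\cN^+$ the geodesic $\Gamma_{\bar p}^{\bar q}$ is almost surely unique, and by translation invariance of BLPP together with Proposition \ref{prop:38} (a standard comparison absorbing the $O(n^{2/3+\delta})$ off-diagonal displacement of $\bar p,\bar q$), taking $\alpha=n^\delta$ yields $\PP(\Gamma_{\bar p}^{\bar q}\not\subseteq B_{2n^{2/3+\delta}}(\LL_{-2\n}^{2\n}))\le Ce^{-cn^{3\delta}}$. A union bound over the $O(n^{10/3+2\delta})$ pairs, using that $n^{10/3+2\delta}e^{-cn^{3\delta}}\le e^{-(c/2)n^{3\delta}}$ for all large $n$, controls the event that some net geodesic leaves $B_{2n^{2/3+\delta}}(\LL_{-2\n}^{2\n})$. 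On its complement an arbitrary geodesic $\Gamma_p^q$ is trapped: writing $p^-,p^+$ (resp.\ $q^-,q^+$) for the two net points obtained by rounding the $x$-coordinate of $p$ (resp.\ $q$) down and up to $\ZZ$ at the same height, the monotonicity of geodesics under coordinatewise ordering of endpoints gives, at every common height $m$, that the leftmost geodesic from $p^-$ to $q^-$ is weakly left of $\Gamma_p^q(m)$ and the rightmost geodesic from $p^+$ to $q^+$ is weakly right of it; since any geodesic lies between the leftmost and rightmost geodesics with the same endpoints, and both sandwiching net geodesics lie in $B_{2n^{2/3+\delta}}(\LL_{-2\n}^{2\n})$, so does $\Gamma_p^q$. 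Absorbing the finitely many small $n$ into $C$ finishes this part.

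\emph{The construction.} Work on $\latapx_n$, so $\Gamma_p^q\subseteq B_{2n^{2/3+\delta}}(\LL_{-2\n}^{2\n})$, and fix admissible $p=(x_p,s)$, $q=(x_q,t)$ and a geodesic $\Gamma_p^q$. Set $h_1=\lceil-(1-\gamma/8)n\rceil$ and $h_2=\lfloor(1-\gamma/8)n\rfloor$; for all admissible $p,q$ one has $s\le h_1<-(1-\gamma)n$ and $(1-\gamma)n<h_2\le t$, and $h_1\in[\![-(1+\gamma/4)n,-(1-\gamma/4)n]\!]$, $h_2\in[\![(1-\gamma/4)n,(1+\gamma/4)n]\!]$. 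Thus $v_1=(\Gamma_p^q(h_1),h_1)$ and $v_2=(\Gamma_p^q(h_2),h_2)$ lie on $\Gamma_p^q$, and the sub-path of $\Gamma_p^q$ between them contains the entire central portion $\Gamma_p^q\cap[-(1-\gamma)n,(1-\gamma)n]_\RR$. Let $p'$ be $v_1$ with its $x$-coordinate rounded (in a fixed direction) to $n^{-1}\ZZ$, and likewise $q'$; since $\Gamma_p^q$ is within $2n^{2/3+\delta}$ of the diagonal, $p',q'$ lie in the asserted lattice sets. The point of the construction is that, because $p'$ (resp.\ $q'$) lies within $n^{-1}$ of the point $v_1$ (resp.\ $v_2$) of $\Gamma_p^q$ and because geodesics between rational endpoints sharing a nearby endpoint coalesce well within the $\Theta(n)$ vertical distance separating $h_1,h_2$ from the central strip, the geodesic $\Gamma_{p'}^{q'}$ coincides with $\Gamma_p^q$ at all heights in $[\![h_1,h_2]\!]$ that enter the definition of $\basin^\delta_n$ — certainly including the central band $[\![-(1-\gamma)n,(1-\gamma)n]\!]$ and the near-endpoint bands below $-(1-\gamma/2)n$ and above $(1-\gamma/2)n$ — the precise coincidence being forced by the ordering of geodesics under endpoint perturbation together with the almost sure coalescence and uniqueness folded into $\latapx_n$. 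Granting this, the first inclusion $\basin^\delta_n(\Gamma_{p'}^{q'})\subseteq\basin^\delta_n(\Gamma_p^q)$ is immediate from the definition: a pair $(p'',q'')\in\basin^\delta_n(\Gamma_{p'}^{q'})$ has $p''$-height in $[\![h_1,-(1-\gamma/2)n]\!]\subseteq[\![s,-(1-\gamma/2)n]\!]$ and $q''$-height symmetrically, while the "agrees with $\Gamma_{p'}^{q'}$ outside $[-(1-\gamma)n,(1-\gamma)n]_\RR$" condition and the "lies in $B_{n^{2/3-4\delta/11}}(\Gamma_{p'}^{q'})$" condition only involve $\Gamma_{p'}^{q'}$ at heights where it equals $\Gamma_p^q$. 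The second inclusion is the box containment, which follows directly from $\latapx_n$ — putting $\Gamma_p^q$, hence the $n^{2/3-4\delta/11}$-horizontal-neighbourhood of its relevant portion, inside $B_{3n^{2/3+\delta}}(\LL_{-2\n}^{2\n})$ — together with the height constraints in the definition of $\basin^\delta_n$ and the fact that admissible $p,q$ have heights in $[\![-(1+\gamma/2)n,-(1-\gamma/2)n]\!]$, $[\![(1-\gamma/2)n,(1+\gamma/2)n]\!]$.

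I expect the main obstacle to be the coincidence claim in the second part: turning the $n^{-1}$-proximity of $p',q'$ to points of $\Gamma_p^q$ into exact agreement of $\Gamma_{p'}^{q'}$ with $\Gamma_p^q$ on the central strip and the relevant near-endpoint bands. This requires a careful use of the planarity and monotonicity of staircases (choosing the rounding direction so that $\Gamma_{p'}^{q'}$ is pinched against $\Gamma_p^q$), of the almost sure coalescence of geodesics between rational points sharing a nearby endpoint, and of the uniqueness of geodesics between rational points; it is also the only place where the precise numerology of the exponents (the $4\delta/11$ in the definition of $\basin^\delta_n$ and the nested windows $\gamma/8,\gamma/4,\gamma/2,\gamma$) is genuinely used. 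By contrast the probabilistic first part is routine, the only mild points being the invocation of transversal fluctuations for slightly off-diagonal endpoints and the absorption of the polynomially many net pairs by the stretched-exponential tail.
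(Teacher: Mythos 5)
The probabilistic first part of your argument is fine and matches the paper's treatment (transversal fluctuation estimate, union bound over a net, sandwiching by monotonicity). The problem is in the construction of $p',q'$, where there is a genuine gap.

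You produce $p',q'$ by \emph{rounding} the $x$-coordinates of points of $\Gamma_p^q$ to $n^{-1}\ZZ$, so in general $p',q'\notin\Gamma_p^q$, and then you need $\Gamma_{p'}^{q'}$ to literally coincide with $\Gamma_p^q$ on the relevant heights. You invoke an unproven claim that ``geodesics between rational endpoints sharing a nearby endpoint coalesce well within the $\Theta(n)$ vertical distance,'' but nothing in the paper supplies such a statement, it is not an almost-sure deterministic fact (coalescence of geodesics from nearby endpoints is a delicate probabilistic question, not a planarity identity), and the conclusion of the lemma is required to hold pointwise on the event $\latapx_n$, so a ``with high probability'' coalescence estimate would not suffice anyway. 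You flag this yourself as the main obstacle, and indeed the gap is not something that can be patched by the planarity/uniqueness facts you cite: pinching between endpoint-ordered geodesics bounds $\Gamma_{p'}^{q'}$ between two nearby curves, but gives no exact agreement with $\Gamma_p^q$, and exact agreement is what is needed to compare $B_{n^{2/3-4\delta/11}}(\Gamma_{p'}^{q'})$ with $B_{n^{2/3-4\delta/11}}(\Gamma_p^q)$ and to compare the two ``$\setminus\Gamma_{\cdot}^{\cdot}\subseteq[-(1-\gamma)n,(1-\gamma)n]_\RR^c$'' conditions in the definition of the basin.

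The paper avoids the coalescence issue entirely by choosing $p',q'$ to lie \emph{exactly on} $\Gamma_p^q$ at points of the $n^{-1}\ZZ$ lattice. Then, since $p',q'$ are rational, $\Gamma_{p'}^{q'}$ is the unique geodesic between them and is literally a sub-path of $\Gamma_p^q$, so $\basin^\delta_n(\Gamma_{p'}^{q'})\subseteq\basin^\delta_n(\Gamma_p^q)$ is immediate from the definition (both the agreement-in-the-central-band condition and the $B_{n^{2/3-4\delta/11}}$-proximity condition transfer because $\Gamma_{p'}^{q'}\subseteq\Gamma_p^q$). The nontrivial content becomes showing that such lattice points exist on $\Gamma_p^q$, i.e.\ that $\Gamma_p^q\cap[\![-(1+\gamma/4)n,-(1-\gamma/4)n]\!]_{n^{-1}\ZZ}\neq\emptyset$ (and similarly above): here one exploits that a BLPP geodesic is a staircase with a horizontal segment $I_j$ at each integer height $j$, and if it avoided every $n^{-1}\ZZ$ line then $|I_j|_{\hor}\le n^{-1}$ for each $j$ in a height-window of length $\Theta(n)$, so the geodesic could move at most $O(1)$ horizontally across the window, contradicting the $\latapx_n$ constraint that it must track the diagonal $\LL_{-2\n}^{2\n}$ to within $2n^{2/3+\delta}$. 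This existence argument is the one piece of structure your outline does not anticipate, and it is precisely what makes the choice of $p',q'$ work without any coalescence input.
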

  \begin{proof}
    That $\latapx_n$ satisfies the desired probability estimate is an easy consequence of Proposition \ref{prop:38} along with planarity and a union bound. Further, the second inclusion in \eqref{eq:713} automatically holds on $\latapx_n$ by the condition $\eqref{eq:714}$ in the definition of $\basin^\delta_n(\Gamma_p^q)$. Thus, to complete the proof, we need only show that the first inclusion in \eqref{eq:713} always holds on the event $\latapx_n$.
    
    To begin, we show that on the event $\latapx_n$, for any $p\in -\n+\cK_n^\delta, q\in \n+\cK_n^\delta$, we must have
    \begin{equation}
      \label{eq:165}
      \Gamma_p^q\cap [\![-(1+\gamma/4)n,-(1-\gamma/4)n]\!]_{n^{-1}\ZZ}\neq \emptyset,  \Gamma_p^q\cap [\![(1-\gamma/4)n,(1+\gamma/4)n]\!]_{n^{-1}\ZZ}\neq \emptyset.
    \end{equation}
    We just show the former, and the latter will follow similarly. For $j\in [\![-(1+\gamma/4)n, -(1-\gamma/4)n]\!]$, let $I_j=\Gamma_p^q\cap \{j\}_{\RR}$. Now, with the goal of eventually obtaining a contradiction, we assume that
    \begin{equation}
      \label{eq:662}
      \Gamma_p^q\cap [\![-(1+\gamma/4)n, -(1-\gamma/4)n]\!]_{n^{-1}\ZZ}= \emptyset.
    \end{equation}
 In particular, this implies that we have $|I_j|_{\hor}\leq n^{-1}$ for all $j\in [\![-(1+\gamma/4)n, -(1-\gamma/4)n]\!]$. Writing $p=(x,s)$, this implies that we must have
 \begin{equation}
   \label{eq:663}
   \Gamma_p^q(-(1-\gamma/4)n)-x\leq n^{-1}(-(1-\gamma/4)n-s)\leq 3\gamma/8,
 \end{equation}
 where the last line uses that since $p\in -n+\cK^\delta_n$, we have $-(1-\gamma/8)n\geq s\geq -(1+\gamma/8)n$. Now, since $p\in -\n+\cK^\delta_n$, we have $|x-s|\leq n^{2/3+\delta}$, and this implies that we must have
 \begin{equation}
   \label{eq:664}
   \Gamma_p^q(-(1-\gamma/4)n)\leq s+ n^{2/3+\delta}+3\gamma/8\leq -(1-\gamma/8)n + n^{2/3+\delta}+ 3\gamma/8.
 \end{equation}
 However, this is a contradiction since on the event $\latapx_n$, we must have
 \begin{equation}
   \label{eq:665}
   \Gamma_p^q(-(1-\gamma/4)n)\in [-(1-\gamma/4)n-2n^{2/3+\delta}, -(1-\gamma/4)n + 2n^{2/3+\delta}],
 \end{equation}
an interval which is disjoint with $(-\infty,-(1-\gamma/8)n + n^{2/3+\delta}+ 3\gamma/8]$ for all $n$ large enough as long as $\delta_0$ is chosen to be small enough.

    We have now established that \eqref{eq:165} holds on $\latapx_n$. Now, for any $p,q$, we can simply choose $p'\in \Gamma_p^q\cap [\![-(1+\gamma/4)n, -(1-\gamma/4)n]\!]_{n^{-1}\ZZ}, q'\in \Gamma_p^q\cap [\![(1-\gamma/4)n,(1+\gamma/4)n]\!]_{n^{-1}\ZZ}$. Since $p',q'$ are rational points, it is immediate that the portion of $\Gamma_p^q$ between $p',q'$ is precisely the unique geodesic $\Gamma_{p'}^{q'}$. As a result, the inclusion $\basin^\delta_n(\Gamma_{p'}^{q'})\subseteq \basin^\delta_n(\Gamma_p^q)$ holds trivially.
  \end{proof}

Now, we consider the event $\smallbasin$ defined as the event on which there exists a $p'\in B_{2n^{2/3+\delta}}(\LL_{-2\n}^{2\n})\cap [\![-(1+\gamma/4)n,-(1-\gamma/4)n]\!]_{n^{-1}\ZZ}$, a $q'\in B_{2n^{2/3+\delta}}(\LL_{-2\n}^{2\n})\cap [\![(1-\gamma/4)n,(1+\gamma/4)n]\!]_{n^{-1}\ZZ}$ and a $t\in \{0\}\cup \cT_{-\n+\cK^\delta_n}^{\n+\cK^\delta_n,[0,1]}$ for which we have $\basin^\delta_n(\Gamma_{p'}^{q',t})\leq n^{10/3-2\delta}$. For the above event, we have the following lemma.
  \begin{lemma}
    \label{lem:102}
    There exists $\delta_0>0$ and constants $C,c$ such that for any $\delta<\delta_0$ and all $n$ large enough, we have
    \begin{equation}
     \label{eq:99}
     \PP(\smallbasin_n\lvert \cT_{-\n+\cK^\delta_n}^{\n+\cK^\delta_n,[0,1]})\leq C(|\cT_{-\n+\cK^\delta_n}^{\n+\cK^\delta_n,[0,1]}|+1) e^{-cn^{3\delta/11}}.
   \end{equation}
 \end{lemma}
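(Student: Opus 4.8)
The plan is to condition on the random finite set $\cT \coloneqq \cT_{-\n+\cK^\delta_n}^{\n+\cK^\delta_n,[0,1]}$ appearing in the conditioning, and then estimate $\PP(\smallbasin_n \mid \cT)$ by a union bound over the finitely many rational pairs $(p',q')$ and times $t \in \{0\}\cup\cT$ that can witness $\smallbasin_n$, controlling each single event $\{|\basin^\delta_n(\Gamma_{p'}^{q',t})|_{\hor} \le n^{10/3-2\delta}\}$ via Proposition \ref{prop:12}. The mechanism making this work is exactly the one behind Lemma \ref{lem:104}: the set $\cT$ is a measurable function of the exponential clocks attached to the sites in $\cM_{-\n+\cK^\delta_n}^{\n+\cK^\delta_n}$, and those clocks are independent of the Brownian data $\{X_{i,m}^r\}$; hence conditionally on $\cT$ the law of $T^0$ is still that of a static BLPP, and by Lemma \ref{prop:48} so is the conditional law of $T^r$ for each $r \in \cT$. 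Consequently, for every $t \in \{0\}\cup\cT$ and every pair of rational points $(p',q')$, the geodesic $\Gamma_{p'}^{q',t}$ is a.s.\ unique and, conditionally on $\cT$, Proposition \ref{prop:12} applies to it as soon as we verify its slope hypothesis.

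To verify that hypothesis, note that every admissible $p' = (x',s')$ satisfies $s' \in [-(1+\gamma/4)n,-(1-\gamma/4)n]$ and, since $p'\in B_{2n^{2/3+\delta}}(\LL_{-2\n}^{2\n})$, also $|x'-s'| \le 2n^{2/3+\delta}$; symmetrically $q' = (y',t')$ satisfies $t' \in [(1-\gamma/4)n,(1+\gamma/4)n]$ and $|y'-t'| \le 2n^{2/3+\delta}$. Thus $t'-s' \ge 2(1-\gamma/4)n$ while $|(y'-x')-(t'-s')| \le 4n^{2/3+\delta}$, so $\slope(p',q') \to 1$ and in particular lies in, say, $(1/2,2)$ for all $n$ large. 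Applying Proposition \ref{prop:12} with $\mu = 1/2$ — valid once $\delta$ is chosen below the $\delta_0$ provided by that proposition — gives $\PP(|\basin^\delta_n(\Gamma_{p'}^{q',t})|_{\hor} \le n^{10/3-2\delta} \mid \cT) \le C e^{-cn^{3\delta/11}}$ for each fixed such triple $(p',q',t)$.

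It then remains only to count and union-bound. There are $\Theta(n)$ admissible integer heights for $p'$, and on each such line the $x$-coordinate is restricted to an interval of width $O(n^{2/3+\delta})$ intersected with $n^{-1}\ZZ$, giving $O(n^{5/3+\delta})$ choices; so there are $O(n^{8/3+\delta})$ choices for $p'$, and likewise for $q'$, i.e.\ at most $C n^{16/3+2\delta}$ admissible pairs (the precise exponent is immaterial). Summing the single-triple bounds over these pairs and over the $|\cT|+1$ admissible values of $t$ yields $\PP(\smallbasin_n \mid \cT) \le C(|\cT|+1)\, n^{16/3+2\delta}\, e^{-cn^{3\delta/11}}$, and since $n^{16/3+2\delta} e^{-(c/2)n^{3\delta/11}} \to 0$ this is at most $(|\cT|+1) e^{-(c/2)n^{3\delta/11}}$ for $n$ large, which is the asserted estimate after relabelling the constant $c$.

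I do not expect a genuine obstacle here: the argument is a conditioned union bound, and the only two points needing a moment's care are (i) recording that conditioning on $\cT$ preserves the static‑BLPP marginal of $T^t$ for every $t \in \{0\}\cup\cT$, which is precisely Lemma \ref{prop:48} together with the independence of the clocks from the Brownian increments, and (ii) the slope check needed to invoke Proposition \ref{prop:12}, which is automatic because all points involved lie within $O(n^{2/3+\delta})$ of the main diagonal.
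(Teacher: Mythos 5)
Your proof is correct and takes essentially the same approach as the paper: condition on $\cT_{-\n+\cK^\delta_n}^{\n+\cK^\delta_n,[0,1]}$, union-bound over the polynomially many mesh pairs $(p',q')$ and over $t\in\{0\}\cup\cT$, apply Lemma \ref{prop:48} to reduce each conditional probability to a static-BLPP computation, and invoke Proposition \ref{prop:12} to get the stretched-exponential bound that swallows the polynomial prefactor. The paper's proof is just the display \eqref{eq:480}; you additionally spell out the slope check needed to apply Proposition \ref{prop:12} and the reason conditioning on $\cT$ leaves $T^0$ a static BLPP, both of which are implicit in the paper but worth recording.
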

  \begin{proof}
    First observe that there are at most $(\gamma n/2 \times 4n^{2/3+\delta} \times n)^2$ pairs $(p',q')$ such that $p'\in B_{2n^{2/3+\delta}}(\LL_{-2\n}^{2\n})\cap [\![-(1+\gamma/4)n, -(1-\gamma/4)n]\!]_{n^{-1}\ZZ}$ and $q'\in B_{2n^{2/3+\delta}}(\LL_{-2\n}^{2\n})\cap [\![(1-\gamma/4)n,(1+\gamma/4)n]\!]_{n^{-1}\ZZ}$. Thus, %
for some constants $C,c,C_1,c_1$, we have
    \begin{align}
      \label{eq:480}
      \PP(\smallbasin_n\lvert \cT_{-\n+\cK^\delta_n}^{\n+\cK^\delta_n,[0,1]}) %
      &=\PP\left(\bigcup_{t\in \{0\}\cup\cT_{-\n+\cK^\delta_n}^{\n+\cK^\delta_n,[0,1]},p',q'}\{|\basin^\delta_n(\Gamma_{p'}^{q',t})|_{\hor}\leq n^{10/3-2\delta}\}\lvert \cT_{-\n+\cK^\delta_n}^{\n+\cK^\delta_n,[0,1]}\right)\nonumber\\
                                                            &\leq (|\cT_{-\n+\cK^\delta_n}^{\n+\cK^\delta_n,[0,1]}|+1)\sum_{p',q'}\PP(\basin^\delta_n(\Gamma_{p'}^{q'})\leq n^{10/3-2\delta})\nonumber\\
                                                            &\leq (\gamma n/2 \times 2n^{2/3+\delta} \times n)^2(|\cT_{-\n+\cK^\delta_n}^{\n+\cK^\delta_n,[0,1]}|+1)\times (C_1e^{-c_1n^{3\delta/11}})\nonumber\\
      &\leq C(|\cT_{-\n+\cK^\delta_n}^{\n+\cK^\delta_n,[0,1]}|+1)e^{-cn^{3\delta/11}}.
    \end{align}
To obtain the second line, we used Lemma \ref{prop:48} and to obtain the third line, we invoked Proposition \ref{prop:12}. This completes the proof.

\end{proof}
From now on, we shall use $\latapx_n^t$ denote the occurrence of the event $\latapx_n$ from Lemma \ref{lem:38} but now for the LPP $T^t$. Using this notation, we now have the following result.
\begin{lemma}
  \label{lem:103}
  There exists $\delta_0>0$ and constants $C,c$ such that for any fixed $\delta<\delta_0$ and all $n$ large enough,
     \begin{equation}
      \label{eq:473}
       \PP(\bigcap_{t\in \{0\}\cup\cT_{-\n+\cK^\delta_n}^{\n+\cK^\delta_n,[0,1]}}\latapx_{n}^t\lvert \cT_{-\n+\cK^\delta_n}^{\n+\cK^\delta_n,[0,1]})\geq 1-C(|\cT_{-\n+\cK^\delta_n}^{\n+\cK^\delta_n,[0,1]}|+1)e^{-cn^{3\delta}}.
   \end{equation}
 \end{lemma}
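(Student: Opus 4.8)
The plan is to reduce the statement entirely to the static estimate Lemma \ref{lem:38}, by conditioning on the (almost surely finite) random set $\cT^*\coloneqq\cT_{-\n+\cK^\delta_n}^{\n+\cK^\delta_n,[0,1]}$ and applying a union bound. Recall from Section \ref{sec:usef-result-relat} that $\cT^*\sim\mathrm{Poi}(|\cM_{-\n+\cK^\delta_n}^{\n+\cK^\delta_n}|)$, so that $\{0\}\cup\cT^*$ almost surely consists of $|\cT^*|+1$ points; this accounts for the combinatorial factor appearing in \eqref{eq:473}.

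First I would observe that, conditionally on $\cT^*$, the environment $T^t$ is distributed as a static BLPP for every $t\in\{0\}\cup\cT^*$. For $t\in\cT^*$ this is exactly the content of Lemma \ref{prop:48}. For $t=0$ it is even more elementary: $T^0$ is a measurable function of the family $\{X_{i,m}^0\}_{i,m\in\ZZ}$ alone, which by the construction in Section \ref{sec:model-definition} is independent of the collection of exponential clocks, whereas $\cT^*$ is a measurable function of those clocks; hence conditioning on $\cT^*$ does not alter the law of $T^0$. Next, for each such $t$, the event $\latapx_n^t$ is precisely the event $\latapx_n$ of Lemma \ref{lem:38} read off the environment $T^t$, so Lemma \ref{lem:38} supplies constants $C_1,c_1$ (independent of $t$, of $\cT^*$, and of $n$) with $\PP\bigl((\latapx_n^t)^c\mid\cT^*\bigr)\leq C_1 e^{-c_1 n^{3\delta}}$. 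Finally I would take a union bound over the (conditionally) at most $|\cT^*|+1$ values of $t\in\{0\}\cup\cT^*$, which immediately yields \eqref{eq:473} with $C=C_1$ and $c=c_1$.

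I do not anticipate a serious obstacle here. The only point that requires a small amount of care is the claim that conditioning on $\cT^*$ preserves the ``static BLPP'' marginal of $T^t$; for $t\in\cT^*$ this is precisely Lemma \ref{prop:48}, and for $t=0$ it follows from the independence between the initial weights and the clocks built into the definition of the dynamics. The superpolynomial decay $e^{-c_1 n^{3\delta}}$ furnished by Lemma \ref{lem:38} is exactly what makes the union bound affordable: it comfortably dominates the polynomially-large mean of $|\cT^*|$, so that after later taking expectations over $\cT^*$ the resulting error term is still stretched-exponentially small in $n$.
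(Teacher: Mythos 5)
Your proof is correct and follows essentially the same route as the paper: condition on $\cT_{-\n+\cK^\delta_n}^{\n+\cK^\delta_n,[0,1]}$, note via Lemma \ref{prop:48} (together with the independence of the $t=0$ environment from the clocks) that each $T^t$ is marginally static BLPP, apply Lemma \ref{lem:38} to each $\latapx_n^t$, and union bound over the $|\cT_{-\n+\cK^\delta_n}^{\n+\cK^\delta_n,[0,1]}|+1$ values of $t$. The paper compresses this into a one-line reference to ``the same reasoning as Lemma \ref{lem:102}'', but your expanded argument matches that reasoning exactly.
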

 \begin{proof}
   By the same reasoning as in the proof of Lemma \ref{lem:102}, the term on the left hand side of \eqref{eq:473} is lower bounded by $1-(|\cT_{-\n+\cK^\delta_n}^{\n+\cK^\delta_n,[0,1]}|+1)\PP((\latapx_n)^c)$. Applying Lemma \ref{lem:38} now completes the proof.
 \end{proof}
Note that Lemma \ref{lem:102} and Lemma \ref{lem:103} both involve the term $|\cT_{-\n+\cK^\delta_n}^{\n+\cK^\delta_n,[0,1]}|$. The following simple tail estimate for this cardinality shall be useful for us.
  \begin{lemma}
    \label{lem:35}
There exists $\delta_0>0$ and constants $C',c'$ such that for any $\delta<\delta_0$, we have
    \begin{equation}
      \label{eq:476}
      \PP(|\cT_{-\n+\cK^\delta_n}^{\n+\cK^\delta_n,[0,1]}|> C' n^{2})\leq e^{-c'n^{2}}.
    \end{equation}   
  \end{lemma}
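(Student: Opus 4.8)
The plan is to reduce the claim to a standard Poisson concentration inequality. Recall from the remark following \eqref{eq:63} that $|\cT_{K_1}^{K_2,[s,t]}|\sim \mathrm{Poi}\big((t-s)|\cM_{K_1}^{K_2}|\big)$; taking $[s,t]=[0,1]$, the quantity $|\cT_{-\n+\cK^\delta_n}^{\n+\cK^\delta_n,[0,1]}|$ is therefore a Poisson random variable with mean $\lambda_n\coloneqq |\cM_{-\n+\cK^\delta_n}^{\n+\cK^\delta_n}|$. Thus it suffices to (i) obtain a deterministic bound $\lambda_n\leq C_0 n^2$ with $C_0$ depending only on $\gamma$ and \emph{uniform} in $\delta<\delta_0$, and then (ii) apply a Chernoff bound for the Poisson law.

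For step (i), I would first note that, directly from the definition $\cK^\delta_n=B_{n^{2/3+\delta}}(\LL_{-\gamma\n/8}^{\gamma\n/8})\cap\ZZ_\RR$, every point of $\cK^\delta_n$ has both coordinates bounded in absolute value by $\gamma n/8+n^{2/3+\delta}$. Choosing $\delta_0\leq 1/3$ guarantees $n^{2/3+\delta}\leq n$ for all $n\geq 1$ and all $\delta<\delta_0$, so that $\cK^\delta_n\subseteq[-2n,2n]^2$ and hence $-\n+\cK^\delta_n\subseteq[-3n,3n]^2$ and $\n+\cK^\delta_n\subseteq[-3n,3n]^2$. Now if $(i,m)\in\cM_{-\n+\cK^\delta_n}^{\n+\cK^\delta_n}$, then by \eqref{eq:63} there exist $p\leq w\leq q$ with $p\in -\n+\cK^\delta_n$, $q\in\n+\cK^\delta_n$ and $w\in\{m\}_{[i,i+1]}$; comparing coordinates forces $m\in[-3n,3n]$ and $i\in[-3n-1,3n]$, so $\lambda_n=|\cM_{-\n+\cK^\delta_n}^{\n+\cK^\delta_n}|\leq(6n+2)^2\leq C_0 n^2$ for all $n\geq 1$, with $C_0$ an absolute constant (depending only on the fixed parameter $\gamma$).

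For step (ii), I would invoke the elementary bound $\PP(X\geq x)\leq e^{-\lambda}(e\lambda/x)^x$, valid for $X\sim\mathrm{Poi}(\lambda)$ and $x>\lambda$. Setting $C'\coloneqq 2eC_0$ and $x=C'n^2$, one has $e\lambda_n/x\leq 1/2$, whence
\[
\PP\big(|\cT_{-\n+\cK^\delta_n}^{\n+\cK^\delta_n,[0,1]}|> C'n^2\big)\leq \PP\big(|\cT_{-\n+\cK^\delta_n}^{\n+\cK^\delta_n,[0,1]}|\geq C'n^2\big)\leq 2^{-C'n^2},
\]
and the lemma follows with $c'\coloneqq C'\log 2$. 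There is no substantive obstacle here: the only point requiring a little care is the coordinate bookkeeping in step (i), where one must check that the $O(n^2)$ bound on $|\cM_{-\n+\cK^\delta_n}^{\n+\cK^\delta_n}|$ holds with a constant that does not blow up as $\delta\uparrow\delta_0$ — this is precisely why we take $\delta_0\leq 1/3$, so that the enlargement $n^{2/3+\delta}$ of the endpoint regions stays $\leq n$ for every $n\geq 1$.
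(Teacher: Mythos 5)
Your proof is correct and follows essentially the same route as the paper's: bound $|\cM_{-\n+\cK^\delta_n}^{\n+\cK^\delta_n}|$ by $O(n^2)$, identify $|\cT_{-\n+\cK^\delta_n}^{\n+\cK^\delta_n,[0,1]}|$ as a Poisson variable with that parameter, and finish with a Poisson tail bound. The only cosmetic difference is the choice of concentration inequality — you use the Chernoff form $\PP(X\geq x)\leq e^{-\lambda}(e\lambda/x)^x$, while the paper uses $\PP(\mathrm{Poi}(\lambda)\geq\lambda+x)\leq e^{-x^2/(\lambda+x)}$ with $x=\lambda$; both yield the stated stretched-exponential rate.
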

  \begin{proof}
    First, consider the set $\cM_{-\n+\cK^\delta_n}^{\n+\cK^\delta_n}$-- it is easy to see that we have $|\cM_{-\n+\cK^\delta_n}^{\n+\cK^\delta_n}|\leq Cn^{2}$ for a constant $C$ as long as $\delta$ is small enough. Also, by the definition of the dynamics, we know that $|\cT_{-\n+\cK^\delta_n}^{\n+\cK^\delta_n,[0,1]}|$ is distributed as a Poisson variable of parameter $|\cM_{-\n+\cK^\delta_n}^{\n+\cK^\delta_n}|$. Now, we recall the following simple bound for a Poisson variable with parameter $\lambda$-- for all $x>0$, we have
    \begin{equation}
      \label{eq:479}
      \PP(\mathrm{Poi}(\lambda)\geq \lambda+ x)\leq e^{-\frac{x^2}{\lambda+x}}.
    \end{equation}
    Thus, by using the above with $x=\lambda= |\cM_{-\n+\cK^\delta_n}^{\n+\cK^\delta_n}|$, we immediately obtain the needed result.
  \end{proof}
  In view of the above lemma, we define the event $\bddflips_n=\{|\cT_{-\n+\cK^\delta_n}^{\n+\cK^\delta_n,[0,1]}|\leq  C' n^{2}\}$ and note that $\PP(\bddflips_n^c)\leq e^{-c'n^2}$. We are now ready to complete the proof of Proposition \ref{lem:36}. %

  \begin{proof}[Proof of Proposition \ref{lem:36}]

    We begin by noting that while the defining condition of the event $\cover_n^{\delta,\nu}$ includes all $t\in [0,1]$, it suffices to only prove that the condition holds for $t\in \{0\}\cup \cT_{-\n+\cK^\delta_n}^{\n+\cK^\delta_n,[0,1]}$. Indeed, it is easy to see that a.s.\ for any $t\in [0,1]$, there must exist a corresponding $t'\in \{0\}\cup\cT_{-\n+\cK^\delta_n}^{\n+\cK^\delta_n,[0,1]}$ such that all the geodesics from points in the set $-\n+\cK^\delta_n$ to $\n+\cK^\delta_n$ are the same for the LPPs $T^t$ and $T^{t'}$. 
   Consider the event $\cE_n$ defined by
   \begin{equation}
     \label{eq:100}
     \cE_n=\left(\bigcap_{t\in \{0\}\cup\cT_{-\n+\cK^\delta_n}^{\n+\cK^\delta_n,[0,1]}}\latapx_{n}^t\right)\cap \smallbasin_n^c\cap \bddflips_n.
   \end{equation}
   By Lemma \ref{lem:102}, Lemma \ref{lem:103} and Lemma \ref{lem:35}, it follows that for any fixed $\delta<\delta_0$ and for all $n$ large enough, we have for some constants $C,c$, 
   \begin{equation}
     \label{eq:101}
     \PP(\cE_n)\geq 1-Ce^{-cn^{3\delta/11}}.
   \end{equation}
The utility of the above event $\cE_n$ is that the following estimate holds for any fixed $p'\in B_{2n^{2/3+\delta}}(\LL_{-2\n}^{2\n})\cap [\![-(1+\gamma/4)n, -(1-\gamma/4)n]\!]_{n^{-1}\ZZ}$ and $q'\in B_{2n^{2/3+\delta}}(\LL_{-2\n}^{2\n})\cap [\![(1-\gamma/4)n,(1+\gamma/4)n]\!]_{n^{-1}\ZZ}$:
   \begin{align}
     \label{eq:102}
     &\PP\left(\bigcup_{t\in \{0\}\cup\cT_{-\n+\cK^\delta_n}^{\n+\cK^\delta_n,[0,1]}}\{\cQ_{n,\nu}\cap \basin^\delta_n(\Gamma_{p'}^{q',t})\}= \emptyset \Big\lvert \cE_n\right)\nonumber\\
     &\leq \EE\left[\sum_{t\in \{0\}\cup\cT_{-\n+\cK^\delta_n}^{\n+\cK^\delta_n,[0,1]}}\PP(\cQ_{n,\nu}\cap \basin^\delta_n(\Gamma_{p'}^{q',t})=\emptyset \lvert  \{T^s\}_{s\in \RR})\Big\lvert \cE_n\right]\nonumber\\
     &=\EE\left[\sum_{t\in \{0\}\cup\cT_{-\n+\cK^\delta_n}^{\n+\cK^\delta_n,[0,1]}}\PP(\mathrm{Poi}(n^{-10/3+2\nu}\times |\basin^\delta_n(\Gamma_{p'}^{q',t})|_{\hor})=0)\Big\lvert \cE_n\right]\nonumber\\
     &\leq \EE[ (|\cT_{-\n+\cK^\delta_n}^{\n+\cK^\delta_n,[0,1]}|+1)\PP(\mathrm{Poi}(n^{-10/3+2\nu}\times n^{10/3-2\delta})=0)\lvert \cE_n]\nonumber\\
&\leq(C'n^{2}+1)e^{-cn^{2\nu-2\delta}}.
   \end{align}
   To obtain the third line above, we have used the definition of $\cQ_{n,\nu}$. Indeed, conditional on the entire dynamics $\{T^s\}_{s\in \RR}$, for any $t\in \{0\}\cup \cT_{-\n+\cK^\delta_n}^{\n+\cK^\delta_n,[0,1]}$, the cardinality of the set $\cQ_{n,\nu}\cap \basin^\delta_n(\Gamma_{p'}^{q',t})$ is simply a Poisson random variable with rate $n^{-10/3+2\nu}\times |\basin^\delta_n(\Gamma_{p'}^{q',t})|_{\hor}$-- this is because $\cQ_{n,\nu}$ is a Poisson process of rate $n^{-10/3+2\nu}$ on the space $(\ZZ_\RR)^2$ which is independent of the dynamical BLPP $\{T^s\}_{s\in \RR}$. To obtain the fourth line, we have used that on $\cE_n$, we have $|\basin^\delta_n(\Gamma_{p'}^{q',t})|_{\hor}\geq n^{10/3-2\delta}$ for all $t\in \{0\}\cup \cT_{-\n+\cK^\delta_n}^{\n+\cK^\delta_n,[0,1]}$. Finally, to obtain the last line, we use that on $\cE_n$, we have $|\cT_{-\n+\cK^\delta_n}^{\n+\cK^\delta_n,[0,1]}|\leq C'n^{2}$.
   
   Now, we consider the event $\cA_n$ defined by the requirement that simultaneously for all $t\in \{0\}\cup\cT_{-\n+\cK^\delta_n}^{\n+\cK^\delta_n,[0,1]}$, all $p'\in B_{2n^{2/3+\delta}}(\LL_{-2\n}^{2\n})\cap [\![-(1+\gamma/4)n, -(1-\gamma/4)n]\!]_{n^{-1}\ZZ}$ and all $q'\in B_{2n^{2/3+\delta}}(\LL_{-2\n}^{2\n})\cap [\![(1-\gamma/4)n,(1+\gamma/4)n]\!]_{n^{-1}\ZZ}$, we have
   \begin{equation}
     \label{eq:96}
    \cQ_{n,\nu} \cap \basin^\delta_n(\Gamma_{p'}^{q',t})\neq \emptyset.
   \end{equation}
 By using \eqref{eq:102} and a union bound over $p',q'$, we immediately obtain
   \begin{equation}
     \label{eq:103}
     \PP(\cA_n\lvert \cE_n)\geq 1- (\gamma n/2 \times 4n^{2/3+\delta} \times n)^2\times (C'n^{2}+1)e^{-cn^{2\nu-2\delta}},
   \end{equation}
   where we note that the term $(\gamma n/2 \times 2n^{2/3+\delta} \times n)^2$ counts the number of pairs $p'\in B_{2n^{2/3+\delta}}(\LL_{-2\n}^{2\n})\cap [\![-(1+\gamma/4)n, -(1-\gamma/4)n]\!]_{n^{-1}\ZZ}$ and $q'\in B_{2n^{2/3+\delta}}(\LL_{-2\n}^{2\n})\cap [\![(1-\gamma/4)n,(1+\gamma/4)n]\!]_{n^{-1}\ZZ}$.
   
   Finally, we note the inclusion
   \begin{equation}
     \label{eq:98}
     \cA_n\cap \cE_n\subseteq \cover_n^{\delta,\nu}.
   \end{equation}
   Indeed, since $\cE_n$ was defined to satisfy $\cE_n\subseteq \bigcap_{t\in \{0\}\cup\cT_{-\n+\cK^\delta_n}^{\n+\cK^\delta_n,[0,1]}}\latapx_{n}^t$, we must have, for any $p,q,t,\Gamma_p^{q,t}$ as in the definition of the event $\cover_n^{\delta,\nu}$, a $p'\in B_{2n^{2/3+\delta}}(\LL_{-2\n}^{2\n})\cap [\![-(1+\gamma/4)n, -(1-\gamma/4)n]\!]_{n^{-1}\ZZ}$ and $q'\in B_{2n^{2/3+\delta}}(\LL_{-2\n}^{2\n})\cap [\![(1-\gamma/4)n,(1+\gamma/4)n]\!]_{n^{-1}\ZZ}$ satisfying
   \begin{align}
     \label{eq:715}
     \basin^\delta_n(\Gamma_{p'}^{q',t})&\subseteq \basin^\delta_n(\Gamma_p^{q,t})\nonumber\\
     &\subseteq (B_{3n^{2/3+\delta}}(\LL_{-2\n}^{2\n}))^2\cap ([\![-(1+\gamma/2)n, -(1-\gamma/2)n]\!]_{\RR})\times [\![(1-\gamma/2)n,(1+\gamma/2)n]\!]_{\RR}).
   \end{align}
   Further, by the definition of $\cA_n$ above, on the event $\cA_n\cap \cE_n$, for the above choice of $p',q'$, we also have $\cQ_{n,\nu}\cap \basin^\delta_n(\Gamma_{p'}^{q',t})\neq \emptyset$ and as a result, $\cQ_{n,\nu}\cap  \basin^\delta_n(\Gamma_{p}^{q,t})\neq \emptyset$. This justifies the inclusion \eqref{eq:98}. Thus, by using \eqref{eq:98}, \eqref{eq:103} and \eqref{eq:101}, we can write
   \begin{equation}
     \label{eq:104}
     \PP(\cover_n^{\delta,\nu})\geq \PP(\cA_n\cap \cE_n)= \PP(\cE_n)\PP(\cA_n\lvert \cE_n)\geq (1-Ce^{-cn^{3\delta/11}})(1-Ce^{-cn^{2\nu-2\delta}}),
   \end{equation}
   and this completes the proof.
  \end{proof}

  \section{Upper bounds on the Hausdorff dimension of exceptional times}
 \label{sec:dim-ub}
 The first goal of this section is to prove Theorem \ref{thm:6}, and then we shall subsequently use this to prove Theorem \ref{thm:3} and Theorem \ref{thm:5}.
 
\subsection{Proof of Theorem \ref{thm:6}}
\label{sec:proof-theor-refthm:6}
In fact, we shall prove the following stronger version of Theorem \ref{thm:5}, which considers the hitset corresponding to parallelograms (as opposed to segments) around $-\n$ and $\n$.%
 \begin{proposition}
    \label{prop:14}
Fix $\gamma\in (0,1)$. There exists a constant $\delta_0>0$ such that for all fixed $0<\delta<\delta_0$, and all $n$ large enough, we have
    \begin{equation}
      \label{eq:114}
      \EE\left[|\hitset_{-\n+\cK_n^\delta}^{\n+\cK_n^\delta,[s,t]}([\![-(1-\gamma)n,(1-\gamma)n]\!]_{\RR})|\right]\leq n^{1+8\delta} + n^{5/3+8\delta}(t-s).
    \end{equation}
  \end{proposition}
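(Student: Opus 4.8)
The plan is to combine the covering result Proposition~\ref{lem:36} with the point-to-point switch estimate Proposition~\ref{prop:911}, and the only genuinely new work is bookkeeping. By stationarity of the dynamics in $t$ and the obvious subadditivity $|\hitset^{[s,t]}(\cdot)|\le\sum_i|\hitset^{[s_i,t_i]}(\cdot)|$ when $[s,t]=\bigcup_i[s_i,t_i]$, it suffices to treat $[s,t]\subseteq[0,1]$: if $t-s\ge1$ one splits into $\lceil t-s\rceil$ unit sub-intervals and sums, using $n^{1+8\delta}\le n^{5/3+8\delta}$ to absorb the extra constant terms. So fix $[s,t]\subseteq[0,1]$, fix $\nu=2\delta$ (so $2\nu-2\delta=2\delta>0$), let $\cQ_{n,\nu}$ and $\cover_n^{\delta,\nu}$ be as in Proposition~\ref{lem:36}, and write $K=[\![-(1-\gamma)n,(1-\gamma)n]\!]_\RR$ and $R_n$ for the product of the two parallelogram strips $B_{3n^{2/3+\delta}}(\LL_{-2\n}^{2\n})\cap[\![-(1+\gamma/2)n,-(1-\gamma/2)n]\!]_\RR$ and $B_{3n^{2/3+\delta}}(\LL_{-2\n}^{2\n})\cap[\![(1-\gamma/2)n,(1+\gamma/2)n]\!]_\RR$. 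On $\cover_n^{\delta,\nu}$, the definition of $\basin^\delta_n$ says each geodesic $\Gamma_p^{q,r}$ with $p\in-\n+\cK_n^\delta$, $q\in\n+\cK_n^\delta$, $r\in[s,t]$ agrees on $[-(1-\gamma)n,(1-\gamma)n]_\RR$ with some geodesic $\Gamma_{\tilde p}^{\tilde q,r}$ for a pair $(\tilde p,\tilde q)\in\cQ_{n,\nu}\cap R_n$; hence $\coarse(\Gamma_p^{q,r}\cap K)\subseteq\hitset_{\tilde p}^{\tilde q,[s,t]}(K)$ and therefore, on $\cover_n^{\delta,\nu}$,
\[ |\hitset_{-\n+\cK_n^\delta}^{\n+\cK_n^\delta,[s,t]}(K)|\ \le\ \sum_{(\tilde p,\tilde q)\in\cQ_{n,\nu}\cap R_n}|\hitset_{\tilde p}^{\tilde q,[s,t]}(K)|. \]

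Next I would bound $\EE|\hitset_p^{q,[s,t]}(K)|$ for a fixed deterministic pair $(p,q)\in R_n$. Since $r\mapsto\coarse(\Gamma_p^{q,r}\cap K)$ is piecewise constant with jumps only at times in $\cT_p^{q,[s,t]}$, the integer-valued field $H(i,m)=\#\{r\in\cT_p^{q,[s,t]}:(i,m)\in\coarse(\Gamma_p^{q,r}\cap K)\setminus\coarse(\Gamma_p^{q,r^-}\cap K)\}$ satisfies $\ind((i,m)\in\hitset_p^{q,[s,t]}(K))\le\ind((i,m)\in\hitset_p^{q,\{s\}}(K))+H(i,m)$; summing over cells gives the deterministic inequality $|\hitset_p^{q,[s,t]}(K)|\le|\hitset_p^{q,\{s\}}(K)|+\switch_p^{q,[s,t]}(K)$. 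The static term is at most $Cn$ deterministically, since a single monotone staircase with endpoints whose first coordinates lie in an $O(n)$-window occupies $O(n)$ coarse cells inside the strip $K$. For the switch term, translating so that $p$ has height $0$: then $q$ has integer height $h\in[(2-\gamma)n,(2+\gamma)n]$, $\slope(p,q)$ lies in a compact subset of $(0,\infty)$ depending only on $\gamma$, and because $p,q$ are constrained to the $\gamma/2$-widened height windows while $K$ is the $\gamma$-shrunk window, in the new coordinates $K$ sits inside $[\![\beta_0 h,(1-\beta_0)h]\!]_\RR$ for a fixed $\beta_0=\beta_0(\gamma)>0$. Thus Proposition~\ref{prop:911} (with endpoint separation $\asymp2n$, the parameters $\beta_0,\mu(\gamma)$, and $\varepsilon=\delta$) gives $\EE\switch_p^{q,[s,t]}(K)\le Cn^{5/3+\delta}(t-s)$, so $\EE|\hitset_p^{q,[s,t]}(K)|\le Cn+Cn^{5/3+\delta}(t-s)$ uniformly over $(p,q)\in R_n$.

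To assemble: $\cQ_{n,\nu}$ is independent of $\{T^t\}$, so conditioning on $\cQ_{n,\nu}$ and using the uniform point-to-point bound,
\[ \EE\Big[\sum_{(\tilde p,\tilde q)\in\cQ_{n,\nu}\cap R_n}|\hitset_{\tilde p}^{\tilde q,[s,t]}(K)|\Big]\ \le\ \EE|\cQ_{n,\nu}\cap R_n|\cdot\big(Cn+Cn^{5/3+\delta}(t-s)\big), \]
and $|\cQ_{n,\nu}\cap R_n|\sim\mathrm{Poi}(n^{-10/3+2\nu}|R_n|_{\hor})$ with $|R_n|_{\hor}=O(n^{5/3+\delta})\cdot O(n^{5/3+\delta})=O(n^{10/3+2\delta})$, so $\EE|\cQ_{n,\nu}\cap R_n|=O(n^{2\nu+2\delta})=O(n^{6\delta})$. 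This bounds $\EE[|\hitset|\ind_{\cover_n^{\delta,\nu}}]$ by $Cn^{1+6\delta}+Cn^{5/3+7\delta}(t-s)$. On the complement, $\hitset$ is deterministically at most $Cn^2$ (it is contained in $[\![-2n-1,2n]\!]\times[\![-(1-\gamma)n,(1-\gamma)n]\!]$ by monotonicity of staircases together with the $O(n)$-control on endpoint first coordinates), so by \eqref{eq:91}, $\EE[|\hitset|\ind_{(\cover_n^{\delta,\nu})^c}]\le Cn^2(Ce^{-cn^{3\delta/11}}+Ce^{-cn^{2\delta}})=o(1)$. Adding the two contributions and absorbing constants into the exponent for all $n$ large gives $\EE|\hitset|\le n^{1+8\delta}+n^{5/3+8\delta}(t-s)$, which is \eqref{eq:114}.

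I expect the main obstacle to be purely organizational rather than analytic: essentially all the probabilistic content has been delegated to Proposition~\ref{prop:12} (feeding Proposition~\ref{lem:36}) and to Proposition~\ref{prop:911}. The two points that need care are (i) checking that, after translating the Poissonian endpoints $\tilde p,\tilde q\in R_n$ to the normalized configuration, the strip $[\![-(1-\gamma)n,(1-\gamma)n]\!]_\RR$ is interior with positive margins (so that Proposition~\ref{prop:911} applies, and in particular $\slope(\tilde p,\tilde q)$ stays in a fixed compact subset of $(0,\infty)$), and (ii) the conditioning/independence step that turns the random sum over $\cQ_{n,\nu}\cap R_n$ into $\EE|\cQ_{n,\nu}\cap R_n|$ times a deterministic point-to-point bound — together with choosing $\delta<\nu$ small (here $\nu=2\delta$, $\varepsilon=\delta$) so that every error term decays and the final exponent is at most $8\delta$.
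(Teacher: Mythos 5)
Your proof is correct and follows essentially the same route as the paper: reduce to point-to-point hitsets via the Poissonian cover from Proposition~\ref{lem:36}, use the deterministic identity $|\hitset_p^{q,[s,t]}(K)|\le|\hitset_p^{q,\{s\}}(K)|+\switch_p^{q,[s,t]}(K)$ (which is exactly Lemma~\ref{lem:143} of the paper), bound the static part by $O(n)$ and the switch part by Proposition~\ref{prop:911}, compute $\EE|\cQ_{n,\nu}\cap R_n|=O(n^{6\delta})$, and dispatch the complement of $\cover_n^{\delta,\nu}$ with the worst-case $O(n^2)$ bound. One thing you do more explicitly than the paper is the reduction from general $[s,t]\subseteq\RR$ to $[s,t]\subseteq[0,1]$ via stationarity and subadditivity of the hitset, which is a worthwhile clarification since Proposition~\ref{lem:36} is only stated for the interval $[0,1]$; just be careful there to first prove a bound with a slightly smaller exponent on a unit interval, so the multiplicative constants accrued when summing over $\lceil t-s\rceil$ sub-intervals can genuinely be absorbed into the $n^{8\delta}$.
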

  In order to prove the prove the above result, we shall heavily rely on Proposition \ref{lem:36} and shall frequently use the set $\cS^\delta_n$ consisting of $(p,q)\in \cQ_{n,2\delta}$ which additionally satisfy $p\in [\![-(1+\gamma/2)n, -(1-\gamma/2)n]\!]_\RR\cap B_{3n^{2/3+\delta}}(\LL_{-2\n}^{2\n})$ and $q\in [\![(1-\gamma/2)n,(1+\gamma/2)n]\!]_\RR\cap B_{3n^{2/3+\delta}}(\LL_{-2\n}^{2\n})$. The following result is an immediate consequence of Proposition \ref{lem:38}.
  \begin{lemma}
    \label{lem:141}
   On the event $\cover_n^{\delta,2\delta}$, we have
    \begin{equation}
      \label{eq:705}
    \hitset_{-\n+\cK_n^\delta}^{\n+\cK_n^\delta,[s,t]}([\![-(1-\gamma)n,(1-\gamma)n]\!]_{\RR})\subseteq  \bigcup_{(p,q)\in \cS_{n}^\delta} \hitset_{p}^{q,[s,t]}([\![-(1-\gamma)n,(1-\gamma)n]\!]_{\RR}).
    \end{equation}     
  \end{lemma}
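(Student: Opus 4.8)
The plan is simply to unwind the definition of the hitset on the left of \eqref{eq:705} and, for each coarse cell appearing there, produce a pair $(\tilde p,\tilde q)\in\cS_n^\delta$ whose geodesic at the same time realises that cell. Throughout we may assume $[s,t]\subseteq[0,1]$, which is the regime in which the event $\cover_n^{\delta,2\delta}$ is defined (and the only regime relevant for Proposition \ref{prop:14}).

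First I would fix $(i,m)$ in the left-hand side of \eqref{eq:705}. By the definition \eqref{eq:703} of the hitset there exist $p\in -\n+\cK_n^\delta$, $q\in\n+\cK_n^\delta$, a time $r\in[s,t]$, and a geodesic $\Gamma_p^{q,r}$ for the LPP $T^r$ with
\[
(i,m)\in\coarse\bigl(\Gamma_p^{q,r}\cap[\![-(1-\gamma)n,(1-\gamma)n]\!]_\RR\bigr).
\]
Since we are working on $\cover_n^{\delta,2\delta}$ and $r\in[0,1]$, the defining property of this event (Proposition \ref{lem:36}, applied with $\nu=2\delta$) yields a pair $(\tilde p,\tilde q)\in\cQ_{n,2\delta}\cap\basin^\delta_n(\Gamma_p^{q,r})$ with $\tilde p\in B_{3n^{2/3+\delta}}(\LL_{-2\n}^{2\n})\cap[\![-(1+\gamma/2)n,-(1-\gamma/2)n]\!]_\RR$ and $\tilde q\in B_{3n^{2/3+\delta}}(\LL_{-2\n}^{2\n})\cap[\![(1-\gamma/2)n,(1+\gamma/2)n]\!]_\RR$; that is, $(\tilde p,\tilde q)\in\cS_n^\delta$. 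By the definition of $\basin^\delta_n(\Gamma_p^{q,r})$ there is, moreover, a geodesic $\Gamma_{\tilde p}^{\tilde q,r}$ for $T^r$ satisfying $\Gamma_{\tilde p}^{\tilde q,r}\setminus\Gamma_p^{q,r}\subseteq[-(1-\gamma)n,(1-\gamma)n]_\RR^c$, so in particular $\Gamma_{\tilde p}^{\tilde q,r}\cap[-(1-\gamma)n,(1-\gamma)n]_\RR\subseteq\Gamma_p^{q,r}$.

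It then remains to upgrade this one-sided inclusion to the coincidence $\Gamma_{\tilde p}^{\tilde q,r}\cap[-(1-\gamma)n,(1-\gamma)n]_\RR=\Gamma_p^{q,r}\cap[-(1-\gamma)n,(1-\gamma)n]_\RR$ on the central strip; this is the only step in the argument that is not pure bookkeeping, and I expect it to be the (mild) obstacle. It follows from planarity: the vertical coordinate of $\tilde p$ is at most $-(1-\gamma/2)n<-(1-\gamma)n$ and that of $\tilde q$ is at least $(1-\gamma/2)n>(1-\gamma)n$, so the monotone staircase $\Gamma_{\tilde p}^{\tilde q,r}$ crosses the entire vertical extent of $[-(1-\gamma)n,(1-\gamma)n]_\RR$; hence $\Gamma_{\tilde p}^{\tilde q,r}\cap[-(1-\gamma)n,(1-\gamma)n]_\RR$ is a connected sub-staircase of $\Gamma_p^{q,r}$ running from the bottom boundary level to the top boundary level of the strip. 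At a generic (non-integer) boundary level the two staircases meet at a single point, which forces them to agree there, and a level-by-level comparison upward (using at each level that the portion of $\Gamma_{\tilde p}^{\tilde q,r}$ lies inside $\Gamma_p^{q,r}$) then forces $\Gamma_{\tilde p}^{\tilde q,r}$ to coincide with $\Gamma_p^{q,r}$ throughout the strip. Restricting to the discrete strip $[\![-(1-\gamma)n,(1-\gamma)n]\!]_\RR\subseteq[-(1-\gamma)n,(1-\gamma)n]_\RR$ and applying $\coarse$ gives $(i,m)\in\coarse(\Gamma_{\tilde p}^{\tilde q,r}\cap[\![-(1-\gamma)n,(1-\gamma)n]\!]_\RR)$; since $r\in[s,t]$ and $(\tilde p,\tilde q)\in\cS_n^\delta$, this places $(i,m)$ in $\hitset_{\tilde p}^{\tilde q,[s,t]}([\![-(1-\gamma)n,(1-\gamma)n]\!]_\RR)$, hence in the right-hand side of \eqref{eq:705}, as required. (If $(1-\gamma)n$ is an integer the two staircases could in principle differ only on a horizontal segment at the extreme level, which contributes $O(1)$ cells and is harmless, so no generality is lost in assuming full coincidence.)
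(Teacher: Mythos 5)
Your proof is correct in substance and supplies precisely the argument the paper declares ``immediate.'' The key points you get right are: (i) you correctly read off from the definition of $\cover_n^{\delta,2\delta}$ (with $\nu=2\delta$) that the promised pair $(\tilde p,\tilde q)$ lands in $\cS_n^\delta$ \emph{and} in $\basin^\delta_n(\Gamma_p^{q,r})$; and, more importantly, (ii) you notice that the $\basin$ definition only yields the one-sided containment $\Gamma_{\tilde p}^{\tilde q,r}\cap[-(1-\gamma)n,(1-\gamma)n]_\RR\subseteq\Gamma_p^{q,r}$, whereas the lemma requires every coarse cell of $\Gamma_p^{q,r}$ in the strip to also be a cell of $\Gamma_{\tilde p}^{\tilde q,r}$, i.e.\ the reverse direction. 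Identifying that one must upgrade to \emph{coincidence} on the strip is the non-trivial content here, and your planarity/monotonicity argument does it: since $\tilde p$ has vertical coordinate at most $-(1-\gamma/2)n$ and $\tilde q$ at least $(1-\gamma/2)n$, both staircases traverse the full vertical extent of the strip, and then containment plus the fact that a staircase meets a non-integer horizontal level at a single point (and the interior of each vertical step between adjacent integer levels in the strip must lie on $\Gamma_p^{q,r}$) forces the two to agree level-by-level across the strip, giving equality of the hit coarse cells.

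One small point on your parenthetical about the integer-boundary case. You are right that if $(1-\gamma)n\in\ZZ$ there is a genuine issue: the vertical step of $\Gamma_{\tilde p}^{\tilde q,r}$ between levels $N=(1-\gamma)n$ and $N+1$ exits the closed strip, so the containment does not pin down $\Gamma_{\tilde p}^{\tilde q,r}(N)$, and the horizontal segments of the two geodesics at level $\pm N$ can genuinely differ (the one for $\Gamma_{\tilde p}^{\tilde q,r}$ is only a prefix). In that case the stated \emph{set} inclusion can literally fail at the two extreme integer levels, and ``no generality is lost in assuming full coincidence'' is slightly too cavalier for a statement that asserts an inclusion of sets. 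The cleanest fix is to note that the discrepancy is confined to at most two levels, contributes $O(\Gamma_p^{q,r}(\pm N)-\Gamma_p^{q,r}(\pm N-1))$ cells, and is therefore negligible in the expectation estimates of Lemma~\ref{lem:142} and Proposition~\ref{prop:14} where the lemma is actually used; alternatively, one can widen the strip in the definition of $\basin^\delta_n$ by one unit, or observe that one may without loss of generality take $\gamma$ with $(1-\gamma)n\notin\ZZ$. Apart from this small looseness, the argument is complete and correct.
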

  The following lemma shall be proved by combining the above along with an easy worst case estimate.
  \begin{lemma}
    \label{lem:142}
 There exists $\delta_0>0$ and constants $C,c$ such that for any fixed $\delta<\delta_0$ and all $n$ large enough, we have
    \begin{equation}
      \label{eq:707}
      \EE\left[|\hitset_{-\n+\cK_n^\delta}^{\n+\cK_n^\delta,[s,t]}([\![-(1-\gamma)n,(1-\gamma)n]\!]_{\RR})|\right]\leq \EE[|\cS_n^\delta|]\sup_{p,q}\EE[|\hitset_{p}^{q,[s,t]}([\![-(1-\gamma)n,(1-\gamma)n]\!]_{\RR})|]+ Ce^{-cn^{3\delta/11}},
    \end{equation}
    where the supremum above is over all $p\in [\![-(1+\gamma/2)n, -(1-\gamma/2)n]\!]_\RR\cap B_{3n^{2/3+\delta}}(\LL_{-2\n}^{2\n})$ and $q\in [\![(1-\gamma/2)n,(1+\gamma/2)n]\!]_\RR\cap B_{3n^{2/3+\delta}}(\LL_{-2\n}^{2\n})$.
  \end{lemma}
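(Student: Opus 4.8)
The plan is to decompose the expectation on the left of \eqref{eq:707} according to whether the covering event $\cover_n^{\delta,2\delta}$ of Proposition \ref{lem:36} (applied with $\nu=2\delta$, which forces $\delta_0$ to be at most the one supplied by that proposition and by Lemma \ref{lem:38}) occurs. On $\cover_n^{\delta,2\delta}$ I would use the set inclusion \eqref{eq:705} of Lemma \ref{lem:141} together with the independence of the sprinkled Poisson cloud $\cQ_{n,2\delta}$ from the dynamics, while on the complement I would use a crude deterministic bound on the cardinality of the hitset.

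First I would dispose of the complementary event. Every staircase joining a point of the on-scale parallelogram $-\n+\cK_n^\delta$ to a point of $\n+\cK_n^\delta$ is an up-right staircase both of whose endpoints have horizontal coordinate within $O(n^{2/3+\delta})$ of the respective corners $-\n$ and $\n$; hence, at each integer height $m\in[\![-(1-\gamma)n,(1-\gamma)n]\!]$ its horizontal coordinate lies between those of its endpoints and so in $[\![-2n,2n]\!]$ for all $n$ large. Consequently, irrespective of the realisation of the dynamics and of the random set of update times, the set $\hitset_{-\n+\cK_n^\delta}^{\n+\cK_n^\delta,[s,t]}([\![-(1-\gamma)n,(1-\gamma)n]\!]_{\RR})$ is contained in the fixed lattice box $[\![-2n,2n]\!]_{[\![-(1-\gamma)n,(1-\gamma)n]\!]}\cap\ZZ^2$, whose cardinality is at most $C_0n^2$ for an absolute constant $C_0$. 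Since, by Proposition \ref{lem:36} with $\nu=2\delta$, we have $\PP((\cover_n^{\delta,2\delta})^c)\leq Ce^{-cn^{3\delta/11}}+Ce^{-cn^{2\delta}}\leq 2Ce^{-cn^{3\delta/11}}$ for $n$ large (using $3/11<2$), absorbing the polynomial factor into the stretched exponential gives
\begin{equation*}
\EE[\,|\hitset_{-\n+\cK_n^\delta}^{\n+\cK_n^\delta,[s,t]}([\![-(1-\gamma)n,(1-\gamma)n]\!]_{\RR})|\,\ind((\cover_n^{\delta,2\delta})^c)\,]\leq Ce^{-cn^{3\delta/11}}.
\end{equation*}

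On the event $\cover_n^{\delta,2\delta}$, Lemma \ref{lem:141} gives $|\hitset_{-\n+\cK_n^\delta}^{\n+\cK_n^\delta,[s,t]}(\cdot)|\leq\sum_{(p,q)\in\cS_n^\delta}|\hitset_p^{q,[s,t]}(\cdot)|$, and dropping the indicator of $\cover_n^{\delta,2\delta}$ only increases the right-hand side. The crucial structural point is that $\cS_n^\delta$ is a deterministic function of $\cQ_{n,2\delta}$ (it is $\cQ_{n,2\delta}$ restricted to a fixed region), is a.s.\ finite, and is therefore independent of the dynamical BLPP $\{T^s\}_{s\in\RR}$, whereas for each \emph{fixed} pair of lattice points $(p,q)$ the quantity $|\hitset_p^{q,[s,t]}(\cdot)|$ is a functional of $\{T^s\}_{s\in\RR}$ alone. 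Conditioning on $\cQ_{n,2\delta}$ and using this independence,
\begin{equation*}
\EE[\sum_{(p,q)\in\cS_n^\delta}|\hitset_p^{q,[s,t]}(\cdot)| \mid \cQ_{n,2\delta}]=\sum_{(p,q)\in\cS_n^\delta}\EE[|\hitset_p^{q,[s,t]}(\cdot)|]\leq|\cS_n^\delta|\sup_{p,q}\EE[|\hitset_p^{q,[s,t]}(\cdot)|],
\end{equation*}
where, by the very definition of $\cS_n^\delta$, the supremum ranges only over $p\in[\![-(1+\gamma/2)n,-(1-\gamma/2)n]\!]_\RR\cap B_{3n^{2/3+\delta}}(\LL_{-2\n}^{2\n})$ and $q\in[\![(1-\gamma/2)n,(1+\gamma/2)n]\!]_\RR\cap B_{3n^{2/3+\delta}}(\LL_{-2\n}^{2\n})$. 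Taking expectation over $\cQ_{n,2\delta}$ and adding the bound from the previous paragraph yields \eqref{eq:707}.

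I do not anticipate any genuine obstacle; the only point requiring a little care is the deterministic estimate on $(\cover_n^{\delta,2\delta})^c$, where one must note that although the hitset is \emph{defined} as a union over the random (Poissonian) set of resampling times in $[s,t]$, it nonetheless always lives inside a fixed $O(n)\times O(n)$ lattice box and hence has cardinality $O(n^2)$ deterministically, so no control of the number of update times is needed at that step.
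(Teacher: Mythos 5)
Your proposal is correct and follows essentially the same route as the paper: decompose according to $\cover_n^{\delta,2\delta}$, on the good event apply Lemma \ref{lem:141} and the independence of $\cQ_{n,2\delta}$ from the dynamics, on the bad event use a deterministic $O(n^2)$ bound (the paper phrases it as $|\cM_{-\n+\cK_n^\delta}^{\n+\cK_n^\delta}|\leq Cn^2$ rather than containment in a lattice box, but this is an equivalent observation), and conclude with Proposition \ref{lem:36}. The one cosmetic difference — your explicit geometric argument for the $O(n^2)$ bound versus the paper's direct appeal to $|\cM_{-\n+\cK_n^\delta}^{\n+\cK_n^\delta}|$ — is immaterial.
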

  \begin{proof}
    We begin by noting that we always have the worst case estimate
          \begin{equation}
    \label{eq:484}
    |\hitset_{-\n+\cK_n^\delta}^{\n+\cK_n^\delta,[0,1]}([\![-(1-\gamma)n,(1-\gamma)n]\!]_\RR)|\leq |\cM_{-\n+\cK_n^\delta}^{\n+\cK_n^\delta}|.
  \end{equation}
Now, for some constants $C,C_1,c_1$, we can write
    \begin{align}
      \label{eq:708}
      &\EE\left[|\hitset_{-\n+\cK_n^\delta}^{\n+\cK_n^\delta,[s,t]}([\![-(1-\gamma)n,(1-\gamma)n]\!]_{\RR})|\right]\nonumber\\
      &\leq \EE[\sum_{(p,q)\in\cS_n^\delta}|\hitset_{p}^{q,[s,t]}([\![-(1-\gamma)n,(1-\gamma)n]\!]_{\RR})|]+ \EE[\ind( (\cover_n^{\delta,2\delta})^c)|\cM_{-\n+\cK_n^\delta}^{\n+\cK_n^\delta}|]\nonumber\\
      &\leq \EE[|\cS_n^\delta|]\sup_{p,q}\EE[|\hitset_{p}^{q,[s,t]}([\![-(1-\gamma)n,(1-\gamma)n]\!]_{\RR})|]+ Cn^2\PP(\cover_n^{\delta,2\delta})^c)\nonumber\\
      &\leq \EE[|\cS_n^\delta|]\sup_{p,q}\EE[|\hitset_{p}^{q,[s,t]}([\![-(1-\gamma)n,(1-\gamma)n]\!]_{\RR})|] + C_1e^{-c_1n^{3\delta/11}}.
    \end{align}
    To obtain the second line, we use Lemma \ref{lem:141} and \eqref{eq:484}. To obtain the first term in the third line, we use that the Poisson process $\cQ_{n}^{\delta,2\delta}$ is independent of the dynamical BLPP, and to obtain the second term therein, we simply use that there is a constant $C$ for which we have $|\cM_{-\n+\cK_n^\delta}^{\n+\cK_n^\delta}|\leq Cn^2$. Finally to obtain the last line, we used \eqref{eq:91} from Proposition \ref{lem:36}.
  \end{proof}
  In order to control the sum appearing on the right hand side of Lemma \ref{lem:142}, we shall use the following elementary but very useful fact.
  \begin{lemma}
    \label{lem:143}
    Almost surely, for all $p\leq q\in \ZZ_{\RR}$, all $s<t$ and any $K\subseteq \RR^2$, we have
    \begin{equation}
      \label{eq:709}
      |\hitset_p^{q,[s,t]}(K)|\leq |\hitset_p^{q,\{s\}}(K)|+ \switch_p^{q,[s,t]}(K).
    \end{equation}
  \end{lemma}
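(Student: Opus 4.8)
The plan is to prove \eqref{eq:709} as a purely set-theoretic inequality for a fixed geodesic trajectory $r\mapsto\Gamma_p^{q,r}$, after dispatching the measure-theoretic bookkeeping. First I would fix $p\leq q\in\ZZ_\RR$ and $s<t$ and work on the almost sure event on which the set $\cT\coloneqq\cT_p^{q,[s,t]}$ is finite and the geodesic $\Gamma_p^{q,r}$ is unique for every $r\in\{s\}\cup\cT$; the latter follows from Lemma \ref{prop:48} combined with uniqueness of geodesics between fixed endpoints in static BLPP, exactly as in Lemma \ref{lem:104}. Since $\Gamma_p^{q,r}$ depends only on the increments indexed by $\cM_p^q$, it is constant on each maximal subinterval of $[s,t]$ disjoint from $\cT$, so the union defining the hitset in \eqref{eq:703} collapses to a finite one:
$$\hitset_p^{q,[s,t]}(K)=\coarse(\Gamma_p^{q,s}\cap K)\cup\bigcup_{r\in\cT}\coarse(\Gamma_p^{q,r}\cap K),\qquad \hitset_p^{q,\{s\}}(K)=\coarse(\Gamma_p^{q,s}\cap K).$$

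The core of the argument is a short telescoping step. Enumerate $\cT=\{r_1<\dots<r_N\}$ and set $r_0\coloneqq s$; since no increment indexed by $\cM_p^q$ is resampled strictly between $r_{j-1}$ and $r_j$, we have $\Gamma_p^{q,r_j^-}=\Gamma_p^{q,r_{j-1}}$. Applying the trivial inclusion $A\subseteq(A\setminus B)\cup B$ with $A=\coarse(K\cap\Gamma_p^{q,r_j})$ and $B=\coarse(K\cap\Gamma_p^{q,r_j^-})$ gives, for each $j$,
$$\coarse(\Gamma_p^{q,r_j}\cap K)\subseteq\bigl(\coarse(K\cap\Gamma_p^{q,r_j})\setminus\coarse(K\cap\Gamma_p^{q,r_j^-})\bigr)\cup\coarse(K\cap\Gamma_p^{q,r_{j-1}}).$$
Feeding this in for $j=N,N-1,\dots,1$, so that the re-introduced term $\coarse(K\cap\Gamma_p^{q,r_{j-1}})$ is each time absorbed into the union already built up for indices $\le j-1$, a straightforward induction yields
$$\hitset_p^{q,[s,t]}(K)\subseteq\coarse(\Gamma_p^{q,s}\cap K)\cup\bigcup_{r\in\cT}\bigl(\coarse(K\cap\Gamma_p^{q,r})\setminus\coarse(K\cap\Gamma_p^{q,r^-})\bigr).$$
Taking cardinalities, bounding the size of a union by the sum of sizes, and recognizing the resulting sum over $r\in\cT$ as $\switch_p^{q,[s,t]}(K)$ via its definition \eqref{eq:89}, gives \eqref{eq:709}.

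To obtain the ``almost surely, for all $p,q,s,t,K$'' formulation I would observe that for fixed rational $p\leq q$ the process $r\mapsto\Gamma_p^{q,r}$ is almost surely right-continuous, piecewise constant, with finitely many jumps in each compact interval (the jump times lying in the a.s.\ finite Poisson sets $\cT_p^{q,[-M,M]}$, $M\in\NN$), so the deterministic argument above runs verbatim for all $s<t$ at once; a countable union over rational $p\leq q$ then handles all endpoints needed in the applications (which are either fixed or sampled from a process independent of the dynamics), and the dependence on $K$ is immaterial since the inequality is set-theoretic once the trajectory is fixed. I do not expect a genuine obstacle here: the only care required is ensuring that $\hitset$ and $\switch$ are simultaneously well-defined (finiteness of $\cT$ and uniqueness of geodesics), after which the estimate is a one-line telescoping bound.
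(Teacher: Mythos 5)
Your proof is correct and uses essentially the same mechanism as the paper's: the paper argues per element, assigning each $(i,m)\in\hitset_p^{q,[s,t]}(K)$ not already hit at time $s$ to the first time $r_*$ at which it appears, while your telescoping inclusion is the set-valued dual of that first-appearance injection. Your version is somewhat more careful about the measure-theoretic preliminaries (finiteness of $\cT$ and collapse of the union to $\{s\}\cup\cT$), which the paper leaves implicit, but there is no substantive difference in the argument.
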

  \begin{proof}
    Suppose $(i,m)\in \hitset_{p}^{q,[s,t]}(K)$. The first case is that $(i,m)\in \hitset_{p}^{q\{s\}}(K)$ as well, in which case it is accounted for in the first term above. If not, then consider the time $r_*\in (s,t]$ defined by
    \begin{equation}
      \label{eq:116}
      r_*=\inf\{r: (i,m)\in \hitset_{p}^{q[s,r]}(K)\}.
    \end{equation}
    Thus, with this definition, we have
    \begin{equation}
      \label{eq:117}
     (i,m)\in \coarse(K\cap \Gamma_{p}^{q,r_*})\setminus \coarse(K\cap \Gamma_{p}^{q,r_*^-})
    \end{equation}
    and as result, it is accounted for in the second term on the right hand side of \eqref{eq:709}.
  \end{proof}
  We are now ready to complete the proof of Proposition \ref{prop:14} and thereby of Theorem \ref{thm:6} as well.
  \begin{proof}[Proof of Proposition \ref{prop:14}]
    It is easy to see that there is a deterministic constant $C'$ for which we always have $|\hitset_p^{q,\{s\}}(K)|\leq C'n$ for all $p\in [\![-(1+\gamma/2)n, -(1-\gamma/2)n]\!]_\RR\cap B_{3n^{2/3+\delta}}(\LL_{-2\n}^{2\n})$ and $q\in [\![(1-\gamma/2)n,(1+\gamma/2)n]\!]_\RR\cap B_{3n^{2/3+\delta}}(\LL_{-2\n}^{2\n})$. Further, for any $p,q$ as above, by Proposition \ref{prop:911}, we have
    \begin{equation}
      \label{eq:711}
      \EE[\switch_p^{q,[s,t]}([\![-(1-\gamma)n,(1-\gamma)n]\!]_{\RR})]\leq n^{5/3+\delta}(t-s).
    \end{equation}
    As a result of this and Lemma \ref{lem:81}, we obtain
    \begin{equation}
      \label{eq:712}
     \EE\left[|\hitset_{-\n+\cK_n^\delta}^{\n+\cK_n^\delta,[s,t]}([\![-(1-\gamma)n,(1-\gamma)n]\!]_{\RR})|\right]\leq \EE[|\cS_n^\delta|](C'n+ n^{5/3+\delta}(t-s)) + Ce^{-cn^{3\delta/11}}.
   \end{equation}
   Finally, note that since $\cQ_{n,2\delta}$ is a Poisson process of rate $n^{-10/3+4\delta}$, for some constant $C$, we have $\EE|\cS_n^\delta| \leq C(\gamma n \times 6n^{2/3+\delta})^2n^{-10/3+4\delta}=36C\gamma^2n^{6\delta}$, where $C(\gamma n \times 6n^{2/3+\delta})^2$ is simply an upper bound for
   \begin{equation}
     \label{eq:716}
     |(B_{3n^{2/3+\delta}}(\LL_{-2\n}^{2\n}))^2\cap ([\![-(1+\gamma/2)n, -(1-\gamma/2)n]\!]_{\RR})\times [\![(1-\gamma/2)n,(1+\gamma/2)n]\!]_{\RR})|_{\hor}.
   \end{equation}
    This completes the proof.
  \end{proof}
  Before moving on, we note that while Proposition \ref{prop:14} was stated and proved for regions around the points $-\n,\n$, the same arguments can be used to obtain a general version corresponding to points $q,p$ such that $\slope(q,p)$ is bounded away from $0$ or $\infty$. Shortly, we shall frequently use such a result for the case when $p=-q$ and we now provide a statement without proof.
  \begin{lemma}
    \label{lem:144}
There exists $\delta_0>0$ such that the following holds. Fix $\gamma\in (0,1)$, $\mu\in (0,1)$ and $\delta<\delta_0$. Then with $\cK^\delta_p\coloneqq B_{n^{2/3+\delta}}(\LL_{-\gamma p/8}^{\gamma p/8})\cap \ZZ_\RR$, for all $p\in \{n\}_{\RR}$ satisfying $\slope(\0,p)\in (\mu,\mu^{-1})$, all $[s,t]\subseteq \RR$ and all $n$ large enough, we have
    \begin{equation}
      \label{eq:717}
      \EE[|\hitset_{-p+\cK_p^\delta}^{p+\cK_p^\delta,[s,t]}([\![-(1-\gamma)n,(1-\gamma)n]\!]_{\RR})|]\leq n^{5/3+8\delta}+ n^{1+8\delta}(t-s).
    \end{equation}
  \end{lemma}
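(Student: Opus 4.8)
The plan is to repeat, essentially verbatim, the chain of lemmas in Sections~\ref{sec:sprinkling} and~\ref{sec:proof-theor-refthm:6} that lead to Proposition~\ref{prop:14}, with the point $\n$ replaced throughout by the general endpoint $p\in\{n\}_\RR$ and every occurrence of the line $\LL_{-\n}^{\n}$ replaced by $\LL_{-p}^{p}$. Concretely, I would first set up the general-point analogue of Proposition~\ref{lem:36}: sprinkle a rate-$n^{-10/3+4\delta}$ Poisson cloud $\cQ$ on $(\ZZ_\RR)^2$ independently of the dynamics, and use the basin estimate Proposition~\ref{prop:12} together with its input Proposition~\ref{prop:11} --- both of which are \emph{already} stated for pairs of slope in $(\mu,1/\mu)$, so no new last passage input is needed --- to conclude that, on an event of probability $\ge 1-Ce^{-cn^{3\delta/11}}$, every geodesic $\Gamma_{u}^{v,t}$ with $u\in -p+\cK_p^\delta$, $v\in p+\cK_p^\delta$, $t\in[0,1]$ has its central portion (inside $[\![-(1-\gamma)n,(1-\gamma)n]\!]_\RR$) covered by $\Gamma_{\tilde u}^{\tilde v,t}$ for some sprinkled pair $(\tilde u,\tilde v)\in\cQ$ lying in an $O(n^{2/3+\delta})$-neighbourhood of $\LL_{-p}^{p}$ and still of slope bounded away from $0$ and $\infty$. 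This is precisely Lemma~\ref{lem:141} in the present setting, and the auxiliary Lemmas~\ref{lem:38}, \ref{lem:102}, \ref{lem:103}, \ref{lem:35} transcribe word for word.

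Granting this covering, the bookkeeping is identical to the proof of Proposition~\ref{prop:14}. Using the deterministic worst-case bound $|\hitset_u^{v,\{s\}}(K)|\le C'n$ (the static geodesic $\Gamma_u^{v,s}$ meets at most $C'n$ cells inside $[\![-(1-\gamma)n,(1-\gamma)n]\!]_\RR$), the elementary inequality Lemma~\ref{lem:143}, and the general-point switch estimate Proposition~\ref{prop:911} (applicable exactly because the sprinkled pairs have slope in a fixed compact subinterval of $(0,\infty)$), one obtains, with $\cS_n^\delta$ the relevant sprinkled family,
\begin{align*}
&\EE\bigl[|\hitset_{-p+\cK_p^\delta}^{p+\cK_p^\delta,[s,t]}([\![-(1-\gamma)n,(1-\gamma)n]\!]_\RR)|\bigr]\\
&\qquad\le \EE[|\cS_n^\delta|]\bigl(C'n+n^{5/3+\delta}(t-s)\bigr)+Cn^2\,\PP\bigl((\cover_n^{\delta,2\delta})^c\bigr),
\end{align*}
where $Cn^2$ is the deterministic fallback $|\cM_{-p+\cK_p^\delta}^{p+\cK_p^\delta}|$. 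Since $\cQ$ has rate $n^{-10/3+4\delta}$ and is supported in an $O(n^{2/3+\delta})$-neighbourhood of $\LL_{-p}^{p}$ we have $\EE|\cS_n^\delta|=O(n^{6\delta})$, and $\PP((\cover_n^{\delta,2\delta})^c)\le Ce^{-cn^{3\delta/11}}$, so the right-hand side is at most $n^{1+8\delta}+n^{5/3+8\delta}(t-s)$ for all $n$ large enough.

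It remains to deduce the displayed form \eqref{eq:717}. When $t-s\le 1$ the bound just obtained suffices, since $n^{5/3+8\delta}+n^{1+8\delta}(t-s)-\bigl(n^{1+8\delta}+n^{5/3+8\delta}(t-s)\bigr)=(1-(t-s))\,(n^{5/3+8\delta}-n^{1+8\delta})\ge 0$. For $t-s>1$ (indeed uniformly in $[s,t]$) one instead uses a crude region-confinement estimate: on the event that all geodesics $\Gamma_u^{v,t}$ with $u\in -p+\cK_p^\delta$, $v\in p+\cK_p^\delta$, $t\in[s,t]$ lie in $B_{Cn^{2/3+\delta}}(\LL_{-p}^{p})$ the hitset meets only $O(n^{5/3+\delta})$ cells, and otherwise it is bounded by $|\cM|=O(n^2)$; by Proposition~\ref{prop:38}, planarity, Lemma~\ref{prop:48} and a union bound over $\{s\}\cup\cT_{-p+\cK_p^\delta}^{p+\cK_p^\delta,[s,t]}$, whose cardinality has mean $O((t-s)n^2)$, this confinement event fails with probability $\le \bigl(1+(t-s)n^2\bigr)Ce^{-cn^{c\delta}}$, giving $\EE[|\hitset|]\le n^{5/3+8\delta}+n^{1+8\delta}(t-s)$ for $n$ large. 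There is no substantive new difficulty in any of this: the entire content is a mechanical transcription of Sections~\ref{sec:sprinkling}--\ref{sec:proof-theor-refthm:6}, and the only point needing care --- and the reason the generous $8\delta$ slack in the exponents is convenient --- is that every auxiliary pair of points appearing along the way (the sprinkled Poissonian points, the endpoints of the basins, the fine $n^{-1}$-mesh approximants) must be kept of slope in a fixed compact subinterval of $(0,\infty)$ so that Propositions~\ref{prop:11}, \ref{prop:12} and \ref{prop:911} apply with constants uniform in $n$.
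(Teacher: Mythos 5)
Your proposal is correct and takes the same route the paper intends: Lemma~\ref{lem:144} is exactly the statement the paper declares it will give ``without proof'' as a direct transcription of the proof of Proposition~\ref{prop:14} to general endpoints $p$, and this works because the inputs (Propositions~\ref{prop:11} and~\ref{prop:12} for the basin volume, Proposition~\ref{prop:911} for point-to-point switches) are already formulated for endpoint pairs of slope in a fixed compact $(\mu,\mu^{-1})\subset(0,\infty)$, so no new last passage estimate is needed; the bookkeeping through Lemmas~\ref{lem:141}--\ref{lem:143} transcribes verbatim. You also caught something worth flagging: the bound displayed in Lemma~\ref{lem:144}, $n^{5/3+8\delta}+n^{1+8\delta}(t-s)$, has its two exponents swapped relative to Proposition~\ref{prop:14}'s $n^{1+8\delta}+n^{5/3+8\delta}(t-s)$. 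The downstream computation in the proof of Lemma~\ref{lem:40} substitutes $t-s=n^{-2/3}$ and evaluates $n^{-5/3}\bigl(n^{1+8\delta}+n^{5/3+8\delta}\,n^{-2/3}\bigr)=O(n^{-2/3+8\delta})$, which requires the Proposition~\ref{prop:14} form (the printed Lemma~\ref{lem:144} form would give only $O(n^{8\delta})$, destroying the decay needed for Theorem~\ref{thm:5}), so the printed statement of Lemma~\ref{lem:144} almost certainly carries a typo and should read $n^{1+8\delta}+n^{5/3+8\delta}(t-s)$. Your extra case split for $t-s>1$---a deterministic confinement bound of $O(n^{5/3+\delta})$ cells on a high-probability uniform transversal-fluctuation event---is correct and is exactly what is needed if one insists on deriving the bound as literally printed, but it is superfluous for the version the paper actually uses.
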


\subsection{Proof of Theorem \ref{thm:5}}
\label{sec:proof-theor-refthm:5}

From now onwards, we shall work with $\gamma=1/2$, and as a result, we simply have $\cK_p^\delta=B_{n^{2/3+\delta}}(\LL_{-p/16}^{p/16})\cap \ZZ_\RR$. %
We shall also frequently work with the set
\begin{equation}
  \label{eq:488}
  \nbd_n= \coarse(B_{n^{2/3}}(\LL_{-p/64}^{p/64})).
\end{equation}
To simplify notation later, from now on, we write $\mathscr{I}^\delta_n=B_{n^{2/3+\delta}/2}(\{\0\})$. The reason why we define $\nbd_p$ and $\mathscr{I}^\delta_n$ as above is because of the following trivial observation.
\begin{lemma}
  \label{lem:41}
There exists $\delta_0>0$ such that for any fixed $\delta<\delta_0$, for all $n$ large enough, all $p\in \{n\}_{\RR}$ and all $q,q'\in \nbd_p$, we have $[\![-n/4,n/4]\!]_\RR\subseteq [\![-n/2,n/2]\!]_\RR + q'-q$ and $\mathscr{I}^\delta_n\subseteq \cK_p^\delta +q'-q$. %
\end{lemma}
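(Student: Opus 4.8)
The plan is to unwind the definitions and check both inclusions by a direct computation, keeping track only of the displacement vector $q'-q$. First I would write $p=(x_p,n)$; since the line $\LL_{-p/64}^{p/64}$ passes through the origin, its point at height $m$ is $((m/n)x_p,m)$, and the same holds for $\LL_{-p/16}^{p/16}$ (the vertical case $x_p=0$ being subsumed under the convention $(m/n)x_p=0$). Write $q=(i_q,m_q)$, $q'=(i_{q'},m_{q'})$ and $q'-q=(a,b)$. Since $q,q'\in\nbd_p=\coarse(B_{n^{2/3}}(\LL_{-p/64}^{p/64}))$, the height coordinates $m_q,m_{q'}$ lie in $[\![-n/64,n/64]\!]$, and because $\{m_q\}_{[i_q,i_q+1]}$ must meet the $n^{2/3}$-horizontal neighbourhood of the line we get $|i_q-(m_q/n)x_p|\le n^{2/3}+1$, and likewise for $q'$. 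Subtracting the two estimates, the (possibly enormous) term $(b/n)x_p$ cancels, leaving
\begin{equation}
  |b|\le n/32,\qquad |a-(b/n)x_p|\le 2n^{2/3}+2 .
\end{equation}

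Given this, the first inclusion is immediate: $[\![-n/2,n/2]\!]_\RR+q'-q=\RR\times\big(\ZZ\cap[b-n/2,\,b+n/2]\big)$, so it suffices that $[-n/4,n/4]\subseteq[b-n/2,b+n/2]$, which follows from $|b|\le n/32<n/4$. For the second inclusion, recall $\mathscr{I}^\delta_n=\{(x,0):|x|\le n^{2/3+\delta}/2\}$ lives at height $0$; since $|b|\le n/32$ keeps $-b$ well inside the vertical span of $\LL_{-p/16}^{p/16}$, the height-$(-b)$ slice of $\cK_p^\delta$ is $\{x:|x+(b/n)x_p|\le n^{2/3+\delta}\}$, so the height-$0$ slice of $\cK_p^\delta+q'-q$ is $\{x:|x-a+(b/n)x_p|\le n^{2/3+\delta}\}$. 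Using $|a-(b/n)x_p|\le 2n^{2/3}+2\le n^{2/3+\delta}/2$, valid for all $n$ large (for any fixed $\delta_0>0$ and $\delta<\delta_0$), this slice contains $[-n^{2/3+\delta}/2,\,n^{2/3+\delta}/2]$, which is exactly the $x$-range of $\mathscr{I}^\delta_n$; hence $\mathscr{I}^\delta_n\subseteq\cK_p^\delta+q'-q$.

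There is essentially no obstacle here — this is the elementary observation it is claimed to be, and $\delta_0$ can be taken arbitrary. The one point worth emphasising, and the reason no hypothesis on $\slope(\0,p)$ is needed even though $x_p$ may be arbitrarily large, is the cancellation in the second display: the large horizontal drift $(b/n)x_p$ of the reference line over a vertical span $b$ is matched, up to an $O(n^{2/3})$ error, by the horizontal offset $a=i_{q'}-i_q$ between $q$ and $q'$, precisely because $q$ and $q'$ both hug the common line $\LL_{\0}^{p}$ to within $n^{2/3}+1$. (A minor bookkeeping remark: if one reads $\LL_{-p/64}^{p/64}$ as the full line rather than the segment through $\pm p/64$, then $\nbd_p$ should be understood as already restricted to the vertical band $[\![-n/64,n/64]\!]$, which is how it is used; with that reading nothing above changes.)
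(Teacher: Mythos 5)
Your proof is correct and complete; the paper states Lemma \ref{lem:41} without proof (introducing it as ``the following trivial observation''), and your direct unwinding of the definitions is exactly what is being left to the reader. The key cancellation you isolate — both $q$ and $q'$ track $\LL_{\0}^p$ to within $O(n^{2/3})$ at the coarse scale, so $|a-(b/n)x_p|\le 2n^{2/3}+2$ even when $x_p$ is huge — is indeed the reason the statement needs no hypothesis on $\slope(\0,p)$, and $\delta_0$ is arbitrary with the $n$-threshold depending on $\delta$ as you say.
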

We now make another definition that will be in play for the next few results. We let $(\fraki_p,\frakm_p)$ be uniformly chosen from $\nbd_p$ independently of the dynamical LPP. Now, with this definition, we have the following result.
\begin{lemma}
  \label{lem:40}
 There exists $\delta_0>0$ such that the following holds. For any fixed $\mu\in (0,1)$ and $\delta<\delta_0$, all $n$ large enough, and all $p\in \{n\}_{\RR}$ satisfying $\slope(\0,p)\in (\mu,\mu^{-1})$, we have, for some constant $C$,    %
  \begin{equation}
    \label{eq:121}
    \PP
    \left(
      (\fraki_p,\frakm_p)\in \hitset_{-p+\cK_p^\delta}^{p+\cK_p^\delta,[0,n^{-2/3}]}([\![-n/2,n/2]\!]_\RR)\right)\leq Cn^{-2/3+8\delta}.
  \end{equation}
\end{lemma}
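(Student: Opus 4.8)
The plan is a short averaging argument that offloads essentially all the work onto the hitset estimate Lemma \ref{lem:144}. Write $\cH:=\hitset_{-p+\cK_p^\delta}^{p+\cK_p^\delta,[0,n^{-2/3}]}([\![-n/2,n/2]\!]_\RR)\subseteq\ZZ^2$; this is a random set measurable with respect to the dynamical LPP $\{T^t\}_{t\in\RR}$, while $\nbd_p=\coarse(B_{n^{2/3}}(\LL_{-p/64}^{p/64}))$ is deterministic and $(\fraki_p,\frakm_p)$ is uniform on $\nbd_p$ independently of $\{T^t\}_{t\in\RR}$. First I would condition on the dynamics to get
\begin{equation*}
  \PP\big((\fraki_p,\frakm_p)\in\cH\,\big\lvert\,\{T^t\}_{t\in\RR}\big)=\frac{|\nbd_p\cap\cH|}{|\nbd_p|}\leq\frac{|\cH|}{|\nbd_p|},
\end{equation*}
and then take expectations to reduce the claim to the two bounds $\EE[|\cH|]\lesssim n^{1+8\delta}$ and $|\nbd_p|\gtrsim n^{5/3}$.

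For the numerator I would invoke Lemma \ref{lem:144} with $\gamma=1/2$ and $[s,t]=[0,n^{-2/3}]$: one has $[\![-(1-\gamma)n,(1-\gamma)n]\!]_\RR=[\![-n/2,n/2]\!]_\RR$, the set $\cK_p^\delta$ there is exactly the one in play here, and $\slope(\0,p)\in(\mu,\mu^{-1})$ is assumed, so the lemma yields $\EE[|\cH|]\leq n^{1+8\delta}+n^{5/3+8\delta}\cdot n^{-2/3}=2n^{1+8\delta}$. For the denominator I would simply count: the segment $\LL_{-p/64}^{p/64}$ has vertical extent $n/32$, and for each integer $m$ in its vertical range the horizontal $n^{2/3}$-neighbourhood $B_{n^{2/3}}(\LL_{-p/64}^{p/64})$ meets the row $\{m\}_\RR$ in an interval of length $2n^{2/3}$, which $\coarse$ turns into at least $2n^{2/3}-1$ lattice points; summing over the $\gtrsim n$ such rows gives $|\nbd_p|\geq cn^{5/3}$ for an absolute constant $c>0$ and all large $n$. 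Combining the two estimates gives $\PP((\fraki_p,\frakm_p)\in\cH)\leq 2c^{-1}n^{-2/3+8\delta}$, which is the assertion with $C=2c^{-1}$ and $\delta_0$ inherited from Lemma \ref{lem:144}.

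There is no serious obstacle internal to this lemma: it is bookkeeping once Lemma \ref{lem:144} is available, and the only quantitative point is that $|\nbd_p|=\Theta(n^{5/3})$ is calibrated precisely so that the $n^{5/3+8\delta}(t-s)$ term of the hitset bound, evaluated at $t-s=n^{-2/3}$, collapses to the target order $n^{-2/3+8\delta}$. The genuine difficulty lies entirely upstream — in the switch estimate Theorem \ref{prop:30} (via its general-points form Proposition \ref{prop:911}) and in the Poissonian covering construction Proposition \ref{lem:36}, which together underlie Lemma \ref{lem:144}.
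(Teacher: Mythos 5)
Your proof is correct and follows the same averaging argument as the paper's own proof: condition on (or use independence from) the dynamics to write the hitting probability as $|\nbd_p|^{-1}\EE[|\cH|]$, bound the numerator via the hitset estimate (Lemma \ref{lem:144}, the general-$p$ version of Proposition \ref{prop:14}) with $t-s=n^{-2/3}$, and bound $|\nbd_p|\gtrsim n^{5/3}$ by direct counting. Note that you correctly used the hitset bound in the form $n^{1+8\delta}+n^{5/3+8\delta}(t-s)$, which is what Proposition \ref{prop:14} gives and what the calculation requires; the displayed right-hand side of Lemma \ref{lem:144} in the paper has the two exponents inadvertently transposed.
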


\begin{proof}
  Since $(\fraki_p,\frakm_p)$ is independent of the dynamical BLPP, for some constants $C_1,C_2$ and all $n$ large enough, we have 
  \begin{align}
    \label{eq:489}
    \PP
    \left(
    (\fraki_p,\frakm_p)\in \hitset_{-p+\cK_p^\delta}^{p+\cK_p^\delta,[0,n^{-2/3}]}([\![-n/2,n/2]\!]_\RR)\right)&\leq (\nbd_p)^{-1}\EE [|\hitset_{-p+\cK_p^\delta}^{p+\cK_p^\delta,[0,n^{-2/3}]}([\![-n/2,n/2]\!]_\RR)|]\nonumber\\
                                                                                                &\leq C_1n^{-5/3}(n^{1+8\delta}+n^{5/3+8\delta}\times n^{-2/3})\nonumber\\
                                                                                                &\leq C_2n^{-2/3+8\delta},
  \end{align}
  where to obtain the second line above, we have used that $|\nbd_n|\geq Cn^{5/3}$ for some positive constant $C$; also, we have used Proposition \ref{prop:14} with $[s,t]=[0,n^{-2/3}]$. %
\end{proof}
With the help of Lemma \ref{lem:41}, we can now obtain a version of the above result where $(\fraki_p,\frakm_p)$ is replaced by a fixed point.
\begin{lemma}
  \label{lem:42}
   There exists $\delta_0>0$ such that the following holds. For any fixed $\mu\in (0,1)$ and $\delta<\delta_0$, all $n$ large enough, all $p\in \{n\}_{\RR}$ satisfying $\slope(\0,p)\in (\mu,\mu^{-1})$, and all points $q\in \nbd_p$, we have, for some constant $C$,
  \begin{equation}
    \label{eq:490}
    \PP\left(
      q\in \hitset_{-p+\mathscr{I}^\delta_n}^{p+\mathscr{I}^\delta_n,[0,n^{-2/3}]}([\![-n/4,n/4]\!]_\RR)
    \right)\leq  Cn^{-2/3+8\delta}.
  \end{equation}
\end{lemma}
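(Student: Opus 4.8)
The plan is to deduce the fixed-point estimate \eqref{eq:490} from the averaged estimate of Lemma \ref{lem:40} via the lattice translation invariance of the dynamical BLPP together with the containments of Lemma \ref{lem:41}. The first ingredient I would record is that the whole dynamical process $\{T^t\}_{t\in\RR}$ is invariant in law under any integer translation $\tau_v\colon z\mapsto z+v$, $v\in\ZZ^2$; this is immediate from the stationarity and independence of the Brownian increments $W_k^t$ across both the spatial coordinate and the index $k$, combined with the translation invariance of the Poisson clock field driving the resampling. Since $\coarse(A+v)=\coarse(A)+v$ for $v\in\ZZ^2$, and geodesics transform equivariantly under $\tau_v$, this yields the distributional identity, valid for any $v\in\ZZ^2$, any finite interval $[s,t]$, any sets $K_1,K_2,K\subseteq\RR^2$ and any $z\in\ZZ^2$,
\begin{equation*}
  \PP\bigl(z\in\hitset_{K_1}^{K_2,[s,t]}(K)\bigr)=\PP\bigl(z+v\in\hitset_{K_1+v}^{K_2+v,[s,t]}(K+v)\bigr).
\end{equation*}
I would also note the trivial monotonicity of $\hitset$: if $K_1\subseteq K_1'$, $K_2\subseteq K_2'$ and $K\subseteq K'$, then $\hitset_{K_1}^{K_2,[s,t]}(K)\subseteq\hitset_{K_1'}^{K_2',[s,t]}(K')$, which follows directly from the definition \eqref{eq:703} and the monotonicity of $\coarse$.

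Next, I would fix $q\in\nbd_p$, take an arbitrary $q'\in\nbd_p$, and set $v=q'-q\in\ZZ^2$. Applying the displayed identity with $z=q$, $K_1=-p+\mathscr{I}^\delta_n$, $K_2=p+\mathscr{I}^\delta_n$, $K=[\![-n/4,n/4]\!]_\RR$ and this $v$ gives
\begin{equation*}
  \PP\bigl(q\in\hitset_{-p+\mathscr{I}^\delta_n}^{p+\mathscr{I}^\delta_n,[0,n^{-2/3}]}([\![-n/4,n/4]\!]_\RR)\bigr)=\PP\bigl(q'\in\hitset_{-p+\mathscr{I}^\delta_n+v}^{p+\mathscr{I}^\delta_n+v,[0,n^{-2/3}]}([\![-n/4,n/4]\!]_\RR+v)\bigr).
\end{equation*}
Applying Lemma \ref{lem:41} to the pair $(q',q)$ (i.e.\ with the roles swapped, so that $+v$ rather than $-v$ appears) yields $\mathscr{I}^\delta_n+v\subseteq\cK_p^\delta$ and $[\![-n/4,n/4]\!]_\RR+v\subseteq[\![-n/2,n/2]\!]_\RR$, hence $-p+\mathscr{I}^\delta_n+v\subseteq-p+\cK_p^\delta$ and $p+\mathscr{I}^\delta_n+v\subseteq p+\cK_p^\delta$. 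By the monotonicity of $\hitset$ the event on the right above is contained in $\{q'\in\hitset_{-p+\cK_p^\delta}^{p+\cK_p^\delta,[0,n^{-2/3}]}([\![-n/2,n/2]\!]_\RR)\}$, so
\begin{equation*}
  \PP\bigl(q\in\hitset_{-p+\mathscr{I}^\delta_n}^{p+\mathscr{I}^\delta_n,[0,n^{-2/3}]}([\![-n/4,n/4]\!]_\RR)\bigr)\leq\PP\bigl(q'\in\hitset_{-p+\cK_p^\delta}^{p+\cK_p^\delta,[0,n^{-2/3}]}([\![-n/2,n/2]\!]_\RR)\bigr)
\end{equation*}
for every $q'\in\nbd_p$. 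Averaging over $q'\in\nbd_p$ and using that $(\fraki_p,\frakm_p)$ is independent of the dynamical BLPP, the average of the right-hand side equals $\PP\bigl((\fraki_p,\frakm_p)\in\hitset_{-p+\cK_p^\delta}^{p+\cK_p^\delta,[0,n^{-2/3}]}([\![-n/2,n/2]\!]_\RR)\bigr)$, which is at most $Cn^{-2/3+8\delta}$ by Lemma \ref{lem:40}; this completes the proof.

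I do not expect a genuine obstacle here — the argument is soft and purely a transfer from the averaged to the pointwise bound. The only points requiring care are bookkeeping: invoking Lemma \ref{lem:41} with the correct ordering of the pair $(q,q')$ so that the shifts land inside $\cK_p^\delta$ and $[\![-n/2,n/2]\!]_\RR$, and verifying that $\ZZ^2$-translation invariance holds jointly for the resampling dynamics (not merely for a single static time-slice), which is what legitimizes comparing hitsets over the same interval $[0,n^{-2/3}]$.
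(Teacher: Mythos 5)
Your proof is correct and takes essentially the same route as the paper: translation invariance, the containments of Lemma \ref{lem:41}, and monotonicity of $\hitset$, reducing to the averaged bound of Lemma \ref{lem:40}. The only cosmetic difference is that you shift from $q$ to an arbitrary $q'\in\nbd_p$ and then average, whereas the paper shifts to $\0$ and then directly re-centers at the random $(\fraki_p,\frakm_p)$; these two formulations are interchangeable.
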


\begin{proof}
By Lemma \ref{lem:41}, we almost surely have $[\![-n/4,n/4]\!]_\RR-q\subseteq [\![-n/2,n/2]\!]_\RR-(\fraki_p,\frakm_p)$, $-p+\mathscr{I}^\delta_n-q\subseteq -p+\cK_p^\delta-(\fraki_p,\frakm_p)$ and $p+\mathscr{I}^\delta_n-q\subseteq p+\cK_n^\delta-(\fraki_p,\frakm_p)$. As a result of this, for some constant $C$ and all $n$ large enough, we have
  \begin{align}
    \label{eq:124}
    &\PP\left(
      q\in \hitset_{-p+\mathscr{I}^\delta_n}^{p+\mathscr{I}^\delta_n,[0,n^{-2/3}]}([\![-n/4,n/4]\!]_\RR)\right)\nonumber\\
    &=\PP\left(
      \0\in \hitset_{-p+\mathscr{I}^\delta_n-q}^{p+\mathscr{I}^\delta_n-q,[0,n^{-2/3}]}([\![-n/4,n/4]\!]_\RR-q)\right)
    \nonumber\\
    &\leq\PP
    \left(
      \0\in \hitset_{-p+\cK_p^\delta-(\fraki_p,\frakm_p)}^{p+\cK_p^\delta-(\fraki_p,\frakm_p),[0,n^{-2/3}]}([\![-n/2,n/2]\!]_\RR-(\fraki_p,\frakm_p))
      \right)\nonumber\\
    &=\PP
    \left(
      (\fraki_p,\frakm_p)\in \hitset_{-p+\cK_p^\delta}^{p+\cK_p^\delta,[0,n^{-2/3}]}([\![-n/2,n/2]\!]_\RR
      )
      \right)\leq Cn^{-2/3+8\delta}.
  \end{align} 
The second inequality above follows by using the translational invariance of BLPP along with the fact that $(\fraki_p,\frakm_p)$ is independent of the dynamical LPP. Finally, the last inequality follows by Lemma \ref{lem:40}.
\end{proof}

For the next result, we shall use the intervals $I_{n,i}=[in^{-2/3},(i+1)n^{-2/3}]$, and for any point $p\in \{n\}_{\RR}$, we shall use $N_p$ to denote the number of $i\in [\![0,n^{2/3}-1]\!]$ such that we have
  \begin{equation}
    \label{eq:537}
    \0\in \hitset_{-p+\mathscr{I}^\delta_n}^{p+\mathscr{I}^\delta_n,I_{n,i}}([\![-n/4,n/4]\!]_\RR).
  \end{equation}
By invoking Lemma \ref{lem:42} with $q=0$ and using the stationarity of dynamical BLPP, we immediately have the following result.
\begin{lemma}
  \label{lem:114}
  There exists $\delta_0>0$ such that for any fixed $\mu\in (0,1)$ and $\delta<\delta_0$, there exists a constant $C$ such that the following holds. For any point $p\in \{n\}_\RR$ with $\slope(\0,p)\in (\mu,\mu^{-1})$, and for all large enough $n$, we have $\EE N_p\leq Cn^{8\delta}$.%
\end{lemma}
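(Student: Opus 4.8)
The plan is to deduce this directly from Lemma \ref{lem:42} by combining linearity of expectation with the temporal stationarity of the dynamical BLPP. Since $N_p$ is by definition the sum of the indicators $\ind(\0\in \hitset_{-p+\mathscr{I}^\delta_n}^{p+\mathscr{I}^\delta_n,I_{n,i}}([\![-n/4,n/4]\!]_\RR))$ over $i\in [\![0,n^{2/3}-1]\!]$, I would first write
\begin{equation}
  \label{eq:nplan1}
  \EE N_p=\sum_{i=0}^{n^{2/3}-1}\PP\left(\0\in \hitset_{-p+\mathscr{I}^\delta_n}^{p+\mathscr{I}^\delta_n,I_{n,i}}([\![-n/4,n/4]\!]_\RR)\right).
\end{equation}
Each interval $I_{n,i}=[in^{-2/3},(i+1)n^{-2/3}]$ is a translate of $[0,n^{-2/3}]$, and the process $\{T^t\}_{t\in\RR}$ is stationary in $t$ (Section \ref{sec:model-definition}); hence the law of the geodesics $\{\Gamma_u^{v,r}\}$ as $r$ ranges over $I_{n,i}$ agrees with that over $[0,n^{-2/3}]$, and every summand in \eqref{eq:nplan1} equals $\PP(\0\in \hitset_{-p+\mathscr{I}^\delta_n}^{p+\mathscr{I}^\delta_n,[0,n^{-2/3}]}([\![-n/4,n/4]\!]_\RR))$.

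Next I would invoke Lemma \ref{lem:42} with the choice $q=\0$. To see that this is legitimate, note that $\0$ lies on the line $\LL_{-p/64}^{p/64}$ (this line passes through the origin since its endpoints are antipodal), so $\0\in \LL_{-p/64}^{p/64}\subseteq B_{n^{2/3}}(\LL_{-p/64}^{p/64})$, whence $\0\in \coarse(B_{n^{2/3}}(\LL_{-p/64}^{p/64}))=\nbd_p$. Lemma \ref{lem:42} then applies and yields, for all $n$ large enough and for a constant $C$ depending only on $\mu,\delta$,
\begin{equation}
  \label{eq:nplan2}
  \PP\left(\0\in \hitset_{-p+\mathscr{I}^\delta_n}^{p+\mathscr{I}^\delta_n,[0,n^{-2/3}]}([\![-n/4,n/4]\!]_\RR)\right)\leq Cn^{-2/3+8\delta}.
\end{equation}
Summing \eqref{eq:nplan2} over the $n^{2/3}$ values of $i$ in \eqref{eq:nplan1} gives $\EE N_p\leq n^{2/3}\cdot Cn^{-2/3+8\delta}=Cn^{8\delta}$, as desired.

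This argument is essentially bookkeeping: all the analytic input (the hitset estimate Proposition \ref{prop:14}, transversal fluctuations, the translation step of Lemma \ref{lem:42}) has already been carried out upstream. The only points that require a moment of care are the two reductions used above — the use of temporal stationarity to replace a general $I_{n,i}$ by $[0,n^{-2/3}]$, and the verification that $\0\in\nbd_p$ so that Lemma \ref{lem:42} is indeed applicable with $q=\0$ — and neither of these presents a genuine obstacle.
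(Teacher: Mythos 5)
Your argument is correct and follows exactly the route the paper indicates: linearity of expectation, temporal stationarity of the dynamics to collapse all $I_{n,i}$ to $[0,n^{-2/3}]$, and an application of Lemma \ref{lem:42} with $q=\0$ (after noting $\0\in\nbd_p$). The paper gives this as a one-line remark; you have simply filled in the bookkeeping.
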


We are now ready to complete the proof of Theorem \ref{thm:5}.
\begin{proof}[Proof of Theorem \ref{thm:5}]

  First, by a simple countable union argument, it suffices to work with $\scT_\0^\theta$ defined as the set of times $t\in [0,1]$ at which there exists a bigeodesic $\Gamma^t$ additionally satisfying $\0\in \coarse(\Gamma^t)$. %
  The goal now is to establish that we a.s.\ have $\dim \scT_\0^\theta=0$ almost surely.
  
  The first ingredient in the proof is Proposition \ref{prop:22} which implies that for any fixed $\delta>0$, the following holds almost surely-- for all $t\in \scT_\0^\theta$ and the corresponding bigeodesic $\Gamma^t$, we have
  \begin{equation}
    \label{eq:491}
    \Gamma^t\cap [\![-n,n]\!]_\RR\subseteq B_{n^{2/3+\delta}/2}(\LL_{-(\theta n,n)}^{(\theta n,n)})
  \end{equation}
  for all $n$ large enough. %

  Thus, for every $t\in \scT_\0^\theta$, the event
  \begin{equation}
    \label{eq:492}
   \liminf_{n\rightarrow \infty}\left\{ \0\in \hitset_{-(\theta n,n)+ \mathscr{I}_n^\delta}^{(\theta n,n)+ \mathscr{I}_n^\delta,\{t\}}\right\}
 \end{equation}
  must hold. The goal now is to show that the set of $t$ satisfying the above must have Hausdorff dimension at most $12\delta$. Since $\delta$ is arbitrary, this would complete the proof.

  By a countable union argument, it suffices to fix an $m\in \NN$ and show that the set of $t\in [0,1]$ for which we additionally have
  \begin{equation}
    \label{eq:493}
   \bigcap_{n\geq m} \left\{ \0\in \hitset_{-(\theta n,n)+ \mathscr{I}_n^\delta}^{(\theta n,n)+ \mathscr{I}_n^\delta,\{t\}}\right\}
 \end{equation}
 almost surely has Hausdorff dimension at most $12\delta$. We locally use $\scT^{\theta,\delta}_{\0,m}$ to denote the above-mentioned set of exceptional times.

 Now, consider the intervals $I_{n,i}=[in^{-2/3},(i+1)n^{-2/3}]$ and let $\cI_n$ denote the set of $i\in [\![0,n^{2/3}-1]\!]$ such that the event
 \begin{equation}
   \label{eq:495}
   \left\{ \0\in \hitset_{-(\theta n,n)+ \mathscr{I}_n^\delta}^{(\theta n,n)+ \mathscr{I}_n^\delta,I_{n,i}}\right\}
 \end{equation}
 occurs.
 Then by the definition of $\scT^{\theta,\delta}_{\0,m}$ from \eqref{eq:493}, for all $n\geq m$, we almost surely have
 \begin{equation}
   \label{eq:494}
   \scT^{\theta,\delta}_{\0,m}\subseteq \bigcup_{i\in \cI_n} I_{n,i}. 
 \end{equation}
 The above equation yields a covering of the exceptional set $\scT^{\theta,\delta}_{\0,m}$ by intervals of length $n^{-2/3}$. It now remains to compute the expected number of intervals in such a covering, and for this, we use Lemma \ref{lem:114}. Indeed, by Lemma \ref{lem:114}, there is a constant $C$ such that for all $n$, we have
 \begin{equation}
   \label{eq:496}
   \EE |\cI_n|\leq Cn^{8\delta}=C(n^{-2/3})^{-12\delta}.
 \end{equation}
 As a result of the above estimate, we a.s.\ have $\dim \scT^{\theta,\delta}_{\0,m}\leq 12\delta$. This completes the proof. %
\end{proof}

\subsection{Proof of Theorem \ref{thm:3}}
\label{sec:proof-thm:3}
As in the proof of Theorem \ref{thm:5}, we can reduce to proving the following simpler statement.
\begin{proposition}
  \label{prop:43}
  Fix $0<\theta_1<\theta_2<\infty$ and let $\scT_\0^{(\theta_1,\theta_2)}$ denote the set of times $t\in [0,1]$ such that there exists a $\theta$-directed bigeodesic $\Gamma^t$ for the BLPP $T^t$ for some $\theta\in (\theta_1,\theta_2)$ additionally satisfying $\0\in \coarse(\Gamma^t)$. Then we almost surely have $\dim \scT_\0^{(\theta_1,\theta_2)}\leq 1/2$.
\end{proposition}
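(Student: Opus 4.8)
The plan is to mimic the proof of Theorem \ref{thm:5} but with the direction $\theta$ now allowed to vary over the interval $(\theta_1,\theta_2)$ rather than being fixed; the extra freedom in $\theta$ is precisely what degrades the dimension bound from $0$ to $1/2$. As in the proof of Theorem \ref{thm:5}, I would first invoke Proposition \ref{prop:22}: fixing $\delta>0$, almost surely for every $t\in \scT_\0^{(\theta_1,\theta_2)}$ with corresponding bigeodesic $\Gamma^t$ directed along some $\theta=\theta(t)\in (\theta_1,\theta_2)$, one has $\Gamma^t\cap[\![-n,n]\!]_\RR\subseteq B_{n^{2/3+\delta}}(\LL_{(-\theta n,-n)}^{(\theta n,n)})$ for all $n$ large enough. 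Since $0\in\coarse(\Gamma^t)$ and the bigeodesic crosses the line segments $L_{-n}^\theta \coloneqq \{-n\}_{[-\theta n-n^{2/3},-\theta n+n^{2/3}]}$ and $L_n^\theta$ around $(-\theta n,-n)$ and $(\theta n,n)$, we get that for each such $t$, there is some $\theta\in(\theta_1,\theta_2)$ with $\0\in\hitset_{L^\theta_{-n}}^{L^\theta_n,\{t\}}([\![-n/4,n/4]\!]_\RR)$ for all large $n$. The key new step is to discretize the direction: the parameter space $(\theta_1,\theta_2)$ can be covered by $O(n^{2/3})$ many intervals of length $n^{-2/3}$, and by transversal-fluctuation robustness, if $|\theta-\theta'|\le n^{-2/3}$ then $L^\theta_{\pm n}$ is contained in an $O(n^{2/3+\delta})$-horizontal neighborhood of $(\pm\theta' n,\pm n)$, so it suffices to work with a fixed mesh $\{\theta_j\}_{j=1}^{O(n^{2/3})}$ of directions and the enlarged segments $\cK$-type regions around $(\pm\theta_j n,\pm n)$.

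Concretely I would fix $m\in\NN$ and define $\scT^{(\theta_1,\theta_2),\delta}_{\0,m}$ to be the set of $t\in[0,1]$ for which, for all $n\ge m$, there exists $j$ with $\0\in\hitset_{-p_{n,j}+\cK}^{p_{n,j}+\cK,\{t\}}([\![-n/4,n/4]\!]_\RR)$, where $p_{n,j}=(\theta_j n, n)$ ranges over the $O(n^{2/3})$ mesh directions and $\cK$ is an on-scale parallelogram of the type $\mathscr{I}_n^\delta$ or $\cK_p^\delta$. The union over $m$ of these sets contains $\scT_\0^{(\theta_1,\theta_2)}$ up to the almost sure event from Proposition \ref{prop:22}, so it suffices to bound $\dim\scT^{(\theta_1,\theta_2),\delta}_{\0,m}$. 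To do this, cover $[0,1]$ by the intervals $I_{n,i}=[in^{-2/3},(i+1)n^{-2/3}]$ and, exactly as in the proof of Theorem \ref{thm:5}, let $\cI_n$ be the set of indices $i$ for which there exists some mesh direction $p_{n,j}$ with $\0\in\hitset_{-p_{n,j}+\cK}^{p_{n,j}+\cK,I_{n,i}}([\![-n/4,n/4]\!]_\RR)$. Then $\scT^{(\theta_1,\theta_2),\delta}_{\0,m}\subseteq\bigcup_{i\in\cI_n}I_{n,i}$ for all $n\ge m$.

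The estimate on $\EE|\cI_n|$ is where the exponent $1/2$ appears. For a single fixed direction $p$ with $\slope(\0,p)$ bounded away from $0,\infty$, Lemma \ref{lem:114} (or rather the mechanism behind it, using Proposition \ref{prop:14} / Lemma \ref{lem:144} together with $|\ZZ^2\cap\mathscr{I}_n^\delta\text{-parallelogram}|=\Theta(n^{5/3})$) gives $\PP(\0\in\hitset_{-p+\cK}^{p+\cK,I_{n,i}}([\![-n/4,n/4]\!]_\RR))\le Cn^{-2/3+8\delta}$, hence the expected number of $i\in[\![0,n^{2/3}-1]\!]$ that are bad \emph{for that fixed direction} is $O(n^{8\delta})$. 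Now union-bounding over the $O(n^{2/3})$ mesh directions $p_{n,j}$, one gets $\EE|\cI_n|\le C n^{2/3}\cdot n^{8\delta}= C(n^{-2/3})^{-1-12\delta}$. Taking $n\to\infty$ and then $\delta\to0$, Fatou/Borel--Cantelli-type reasoning (the same argument as at the end of the proof of Theorem \ref{thm:5}, using that $\{\cI_n\}$ gives a covering at scale $n^{-2/3}$ and $\EE|\cI_n|(n^{-2/3})^{1/2+\varepsilon}\to0$) shows $\dim\scT^{(\theta_1,\theta_2),\delta}_{\0,m}\le (1+12\delta)/2$ almost surely, whence $\dim\scT_\0^{(\theta_1,\theta_2)}\le 1/2$. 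I expect the main obstacle to be the direction-discretization step: one must verify that a mesh of $O(n^{2/3})$ directions suffices, i.e.\ that for $|\theta-\theta'|\lesssim n^{-2/3}$ the hitset event for direction $\theta$ is implied by the hitset event for the nearby mesh direction $\theta'$ with slightly enlarged boundary regions $\cK$ — this requires care with the geometry of the segments $L^\theta_{\pm n}$ and with the fact that Proposition \ref{prop:22} only controls the bigeodesic at scales $n$ for the \emph{true} direction $\theta(t)$, so the enlargement of the boundary regions must absorb both the $n^{2/3+\delta}$ transversal fluctuation and the $n^{-2/3}\cdot n = O(1)$ (in fact subpolynomial) horizontal shift coming from the direction mismatch, while keeping the regions on scale $n^{2/3+\delta}$ so that Proposition \ref{prop:14}/Lemma \ref{lem:144} still applies.
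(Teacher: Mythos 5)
Your overall architecture matches the paper's proof — invoke Proposition \ref{prop:22} to localize the bigeodesic, discretize the direction, cover $[0,1]$ by intervals of length $n^{-2/3}$, apply a hitset bound of the form of Lemma \ref{lem:114} for each discretized direction, and union bound. However, there is a genuine quantitative error in the direction-discretization step that breaks the final dimension bound.

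You discretize $(\theta_1,\theta_2)$ into $O(n^{2/3})$ intervals of size $n^{-2/3}$. This is over-refined by a factor of $n^{1/3}$. Proposition \ref{prop:22} only controls the bigeodesic at height $\pm n$ to within $n^{2/3+\delta}$ of the anchor point $(\pm\theta n,\pm n)$; changing $\theta$ by $n^{-1/3+\delta}$ shifts the anchor by $n^{2/3+\delta}$, which is already absorbed by the slack in the boundary regions $\cK$. So a mesh of $O(n^{1/3-\delta})$ directions suffices, and this is exactly what the paper does: it covers $[\theta_1 n,\theta_2 n]$ by $O(n^{1/3-\delta})$ overlapping segments $\cJ^\delta_{n,j}$ of length $n^{2/3+\delta}$ (equivalently, direction increments of size $\asymp n^{-1/3+\delta}$). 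With your finer mesh, the union bound gives $\EE|\cI_n|\le Cn^{2/3+8\delta}=C(n^{-2/3})^{-1-12\delta}$; this implies $\dim\le 1+12\delta$, which is the trivial bound, not $\dim\le(1+12\delta)/2$ as you assert — your last step is a non-sequitur, since a cover by $N$ intervals of diameter $\varepsilon$ with $\EE N\le C\varepsilon^{-\alpha_0}$ only gives $\dim\le\alpha_0$. Replacing $O(n^{2/3})$ by $O(n^{1/3-\delta})$ in your union bound yields $\EE|\cI_n|\le C n^{1/3-\delta}\cdot n^{8\delta}=C(n^{-2/3})^{-1/2-21\delta/2}$, which gives the correct bound $\dim\le 1/2+21\delta/2$ and hence, letting $\delta\to0$, $\dim\le 1/2$. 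The remark you make at the end about the mesh needing to absorb both the transversal fluctuation and the horizontal shift is exactly the right intuition — but it points to the \emph{coarse} mesh being adequate, not to a need for the fine one.
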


\begin{proof}[Proof of Theorem \ref{thm:3} assuming Proposition \ref{prop:43}]
  First, since the set $\coarse(\RR^2)$ is countable, it suffices to restrict to bigeodesics $\Gamma^t$ satisfying $\0\in \coarse(\Gamma^t)$. Further, since $\RR=\bigcup_{i\in \NN}[i,i+1]$ and since we are working with a stationary dynamics, it again suffices to work with $t\in [0,1]$. Also, recall that by Proposition \ref{prop:22}, almost surely, for any $t\in \RR$, any bigeodesic must be $\theta$-directed for some $\theta\in (0,\infty)$. In view of the above discussion, to complete the proof, we need only show that $\dim \scT_\0^{(0,\infty)}\leq 1/2$ almost surely, where the set $\scT_\0^{(0,\infty)}$ is defined as in the statement of Proposition \ref{prop:43}. However, we now note that the set of possible angles $(0,\infty)$ can be written as a countable union of $(\theta_1,\theta_2)$ where $\theta_1,\theta_2$ are restricted to be rational. Thus, by again using the stability of Hausdorff dimension over countable unions, the proof is complete.
\end{proof}
We now provide the proof of Proposition \ref{prop:43}.

 \begin{proof}[Proof of Proposition \ref{prop:43}]
  Fix $\delta>0$. By using the definition of $\scT_{\0}^{(\theta_1,\theta_2)}$, we know that almost surely, for every $t\in \scT_{\0}^{(\theta_1,\theta_2)}$, there exists a random $\theta\in (\theta_1,\theta_2)$ such that the geodesic $\Gamma^t$ is $\theta$-directed, satisfies $\0\in \coarse(\Gamma^t)$ and for all $n$ large enough, satisfies
   \begin{equation}
     \label{eq:540}
     \Gamma^t\cap [\![-n,n]\!]_\RR\subseteq B_{n^{2/3+\delta}/4}(\LL_{-(\theta n,n)}^{(\theta n,n)}).
   \end{equation}
   Now, for $j\in [\![2\theta_1 n^{1/3-\delta}-1,2\theta_2 n^{1/3-\delta}]\!]$, consider the overlapping intervals $J^\delta_{n,j}$ defined by $J^\delta_{n,j}= [j n^{2/3+\delta}/2,(j/2+1) n^{2/3+\delta}]$. Now, for convenience, we define $\cJ^\delta_{n,j}=\{n\}_{J^\delta_{n,j}}$.

   Since any sub-interval of length $n^{2/3+\delta}/2$ of $[\theta_1 n,\theta_2 n]$ must lie in at least one of the above intervals $J^\delta_{n,j}$, we obtain the following. Almost surely, for every $t\in \scT_\0^{(\theta_1,\theta_2)}$, the event
   \begin{equation}
     \label{eq:541}
    \liminf_{n\rightarrow \infty} \bigcup_j\{ \Gamma^t\cap (-\cJ^\delta_{n,j})\neq \emptyset, \Gamma^t\cap \cJ^\delta_{n,j}\neq \emptyset\}
   \end{equation}
   occurs and thereby, since $\0\in \coarse(\Gamma^t)$ as well, the event
   \begin{equation}
     \label{eq:542}
    \liminf_{n\rightarrow \infty} \bigcup_j \{\0\in \hitset_{-\cJ^\delta_{n,j}}^{\cJ^\delta_{n,j},\{t\}}\}
   \end{equation}
   also occurs almost surely.
   
   Now, for $m\in \NN$, we define $\scT_{\0,m}^{(\theta_1,\theta_2),\delta}\subseteq [0,1]$ as the set of times for which the event
   \begin{equation}
     \label{eq:543}
    \bigcap_{n\geq m} \bigcup_j \{\0\in \hitset_{-\cJ^\delta_{n,j}}^{\cJ^\delta_{n,j},\{t\}}\}
   \end{equation}
   occurs, where recall that $j\in [\![2\theta_1 n^{1/3-\delta}-1,2\theta_2 n^{1/3-\delta}]\!]$ in the above.
 Now, by \eqref{eq:542} and the stability of Hausdorff dimension under countable unions and the fact that $\delta$ is arbitrary, it suffices to show that for any fixed $m$, we a.s.\ have $\dim \scT_{\0,m}^{(\theta_1,\theta_2),\delta}\leq 1/2+21\delta/2$.

 Now, for $i\in [\![0, n^{2/3}-1]\!]$, we consider the intervals $I_{n,i}=[in^{-2/3},(i+1)n^{-2/3}]$ and let $\cI_n$ denote the set of $i$ such that the event $\bigcup_j \{\0\in \hitset_{-\cJ^\delta_{n,j}}^{\cJ^\delta_{n,j},I_{n,i}}\}$ occurs. By the definition of $\scT_{\0,m}^{(\theta_1,\theta_2),\delta}$, for all $n\geq m$, we have
 \begin{equation}
   \label{eq:507}
   \scT_{\0,m}^{(\theta_1,\theta_2),\delta}\subseteq \bigcup_{i\in \cI_n}I_{n,i}.
 \end{equation}
 Thus, the above equation yields a cover for the set $\scT_{\0,m}^{(\theta_1,\theta_2),\delta}$ by intervals of length $n^{-2/3}$. Further, by using Lemma \ref{lem:114} along with a union bound over $j\in [\![2\theta_1 n^{1/3-\delta}-1,2\theta_2 n^{1/3-\delta}]\!]$, we have the following bound on the size of the cover for some constant $C$ and all $n$,
 \begin{equation}
   \label{eq:508}
   \EE|\cI_n|\leq Cn^{8\delta}\times (2(\theta_2-\theta_1)n^{1/3-\delta}+1)=O(n^{1/3+7\delta})=O((n^{-2/3})^{-1/2-21\delta/2}). 
 \end{equation}
 This immediately yields that $\dim \scT_{\0,m}^{(\theta_1,\theta_2),\delta}\leq 1/2+21\delta/2$ almost surely, thereby completing the proof. %
\end{proof}

\section{Appendix 1: Directedness of infinite geodesics in dynamical BLPP}
\label{sec:direction}
The goal of this section is to discuss the proofs of Propositions \ref{prop:21}, \ref{prop:22}. For Proposition \ref{prop:21}, we shall follow the classical argument used by Newman \cite{New95} and Howard-Newman \cite{HN01} for proving the corresponding result about semi-infinite geodesics in static first passage percolation. A version of this argument was implemented for static exponential LPP in the work \cite{FP05}. In our setting, the primary difference is that we are working with a dynamical model (dynamical BLPP) instead of a static one and thus require control on geodesics simultaneously for all times. As we shall see, it turns out that the above difficulty is merely superficial, and the same argument works, albeit with minor modifications.

\subsection{Proof sketch of Proposition \ref{prop:21}}
\label{sec:proof-prop-refpr}
Before beginning, we introduce some notation. For a point $p\in \RR^2\setminus \{0\}$ and a $\theta>0$, let $\cC(p,\theta)$ denote the cone of angle $\theta$ around $p$, that is, with $\langle\cdot,\cdot\rangle$ being the usual inner product in $\RR^2$,
\begin{equation}
  \label{eq:647}
  \cC(p,\theta)= \{q\in \RR^2: \langle p,q\rangle \geq |p||q|\cos\theta\}.
\end{equation}
Now, for the proof sketch of Proposition \ref{prop:21}, it will be useful for the reader to concurrently refer to the proof of \cite[Proposition 7]{FP05}. Indeed, by the argument therein, Proposition \ref{prop:21} can be reduced to proving the following statement.
\begin{lemma}
  \label{lem:68}
  Fix $\delta>0$. Almost surely, there is a random compact set $\cK\subseteq \RR^2$ such that the following holds. For all $t\in [0,1]$ and any semi-infinite geodesic $\Gamma^t$ emanating from $\0$, for all points $p\in \Gamma^{t}\cap \cK^c$ and all points $q\in \Gamma^t$ satisfying $p\leq q$, we have $q\in \cC(p,|p|^{-1/3+\delta})$. %
\end{lemma}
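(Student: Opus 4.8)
The plan is to follow the Newman/Howard--Newman scheme as implemented in \cite{FP05}, reducing everything to a uniform-in-time control on the transversal fluctuation of point-to-point geodesics. The key new feature compared to the static setting is that we must bound the relevant events simultaneously for all $t\in[0,1]$; since there are only finitely many resampling times affecting any fixed finite region, this will turn out to be essentially free once we quantify things correctly.

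\textbf{Step 1: Reduce to a uniform transversal fluctuation estimate.} First I would recall the classical observation (used in \cite{FP05, New95, HN01}) that Lemma \ref{lem:68} follows once one knows the following: there exists a random compact set $\cK$ such that, for all $t\in[0,1]$, no point-to-point geodesic $\Gamma_\0^{q,t}$ with $q$ at distance $\rho$ from $\0$ and $\rho$ large enough that the relevant portion escapes $\cK$, deviates from the straight segment $\LL_\0^q$ by more than $\rho^{2/3+\delta/2}$. The point is that a semi-infinite geodesic through a far point $p$ with $|p|=\rho$ is locally a point-to-point geodesic to points further along it, and an $O(\rho^{2/3+\delta/2})$ transversal deviation at distance $\rho$ translates exactly into the angular containment $q\in\cC(p,|p|^{-1/3+\delta})$ claimed (since $\rho^{2/3+\delta/2}/\rho = \rho^{-1/3+\delta/2}\le \rho^{-1/3+\delta}$ for large $\rho$). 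So it suffices to prove a Borel--Cantelli-type statement: almost surely, for all $t\in[0,1]$ and all $q\in\ZZ_\RR$ with $|q|$ large, $\Gamma_\0^{q,t}\subseteq B_{|q|^{2/3+\delta/2}}(\LL_\0^q)$.

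\textbf{Step 2: Uniform-in-time transversal fluctuations via Lemma \ref{prop:48}.} For the uniform-in-time statement, I would dyadically decompose: for each scale $2^k\le |q| < 2^{k+1}$, let $R_k$ be a box of side comparable to $2^{k}$ containing $\0$ and $q$ and a $2^{k(2/3+\delta/4)}$-neighbourhood of $\LL_\0^q$. The set of times in $[0,1]$ at which some Brownian increment indexed by a lattice point in $\cM_{K_1}^{K_2}$ (for the relevant $K_1,K_2\subseteq R_k$) is resampled is $\cT\sim\mathrm{Poi}((t-s)|\cM|)$, hence a.s.\ finite, with cardinality stochastically dominated by a Poisson variable of parameter $O(2^{2k})$; a crude tail bound gives $|\cT|\le 2^{3k}$ outside an event of probability $\le e^{-c2^{2k}}$. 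Conditioning on $\cT$, by Lemma \ref{prop:48} each $T^r$ for $r\in\{0\}\cup\cT$ is a static BLPP, so by the transversal fluctuation estimate Proposition \ref{prop:38} (applied after a suitable translation/scaling to handle the endpoint $q$ via Proposition \ref{prop:46}), the probability that $\Gamma_\0^{q,r}\not\subseteq B_{2^{k(2/3+\delta/4)}}(\LL_\0^q)$ is $\le Ce^{-c2^{k\delta^2}}$ for some $\alpha\asymp 2^{k\delta/4}$ substituted into \eqref{eq:461}. A union bound over the (polynomially many in $2^k$) choices of endpoint $q$ on scale $k$, over $r\in\{0\}\cup\cT$, and over $k$, then shows that the bad event happens for only finitely many $k$ almost surely. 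Since a.s.\ every $t\in[0,1]$ has all geodesics from $\0$ into $R_k$ coinciding with those of some $r\in\{0\}\cup\cT$, this yields the desired uniform-in-$t$ bound, and one takes $\cK$ to be a sufficiently large ball containing all the finitely many bad scales.

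\textbf{Main obstacle.} The genuinely delicate point, and where I would spend most care, is the quantitative bookkeeping in the reduction of Step 1: one must verify that an $O(\rho^{2/3+\delta/2})$ control on \emph{point-to-point} geodesics (to points at distance $\asymp\rho$), holding uniformly for all $t$, really does force the \emph{cone containment} for \emph{semi-infinite} geodesics at all later points $q$ with $p\le q$. This is exactly the argument of \cite{FP05}; the subtlety is that a semi-infinite geodesic $\Gamma^t$ need not be a point-to-point geodesic to a point at a controlled distance --- one uses that any finite sub-segment is a geodesic and patches together the dyadic-scale estimates, carefully tracking that the transversal-fluctuation exponent $2/3+\delta/2$ (chosen strictly below $2/3+\delta$) leaves enough room to absorb the patching errors into the final $|p|^{-1/3+\delta}$. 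The dynamical aspect, by contrast, is handled cleanly by Lemma \ref{prop:48} plus a union bound, since only finitely many resampling times are relevant on each scale; no new probabilistic input beyond the static estimates is needed.
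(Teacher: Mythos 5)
Your approach matches the paper's: reduce (following \cite{FP05}) to a uniform-in-time transversal fluctuation estimate for point-to-point geodesics (the paper's Lemma~\ref{lem:65}), establish that estimate via Lemma~\ref{prop:48} together with the static bound Proposition~\ref{prop:38} and a union bound over the a.s.\ finitely many resampling times (the paper's Lemma~\ref{lem:63}, which bounds directly by $(1+\EE|\cT_{\0}^{q,[0,1]}|)$ times the static probability rather than using your Poisson tail plus crude cutoff---slightly cleaner but equivalent), and conclude with Borel--Cantelli. One detail to tighten: the endpoints $q\in\ZZ_\RR$ at scale $2^k$ form a continuum, not ``polynomially many'' choices, so the union bound must be run only over a countable mesh (the paper uses $p\in\NN^2$) and then upgraded to all endpoints and all geodesics by planarity/geodesic ordering, exactly as done at the end of the paper's proof of Lemma~\ref{lem:65}.
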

Further, as discussed in the proof of \cite[Proposition 7]{FP05}, the above result in turn follows from the following transversal fluctuation estimate; note that for $A\subseteq \RR^2$, we shall use $B^{\euc}_r(A)=\{z\in \RR^2: d(z,A)\leq r\}$, where $d$ is the Euclidean metric on $\RR^2$.
\begin{lemma}
  \label{lem:65}
Fix $\delta>0$. Almost surely, there is only a bounded set of points $p\in \ZZ_\RR$ with $\0\leq p$ for which there exists a $t\in [0,1]$ and a geodesic $\Gamma_{\0}^{p,t}$ with $\Gamma_{\0}^{p,t}\not\subseteq B_{|p|^{2/3+\delta}}^{\euc}(\LL_{\0}^{p})$.
\end{lemma}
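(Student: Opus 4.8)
The plan is to reduce the statement to a countable family of endpoints and then, for each fixed endpoint, to a question about static BLPP controlled by the transversal fluctuation estimates of Section~\ref{sec:transv-fluct-estim}. First I would reduce to integer endpoints: for $p=(x,n)\in\ZZ_\RR$ with $x\in[j,j+1]$, $j\in\ZZ$, planarity of geodesics sharing the endpoint $\0$ gives that, at every integer level, any geodesic $\Gamma_{\0}^{p,t}$ has horizontal position between those of $\Gamma_{\0}^{(j,n),t}$ and $\Gamma_{\0}^{(j+1,n),t}$ (this holds a.s.\ for all $t\in[0,1]$ since for such $t$ the relevant part of $T^t$ is constant on a neighbourhood of a time at which it is a static BLPP). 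Since $\LL_{\0}^{(j,n)},\LL_{\0}^{(j+1,n)},\LL_{\0}^{p}$ lie within $O(1)$ of one another inside $[\![0,n]\!]_\RR$, and since a Euclidean neighbourhood of a line is a convex slab, for $|p|$ large the event that some $\Gamma_\0^{p,t}$ exits $B_{|p|^{2/3+\delta}}^{\euc}(\LL_\0^p)$ is contained in the event that some $\Gamma_\0^{(j,n),t}$ or $\Gamma_\0^{(j+1,n),t}$ exits the half-sized neighbourhood of its own line. Hence it suffices to prove that a.s.\ only finitely many integer $p\geq\0$ lie in this "bad" set.

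For a fixed integer $p\geq\0$, the environment $T^t$ restricted to the cells of $\cM_{\0}^{p}$ is piecewise constant in $t$, changing only on the finite set $\cT_{\0}^{p,[0,1]}$, so $\{\Gamma_\0^{p,t}:t\in[0,1]\}$ is contained in the finite family $\{\Gamma_\0^{p,r}:r\in\{0\}\cup\cT_{\0}^{p,[0,1]}\}$. A union bound together with Lemma~\ref{prop:48} and the fact that $T^0\stackrel{d}{=}T$ then gives
\[
\PP\big(\exists\, t\in[0,1]:\ \Gamma_\0^{p,t}\not\subseteq B_{|p|^{2/3+\delta}/2}^{\euc}(\LL_\0^p)\big)\ \leq\ \big(|\cM_{\0}^{p}|+1\big)\,\PP\big(\Gamma_\0^{p}\not\subseteq B_{|p|^{2/3+\delta}/2}^{\euc}(\LL_\0^p)\big),
\]
with $\Gamma_\0^p$ now a static BLPP geodesic and $|\cM_{\0}^{p}|=O(|p|^2)$. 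For the static probability I would combine Proposition~\ref{prop:38} with Brownian scaling (Proposition~\ref{prop:46}): writing $p=(x,n)$ and scaling the horizontal coordinate by $x/n$ shows $\Gamma_\0^{(x,n)}$ is a horizontal rescaling of $\Gamma_\0^{(n,n)}$, and converting the resulting horizontal transversal fluctuation bound into a Euclidean one via the angle of $\LL_\0^p$ (and using $|p|^{2/3+\delta}\geq(\max(x,n))^{2/3+\delta}$) yields, when $\slope(\0,p)$ is bounded away from $0$ and $\infty$, a bound $\leq Ce^{-c|p|^{3\delta}}$ by taking $\alpha=|p|^{\delta}$ in Proposition~\ref{prop:38}. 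For near-axial endpoints one instead notes that $\Gamma_\0^{(x,n)}$ lies deterministically in the rectangle $[0,x]\times[0,n]$, hence in the slab $B_{\min(x,n)}^{\euc}(\LL_\0^p)$, so the exit event is empty once $\min(x,n)\leq|p|^{2/3+\delta}/2$; the narrow remaining moderately-skewed range is again covered by the scaled Proposition~\ref{prop:38}. In all regimes the static probability is $\leq Ce^{-c|p|^{\eta}}$ for some $\eta=\eta(\delta)>0$.

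Combining the two displays gives $\PP(p\text{ bad})\leq C|p|^{2}e^{-c|p|^{\eta}}$, which is summable over integer $p\geq\0$; Borel--Cantelli then yields that a.s.\ only finitely many integer $p$ are bad, and the first-paragraph reduction upgrades this to the claimed boundedness over all $p\in\ZZ_\RR$. The only genuinely delicate point is the slope bookkeeping in the static estimate --- in particular the interplay between horizontal and Euclidean neighbourhoods and the need to rule out escape for near-axial endpoints, where the deterministic slab confinement is essential; the passage from the dynamical to the static setting is, as the paper anticipates, merely a matter of absorbing the harmless polynomial factor $|\cM_\0^p|+1$ coming from the Poissonian resampling times.
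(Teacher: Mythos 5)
Your proposal is correct and follows essentially the same route as the paper: reduce to integer endpoints by geodesic ordering, observe that $T^t$ restricted to $\cM_{\0}^{p}$ is piecewise constant so one can union bound over $\{0\}\cup\cT_{\0}^{p,[0,1]}$ (absorbing the polynomial factor $|\cM_{\0}^{p}|+1=O(|p|^2)$ via Lemma~\ref{prop:48}), handle the static probability by Brownian scaling of Proposition~\ref{prop:38} together with the horizontal-to-Euclidean neighbourhood conversion and a crude deterministic confinement for extreme slopes, and finish with Borel--Cantelli. The paper packages the first two steps into a standalone intermediate estimate (Lemma~\ref{lem:63}), and instead of a case split into near-axial / moderately-skewed / bounded-slope, it uses the single deterministic bound $\Gamma_{\0}^{(\beta n,n),t}\subseteq B_n^{\euc}(\LL)$ to extend the transversal estimate to all $r>0$ with a degraded exponent $\nu$; your sharper deterministic confinement $B^{\euc}_{\min(x,n)}(\LL_\0^p)$ is perfectly fine and plays the same role. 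The only place where your sketch glosses over a detail that the paper addresses explicitly is the interaction between the required $\alpha$ and the constraint $\alpha\leq n^{1/10}$ in Proposition~\ref{prop:38} for the moderately-skewed regime: one must check that when the natural choice of $\alpha$ exceeds $n^{1/10}$, falling back to $\alpha=n^{1/10}$ still yields containment in a neighbourhood at most $|p|^{2/3+\delta}/2$ and a probability bound that remains stretched-exponential in $|p|$; a short computation shows this holds (this is exactly the purpose of the paper's exponent $\nu$). Since that verification goes through, the argument is sound.
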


The goal now is to discuss the proof of the above statement. Since the above concerns geodesic structure in BLPP simultaneously as dynamical time is varied, we shall need a few transversal fluctuation estimates that hold uniformly as the dynamics evolves, and the following is a result in this direction.

\begin{lemma}
  \label{lem:63}
  There exist constants $C,c$ such that for all $n\in \NN$, all $\beta>0$ and $r\leq n^{1/10}$ , we have
  \begin{equation}
    \label{eq:645}
       \PP(\Gamma_{\0}^{(\beta n,n),t}\not\subseteq B_{\beta r n^{2/3}}(\LL_{\0}^{(\beta n, n)}) \textrm{ for some }t\in [0,1])\leq (1+C\beta n^2)e^{-cr^3}.
  \end{equation}
\end{lemma}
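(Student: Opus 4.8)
The plan is to exploit the fact that, although \eqref{eq:645} asks about all $t\in[0,1]$ simultaneously, the geodesic $\Gamma_{\0}^{(\beta n,n),t}$ changes only at the finitely many resampling times of the coordinates in $\cM_{\0}^{(\beta n,n)}$, so that the uniform-in-time event reduces to a first-moment computation over those jump times, each slice of which is governed by static BLPP via Lemma~\ref{prop:48}. This is essentially the strategy already used in the proofs of Lemmas~\ref{lem:104} and~\ref{lem:116}, upgraded with a union bound over the Poissonian jump set.

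\textbf{Step 1: localising the time-dependence.} Exactly as in the proof of Lemma~\ref{lem:116}, $\Gamma_{\0}^{(\beta n,n),t}$ depends only on the increments $\{X_{i,m}^{t}\}_{(i,m)\in\cM_{\0}^{(\beta n,n)}}$. Since this finite family of paths is resampled only at the a.s.\ finite set of times $\cT_{\0}^{(\beta n,n),[0,1]}$, the map $t\mapsto\Gamma_{\0}^{(\beta n,n),t}$ is piecewise constant and right-continuous on $[0,1]$, so almost surely its set of values over $t\in[0,1]$ coincides with its set of values over $t\in\{0\}\cup\cT_{\0}^{(\beta n,n),[0,1]}$. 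Writing $\cA$ for the event in \eqref{eq:645} and $\fU=B_{\beta r n^{2/3}}(\LL_{\0}^{(\beta n,n)})$, this gives the pointwise bound
\begin{equation*}
\ind(\cA)\le\ind\big(\Gamma_{\0}^{(\beta n,n),0}\not\subseteq\fU\big)+\sum_{r\in\cT_{\0}^{(\beta n,n),[0,1]}}\ind\big(\Gamma_{\0}^{(\beta n,n),r}\not\subseteq\fU\big).
\end{equation*}

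\textbf{Step 2: conditioning and reduction to static BLPP.} Taking expectations and conditioning on $\cT_{\0}^{(\beta n,n),[0,1]}$, Lemma~\ref{prop:48} shows that for every $r\in\cT_{\0}^{(\beta n,n),[0,1]}$ the law of $T^{r}$ is, conditionally, that of a static BLPP; the same is true for $T^{0}$. Hence, with $q_{n}:=\PP\big(\Gamma_{\0}^{(\beta n,n)}\not\subseteq B_{\beta r n^{2/3}}(\LL_{\0}^{(\beta n,n)})\big)$ evaluated in a static BLPP,
\begin{equation*}
\PP(\cA)\le q_{n}\Big(1+\EE\big|\cT_{\0}^{(\beta n,n),[0,1]}\big|\Big)=q_{n}\big(1+|\cM_{\0}^{(\beta n,n)}|\big),
\end{equation*}
using $\cT_{\0}^{(\beta n,n),[0,1]}\sim\mathrm{Poi}(|\cM_{\0}^{(\beta n,n)}|)$. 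A direct count gives $|\cM_{\0}^{(\beta n,n)}|\le C\beta n^{2}$ (for $\beta\ge1/n$, which is the regime of the statement). Then Step 3 below provides $q_n\le Ce^{-cr^3}$, and combining the two and adjusting constants yields \eqref{eq:645}.

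\textbf{Step 3: the static estimate via Brownian scaling.} Applying the scaling invariance of static BLPP (Proposition~\ref{prop:46}) with spatial (horizontal) scaling parameter $\beta$ identifies $\Gamma_{\0}^{(\beta n,n)}$ in law with the horizontal $\beta$-dilation of $\Gamma_{\0}^{\n}$; since $\LL_{\0}^{(\beta n,n)}$ and the tube $B_{\beta r n^{2/3}}(\LL_{\0}^{(\beta n,n)})$ are exactly the $\beta$-dilations of $\LL_{\0}^{\n}$ and $B_{r n^{2/3}}(\LL_{\0}^{\n})$, one gets $q_{n}=\PP\big(\Gamma_{\0}^{\n}\not\subseteq B_{r n^{2/3}}(\LL_{\0}^{\n})\big)$, which by Proposition~\ref{prop:38} is at most $Ce^{-cr^{3}}$ for all $r\le n^{1/10}$.

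\textbf{Main obstacle.} There is no serious difficulty: the whole argument is the standard static transversal fluctuation estimate together with the two soft inputs Lemma~\ref{prop:48} and Proposition~\ref{prop:46}. The single point requiring (mild) care is Step 1 — that the dependence of the geodesic on the dynamics factors through the finite coordinate set $\cM_{\0}^{(\beta n,n)}$, so that ``for some $t\in[0,1]$'' may be replaced by a sum over the $\mathrm{Poi}(|\cM_{\0}^{(\beta n,n)}|)$ resampling events; this is precisely the observation underlying Lemmas~\ref{lem:104} and~\ref{lem:116}, and it is what produces the multiplicative factor $1+C\beta n^{2}$ in \eqref{eq:645}.
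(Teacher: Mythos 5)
Your proof is correct and follows essentially the same route as the paper's: reduce the uniform-in-$t$ event to a union over $\{0\}\cup\cT_{\0}^{(\beta n,n),[0,1]}$, apply Lemma~\ref{prop:48} to make each slice a static BLPP, count $\EE|\cT_{\0}^{(\beta n,n),[0,1]}|=|\cM_{\0}^{(\beta n,n)}|\le C\beta n^2$, and finish with Brownian scaling plus the static transversal fluctuation bound. The only cosmetic difference is that you cite Proposition~\ref{prop:38} for the final estimate where the paper cites Proposition~\ref{prop:53}; both give the same $Ce^{-cr^3}$ tail.
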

\begin{proof}
  Recall that since $\cT_{\0}^{(\beta n,n),[0,1]}\sim \textrm{Poi}(|\cM_{\0}^{(\beta n,n)}|)$, for some constant $C$, we have
  \begin{equation}
    \label{eq:691}
    \EE|\cT_{\0}^{(\beta n,n),[0,1]}|\leq C\beta n^2
  \end{equation}
 Now, by Proposition \ref{prop:48}, conditional on the set $\cT_{\0}^{(\beta n,n),[0,1]}$, $T^t$ is a BLPP for each $t\in \cT_{\0}^{(\beta n,n),[0,1]}$. As a result, we can write
  \begin{align}
    \label{eq:202}
    &\PP(\Gamma_{\0}^{(\beta n,n),t}\not\subseteq B_{\beta rn^{2/3}}(\LL_{\0}^{(\beta n,n)}) \textrm{ for some } t\in [0,1])\nonumber\\
    &=\PP(\Gamma_{\0}^{(\beta n,n),t}\not\subseteq B_{\beta rn^{2/3}}(\LL_{\0}^{(\beta n,n)}) \textrm{ for some } t\in \{0\}\cup \cT_{\0}^{(\beta n,n),[0,1]})\nonumber\\
    &\leq \EE\left[\sum_{t\in \{0\}\cup \cT_{\0}^{(\beta n,n),[0,1]}}\PP(\Gamma_{\0}^{(\beta n,n),t}\not\subseteq B_{\beta rn^{2/3}}(\LL_{\0}^{(\beta n,n)})\lvert\cT_{\0}^{(\beta n,n),[0,1]})\right]\nonumber\\
    &=(1+\EE|\cT_{\0}^{(\beta n,n),[0,1]}|)\PP(\Gamma_{\0}^{(\beta n,n)}\not\subseteq B_{\beta rn^{2/3}}(\LL_{\0}^{(\beta n,n)}))\nonumber\\
    &= (1+\EE|\cT_{\0}^{(\beta n,n),[0,1]}|)\PP(\Gamma_{\0}^{\n}\not\subseteq B_{rn^{2/3}}(\LL_{\0}^{\n}))\nonumber\\
    &\leq (1+C\beta n^2)e^{-cr^3}.
  \end{align}
The second last line above is obtained by using Brownian scaling and the last line is a consequence of \eqref{eq:691} and Proposition \ref{prop:53}. This completes the proof.
\end{proof}

We now use the above to prove Lemma \ref{lem:65}.
\begin{proof}[Proof of Lemma \ref{lem:65}]
  First, by basic trigonometry, for any $\beta>0,x>0$, we know that
  \begin{equation}
    \label{eq:646}
    B_{\beta x}(\LL_{\0}^{(\beta n,n)})=B_{\beta x/\sqrt{1+\beta^2}}^{\euc}(\LL_{\0}^{(\beta n, n)})\subseteq B_{x}^{\euc}(\LL_{\0}^{(\beta n, n)}).
  \end{equation}
 As a result, by using Lemma \ref{lem:63}, we know that for some constants $C,c>0$, and for all $\beta>0$ and $r\leq n^{1/10}$, we have
  \begin{equation}
    \label{eq:187.1}
    \PP(\Gamma_{\0}^{(\beta n,n),t}\subseteq B^{\euc}_{r n^{2/3}}(\LL_{\0}^{ (\beta n, n )}) \textrm{ for all } t\in [0,1])\geq 1-(1+C\beta n^2)e^{-cr^3}.
  \end{equation}
  Note that deterministically, we have the inequality $\Gamma_{\0}^{(\beta n,n),t}\subseteq B^{\euc}_{n}(\LL_{\0}^{ (\beta n, n )})$ for all $t\in \RR$. As a result, for some constant $\nu>0$, all $\beta>0$ and all $r>0$, we have
  \begin{equation}
    \label{eq:649}
    \PP(\Gamma_{\0}^{(\beta n,n),t}\subseteq B^{\euc}_{r n^{2/3}}(\LL_{\0}^{ (\beta n, n )}) \textrm{ for all } t\in [0,1])\geq 1-(1+C\beta n^2)e^{-cr^\nu}.
  \end{equation}
We now choose $r$ such that $rn^{2/3}=|(\beta n,n)|^{2/3+\delta}/2$ and as a result, we obtain that for some constant $\nu',C',c'>0$ and all $\0\leq p\in \ZZ_\RR$,
  \begin{equation}
    \label{eq:187}
    \PP(\Gamma_{\0}^{p,t}\subseteq B^{\euc}_{|p|^{2/3+\delta}/2}(\LL_{\0}^{ p}) \textrm{ for all } t\in [0,1])\geq 1-C'e^{-c'|p|^{\nu'}}.
  \end{equation}
  Thus, by the Borel-Cantelli lemma, we obtain that almost surely, for all except a finite set of $p\in \NN^2$, we have
  \begin{equation}
    \label{eq:690}
    \Gamma_{\0}^{p,t} \subseteq B^{\euc}_{|p|^{2/3+\delta}/2}(\LL_{\0}^{p})
  \end{equation}
  for all $t\in [0,1]$. By geodesic ordering, this in fact implies that almost surely, for all except a bounded set of $p$, we have $\Gamma_{\0}^{p,t} \subseteq B^{\euc}_{|p|^{2/3+\delta}}(\LL_{\0}^{p})$ for all $t\in [0,1]$ and geodesics $\Gamma_{\0}^{p,t}$. This completes the proof.
\end{proof}

Before moving on, we record a consequence of Lemma \ref{lem:68} that will shortly be very useful.
\begin{lemma}
  \label{lem:69}
  Fix $\delta>0$. The following holds almost surely. For all $t\in \RR$ and $\theta\in [0,\infty]$ admitting a $\theta$-directed semi-infinite geodesic $\Gamma^{t}$, and for all $m$ large enough, we have %
  \begin{equation}
    \label{eq:212}
    \Gamma^t\cap \{m\}_{\RR}\subseteq [\theta m-m^{2/3+\delta}, \theta m+m^{2/3+\delta}].
  \end{equation}
\end{lemma}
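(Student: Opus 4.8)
\textbf{Proof proposal for Lemma \ref{lem:69}.}

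The plan is to derive the lemma from the cone estimate Lemma \ref{lem:68}, using the $\theta$-directedness hypothesis in an essential way. First I would reduce to $t\in[0,1]$ by the stationarity of the dynamics and a countable union over $k\in\ZZ$, and reduce to the case of a semi-infinite geodesic emanating from $\0$ (the general base point follows by translation invariance and a countable union over a dense set of rational base points; in the applications the relevant geodesic already passes through $\0$). We may also assume $\delta$ is small, since for $\delta\geq 1/3$ the bound $m^{2/3+\delta}$ eventually dominates $|\Gamma^t(m)-\theta m|\leq (\theta+o(1))m$ for a $\theta$-directed geodesic, making the claim trivial; and we treat only $\theta\in[0,\infty)$, the case $\theta=\infty$ being vacuous. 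I would then apply Lemma \ref{lem:68} \emph{with its exponent parameter set to $\delta_0:=\delta/2$}, obtaining an a.s.\ event carrying a random compact set $\cK$ such that for every $t\in[0,1]$ and every semi-infinite geodesic $\Gamma^t$ from $\0$, all $q\in\Gamma^t$ with $q\geq p$ lie in $\cC(p,|p|^{-1/3+\delta_0})$ whenever $p\in\Gamma^t\cap\cK^c$.

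Now fix such a $t$, a $\theta\in[0,\infty)$, and a $\theta$-directed semi-infinite geodesic $\Gamma^t$ from $\0$, and fix a small constant $\eta=\eta(\delta)>0$. For $m$ large set $m^\ast=\lceil m^{1-\eta}\rceil$ and $p^\ast=(\Gamma^t(m^\ast),m^\ast)\in\Gamma^t$; since $|p^\ast|\geq m^\ast\to\infty$, for $m$ large we have $p^\ast\notin\cK$, so Lemma \ref{lem:68} gives $q\in\cC(p^\ast,\alpha_\ast)$ for all $q\in\Gamma^t$ with $q\geq p^\ast$, where $\alpha_\ast:=|p^\ast|^{-1/3+\delta_0}$. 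The key observation is that, since $\Gamma^t$ is $\theta$-directed, it contains points $r_k\geq p^\ast$ whose normalized directions $r_k/|r_k|$ converge to the unit vector $u_\theta=(\theta,1)/\sqrt{1+\theta^2}$; as $\cC(p^\ast,\alpha_\ast)$ is a closed spherical cap (of angular radius $\alpha_\ast$ about $p^\ast/|p^\ast|$) containing each $r_k/|r_k|$, it also contains $u_\theta$. Hence every point $(x,m)\in\Gamma^t$ with $m\geq m^\ast$ — all of which satisfy $(x,m)\geq p^\ast$ by the staircase property — has direction within angular distance $2\alpha_\ast$ of $u_\theta$. Using that $\tan$ is Lipschitz, with a $\theta$-dependent constant $C_\theta$, on a neighbourhood of $\arctan\theta$ bounded away from $\pi/2$ (valid once $\alpha_\ast$ is small, i.e.\ $m$ is large), this gives $|x/m-\theta|\leq 2C_\theta\alpha_\ast$ for every such $(x,m)$; in particular $\Gamma^t(m^\ast)/m^\ast$ is within $2C_\theta\alpha_\ast$ of $\theta$, so $|p^\ast|\leq C_\theta' m^\ast$, and since $-1/3+\delta_0<0$ we get $\alpha_\ast\leq (m^\ast)^{-1/3+\delta_0}\leq m^{(1-\eta)(\delta_0-1/3)}$.

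Combining, for every $(x,m)\in\Gamma^t$ with $m$ large we obtain $|x-\theta m|\leq 2C_\theta\,m^{1+(1-\eta)(\delta_0-1/3)}$. With $\delta_0=\delta/2$, the exponent $1+(1-\eta)(\delta/2-1/3)$ equals $2/3+\delta/2<2/3+\delta$ at $\eta=0$, hence is $<2/3+\delta$ for all $\eta$ small enough (any $\eta<3\delta/(2-3\delta)$ works); fixing such an $\eta$ and taking $m$ large enough (depending on $t,\theta,\Gamma^t$) yields $|x-\theta m|\leq m^{2/3+\delta}$, i.e.\ $\Gamma^t\cap\{m\}_\RR\subseteq[\theta m-m^{2/3+\delta},\theta m+m^{2/3+\delta}]$, as desired. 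The main obstacle — and the step that makes the argument go — is the use of $\theta$-directedness to anchor the \emph{axis} of the cone $\cC(p^\ast,\alpha_\ast)$ to the deterministic direction $u_\theta$: Lemma \ref{lem:68} alone only controls $\Gamma^t$ relative to its own (random) far-out direction, and it is the convergence of that direction to $\theta$, fed back through the closedness of the cone, that converts this into control relative to the line $\LL_{\0}^{(\theta m,m)}$. Everything else is elementary, though one must be careful with the exponent bookkeeping, which forces the intermediate scale $m^\ast$ to be a genuine power $m^{1-\eta}$ of $m$ and the exponent applied in Lemma \ref{lem:68} to be strictly smaller than the target $\delta$.
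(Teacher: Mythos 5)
Your proof is correct and rests on the same two key ingredients as the paper's: the cone estimate Lemma~\ref{lem:68} applied with exponent $\delta/2$, together with $\theta$-directedness used to anchor the axis of the cone $\cC(p^\ast,\alpha_\ast)$ to the deterministic direction $u_\theta$. The paper implements this as a short contradiction argument (assuming a sequence of violating points $p_i$ and observing that the disjoint cones $\cC(p_i,Cm_i^{-1/3+\delta})$ and $\cC((\theta m_i,m_i),Cm_i^{-1/3+\delta})$ must both contain the far tail of $\Gamma^t$), whereas you argue directly with an intermediate scale $m^\ast=m^{1-\eta}$ and explicit $\arctan$-Lipschitz bookkeeping; the contradiction version is a bit leaner and the power-law choice of $m^\ast$ is more than is strictly needed, but your argument is sound.
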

\begin{proof}
  First, note that it suffices to work only with non-trivial geodesics $\Gamma^t$ as the estimate is obvious in the trivial case. Now, since the above geodesic is assumed to be non-trivial, it is easy to see that it must pass via a rational point in $\ZZ_\RR$. As a result, by translation invariance, it suffices to prove the lemma where we additionally assume that $t\in [0,1]$ and that $\Gamma^t$ is a geodesic emanating from $\0$. Now, with the aim of eventually obtaining a contradiction, we assume that there exists a $t\in [0,1]$ and an infinite sequence of points $p_i=(x_i,m_i)\in \Gamma^{t}$ such that $m_i\rightarrow \infty$ and
  \begin{equation}
    \label{eq:650}
    x_i\notin [\theta m_i-m_i^{2/3+\delta}, \theta m_i + m_i^{2/3+\delta}]
  \end{equation}
  for all $i$. As a consequence there must exist a positive constant $C$ such that for all $i$, we have
  \begin{equation}
    \label{eq:727}
    \cC( (\theta m_i, m_i), Cm_i^{-1/3+\delta})\cap \cC(p_i, C m_i^{-1/3+\delta})=\emptyset.
  \end{equation}%
 However, by using Lemma \ref{lem:68} with $\delta$ replaced by $\delta/2$, we know that there is a positive constant $C$ such that for all $i$ large enough, and all $r\geq m_i$, we have
    \begin{equation}
    \label{eq:213}
    (\Gamma^{t}(r),r)\in \cC(p_i, C m_i^{-1/3+\delta}). 
  \end{equation}
Finally, since $\Gamma^{t}$ is $\theta$-directed, we must also have $ (\Gamma^t(r),r)\in \cC( (\theta m_i, m_i), Cm_i^{-1/3+\delta})$ for any fixed $i$ and all $r$ large enough. This contradicts \eqref{eq:727} and \eqref{eq:213}, thereby completing the proof.
\end{proof}
\subsection{Ruling out non-trivial axial semi-infinite geodesics}
\label{sec:proof-prop-refpr-1}
The goal of this section is to prove the following result.
\begin{proposition}
  \label{lem:135}
  Almost surely, for all $t\in \RR$, there does not exist any non-trivial $0$-directed or $\infty$-directed semi-infinite geodesic $\Gamma^t$.
\end{proposition}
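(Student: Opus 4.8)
The plan is to adapt to the dynamical BLPP setting the argument of \cite[Section 5]{BHS22}, which rules out non-trivial axially directed semi-infinite geodesics for static exponential LPP. We describe the $0$-directed case; the $\infty$-directed case is entirely analogous after interchanging the roles of the two coordinates, replacing the transversal fluctuation estimates of Section \ref{sec:transv-fluct-estim} by the corresponding bounds on the number of Brownian lines visited by a near-horizontal geodesic. First, by the stationarity of the dynamics in $t$ we may restrict to $t\in[0,1]$ up to a countable union over integer translates, and since every non-trivial semi-infinite geodesic passes through a point of $\ZZ_\RR$, translation invariance of static BLPP together with a further countable union reduces the claim to the following: almost surely there is no $t\in[0,1]$ admitting a non-trivial $0$-directed semi-infinite geodesic $\Gamma^t$ emanating from $\0$.

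Given such a $\Gamma^t$, fix a small $\varepsilon>0$. Lemma \ref{lem:69} (with $\theta=0$) confines $\Gamma^t$, beyond a random but a.s.\ finite height, to the thin region $\{(x,m):|x|\le m^{2/3+\varepsilon}\}$; in particular, denoting by $v_N$ the (a.s.\ unique, by Lemma \ref{lem:104}) point of $\Gamma^t\cap\{N\}_\RR$, the finite geodesic $\Gamma_\0^{v_N,t}=\Gamma^t\cap[0,N]_\RR$ joins $\0$ to a point within horizontal distance $N^{2/3+\varepsilon}$ of the vertical axis, for all large $N$. The next step is to upgrade the static estimates we need --- transversal fluctuation bounds (Propositions \ref{prop:38}, \ref{prop:53}) and passage-time moderate deviations (Proposition \ref{prop:47} and its consequences) --- to versions holding uniformly over all $t\in[0,1]$; this is carried out exactly as in the proof of Lemma \ref{lem:63}, by conditioning (via Lemma \ref{prop:48}) on the Poissonian set of resampling times affecting the relevant bounded region, using the stretched-exponential tail on the cardinality of this set, and summing the static bounds over it. With these uniform-in-time inputs in hand, the static argument of \cite[Section 5]{BHS22} (which implements, in the dynamical-BLPP language, the classical Newman/Howard--Newman scheme of \cite{New95,HN01,FP05}) can be run: the existence of a non-trivial $0$-directed $\Gamma^t$ forces, for every large $N$, an atypical behaviour of the near-axial geodesic $\Gamma_\0^{v_N,t}$ in the bulk (a macroscopic deviation from the straight line, or equivalently a persistent transversal excursion at a fixed mesoscopic scale), an event whose probability --- uniformly over $t\in[0,1]$ and over the position of $v_N$ in the strip --- tends to $0$ as $N\to\infty$ by the moderate deviation and transversal fluctuation estimates just made time-uniform; a Borel--Cantelli argument over the scales $N$ then yields the claim.

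The main work, and the genuine obstacle, lies precisely in making the conclusion hold simultaneously for all $t\in[0,1]$ rather than at a fixed time: this is what the Poissonization bookkeeping of Lemma \ref{lem:63} accomplishes, and it must be invoked repeatedly to convert each static estimate into a uniform-in-time statement before the union bounds over resampling times and over scales are applied. Porting the probabilistic inputs of \cite[Section 5]{BHS22} from exponential LPP to BLPP is routine, since these are all standard one-point, transversal-fluctuation, or local-Brownianity statements recorded in Section \ref{sec:usef-estim-last}. Finally, one must also dispose of the degenerate possibility that $\Gamma^t$ makes only finitely many horizontal moves and is therefore eventually a vertical half-line $\{c\}_{[K,\infty)}$: if $c=0$ then $\Gamma^t$ is the trivial vertical axis, contradicting non-triviality, while if $c>0$ the geodesic property would force $T_\0^{(c,N),t}=T_\0^{(c,K),t}$ for all $N>K$, which fails almost surely because $T_\0^{(c,K+1),t}=W_{K+1}^t(c)+\sup_{z\in[0,c]}\bigl(T_\0^{(z,K),t}-W_{K+1}^t(z)\bigr)$ and the process $z\mapsto T_\0^{(z,K),t}-W_{K+1}^t(z)$, being locally absolutely continuous to Brownian motion, a.s.\ attains its supremum on $[0,c]$ in the interior rather than at the constrained endpoint $z=c$.
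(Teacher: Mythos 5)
Your high-level strategy --- Newman/Howard--Newman directedness control plus Poissonization over resampling times to make everything uniform in $t$ --- is the right one, and it is fine to invoke Lemma \ref{lem:69}. However, the proof as described has a genuine gap. The paper's argument for each axial direction splits into an ``infinite width'' (resp.\ ``infinite height'') case and a ``finite width'' (resp.\ ``finite height'') case, and your main argument only addresses the former. If the $0$-directed $\Gamma^t$ has $\lim_n\Gamma^t(n)=W<\infty$, there is no ``macroscopic deviation'' or ``persistent mesoscopic excursion'' to exploit: $\Gamma^t$ is confined to the vertical strip $[0,W]_\RR$, the discrepancy between $\Gamma_\0^{v_N,t}$ and the chord $\LL_\0^{v_N}$ is bounded by $W$, and no transversal-fluctuation estimate rules that out. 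The finite-width case must instead be killed by a geodesic-ordering squeeze: fix $M'\ge W$, note that $\Gamma_\0^{(M',n),t}$ lies weakly to the right of $\Gamma^t$ by planarity, and use the time-uniform fact (Lemma \ref{lem:58}, itself a consequence of Lemma \ref{lem:134} plus Borel--Cantelli) that $\Gamma_\0^{(M',n),t}(\ell)\to 0$ as $n\to\infty$ for each fixed $\ell$, uniformly in $t\in[0,1]$; this forces $\Gamma^t$ to be the trivial vertical axis. Moreover, your attempted catch-all for the remaining case is incorrect: finite width does \emph{not} imply that $\Gamma^t$ is ``eventually a vertical half-line $\{c\}_{[K,\infty)}$.'' A staircase with $\Gamma^t(n)=W(1-2^{-n})$, say, has finite width $W$ but makes a strictly positive horizontal move at every level, so the argmax computation at a single level $K+1$ does not apply. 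Eventually-vertical is a strict sub-case, and your argument leaves the rest of the finite-width case unproved.

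A second issue is your assertion that the $\infty$-directed case ``is entirely analogous after interchanging the roles of the two coordinates.'' Unlike exponential LPP, BLPP does not have the diagonal symmetry $(x,y)\leftrightarrow(y,x)$, a point the paper flags explicitly. The ``infinite height'' half does parallel the ``infinite width'' half structurally, but the ``finite height'' half needs a different probabilistic input: one must show that, uniformly over $t\in[0,1]$, $\lim_{x\to\infty}\Gamma_\0^{(x,1),t}(0)=\infty$ (Lemma \ref{lem:62}), which the paper establishes via the exact representation $\Gamma_\0^{(x,1),t}(0)=\argmax_{y\in[0,x]}(W_0^t(y)-W_1^t(y))$ together with a small-ball estimate for $\sup_{y\in[0,n]}|W_0^t(y)-W_1^t(y)|$, Poissonized over the resampling times. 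Your proposal neither supplies this substitute nor flags that one is needed; ``bounds on the number of Brownian lines visited by a near-horizontal geodesic'' is not the relevant tool for this sub-case.
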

 A version of this result was proved for static exponential LPP in the work \cite[Section 5]{BHS22}. The difference is that we are working with a dynamical model and want the result to hold uniformly for all times. We follow the same proof strategy but with some minor modifications to achieve the above uniformity. %
\subsubsection{\textbf{A mesoscopic transversal fluctuation estimate}}
\label{sec:mesosc-transv-fluct}

We begin with a version of the mesoscopic transversal fluctuation estimate \cite[Lemma 2.4]{BBBK25} (Proposition \ref{prop:53} herein) which now holds uniformly in time. Regarding notation, for a point $\0\leq q\in \ZZ_\RR$, a staircase $\xi\colon \0\rightarrow q$ and an $\ell\in \NN$, we define
\begin{equation}
  \label{eq:670}
  \TF_\ell(\xi)=\inf\{ r: \xi\cap \{\ell\}_{\RR}\subseteq B_r(\LL_{\0}^{q})\},
\end{equation}
where we note that the ball $B_r(\LL_{\0}^{q})$ is not the same as $B_r^{\euc}(\LL_{\0}^{q})$. We now have the following result.
\begin{lemma} 
  \label{lem:57}
  Fix $\delta\in (0,1/10]$. Then there exist positive constants $n_0,\ell_0$ such that for all $n\geq n_0,\ell\geq \ell_0, 0<\beta<n$, we have
  \begin{equation}
    \label{eq:189}
        \PP(\TF_\ell(\Gamma_{\0}^{(\beta n,n),t})\leq \beta\ell^{2/3+\delta}\textrm{ for all } t\in [0,1])\geq 1-Ce^{-c\ell^{3\delta}}.
  \end{equation}
\end{lemma}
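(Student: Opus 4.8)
The plan is to mimic the proof of the macroscopic estimate Lemma~\ref{lem:63}: reduce the statement, which requires control of the geodesic simultaneously over all $t\in[0,1]$, to a union bound over the a.s.\ finite set $\cT_{\0}^{(\beta n,n),[0,1]}$ of relevant resampling times, and then feed in the static mesoscopic transversal fluctuation estimate (Proposition~\ref{prop:53}) at each such time.

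First I would observe that, for the fixed endpoints $\0$ and $(\beta n,n)$, the geodesic $\Gamma_{\0}^{(\beta n,n),t}$ is a function only of the increments $\{X_{i,m}^t:(i,m)\in\cM_{\0}^{(\beta n,n)}\}$, hence it is piecewise constant in $t$ and changes only at the times of $\cT_{\0}^{(\beta n,n),[0,1]}$; consequently, if $\TF_\ell(\Gamma_{\0}^{(\beta n,n),t})>\beta\ell^{2/3+\delta}$ for some $t\in[0,1]$, then the same already holds at some $t\in\{0\}\cup\cT_{\0}^{(\beta n,n),[0,1]}$ (reading the geodesic just after each resampling). Since $|\cT_{\0}^{(\beta n,n),[0,1]}|\sim\mathrm{Poi}(|\cM_{\0}^{(\beta n,n)}|)$ with $|\cM_{\0}^{(\beta n,n)}|\le Cn^{3}$ for $0<\beta<n$, conditioning on $\cT_{\0}^{(\beta n,n),[0,1]}$ and invoking Lemma~\ref{prop:48} --- so that $T^{t}$ is a static BLPP (with its a.s.\ unique geodesic, as in the proof of Lemma~\ref{lem:104}) for each such $t$ --- a union bound gives
\[
\PP\bigl(\exists\,t\in[0,1]:\ \TF_\ell(\Gamma_{\0}^{(\beta n,n),t})>\beta\ell^{2/3+\delta}\bigr)\le\bigl(|\cM_{\0}^{(\beta n,n)}|+1\bigr)\,\PP\bigl(\TF_\ell(\Gamma_{\0}^{(\beta n,n)})>\beta\ell^{2/3+\delta}\bigr),
\]
with $\Gamma_{\0}^{(\beta n,n)}$ on the right the static geodesic.

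To bound the static probability I would use Brownian scaling. If $\ell>n$ the slice $\Gamma_{\0}^{(\beta n,n)}\cap\{\ell\}_\RR$ is empty and there is nothing to prove, so assume $\ell_0\le\ell\le n$. Rescaling the underlying Brownian motions by $x\mapsto x/\beta$ (Proposition~\ref{prop:46}) sends $\Gamma_{\0}^{(\beta n,n)}$ to $\beta\cdot\Gamma_{\0}^{\n}$ in law and multiplies horizontal distances (and the line $\LL_{\0}^{(\beta n,n)}$, which is thereby sent to $\LL_{\0}^{\n}$) by $\beta$, so $\TF_\ell(\Gamma_{\0}^{(\beta n,n)})\stackrel{d}{=}\beta\,\TF_\ell(\Gamma_{\0}^{\n})$ and the static probability equals $\PP(\TF_\ell(\Gamma_{\0}^{\n})>\ell^{2/3+\delta})$. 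Since $\LL_{\0}^{(\ell,\ell)}=\LL_{\0}^{\n}$, containment of $\Gamma_{\0}^{\n}\cap[0,\ell]_\RR$ in $B_{\ell^{2/3+\delta}}(\LL_{\0}^{(\ell,\ell)})$ forces $\TF_\ell(\Gamma_{\0}^{\n})\le\ell^{2/3+\delta}$; hence Proposition~\ref{prop:53} applied with $m=\ell$ and $\alpha=\ell^{\delta}$ --- admissible precisely because $\delta\le1/10$ gives $\alpha\le\ell^{1/10}=m^{1/10}$, and with $\ell_0\ge m_0$ and $n_0$ at least the threshold there --- yields $\PP(\TF_\ell(\Gamma_{\0}^{\n})>\ell^{2/3+\delta})\le Ce^{-c\ell^{3\delta}}$.

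Combining the two bounds leaves $\le(|\cM_{\0}^{(\beta n,n)}|+1)Ce^{-c\ell^{3\delta}}\le Cn^{3}e^{-c\ell^{3\delta}}$, and the one remaining point --- which I expect to need the most care --- is absorbing this polynomial-in-$n$ prefactor into the exponential: this uses that in every application of the lemma $\ell$ is at least a fixed positive power of $n$, so that $n^{3}\le e^{(c/2)\ell^{3\delta}}$ once $n$ is large, whereupon the bound becomes $\le Ce^{-(c/2)\ell^{3\delta}}$ and \eqref{eq:189} follows after relabeling constants. Apart from this bookkeeping the argument is a routine transfer of Proposition~\ref{prop:53} through Lemma~\ref{prop:48} and Brownian scaling, entirely parallel to the proof of Lemma~\ref{lem:63}.
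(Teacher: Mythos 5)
Your reduction to static BLPP via a union bound over $\cT_{\0}^{(\beta n,n),[0,1]}$, followed by Brownian scaling and Proposition~\ref{prop:53}, is carried out correctly as far as it goes, but the ``bookkeeping'' you defer to the end is where the argument actually breaks. The union bound produces a prefactor of $\EE[|\cT_{\0}^{(\beta n,n),[0,1]}|+1] = |\cM_{\0}^{(\beta n,n)}|+1 = \Theta(\beta n^2)$, so you arrive at a bound of order $\beta n^2\, e^{-c\ell^{3\delta}}$. Lemma~\ref{lem:57} demands a bound of the form $Ce^{-c\ell^{3\delta}}$ that is \emph{uniform} over $n\geq n_0$ and $\ell\geq\ell_0$ (and $0<\beta<n$); taking $\ell=\ell_0$ fixed and $n\to\infty$ your bound blows up. Your proposed fix --- assuming $\ell$ is a fixed positive power of $n$ so that $n^3\le e^{(c/2)\ell^{3\delta}}$ --- is not a legitimate reading of the statement, and it is also false in the places where the lemma is actually invoked: in the proofs of Propositions~\ref{prop:19} and~\ref{prop:20}, the lemma is applied with the mesoscopic height $\ell$ (there $L_M$, resp.\ $M$) held fixed while $n\to\infty$. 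So the exponent $\ell^{3\delta}$ stays bounded while the prefactor grows polynomially in $n$, and the argument does not close.

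The reason a naive union bound cannot work is structural: the geodesic $\Gamma_{\0}^{(\beta n,n),t}$ is a function of all $\Theta(\beta n^2)$ resampled increments, so a priori any resampling can move its slice at height $\ell$, and there are simply too many resampling times to pay for with a mesoscopic exponent. The paper's proof avoids paying at the mesoscopic level altogether. It works \emph{inside} the argument of \cite[Lemma 2.4]{BBBK25}, which converts the mesoscopic transversal event at height $\ell$ into a \emph{macroscopic} transversal event for auxiliary geodesics, controlled by Proposition~\ref{prop:38}. The paper then replaces each use of Proposition~\ref{prop:38} with the dynamical analogue Lemma~\ref{lem:134}, whose error $Ce^{-cn^{3\delta}}$ comes from Lemma~\ref{lem:63} with $r=n^\delta$: there the prefactor $1+C\beta n^2\le Cn^3$ \emph{is} absorbed, because $n^{3\delta}$ grows polynomially in $n$. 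Since $\ell\le n$, the resulting error $Ce^{-cn^{3\delta}}\le Ce^{-c\ell^{3\delta}}$ dominates, giving the desired uniform bound. In short, the union bound over the dynamics must be applied at the macroscopic scale (where the exponent is a power of $n$), not at the mesoscopic scale $\ell$, and your proposal does the latter.
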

In order to discuss the proof of the above, we first state an immediate consequence of Lemma \ref{lem:63}.
\begin{lemma}
  \label{lem:134}
  There exist constants $C,c$ such that for any $\delta\in (0,1/10],0<\beta<n$ and all $n\in \NN$, we have
  \begin{equation}
    \label{eq:648}
    \PP(\Gamma_{\0}^{(\beta n,n),t}\not\subseteq B_{\beta n^{2/3+\delta}}(\LL_{\0}^{(\beta n,n)})\textrm{ for some } t\in [0,1])\leq Ce^{-cn^{3\delta}}.
  \end{equation}
\end{lemma}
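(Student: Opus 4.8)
The plan is to obtain Lemma~\ref{lem:134} directly from Lemma~\ref{lem:63} by making an appropriate choice of the free parameter $r$. Concretely, I would apply Lemma~\ref{lem:63} with $r=n^{\delta}$. The hypothesis $\delta\in(0,1/10]$ guarantees $n^{\delta}\le n^{1/10}$, so the constraint $r\le n^{1/10}$ needed to invoke Lemma~\ref{lem:63} is satisfied; and with this choice one has $\beta r n^{2/3}=\beta n^{2/3+\delta}$, so Lemma~\ref{lem:63} immediately yields
\begin{equation*}
\PP\bigl(\Gamma_{\0}^{(\beta n,n),t}\not\subseteq B_{\beta n^{2/3+\delta}}(\LL_{\0}^{(\beta n,n)})\textrm{ for some }t\in[0,1]\bigr)\le (1+C\beta n^2)e^{-cn^{3\delta}}
\end{equation*}
for the constants $C,c$ supplied by Lemma~\ref{lem:63}.

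The only remaining step is cosmetic: to absorb the polynomial prefactor into the stretched exponential. Using the hypothesis $\beta<n$, the prefactor $1+C\beta n^2$ is at most $1+Cn^3$. Since $3\delta>0$, the function $n\mapsto (1+Cn^3)e^{-(c/2)n^{3\delta}}$ is bounded over $\NN$ by some absolute constant $M$, so $(1+Cn^3)e^{-cn^{3\delta}}\le M e^{-(c/2)n^{3\delta}}$; relabelling $M$ and $c/2$ as the new constants $C,c$ then gives exactly the claimed inequality. (For small $n$, where this crude bound might not look sharp, one can simply use that the probability in question is at most $1$ and enlarge $C$ accordingly.)

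There is essentially no obstacle here: the lemma carries no probabilistic content of its own beyond what is already packaged in Lemma~\ref{lem:63}, and the proof reduces to checking the parameter inequality $n^{\delta}\le n^{1/10}$ and performing the elementary absorption of a polynomial factor into a stretched-exponential tail. The only thing worth a moment's care is the bookkeeping around the constraint $\delta\le 1/10$ and the uniformity in $\beta\in(0,n)$, both of which are handled by the choices above.
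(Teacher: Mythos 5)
Your proof is correct and takes the same route as the paper, which states the lemma as an ``immediate consequence'' of Lemma~\ref{lem:63} without spelling out the details; you supply exactly those details, namely the substitution $r=n^{\delta}$ (valid since $\delta\le 1/10$) and the absorption of the polynomial prefactor $1+C\beta n^{2}\le 1+Cn^{3}$ into the stretched exponential. One small caveat: when you write that $n\mapsto(1+Cn^3)e^{-(c/2)n^{3\delta}}$ is ``bounded by some absolute constant $M$,'' that $M$ is not in fact uniform over $\delta\in(0,1/10]$ (as $\delta\to 0$ the maximum blows up), so the constants $C,c$ produced by this argument implicitly depend on $\delta$ even though the lemma's phrasing quantifies over $\delta$; this is harmless in practice since the lemma is only invoked for a fixed $\delta$, but it is worth stating the constants as $\delta$-dependent rather than calling $M$ absolute.
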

\begin{proof}[Proof sketch of Lemma \ref{lem:57}]
  Broadly, the proof of \cite[Lemma 2.4]{BBBK25} proceeds by showing that in static BLPP, if the transversal fluctuation is too high at a mesoscopic scale, then one can find two auxiliary points such that the geodesic between them has macroscopically large transversal fluctuation (see \cite[Figure 4]{BBBK25}), and this probability is controlled by Proposition \ref{prop:38}. Now, in order to obtain Lemma \ref{lem:57}, we precisely follow the proof of \cite[Lemma 2.4]{BBBK25} but now simply use the dynamical BLPP transversal fluctuation estimate (Lemma \ref{lem:134}) at all the instances when the corresponding static BLPP transversal estimate (Proposition \ref{prop:38}) is used.

\end{proof}

\subsubsection{\textbf{Ruling out $\0$-directed non-trivial semi-infinite geodesics}}
\label{sec:ruling-out-0}
In the following string of lemmas, we shall now use the above result to rule out non-trivial semi-infinite geodesics which are $0$-directed, that is, which are directed ``vertically upward''. For a $0$-directed semi-infinite staircase $\xi$, we say that $\xi$ has infinite width if we have $\lim_{n\rightarrow \infty}\xi(n)=\infty$, and following \cite[Section 5]{BHS22}, we shall analyse finite width and infinite width semi-infinite geodesics separately. We now have the following result which is a direct analogue of \cite[Lemma 5.2]{BHS22}; while going through the proof, it might be helpful for the reader to refer to Figure \ref{fig:Vset1}.
  \begin{figure}
    \centering
    \includegraphics[width=0.45\linewidth]{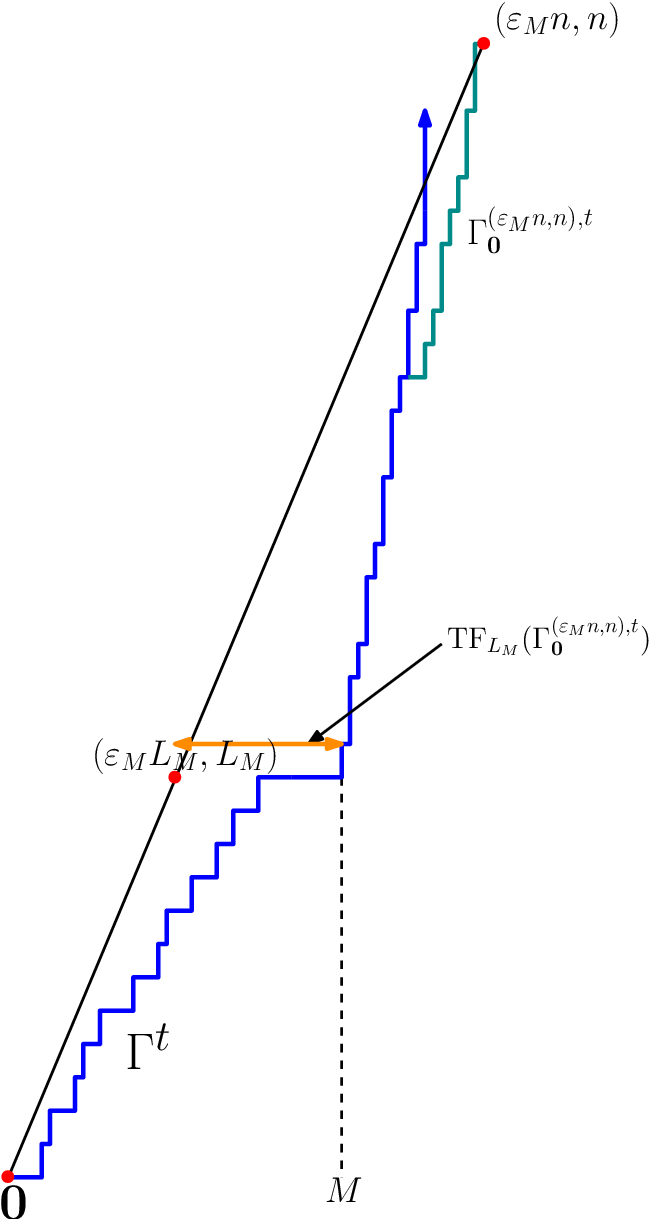}
    \caption{\textit{Proof of Proposition \ref{prop:19}}: If the geodesic $\Gamma^t$ has infinite width, then for all $M>0$, there must be an $L_M\in \NN$ for which $\Gamma^t$ intersects $[\![1,L_M]\!]_{\{M\}}$. Now, for any $\varepsilon_M>0$, since $\Gamma^t$ is $0$-directed, the point $(\varepsilon_Mn,n)$ must lie to the right of $\Gamma^t$ for all large $n$. By choosing $\varepsilon_M\leq M/(2L_M)$ and by planarity, for all large $n$, we must have $\TF_{L_M}(\Gamma_{\0}^{(\varepsilon_M n,n),t})\geq M/2$, but this has at most probability $Ce^{-cM^{3/10}}$ by transversal fluctuation estimates.}
    \label{fig:Vset1}
  \end{figure}
\begin{proposition}
  \label{prop:19}
  Almost surely, there does not exist any $t\in [0,1]$ and a point $p\in \ZZ_{\RR}$ with a $0$-directed infinite width semi-infinite geodesic $\Gamma^t$ emanating from $p$. %
\end{proposition}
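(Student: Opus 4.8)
The plan is to follow the scheme indicated in Figure~\ref{fig:Vset1}: a $0$-directed infinite-width semi-infinite geodesic would, by planarity, force the point-to-point geodesic from $\0$ to a far point on a slope-$\beta$ ray with $\beta$ small to undergo macroscopic transversal fluctuation at a mesoscopic height, and this is forbidden by the uniform-in-time estimate Lemma~\ref{lem:57}. I would first reduce to geodesics emanating from $\0$: if $\Gamma^t$ has infinite width then $\sum_m(\Gamma^t(m)-\Gamma^t(m-1))=\infty$, so infinitely many level segments $\{m\}_{[\Gamma^t(m-1),\Gamma^t(m)]}$ are non-degenerate and $\Gamma^t$ passes through some rational point $q\in\ZZ_\RR$, above which $\Gamma^t$ is again a $0$-directed infinite-width semi-infinite geodesic emanating from $q$. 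A countable union over rational $q$ together with the horizontal and vertical shift-invariance of the geodesic structure of dynamical BLPP then reduce the claim to showing $\PP(E)=0$, where $E$ is the event that for some $t\in[0,1]$ there is a $0$-directed infinite-width semi-infinite geodesic $\Gamma^t$ emanating from $\0$.

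Fix $\delta=1/10$ and work on $E$ with a measurably chosen bad pair $(t^*,\Gamma^{t^*})$; let $m_1$ be the threshold from Lemma~\ref{lem:69}, so $\Gamma^{t^*}(m)\le m^{2/3+\delta}$ for $m\ge m_1$. For $M\in\NN$ let $L_M$ be the smallest integer $\ell\ge m_1$ with $\Gamma^{t^*}(\ell)\ge M$; this is finite by infinite width, and $M\le\Gamma^{t^*}(L_M)\le L_M^{2/3+\delta}$ forces $L_M\ge M$ (as $2/3+\delta<1$). Put $\beta=M/(2L_M)\in\QQ$. Since $\Gamma^{t^*}$ is $0$-directed, $\Gamma^{t^*}(n)/n\to 0$, so for all large $n$ the rational point $(\beta n,n)$ lies strictly right of $\Gamma^{t^*}$ at height $n$; by the standard ordering of geodesics with shared left endpoint $\0$ (with Lemma~\ref{lem:104} giving a.s.\ uniqueness of $\Gamma_\0^{(\beta n,n),t^*}$ simultaneously for all $t$), $\Gamma_\0^{(\beta n,n),t^*}$ stays weakly right of $\Gamma^{t^*}$ on heights $[0,n]$, hence lies at horizontal position $\ge\Gamma^{t^*}(L_M)\ge M$ at height $L_M$, while $\LL_\0^{(\beta n,n)}$ sits at $\beta L_M=M/2$ there; thus $\TF_{L_M}(\Gamma_\0^{(\beta n,n),t^*})\ge M/2$ for all large $n$.

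To turn this into a bound I would decompose on the integer value of $L_M$. For $L\ge\max(\ell_0,2)$ (with $\ell_0$ as in Lemma~\ref{lem:57}) the above gives $E\cap\{L_M=L\}\subseteq\liminf_n A_n$, where $A_n=\{\exists\,t\in[0,1]:\TF_L(\Gamma_\0^{(\frac{M}{2L}n,n),t})\ge M/2\}$, so Fatou's lemma gives $\PP(E\cap\{L_M=L\})\le\liminf_n\PP(A_n)$; and since $\frac{M}{2L}L^{2/3+\delta}=\frac M2L^{-1/3+\delta}<\frac M2$ for $L\ge2$, Lemma~\ref{lem:57} (with $\beta=\frac M{2L}$, scale $\ell=L$, for $n>\frac M{2L}$ and $n\ge n_0$) bounds $\PP(A_n)$ by $Ce^{-cL^{3\delta}}$ uniformly for $n$ large. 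Summing over $L$ and using $L_M\ge M$ on $E$, for every $M\ge\max(\ell_0,2)$,
\[
\PP(E)=\PP(E\cap\{L_M\ge M\})\le\sum_{L\ge M}Ce^{-cL^{3\delta}}\le C'e^{-c'M^{3\delta}},
\]
and sending $M\to\infty$ gives $\PP(E)=0$. The genuinely delicate point is not any single estimate but feeding the random scale $L_M$ and slope $\beta=M/(2L_M)$ into the fixed-scale statement of Lemma~\ref{lem:57}, compounded by the fact that $\TF_{L_M}(\Gamma_\0^{(\beta n,n),t^*})\ge M/2$ only holds for $n$ past a random threshold; this is resolved by first conditioning on the value of $L_M$ to freeze $\beta$ and then sweeping the $n$-indexed events with Fatou, which is exactly where the uniformity over $t\in[0,1]$ in Lemma~\ref{lem:57} is indispensable. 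A smaller point to verify carefully is the reduction to a rational starting point and the planar ordering of geodesics with a shared endpoint, handled as in \cite{BHS22}.
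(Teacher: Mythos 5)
Your argument is correct, and it takes essentially the same route as the paper — reduce to a rational starting point, use the $0$-directedness of $\Gamma^{t^*}$ to place $(\beta n,n)$ to its right for large $n$, apply geodesic ordering to force $\Gamma_\0^{(\beta n,n),t^*}$ to sit at horizontal position $\geq M$ at a height $L_M$ where the line $\LL_\0^{(\beta n,n)}$ is at $M/2$, and invoke Lemma~\ref{lem:57} — but the bookkeeping is organized differently, and arguably more cleanly. The paper fixes a \emph{deterministic} $L_M$ for which $\PP(\cA_{M,L_M}\neq\emptyset)\geq\delta/2$, defines $\varepsilon_M$ so that $M\geq2\varepsilon_ML_M$, and derives a single contradiction for each large $M$; the needed lower bound on the decay rate $e^{-cL_M^{3/10}}$ then implicitly rests on $L_M$ growing with $M$, which the paper does not spell out. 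Your version instead makes $L_M$ a \emph{random} quantity attached to a measurably selected bad pair $(t^*,\Gamma^{t^*})$, uses Lemma~\ref{lem:69} to establish the key lower bound $L_M\geq M$ directly from the transversal-fluctuation control $\Gamma^{t^*}(L_M)\leq L_M^{2/3+\delta}$, partitions on the value $L_M=L$, and sums $\sum_{L\geq M}Ce^{-cL^{3/10}}\leq C'e^{-c'M^{3/10}}$. This makes explicit the quantitative link between $M$ and the relevant mesoscopic height, and the Fatou step correctly handles the random $n$-threshold at which $(\beta n,n)$ is guaranteed to lie right of $\Gamma^{t^*}$. The remaining technical points you flag (measurable selection of $(t^*,\Gamma^{t^*})$, rationality of $\beta n$ to invoke Lemma~\ref{lem:104}, planar ordering with shared left endpoint) are handled appropriately, and the application of Lemma~\ref{lem:57} with $\beta=M/(2L)$, $\ell=L$ is sound since $\beta\ell^{2/3+\delta}=(M/2)L^{-1/3+\delta}<M/2$ and $\beta<n$ for $n$ large.
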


\begin{proof}
 By translation invariance and the fact that any geodesic $\Gamma^t$ as above must pass via a rational point in $\ZZ_\RR$, it suffices to show that a.s.\ there does not exist any $t\in [0,1]$ with a $0$-directed infinite width semi-infinite geodesic $\Gamma^t$ emanating from $\0$. With the aim of eventually obtaining a contradiction, we assume the contrary. That is, if we let $\cA$ be the random set of times $t\in [0,1]$ and geodesics $\Gamma^t$ as above, then we assume that $\PP(\cA\neq \emptyset)=\delta>0$.

  Now, for $M,L$, let $\cA_{M,L}$ denote the set of $t\in \cA$ such that there exists a geodesic $\Gamma^t$ as above which additionally satisfies $\Gamma^t\cap [\![1,L]\!]_{\{M\}}\neq \emptyset$. Now, for each $M>0$, there must exist an $L_M$ such that $\PP(\cA_{M,L_M}\neq \emptyset)\geq \delta/2$. %

Now, given $M$, we fix $\varepsilon_M\in (0,1)$ small enough so as to satisfy $M\geq 2\varepsilon_ML_M$. Now, since we are looking at $0$-directed semi-infinite geodesics, for each $t\in \cA_{M,L_M}$, for all $n$ sufficiently large, the point $(\varepsilon_M n,n)$ must be to the right of $\Gamma^t$. Thus, if we define $\cA_{M,L_M}^{n}$ be the set of $t\in \cA_{M,L_M}$ for which the point $(\varepsilon_M n,n)$ lies to the right of $\Gamma^t$, then for all $M$ large and for $n\geq n_M$, we must have
  \begin{equation}
    \label{eq:190}
    \PP(\cA_{M,L_M}^{n}\neq\emptyset)\geq \delta/4.
  \end{equation}
  Note that by the ordering of geodesics, for any $t\in \cA_{M,L_M}^{n}$, we have
  \begin{equation}
    \label{eq:191}
    \Gamma_{\0}^{(\varepsilon_M n, n),t}\cap \{L_M\}_{\RR}\subseteq \{L_M\}_{[M,\infty)}.
  \end{equation}
  Also, note that $M-\varepsilon_ML_M\geq M/2$. Now, Lemma \ref{lem:57} yields that for some constants $C,c$, all $M$ and all $n\geq n_M$, we have
  \begin{equation}
    \label{eq:192}
    \PP(\TF_{L_M}(\Gamma_{\0}^{(\varepsilon_M n,n),t})\leq M/2\textrm{ for all } t\in [0,1])\geq 1-Ce^{-c(M/\varepsilon_M)^{3/10}}\geq 1-Ce^{-cM^{3/10}},
  \end{equation}
  where we have used $\varepsilon_M\in (0,1)$ to obtain the last inequality. However, the above along with \eqref{eq:191} implies that
  \begin{equation}
    \label{eq:193}
    \PP(\cA^n_{M,L_M}\neq\emptyset)\leq Ce^{-cM^{3/10}}
  \end{equation}
  for all $M$ large enough. This is in contradiction with \eqref{eq:190}, and the proof is complete. %
\end{proof}
It remains to consider $0$-directed semi-infinite geodesics which are of finite width. To do so, we shall first need the following easy lemma.
\begin{lemma}
  \label{lem:58}
  For any fixed $x_0>0, \ell\in \NN,\varepsilon>0$, we a.s.\ have $\Gamma_{\0}^{(x_0,n),t}\cap \{\ell\}_{\RR}\subseteq \{\ell\}_{[0,\varepsilon]}$ for all $n$ large enough.
\end{lemma}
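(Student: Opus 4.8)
The plan is to reduce the statement to the uniform-in-time transversal fluctuation bound Lemma \ref{lem:63}, applied to a target point that is forced to be very ``steep''. The guiding picture: the geodesic $\Gamma_{\0}^{(x_0,n),t}$ runs from $\0$ to $(x_0,n)$, and the line $\LL_{\0}^{(x_0,n)}$ meets $\{\ell\}_{\RR}$ at the single point $(x_0\ell/n,\ell)$, whose first coordinate tends to $0$ as $n\to\infty$; meanwhile the transversal fluctuation of the geodesic around this line is, by KPZ scaling, of order $\beta$ times a mesoscopic power, where $\beta=x_0/n\to 0$. So for large $n$ the geodesic is pinched near the vertical axis at height $\ell$, certainly inside $\{\ell\}_{[0,\varepsilon]}$.

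Concretely, I would apply Lemma \ref{lem:63} with $\beta=x_0/n$ (so that $(\beta n,n)=(x_0,n)$; note the hypothesis there only requires $\beta>0$, with no lower bound, so letting $\beta$ shrink with $n$ is permitted) and $r=n^{1/10}$. Since $\beta r n^{2/3}=x_0 n^{-7/30}$ and $1+C\beta n^{2}=1+Cx_0 n$, this yields, for all $n$ large,
\[
\PP\!\left(\Gamma_{\0}^{(x_0,n),t}\not\subseteq B_{x_0 n^{-7/30}}\big(\LL_{\0}^{(x_0,n)}\big)\ \text{for some }t\in[0,1]\right)\le (1+Cx_0 n)\,e^{-cn^{3/10}},
\]
and the right-hand side is summable over $n$. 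By the Borel--Cantelli lemma, almost surely there is a random $N$ such that $\Gamma_{\0}^{(x_0,n),t}\subseteq B_{x_0 n^{-7/30}}(\LL_{\0}^{(x_0,n)})$ for all $n\ge N$ and all $t\in[0,1]$. Intersecting with $\{\ell\}_{\RR}$ places $\Gamma_{\0}^{(x_0,n),t}\cap\{\ell\}_{\RR}$ inside $\{\ell\}_{[x_0\ell/n-x_0 n^{-7/30},\,x_0\ell/n+x_0 n^{-7/30}]}$; since the staircase emanates from $\0$ all its first coordinates are nonnegative, so it is in fact contained in $\{\ell\}_{[0,\,x_0\ell/n+x_0 n^{-7/30}]}$. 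Because $x_0\ell/n+x_0 n^{-7/30}\to 0$, this interval lies in $[0,\varepsilon]$ once $n$ exceeds a further deterministic threshold, which gives the claim (uniformly over $t\in[0,1]$, hence in particular for the $t$ in the statement).

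There is no genuine obstacle here; the only point to verify is that the probability bound from Lemma \ref{lem:63} at $r=n^{1/10}$ decays fast enough in $n$ to survive multiplication by the polynomial prefactor $1+Cx_0 n$ (which accounts for the expected number of resampling events in the box $\cM_{\0}^{(x_0,n)}$), and it does, since $e^{-cn^{3/10}}$ dominates any polynomial. If one prefers to avoid the time-uniform estimate, the same conclusion for a single fixed $t$ follows identically from the static mesoscopic transversal fluctuation bound Proposition \ref{prop:53} together with Brownian scaling (Proposition \ref{prop:46}): rescaling horizontally by the factor $n/x_0$ identifies $\Gamma_{\0}^{(x_0,n)}$ in distribution with $(x_0/n)\,\Gamma_{\0}^{\n}$, and Proposition \ref{prop:53} controls the diagonal geodesic $\Gamma_{\0}^{\n}$ at height $\ell$, after which the same Borel--Cantelli step applies.
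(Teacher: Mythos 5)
Your proof is correct and matches the paper's argument: you apply the time-uniform transversal fluctuation bound (Lemma \ref{lem:63} with $\beta=x_0/n$ and $r=n^{1/10}$, i.e.\ the $\delta=1/10$ case of Lemma \ref{lem:134}) to get a summable tail, then conclude by Borel--Cantelli. The only cosmetic difference is that the paper invokes the packaged statement Lemma \ref{lem:134} rather than Lemma \ref{lem:63} directly, but these are the same estimate.
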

\begin{proof}
  By using Lemma \ref{lem:134} with $\beta =x_0n^{-1}$, it follows that the probability
  \begin{equation}
    \label{eq:692}
       \PP(\Gamma_{\0}^{(x_0,n),t}\cap \{\ell\}_{[\varepsilon,\infty)}\neq\emptyset \textrm{ for some } t\in [0,1])
  \end{equation}
converges to $0$ superpolynomially fast as $n\rightarrow \infty$. The desired result now follows by applying the Borel-Cantelli lemma.
\end{proof}
We are now ready to handle $0$-directed finite height semi-infinite geodesics.
\begin{lemma}
  \label{lem:59}
  Almost surely, there does not exist any $t\in [0,1]$ and a point $p\in \ZZ_\RR$ with a non-trivial $0$-directed finite width semi-infinite geodesic emanating from $p$.
\end{lemma}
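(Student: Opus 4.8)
The plan is to follow the treatment of finite-width axial geodesics in \cite[Section~5]{BHS22}, deriving a contradiction with Lemma~\ref{lem:58} by planarity. Exactly as in the reduction opening the proof of Proposition~\ref{prop:19} --- which uses that a non-trivial axial geodesic passes through a rational point of $\ZZ_\RR$ together with translation invariance --- it suffices to show that, almost surely, no $t\in[0,1]$ admits a non-trivial $0$-directed finite-width semi-infinite geodesic $\Gamma^t$ emanating from $\0$. For such a $\Gamma^t$, write $\Gamma^t(k)$ for the largest $x$ with $(x,k)\in\Gamma^t$; then $k\mapsto\Gamma^t(k)$ is non-decreasing (as $\Gamma^t$ is an up-right staircase) and, being of finite width, bounded, hence converges to some $x_\infty\in[0,\infty)$. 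Moreover $\Gamma^t$ non-trivial forces $x_\infty>0$, since $\Gamma^t(0)=0$ and $x_\infty=0$ would make $\Gamma^t$ the vertical ray $\{0\}\times[0,\infty)$.

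Next I would fix the almost sure event $\Omega_0$ on which the conclusion of Lemma~\ref{lem:58} holds for every triple $(x_0,\ell,\varepsilon)$ with $x_0\in\QQ\cap(0,\infty)$, $\ell\in\NN$, $\varepsilon\in\QQ\cap(0,\infty)$ --- a countable intersection of almost sure events. Working on $\Omega_0$, suppose for contradiction that some $t\in[0,1]$ carries a $\Gamma^t$ as above, and pick $x_0\in\QQ$ with $x_0>x_\infty$. For every $n$ the segment $\Gamma^t\cap[0,n]_\RR$ is a geodesic from $\0$ to $(\Gamma^t(n),n)$, and $(\Gamma^t(n),n)\le(x_0,n)$ coordinatewise since $\Gamma^t(n)\le x_\infty<x_0$. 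As $\0$ and $(x_0,n)$ are rational points of $\ZZ_\RR$, the geodesic $\Gamma_{\0}^{(x_0,n),t}$ is unique by Lemma~\ref{lem:104}; combining this with the standard planarity of BLPP geodesics --- superadditivity of the BLPP weight under the pointwise maximum/minimum of two staircases shows that the pointwise maximum of a geodesic from $\0$ to $(\Gamma^t(n),n)$ and one from $\0$ to $(x_0,n)$ is again a geodesic from $\0$ to $(x_0,n)$, hence equals $\Gamma_{\0}^{(x_0,n),t}$ by uniqueness --- one obtains that \emph{every} geodesic from $\0$ to $(\Gamma^t(n),n)$, in particular $\Gamma^t\cap[0,n]_\RR$, lies weakly to the left of $\Gamma_{\0}^{(x_0,n),t}$. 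Thus $\Gamma^t(\ell)\le\Gamma_{\0}^{(x_0,n),t}(\ell)$ for all $\ell\in[\![0,n]\!]$. Fixing $\ell\in\NN$ and $\varepsilon\in\QQ\cap(0,\infty)$ and letting $n\to\infty$, Lemma~\ref{lem:58} gives $\Gamma_{\0}^{(x_0,n),t}(\ell)\le\varepsilon$ for all $n$ large, hence $\Gamma^t(\ell)\le\varepsilon$; since $\ell,\varepsilon$ are arbitrary and $\Gamma^t(\ell)\ge0$, we get $\Gamma^t(\ell)=0$ for all $\ell$, i.e.\ $x_\infty=0$, contradicting $x_\infty>0$.

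The translation/rational-point reduction and the monotonicity of $k\mapsto\Gamma^t(k)$ are routine; I expect the one delicate step --- and the main obstacle --- to be upgrading the planarity comparison to the \emph{given} path $\Gamma^t\cap[0,n]_\RR$, since its endpoint $(\Gamma^t(n),n)$ need not be rational and hence need not have a unique geodesic. This is precisely where the uniqueness of the rational-endpoint geodesic $\Gamma_{\0}^{(x_0,n),t}$ (Lemma~\ref{lem:104}) enters, via the pointwise-maximum argument, to force \emph{all} geodesics to $(\Gamma^t(n),n)$ below it. A minor additional point to verify is that, after the reduction, one has the analogue of Lemma~\ref{lem:58} (and of the dynamical transversal fluctuation estimate Lemma~\ref{lem:134} behind it) from the relevant rational startpoint; this holds by the same proof, as those estimates only use that at the Poissonian resampling times the environment is a static BLPP (Lemma~\ref{prop:48}) together with the translation-invariant static estimate Proposition~\ref{prop:53}.
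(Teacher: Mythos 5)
Your proposal is correct and takes essentially the same approach as the paper: fix a bound (your $x_0$, the paper's $M'\geq M$) on the width, use planarity/geodesic ordering to sandwich $\Gamma^t$ to the left of the point-to-point geodesic $\Gamma_{\0}^{(x_0,n),t}$, and invoke Lemma~\ref{lem:58} to squeeze $\Gamma^t$ into arbitrarily thin vertical strips. The only difference is stylistic --- you work directly on a fixed almost-sure event $\Omega_0$ built from countably many invocations of Lemma~\ref{lem:58}, and you spell out the max/min superadditivity step behind ``geodesic ordering'' (needed since the endpoint $(\Gamma^t(n),n)$ may be irrational), whereas the paper runs a contradiction with $\PP(\mathcal{A}\neq\emptyset)=\delta>0$ and simply cites the ordering of geodesics.
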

\begin{proof}
  By standard arguments, it suffices to show that there a.s.\ does not exist any $t\in [0,1]$ with a non-trivial $0$-directed finite width semi-infinite geodesic emanating from $\0$. Assume the contrary. If so, then there exists a set $\cA\subseteq [0,1]$ satisfying $\PP(\cA\neq\emptyset)=\delta>0$ such that for all $t\in \cA$, we have a geodesic $\Gamma^t$ satisfying the above. Now, there must exist an $M$ large enough such that if we define $\cA_M\subseteq \cA$ to be the set of $t\in \cA_M$ for which $\Gamma^t\subseteq \RR_{[0,M]}$, then $\PP(\cA_{M}\neq \emptyset)\geq \delta/2$. Now, we fix $M'\in \NN$ with $M'\geq M$. By the ordering of geodesics, we note that that for any $t\in \cA_M$, the geodesic $\Gamma_{\0}^{(M',n),t}$ must stay to the right of the geodesic $\Gamma^t$. However, we also know that for any fixed $\ell,\varepsilon$, we have $\Gamma_{\0}^{(M',n),t}\cap \{\ell\}_{\RR}\subseteq \{\ell\}_{[0,\varepsilon]}$ for all large enough $n$. Thus, for every $\varepsilon>0$, and $t\in \cA_M$, we must in fact have $\Gamma^t\subseteq \RR_{[0,\varepsilon]}$. Since $\varepsilon$ is arbitrary, this contradicts the non-triviality of $\Gamma^t$ and completes the proof.
\end{proof}

\subsubsection{\textbf{Ruling out $\infty$-directed non-trivial semi-infinite geodesics}}
\label{sec:ruling-out-infty}
Having ruled out non-trivial $0$-directed semi-infinite geodesics, the task now is to rule out non-trivial $\infty$-directed semi-infinite geodesics. The arguments for doing this are entirely analogous; however unlike exponential LPP, the $0$-directed case and the $\infty$-directed case are not exactly equivalent by symmetry. Thus, we now repeat the arguments from the previous section adapted to the current setting of ``entirely horizontal'' semi-infinite geodesics. For a $\infty$-directed semi-infinite staircase $\xi$, we say that $\xi$ has finite height if $\xi\cap \{n\}_{\RR}=\emptyset$ for all $n$ large enough, and otherwise, we say that $\xi$ has infinite height. We now have the following lemma which is an analogue of Proposition \ref{prop:19}.
\begin{proposition}
  \label{prop:20}
Almost surely, there does not exist any $t\in [0,1]$ and a point $p\in \ZZ_\RR$ with an $\infty$-directed infinite height semi-infinite geodesic $\Gamma^t$ emanating from $p$. %
\end{proposition}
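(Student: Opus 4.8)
The plan is to replay the proof of Proposition \ref{prop:19} with the two coordinates interchanged: the ``steep'' comparison geodesics $\Gamma_\0^{(\varepsilon_M n,n),t}$ used there are replaced by ``flat'' ones $\Gamma_\0^{(n,\varepsilon_M n),t}$. The only substantive point is that BLPP is not symmetric under swapping the two coordinates, so Proposition \ref{prop:19} cannot be quoted verbatim; however, the uniform-in-time mesoscopic transversal fluctuation estimate Lemma \ref{lem:57} is stated for \emph{all} aspect ratios $0<\beta<n$, so it applies just as well to these flat geodesics and the argument goes through unchanged. Note also that an $\infty$-directed semi-infinite geodesic of infinite height is automatically non-trivial (the trivial horizontal geodesic has finite height), which we will use implicitly.

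By translation invariance, and since any non-trivial semi-infinite geodesic passes through a point of $\ZZ_\RR$, it suffices to show that almost surely there is no $t\in[0,1]$ admitting an $\infty$-directed infinite-height semi-infinite geodesic emanating from $\0$. Suppose otherwise and let $\cA\subseteq[0,1]$ be the set of such times, with $\PP(\cA\neq\emptyset)=\delta>0$. For $M,L$ let $\cA_{M,L}$ be the set of $t\in\cA$ for which some witnessing geodesic $\Gamma^t$ meets the segment $\{M\}_{[\![1,L]\!]}$, i.e.\ reaches height $M$ at some horizontal coordinate in $[\![1,L]\!]$. Since a witness is non-trivial and has infinite height, it crosses the line $\{M\}_\RR$ at a finite, eventually positive-first-coordinate point, so $\bigcup_L\cA_{M,L}=\cA$ up to a null set and for each large $M$ there is $L_M$ with $\PP(\cA_{M,L_M}\neq\emptyset)\geq\delta/2$. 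Now fix $\varepsilon_M\in(0,1)$ with $2\varepsilon_M L_M\leq M$; since a witness $\Gamma^t$ is $\infty$-directed, for all $n$ large the point $(n,\varepsilon_M n)$ lies above $\Gamma^t$, and writing $\cA_{M,L_M}^n$ for the subset of $\cA_{M,L_M}$ on which this holds we get $\PP(\cA_{M,L_M}^n\neq\emptyset)\geq\delta/4$ for all $M$ large and all $n\geq n_M$.

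For $t\in\cA_{M,L_M}^n$, the ordering of geodesics emanating from $\0$ forces $\Gamma_\0^{(n,\varepsilon_M n),t}$ to stay weakly above $\Gamma^t$ throughout the horizontal range $[0,n]$; since $\Gamma^t$ is at height $\geq M$ by horizontal coordinate $L_M$, so is $\Gamma_\0^{(n,\varepsilon_M n),t}$, whence $\Gamma_\0^{(n,\varepsilon_M n),t}$ meets the line $\{M\}_\RR$ at some horizontal coordinate $\leq L_M$, while $\LL_\0^{(n,\varepsilon_M n)}$ meets it at horizontal coordinate $M/\varepsilon_M$; therefore $\TF_M(\Gamma_\0^{(n,\varepsilon_M n),t})\geq M/\varepsilon_M-L_M\geq M/(2\varepsilon_M)$. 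On the other hand, applying Lemma \ref{lem:57} with its parameter $\beta$ set to $\varepsilon_M^{-1}$, its parameter $n$ set to $\varepsilon_M n$, and $\ell=M$ (all hypotheses being met once $n$ is large relative to $M$ and $\varepsilon_M$) gives $\PP(\TF_M(\Gamma_\0^{(n,\varepsilon_M n),t})\leq \varepsilon_M^{-1}M^{2/3+\delta}\text{ for all }t\in[0,1])\geq 1-Ce^{-cM^{3\delta}}$, and since $\varepsilon_M^{-1}M^{2/3+\delta}<M/(2\varepsilon_M)$ once $M$ is large, these two facts force $\PP(\cA_{M,L_M}^n\neq\emptyset)\leq Ce^{-cM^{3\delta}}$, contradicting the bound $\delta/4$ for $M$ large. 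The only genuinely delicate step is the geodesic-ordering claim — that a finite geodesic from $\0$ to a point lying above an $\infty$-directed semi-infinite geodesic from $\0$ remains (weakly) above it on the common horizontal range — which follows from the standard non-crossing property of BLPP geodesics sharing an endpoint together with the fact that the two relevant endpoints are comparable in the partial order $\leq$; the parameter bookkeeping needed to invoke Lemma \ref{lem:57} is routine. I do not expect any real obstacle beyond this, since the absence of coordinate symmetry is harmless precisely because Lemma \ref{lem:57} is uniform in the aspect ratio $\beta$.
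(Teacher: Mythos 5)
Your proof is correct and takes essentially the same approach as the paper: the paper's own proof of Proposition \ref{prop:20} already repeats the scheme of Proposition \ref{prop:19} with a ``flat'' comparison geodesic $\Gamma_\0^{(\chi_M n,n),t}$ where $\chi_M\geq 1$, which coincides with your $\Gamma_\0^{(n,\varepsilon_M n),t}$ under the identification $\chi_M=\varepsilon_M^{-1}$ and the reparameterization $n\mapsto\varepsilon_M n$ when invoking Lemma \ref{lem:57}. The only nit is that you should use the continuous segment $\{M\}_{[0,L]}$ (as the paper does) rather than the discrete set $\{M\}_{[\![1,L]\!]}$ when defining $\cA_{M,L}$, so that $\bigcup_L\cA_{M,L}=\cA$ holds without any appeal to null sets.
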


\begin{proof}
  Again, by translation invariance, it suffices to work with $p=\0$. Let $\cA$ denote the random set of times $t\in [0,1]$ admitting a geodesic $\Gamma^t$ as in the statement of the lemma. With the aim of eventually obtaining a contradiction, we assume that $\PP(\cA\neq \emptyset)=\delta>0$.

  For $M,L$, let $\cA_{M,L}$ denote the set of $t\in \cA$ for which there exists a geodesic $\Gamma^t$ as above which satisfies $\Gamma^t\cap \{M\}_{[0,L]}\neq \emptyset$. By the requirement of infinite height in the statement of the lemma, for each $M$, there must exist an $L_M$ such that we have $\PP(\cA_{M,L_M}\neq \emptyset)\geq \delta/2$.

  Now, given $M$, we fix $\chi_M\geq 1$ large enough so as to satisfy $\chi_MM\geq 2L_M$. Since we are working with $\infty$-directed geodesics $\Gamma^t$, for each $t\in \cA_{M,K}$ and all $n$ sufficiently large, the point $(\chi_Mn,n)$ must be to the left of $\Gamma^t$. As a result, if we define $\cA_{M,L_M}^n$ as the subset of $\cA_{M,L_M}$ where the point $(\chi_M n,n)$ lies to the left of $\Gamma^t$, then for all $M$ large and for all $n\geq n_M$, we must have
  \begin{equation}
    \label{eq:196}
    \PP(\cA_{M,L_M}^n)\geq \delta/4.
  \end{equation}
  By the ordering of geodesics, for all $t\in \cA_{M,L_M}^n$, we must have
  \begin{equation}
    \label{eq:197}
    \Gamma_{\0}^{(\chi_M n, n),t}\cap \{M\}_{\RR}\subseteq \{M\}_{[0,L_M]}.
  \end{equation}
  Also, note that $\chi_M M-L_M\geq \chi_M M/2$. Now, by Lemma \ref{lem:57}, we know that for all $M$ large and $n$ large enough depending on $M$,
  \begin{equation}
    \label{eq:201}
    \PP(\TF_{M}(\Gamma_{\0}^{(\chi_M n,n),t})\leq \chi_M M/2 \textrm{ for all } t\in [0,1])\geq 1-Ce^{-cM^{3/10}}.
  \end{equation}
  When combined with \eqref{eq:197}, this implies that $\PP(\cA_{M,L_M}^n)\leq Ce^{-cM^{3/10}}$ for all $M$ large enough and $n\geq n_M$. This contradicts \eqref{eq:196} and completes the proof.
\end{proof}

It now remains to rule out non-trivial $\infty$-directed semi-infinite geodesics. The following result shall serve as a substitute of Lemma \ref{lem:58} from the previous section.

\begin{lemma}
  \label{lem:62}
Almost surely, for all $t\in [0,1]$, we have $\lim_{x\rightarrow \infty}\Gamma_{\0}^{(x,1),t}(0)=\infty$.
\end{lemma}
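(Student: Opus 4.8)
The plan is to reduce the statement to a fact about Brownian motion and then to prove that fact by a multi--scale argument. The geodesic $\Gamma_{\0}^{(x,1),t}$ is a staircase consisting of a horizontal segment on line $0$ from $(0,0)$ to $(z_{0},0)$, a unit vertical segment at abscissa $z_{0}$, and a horizontal segment on line $1$ from $(z_{0},1)$ to $(x,1)$; by \eqref{eq:331} its weight is $\bigl(W_{0}^{t}(z_{0})-W_{0}^{t}(0)\bigr)+\bigl(W_{1}^{t}(x)-W_{1}^{t}(z_{0})\bigr)$, so that $z_{0}=\Gamma_{\0}^{(x,1),t}(0)$ is precisely a maximiser of $z\mapsto W_{0}^{t}(z)-W_{1}^{t}(z)$ over $z\in[0,x]$. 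Writing $g^{t}:=W_{0}^{t}-W_{1}^{t}$, normalised so that $g^{t}(0)=0$ (which for each fixed $t$ is $\sqrt{2}$ times a standard Brownian motion on $[0,\infty)$), the assertion becomes: almost surely, for every $t\in[0,1]$, $\argmax_{z\in[0,x]}g^{t}(z)\to\infty$ as $x\to\infty$. Since this maximiser is non--decreasing in $x$, an elementary record--point argument shows that this is equivalent to the following clean statement, which is what I would aim to prove: almost surely, for every $t\in[0,1]$ and every $n\in\NN$ there exists $z>n$ with $g^{t}(z)\ge\sup_{w\in[0,n]}g^{t}(w)$.

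By a countable union over $n$ it suffices to fix $n$. Decompose $g^{t}$ at level $n$: set $h^{t}(z):=g^{t}(n+z)-g^{t}(n)$ for $z\ge0$, which is again $\sqrt{2}$ times a Brownian motion for each $t$, is driven only by the increments $X_{n+i,0}^{\bullet},X_{n+i,1}^{\bullet}$ with $i\ge0$, and is therefore independent of $A^{t}:=\sup_{w\in[0,n]}g^{t}(w)-g^{t}(n)\ge0$, which is driven only by the increments $X_{i,0}^{\bullet},X_{i,1}^{\bullet}$ with $0\le i<n$; the conclusion to reach is that for all $t$, $h^{t}$ attains the level $A^{t}$ somewhere on $(0,\infty)$. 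Conditioning on the finitely--many--clock environment on $[0,n]$ (cf.\ Lemma \ref{prop:48}), $A^{\bullet}$ becomes a fixed, bounded, piecewise--constant function of $t$, and the problem reduces to a self--contained statement about the single dynamical Brownian motion $h^{\bullet}$: for every fixed bounded piecewise--constant level function $a\colon[0,1]\to[0,\infty)$, almost surely $h^{t}$ reaches $a(t)$ on $(0,\infty)$ for every $t\in[0,1]$. I would prove this by a Borel--Cantelli scheme over the dyadic spatial blocks $[\![2^{j},2^{j+1}-1]\!]$, whose driving increment data are independent across $j$: declare block $j$ \emph{good} if, uniformly over $t\in[0,1]$, $h^{t}$ makes an excursion inside that block large enough to dominate both the level function and the maximal fluctuation accumulated by $h^{t}$ over $[0,2^{j}]$, and show that only finitely many blocks are bad.

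The step I expect to be the real obstacle is the uniform--in--$t$ control inside a block. A crude union bound over the resampling epochs of the clocks relevant to a block of length $L$ is hopelessly lossy: there are of order $L$ such epochs, and on each one the probability that the relevant block functional is atypically small is only polynomial (of order $L^{-1/2}$ in the regime that arises), so the union bound is $\gg 1$. One must instead exploit that consecutive epochs differ by resampling a single increment, so that the block maximum of $h^{t}$ is strongly correlated as $t$ varies; quantitatively, this is what shows that the set of times at which $h^{\bullet}$ behaves atypically on a given scale is not merely Lebesgue--null but of Hausdorff dimension at most $1/2$. A further argument is then needed to show that, even at these exceptional times, $\sup_{z>0}h^{t}(z)$ is only approached as $z\to\infty$ rather than attained at a fixed finite abscissa, so that the required level is still reached beyond every $n$; heuristically, at such an exceptional time the path (which stays below a finite level) has its running maximum distributed, in an arcsine--type fashion, over the whole spatial half--line, so its argmax still escapes every compact set. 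Carrying out these two points rigorously --- the dimension bound for the exceptional set and the non--attainment at exceptional times --- is the crux of the lemma.
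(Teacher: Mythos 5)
Your identification of $\Gamma_{\0}^{(x,1),t}(0)$ as the argmax of $g^{t}:=W_{0}^{t}-W_{1}^{t}$ over $[0,x]$, and the reduction to showing that for every $t$ and every $n$ the running maximum $\sup_{[0,n]}g^{t}$ is exceeded by $g^{t}$ somewhere beyond $n$, are both correct and coincide with the paper's opening step (the paper phrases this as $\sup_{y>0}g^{t}(y)=\infty$ for all $t$). From there your proposal diverges from the paper and, as you yourself acknowledge, does not arrive at a proof: the ``crux'' you name (a Hausdorff-dimension bound on an exceptional time set, plus a non-attainment argument at exceptional times) is left entirely open. The specific obstruction you cite as forcing this detour — that a union bound over the $\Theta(n)$ resampling epochs in $\cT_{\0}^{(n,1),[0,1]}$ is ``hopelessly lossy'' — is in fact where the paper's argument succeeds, by changing the quantity being estimated.

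The loss you describe is real if one works with the one-sided quantity $\sup_{[0,L]}g$ (or with hitting a positive random level of order $\sqrt{L}$): its left tail decays only like a fixed small power of $L$, which is swamped by linearly many epochs. The paper avoids this entirely. Using the symmetry $(W_{0}^{t},W_{1}^{t})\stackrel{d}{=}(-W_{0}^{t},-W_{1}^{t})$, it reduces to showing $\sup_{y>0}|g^{t}(y)|=\infty$ for all $t$ simultaneously, and then applies the small-ball estimate for the \emph{absolute} supremum with a threshold slightly below the diffusive scale: $\PP\bigl(\sup_{[0,n]}|g|<c\,n^{1/2}(\log n)^{-1/2}\bigr)$ decays like $n^{-\Theta(c^{-2})}$, so by taking $c$ small the exponent can be made to exceed any fixed power — in particular the $\Theta(n)$ factor coming from a union bound over $\{0\}\cup\cT_{\0}^{(n,1),[0,1]}$ (via Lemma \ref{prop:48}) is absorbed, and Borel–Cantelli gives that a.s., for all large $n$ and all $t\in[0,1]$ simultaneously, $\sup_{[0,n]}|g^{t}|\ge c\,n^{1/2}(\log n)^{-1/2}$, hence $\sup|g^{t}|=\infty$. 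No dyadic-block decomposition, no dimension bound, no arcsine argument are needed; the proof is a few lines.

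This is the idea your proposal misses: pass to the modulus and pick a sub-diffusive threshold, and the naive union bound over clock rings becomes perfectly adequate. Two further remarks. First, the level-hitting reformulation you chose is genuinely harder to push through uniformly in $t$ than the statement $\sup|g^{t}|=\infty$, precisely because the random level $A^{t}$ lives at the diffusive scale while the modulus argument is free to work far below it. Second, the arcsine fallback you propose for exceptional times is not a fix on its own: even granting a Hausdorff-dimension bound, at an exceptional $t$ with $\sup_{z>0}h^{t}(z)<\infty$ the random level $A^{t}$ may simply exceed $\sup h^{t}$, in which case the level is never reached regardless of whether the supremum of $h^{t}$ is attained or merely approached; your sketch does not address this, so the step you flag as the crux would remain open even after the dimension bound.
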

\begin{proof}
  It is easy to check that, almost surely, for all $t\in [0,1]$ and rational $x>0$,
  \begin{equation}
    \label{eq:693}
    \Gamma_{\0}^{(x,1),t}(0)=\argmax_{y\in [0,x]}(W_0^t(y)-W_1^t(y)).
  \end{equation}
  As a result, the condition $\lim_{x\rightarrow \infty}\Gamma_{\0}^{(x,1),t}(0)<\infty$ is equivalent to $\argmax_{y>0}(W_0^t(y)-W_1^t(y))<\infty$. Thus, it suffices to establish that almost surely, for all $t\in [0,1]$, $\sup_{y>0}(W_0^t(y)-W_1^t(y))=\infty$. Since we have the distributional equality $(W_0^t,W_1^t)_{t\in \RR}\stackrel{d}{=}(-W_0^t,-W_1^t)_{t\in \RR}$, we can reduce to simply showing that almost surely, for all $t\in [0,1]$,
  \begin{equation}
    \label{eq:698}
  \sup_{y>0}|W_0^t(y)-W_1^t(y)|=\infty.  
  \end{equation}
   However, this is not difficult to show-- indeed, first, by a small ball estimate (see \cite[Theorem 2]{Chu48}), we know that there is a constant $c$ such that the probability
  \begin{equation}
    \label{eq:695}
    \PP(\sup_{y\in [0,n]}|W_0(y)-W_1(y)|< cn^{1/2}(\log n)^{-1/2})
  \end{equation}
  decays superpolynomially in $n$. As a result, for the same constant $c$, we have
  \begin{align}
    \label{eq:694}
    &\PP(\exists t\in [0,1]: \sup_{y\in [0,n]}|W_0^t(y)-W_1^t(y)|< cn^{1/2}(\log n)^{-1/2})\nonumber\\
    &=\PP(\exists t\in \{0\}\cup \cT_{\0}^{(n,1),[0,1]}: \sup_{y\in [0,n]}|W_0^t(y)-W_1^t(y)|< cn^{1/2}(\log n)^{-1/2})\nonumber\\
                                                                          &\leq \EE[\sum_{t\in \{0\}\cup \cT_{\0}^{(n,1),[0,1]}}\PP(\sup_{y\in [0,n]}|W_0^t(y)-W_1^t(y)|< cn^{1/2}(\log n)^{-1/2}\lvert \cT_{\0}^{(n,1),[0,1]})]\nonumber\\
    &\leq (1+\EE|\cT_{\0}^{(n,1),[0,1]}|)\PP(\sup_{y\in [0,n]}|W_0(y)-W_1(y)|< cn^{1/2}(\log n)^{-1/2}).
  \end{align}
  Since $\EE|\cT_{\0}^{(n,1),[0,1]}|\leq Cn$ for some constant $C$ and since \eqref{eq:695} decays superpolynomially, the final expression in \eqref{eq:694} must also decay superpolynomially as $n\rightarrow \infty$. Now, by using the Borel-Cantelli lemma, we obtain that almost surely, for all $n$ large enough, and for all $t\in [0,1]$, we have
  \begin{equation}
    \label{eq:696}
    \sup_{y\in [0,n]}|W_0^t(y)-W_1^t(y)|\geq cn^{1/2}(\log n)^{-1/2}.
  \end{equation}
  In particular, this establishes \eqref{eq:698} and completes the proof.
\end{proof}

We are now ready to rule out non-trivial $\infty$-directed finite height semi-infinite geodesics.
\begin{lemma}
  \label{lem:60}
  Almost surely, there does not exist any $t\in \RR$ and a point $p\in \ZZ_\RR$ with a non-trivial $\infty$-directed finite height semi-infinite geodesic emanating from $p$
\end{lemma}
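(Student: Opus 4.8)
The plan is to mirror the proof of Lemma \ref{lem:59}, but now in the ``horizontal'' direction and with Lemma \ref{lem:62} (or rather a version of it with a general target height) playing the role that Lemma \ref{lem:58} played there. By the same standard reduction (translation invariance, together with the fact that any non-trivial semi-infinite geodesic passes through a point of $\ZZ_\RR$), it suffices to rule out, almost surely, the existence of a $t\in[0,1]$ admitting a non-trivial $\infty$-directed finite height semi-infinite geodesic $\Gamma^t$ emanating from $\0$. Suppose not, so the random set $\cA$ of such times has $\PP(\cA\neq\emptyset)=\delta>0$. Since a finite height geodesic occupies only finitely many rows, we can fix $M\in\NN$ so that, with probability at least $\delta/2$, there is such a $t$ with a witnessing geodesic $\Gamma^t\subseteq[\![0,M]\!]_\RR$; call this event $\cA_M$.

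Now set $M'=M+1$ and, for (rational) $x\to\infty$, consider the geodesics $\Gamma_\0^{(x,M'),t}$. For any $t\in\cA_M$ with witness $\Gamma^t$, the point $(x,M')$ sits at height $M'>M$, hence weakly above $\Gamma^t$; by the planar ordering of geodesics emanating from $\0$, the geodesic $\Gamma_\0^{(x,M'),t}$ must then also lie weakly above $\Gamma^t$, and in particular it leaves row $0$ at a column no larger than $\Gamma^t$ does, i.e.\ $\Gamma_\0^{(x,M'),t}(0)\leq\Gamma^t(0)$. On the other hand, a height-$M'$ analogue of Lemma \ref{lem:62} --- namely that almost surely, for all $t\in[0,1]$, $\lim_{x\to\infty}\Gamma_\0^{(x,M'),t}(0)=\infty$ --- forces the left-hand side to diverge as $x\to\infty$. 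Hence $\Gamma^t(0)=\infty$. But a non-trivial $\infty$-directed geodesic must leave row $0$ at a finite column, so $\Gamma^t$ is in fact trivial, contradicting $t\in\cA_M$; as $\PP(\cA_M\neq\emptyset)\geq\delta/2>0$ this is impossible, completing the proof modulo the height-$M'$ version of Lemma \ref{lem:62}.

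I expect this last ingredient to be the main obstacle, and I would establish it following the scheme of Lemma \ref{lem:62}: the column at which $\Gamma_\0^{(x,M'),t}$ leaves row $0$ is the maximizer over $z\geq0$ of $W_0^t(z)-W_0^t(0)+T_{(z,1)}^{(x,M'),t}$, and one checks that, for $z$ in any fixed bounded window, the quantity $T_{(z,1)}^{(x,M'),t}$ differs from a $z$-independent quantity by an amount that becomes negligible as $x\to\infty$ relative to the almost surely unbounded Brownian fluctuations already used in the proof of Lemma \ref{lem:62} (for example, via $T_{(z,1)}^{(x,M'),t}\geq -W_1^t(z)+\mathrm{const}+\sup_{w\in[z,x]}(W_1^t(w)-W_{M'}^t(w))$ one reduces matters to $\sup_{z>0}(W_0^t(z)-W_{M'}^t(z))=\infty$), so the maximizer escapes to $+\infty$; uniformity over $t\in[0,1]$ is obtained exactly as in Lemma \ref{lem:62} by conditioning on the a.s.\ finite resampling set $\cT_{\0}^{(x,M'),[0,1]}$, using Lemma \ref{prop:48} to treat each corresponding $T^t$ as a static BLPP, a union bound, and Borel--Cantelli. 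Alternatively, one can sidestep the general target height entirely by first passing to the tail of $\Gamma^t$ after its final upward step --- a sub-geodesic of the same type lying in two consecutive rows and emanating from a point of $\ZZ_\RR$, to which translation invariance applies after a short Brownian argument handling the measure-zero boundary case of a geodesic that ascends immediately at an irrational column --- thereby reducing to the case $\Gamma^t\subseteq[\![0,1]\!]_\RR$, where Lemma \ref{lem:62} applies as stated.
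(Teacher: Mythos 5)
Your overall structure is right and your "sidestep" is in fact the paper's proof, but your primary route has a gap in the auxiliary lemma that you flag as "the main obstacle."

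The ordering step in your main argument is correct: comparing $\Gamma_\0^{(x,M'),t}$ with the finite truncation $\Gamma_\0^{(x'',M),t}$ of $\Gamma^t$ for $x''\geq x$ (both emanating from $\0$, with $(x,M')$ to the upper-left of $(x'',M)$), non-crossing together with uniqueness forces $\Gamma_\0^{(x,M'),t}(0)\leq\Gamma^t(0)$. The problem is the height-$M'$ version of Lemma~\ref{lem:62} that your main route depends on. Your reduction bounds $T_{(z,1)}^{(x,M'),t}$ \emph{from below} by $-W_1^t(z)+W_{M'}^t(x)+\sup_{w\in[z,x]}(W_1^t(w)-W_{M'}^t(w))$, which controls the profile $z\mapsto W_0^t(z)+T_{(z,1)}^{(x,M'),t}$ only from one side. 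To show the argmax escapes to $+\infty$, you would also need to bound the profile from \emph{above} on a fixed window $[0,z_0]$, uniformly in $x$ and $t$; this is not addressed, and the passage time $T_{(z,1)}^{(x,M'),t}$ has no simple $z$-independent upper envelope (the crude superadditivity estimate $T_{(z,1)}^{(x,M')}\leq T_{(0,1)}^{(x,M')}-W_1(z)+W_1(0)$ feeds back into the same unknown quantity). So the reduction to $\sup_{z>0}(W_0^t(z)-W_{M'}^t(z))=\infty$ does not go through as written.

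The paper avoids this entirely. It fixes the height $M$ at which the finite-height geodesic $\Gamma^t$ eventually settles, and then looks at the \emph{two-row} geodesic $\Gamma_{(0,M-1)}^{(x,M),t}$, anchored at the deterministic lattice point $(0,M-1)$. By ordering against the tail of $\Gamma^t$ (a geodesic to the same endpoint $(x,M)$ from a point of row $M-1$ lying to the right of $(0,M-1)$), one gets $\Gamma_{(0,M-1)}^{(x,M),t}(M-1)\leq x_0$ for all large $x$; since this is a geodesic living in rows $M-1,M$ only, Lemma~\ref{lem:62} applies verbatim after translating rows $(M-1,M)\mapsto(0,1)$, giving the contradiction. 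This is exactly your "sidestep," except that anchoring at the fixed point $(0,M-1)$ rather than at the (possibly irrational, $t$-dependent) column where $\Gamma^t$ enters row $M-1$ removes the "measure-zero boundary case" headache you were worried about, and requires no new Brownian estimate beyond Lemma~\ref{lem:62} as stated.
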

\begin{proof}
  It suffices to show that there a.s.\ does not exist any $t\in [0,1]$ with an $\infty$-directed finite height semi-infinite geodesic emanating from $\0$. Assume the contrary. If so, then there exists a set $\cA\subseteq [0,1]$ satisfying $\PP(\cA\neq \emptyset)>0$ such that for all $t\in \cA$, we have a geodesic $\Gamma^t$ satisfying the above. Now, there must exist $M\in \NN\in \{0\}$ such that if we define $\cA_{M}\subseteq \cA$ by the requirement that for $t\in \cA_M$, for all $x$ large enough, we have $\Gamma^t\cap \RR_{[x,\infty)}=\{M\}_{[x,\infty)}$ along with $\PP(\cA_{M}\neq \emptyset)>0$. Note that if $M=0$ in the above, then this would contradict the assumption that $\Gamma^t$ is non-trivial, and as a result, we can assume that $M\geq 1$.

  However, the above is in contradiction with Lemma \ref{lem:62}. Indeed, suppose that for some $t\in \cA_M$, we have $\Gamma^t\cap \RR_{[x,\infty)}=\{M\}_{[x,\infty)}$ for all $x\geq x_0$. Thus, for this $t$, we must have $\Gamma_{(0,M-1)}^{(x,M),t}(M-1)\leq x_0$ for all $x\geq x_0$. However, this contradicts Lemma \ref{lem:62}; thus, our initial assumption that $\PP(\cA_M\neq \emptyset)>0$ must be incorrect, and this completes the proof.
\end{proof}
\begin{proof}[Proof of Proposition \ref{lem:135}]
  The result follows by combining Proposition \ref{prop:19}, Lemma \ref{lem:59}, Proposition \ref{prop:20} and Lemma \ref{lem:60}.
\end{proof}

\subsection{Proof of Proposition \ref{prop:22}}
\label{sec:proof-prop-refpr-2}
We are now ready to complete the proof of Proposition \ref{prop:22}.

\begin{proof}[Proof of Proposition \ref{prop:22}]
  First, by a simple countable union argument, it suffices to only work with $t\in [0,1]$. We now begin by noting that for a bigeodesic $\Gamma^t$ for $t\in \RR$ and for any point $p\in \Gamma^t\cap \ZZ_\RR$, we can write $\Gamma^t=\gamma_{p,\uparrow}^{t}\cup \gamma_{p,\downarrow}^{t}$, where the former is a semi-infinite geodesic emanating from $p$ and the latter is a ``down-left'' semi-infinite geodesic emanating from $p$. Indeed, until now, we have only considered semi-infinite geodesics which traverse in an up-right fashion, but one can also similarly consider down-left semi-infinite geodesics. We note that by symmetry, all results which are true for usual up-right geodesics also have a counterpart for the down-left case.

  Now, observe that almost surely, for any $t\in [0,1]$, any non-trivial bigeodesic $\Gamma^t$ and $p\in \Gamma^t\cap \ZZ_\RR$, both $\gamma_{p,\uparrow}^{t},\gamma_{p,\downarrow}^{t}$ must be non-trivial semi-infinite geodesics. Indeed, suppose that for some $p$ as above, $\gamma_{p,\uparrow}^{t}$ is a trivial semi-infinite geodesic. Now, either there exists a $q\in \Gamma^t\cap \ZZ_\RR$ such that $\gamma_{q,\uparrow}^t$ is a non-trivial axially directed semi-infinite geodesic or the geodesic $\Gamma^t$ must be trivial itself. The former case is ruled out by Proposition \ref{lem:135}, while the latter case is ruled out since we are assuming $\Gamma^t$ to be non-trivial.

  Now, as a consequence of Proposition \ref{prop:21}, for all $p\in \Gamma^t\cap \ZZ_\RR$, there must exist $\theta^t_{p,\uparrow}, \theta^t_{p,\downarrow}\in [0,\infty]$ such that $\gamma_{p,\downarrow}^t$ is $\theta^t_{p,\downarrow}$-directed and $\gamma_{p,\uparrow}^t$ is $\theta_{p,\uparrow}^t$ directed. Since both $\gamma_{p,\uparrow}^t, \gamma_{p,\downarrow}^t$ are non-trivial as we argued in the previous paragraph, by Proposition \ref{lem:135}, we must in fact have $\theta^t_{p,\uparrow}, \theta^t_{p,\downarrow}\in (0,\infty)$. For the rest of the proof, we just define $z^t= (\Gamma^t(0),0)\in \Gamma^t\cap \ZZ_\RR$ and simply write $\gamma_{\uparrow}^t=\gamma_{z^t,\uparrow}^t$, $\gamma_{\downarrow}^t=\gamma_{z^t,\downarrow}^t$ and $\theta^t_{\uparrow}= \theta^t_{z^t,\uparrow}, \theta^t_{\downarrow}=\theta^t_{z^t,\downarrow}$.

  In view of Lemma \ref{lem:69}, to complete the proof, the goal now is to simply establish that almost surely, for any $t\in [0,1]$ admitting a non-trivial bigeodesic $\Gamma^t$, we must have $\theta_\uparrow^t=\theta_\downarrow^t$.

  In fact, by a basic countable union argument, it suffices to fix a $\mu\in (0,1)$ and establish that almost surely, for any $t\in [0,1]$ admitting a bigeodesic $\Gamma^t$ satisfying $\theta_\uparrow^t,\theta_\downarrow^t\in [\mu,\mu^{-1}]$, we have $\theta_\uparrow^t=\theta_\downarrow^t$. The goal of the remainder of the proof is to establish this.

  Fix $\varepsilon \in (0,1/10]$. As a consequence of Lemma \ref{lem:63}, we know that for any $(x,-n)$ and $(y,n)$ with $(-x),y\in [\mu n/ 2, 2\mu^{-1} n]$, for some constants $C,c$, we have
  \begin{equation}
    \label{eq:217}
    \PP(\Gamma_{(x,n)}^{(y,n),t}\subseteq B_{n^{2/3+\varepsilon}}(\LL_{(x,n)}^{(y,n)})\textrm{ for all }t\in [0,1])\geq 1-Ce^{-cn^{3\varepsilon}}.
  \end{equation}
  We now define the event $\cE_n$ by
  \begin{equation}
    \label{eq:219}
   \cE_n= \{\Gamma_{(x,n)}^{(y,n),t}\subseteq B_{n^{2/3+\varepsilon}}(\LL_{(x,n)}^{(y,n)})\textrm{ for all } (-x),y\in [\mu n/ 2, 2\mu^{-1} n], \textrm{ all } t\in [0,1], \textrm{ all geodesics } \Gamma_{(x,n)}^{(y,n),t}\}.
  \end{equation}
  By using \eqref{eq:217} and taking a union bound over a mesh of $x,y$ and using the ordering of geodesics, it can be obtained that for some constants $C',c'$,
  \begin{equation}
    \label{eq:218}
     \PP(\cE_n)\geq 1-C'e^{-c'n^{3\varepsilon}}.
   \end{equation}
   By applying the Borel-Cantelli lemma, we immediately obtain that a.s.\ the events $\cE_n^c$ occur only for finitely many $n$. Since $[\mu,\mu^{-1}]\subseteq (\mu/2,2\mu^{-1})$, the above immediately implies that almost surely, for every $t\in [0,1]$ admitting a bigeodesic $\Gamma^t$, and the sequence $p_n^t=(\Gamma^t(-n-1),-n)$ and $q_n^t=(\Gamma^t(n),n)$, we must have
   \begin{equation}
     \label{eq:220}
     \Gamma^t\cap [-n,n]_\RR\subseteq B_{n^{2/3+\varepsilon}}(\LL_{p_n^t}^{q_n^t})
   \end{equation}
   for all $n$ large enough. Now, if it were true that $\theta_\uparrow^t\neq \theta_\downarrow^t$, then with $z^t=(\Gamma^t(0),0)$, we would necessarily have %
   \begin{equation}
     \label{eq:221}
     d(z^t,\LL_{p_n^t}^{q_n^t})\geq Cn
   \end{equation}
 for all $n$ large enough, with $d(\cdot,\cdot)$ denoting the Euclidean distance and $C>0$ being a constant depending on $\theta_\uparrow^t,\theta_\downarrow^t$. This is in contradiction with \eqref{eq:220}. This completes the proof.
 \end{proof}

 \section{Appendix 2: Brownian regularity estimates for BLPP line ensembles}
\label{sec:brownian-regularity}
Recall the line ensemble $\cP$ associated to BLPP from Section \ref{sec:ensemble}-- we recall that the dependence of $\cP$ on the parameter $n$ is suppressed for notational ease. As discussed earlier in Section \ref{sec:brown}, a very useful property of $\cP$ is that when viewed locally, each of the individual lines $\cP_k$ are absolutely continuous to a Brownian motion of diffusivity $2$-- many versions of such statements have been developed in the past decade and have led to a various applications. Recently, the work \cite{Dau23} obtained a fine Brownianity result for the Airy line ensemble, the appropriate distributional scaling limit of $\cP$ as $n\rightarrow \infty$. In our work, we require an analogue of the above-mentioned result for the BLPP line ensemble $\cP$. Broadly, the proofs from \cite{Dau23} do adapt to yield the BLPP statements that we require, and the goal of this section is to give an outline the proofs in this case, placing emphasis on the adaptations needed. Throughout this appendix, to improve readability, we shall try to use the same notation as in \cite{Dau23} and explicitly mention the analogous result from \cite{Dau23} for each result stated here-- in case, the proof is the same with only minor differences, we omit it. We now state the main result.

\begin{proposition}
  \label{prop:24}
  Fix $k\in \NN,t\geq 1$ and let $\mathbf{a}\in \RR$ be such that $(\mathbf{a},\mathbf{a}+t)\subseteq [-n^{1/10},n^{1/10}]$. Define $U(\mathbf{a})=[\![1,k]\!]\times (\mathbf{a},\mathbf{a}+t)$. Then there exists a random sequence of continuous functions $\fL^{\mathbf{a}}=\fL^{t,k,\mathbf{a}}$ such that the following hold:
  \begin{enumerate}
  \item Almost surely, $\fL^{\mathbf{a}}$ satisfies $\fL_i^{\mathbf{a}}(r)>\fL_{i+1}^{\mathbf{a}}(r)$ for all $(i,r)\notin U(\mathbf{a})$.
  \item The line ensemble $\fL^{\mathbf{a}}$ satisfies the following Gibbs property. For $1\leq m\leq n+1$ and $-n^{1/3}/2<a<b<\infty$, set $S=[\![1,m]\!]\times [a,b]$. Then conditional on the data given by $\fL_i^{\mathbf{a}}(r)$ for $(i,r)\notin S$, the law of $\fL_i^{\mathbf{a}}(r)$ for $(i,r)\in S$ is given by independent Brownian bridges $B_1,\dots,B_\ell$ from $(a,\fL_i^{\mathbf{a}}(a))$ to $(b,\fL_i^{\mathbf{a}}(b))$ for $i\in [\![1,k]\!]$, additionally conditioned on having $B_i(r)>B_{i+1}(r)$ for all $(i,r)\notin U(\mathbf{a})$.
  \item There exists a constant $c_k$ for which we have
    \begin{equation}
      \label{eq:276}
      \PP(\fL_1^{\mathbf{a}}>\fL_2^{\mathbf{a}}>\dots>\fL_{n+1}^{\mathbf{a}})\geq e^{-c_kt^3}.
    \end{equation}
  \item There is an event $\cE$ which is measurable with respect to the data given by $\cP_i(r)$ for $(i,r)\notin U(\mathbf{a})$, satisfying for a positive constant $c_k$, the inequality
    \begin{equation}
      \label{eq:277}
      \PP(\cE^c)\leq e^{-c_kn},
    \end{equation}
    such that if we condition $\fL^{\mathbf{a}}$ on the event $\{\fL_1^{\mathbf{a}}>\fL_2^{\mathbf{a}}>\dots>\fL_{n+1}^{\mathbf{a}}\}$, then the resulting ensemble has the same law as $\cP$ conditioned on the event $\cE$. %
  \end{enumerate}
\end{proposition}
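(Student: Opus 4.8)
\textbf{Proof proposal for Proposition \ref{prop:24}.} The statement is precisely the BLPP analogue of \cite[Proposition 3.1 and its proof]{Dau23}, which is proved there in the setting of the parabolic Airy line ensemble. The plan is to transcribe that construction, carefully replacing each appeal to a property of the Airy line ensemble by the corresponding property of the prelimiting ensemble $\cP$ (or, equivalently, of $\{P_{k,n}\}_{k=1}^{n+1}$ which is exactly a Dyson Brownian motion of diffusivity $2$). The object $\fL^{\mathbf a}$ is built as follows: start from $\cP$, and resample the lines $\cP_1,\dots,\cP_k$ inside the window $(\mathbf a,\mathbf a+t)$, \emph{dropping} the non-crossing constraint between these $k$ lines on $U(\mathbf a)$ but retaining all other non-crossing constraints (in particular $\cP_k$ must stay above $\cP_{k+1}$, and $\cP_1,\dots,\cP_k$ must match their own values at the endpoints $\mathbf a$ and $\mathbf a+t$). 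Concretely, one conditions on all the data $\cP_i(r)$ for $(i,r)\notin U(\mathbf a)$ and places independent Brownian bridges with diffusivity $2$ on $[\![1,k]\!]\times(\mathbf a,\mathbf a+t)$ subject to exactly the constraints described in item (2); this defines $\fL^{\mathbf a}$. Item (1) is then immediate from the construction, since outside $U(\mathbf a)$ we are either looking at unchanged data from $\cP$ (which is strictly ordered for $x>-n^{1/3}/2$, as noted after \eqref{eq:339}) or at the retained endpoint/inter-level constraints; item (2) is the Brownian Gibbs property of this resampled ensemble, which holds by construction together with the Brownian Gibbs property of $\cP$ itself (Section \ref{sec:brown}).

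For item (3), one needs a lower bound on the probability that the resampled lines, which a priori may cross each other on $U(\mathbf a)$, in fact do not. This is where the diffusivity-$2$ Brownian estimates enter: conditionally on the endpoint values $\cP_i(\mathbf a),\cP_i(\mathbf a+t)$ for $i\in[\![1,k]\!]$ and on $\cP_{k+1}$ restricted to $(\mathbf a,\mathbf a+t)$, the event $\{\fL_1^{\mathbf a}>\dots>\fL_{n+1}^{\mathbf a}\}$ on $U(\mathbf a)$ is a non-crossing event for $k$ Brownian bridges over an interval of length $t$, and the standard lower bound for the probability that $k$ independent Brownian bridges stay ordered (after a deterministic shift) is $e^{-c_k t^3}$ provided the endpoint gaps are not too degenerate. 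To make this unconditional I would first restrict to a high-probability event on which the endpoint values $\cP_i(\mathbf a),\cP_i(\mathbf a+t)$ are ``typical'' in the sense of Lemma \ref{lem:74} (using $|\mathbf a|,|\mathbf a+t|\le n^{1/10}\le n^{1/9}$), and on which $\cP_{k+1}$ does not make an anomalously large upward excursion on the window; outside this event one uses the trivial bound $t\le 2n^{1/10}$ to absorb the loss into the constant $c_k$. This is essentially \cite[Lemma 3.2]{Dau23} carried over verbatim, with the Airy-ensemble one-point/regularity bounds replaced by Lemma \ref{lem:74} and Proposition \ref{prop:47}.

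For item (4), the event $\cE$ should be the event, measurable with respect to $\cP_i(r)$ for $(i,r)\notin U(\mathbf a)$, that the configuration outside $U(\mathbf a)$ is ``well-separated'' enough at the boundary of the window and on the level just below — precisely, that the endpoint gaps $\cP_i(\mathbf a)-\cP_{i+1}(\mathbf a)$ and $\cP_i(\mathbf a+t)-\cP_{i+1}(\mathbf a+t)$ for $i\le k$, and the room above $\cP_{k+1}$, all exceed some polynomially small threshold, so that the disintegration argument of \cite[proof of Proposition 3.1(4)]{Dau23} applies. The key identity is that, conditionally on $\cE$, the law of $\cP$ is exactly the law of $\fL^{\mathbf a}$ conditioned on being ordered: both are obtained from the same outside data by Brownian-bridge resampling on $U(\mathbf a)$, the difference being only whether one imposes the mutual non-crossing constraint among the top $k$ lines on $U(\mathbf a)$; imposing that constraint on $\fL^{\mathbf a}$ recovers the Brownian Gibbs description of $\cP$. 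The probability bound $\PP(\cE^c)\le e^{-c_k n}$ follows from the one-point and gap estimates in Proposition \ref{prop:47} and Lemma \ref{lem:74} together with the non-intersection lower bound \eqref{eq:276}, exactly as the $e^{-c n}$ bound is obtained in \cite{Dau23}; note the parameter range $|\mathbf a|,|\mathbf a+t|\le n^{1/10}$ is exactly what makes the error $e^{-c_k n}$ rather than merely stretched-exponential. The main obstacle I anticipate is bookkeeping: one must check that every quantitative input from \cite{Dau23} that was stated for the Airy line ensemble (one-point deviations, gap lower bounds, non-intersection probabilities, regularity of the boundary data) has a prelimiting counterpart valid uniformly in $n$ for $\cP$ over the window $[-n^{1/10},n^{1/10}]$, and that the constants depend only on $k$ and not on $n$; Lemma \ref{lem:74} and Proposition \ref{prop:47} are designed to supply exactly these, but verifying the matching of ranges (e.g.\ $n^{1/10}$ versus $n^{1/9}$ versus $5n^{2/3}$) and that no estimate is used outside its stated regime is the delicate part of the adaptation.
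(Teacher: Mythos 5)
There is a genuine gap in your construction of $\fL^{\mathbf{a}}$. You define $\fL^{\mathbf{a}}$ by conditioning on $\cP_i(r)$ for $(i,r)\notin U(\mathbf a)$ and then resampling the top $k$ lines on $U(\mathbf a)$ with unconstrained (mutually) Brownian bridges. This is a Markovian modification of $\cP$ on $U(\mathbf a)$: it gives $\fL^{\mathbf a}$ the same marginal law as $\cP$ outside $U(\mathbf a)$, and the stated Gibbs property holds for the special choice $S=U(\mathbf a)$. But property (2) asserts the Gibbs property for \emph{all} rectangles $S\supseteq U(\mathbf a)$, and this fails for your ensemble. Concretely, for $S\supsetneq U(\mathbf a)$ with $m=k$, the conditional law of your $\fL^{\mathbf a}$ on $S\setminus U(\mathbf a)$ given data outside $S$ carries an extra density factor proportional to $\PP_{\mathbf a,\mathbf a+t}\!\bigl(g^k(\mathbf a),g^k(\mathbf a+t),g_{k+1}\bigr)$ (the probability that unconstrained bridges placed over $U(\mathbf a)$ happen not to cross), inherited from the non-intersection constraint of $\cP$ inside $U(\mathbf a)$ before you forgot it. The Gibbs measure described in (2), by contrast, has density $\propto\ind(\text{NI on } S\setminus U(\mathbf a))$ with no such weight, so the two disagree.

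What is missing is the central device of \cite{Dau23}, which the paper follows: define $\fL^{\mathbf a}$ not by the naive resampling $\tilde\eta$ you describe, but by the \emph{tilted} measure $\eta$ with Radon--Nikodym density $\frac{d\eta}{d\tilde\eta}(f)=\PP_{0,t}\!\bigl(f^k(0),f^k(t),f_{k+1}\bigr)^{-1}$; see \eqref{eq:222}. This tilting exactly cancels the extra weight above and is what makes (2) hold for all $S$. The bulk of the work in the paper is then showing $\eta$ is a consistently defined finite measure: the identity $\eta=\eta_s$ (Lemma \ref{lem:71}, the BLPP analogue of \cite[Lemma 3.1]{Dau23}) lets one compute $\eta$ by resampling on arbitrarily large windows $[-s,s]$, the conditional non-crossing lower bound of Lemma \ref{lem:73} together with the tail bound Lemma \ref{lem:75} controls $\EE_{\cF_s}[1/\PP_{0,t}]$ via Lemma \ref{lem:71}, and the event $\cE=\{\cP\in\fav_{t,s(t)}\}$ from \eqref{eq:253}--\eqref{eq:583} (on which $\EE_{\cF_s}[\PP_{0,t}]\geq e^{-n}$) is precisely what truncates the heavy tail of $1/\PP_{0,t}$ so that $\eta\lvert_{\fav_{t,s(t)}}$ has finite mass $\leq e^{c_kt^3}$ (Lemma \ref{lem:78}). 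This also gives (3) more or less for free: $\eta(\NI)=1$ by a Fubini computation, and $\eta(\NI\cap\fav)=\tilde\eta(\fav)\geq 1-e^{-c_kn}$, so the non-crossing probability under $\eta\lvert_{\fav}/\eta(\fav)$ is at least $(1-e^{-c_kn})e^{-c_kt^3}$. Your direct Brownian-bridge argument for (3) and your description of $\cE$ as a boundary-gap event are therefore also not what the paper does; the role of $\cE$ is integrability of $1/\PP_{0,t}$, a finite-$n$ phenomenon that does not appear in the Airy-ensemble setting, not a well-separatedness condition. Your instinct to port \cite[Lemma 3.2]{Dau23} with Proposition \ref{prop:47} and Lemma \ref{lem:74} as inputs is correct and is where the paper's Lemma \ref{lem:75} and Lemmas \ref{cor:2}, \ref{cor:3} come in, but without the tilted measure the construction does not deliver (2).
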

For our application, we will also need some one point tail bounds on $\fL^{\mathbf{a}}$. While \cite{Dau23} proves stronger and more general bounds in the Airy line ensemble setting, in the interest of brevity, we only prove the result the following specific result that we shall require for our application.
\begin{proposition}
  \label{prop:25}
For $t\geq 1$ and $(\mathbf{a},\mathbf{a}+t)\subseteq [-n^{1/10},n^{1/10}]$, consider $\fL^{\mathbf{a}}=\fL^{t,1,\mathbf{a}}$. Then there exist constants $c,c'$ such that for all $x\in \{\mathbf{a},\mathbf{a}+t\}$, and all $r>0$, we have %
  \begin{equation}
    \label{eq:293}
    \PP( |\fL^{\mathbf{a}}_1(x)+x^2|\geq r)\leq e^{c't^3}e^{-cr^{3/2}}.
  \end{equation}
\end{proposition}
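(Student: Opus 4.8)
The plan is to exploit that $x\in\{\mathbf a,\mathbf a+t\}$ is an \emph{endpoint} of the relaxation window, so that $(1,x)\notin U(\mathbf a)$, together with items (3) and (4) of Proposition \ref{prop:24}. Write $\mathrm{ord}=\{\fL^{\mathbf a}_1>\dots>\fL^{\mathbf a}_{n+1}\}$; by item (3) this has positive probability. By item (4), for every Borel $B\subseteq\RR$,
\begin{equation*}
\PP\bigl(\fL^{\mathbf a}_1(x)\in B\,\big|\,\mathrm{ord}\bigr)=\PP\bigl(\cP_1(x)\in B\,\big|\,\cE\bigr)\le \PP(\cP_1(x)\in B)/\PP(\cE)\le 2\,\PP(\cP_1(x)\in B)
\end{equation*}
for $n$ large, using $\PP(\cE^c)\le e^{-cn}$. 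Applying Proposition \ref{prop:47} together with a Taylor expansion (as in the derivation of Lemma \ref{lem:74}, but retaining the sharp moderate-deviation exponent, which is legitimate since $|x|\le n^{1/10}$, and passing to the large-deviation regime of Proposition \ref{prop:47} once $r\ge 5n^{2/3}$, where $r^2n^{-1/3}\ge r^{3/2}$) yields $\PP(|\cP_1(x)+x^2|\ge r)\le Ce^{-cr^{3/2}}$ for all $r>0$, with the lower tail even obeying $Ce^{-cr^3}$. It remains to convert this conditional-on-$\mathrm{ord}$ statement into an unconditional one, which is where the $e^{c't^3}$ loss appears. (If the construction behind Proposition \ref{prop:24} is arranged so that $\fL^{\mathbf a}$ agrees with $\cP$ off $U(\mathbf a)$, then $\fL^{\mathbf a}_1(x)=\cP_1(x)$ and the claim is immediate; in general one argues as below, and we may assume $r\ge1$, the claim being trivial for $r\le1$ since $t\ge1$.)

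For the upper tail I would use
\begin{equation*}
\PP\bigl(\fL^{\mathbf a}_1(x)+x^2\ge r\bigr)\le\frac{\PP\bigl(\fL^{\mathbf a}_1(x)+x^2\ge r,\,\mathrm{ord}\bigr)}{\inf_{s\ge r}\PP\bigl(\mathrm{ord}\,\big|\,\fL^{\mathbf a}_1(x)+x^2=s\bigr)},
\end{equation*}
whose numerator equals $\PP(\mathrm{ord})\,\PP(\fL^{\mathbf a}_1(x)+x^2\ge r\mid\mathrm{ord})\le 2\PP(\cP_1(x)+x^2\ge r)\le Ce^{-cr^{3/2}}$. Pinning the top curve's boundary value \emph{upward} cannot hurt the ordering event: using item (2) to resample a box containing $(1,x)$ in its interior, a monotone coupling raises the whole top curve $\fL^{\mathbf a}_1$, and since on $U(\mathbf a)^c$ the inequalities hold automatically (item (1)) while inside $U(\mathbf a)$ the only constraint involving line $1$ is $\fL^{\mathbf a}_1>\fL^{\mathbf a}_2$, this only helps. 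Hence the denominator is $\ge\PP(\mathrm{ord})\ge e^{-ct^3}$ by item (3), giving the upper-tail bound $Ce^{ct^3}e^{-cr^{3/2}}$.

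For the lower tail the same display is used, but the denominator is now genuinely small and must be estimated quantitatively — this is the main obstacle. Conditionally on $\fL^{\mathbf a}_1(x)=-x^2-s$ with $s\ge r$, and on the remaining boundary data lying in a regularity event of probability $\ge1-e^{-cn}$ (which may be taken inside $\cE$) on which the other boundary heights sit within $O(t)$ of their nominal $-(\,\cdot\,)^2$ values, I would show $\PP(\mathrm{ord}\mid \fL^{\mathbf a}_1(x)=-x^2-s)\ge e^{-c(s^2/t+t^3)}$: the top bridge must recover from depth $s$ below the profile over a window of length at most $t$, costing at least $e^{-cs^2/t}$ by a Brownian-bridge barrier estimate, after which orderedness of the full ensemble costs at least $e^{-ct^3}$ as in item (3). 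Combining with the numerator bound $2\PP(\cP_1(x)+x^2\le -r)\le Ce^{-cr^3}$ gives
\begin{equation*}
\PP\bigl(\fL^{\mathbf a}_1(x)+x^2\le -r\bigr)\le Ce^{-cr^3}e^{c(r^2/t+t^3)}\le e^{c't^3}e^{-cr^{3/2}},
\end{equation*}
the last step being an instance of Young's inequality ($r^3-cr^2/t\ge c'r^{3/2}$ for $r,t\ge1$). The step I expect to demand the most care is precisely the ordering lower bound $e^{-c(s^2/t+t^3)}$ under a pinned-low top endpoint — a Brownian-bridge computation against a lower barrier determined by the remaining, random but $\cE$-controlled, boundary data; this is the $\cP$-analogue of the corresponding estimate in \cite{Dau23}, and modulo that input the rest is routine.
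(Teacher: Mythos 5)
Your approach is genuinely different from the paper's, and it has two gaps, one for each tail, that you acknowledge but do not fill.

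The paper's route is considerably shorter. It does not go through items (3)--(4) of Proposition~\ref{prop:24} at all; instead it uses Lemma~\ref{lem:100} (the quantitative Radon--Nikodym comparison between $\fL^{\mathbf a}$ and $\fB_{[\mathbf a,\mathbf a+t],[\mathbf a-s,\mathbf a+s]}$, whose density tails are controlled by Lemma~\ref{lem:81}) applied with $s(t)=4d_1^{-2}t$, so that any $\fB$-event of probability $\beta$ has $\fL^{\mathbf a}$-probability $\le e^{c't^3}\beta^{1/2}$. Feeding in the one-point tails of $\fB_1$ from Lemma~\ref{lem:85} gives the upper tail directly. For the lower tail the paper uses the observation you did not make: since $x\in\{\mathbf a,\mathbf a+t\}$ lies on the \emph{boundary} of $U(\mathbf a)=\{1\}\times(\mathbf a,\mathbf a+t)$, the point $(1,x)$ is \emph{not} in $U(\mathbf a)$, so item~(1) gives $\fL^{\mathbf a}_1(x)\ge\fL^{\mathbf a}_2(x)$ almost surely. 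Hence $\PP(\fL^{\mathbf a}_1(x)+x^2\le-r)\le\PP(\fL^{\mathbf a}_2(x)+x^2\le-r)$, and the latter is again handled by Lemma~\ref{lem:100} together with the fact that $\fB_{\cdot,2}=\cP_2$ (Lemma~\ref{lem:85}, \eqref{eq:4751}). This sidesteps the entire barrier computation you propose.

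On your own route: the Bayes identity $\PP(A)\le\PP(A\cap\mathrm{ord})/\inf_{s\ge r}\PP(\mathrm{ord}\mid s)$ and the fact $\EE[F\mid X\ge r]\ge\EE[F]$ for nondecreasing $F$ are both fine, so the structure of your upper-tail bound is sound \emph{once you have} monotonicity of $s\mapsto\PP(\mathrm{ord}\mid\fL^{\mathbf a}_1(x)=s)$. But that monotonicity is not as routine as your sketch suggests. Resampling a box $S=\{1\}\times[a,b]$ and pinning $\fL^{\mathbf a}_1(x)$ higher does raise the conditional law of $\fL^{\mathbf a}_1\lvert_S$ \emph{given the boundary data and the lower curves}, and this manifestly helps $\{\fL^{\mathbf a}_1>\fL^{\mathbf a}_2$ on $(\mathbf a,\mathbf a+t)\}$; but the conditional law of the outside data (including $\fL^{\mathbf a}_2$) given $\fL^{\mathbf a}_1(x)=s$ itself depends on $s$, and you do not show that this dependence preserves the monotonicity. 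This needs an FKG-type or Markov-decomposition argument, and it is a real gap. Your lower-tail argument is the harder one and you say so yourself: the estimate $\PP(\mathrm{ord}\mid\fL^{\mathbf a}_1(x)+x^2=-s)\ge e^{-c(s^2/t+t^3)}$ is plausible, but establishing it requires controlling the conditional law of the lower boundary data (which again changes with $s$) and a nontrivial bridge-against-a-random-barrier computation, none of which is carried out. The exponent bookkeeping at the end ($r^3-cr^2/t\gtrsim r^{3/2}$ for $r,t\ge 1$ after discarding small $r$) is correct, but it is contingent on that estimate. A smaller issue: the parenthetical hope that $\fL^{\mathbf a}$ might agree with $\cP$ off $U(\mathbf a)$ is not available here; $\eta$ is a reweighting of $\tilde\eta$, so even the boundary one-point marginals differ.

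In short: your idea of leveraging items (3)--(4) is a legitimate alternative strategy, but it leaves two substantial steps unproved, and the paper's route (Lemma~\ref{lem:100} plus the observation $\fL^{\mathbf a}_1(x)\ge\fL^{\mathbf a}_2(x)$ at endpoints) is both complete and markedly shorter. I'd encourage you to look for that last observation; it is the ingredient that makes the lower tail fall out for free.
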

\begin{remark}
We now briefly remark on the differences between the statement of Proposition \ref{prop:24} when compared to the corresponding statement \cite[Theorem 1.8]{Dau23} for the Airy line ensemble. In Proposition \ref{prop:24}, the presence of the high probability event $\cE$ is the primary aspect which is different when compared to \cite{Dau23}. The reason why this is needed is that $\cP$ is associated to BLPP with a scale parameter, namely `$n$', and one cannot expect to have a Brownian comparison for events of arbitrarily small probability when compared to $n$; for this reason, we work conditional on the high probability event $\cE$. Intuitively, for the Airy line ensemble, one has ``$n=\infty$'' and thus the event $\cE$ reduces to a full probability event and is no longer required. At a more technical level, the reason for requiring the event $\cE$ is that, as opposed to the Airy line ensemble, $\cP(x)$ is only defined for $x\in [-n^{1/3}/2,\infty)$. Due to this, when following the arguments from \cite{Dau23}, one has to be careful in certain pull-back arguments to stay inside the domain of definition of $\cP$-- see the last paragraph in the proof of Lemma \ref{lem:75}.  
\end{remark}

\paragraph{\textbf{Notation from \cite{Dau23} used in this appendix}}
Before starting with the proofs of Propositions \ref{prop:24}, \ref{prop:25}, we import some notation from \cite{Dau23}. We shall work with continuous functions throughout, and for an interval $I\subseteq \RR$, we let $\mathscr{C}^k(I)$ denote the sequence of tuples $f=(f_1,\dots,f_k)$ such that each $f_i\colon I\rightarrow \RR$ is continuous; thus, for instance, we have $\cP\in \mathscr{C}^{n+1}([-n^{1/3}/2,\infty))$ almost surely. For such a tuple $f$, we use $f^\ell$ to denote the first $\ell$ coordinates of $f$, that is, $f=(f_1,\dots,f_\ell)$; for example, for $\ell\leq n$, we have $\cP^\ell=(\cP_1,\dots,\cP_\ell)$. Now for a fixed $k\in \NN$, an interval $I\subseteq \RR$, a set $J\subseteq \RR$, and a function $g$ defined on $I$, we consider the non-intersecting collection
\begin{equation}
  \label{eq:577}
  \NI(g,J)=\{f\in \mathscr{C}^k(I): f_1(s)> f_2(s)>\dots>f_k(s)> g(s) \textrm{ for all } s\in J\}.
\end{equation}
Note that in the above, we also allow $g=-\infty$, the function which is identically equal to $-\infty$. Further, in the above setting, for $J\subseteq I=[s,t]$ and $\mathbf{x},\mathbf{y}\in \RR^k$, we use $\PP_{s,t}(\mathbf{x},\mathbf{y}, g, J)$ to denote the probability that a sequence of independent Brownian motions $B=(B_1,\dots, B_k)$ satisfying $B_i(s)=\mathbf{x}_i$ and $B_i(t)=\mathbf{y}_i$ for all $i\in [\![1,k]\!]$ satisfy the event $\{B\in \NI(g,J)\}$. In case $J=I=[s,t]$, it is omitted from the notation and we simply write $\PP_{s,t}(\mathbf{x},\mathbf{y}, g)$.

\subsection{Proof outline for Proposition \ref{prop:24}}
\label{sec:proof-outl-prop}
With regard to Proposition \ref{prop:24}, we shall primarily discuss the special case $\mathbf{a}=0$. The same proofs works for general values of $\mathbf{a}$ satisfying $(\mathbf{a},\mathbf{a}+t)\subseteq [-n^{1/10},n^{1/10}]$ as well, and we shall give a short discussion of this later. We now set up the relevant objects used in the proofs later.

Now, consider the intervals $[0,t]$ and $[-s,s]$ for $s>t$. Define the line ensemble $\fB$ by $\fB_i(r)=\cP_{i}(r)$ for $(i,r)\notin [\![1,k]\!]\times [0,t]$ and let $\fB^k\lvert_{[0,t]}$ be given by $k$ independent Brownian bridges from $(0, \cP^k(0))$ to $(t,\cP^k(t))$. Use $\tilde{\eta}$ to denote the law of $\fB$; note that, just as $\cP$, $\fB$ and $\tilde{\eta}$ depend on $n$ as well. Let $\eta$ be the measure which is absolutely continuous to $\tilde{\eta}$ with density given by
\begin{equation}
  \label{eq:222}
  \frac{d\eta}{d\tilde{\eta}}(f)= \frac{1}{\PP_{0,t}(f^k(0),f^k(t),f_{k+1})}.
\end{equation}

Fix $s>t$ and define a measure $\eta_s$ by the following procedure. Define a line ensemble $\fB_s$ by $\fB_{s,i}(r)=\cP_i(r)$ for $(i,r)\notin [\![1,k]\!]\times [-s,s]$. Now, given $\cP$ outside the region $[\![1,k]\!]\times [-s,s]$, let $\fB_s^k\lvert_{[-s,s]}$ consist of $k$ independent Brownian bridges from $(-s,\cP^k(-s))$ to $(s,\cP^k(s))$ conditioned on the event $\NI(\cP_{k+1},[-s,0]\cup [t,s])$. We use $\tilde \eta_s$ to denote the law of $\fB_s$. Now, let $\eta_s$ be the measure which is absolutely continuous with respect to $\tilde{\eta}_s$ with density given by
\begin{equation}
  \label{eq:223}
  \frac{d\eta_s}{d\tilde{\eta}_s}(f)=\frac{\PP_{-s,s}(f^k(-s),f^k(s),f_{k+1},[-s,0]\cup [t,s])}{\PP_{-s,s}(f^k(-s),f^k(s),f_{k+1})}.
\end{equation}
Often, we shall use the $\sigma$-algebra $\cF_s$ generated by the values $\cP_i(x)$ for $(i,x)\notin [\![1,k]\!]\times [-s,s]$ and will use $\EE_{\cF_s}$ to denote conditional expectation with respect to it.

The objects introduced above are all taken from \cite[Section 3]{Dau23}. However, we shall require an additional set which we call as $\fav_{t,s}$ and now introduce. For any $0<t<s$, define the collection of functions $\fav_{t,s}\in \mathscr{C}^{n+1}([-n^{1/3}/2,\infty))$ such that we have the following a.s.\ equality of events:
\begin{equation}
  \label{eq:253}
  \{\cP\in \fav_{t,s}\}=\{\fB_s\in \fav_{t,s}\}=\{\EE_{\cF_s}[\PP_{0,t}( \fB_s^k(0),\fB_s^k(t),\fB_{s,k+1})]\geq e^{-n}\}.
\end{equation}
Note that the event $\{\cP\in \fav_{t,s}\}$ is measurable with respect to the data $\cP_{i}(r)$ for $(i,r)\notin [\![1,k]\!]\times [-s,s]$ and recall that $\cP$ and $\fB_s$ are equal outside the region $[\![1,k]\!]\times [-s,s]$. Later, we shall define the event $\cE$ in Proposition \ref{prop:24} (recall that we are working with $\mathbf{a}=0$) by
\begin{equation}
  \label{eq:583}
  \cE=\{\cP\in \fav_{t,s(t)}\}
\end{equation}
for a specific choice $s(t)$ depending on $t$ specified in Lemma \ref{lem:78}. Further, the line ensemble $\fL^{\mathbf{a}}=\fL^{0}$ shall be defined to have the law
\begin{equation}
  \label{eq:584}
 \frac{\eta\lvert_{\fav_{t,s(t)}}}{\eta(\fav_{t,s(t)})}.
\end{equation}
In order to justify the above, the main task is to prove that $\eta\lvert_{\fav_{t,s(t)}}$ is a finite measure as was done in \cite{Dau23} for the corresponding measure for the Airy line ensemble. We now begin stating the main lemmas for achieving the above. We shall simply state lemmas without proof if the proof is the same as the one in \cite{Dau23}, and shall provide more details for the results where there are substantial differences. The following two results rely only on the Brownian Gibbs property which holds both for $\cP$ and for the Airy line ensemble.

\begin{lemma}[{\cite[Lemma 3.1]{Dau23}}]
  \label{lem:71}
  For all $n$ and all $t<s<n^{1/3}/2$, we have $\eta=\eta_s$ and further, we have
  \begin{equation}
    \label{eq:224}
    \EE_{\cF_s}\left[\frac{1}{\PP_{0,t}(\cP^k(0),\cP^k(t),\cP_{k+1})}\right]=\frac{1}{\EE_{\cF_s}[\PP_{0,t}(\fB_s^k(0),\fB_s^k(t),\cP_{k+1})]}.
  \end{equation}
\end{lemma}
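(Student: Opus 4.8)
The plan is to establish this as the BLPP analogue of \cite[Lemma 3.1]{Dau23}, following the same argument essentially verbatim; the only model-specific point to keep track of is that $\cP$ is defined on $[-n^{1/3}/2,\infty)$ rather than on all of $\RR$, but under the hypothesis $t<s<n^{1/3}/2$ all four abscissae $-s,0,t,s$ lie inside this domain (and $[0,t]\subseteq[-s,s]$), so no pull-back issues arise and every object below is well-defined. Both assertions are consequences of the Brownian Gibbs property for $\cP$ (Section \ref{sec:brown}) together with the Markov property of Brownian bridge applied across the nested windows $[0,t]\subseteq[-s,s]$.

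I would first prove the displayed identity. Condition throughout on $\cF_s$. By the Gibbs property for $\cP$ on $[\![1,k]\!]\times[-s,s]$, conditionally on $\cF_s$ the restriction $\cP^k|_{[-s,s]}$ is distributed as a collection $\mathbf{B}=(\mathbf{B}_1,\dots,\mathbf{B}_k)$ of $k$ independent Brownian bridges from $(-s,\cP^k(-s))$ to $(s,\cP^k(s))$ conditioned on $\{\mathbf{B}\in\NI(\cP_{k+1},[-s,s])\}$, an event of conditional probability $\PP_{-s,s}(\cP^k(-s),\cP^k(s),\cP_{k+1})$. Now write $\NI(\cP_{k+1},[-s,s])=\NI(\cP_{k+1},[-s,0]\cup[t,s])\cap\NI(\cP_{k+1},[0,t])$, and observe that conditionally on $\mathbf{B}$ outside $[0,t]$ together with the endpoint values $\mathbf{B}(0),\mathbf{B}(t)$, the segment $\mathbf{B}|_{[0,t]}$ is again $k$ independent Brownian bridges from $(0,\mathbf{B}(0))$ to $(t,\mathbf{B}(t))$, so the conditional probability of $\NI(\cP_{k+1},[0,t])$ is $\PP_{0,t}(\mathbf{B}^k(0),\mathbf{B}^k(t),\cP_{k+1})$. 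A short computation with the tower property then yields
\begin{equation}
  \EE_{\cF_s}\Big[\tfrac{1}{\PP_{0,t}(\cP^k(0),\cP^k(t),\cP_{k+1})}\Big]
  =\frac{\PP_{-s,s}(\cP^k(-s),\cP^k(s),\cP_{k+1},[-s,0]\cup[t,s])}{\PP_{-s,s}(\cP^k(-s),\cP^k(s),\cP_{k+1})},
\end{equation}
since integrating out $\NI(\cP_{k+1},[0,t])$ cancels the reciprocal. Applying the same decomposition to $\fB_s$, whose top $k$ lines on $[-s,s]$ are, conditionally on $\cF_s$, the bridges $\mathbf{B}$ conditioned only on $\NI(\cP_{k+1},[-s,0]\cup[t,s])$, one finds that $\EE_{\cF_s}[\PP_{0,t}(\fB_s^k(0),\fB_s^k(t),\cP_{k+1})]$ equals the reciprocal of the right-hand side above; combining the two displays is the claim.

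For $\eta=\eta_s$, I would compare both measures to the law of $\cP$. Unwinding the definitions, $\eta$ is obtained from $\tilde\eta$ — the $[0,t]$-resampling of $\cP$ with the non-intersection constraint removed — by the reweighting $1/\PP_{0,t}(f^k(0),f^k(t),f_{k+1})$, which is measurable with respect to the data of $\cP$ outside $[\![1,k]\!]\times(0,t)$, and similarly $\eta_s$ is obtained from the $[-s,s]$-resampling $\tilde\eta_s$ by its stated density. Using that, conditionally on $\cF_s$, the $\tilde\eta_s$-law of the top $k$ lines on $[-s,s]$ is $\mathbf{B}$ conditioned on $\NI(\cP_{k+1},[-s,0]\cup[t,s])$, and that restricting $\mathbf{B}$ to $[0,t]$ given its endpoints reproduces the $[0,t]$-bridge resampling, one checks that pushing $\eta_s$ forward along this restriction and cancelling normalizations — precisely here one invokes the identity just established — recovers $\eta$. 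I expect this density-matching to be the main piece of bookkeeping: one has to carry along several Brownian-bridge non-intersection probabilities, verify they are a.s. strictly positive and finite so that all Radon--Nikodym derivatives and conditional expectations are legitimate, and confirm the relevant $\cF_s$-measurability statements. The probabilistic content, however, is nothing beyond the Brownian Gibbs property and the Markov property of Brownian bridge, both of which transfer verbatim from the Airy-line-ensemble setting of \cite{Dau23} to the line ensemble $\cP$.
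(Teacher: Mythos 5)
Your proposal is correct and matches the paper's intent: the paper states Lemma~\ref{lem:71} without proof (citing \cite[Lemma~3.1]{Dau23}) precisely because, as you observe, the argument uses only the Brownian Gibbs property for $\cP$ on the window $[-s,s]$ together with the Markov decomposition of the bridge across $[0,t]\subseteq[-s,s]$, all of which transfers verbatim once one notes $t<s<n^{1/3}/2$ keeps everything inside the domain of $\cP$. One small remark: you don't actually need to "invoke the identity just established" to get $\eta=\eta_s$ — both claims fall out independently of the same Gibbs-property disintegration (compare $\eta$ and $\eta_s$ against the common reference measure of free bridges on $[-s,s]$ and cancel, rather than against the law of $\cP$ itself) — but this is a matter of bookkeeping, not of substance.
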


\begin{lemma}[{\cite[Lemma 3.2]{Dau23}}]
  \label{lem:73}
  Consider the $\cF_s$-measurable random variables
  \begin{align}
    \label{eq:227}
    D&=\sqrt{t}+\max_{r,r'\in [0,t]}|\cP_{k+1}(r)-\cP_{k+1}(r')|\nonumber\\
    M&=\sqrt{s}+\max_{r,r'\in [-s,s]}|\cP_{k+1}(r)-\cP_{k+1}(r')|+ \max_{i\in[\![1,k]\!]}|\cP_i(s)-\cP_i(-s)|.
  \end{align}
  Then for all $2\leq 2t<s<n^{1/3}/2$ and all $n$, we have
  \begin{equation}
    \label{eq:228}
    \EE_{\cF_s}[\PP_{0,t}(\cP_s^k(0),\cP_s^k(t),\cP_{k+1})]\geq \exp( -ck^3s^{-1}(D^2+MD)-cks).
  \end{equation}
  \end{lemma}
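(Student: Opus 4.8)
This is a quantitative Brownian Gibbs ``acceptance probability'' lower bound of the type first developed by Corwin--Hammond, and since $[-s,s]\subseteq(-n^{1/3}/2,n^{1/3}/2)$ lies well inside the domain of definition of $\cP$ whenever $s<n^{1/3}/2$, the Brownian Gibbs property of $\cP$ applies verbatim on $[\![1,k]\!]\times[-s,s]$ and the argument is identical to that of \cite[Lemma 3.2]{Dau23}; in particular the domain subtlety flagged in the Remark plays no role here, so the plan is simply to run that argument. First I would make the conditional law explicit: given $\cF_s$, the curve $\cP_{k+1}$ is fixed on all of $\RR$, the values $\cP^k(-s),\cP^k(s)$ are fixed, and $\fB_s^k\lvert_{[-s,s]}$ is $k$ independent Brownian bridges from $\cP^k(-s)$ to $\cP^k(s)$ conditioned on $\NI(\cP_{k+1},[-s,0]\cup[t,s])$; so one must lower bound $\EE_{\cF_s}[\PP_{0,t}(\fB_s^k(0),\fB_s^k(t),\cP_{k+1})]$ under this law.

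The core step is to localize onto a favourable event for the values of $\fB_s^k$ at the endpoints $0$ and $t$ of the ``hole''. Concretely, fix $\cF_s$-measurable target heights $\Xi_1>\dots>\Xi_k$ and $\Upsilon_1>\dots>\Upsilon_k$, each with consecutive gaps of order $D$ and lowest coordinate of order $D$ above $\cP_{k+1}(0)$, respectively $\cP_{k+1}(t)$, and let $G$ be the event that $\fB_{s,i}(0)$ is within $D$ of $\Xi_i$ and $\fB_{s,i}(t)$ within $D$ of $\Upsilon_i$ for all $i$. On $G$ the endpoints of the hole are ordered and separated on scale $D$, and since $D\ge\sqrt t+\max_{r,r'\in[0,t]}|\cP_{k+1}(r)-\cP_{k+1}(r')|$, a fresh family of $k$ Brownian bridges on $[0,t]$ with such endpoints stays ordered and above $\cP_{k+1}$ with probability at least a positive $k$-dependent constant $c_k$ (Brownian scaling plus a union bound over the ordering and barrier events, each handled by a reflection estimate for a single bridge). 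Hence $\EE_{\cF_s}[\PP_{0,t}(\cdots)]\ge c_k\,\PP_{\cF_s}(G)$.

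It then remains to lower bound $\PP_{\cF_s}(G)$. I would first drop the $\NI$-conditioning at the cost of a factor: writing $\mathbf B$ for $k$ free Brownian bridges on $[-s,s]$ from $\cP^k(-s)$ to $\cP^k(s)$, $\PP_{\cF_s}(G)\ge\PP(\mathbf B\in G\cap\NI(\cP_{k+1},[-s,0]\cup[t,s]))$. Then one forces $\mathbf B$ into a thin tube around an explicit reference profile $f_1>\dots>f_k$ on $[-s,s]$ which equals $\cP^k$ at $\pm s$, ramps up/down on the two side intervals, and on the middle plateau sits at heights realizing $\Xi,\Upsilon$ while staying above $\cP_{k+1}$ with room on scale $D$ and remaining ordered with gaps on scale $D$ on $[-s,0]\cup[t,s]$; being inside this tube forces both $G$ and $\NI(\cP_{k+1},[-s,0]\cup[t,s])$. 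A standard small-ball estimate for Brownian bridges --- writing each $B_i$ as the affine interpolation of its endpoints plus an independent standard bridge, so that the tube event becomes a sup-norm small-ball event for that standard bridge shifted by the reference ``ramp'' correction, whose size is controlled by $D$, $M$ and $k$ together with the ordering $\cP_1(\pm s)>\dots>\cP_{k+1}(\pm s)$ --- then produces, after multiplying over $i\in[\![1,k]\!]$ and optimizing the gap scale (which contributes the $k^3$), a bound $\PP_{\cF_s}(G)\ge\exp(-ck^3s^{-1}(D^2+MD)-cks)$.

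The step I expect to be the main obstacle is precisely this last small-ball computation: choosing the reference profile and target heights so that the resulting exponent is exactly of the claimed form $k^3s^{-1}(D^2+MD)+ks$ and, in particular, so that no term worse than $MD/s$ (such as an $M^2/s$ term) appears. This forces one to measure all heights relative to $\cP_{k+1}$ and to exploit the ordering of the $\cP_i$ at the two endpoints rather than aiming for a fixed canonical configuration; it is the bookkeeping-heavy part of \cite[Section 3]{Dau23} that one must reproduce, while everything else reduces to routine Brownian-bridge estimation.
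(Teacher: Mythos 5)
You correctly identify that the paper defers to \cite[Lemma 3.2]{Dau23} and that the argument carries over verbatim because it uses only the Brownian Gibbs property on the region $[\![1,k]\!]\times[-s,s]$, which lies inside the domain of $\cP$ whenever $s<n^{1/3}/2$; this is exactly what the paper does, stating the lemma with citation and no proof after noting that the result "relies only on the Brownian Gibbs property." Your additional sketch of the underlying Corwin--Hammond-style favourable-event/small-ball computation is consistent with the intended mechanism (the $D^2/s$ and $MD/s$ terms arising as Cameron--Martin ramp costs, the $ks$ term from ordering and floor avoidance on the side intervals), so this matches the paper's intent.
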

In view of the above, in order to prove that $\eta\lvert_{\fav_{t,s(t)}}$ is a finite measure, it is imperative to analyse the tail behaviour of the quantities $M$ and $D$ defined above, and this is done in the following lemma which is a substitute of the result \cite[Lemma 3.4]{Dau23}.

\begin{lemma}[{\cite[Lemma 3.4]{Dau23}}]
  \label{lem:75}
Fix $\delta>0$. For every $k\in \NN$, there exists a $C_k,c_k$ such that for all $s\in (-n^{1/3}/4,n^{1/3}/4)$, $r\in (0,n^{1/3})$ and $a$ satisfying $n^{2/3}/4>a>c_kr^2$, we have
  \begin{equation}
    \label{eq:232}
    \PP(|\cP_k(s)+s^2-\cP_k(s+r)-(s+r)^2|>a)\leq C_k\exp(-\frac{a^2}{(4+\delta)r}).
  \end{equation}
\end{lemma}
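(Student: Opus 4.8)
The plan is to prove Lemma~\ref{lem:75} by comparing the increment of $\cP_k$ over $[s,s+r]$ to that of a Brownian bridge of diffusivity $2$ via the Brownian Gibbs property, on a window $I=[u,v]$ slightly larger than $[s,s+r]$. Set $u=s-\ell$, $v=s+r+\ell$ for a margin $\ell>0$ chosen at the end, and reveal the whole line ensemble $\cP$ except $\cP_k|_I$, so that conditionally $\cP_k|_I$ is a Brownian bridge $B$ (diffusivity $2$) from $(u,\cP_k(u))$ to $(v,\cP_k(v))$ conditioned to stay strictly between $\cP_{k+1}$ and $\cP_{k-1}$ on $I$ (with $\cP_0\equiv+\infty$ when $k=1$). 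Writing $E_x:=\cP_k(x)+x^2$ and using that the window is centred at $s+r/2$, so $u+v=2s+r$, a direct computation absorbs the parabola into the bridge mean: with $\Delta:=B(s+r)-B(s)$ and $\mu:=\EE[\Delta\mid B(u),B(v)]=\tfrac{r}{2\ell+r}(\cP_k(v)-\cP_k(u))$,
\[
\cP_k(s)+s^2-\cP_k(s+r)-(s+r)^2\;=\;-(\Delta-\mu)\;-\;\tfrac{r}{2\ell+r}\bigl(E_v-E_u\bigr).
\]
Conditionally on the revealed data and on the corridor event, $\Delta-\mu$ is a centred Gaussian of variance $2r\cdot\tfrac{2\ell}{2\ell+r}<2r$, tilted by the indicator of the corridor event; so for the two-sided estimate the work reduces to (a) making the endpoint correction $\tfrac{r}{2\ell+r}(E_v-E_u)$ smaller than $\varepsilon a$, (b) lower bounding the conditional corridor probability, and (c) summing the tails.

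\textbf{Favourable event.} For (a)--(b) I would introduce a favourable event $G$, measurable with respect to the revealed data, requiring $|E_u|\vee|E_v|\le A$ and $\sup_{x\in I}(\cP_{k+1}(x)+x^2)\le A$ for a threshold $A$ (taken $\asymp(a^2/r)^{2/3}$ when $a^2/r\ge n$ and $\asymp(a^2/r)^{1/2}n^{1/6}$ otherwise), together with the boundary gaps $\cP_k(u)-\cP_{k+1}(u)$, $\cP_k(v)-\cP_{k+1}(v)$ being $\ge e^{-\delta_0a^2/r}$. On $G$, $\bigl|\tfrac{r}{2\ell+r}(E_v-E_u)\bigr|\le\tfrac{rA}{\ell}\le\varepsilon a$ once $\ell\ge rA/(\varepsilon a)$; and a standard non-intersection estimate over a window of length $2\ell+r$ (the interior obstruction from the concavity of $-x^2$ being of size $\asymp\ell^2$, so the corridor probability is $\ge e^{-\kappa\ell^3}$ times a factor polynomial in the reciprocals of the two endpoint gaps; cf.\ item~(3) of Proposition~\ref{prop:24}) gives conditional corridor probability $\ge e^{-\kappa\ell^3}e^{-2\delta_0a^2/r}n^{-C}$. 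Bounding the tilted Gaussian tail by $\PP(\,\cdot\,)/\PP(\text{corridor})$ then yields
\[
\PP\!\left(\bigl|\cP_k(s)+s^2-\cP_k(s+r)-(s+r)^2\bigr|>a\right)\;\le\;C\,e^{-(1-\varepsilon)^2a^2/(4r)}\,e^{\kappa\ell^3}\,e^{2\delta_0a^2/r}\,n^{C}\;+\;\PP(G^c).
\]

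\textbf{Error terms and parameter choice.} For (c), $\PP(G^c)$ is controlled by the moderate deviation estimates of Proposition~\ref{prop:47} (after the usual Taylor expansion of $2\sqrt{ny}$, needed because the window endpoints may exceed the range $|x|\le n^{1/9}$ of Lemma~\ref{lem:74}), which give $\PP(|E_u|>A)\le C\exp(-c\min\{A^{3/2},A^2n^{-1/3}\})$; the two choices of $A$ make this $\le e^{-a^2/r}$ in the two regimes of the minimum, the supremum of $\cP_{k+1}+x^2$ over $I$ is handled by the same bound and a chaining/union argument over $I$ (of polynomial length in $n$), and $\PP(\text{gap}<e^{-\delta_0a^2/r})\le Ce^{-\delta_0a^2/r}$ by boundedness of the one-point gap density. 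Taking $\ell=rA/(\varepsilon a)$ one checks that $\ell^3\le(\delta_1/\kappa)\,a^2/r$ (so $e^{\kappa\ell^3}\le e^{\delta_1a^2/r}$) and $\ell\le n^{1/3}/4$ (so $u\ge-n^{1/3}/2$, the ``pull-back'' remaining inside the domain of $\cP$): both hold because the hypotheses $a>c_kr^2$ and $a<n^{2/3}/4$ force $r\lesssim n^{1/3}$ and hence $\ell\lesssim n^{1/3}$, with implied constants controlled once $c_k$ is large enough in terms of $\kappa,\varepsilon,\delta_1$ (i.e.\ in terms of $\delta,k$). Collecting exponents, $-(1-\varepsilon)^2\tfrac{a^2}{4r}+\delta_1\tfrac{a^2}{r}+2\delta_0\tfrac{a^2}{r}\le-\tfrac{a^2}{(4+\delta)r}$ for $\varepsilon,\delta_1,\delta_0$ small depending on $\delta$, and the polynomial-in-$n$ prefactors together with the $e^{-a^2/r}$ contribution from $\PP(G^c)$ are absorbed into $C_k$ using $a^2/r\ge c_k^2r^3$ (the case $r\le1$ or $a=O(1)$ being trivial).

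\textbf{Main obstacle.} The crux is the sharp constant $4$. Making the comparison Gaussian have variance essentially $2r$ is automatic, but the two genuinely opposing requirements are that $\ell$ be \emph{large} enough that the amplified endpoint fluctuations $\tfrac{r}{\ell}|E_u|$ are negligible against $a$, and \emph{small} enough that the non-intersection cost $e^{\kappa\ell^3}$ over the window stays subdominant to the target Gaussian tail $e^{-a^2/(4r)}$. These are reconciled precisely by the hypothesis $a>c_kr^2$ with $c_k$ allowed to be a large constant depending on $\delta$ and $k$; once $c_k$ is large, the regime split for the moderate deviation estimates, the domain constraint $u\ge-n^{1/3}/2$, and the compatibility of the window-size bounds all become routine. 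The only other point requiring care is the general-$k$ corridor conditioning (the upper barrier $\cP_{k-1}$), which is handled by revealing all lines except $\cP_k|_I$ and invoking a two-sided non-intersection estimate with $k$-dependent constants.
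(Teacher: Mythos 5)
Your route is genuinely different from the paper's and, as written, it leaves the hardest step unresolved. The paper works with a \emph{one-sided} conditioning window $[0,\lambda]$ (after centering at $s=0$ by Brownian scaling), resamples the first $k$ lines jointly, and replaces the conditioned law by an explicit \emph{stochastically dominated} ensemble of non-intersecting bridges whose endpoints are shifted down by $\mathbf v=\sqrt r\,(1/k,\dots,1)$; the separation introduced by $\mathbf v$ is what makes the ``cost of non-intersection'' bounded below by an explicit $\exp(-k^2\log(k^2\lambda/r))$ factor (\cite[Lemmas 2.4, 2.5]{Dau23}), so no barrier/corridor probability ever needs a lower bound, and the other tail is handled symmetrically by taking $\lambda<0$. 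The choice $\lambda=\sqrt a$ is what reconciles the Gaussian mean $rR_\lambda/\lambda+r\phi_n(\lambda)/\lambda$ with the target exponent and what forces the domain constraint $a<n^{2/3}/4$ (so $-\sqrt a>-n^{1/3}/2$). Your version instead resamples only $\cP_k$ on a symmetric window, which makes the conditional law a single bridge squeezed between the \emph{revealed} barriers $\cP_{k-1}$ and $\cP_{k+1}$, and then you must lower-bound a conditional corridor probability.

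Concretely, the corridor bound is where your argument is incomplete. Your favourable event $G$ controls $|E_u|\vee|E_v|$, $\sup_{I}(\cP_{k+1}+x^2)$, and the two \emph{lower} boundary gaps $\cP_k-\cP_{k+1}$ at $u,v$, but for $k\ge 2$ the bridge is also capped above by $\cP_{k-1}$; nothing in $G$ prevents $\cP_{k-1}$ from descending far (or the corridor width $\cP_{k-1}-\cP_{k+1}$ from pinching in the interior of $I$), in which case the asserted $e^{-\kappa\ell^3}\cdot\text{poly}(\text{gaps})$ lower bound does not follow. You would need additional favourable conditions ($\inf_I(\cP_{k-1}+x^2)\ge -A$, a pointwise lower bound on the gap $\cP_{k-1}-\cP_{k+1}$ throughout $I$, plus a ``boundedness of the gap density'' input) and a proof of the corridor estimate in that form. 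Your cross-reference to item (3) of Proposition~\ref{prop:24} for this lower bound would be circular: that proposition is established downstream of Lemmas~\ref{cor:2} and~\ref{lem:78}, which depend on this very lemma. (Also, the phrase ``polynomial in the reciprocals of the endpoint gaps'' has the dependence inverted — small gaps make the corridor probability small, not large — though the subsequent displayed bound suggests you intended the correct direction.) By contrast, the paper's one-sided shift-and-dominate device bypasses every one of these difficulties, which is why its argument is shorter. Your back-of-envelope on the Gaussian variance $2r\cdot 2\ell/(2\ell+r)<2r$ and the interplay of the constraints $\ell\ge rA/(\varepsilon a)$, $\kappa\ell^3\le\delta_1 a^2/r$, $\ell\le n^{1/3}/4$ with the hypothesis $a>c_kr^2$ is sound, and does mirror the role that $\lambda=\sqrt a$ and $a<n^{2/3}/4$ play in the paper; so if the corridor bound were supplied (and, separately, the $k\ge2$ upper-barrier conditions were added to $G$ with their own tail estimates), your argument could in principle be made to work.
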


\begin{proof}
The constants in this proof shall all depend on $k$ and to avoid clutter, we shall not use subscripts to emphasize this. By a Brownian scaling (see Proposition \ref{prop:46}) argument involving sending the point $n+2n^{2/3}s$ to $n$, it suffices to prove the desired estimate with $s=0$. Before going into the proof, we define the function $\phi_n(\lambda)$ by
  \begin{equation}
    \label{eq:247}
    \phi_n(\lambda)= n^{-1/3}(2n + 2n^{2/3}\lambda - 2\sqrt{n(n+2\lambda n^{2/3})}),
  \end{equation}
  and, by a simple Taylor expansion argument, it can be checked that we have
  \begin{equation}
    \label{eq:248}
    \phi_n(\lambda)\leq \lambda^2
  \end{equation}
  for all $\lambda\geq -n^{1/3}/2$.
  Instead of using the quantity $L_\lambda= \cP_k(0)-\cP_k(r)-r^2$ as in the proof of \cite[Lemma 3.4]{Dau23}, we shall use
  \begin{equation}
    \label{eq:246}
    R_\lambda= \cP_k(0)-\cP_k(\lambda)- \phi_n(\lambda)
  \end{equation}
  The utility of this is that for all $\lambda\in (-n^{1/3}/2,n^{1/3}/2)$, by using Proposition \ref{prop:47}, for some constants $C,c$, we have the estimate
  \begin{equation}
    \label{eq:249}
  \PP(R_\lambda\geq m)\leq \PP(|\cP_k(0)|\geq m/2)+  \PP(|\cP_k(\lambda )+\phi_n(\lambda)|\geq m/2)\leq Ce^{-c m^{3/2}}.
\end{equation}

We now begin with the proof. We first bound $\PP(\cP_k(0)-\cP_k(r)-r^2>a)$-- the proof of the bound $\PP(\cP_k(r)+r^2-\cP_k(0)>a)$ will proceed along similar lines and we shall comment on it at the end.

Reserve $\lambda>r$ to be a parameter that we shall optimize over later. Using $\cF_\lambda$ to denote the $\sigma$-algebra generated by $\cP$ outside the set $[\![1,k]\!]\times [0,\lambda]$, we define $\mathbf{v}=\sqrt{r}(1/k,2/k,\dots,1)$ and let $B$ be a family of $k$ non-intersecting Brownian bridges from $(0,\cP^k(0)-\mathbf{v})$ to $(\lambda, \cP^k(\lambda)-\mathbf{v})$. By a monotonicity argument (\cite[Lemma 2.4]{Dau23}), $\cP\lvert_{[\![1,k]\!]\times [0,\lambda]}$ stochastically dominates $B$, and as a result, we have
  \begin{equation}
    \label{eq:254}
    \PP(\cP_k(0)-\cP_k(r)-r^2>a\lvert \cF_\lambda)\leq  \PP(B_k(0)-B_k(r)> r^2-\sqrt{r}+a\lvert \cF_\lambda).
  \end{equation}
  Let $\tilde B$ be a $k$-tuple of independent Brownian bridges from $(0, \cP^k(0)-\mathbf{v})$ to $(\lambda, \cP^k(\lambda)-\mathbf{v})$. Now, due to the separation introduced by $\mathbf{v}$, it is possible to lower bound $\PP_{0,t}(\cP^k(0)-\mathbf{v}, \cP^k(\lambda)-\mathbf{v},-\infty)$ (see \cite[Lemma 2.5]{Dau23}). Using this, we obtain
  \begin{equation}
    \label{eq:255}
    \PP(B_k(0)-B_k(r)> r^2-\sqrt{r}+a\lvert \cF_\lambda)\leq \exp(k^2\log (k^2\lambda/r))\PP(\tilde B_k(0)-\tilde B_k(r)> a-\sqrt{r}\lvert \cF_\lambda).
  \end{equation}
Note that the above is analogous to (26) in \cite{Dau23}. Now, in our setting, we have the following-- given $\cF_\lambda$, the quantity $\tilde B_k(0)-\tilde B_k(r)$ is a Gaussian with variance $2r(1-r/\lambda)$ and mean $(rR_\lambda/\lambda + r\phi_n(\lambda)/\lambda)$. With this change, we follow along (27) in \cite{Dau23} to obtain
  \begin{equation}
    \label{eq:250}
    \PP(\tilde B_k(0)-\tilde B_k(r)>a-\sqrt{r}\lvert \cF_\lambda)\leq \exp\left(-\frac{a^2}{4r}-\frac{a^2}{4\lambda}+\frac{a}{\sqrt{r}}+a\frac{(R_\lambda)^+}{\lambda}+a\phi_n(\lambda)/\lambda\right)
  \end{equation}
where $(R_\lambda)^+=\max(0,R_\lambda)$. By using \eqref{eq:248}, we can simplify the above to
\begin{equation}
  \label{eq:251}
     \PP(\tilde B_k(0)-\tilde B_k(r)>a-\sqrt{r}\lvert \cF_\lambda)\leq \exp\left(-\frac{a^2}{4r}-\frac{a^2}{4\lambda}+\frac{a}{\sqrt{r}}+a\frac{(R_\lambda)^+}{\lambda}+a\lambda\right).
   \end{equation}
Now, by a computation using \eqref{eq:249}, for all $r<\lambda\leq n^{1/3}/2$, and for some constants $C,c$, we have
   \begin{equation}
     \label{eq:256}
     \EE[ e^{aR_\lambda /\lambda}]\leq C (\lambda/a)e^{ca^3\lambda^{-3}}, 
   \end{equation}
   thereby yielding that
   \begin{equation}
     \label{eq:257}
     \PP(\tilde B_k(0)-\tilde B_k(r)>a-\sqrt{r})\leq C\exp(-\frac{a^2}{4r}-\frac{a^2}{4\lambda}+\frac{a}{\sqrt{r}}+c\frac{a^3}{\lambda^3}+a\lambda+ \log(\lambda/a)).
   \end{equation}
Now, we choose $\lambda = \sqrt{a}$. Note that $\lambda\leq n^{1/3}/2$ since we have assumed that $a<n^{2/3}/4$-- this fact was used in the derivation of \eqref{eq:256} above. Plugging in $\lambda=\sqrt{a}$ in \eqref{eq:257} yields
   \begin{equation}
     \label{eq:467}
     \PP(\tilde B_k(0)-\tilde B_k(r)>a-\sqrt{r})\leq C\exp(-\frac{a^2}{(4+\delta)r})
   \end{equation}
   for all $a>c'r^2$ for some constant $c'$. Finally, since $\lambda=\sqrt{a}$, we have $\exp(k^2(\log (k^2\lambda/r)))=\exp(k^2(\log (k^2\sqrt{a}/r))$ and thus by \eqref{eq:255}, and since $a>c'r^2$,
   \begin{equation}
     \label{eq:468}
\PP(B_k(0)-B_k(r)> r^2+a-\sqrt{r})\leq C\exp(-\frac{a^2}{(4+\delta)r}),     
\end{equation}
and finally, by using \eqref{eq:254}, this provides the desired bound on $\PP(\cP_k(0)-\cP_k(r)-r^2>a)$.

The proof of the corresponding estimate for $\PP(\cP_k(r)+r^2-\cP_k(0)>a)$ is analogous and we now briefly comment on this. Here, we work with a parameter $\lambda<0$ and define $\cF_\lambda = [\![1,k]\!]\times [\lambda, 0]$. We now use exactly the same Brownian-Gibbs argument to achieve an analogous bound on $\PP(\cP_k(r)+r^2-\cP_k(0)>a\lvert \cF_\lambda)$, and finally choose $\lambda=-\sqrt{a}$. Note that, as opposed to the previous case, it is crucial there that $\lambda> -n^{1/3}/2$ since the ensemble $\cP$ is only defined on the set $\NN\times [-n^{1/3}/2,\infty)$, %
and for this reason, we have been working with $a<n^{2/3}/4$ which ensures $\lambda=-\sqrt{a}> -n^{1/3}/2$. This completes the proof.
\end{proof}

As mentioned in the above proof, it is imperative that $\lambda$ stays within the boundaries of the domain of definition of $\cP$, and this is why we work with $a<n^{2/3}/4$. This aspect was not present in \cite[Lemma 3.4]{Dau23} since the Airy line ensemble is defined for all real arguments, and thus the result therein holds with no upper bound on $a$. %
Using the above along with a chaining argument (see \cite[Lemma 2.3]{Dau23}), one can obtain control on the tails of the quantities $D$ and $M$ defined in Lemma \ref{lem:73}.

\begin{lemma}[{\cite[Lemma 3.5]{Dau23}}]
  \label{cor:2}
  For every $k\in \NN$, there exist constants $c_k,c_k'$ such that the following holds. For all $t,s$ satisfying $2\leq 2t< s< n^{1/3}/4$ and all $a$ satisfying $a\sqrt{s}< n^{2/3}/4$, we have
  \begin{equation}
    \label{eq:242}
    \PP(D>a\sqrt{t})\leq e^{c_k t^3 -c_k'a^2}, \PP(M>a\sqrt{s})\leq e^{c_k s^3 -c_k'a^2}.
  \end{equation}
\end{lemma}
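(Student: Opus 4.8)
The plan is to follow the proof of \cite[Lemma 3.5]{Dau23}: reduce the bounds on $D$ and $M$ to oscillation estimates for the parabola-corrected lines $g_i(x):=\cP_i(x)+x^2$, and then derive those from the two-point increment bound Lemma \ref{lem:75} via a dyadic chaining argument. Writing $\mathrm{osc}_J(g)=\sup_{r,r'\in J}|g(r)-g(r')|$ and using $|\cP_{k+1}(r)-\cP_{k+1}(r')|\le|g_{k+1}(r)-g_{k+1}(r')|+|r^2-r'^2|$ together with $\cP_i(s)-\cP_i(-s)=g_i(s)-g_i(-s)$, the quantities of Lemma \ref{lem:73} satisfy
\[
D\le \sqrt t+\mathrm{osc}_{[0,t]}(g_{k+1})+t^2,\qquad
M\le \sqrt s+s^2+\sum_{i=1}^{k+1}\mathrm{osc}_{[-s,s]}(g_i).
\]
Hence it suffices to establish that for an interval $J$ of length $L\ge1$ with $J\subseteq(-n^{1/3}/4,n^{1/3}/4)$ and $L<n^{1/3}$, and for any line $g_i$,
\[
\PP\big(\mathrm{osc}_J(g_i)>a\sqrt L\big)\le e^{\,c_kL^3-c_k'a^2}\qquad\text{whenever }a\sqrt L<n^{2/3}/8,
\]
and then to apply this with $(J,L)=([0,t],t)$ and with $(J,L)=([-s,s],2s)$, taking a union bound over the $O(k)$ lines involved. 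The deterministic terms cause no trouble: after enlarging $c_k$ we may assume $a$ exceeds a large ($k$-dependent) multiple of $L^{3/2}$, for otherwise $c_kL^3\ge c_k'a^2$ and the asserted bound is $\ge1$; and in that regime $\sqrt L+L^2\le\tfrac12 a\sqrt L$.

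For the displayed oscillation bound I would invoke the chaining lemma \cite[Lemma 2.3]{Dau23} with Lemma \ref{lem:75} (used with $\delta=1$) as the input increment estimate. Concretely: partition $J$ at level $j$ into $2^j$ subintervals of length $\ell_j=L2^{-j}$, and bound the increment of $g_i$ across each by $a_j:=\alpha(1+j)\sqrt{\ell_j}$, with $\alpha\asymp a$ chosen small enough that $2\sum_{j\ge0}a_j\le a\sqrt L$. On the event that no level-$j$ increment exceeds $a_j$, the telescoping inequality $\mathrm{osc}_J(g_i)\le2\sum_{j\ge0}\max_{\text{level-}j\text{ intervals}}|\Delta g_i|$ gives $\mathrm{osc}_J(g_i)\le a\sqrt L$. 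By Lemma \ref{lem:75} and a union bound over the $2^j$ subintervals at level $j$, the complementary probability is at most $\sum_{j\ge0}2^jC_k\exp\!\big(-a_j^2/(5\ell_j)\big)=\sum_{j\ge0}2^jC_k\exp\!\big(-\alpha^2(1+j)^2/5\big)$, which is $\le e^{-c_k'a^2}$ since the quadratic in $j$ dominates the factor $2^j$.

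Two conditions must be checked at every scale for Lemma \ref{lem:75} to apply to the relevant increments: the lower threshold $a_j>c_k\ell_j^2$ and the ceiling $a_j<n^{2/3}/4$. After cancellation the former becomes $a(1+j)\ge C_kL^{3/2}2^{-3j/2}$ for a suitable $C_k$, whose worst case is $j=0$ and is exactly the standing assumption $a\gtrsim L^{3/2}$ (when $a$ is below this threshold the target bound is trivial, which is precisely the function of the $e^{c_kL^3}$ prefactor); the latter follows from $a_j\le a_0\le a\sqrt L/2<n^{2/3}/4$. Apart from this check, every step is identical to \cite[Lemma 3.5]{Dau23}. I expect this domain/range bookkeeping — the same issue already flagged in the proof of Lemma \ref{lem:75}, arising because $\cP$ is only defined on $[-n^{1/3}/2,\infty)$ — to be the sole (mild) obstacle, and it is exactly what forces the hypotheses $2t<s<n^{1/3}/4$ and $a\sqrt s<n^{2/3}/4$ into the statement.
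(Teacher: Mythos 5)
Your proof is correct and follows the same route as the paper, which simply invokes the chaining lemma \cite[Lemma 2.3]{Dau23} with Lemma \ref{lem:75} as the increment estimate; you have filled in exactly that computation, including the domain/range bookkeeping that forces the hypotheses $s<n^{1/3}/4$ and $a\sqrt{s}<n^{2/3}/4$. One cosmetic slip: $(1+j)2^{-j/2}$ peaks at $j=2$ rather than $j=0$, so the claim $a_j\le a_0$ should read $a_j\le\tfrac{3}{2}a_0$, but this constant is absorbed harmlessly.
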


The above lemma can now be used to provide tail estimates on the quantity from Lemma \ref{lem:71}.
\begin{lemma}[{\cite[Corollary 3.6]{Dau23}}]
  \label{cor:3}
  There exists constants $\mu_k,d_k,c_k'>0$ such that for all $2\leq 2t< s<n^{1/3}/4$ and all $\varepsilon>0$ with $\alpha\coloneqq \log \varepsilon^{-1}$ satisfying
  \begin{equation}
    \label{eq:252}
    \mu_k\sqrt{\alpha(s/t)^{1/2}}\sqrt{s}<n^{2/3}/4,
  \end{equation}
  we have
  \begin{equation}
    \label{eq:258}
    \PP(\EE_{\cF_s}[\PP_{0,t}(\fB_s^k(0),\fB_s^k(t),\fB_{s,k+1})]\leq \varepsilon)\leq e^{c_k's^3}e^{-d_k\alpha \sqrt{s/t}}.
  \end{equation}
\end{lemma}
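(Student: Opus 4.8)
The plan is to feed the deviation estimates of Lemma~\ref{cor:2} into the quenched lower bound of Lemma~\ref{lem:73}, following the proof of \cite[Corollary~3.6]{Dau23}. By Lemma~\ref{lem:73}, on the event $\{\EE_{\cF_s}[\PP_{0,t}(\fB_s^k(0),\fB_s^k(t),\fB_{s,k+1})]\leq\varepsilon\}$ we must have $ck^3 s^{-1}(D^2+MD)+cks\geq\alpha$, where $\alpha=\log\varepsilon^{-1}$, $c$ is the absolute constant of Lemma~\ref{lem:73}, and $D,M$ are the $\cF_s$-measurable quantities appearing there. We may assume $\alpha\geq 2cks$: otherwise $d_k\alpha\sqrt{s/t}\leq 2ck d_k\, s\sqrt{s/t}\leq 2ck d_k\, s^3\leq c_k's^3$ once $d_k$ is taken small enough (using $s,t\geq 1$), so the claimed bound $e^{c_k's^3}e^{-d_k\alpha\sqrt{s/t}}$ is at least $1$ and holds trivially. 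Under this assumption the $cks$ term is absorbed, leaving $D^2+MD\geq X$ with $X\coloneqq\alpha s/(2ck^3)$.

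The structural observation I would use is that $D\leq M$: indeed $D=\sqrt t+\max_{r,r'\in[0,t]}|\cP_{k+1}(r)-\cP_{k+1}(r')|$, while $M$ dominates $\sqrt s+\max_{r,r'\in[-s,s]}|\cP_{k+1}(r)-\cP_{k+1}(r')|$, and $[0,t]\subseteq[-s,s]$ with $t<s$. Consequently $D^2+MD=D(D+M)\leq 2DM$, so the bad event is contained in $\{DM\geq X/2\}$. I would then threshold the product at the \emph{balanced} scale $\lambda\coloneqq\sqrt X\,(t/(4s))^{1/4}$, so that $\{DM\geq X/2\}\subseteq\{D\geq\lambda\}\cup\{M\geq X/(2\lambda)\}$. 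Writing $\lambda=a_1\sqrt t$ and $X/(2\lambda)=a_2\sqrt s$, a short computation gives $a_1^2=a_2^2=\alpha\sqrt{s/t}/(4ck^3)$ and $a_1\sqrt s=a_2\sqrt s=c_k^{\sharp}\sqrt{\alpha(s/t)^{1/2}}\,\sqrt s$ for a $k$-dependent constant $c_k^{\sharp}$. Taking $\mu_k\geq c_k^{\sharp}$, the hypothesis \eqref{eq:252} is precisely the assertion that $a_1\sqrt s,a_2\sqrt s<n^{2/3}/4$, so Lemma~\ref{cor:2} applies and yields
\begin{equation*}
  \PP(D\geq a_1\sqrt t)+\PP(M\geq a_2\sqrt s)\leq e^{c_k t^3}e^{-c_k'a_1^2}+e^{c_k s^3}e^{-c_k'a_2^2}\leq 2e^{c_k s^3}e^{-c_k'\alpha\sqrt{s/t}/(4ck^3)},
\end{equation*}
where we used $t\leq s$. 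Enlarging $c_k'$ to absorb the factor $2$ and shrinking $d_k$ if necessary then gives \eqref{eq:258}.

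The hard part is exactly this scale-matching. A naive split of $D^2+MD\geq X$ into $\{D^2\geq X/2\}$ or $\{MD\geq X/2$, treated by putting all the deviation on $M\}$ leads to a threshold $a$ with $a\sqrt s$ of order $\sqrt{\alpha}\,(s/t)^{1/2}\sqrt s$, which can exceed $n^{2/3}$ when $s/t$ is large and thus leaves the regime where Lemma~\ref{cor:2} applies. It is the reduction to the product event $DM\geq X/2$ (using $D\leq M$) followed by balancing the two thresholds that simultaneously yields the $\sqrt{s/t}$ gain in the exponent of \eqref{eq:258} and fixes the quantity $\sqrt{\alpha(s/t)^{1/2}}\sqrt s$ appearing in the hypothesis \eqref{eq:252}; the rest is routine bookkeeping of $k$-dependent constants.
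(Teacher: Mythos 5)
Your proof is correct, and while it rests on the same two pillars as the paper's argument (the quenched lower bound of Lemma~\ref{lem:73} fed into the deviation estimates of Lemma~\ref{cor:2}, together with a trivial-case reduction on $\alpha$), the intermediate combinatorics are genuinely different. The paper, following \cite{Dau23}, first centers the fluctuations by defining $D'=t^{-1/2}D-c_k't^{3/2}$, $M'=s^{-1/2}M-c_k's^{3/2}$, and $X=D'\vee M'$ (so that $\PP(X>a)\leq e^{-c_k'a^2}$ with a single tail), then re-expands Lemma~\ref{lem:73} into the polynomial bound $\exp(-\gamma_k(X^2\sqrt{t/s}+s\sqrt{t}X+st^2))$ in \eqref{eq:244}, and finally splits the event $\{\cdots\geq\alpha\}$ into a quadratic-in-$X$ piece and a linear-in-$X$ piece, arguing that under the WLOG hypothesis $\alpha\geq c_k s^{5/2}t^{1/2}$ the linear event is contained in the quadratic one. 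Your argument instead works directly with $D$ and $M$: the observation $D\leq M$ collapses $D^2+MD$ to the product $2DM$, and you then threshold this product at the balanced scale so that both resulting events are handled by Lemma~\ref{cor:2} with the same deviation parameter $a$. This avoids introducing $X$, the re-expansion step \eqref{eq:244}, and the containment argument for the linear event, and it gets away with the weaker reduction $\alpha\geq 2cks$. Both routes pinpoint the same threshold $a\sqrt{s}=c_k^\sharp\sqrt{\alpha(s/t)^{1/2}}\sqrt{s}$, which is exactly why the hypothesis \eqref{eq:252} appears; the constant bookkeeping at the end is the same in spirit. Your version is arguably a bit more self-contained, while the paper's version mirrors the presentation of \cite{Dau23} and so makes the comparison to the Airy-line-ensemble case more transparent.
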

\begin{proof}
  Without loss of generality, we can assume $\alpha \geq c_ks^{5/2}t^{1/2}$ for a large constant $c_k$. Now, by applying Lemma \ref{cor:2}, we obtain that for an appropriately chosen constant $c_k'$, if we define $D'=t^{-1/2}D-c_k't^{3/2}$, $M'=s^{-1/2}M-c_k's^{3/2}$,  $X=D'\vee M'$, then for all $a\sqrt{s}<n^{2/3}/4$, and all $n$ large enough depending on $k$, we have
  \begin{equation}
    \label{eq:469}
    \PP(X>a)\leq e^{-c_k'a^2}.
  \end{equation}
  Now, by Lemma \ref{lem:73}, for some constant $\gamma_k$, we obtain
  \begin{equation}
    \label{eq:244}
    \EE_{\cF_s}[\PP_{0,t}(\fB_s^k(0),\fB_s^k(t),\fB_{s,k+1})]\geq \exp(-\gamma_k(X^2\sqrt{t/s}+s\sqrt{t}X+st^2))
  \end{equation}
  and for all $\alpha$ satisfying $(\sqrt{\gamma_k^{-1}\alpha\sqrt{s/t}/4})\sqrt{s}<n^{2/3}/4$,
  \begin{align}
    \label{eq:245}
    \PP(\gamma_k(X^2\sqrt{t/s}+s\sqrt{t}X+st^2)\geq \alpha)&\leq \PP(X^2\geq \gamma_{k}^{-1} \sqrt{s/t}\alpha/4)+ \PP(X\geq \gamma_k^{-1}\alpha/(4s\sqrt{t}))\nonumber\\
    &\leq 2\PP(X^2\geq \gamma_k^{-1} \sqrt{s/t}\alpha/4)\nonumber\\
    &\leq 2\exp(-d_k(\alpha \sqrt{s/t})).
  \end{align}
  where to obtain the first and second inequality, we choose $c_k$ to be large enough depending on $\gamma_k$ and use that $\alpha \geq c_ks^{5/2}t^{1/2}$. The final inequality is obtained by applying \eqref{eq:469}. Now, to complete the proof, we simply define $\mu_k=\sqrt{\gamma_k^{-1}/4}$-- note that this ensures that $\mu_k\sqrt{\alpha(s/t)^{1/2}}\sqrt{s}=(\sqrt{\gamma_k^{-1}\alpha\sqrt{s/t}/4})\sqrt{s}<n^{2/3}/4$.
\end{proof}
At this point, we are ready to define the mapping $t\mapsto s(t)$ appearing in \eqref{eq:583} and \eqref{eq:584}. With $d_k$ being the constant in Lemma \ref{cor:3}, we define $s(t)=4d_k^{-2} t$. Now, in the following, we use Lemma \ref{cor:3} to control the total mass of the measure $\eta\lvert_{\fav_{t,s(t)}}$.
\begin{lemma}
  \label{lem:78}
There exist a constant $c_k$ such that for all $1\leq t\leq n^{1/10}$, we have
  \begin{equation}
    \label{eq:259}
\eta(\fav_{t,s(t)})=\EE
\left[
  \frac{1}{\EE_{\cF_{s(t)}}[\PP_{0,t}(\fB_{s(t)}^k(0),\fB_{s(t)}^k(t),\fB_{s(t),k+1})]}\ind(\fB_{s(t)}\in \fav_{t,s(t)})\right]\leq e^{c_kt^3}.
\end{equation}
Also, for some constant $c'_k$, we have
\begin{equation}
  \label{eq:263}
  \PP(\fB\in (\fav_{t,s(t)})^c)=\PP(\fB_s\in (\fav_{t,s(t)})^c)\leq e^{-c'_kn}.
\end{equation}
\end{lemma}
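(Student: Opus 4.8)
The plan is to prove the two assertions of Lemma \ref{lem:78} in turn, both of which follow by combining the earlier structural lemmas with the tail estimate Lemma \ref{cor:3}, together with the particular choice $s(t) = 4d_k^{-2}t$ already fixed just above the statement. Let us abbreviate $s = s(t)$ and $\cG_t = \{\fB_s \in \fav_{t,s}\} = \{\EE_{\cF_s}[\PP_{0,t}(\fB_s^k(0),\fB_s^k(t),\fB_{s,k+1})] \geq e^{-n}\}$; recall from \eqref{eq:253} that this event only depends on $\cP$ (equivalently $\fB_s$) outside the region $[\![1,k]\!]\times[-s,s]$, hence is $\cF_s$-measurable.

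First I would prove \eqref{eq:263}. By \eqref{eq:253}, $\cG_t^c = \{\EE_{\cF_s}[\PP_{0,t}(\fB_s^k(0),\fB_s^k(t),\fB_{s,k+1})] < e^{-n}\}$, so I want to apply Lemma \ref{cor:3} with $\varepsilon = e^{-n}$, i.e. $\alpha = \log\varepsilon^{-1} = n$. The hypothesis of Lemma \ref{cor:3} requires $2 \le 2t < s < n^{1/3}/4$ and $\mu_k\sqrt{\alpha(s/t)^{1/2}}\sqrt{s} < n^{2/3}/4$. Since $s/t = 4d_k^{-2}$ is a fixed $k$-dependent constant, the quantity $\mu_k\sqrt{\alpha(s/t)^{1/2}}\sqrt{s}$ is of order $\sqrt{n}\cdot\sqrt{s} = \sqrt{n}\cdot\sqrt{4d_k^{-2}t}$, which for $t \le n^{1/10}$ is at most $O(n^{1/2 + 1/20})$, comfortably smaller than $n^{2/3}/4$ for $n$ large; similarly $s = 4d_k^{-2}t \le 4d_k^{-2}n^{1/10} < n^{1/3}/4$ for large $n$, and $2t < s$ holds provided $d_k < \sqrt{2}$ (which one can always arrange by shrinking $d_k$, since Lemma \ref{cor:3} remains true with a smaller constant). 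Thus Lemma \ref{cor:3} gives $\PP(\cG_t^c) \le e^{c_k's^3}e^{-d_k n\sqrt{s/t}} = e^{c_k'(4d_k^{-2}t)^3}e^{-d_k\cdot(2d_k^{-1})\cdot n} = e^{O(t^3)}e^{-2n}$, and since $t \le n^{1/10}$ we have $t^3 \le n^{3/10} = o(n)$, so this is at most $e^{-n}$ for all $n$ large, i.e. $\PP(\fB_s \in (\fav_{t,s})^c) \le e^{-c_k'n}$ after relabelling the constant; the equality $\PP(\fB \in (\fav_{t,s})^c) = \PP(\fB_s \in (\fav_{t,s})^c)$ is immediate since $\fB$ and $\fB_s$ agree with $\cP$ outside $[\![1,k]\!]\times[-s,s]$ and the event is measurable with respect to that exterior data.

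For \eqref{eq:259}, I would first establish the displayed identity for $\eta(\fav_{t,s})$ and then bound it. By the definition of $\eta$ via \eqref{eq:222} (with $\tilde\eta$ the law of $\fB$), and then using Lemma \ref{lem:71} (valid since $t < s < n^{1/3}/2$) to rewrite $\EE_{\cF_s}[1/\PP_{0,t}(\cP^k(0),\cP^k(t),\cP_{k+1})]$ as $1/\EE_{\cF_s}[\PP_{0,t}(\fB_s^k(0),\fB_s^k(t),\cP_{k+1})]$, one gets $\eta(\fav_{t,s}) = \EE_{\tilde\eta}[\frac{d\eta}{d\tilde\eta}\ind(\fav_{t,s})] = \EE[\frac{1}{\PP_{0,t}(\cP^k(0),\cP^k(t),\cP_{k+1})}\ind(\cP\in\fav_{t,s})]$, and since $\fav_{t,s}$ is $\cF_s$-measurable one can take $\EE_{\cF_s}$ inside to obtain $\EE[\EE_{\cF_s}[\frac{1}{\PP_{0,t}(\cP^k(0),\cP^k(t),\cP_{k+1})}]\ind(\cP\in\fav_{t,s})] = \EE[\frac{1}{\EE_{\cF_s}[\PP_{0,t}(\fB_s^k(0),\fB_s^k(t),\fB_{s,k+1})]}\ind(\fB_s\in\fav_{t,s})]$, which is the claimed formula. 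Now to bound it: on the event $\{\fB_s\in\fav_{t,s}\}$ one might be tempted to use the crude bound $1/\EE_{\cF_s}[\cdots] \le e^n$, but that is far too lossy; instead I would integrate by parts over the tail, splitting the value $V := \EE_{\cF_s}[\PP_{0,t}(\fB_s^k(0),\fB_s^k(t),\fB_{s,k+1})]$ into dyadic scales $V \in [e^{-2^{j+1}}, e^{-2^j}]$ for $j$ ranging up so that $e^{-2^j} \le e^0$ down to the cutoff $V \ge e^{-n}$, i.e. writing $\EE[V^{-1}\ind(V \ge e^{-n})] \le \sum_j e^{2^{j+1}}\PP(V \le e^{-2^j})$ (plus the contribution $V \ge 1$, bounded by $1$), and then applying Lemma \ref{cor:3} with $\varepsilon = e^{-2^j}$, i.e. $\alpha = 2^j$, which gives $\PP(V \le e^{-2^j}) \le e^{c_k's^3}e^{-d_k 2^j\sqrt{s/t}} = e^{O(t^3)}e^{-2\cdot 2^j}$; each term is then $e^{2^{j+1}}\cdot e^{O(t^3)}e^{-2^{j+1}} = e^{O(t^3)}$, and summing over the $O(\log n)$ relevant values of $j$ yields the bound $e^{O(t^3)}\cdot O(\log n) \le e^{c_k t^3}$ after enlarging the constant (using $t \ge 1$ so $\log n$ is absorbed), subject again to the Lemma \ref{cor:3} hypothesis $\mu_k\sqrt{2^j(s/t)^{1/2}}\sqrt{s} < n^{2/3}/4$, which holds for all $j \le \log_2 n$ because then $2^j \le n$ and, as in the first part, $s/t$ is constant and $s \le 4d_k^{-2}n^{1/10}$, so the left side is $O(\sqrt{n\cdot s}) = O(n^{1/2 + 1/20})$.

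The main obstacle I anticipate is not any single step but the bookkeeping of the parameter ranges: one must check carefully that the choice $s(t) = 4d_k^{-2}t$ is compatible simultaneously with the constraints $2t < s$, $s < n^{1/3}/4$, and the $n^{2/3}/4$-type upper bounds in Lemma \ref{cor:3}, uniformly over $1 \le t \le n^{1/10}$ and over the full dyadic range of $\alpha = 2^j$ used in the summation — in particular, ensuring the "$\mu_k\sqrt{\alpha(s/t)^{1/2}}\sqrt{s} < n^{2/3}/4$" condition survives for $\alpha$ as large as $n$, which it does precisely because $t$ is constrained to be polynomially small in $n$. The dyadic-summation step replacing the naive $e^n$ bound is what actually produces the clean $e^{c_k t^3}$ (rather than something like $e^{c_k n}$), so getting that telescoping right — the cancellation $e^{2^{j+1}}\cdot e^{-2^{j+1}} = 1$ up to the $e^{O(t^3)}$ prefactor — is the conceptual heart of the argument, and everything else is routine.
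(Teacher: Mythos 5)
Your treatment of \eqref{eq:263} is correct and matches the paper's, and your derivation of the identity in \eqref{eq:259} via \eqref{eq:222}, $\cF_s$-measurability of $\fav_{t,s}$, and Lemma~\ref{lem:71} is also fine. The gap is in the final bound on \eqref{eq:259}, and it is a genuine one rather than a bookkeeping issue, because it sits exactly at the step you identify as the ``conceptual heart.''

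Your doubly-exponential dyadic decomposition $V\in[e^{-2^{j+1}},e^{-2^j}]$ produces, as you compute, a per-term bound $e^{2^{j+1}}\cdot e^{c_k's^3}e^{-2\cdot 2^j}=e^{c_k's^3}$, i.e.\ a \emph{constant} per term. There are $\Theta(\log n)$ levels $j$ with $2^j\le n$, so the total is $\Theta(\log n)\cdot e^{O(t^3)}$. This cannot be absorbed into $e^{c_kt^3}$: the lemma allows $t$ as small as $1$, in which case $e^{c_kt^3}$ is a fixed $n$-independent constant while $\log n\to\infty$. The cancellation ``$e^{2^{j+1}}\cdot e^{-2^{j+1}}=1$'' that you highlight is precisely the wrong outcome to aim for; one needs the summand to \emph{decay} across scales, not to stay constant, and the doubly-exponential spacing throws that decay away. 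The point of the exponent $2$ in the tail bound $\PP(V\le\varepsilon)\le e^{c_kt^3}\varepsilon^{2}$ is that it beats the $\varepsilon^{-1}$ blow-up by a full power, leaving a geometrically summable remainder. Concretely, the paper's ``immediately implies'' is the integral
\begin{equation*}
  \EE[V^{-1}\ind(V\ge e^{-n})]\ \le\ 1+\int_{1}^{e^n}\PP(V\le u^{-1})\,du\ \le\ 1+e^{c_kt^3}\int_{1}^{\infty}u^{-2}\,du\ =\ 1+e^{c_kt^3},
\end{equation*}
with the convergence of $\int_1^\infty u^{-2}\,du$ doing the work that your constant-per-level cancellation fails to do. If you insist on a discrete decomposition, the fix is to use \emph{linearly} spaced levels in the exponent, $V\in[e^{-(j+1)},e^{-j}]$ for integers $j\ge 0$: then the $j$th term is at most $e^{j+1}\cdot e^{c_k't^3}e^{-2j}=e\,e^{c_k't^3}e^{-j}$, which is geometrically summable with $n$-independent total.

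Everything else in the proposal — the verification that $s(t)=4d_k^{-2}t$ makes $d_k\sqrt{s(t)/t}=2$, the check that the hypotheses $2t<s<n^{1/3}/4$ and $\mu_k\sqrt{\alpha(s/t)^{1/2}}\sqrt{s}<n^{2/3}/4$ hold uniformly for $1\le t\le n^{1/10}$ and $\alpha\le n$, and the identity $\PP(\fB\in(\fav_{t,s})^c)=\PP(\fB_s\in(\fav_{t,s})^c)$ via $\cF_s$-measurability — is sound and agrees with the paper's argument.
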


\begin{proof}
Note that we can choose a constant $C_k$ such that for $1\leq t\leq n^{1/10}$ and for all $\alpha\leq C_kn^{37/30}$, we have
  \begin{equation}
    \label{eq:260}
    \mu_k\sqrt{\alpha(s(t)/t)^{1/2}}\sqrt{s(t)}\leq 4\mu_kd_k^{-2}\sqrt{\alpha} n^{1/20} <n^{2/3}/4.
  \end{equation}
  In particular, by Lemma \ref{cor:3}, for all $\varepsilon \geq e^{-n}$, or equivalently, all $\alpha\leq n$, we have,
  \begin{equation}
    \label{eq:261}
    \PP(\EE_{\cF_{s(t)}}[\PP_{0,t}(\fB_{s(t)}^k(0),\fB_{s(t)}^k(t),\fB_{s(t),k+1})]\leq \varepsilon)\leq e^{c_kt^3}\varepsilon^2.
  \end{equation}
  Now, recall that by the definition of $\fav_{t,s(t)}$, on the event $\{\fB_{s(t)}\in \fav_{t,s(t)}\}$, we have
  \begin{equation}
    \label{eq:262}
    \EE_{\cF_{s(t)}}[\PP_{0,t}(\fB_{s(t)}^k(0),\fB_{s(t)}^k(t),\fB_{s(t),k+1})]\geq e^{-n},
  \end{equation}
  and thus \eqref{eq:261} immediately implies \eqref{eq:259}. Also, since we are working with $t\leq n^{1/10}$ which implies $t^3\ll n$, \eqref{eq:263} is an immediate consequence of applying \eqref{eq:261} with $\varepsilon=e^{-n}$.
\end{proof}

\begin{proof}[Proof of Proposition \ref{prop:24} in the case $\mathbf{a}=0$]
  By Lemma \ref{lem:78}, it follows that $\eta\lvert_{\fav_{t,s(t)}}$ is a finite measure, and thus we can legitimately define $\fL^{\mathbf{a}}=\fL^{0}$ to be the line ensemble whose law given by \eqref{eq:584}. With the event $\cE$ defined as in \eqref{eq:583}, we obtain by using \eqref{eq:263} that
  \begin{equation}
    \label{eq:585}
    \PP(\cE^c)=\PP(\fB_s\in (\fav_{t,s(t)})^c)\leq e^{-c_kn},
  \end{equation}
  thereby yielding the required bound \eqref{eq:277}.
Verifying that the above-defined $\fL^{\mathbf{a}}$ satisfies the remaining properties in the statement of Proposition \ref{prop:24} is the same as the corresponding steps in the completion of proof of Theorem 1.8 in \cite{Dau23}, and for this reason, we omit the remaining details.
\end{proof}
Until now we have discussed Proposition \ref{prop:24} for the special case $\mathbf{a}=0$. We now briefly discuss how the result can be obtained for general values of $\mathbf{a}$-- the proof is the same, albeit with notational changes. For an interval $A=[c,d]$, let $\cF_{A}$ denote the $\sigma$-algebra generated by the set $\{\cP_i(x): (i,x)\notin [\![1,k]\!]\times A$. For an interval $B=[a,b]\subseteq [c,d]$ for all $i$, we define a line ensemble $\fB_{B,A}$ in a similar manner to the definition of $\fB_s$ earlier. That is, we define $\fB_{B,A,i}(r)=\cP_i(r)$ for all $(i,r)\notin [\![1,k]\!]\times A$ and conditionally on this, we define $\fB_{B,A}^k\lvert_{A}$ to be a Brownian bridge from $(c,\cP^k(c))$ to $(d,\cP^k(d))$ additionally conditioned on the event $\NI(\cP_{k+1},A\setminus B)$. For a line ensemble $f$, consider the random variable
\begin{equation}
  \label{eq:268}
  Z(f,B)= \PP_{a,b}(f^k(a),f^k(b),f_{k+1}).
\end{equation}
The following is an analogue of Lemma \ref{lem:71}.

\begin{lemma}
  \label{lem:80}
  For any $A,B$ as above, we have
  \begin{equation}
    \label{eq:269}
    \EE_{\cF_A}[Z(\fB_{B,B},B)^{-1}]=[\EE_{\cF_A}Z(\fB_{B,A},B)]^{-1}.
  \end{equation}
\end{lemma}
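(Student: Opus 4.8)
The plan is to mirror the proof of \cite[Lemma 3.1]{Dau23} (Lemma \ref{lem:71} here), using only the Brownian Gibbs property of $\cP$ and the Markov property of Brownian bridges. Since $B=[a,b]\subseteq[c,d]=A$ and $k+1\le n+1$, the Brownian Gibbs property says that, conditionally on $\cF_A$, the restriction $\cP^k\lvert_A$ has the law of $k$ independent Brownian bridges on $A$ from $(c,\cP^k(c))$ to $(d,\cP^k(d))$ conditioned on the event $\NI(\cP_{k+1},A)$ (ordering among themselves and staying above $\cP_{k+1}$ throughout $A$). Denote this conditional law by $\nu$, and denote by $\nu_0$ the conditional law, given $\cF_A$, of $\fB_{B,A}^k\lvert_A$, i.e.\ $k$ independent Brownian bridges on $A$ with the same endpoints conditioned only on $\NI(\cP_{k+1},A\setminus B)$. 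Because $\NI(\cP_{k+1},A)=\NI(\cP_{k+1},A\setminus B)\cap\NI(\cP_{k+1},B)$, one has $d\nu/d\nu_0(f)=\mathbbm{1}(f\in\NI(\cP_{k+1},B))/\nu_0(\NI(\cP_{k+1},B))$; the normalizer is a.s.\ positive since $\cP$ is a.s.\ a strictly ordered line ensemble and $\cP_{k+1}$ is continuous on the compact interval $B$.

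First I would identify the two quantities in play. Since $\fB_{B,B}$ agrees with $\cP$ outside $[\![1,k]\!]\times B$ --- in particular at the endpoints $a,b$ of $B$ and on the $(k+1)$-st line --- one has the pointwise identity $Z(\fB_{B,B},B)=\PP_{a,b}(\cP^k(a),\cP^k(b),\cP_{k+1})$, so the left-hand side of the claim equals $\int \PP_{a,b}(f(a),f(b),\cP_{k+1})^{-1}\,d\nu(f)$, where $f$ ranges over $k$-tuples on $A$. For the denominator on the right, I would compute $\nu_0(\NI(\cP_{k+1},B))$ by conditioning $\fB_{B,A}^k$ on its restriction to $A\setminus B$: given that restriction (hence given $\fB_{B,A}^k(a),\fB_{B,A}^k(b)$), the path $\fB_{B,A}^k\lvert_B$ is a plain Brownian bridge between those endpoints --- the $\NI$-conditioning on $A\setminus B$ does not touch $B$ --- so by the tower property $\nu_0(\NI(\cP_{k+1},B))=\EE_{\cF_A}[Z(\fB_{B,A},B)]$.

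It then remains to show $\int \PP_{a,b}(f(a),f(b),\cP_{k+1})^{-1}\,d\nu(f)=1/\nu_0(\NI(\cP_{k+1},B))$. Changing measure from $\nu$ to $\nu_0$ turns the left side into $\nu_0(\NI(\cP_{k+1},B))^{-1}\int \PP_{a,b}(f(a),f(b),\cP_{k+1})^{-1}\mathbbm{1}(f\in\NI(\cP_{k+1},B))\,d\nu_0(f)$. Disintegrating $\nu_0$ over $f\lvert_{A\setminus B}$ and using, again, that $f\lvert_B$ is then a Brownian bridge from $f(a)$ to $f(b)$, the inner integral of $\mathbbm{1}(f\in\NI(\cP_{k+1},B))$ equals exactly $\PP_{a,b}(f(a),f(b),\cP_{k+1})$, which cancels the reciprocal weight (both a.s.\ positive, as above); the remaining integral is $\int 1\,d\nu_0(f\lvert_{A\setminus B})=1$. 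Combining the three displays gives the identity.

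I expect the only genuine subtlety --- and the step to write carefully --- to be the two invocations of the Markov property of Brownian bridges under the auxiliary $\NI$-conditioning, namely that conditioning on $\NI(\cP_{k+1},A\setminus B)$ leaves $f\lvert_B$ a plain Brownian bridge given $f\lvert_{A\setminus B}$ (since that event is measurable with respect to $f\lvert_{A\setminus B}$), together with the a.s.\ positivity statements that make the reciprocals and the division by $\nu_0(\NI(\cP_{k+1},B))$ legitimate, all of which follow from the a.s.\ strict ordering of $\cP$. Everything else is bookkeeping identical to \cite[Lemma 3.1]{Dau23}.
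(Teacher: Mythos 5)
Your argument is correct and takes essentially the same approach as the paper: the paper states Lemma \ref{lem:80} without proof as a direct analogue of Lemma \ref{lem:71} (i.e.\ of \cite[Lemma 3.1]{Dau23}), and your proof reproduces exactly the Brownian Gibbs / bridge-Markov-property computation from that source, including the correct identification $Z(\fB_{B,B},B)=\PP_{a,b}(\cP^k(a),\cP^k(b),\cP_{k+1})$ and the disintegration of $\nu_0$ over $f\lvert_{A\setminus B}$ that makes the reciprocal weight cancel.
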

Now, analogous to the collection $\fav_{t,s}$ defined earlier in \eqref{eq:253}, we can more generally, consider the set $\fav_{B,A}$ defined by
\begin{equation}
  \label{eq:271}
  \{\fB_{B,A}\in \fav_{B,A}\}= \{\EE_{\cF_{A} }Z(\fB_{B,A},B)\geq e^{-n}\}.
\end{equation}
By the same proof as in the case of $\mathbf{a}=0$, one can obtain the following analogue of Lemma \ref{cor:3}.
\begin{lemma}
  \label{lem:81}
    There exist positive constants $\mu_k,d_k,c_k>0$ such that for all $2\leq 2t<s$ and $\mathbf{a}$ additionally satisfying $[\mathbf{a}-s,\mathbf{a}+s]\subseteq [-n^{1/3}/4,n^{1/3}/4]$, and all $\varepsilon>0$ with $\alpha\coloneqq \log\varepsilon^{-1}$ satisfying
  \begin{equation}
    \label{eq:2521}
    \mu_k\sqrt{\alpha(s/t)^{1/2}}\sqrt{s}<n^{2/3}/4,
  \end{equation}
  we have
  \begin{equation}
    \label{eq:2581}
    \PP(\EE_{\cF_{[\mathbf{a}-s,\mathbf{a}+s]}}Z(\fB_{[\mathbf{a},\mathbf{a}+t],[\mathbf{a}-s,\mathbf{a}+s]},[\mathbf{a},\mathbf{a}+t])\leq \varepsilon)\leq e^{c_ks^3}e^{-d_k\alpha \sqrt{s/t}}.
  \end{equation}
\end{lemma}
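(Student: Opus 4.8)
\emph{Proof proposal.} The plan is to re-run the proof of Lemma \ref{cor:3} with every window recentered at $\mathbf{a}$, substituting Lemma \ref{lem:80} for Lemma \ref{lem:71} and the general-$s$ increment estimate Lemma \ref{lem:75} for the $s=0$ specialization that enters Lemma \ref{cor:2}. Write $A_s=[\mathbf{a}-s,\mathbf{a}+s]$ and $B_t=[\mathbf{a},\mathbf{a}+t]$, and introduce the $\cF_{A_s}$-measurable quantities
\begin{align*}
  D&=\sqrt{t}+\max_{r,r'\in B_t}\,|\cP_{k+1}(r)-\cP_{k+1}(r')|,\\
  M&=\sqrt{s}+\max_{r,r'\in A_s}\,|\cP_{k+1}(r)-\cP_{k+1}(r')|+\max_{i\in[\![1,k]\!]}|\cP_i(\mathbf{a}+s)-\cP_i(\mathbf{a}-s)|,
\end{align*}
which are the obvious shifts of the variables in Lemma \ref{lem:73}; the target estimate \eqref{eq:2581} is then, with $\varepsilon=e^{-n}$ and via \eqref{eq:271}, exactly the bound needed to show that the (shifted) measure $\eta\lvert_{\fav_{B_t,A_s}}$ has finite mass, as in the $\mathbf{a}=0$ case.

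The proof rests on two ingredients. First, a Brownian--Gibbs lower bound: the argument of Lemma \ref{lem:73} uses only the Gibbs resampling on the box $[\![1,k]\!]\times A_s$ together with the bridge lower bounds \cite[Lemmas 2.4, 2.5]{Dau23}, none of which sees where the box sits, so the same computation gives $\EE_{\cF_{A_s}}Z(\fB_{B_t,A_s},B_t)\ge\exp(-c_k s^{-1}(D^2+MD)-c_k s)$ for $2\le 2t<s$ with $A_s\subseteq[-n^{1/3}/4,n^{1/3}/4]$, and Lemma \ref{lem:80} supplies the reciprocal identity playing the role of \eqref{eq:224}. Second, tail control on $D$ and $M$: decomposing each increment as $\cP_{k+1}(r)-\cP_{k+1}(r')=[(\cP_{k+1}(r)+r^2)-(\cP_{k+1}(r')+(r')^2)]-(r^2-(r')^2)$, the parabola-corrected bracket has the Gaussian-type tails of Lemma \ref{lem:75}, which upon dyadic chaining over $B_t$ (resp.\ $A_s$), exactly as in \cite[Lemma 2.3]{Dau23}, yields $\PP(D>a\sqrt{t})\le e^{c_k t^3-c_k' a^2}$ and $\PP(M>a\sqrt{s})\le e^{c_k s^3-c_k' a^2}$ in the range $a\sqrt{s}<n^{2/3}/4$ imposed by Lemma \ref{lem:75}, the leftover deterministic quadratic $r^2-(r')^2$ being absorbed as discussed below.

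Given these two ingredients one concludes precisely as in Lemma \ref{cor:3}: set $D'=t^{-1/2}D-c_k' t^{3/2}$, $M'=s^{-1/2}M-c_k' s^{3/2}$, $X=D'\vee M'$, so that $\PP(X>a)\le e^{-c_k' a^2}$ for $a\sqrt{s}<n^{2/3}/4$; substituting $D\le\sqrt{t}(X+c_k' t^{3/2})$ and $M\le\sqrt{s}(X+c_k' s^{3/2})$ into the Gibbs lower bound gives $\EE_{\cF_{A_s}}Z(\fB_{B_t,A_s},B_t)\ge\exp(-\gamma_k(X^2\sqrt{t/s}+s\sqrt{t}\,X+st^2))$, and a union bound over $\{X^2\ge\gamma_k^{-1}\sqrt{s/t}\,\alpha/4\}\cup\{X\ge\gamma_k^{-1}\alpha/(4s\sqrt{t})\}$ (legitimate once one also assumes the harmless bound $\alpha\ge c_k s^{5/2}t^{1/2}$) produces \eqref{eq:2581}, the hypothesis $\mu_k\sqrt{\alpha(s/t)^{1/2}}\sqrt{s}<n^{2/3}/4$ being exactly what keeps $a$ inside the range where the tail bound for $X$ holds.

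The one genuinely new point, and the main obstacle, is the deterministic parabolic drift $r\mapsto r^2$ over the recentered windows: when $\mathbf{a}\neq0$ its oscillation over $B_t$ is of order $|\mathbf{a}|t+t^2$ and over $A_s$ of order $(|\mathbf{a}|+s)^2$, rather than $t^2$ and $s^2$, so $D$, $M$, and hence the exponent $s^{-1}(D^2+MD)$, are inflated. One must check, using $t\ge1$, $s>2t$ and the standing constraint $A_s\subseteq[-n^{1/3}/4,n^{1/3}/4]$, that this inflation contributes to the exponent only a term that is $o(n)$, so that the Gibbs lower bound stays above $e^{-n}$ and the threshold defining $\fav_{B_t,A_s}$ still makes sense, and that the exponents in $t$ and $s$ in \eqref{eq:2581} are unchanged; alternatively one may precede the argument with a Brownian-scaling reduction in the spirit of the proof of Lemma \ref{lem:75} (sending $n+2n^{2/3}\mathbf{a}$ to $n$ via Proposition \ref{prop:46}), which identifies the recentered ensemble with the one at a scale differing from $n$ by at most a constant factor and reduces the statement directly to Lemma \ref{cor:3}. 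Carrying out this bookkeeping uniformly in $(\mathbf{a},s,t)$ is essentially the only place where any care is required.
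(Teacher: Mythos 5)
Your proposal is correct and follows the same route the paper implicitly takes, namely re-running the proof of Lemma~\ref{cor:3} (via Lemmas~\ref{lem:80}, \ref{lem:73}, \ref{lem:75}) with the windows recentred at $\mathbf{a}$; the paper itself only says ``same proof as in the case $\mathbf{a}=0$.'' You correctly flag the one genuine subtlety that this glosses over---that the raw max-min increments defining $D$ and $M$ acquire an extra linear-in-$\mathbf{a}$ deterministic drift when $\mathbf{a}\neq0$---and the Brownian-scaling reduction you suggest (sending $n+2n^{2/3}\mathbf{a}\mapsto n$ via Proposition~\ref{prop:46}, exactly as in the proof of Lemma~\ref{lem:75}) is the clean way to dispose of it, since under $[\mathbf{a}-s,\mathbf{a}+s]\subseteq[-n^{1/3}/4,n^{1/3}/4]$ the rescaled $n'$, $s'$, $t'$ differ from $n$, $s$, $t$ only by bounded factors; the bare ``bookkeeping'' alternative, by contrast, would need the increments in $D,M$ to be measured relative to the linear interpolation between endpoints (a bridge oscillation), since in the corner regime $|\mathbf{a}|\sim s\sim t\sim n^{1/3}$ the raw max-min drift is large enough to compete with $\alpha$.
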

As one would expect, with the above results at hand, one can continue along and prove Proposition \ref{prop:24} for general values of $\mathbf{a}$. We do not expand on this further.

\subsection{One point tail bounds}
\label{sec:one-point-tail}
The goal now is to outline the proof of Proposition \ref{prop:25}. First, we shall need the following comparison lemma.
\begin{lemma}
  \label{lem:100}
  Let $d_k,c_k$ be the constants from Lemma \ref{lem:81}. Let $\mathbf{a},s,t$ be such that $2\leq 2t< s$ along with $[\mathbf{a}-s,\mathbf{a}+s]\subseteq [-n^{1/3}/4,n^{1/3}/4]$. Then for any $\cA\in \mathscr{C}^{n+1}( [-n^{1/3}/2,\infty))$, with $\beta$ denoting $\PP(\fB_{[\mathbf{a},\mathbf{a}+t],[\mathbf{a}-s,\mathbf{a}+s]}\in \cA)$,
we have
  \begin{equation}
    \label{eq:472}
    \PP(\fL^{\mathbf{a}}\in \cA)\leq \exp(c_kd_k^{-1}s^{5/2}t^{1/2})\frac{\beta^{1-d_{k}^{-1}\sqrt{t/s}}}{d_k\sqrt{s/t}-1}.
  \end{equation}
\end{lemma}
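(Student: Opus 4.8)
The plan is to compare the law of $\fL^{\mathbf{a}}$ with that of $\fB_{B,A}$, where I write $B=[\mathbf{a},\mathbf{a}+t]$ and $A=[\mathbf{a}-s,\mathbf{a}+s]$, by exhibiting an explicit Radon--Nikodym derivative and controlling its singular factor through the tail estimate Lemma~\ref{lem:81}. Set $\Phi:=\EE_{\cF_A}[Z(\fB_{B,A},B)]$, which is $\cF_A$-measurable, bounded by $1$ (being the conditional expectation of a probability), and by construction satisfies $\{\fB_{B,A}\in\fav_{B,A}\}=\{\Phi\ge e^{-n}\}$. First I would invoke the general-$\mathbf{a}$ version of Lemma~\ref{lem:71}, i.e.\ Lemma~\ref{lem:80}, which gives $\eta=\eta_s$, together with the identity stating that the density of $\eta_s$ with respect to the law of $\fB_{B,A}$ equals $\Phi^{-1}$; this identity comes from conditioning the Brownian bridges on $A$ on their values at $\mathbf{a}$ and $\mathbf{a}+t$ and applying the Markov property, so that the density evaluated at the random ensemble $\fB_{B,A}$ is exactly the conditional expectation $\Phi$. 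Since the law of $\fL^{\mathbf{a}}$ is $\eta$ restricted to $\fav_{B,A}$ and normalised, this yields
\begin{equation*}
\PP(\fL^{\mathbf{a}}\in\cA)=\frac{1}{\eta(\fav_{B,A})}\,\EE\!\left[\mathbbm{1}(\Phi\ge e^{-n})\,\mathbbm{1}(\fB_{B,A}\in\cA)\,\Phi^{-1}\right].
\end{equation*}

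Next I would control the normalisation. Since $\Phi^{-1}\ge 1$, we have $\eta(\fav_{B,A})\ge\PP(\fB_{B,A}\in\fav_{B,A})=\PP(\Phi\ge e^{-n})$, and the general-$\mathbf{a}$ analogue of \eqref{eq:263} (Lemma~\ref{lem:81} with $\varepsilon=e^{-n}$, using $t\le n^{1/10}$ so that $t^3\ll n$) gives $\PP(\Phi<e^{-n})\le e^{-c_k'n}$; hence $\eta(\fav_{B,A})\ge 1/2$ for all large $n$. It thus remains to bound $\EE[\mathbbm{1}(\Phi\ge e^{-n})\,\mathbbm{1}(\fB_{B,A}\in\cA)\,\Phi^{-1}]$, and for this I would decompose dyadically in $\Phi$: on $\{\Phi\in[2^{-j-1},2^{-j})\}$ one has $\Phi^{-1}\le 2^{j+1}$, only $0\le j\le\lceil n/\log 2\rceil$ can contribute since $\Phi\ge e^{-n}$ there, and on each such piece the probability of also lying in $\cA$ is at most $\min(\PP(\Phi<2^{-j}),\beta)$. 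Applying Lemma~\ref{lem:81} with $\varepsilon=2^{-j}$ and $\alpha=j\log 2$ — whose hypothesis $\mu_k\sqrt{\alpha\sqrt{s/t}}\,\sqrt s<n^{2/3}/4$ holds for all $j\le n$ because $s\le n^{1/10}$ and $t\ge1$ — gives $\PP(\Phi<2^{-j})\le e^{c_ks^3}2^{-d_kj\sqrt{s/t}}$, so that
\begin{equation*}
\EE\!\left[\mathbbm{1}(\Phi\ge e^{-n})\,\mathbbm{1}(\fB_{B,A}\in\cA)\,\Phi^{-1}\right]\le\sum_{j=0}^{\lceil n/\log 2\rceil}2^{j+1}\min\!\left(e^{c_ks^3}2^{-d_kj\sqrt{s/t}},\,\beta\right).
\end{equation*}

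Finally I would split this finite sum at (the integer part of, capped by $\lceil n/\log 2\rceil$) the crossover index $j^\ast=(c_ks^3-\log\beta)/(d_k\sqrt{s/t}\log 2)$, at which the two competing bounds agree. For $j\le j^\ast$ the sum is geometric in $2^j$ and is at most $4\cdot 2^{j^\ast}\beta=4\,e^{c_kd_k^{-1}s^{5/2}t^{1/2}}\beta^{1-d_k^{-1}\sqrt{t/s}}$; for $j>j^\ast$, using $d_k\sqrt{s/t}>1$ in the regime of interest (e.g.\ for the value $s=4d_k^{-2}t$ entering the construction of $\fL^{\mathbf{a}}$ one has $d_k\sqrt{s/t}=2$), the sum is geometric in $2^{j(1-d_k\sqrt{s/t})}$ with ratio $2^{1-d_k\sqrt{s/t}}$ and equals the same main term multiplied by $(1-2^{1-d_k\sqrt{s/t}})^{-1}\le C(d_k\sqrt{s/t}-1)^{-1}$; the residual case where $j^\ast$ exceeds $\lceil n/\log 2\rceil$ (so $\beta$ is exponentially small in $n$) is handled directly by the crude bound $\sum_j 2^{j+1}\beta\le 4e^n\beta$, which is then smaller than the claimed right-hand side after adjusting $c_k$. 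Combining the two pieces, multiplying by $\eta(\fav_{B,A})^{-1}\le 2$, and absorbing absolute constants into $c_k$ yields \eqref{eq:472}.

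The step I expect to be the main obstacle is the first one: rigorously checking that the Radon--Nikodym derivative of $\eta$ with respect to the law of $\fB_{B,A}$ is exactly $\Phi^{-1}$, with the very same $\cF_A$-measurable quantity $\Phi$ that enters the definition of the conditioning set $\fav_{B,A}$. This requires the conditioning/Markov identity for non-intersecting Brownian bridges — the general-$\mathbf{a}$ form of Lemma~\ref{lem:71}/Lemma~\ref{lem:80} — together with careful bookkeeping of which boundary values are measurable with respect to $\cF_A$; once this identification is in place, the rest is the routine dyadic summation above against Lemma~\ref{lem:81}.
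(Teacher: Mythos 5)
Your proposal is correct and follows essentially the same path as the paper: both start from the Radon--Nikodym density $X = C_*\Phi^{-1}\ind(\fav_{B,A})$ of $\fL^{\mathbf a}$ with respect to the law of $\fB_{B,A}$ (with $\Phi=\EE_{\cF_{A}}[Z(\fB_{B,A},B)]$), control the tail of $X$ via Lemma~\ref{lem:81}, and then estimate $\EE[X\,\ind(\fB_{B,A}\in\cA)]$ against $\beta$. The only difference is cosmetic --- you discretize dyadically in $\Phi$ while the paper passes through the continuous tail integral $\int S(y)\,dy$ and evaluates the resulting power law explicitly --- and the RN-density identification you flag as the main obstacle is precisely what the paper asserts at the outset of its proof via Lemma~\ref{lem:80}, so your instinct there matches the source.
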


\begin{proof}
Let $B=[\mathbf{a},\mathbf{a}+t]$ and let $A_s=[\mathbf{a}-s,\mathbf{a}+s]$. %
Begin by noting that the ensemble $\fL^{\mathbf{a}}$ has the following Radon-Nikodym density with respect to $\fB_{B_,A_s}$:
  \begin{equation}
    \label{eq:291}
    X(\fB_{B,A_s})= \frac{C_*}{\EE_{\cF_{A_s}} Z(\fB_{B,A_s},B)} \ind(\fB_{B,A_s}\in \fav_{B,A_s}),%
  \end{equation}
  where the constant $C_*$ is such that $\fL^{\mathbf{a}}$ is a probability measure.
 Define $S(y)=\PP(X(\fB_{B,A_s})\geq y)$ and let $\beta= \PP(\fB_{B,A_s}\in \cA)$. We know that for all $y$ satisfying $C_*y^{-1}\geq e^{-n}$ and some constant $c_k$, we have
  \begin{align}
    \label{eq:294}
    S(y)&\leq \PP(\EE_{\cF_{A_s}}Z(\fB_{B,A_s},B)<C_*y^{-1})\nonumber\\
                                                                            &=e^{c_ks^3}\exp(-d_k\log( (C_* y^{-1})^{-1})\sqrt{s/t})   \nonumber\\
    &\leq e^{c_ks^3}\exp(-d_k(\log y)\sqrt{s/t})\eqqcolon \tilde S(y),
  \end{align}
  where the second line uses Lemma \ref{lem:81}.
  Now, with $\beta=\PP(\fB_{B,A_s}\in \cA)$, we have
 \begin{align}
    \label{eq:292}
   \PP(\fL^{\mathbf{a}}\in \cA)&=\EE[X(\fB_{B,A_s})\ind(\fB_{B,A_s}\in \cA)]\leq \int_{S^{-1}(\beta)}^{C_*e^{n}} S(y)dy\leq \int_{\tilde S^{-1}(\beta)}^\infty \tilde S(y)dy\nonumber\\
   &=\exp(c_kd_k^{-1}s^{5/2}t^{1/2})\frac{\beta^{1-d_{k}^{-1}\sqrt{t/s}}}{d_k\sqrt{s/t}-1}.
 \end{align}
This completes the proof.%
\end{proof}
In the following lemma, we a moderate deviation estimates for $\fB_{[\mathbf{a},\mathbf{a}+t],[\mathbf{a}-s,\mathbf{a}+s]}$ which shall subsequently be used in conjunction with the above comparison result to obtain Proposition \ref{prop:25}.
\begin{lemma}
  \label{lem:85}
Fix $k=1$. Let $\mathbf{a},t,s$ be such that $2\leq 2t<s$ and $[\mathbf{a},\mathbf{a}+t]\subseteq [-n^{1/10},n^{1/10}]$. Then for any $x\in \{\mathbf{a},\mathbf{a}+t\}$, we have
  \begin{align}
    \label{eq:475}
    &\PP(\fB_{[\mathbf{a},\mathbf{a}+t],[\mathbf{a}-s,\mathbf{a}+s],1}(x)+x^2\geq r)\leq Ce^{-cr^{3/2}},\\ \label{eq:4751} 
    &\PP(\fB_{[\mathbf{a},\mathbf{a}+t],[\mathbf{a}-s,\mathbf{a}+s],2}(x)+x^2\leq -r)\leq Ce^{-cr^{3/2}}.
  \end{align}
\end{lemma}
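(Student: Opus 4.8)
The plan is to treat the two displayed inequalities separately, reducing each to a one-sided moderate deviation estimate for a single line of $\cP$. Write $B=[\mathbf{a},\mathbf{a}+t]$ and $A_s=[\mathbf{a}-s,\mathbf{a}+s]$. Since $k=1$, only the first line of $\cP$ is resampled when forming $\fB_{B,A_s}$, so one has the exact identity $\fB_{B,A_s,2}=\cP_2$, and hence \eqref{eq:4751} is just the lower tail bound $\PP(\cP_2(x)+x^2\leq -r)\leq Ce^{-cr^{3/2}}$. I would deduce this from Proposition \ref{prop:47} with $k=2$, combined with the Taylor expansion of $2\sqrt{n(n+2n^{2/3}x)}$ about $2n+2n^{2/3}x-n^{1/3}x^2$ (whose error is $o(1)$ uniformly over $|x|\leq n^{1/10}$ — this is the computation underlying Lemma \ref{lem:74}): the lower tail of $P_{2,n}$ contributes $e^{-cr^{3}}$ in the range $r<5n^{2/3}$ and $e^{-cr^{2}n^{-1/3}}$ for $r\geq 5n^{2/3}$, and since $r^{3}\geq r^{3/2}$ for $r\geq 1$ while $r^{2}n^{-1/3}\geq 5^{1/2}r^{3/2}$ once $r\geq 5n^{2/3}$, both ranges yield $Ce^{-cr^{3/2}}$ (the estimate being trivial for $r\leq 1$ after adjusting $C$).

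For \eqref{eq:475} the idea is to transfer the upper tail from $\cP_1$ via a stochastic domination. Condition on $\cF_{A_s}$. By the Brownian Gibbs property for $\cP$ (applicable since $\mathbf{a}-s>-n^{1/3}/2$, which holds under the standing hypotheses of this appendix), the conditional law of $\cP_1\lvert_{A_s}$ is that of a Brownian bridge between the endpoint values $\cP_1(\mathbf{a}-s)$ and $\cP_1(\mathbf{a}+s)$ conditioned on $\NI(\cP_2,A_s)$; by definition, the conditional law of $\fB_{B,A_s,1}\lvert_{A_s}$ is that of the \emph{same} Brownian bridge conditioned only on $\NI(\cP_2,A_s\setminus B)$. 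Since $\NI(\cP_2,A_s)=\NI(\cP_2,A_s\setminus B)\cap\NI(\cP_2,B)$, the law of $\cP_1\lvert_{A_s}$ is exactly that of $\fB_{B,A_s,1}\lvert_{A_s}$ further conditioned on the additional increasing event $\NI(\cP_2,B)$, so by the standard monotonicity under conditioning a Brownian bridge to stay above a fixed curve on a subinterval (\cite[Lemma 2.4]{Dau23}) one obtains $\fB_{B,A_s,1}(x)\preceq\cP_1(x)$ for every $x\in A_s$, conditionally on $\cF_{A_s}$ and hence unconditionally. As $\mathbf{a},\mathbf{a}+t\in A_s$, this gives $\PP(\fB_{B,A_s,1}(x)+x^2\geq r)\leq\PP(\cP_1(x)+x^2\geq r)$, and the right-hand side is $\leq Ce^{-cr^{3/2}}$ by Proposition \ref{prop:47} with $k=1$ and the same Taylor expansion: the upper tail of $P_{1,n}$ is $e^{-cr^{3/2}}$ for $r<5n^{2/3}$ and $e^{-cr^{2}n^{-1/3}}\leq e^{-c'r^{3/2}}$ for $r\geq 5n^{2/3}$.

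The one step needing genuine care is the domination argument: one must check that $\cP_1$ and $\fB_{B,A_s,1}$ share the same endpoint data on $A_s$ and the same lower boundary curve $\cP_2$ over $A_s$ — both true precisely because, with $k=1$, resampling affects line $1$ alone and only inside $A_s$ — and that the cited monotonicity lemma is applied in the form ``conditioning on an increasing event stochastically increases the process''. Everything else, namely the translation of the $P_{k,n}$ tail bounds of Proposition \ref{prop:47} into tail bounds for $\cP_1(x)+x^2$ and $\cP_2(x)+x^2$, is the routine Taylor bookkeeping already carried out in the derivation of Lemma \ref{lem:74}, which I would not reproduce.
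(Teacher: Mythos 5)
Your proof is correct and follows essentially the same route as the paper: for \eqref{eq:4751} you observe $\fB_{B,A_s,2}=\cP_2$ (resampling only touches line $1$) and invoke the one-point tail estimate, and for \eqref{eq:475} you establish the stochastic domination $\fB_{B,A_s,1}\preceq\cP_1$ by identifying $\cP_1\vert_{A_s}$ as $\fB_{B,A_s,1}\vert_{A_s}$ further conditioned on the increasing event $\NI(\cP_2,B)$ and appealing to monotonicity. This is exactly the paper's argument (``by monotonicity argument, $\fB$ is stochastically dominated by $\cP$''), with the domination spelled out in detail; the only cosmetic difference is that you go back to Proposition \ref{prop:47} and do the moderate-vs.-large regime bookkeeping explicitly rather than citing Lemma \ref{lem:74} directly, which is in fact the more careful route since the $\min\{\alpha^{3/2},\alpha^2 n^{-1/3}\}$ form of Lemma \ref{lem:74}, read literally, gives the weaker rate $\alpha^2 n^{-1/3}$ in the range $\alpha<n^{2/3}$ and one does need the underlying $\alpha^{3/2}$ decay from Proposition \ref{prop:47} there.
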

\begin{proof}
  First, by monotonicity argument, it can be shown that $\fB_{[\mathbf{a},\mathbf{a}+t],[\mathbf{a}-s,\mathbf{a}+s]}$ is stochastically dominated by $\cP$. Thus, by using Lemma \ref{lem:74} with $k=1$, we immediately obtain the first inequality.
To obtain the second inequality, we first note that $\fB_{[\mathbf{a},\mathbf{a}+t],[\mathbf{a}-s,\mathbf{a}+s],k}=\cP_k$ for all $k\geq 2$ and then apply Lemma \ref{lem:74} with $k=2$.
\end{proof}

\begin{proof}[Proof of Proposition \ref{prop:25}]
  The proof consists of applications of Lemma \ref{lem:100} and Lemma \ref{lem:85}. With $d_k$ being the constant in Lemma \ref{lem:100}, we shall work with $s(t)=4d_1^{-2}t$-- this implies that $d_1^{-1}\sqrt{t/s(t)}=1/2$. Now, by using \eqref{eq:475} in Lemma \ref{lem:85} along with Lemma \ref{lem:100}, we immediately obtain that for $x\in \{\mathbf{a},\mathbf{a}+t\}$, for some constant $c'$, we have
  \begin{equation}
    \label{eq:477}
    \PP(\fL^{\mathbf{a}}_1(x)+x^2\geq r)\leq e^{c't^3}e^{-cr^{3/2}}.
  \end{equation}
  Now, to bound $\PP(\fL^{\mathbf{a}}_1(x)+x^2\leq -r)$, we first note that by the above reasoning applied to \eqref{eq:4751}, we have for $x\in \{\mathbf{a},\mathbf{a}+t\}$ and for some constant $c'$,
  \begin{equation}
    \label{eq:478}
    \PP(\fL^{\mathbf{a}}_2(x)+x^2\leq -r)\leq e^{c't^3}e^{-cr^{3/2}}.
  \end{equation}
  However, we a.s.\ also have $\fL^{\mathbf{a}}_1(x)\geq \fL^{\mathbf{a}}_2(x)$ for $x\in \{\mathbf{a},\mathbf{a}+t\}$. This shows that $\PP(\fL^{\mathbf{a}}_1(x)+x^2\leq -r)\leq e^{c't^3}e^{-cr^{3/2}}$ and completes the proof.
\end{proof}

\section{Appendix 3: Tail bound on the number of near maximisers for BLPP weight profiles}
\label{sec:app-peaks}

The aim of this section is to establish Proposition \ref{lem:31}, the result controlling the number of peaks of the routed distance profile $Z_{\0}^{\n}$ on the line $\{m\}_\RR$. Note that we allow $m$ to be much smaller than $n$, and thus will have to ensure that all the estimates also hold for this regime. Our first goal is to establish the following lemma, which will later allow us to only count peaks for $Z_{\0}^{\n}$ in $\{m\}_{[m- (1+m)^{2/3}n^\delta, m+(1+m)^{2/3}n^\delta]}$ instead of the larger space $\{m\}_\RR$.
\begin{lemma}
  \label{lem:121}
There exist constants $g,C,c>0$ such that for all $\delta>0$ and all $0\leq m\leq n$, we have
  \begin{equation}
    \label{eq:601}
    \PP(\sup_{|x|\geq (1+m)^{2/3}n^{\delta}}Z_{\0}^{\n}(m+x,m)-Z_{\0}^{\n}(m,m) \geq -g(1+m)^{1/3}n^{\delta})\leq Ce^{-cn^{3\delta/8}}.
  \end{equation}
\end{lemma}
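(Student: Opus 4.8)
\textbf{Proof plan for Lemma \ref{lem:121}.}
The plan is to decompose the range $\{x: |x| \geq (1+m)^{2/3}n^\delta\}$ into dyadic annuli and to control, on each annulus, the value of the routed profile by a combination of a transversal fluctuation estimate and an upper-tail moderate deviation estimate for the passage times $T_\0^{(m+x,m)}$ and $T_{(m+x,m)}^{\n}$. Recall that $Z_\0^\n(m+x,m) = T_\0^{(m+x,m)} + T_{(m+x,m)}^\n$, so a large value of the profile at a point far from the straight-line trajectory forces one of the two constituent passage times to be abnormally large relative to its mean. Since $\slope(\0,(m,m))$ and $\slope((m,m),\n)$ are both $1$, the relevant quantity is the deviation of $T_\0^{(m+x,m)}$ above $Q_\0^{(m+x,m)} = 2m + x$ (and symmetrically for the upper segment), and one pays a Gaussian cost $\sim (x-\text{something})^2/(\text{length})$ coming from the slope being off by $x/m$, plus the usual KPZ-type fluctuation term. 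The key point is that as $|x|$ grows past $(1+m)^{2/3}$, the ``slope penalty'' eventually dominates and gives superpolynomial decay; for $|x|$ of order between $(1+m)^{2/3}n^\delta$ and $(1+m)$ the penalty is already at least $n^{3\delta/8}$-ish, and for $|x| \gtrsim (1+m)$ one uses the off-diagonal (large $\alpha$) regime of Proposition \ref{prop:47} / Lemma \ref{lem:74}, being careful that $m$ may be much smaller than $n$.

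First I would set up the reference value: by Proposition \ref{prop:15} (or just Proposition \ref{prop:47} applied to the two diagonal segments $T_\0^{(m,m)}$ and $T_{(m,m)}^\n$), with probability at least $1 - Ce^{-cn^{3\delta/8}}$ we have $Z_\0^\n(m,m) \geq Q_\0^{(m,m)} + Q_{(m,m)}^\n - n^{\delta/2}(1+m)^{1/3}\log^{2/3}(\dots)$, so it suffices to upper bound $\sup_{|x|\geq (1+m)^{2/3}n^\delta} Z_\0^\n(m+x,m)$ by $Q_\0^{\n} - (g/2)(1+m)^{1/3}n^\delta$ on a high-probability event. Next, for a dyadic scale $\ell$ with $(1+m)^{2/3}n^\delta \leq \ell \leq 2^k (1+m)^{2/3}n^\delta$ (and then separately handling $\ell \gtrsim m$), I would bound the probability that $T_\0^{(m+x,m)} - Q_\0^{(m+x,m)} \geq t$ for some $x$ in the annulus $|x| \in [\ell, 2\ell]$. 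Using Proposition \ref{prop:47} (after a Brownian scaling via Proposition \ref{prop:46} to normalize the segment to one of ``length $m$ with endpoint displacement $x$''), the exponent one gets is of order $-(\ell/m^{2/3})^{3}$ in the near-diagonal regime (i.e. $\alpha \sim \ell/m^{2/3} \sim n^\delta$ when $\ell \sim (1+m)^{2/3}n^\delta$), which is $\gtrsim e^{-cn^{3\delta}}$ — comfortably beating the dyadic sum and the $n^2$-type union over lattice points in the annulus — and the sum over $k$ telescopes. A symmetric bound handles $T_{(m+x,m)}^\n$, and summing these two contributions keeps us well inside $e^{-cn^{3\delta/8}}$. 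For the far regime $|x| \gtrsim m$, one invokes \eqref{eq:579} of Proposition \ref{prop:47} (or a cruder bound), where the displacement is comparable to the segment length, so the passage time is atypically small (with room to spare) — this range is easy.

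The main obstacle I anticipate is the bookkeeping in the regime where $m$ is substantially smaller than $n$: the estimates in Proposition \ref{prop:47} and Lemma \ref{lem:74} have the scale parameter $n$ baked in (they control $\cP_k$ and $P_{k,n}$), and the range of validity ($|x| \leq n^{1/9}$ in Lemma \ref{lem:74}, $\alpha < 5n^{2/3}$ in Proposition \ref{prop:47}) must be respected after the Brownian scaling that sends the segment $\0 \to (m+x,m)$ to an essentially-diagonal configuration of length $m$. Concretely, I would apply Proposition \ref{prop:47} directly to $P_{k,m}$ with $k=1$ (so the constants are $k$-uniform) and $\alpha \sim |x|/m^{2/3}$; the ranges $\alpha < 5m^{2/3}$ and $\alpha \geq 5m^{2/3}$ must be split, and one must double-check that $\ell = (1+m)^{2/3}n^\delta$ can indeed be much larger than $m$ when $m$ is tiny (e.g. $m$ a constant), in which case the profile is genuinely being evaluated on a scale where a single excursion is extremely costly and the crude deterministic bound $Z_\0^\n(m+x,m) \leq -\infty$ for $|x| > $ (relevant range) or an elementary Brownian-motion estimate suffices. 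Assembling the dyadic sum with the correct choice of the constant $g$ (small enough that the threshold $-g(1+m)^{1/3}n^\delta$ sits below the reference value from the first step, yet the deviation probabilities still decay) is the last routine-but-fiddly piece; I expect the final bound $Ce^{-cn^{3\delta/8}}$ (rather than something sharper like $e^{-cn^{3\delta}}$) is dictated precisely by the weakest link, namely the first-step reference estimate for $Z_\0^\n(m,m)$ via Proposition \ref{prop:15}, not by the dyadic tail sum.
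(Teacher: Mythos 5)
Your overall decomposition $Z_\0^\n(m+x,m) = T_\0^{(m+x,m)} + T_{(m+x,m)}^\n$ and the plan of controlling upper tails on annuli is the right starting point, and it does match the paper's strategy when $m$ is of order $n$ (the paper's Lemma~\ref{lem:124}, which applies Lemma~\ref{lem:1221} to both segments, is essentially a ``compare each $T$ to a deterministic reference'' argument of the type you describe). But there is a genuine gap in the small-$m$ regime, which you flag as the ``main obstacle'' but misdiagnose as bookkeeping. When $m \ll n^{1-3\delta/2}$, your step~1 --- lower bounding $Z_\0^\n(m,m)$ by a deterministic reference with error of order $(1+m)^{1/3}n^{\delta/2}$ --- already fails: the summand $T_{(m,m)}^\n$ has fluctuation of order $(n-m)^{1/3}\sim n^{1/3}$, which for small $m$ vastly exceeds the allowed error window. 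Your step~2 (upper bounding $\sup_x Z_\0^\n(m+x,m)$ against a deterministic cap) fails for the same reason, since $T_{(m+x,m)}^\n$ is itself an order-$n^{1/3}$ fluctuation above its mean with constant probability. The fix, which your plan does not contain, is to compare $T_{(m+x,m)}^\n$ \emph{directly} against $T_{(m,m)}^\n$: these are highly correlated, and their difference has fluctuation of order $|x|^{1/2}$ rather than $n^{1/3}$. Making this quantitative is exactly the role of the Brownian modulus-of-continuity estimate for the weight profile $x\mapsto T_{(m+x,m)}^\n$ (the paper's Lemma~\ref{lem:125}, derived via \cite[Theorem~3.11]{CHH23}), combined with Lemma~\ref{lem:118} for the far range; treating the two segments symmetrically by ``penalizing against the mean'' is not sufficient when one of the two segments is long relative to $|x|^{3/2}$.

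Two smaller inaccuracies. First, to upper bound $T_\0^{(m+x,m)}$ you need the upper-tail branch of Proposition~\ref{prop:47}, which has exponent $e^{-c\alpha^{3/2}}$ rather than $e^{-c\alpha^{3}}$; at $|x|\sim(1+m)^{2/3}n^\delta$ this gives $e^{-cn^{3\delta/2}}$, not the $e^{-cn^{3\delta}}$ you quote (still well inside the target, so this is harmless). Second, the final $e^{-cn^{3\delta/8}}$ is not dictated by the reference estimate for $Z_\0^\n(m,m)$: the reference step gives the stronger $e^{-cn^{3\delta/2}}$. In the paper's proof the $3\delta/8$ arises from the union bound over $x$ for the \emph{upper} segment in the large-$m$ case (Lemma~\ref{lem:124}), where the constraint $|x|\geq(1+m)^{2/3}n^\delta\geq n^{2/3+\delta/2}$ translates into an effective displacement exponent $\delta'=\delta/2$ on a segment of length $\sim n$, and $\tfrac34\delta'=\tfrac38\delta$.
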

For the above lemma, by symmetry, we first note that it suffices to assume $m\leq n/2$ and indeed, we shall do this for the remainder for this section. %
For the proof of the above result, we shall first require a few easy preliminary results regarding passage times in BLPP which we now introduce; these estimates are fairly standard (see \cite[Propositions 2.28, 2.30]{Ham22} for similar estimates), and thus we shall not provide detailed proofs.
\subsection{Preliminary BLPP estimates}
\label{sec:prel-blpp-estim}
First, define $H_{x,k}\colon [-k,x]\rightarrow \RR$ by
\begin{equation}
  \label{eq:611}
    H_{x,k}= 2k+x- 2\sqrt{k(k+x)}.
\end{equation}
By a Taylor expansion argument, the following is easy to obtain.
\begin{lemma}
  \label{lem:127}
  For a fixed $k\geq 0$, $H_{x,k}$ is convex, increasing for $x>0$, decreasing for $x<0$. Also, there exists a constant $c\in (0,1)$ such that for all $|x|\leq ck$, we have $H_{x,k}\geq x^2(8(k+1))^{-1}$ and by convexity, for $|x|>ck$, we have $H_{x,k}\geq c|x|/8$.
\end{lemma}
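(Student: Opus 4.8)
The plan is to prove everything from one algebraic identity. Rationalizing $H_{x,k}=(2k+x)-2\sqrt{k(k+x)}$ and using $(2k+x)^2-4k(k+x)=x^2$ together with $2k+x+2\sqrt{k(k+x)}=(\sqrt k+\sqrt{k+x})^2$, one obtains, on the domain $x\ge -k$ (which is how I would read the typo ``$[-k,x]$''),
\begin{equation}
  H_{x,k}=\frac{x^2}{2k+x+2\sqrt{k(k+x)}}=\frac{x^2}{(\sqrt k+\sqrt{k+x})^2},
\end{equation}
the denominator being strictly positive whenever $k>0$. From this formula $H_{x,k}\ge 0$ with equality only at $x=0$ is immediate. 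The degenerate case $k=0$ gives $H_{x,0}=x$ on $[0,\infty)$, for which all assertions are trivial (the monotonicity claim for $x<0$ being vacuous), so from now on I would take $k>0$.

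For convexity and monotonicity I would simply differentiate: $\partial_x H_{x,k}=1-\sqrt{k/(k+x)}$, which is negative for $-k<x<0$, zero at $x=0$, and positive for $x>0$, and $\partial_x^2 H_{x,k}=\tfrac12\sqrt k\,(k+x)^{-3/2}>0$ for $x>-k$; this gives ``decreasing for $x<0$, increasing for $x>0$, convex'' at once.

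For the quantitative bounds I would fix $c=1/2$ (any $c\in(0,1)$ works) and bound the denominator $(\sqrt k+\sqrt{k+x})^2$ in the two regimes. If $|x|\le ck$ then $\sqrt{k+x}\le\sqrt{k(1+c)}$, so $(\sqrt k+\sqrt{k+x})^2\le k(1+\sqrt{1+c})^2\le k(1+\sqrt 2)^2\le 8k\le 8(k+1)$, whence $H_{x,k}\ge x^2/(8(k+1))$; in fact this even holds for all $|x|\le k$. If $|x|>ck$ I would split on the sign of $x$: for $x>0$, $\sqrt k\le\sqrt{k+x}$ gives $(\sqrt k+\sqrt{k+x})^2\le 4(k+x)$, so $H_{x,k}\ge x^2/(4(k+x))$, and $x^2/(4(k+x))\ge cx/8$ is equivalent to $x(2-c)\ge ck$, which holds because $x>ck$ and $c\le 1$; for $-k<x<0$, $\sqrt{k+x}<\sqrt k$ gives $(\sqrt k+\sqrt{k+x})^2<4k$, so $H_{x,k}>x^2/(4k)\ge (ck)|x|/(4k)=c|x|/4\ge c|x|/8$, using $|x|>ck$.

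Since the whole argument is this identity plus two one-line estimates on the denominator, there is no serious obstacle; the only points needing care are checking that the rationalized formula (and hence the derivative computations) are legitimate, i.e.\ that the denominator is positive, and handling the degenerate interface $k=0$ where the intended domain is $[-k,\infty)$. I would also remark that invoking convexity alone together with the weak quadratic bound $x^2/(8(k+1))$ does not quite recover the clean constant $c|x|/8$ (it loses a factor $k/(k+1)$), which is precisely why working directly from the explicit formula is the efficient route.
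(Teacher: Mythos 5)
Your proof is correct. The paper does not actually spell out an argument for this lemma; it merely remarks that it follows ``by a Taylor expansion argument'', i.e.\ by expanding $\sqrt{k(k+x)}=k\sqrt{1+x/k}$ and controlling the remainder. You instead rationalize to get the \emph{exact} identity $H_{x,k}=x^2/(\sqrt{k}+\sqrt{k+x})^2$, from which the sign of $\partial_xH_{x,k}=1-\sqrt{k/(k+x)}$, the positivity of $\partial_x^2H_{x,k}=\tfrac12\sqrt{k}\,(k+x)^{-3/2}$, and both quantitative lower bounds all follow by one-line estimates on the denominator in the two regimes $|x|\le ck$ and $|x|>ck$. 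This is cleaner than the sketched route precisely because the identity is exact: there is no asymptotic error term to discharge and no case split on whether $x/k$ is small, and it handles $k=0$ and the endpoint $x=-k$ without extra fuss. Your closing remark is also apt: combining the stated quadratic bound $H_{x,k}\ge x^2/(8(k+1))$ with convexity, as the lemma's wording ``by convexity'' suggests, only yields $H_{x,k}\ge ckx/(8(k+1))$ for $x>ck$, which is off from the claimed $c|x|/8$ by the factor $k/(k+1)$; to recover the clean constant via convexity one would instead have to evaluate $H_{ck,k}/(ck)=c/(1+\sqrt{1+c})^{2}\ge c/8$ directly, which is essentially the same denominator estimate you already carry out. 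So working from the explicit formula is indeed the efficient route, and every step of your argument checks out.
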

The following is a simple consequence of the above.
\begin{lemma}
  \label{lem:128}
 There is a constant $g'>0$ such that for all $\delta>0$, $0\leq m\leq n$ and $|x|\geq (1+m)^{2/3}n^\delta$, we have $H_{x,m}\geq g'(1+m)^{1/3}n^\delta$.
\end{lemma}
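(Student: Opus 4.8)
The plan is to read off Lemma \ref{lem:128} directly from the two lower bounds on $H_{x,m}$ already recorded in Lemma \ref{lem:127}, via a short dichotomy on the size of $|x|$ compared with $m$. Fix $m\in[\![0,n]\!]$, $\delta>0$, abbreviate $L=(1+m)^{2/3}n^\delta$, and let $c\in(0,1)$ be the constant furnished by Lemma \ref{lem:127}. Recall that $H_{x,m}$ is only defined for $x\geq -m$, so a negative $x$ with $|x|\geq L$ can occur at all only when $L\leq m$, in which case one automatically has $|x|\leq m$; in particular the split below (into $|x|\leq cm$ and $|x|>cm$) exhausts every admissible $x$ with $|x|\geq L$.

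First I would treat the regime $L\leq |x|\leq cm$. Here Lemma \ref{lem:127} gives $H_{x,m}\geq x^2\big(8(m+1)\big)^{-1}\geq L^2\big(8(m+1)\big)^{-1}=(1+m)^{1/3}n^{2\delta}/8\geq (1+m)^{1/3}n^{\delta}/8$, the last step using $n^{\delta}\geq 1$. Next, in the regime $|x|>cm$, Lemma \ref{lem:127} instead gives $H_{x,m}\geq c|x|/8$; since $1+m\geq 1$ one has $(1+m)^{2/3}\geq (1+m)^{1/3}$, hence $|x|\geq L\geq (1+m)^{1/3}n^\delta$ and therefore $H_{x,m}\geq (c/8)(1+m)^{1/3}n^\delta$. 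Taking $g'=c/8$ (which is at most $1/8$) makes both estimates valid simultaneously, which proves the claim. The degenerate case $m=0$ can be noted separately and is immediate, since then $H_{x,0}=x$ and the hypothesis reads $x\geq n^\delta$.

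There is no substantial obstacle here: the lemma is an elementary consequence of Lemma \ref{lem:127}, and the only mild subtlety — the restricted domain $[-m,\infty)$ of $x\mapsto H_{x,m}$ together with the matching of the two regimes at $|x|=cm$ — is already built into the way Lemma \ref{lem:127} is phrased. The one point to be slightly careful about in the write-up is to observe that $L$ may lie on either side of $cm$ depending on $m$ and $\delta$, so that one genuinely needs both bounds and the union of the two regimes really does cover $\{\,x\geq -m:\ |x|\geq L\,\}$.
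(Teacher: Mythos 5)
Your proof is correct and is exactly the elementary dichotomy the paper has in mind — it simply says Lemma~\ref{lem:128} is ``a simple consequence'' of Lemma~\ref{lem:127} without spelling it out. Your case split at $|x|=cm$, the use of $L^2/(8(m+1))=(1+m)^{1/3}n^{2\delta}/8$ in the first regime and $(1+m)^{2/3}\geq(1+m)^{1/3}$ in the second, and the observation that negative $x$ never escapes $|x|\leq m$ all check out, and taking $g'=c/8$ covers both cases.
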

Throughout the argument, we shall often use the following result, which we sketch a proof of. %
\begin{lemma}
  \label{lem:1221}
There exist constants $C,c,C',c'$ such that for all $\delta>0$, all $n$ and for all $m\in [\![0,n]\!]$, we have
  \begin{align}
    \label{eq:608}
    &\PP(\exists x: |x|\geq (1+m)^{2/3}n^\delta, T_{\0}^{(m+x,m)}-x\geq T_{\0}^{(m,m)} - H_{x,m}/2)\nonumber\\
    &\leq C\exp(-c\min_{|x|\geq (1+m)^{2/3}n^\delta}(\frac{H_{x,m}}{m^{-1/6}\sqrt{m+x}})^{3/2})\nonumber\\
    &\leq C'e^{-c'n^{3\delta/4}}.
  \end{align}
\end{lemma}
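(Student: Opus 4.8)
The plan is to decompose the event in \eqref{eq:608} according to (i) a concentration failure of $T_{\0}^{(m,m)}$ and (ii) a one-sided upper-tail deviation of the profile $x\mapsto T_{\0}^{(m+x,m)}-x$ above its mean, and then to dispatch (ii) by a dyadic union bound over $|x|$. Throughout I would set $R_0=(1+m)^{2/3}n^{\delta}$ and $H^{*}=\min_{|x|\ge R_0}H_{x,m}$, and recall from a Taylor expansion of \eqref{eq:611} that $H_{x,m}$ is exactly the first-order gap $2m-\big(2\sqrt{m(m+x)}-x\big)$, i.e.\ the difference between the leading behaviours of $\EE T_{\0}^{(m,m)}\approx 2m$ and $\EE\big(T_{\0}^{(m+x,m)}-x\big)\approx 2\sqrt{m(m+x)}-x$; the event in \eqref{eq:608} asks precisely that $T_{\0}^{(m+x,m)}-x$ come within $H_{x,m}/2$ of $T_{\0}^{(m,m)}$, i.e.\ be atypically large. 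By Lemma \ref{lem:128}, $H^{*}\ge g'(1+m)^{1/3}n^{\delta}$, which grows in $n$; this is what makes the error terms below harmless. The degenerate case $m=0$ is elementary: there $T_{\0}^{(x,0)}=W_{0}(x)-W_{0}(0)$ and $H_{x,0}=x$, so the event reduces to $\{\exists x\ge n^{\delta}:\ W_{0}(x)\ge x/2\}$, of probability $\le Ce^{-cn^{\delta}}\le C'e^{-c'n^{3\delta/4}}$ by a standard dyadic Brownian estimate, so I would assume $m\ge 1$ henceforth.

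On the event $\cG=\{T_{\0}^{(m,m)}\ge 2m-\tfrac14 H^{*}\}$, if the event in \eqref{eq:608} occurs at a location $x$ with $|x|\ge R_0$, then using $H^{*}\le H_{x,m}$ we get $T_{\0}^{(m+x,m)}-x\ge T_{\0}^{(m,m)}-\tfrac12 H_{x,m}\ge 2m-\tfrac34 H_{x,m}$, which, after substituting \eqref{eq:611}, is equivalent to
\[
T_{\0}^{(m+x,m)}\ \ge\ 2\sqrt{m(m+x)}+\tfrac14 H_{x,m}.
\]
So the problem splits into bounding $\PP(\cG^{c})$ and bounding the probability that such a deviation occurs for some $|x|\ge R_0$. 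For $\PP(\cG^{c})$ I would apply Proposition \ref{prop:47} with $k=1$ and the role of $n$ played by $m$ (so $T_{\0}^{(m,m)}=P_{1,m}(m)$, $2\sqrt{m\cdot m}=2m$, $\sqrt m\,m^{-1/6}=m^{1/3}$): taking $\alpha\asymp n^{\delta}$ in the lower-tail bound of \eqref{eq:578} when $\alpha<5m^{2/3}$, and in \eqref{eq:579} otherwise, together with $H^{*}\ge g'(1+m)^{1/3}n^{\delta}$, gives $\PP(\cG^{c})\le Ce^{-cn^{3\delta/4}}$, which is folded into the right-hand side.

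For a fixed $x$ with $m+x>0$ I would bound $\PP\big(T_{\0}^{(m+x,m)}\ge 2\sqrt{m(m+x)}+\tfrac18 H_{x,m}\big)$ by Proposition \ref{prop:47} again, now with $k=1$, the role of $n$ played by $m$, and the spatial argument $m+x$. Setting $\alpha=\tfrac18 H_{x,m}/\big(m^{-1/6}\sqrt{m+x}\big)$, the bound \eqref{eq:578} gives $\le C_{1}e^{-c_{1}\alpha^{3/2}}$ when $\alpha<5m^{2/3}$, while \eqref{eq:579} gives $\le C_{3}e^{-c_{3}\alpha^{2}m^{-1/3}}\le C_{3}e^{-c_{3}\alpha^{3/2}}$ when $\alpha\ge 5m^{2/3}$ (using $\alpha\ge m^{2/3}$); either way the bound is $\le C\exp\!\big(-c\,(H_{x,m}/(m^{-1/6}\sqrt{m+x}))^{3/2}\big)$, exactly the exponent in the statement. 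To pass from fixed $x$ to a supremum over a unit block I would use the elementary identity $T_{\0}^{(y,m)}=W_{m}(y)+G_{m}(y)$ with $G_{m}(y)=\sup_{0\le w\le y}\big(T_{\0}^{(w,m-1)}-W_{m}(w)\big)$ nondecreasing (read off the one-step last-passage recursion), which gives
\[
\sup_{x\in[x_{0},x_{0}+1]}\big(T_{\0}^{(m+x,m)}-x\big)\ \le\ \big(T_{\0}^{(m+x_{0}+1,m)}-(x_{0}+1)\big)+D+1,
\]
where $D=\sup_{u\in[m+x_{0},m+x_{0}+1]}\big(W_{m}(u)-W_{m}(m+x_{0}+1)\big)$ has a Gaussian tail; since $H_{x,m}\to\infty$, the contribution of $\{D\ge\tfrac18 H_{x_{0}+1,m}\}$ is $\le Ce^{-cH^{2}}$, dominated by $e^{-cH^{3/2}}$, so the block supremum is controlled by the value at an integer point up to a harmless error.

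Finally I would cover $\{|x|\ge R_0\}$ by dyadic blocks $|x|\in[R_{j},R_{j+1}]$, $R_{j}=2^{j}R_0$ (on the side $x<0$ only finitely many blocks occur, down to $|x|\asymp m$), sum the fixed-$x$ bound over the $O(R_{j+1})$ integer points in each block, and use the convexity and monotonicity of $H_{\cdot,m}$ from Lemma \ref{lem:127} to see that $H_{x,m}/\big(m^{-1/6}\sqrt{m+x}\big)$ is, up to constants, increasing in $|x|$ and grows at least geometrically in $j$; thus the polynomial prefactors are absorbed, the sum is dominated by the $j=0$ term, and one obtains the first inequality of \eqref{eq:608}. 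The second inequality follows from a short case analysis showing $\big(H^{*}/(m^{-1/6}\sqrt{m+x})\big)^{3/2}\gtrsim n^{3\delta/4}$ uniformly over $m\in[\![0,n]\!]$. I expect the only real work to be this uniformity: one must track the crossover of the parabola $H_{\cdot,m}$ between the regime $|x|\lesssim m$ (where $H_{x,m}\asymp x^{2}/m$ and the fluctuation scale is $\asymp m^{1/3}$) and $|x|\gg m$ (where $H_{x,m}\asymp |x|$ and the scale is $\asymp m^{-1/6}|x|^{1/2}$), and verify that the worst case — small $m$, $|x|\asymp n^{\delta}$ — still delivers the stated $n^{3\delta/4}$; the probabilistic input (Proposition \ref{prop:47}) is used off the shelf, and the continuum-to-lattice reduction is trivialized by the ``Brownian plus nondecreasing'' structure of $T_{\0}^{(\cdot,m)}$.
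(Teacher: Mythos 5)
Your proposal is correct and takes essentially the same route as the paper's sketch: decompose into a lower-tail failure of $T_{\0}^{(m,m)}$ and an upper-tail deviation of the profile $x\mapsto T_{\0}^{(m+x,m)}-x$, bound both via Proposition~\ref{prop:47} with $k=1$ and the scale $m^{-1/6}\sqrt{m+x}$, sum over unit intervals, and invoke Lemmas~\ref{lem:127}--\ref{lem:128} for uniformity. Where you add value over the paper's sketch is in the continuum-to-lattice step: the paper writes the block probability directly in terms of the value at an endpoint, implicitly relying on the decomposition $T_{\0}^{(y,m)}=W_m(y)+G_m(y)$ with $G_m$ nondecreasing (which you make explicit and correctly combine with a Gaussian modulus estimate for $W_m$ over a unit block), and in handling $m=0$ separately, which is genuinely necessary since the $m^{-1/6}$ scaling in the displayed middle expression is degenerate there. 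The dyadic grouping versus the paper's direct sum over unit intervals is only a cosmetic reorganization, since in both cases the geometric decay of the exponent in $|x|$ (from Lemma~\ref{lem:127}) is what absorbs the polynomial number of blocks and makes the $j=0$ (resp.\ $i=0$) term dominant.
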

\begin{proof}[Proof sketch]
  Write the set $[-(1+m)^{2/3}n^\delta, (1+m)^{2/3}n^\delta]^c$ as the union of countably many intervals $[a_i,b_i]$ of length $1$ each. For some constants $C,c$, the probability in question can be bounded above by
  \begin{align}
    \label{eq:610}
    &\PP(T_{\0}^{(m,m)} -2m\leq -H_{(1+m)^{2/3}n^\delta,m}/4)+  \sum_i (\PP(T_{\0}^{(m+b_i,m)}-2m\sqrt{m(m+a_i)}\geq \min(H_{a_i,m}, H_{b_i,m})/4)\nonumber\\
    &\leq Ce^{-cn^{3\delta/2}}+ \sum_iC\exp( -c(H_{a_i,m}/(m^{-1/6}\sqrt{m+a_i}))^{3/2}),
  \end{align}
  where in the last line, the first term is obtained by using Proposition \ref{prop:47} and Lemma \ref{lem:128} while the second term is obtained just by Proposition \ref{prop:47} along with the fact that $b_i-a_i=1$. Since Lemma \ref{lem:127} yields that $H_{a_i,m}\geq \frac{a_i^2}{8(m+1)}\wedge (c|a_i|/8)$ for some constant $c$, it can be checked that, irrespective of the precise value of $m$, the sum in \eqref{eq:610} is finite and is at most $C'e^{-c'n^{3\delta/4}}$ for some constants $C',c'$.
\end{proof}
In order to handle the case of small values of $m$ in Lemma \ref{lem:121}, we shall require a few additional BLPP estimates. The following estimate on the diffusive fluctuations of BLPP weight profiles, which can be obtained by a comparison to Brownian motion (see \cite[Theorem 3.11]{CHH23}), will be useful for us.
\begin{lemma}
  \label{lem:125}
There exist constants $C,c,C',c'$ such that for all $\delta>0$, all $n$ and all $m$ satisfying $m\leq n^{1-3\delta/2}$, we have
  \begin{align}
    \label{eq:612}
        &\PP(\exists x: |x|\in [(1+m)^{2/3}n^\delta, n^{2/3}], |T_{(m+x,m)}^{\n}+x- T_{(m,m)}^{\n}|\geq H_{x,m}/4)\nonumber\\
    &\leq C\exp(-c \min_{|x|\in [(1+m)^{2/3}n^\delta,n^{2/3}]}(\frac{H_{x,m}}{\sqrt{|x|}})^2)\leq C'e^{-c'n^{\delta}}.
  \end{align}
\end{lemma}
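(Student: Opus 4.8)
The plan is to reduce \eqref{eq:612} to a fluctuation estimate for the line ensemble $\cP$ and then apply the Brownian regularity of BLPP weight profiles. First I would record two consequences of the hypothesis $m\le n^{1-3\delta/2}$: for all large $n$ one has $N\coloneqq n-m\ge n/2$, and $(1+m)^{2/3}n^{\delta}\le n^{2/3}$, so the displayed range of $x$ is non-empty. Next, using the reflection symmetry of BLPP (reverse the direction of staircases, reflecting both coordinates) one has, as processes in $x$, $T_{(m+x,m)}^{\n}\stackrel{d}{=}T_{\0}^{(N-x,N)}=P_{1,N}(N-x)$; feeding this into the rescaling \eqref{eq:339} applied with $N$ in place of $n$, I would write
\[
T_{(m+x,m)}^{\n}+x-T_{(m,m)}^{\n}\ \stackrel{d}{=}\ -\frac{x^2}{4N}+N^{1/3}\Big((\cP_1+u^2)(u)-\cP_1(0)\Big),\qquad u=-\frac{x}{2N^{2/3}},
\]
where here $\cP=\{\cP_k\}$ denotes the line ensemble of \eqref{eq:339} built with $N$ in place of $n$. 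A short Taylor expansion of $H_{x,m}$ (see \eqref{eq:611}) via Lemma \ref{lem:127}, using $N\ge n/2\gg m,n^{2/3}$, shows that the deterministic parabola satisfies $x^2/(4N)\le H_{x,m}/16$ for all $|x|\le n^{2/3}$, so it may be absorbed into the error budget.

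I would then decompose $\{x:|x|\in[(1+m)^{2/3}n^{\delta},n^{2/3}]\}$ into $O(\log n)$ dyadic shells $\{2^{j}\le |x|<2^{j+1}\}$ (over both signs of $x$), and on each shell use monotonicity of $r\mapsto H_{r,m}$ (Lemma \ref{lem:127}) together with the previous paragraph to bound its contribution by
\[
\PP\Big(\sup_{|u|\le 2^{j}/N^{2/3}}\big|(\cP_1+u^2)(u)-\cP_1(0)\big|\ \ge\ \tfrac{3}{16}\,H_{2^{j},m}\,N^{-1/3}\Big).
\]
Since $N\ge n/2$, the window $|u|\le 2^{j}/N^{2/3}$ has length at most $2^{2/3}$ and sits in the bulk of $\cP$, so the Brownian regularity of BLPP weight profiles over windows of size $\Theta(n^{2/3})$ — namely \cite[Theorem 3.11]{CHH23}, or equivalently a consequence of the Brownian Gibbs property together with the one-point bound Lemma \ref{lem:74} — applies: $\cP_1+u^2$ is mutually absolutely continuous on such a window to $\sqrt2$ times Brownian motion, with a Radon--Nikodym derivative of bounded moments, so the last display is at most $C\exp\!\big(-c(H_{2^{j},m}N^{-1/3})^2/(2^{j}/N^{2/3})\big)=C\exp\!\big(-c(H_{2^{j},m}/\sqrt{2^{j}})^2\big)$.

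Finally I would invoke the elementary estimate $H_{x,m}/\sqrt{|x|}\ge c\,n^{\delta/2}$ valid for every $|x|\ge (1+m)^{2/3}n^{\delta}$: in the quadratic regime $|x|\le c'm$ of Lemma \ref{lem:127} one has $H_{x,m}/\sqrt{|x|}\ge |x|^{3/2}/(8(m+1))\ge n^{3\delta/2}/8$, and in the linear regime $|x|>c'm$ one has $H_{x,m}/\sqrt{|x|}\ge c'\sqrt{|x|}/8\ge c'n^{\delta/2}/8$ (using $|x|\ge n^{\delta}$). Since $r\mapsto (H_{r,m}/\sqrt r)^2$ is increasing, summing the shell bounds gives $C\log n\cdot\exp\!\big(-c\min_{|x|\in[(1+m)^{2/3}n^{\delta},n^{2/3}]}(H_{x,m}/\sqrt{|x|})^2\big)$, which yields both stated inequalities after adjusting constants and absorbing the $\log n$ factor. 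The one genuinely delicate point is that the dyadic windows reach size $\sim n^{2/3}$, the maximal scale on which the BLPP profile can be compared to Brownian motion; this is precisely why the hypothesis is arranged so that the far endpoint $(n,n)$ stays at macroscopic distance $\ge n/2$ from the slice $\{m\}_{\RR}$, keeping us squarely in the regime covered by \cite[Theorem 3.11]{CHH23}. Everything else is routine bookkeeping.
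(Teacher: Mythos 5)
Your proposal matches the paper's intended argument: the paper does not spell out a proof of Lemma \ref{lem:125} but only remarks that it follows from a comparison to Brownian motion via \cite[Theorem 3.11]{CHH23}, and your reflection-and-rescaling reduction to the line ensemble $\cP$, the dyadic shell decomposition, the absorption of the parabola $x^2/(4N)$, and the final elementary estimate on $H_{x,m}/\sqrt{|x|}$ are exactly how one implements that remark. The one detail worth keeping in mind is that the Brownian comparison is valid only up to a polynomial-in-$n$ deviation threshold (cf.\ the constraint $t\le n^\alpha$ in Lemma \ref{lem:115}), and when $m$ is bounded and $|x|\sim n^{2/3}$ your target deviation on the largest shells reaches order $n^{1/3}$; for those shells one falls back on the one-point moderate-deviation tails of Proposition \ref{prop:47}, which give an even stronger bound, so the conclusion is unaffected.
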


Finally, for the case when $m$ is much smaller than $n$, we shall use the following deviation estimate which can be obtained using Proposition \ref{prop:47} similarly to the proof of Lemma \ref{lem:1221}.
\begin{lemma}
  \label{lem:118}
There exist positive constants $C,c,C',c'$ such that for all $\delta>0$, all $n$ and all $0\leq m\leq n/2$, we have
  \begin{align}
    \label{eq:598}
    \PP(\exists x\in [-m,n-m]: |x|\geq n^{2/3}, T_{(m+x,m)}^{\n}-x \geq T_{(m,m)}^{\n}-H_{x,m})&\leq C\exp(-c \min_{|x|\geq n^{2/3}} (\frac{H_{x,m}}{n^{1/3}})^{3/2})\nonumber\\
    &\leq C'e^{-c'(n/(1+m))^{3/2}}.%
  \end{align}
\end{lemma}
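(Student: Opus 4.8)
The plan is to follow the template of Lemma \ref{lem:1221}: pass to the top line of the BLPP ensemble by translation invariance, discretise the range of $x$ into unit intervals, and estimate each interval via Proposition \ref{prop:47}.

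First I would use the stationarity of Brownian increments to move the base point of the upper portion to the origin. Set $N=n-m$, so that $N\in[n/2,n]$ and in particular $N\asymp n$. The increments of $W_{m+1},\dots,W_n$ over $[\,\cdot\,,n]$ are distributed as the increments of $N$ fresh i.i.d.\ Brownian motions, so jointly in $x$ one has $T_{(m+x,m)}^{\n}\stackrel{d}{=}T_{\0}^{(N-x,N)}=P_{1,N}(N-x)$ and $T_{(m,m)}^{\n}\stackrel{d}{=}T_{\0}^{(N,N)}=P_{1,N}(N)$, the joint law being that of the top line $P_{1,N}$ evaluated at the two abscissae $N-x$ and $N$. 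Proposition \ref{prop:47} then applies to both of these, and to their difference (after conditioning on one of them, since they share the same upper environment).

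Next, write $\{x\in[-m,n-m]:|x|\ge n^{2/3}\}$ as a countable union of unit intervals $[a_i,b_i]$, $b_i-a_i=1$. On such an interval, pushing the lower endpoint of the upper staircase to the right only shrinks its feasible set while altering the attached increment by at most the oscillation of $W_{m+1}$ over a unit interval, so $\sup_{x\in[a_i,b_i]}T_{(m+x,m)}^{\n}\le T_{(m+a_i,m)}^{\n}+\mathrm{osc}_i$ with $\mathrm{osc}_i$ sub-Gaussian; similarly $H_{x,m}$ varies over $[a_i,b_i]$ only by a bounded additive and multiplicative amount away from the left endpoint $x=-m$ of its domain (by Lemma \ref{lem:127}), and near $x=-m$ one uses a crude deterministic bound. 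Hence the event on $[a_i,b_i]$ forces the fixed-abscissa inequality $T_{(m+a_i,m)}^{\n}-T_{(m,m)}^{\n}\ge a_i-C H_{a_i,m}-1-\mathrm{osc}_i$ for an absolute $C$. The $\mathrm{osc}_i$ term is harmlessly absorbed, since $H_{a_i,m}\gtrsim n^{1/3}$ on this range (Lemma \ref{lem:127}), so its sub-Gaussian tail is negligible. I would then split the remaining event according to whether $T_{(m,m)}^{\n}$ undershoots its typical value $\approx 2N$ or $T_{(m+a_i,m)}^{\n}$ overshoots its typical value $\approx 2\sqrt{N(N-a_i)}$. Because the mean of the difference $T_{(m+x,m)}^{\n}-T_{(m,m)}^{\n}$ is $2\sqrt{N(N-a_i)}-2N$, which differs from the above threshold by at least a fixed fraction of $|a_i|\ (\ge H_{a_i,m})$, one of the two deviations is forced to be $\gtrsim H_{a_i,m}$. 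Proposition \ref{prop:47} bounds the $T_{(m,m)}^{\n}$-undershoot by $\exp(-c(H_{a_i,m}/n^{1/3})^3)$ (a Gaussian-cube tail) and the $T_{(m+a_i,m)}^{\n}$-overshoot by $\exp(-c(H_{a_i,m}/n^{1/3})^{3/2})$ in the moderate regime, switching to a Gaussian tail once the deviation exceeds $5N^{2/3}$; the weaker exponent $3/2$ from the overshoot is what appears in the final bound. Summing over $i$, the super-polynomial growth of $H_{a_i,m}$ in $|a_i|$ makes the sum comparable to its term at $|a_i|\asymp n^{2/3}$, giving the first bound $C\exp(-c\min_{|x|\ge n^{2/3}}(H_{x,m}/n^{1/3})^{3/2})$; the closed form $C'e^{-c'(n/(1+m))^{3/2}}$ then follows by inserting the lower bound $H_{x,m}\ge \tfrac{x^2}{8(m+1)}\wedge\tfrac{c|x|}{8}$ of Lemma \ref{lem:127} at $|x|=n^{2/3}$.

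The main obstacle is obtaining uniform control over the whole range $|x|\in[n^{2/3},n-m]$ with a single clean estimate: one must keep the deviation threshold comparable to $H_{a_i,m}$ for \emph{every} $a_i$ (not merely for $|a_i|$ of order $n^{2/3}$), handle the crossover between the cubic and Gaussian tails in Proposition \ref{prop:47} for the largest $|x|$, and --- for the negative-$x$ part, which is non-empty only when $m\ge n^{2/3}$ --- keep $x\ge -m$ so that $H_{x,m}$ and $T_{(m+x,m)}^{\n}$ stay defined near the edge of the domain; apart from this last point the two signs of $x$ are treated identically. Verifying that the resulting series over unit intervals telescopes to the stated double-exponential bound, uniformly in $m\le n/2$, is the one genuinely delicate computation.
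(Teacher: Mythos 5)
Your template (discretize $\{x\in[-m,n-m]:|x|\ge n^{2/3}\}$ into unit intervals, center, apply Proposition~\ref{prop:47} to the two marginals) is indeed what the paper intends, and the identification $(T_{(m+x,m)}^{\n},T_{(m,m)}^{\n})\stackrel{d}{=}(P_{1,N}(N-x),P_{1,N}(N))$ with $N=n-m$ is correct. The gap is in your assertion that the threshold $a_i-H_{a_i,m}$ sits above the mean of $T_{(m+a_i,m)}^{\n}-T_{(m,m)}^{\n}$ by a positive fraction of $|a_i|$, which you then use to force an undershoot or overshoot, and in your closing remark that "apart from [the domain of definition] the two signs of $x$ are treated identically." Both statements are false for $a_i<0$, and the argument does not close there.

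Concretely, write $x=-y$ with $y\in[n^{2/3},m]$; this range is non-empty whenever $m\ge n^{2/3}$, and for, say, $m\asymp n^{3/4}$ the claimed bound $C'e^{-c'(n/(1+m))^{3/2}}$ is still $o(1)$, so this range cannot be ignored. One has
\begin{equation*}
  \EE\bigl[T_{(m-y,m)}^{\n}-T_{(m,m)}^{\n}\bigr]\approx 2\sqrt{N(N+y)}-2N\approx y>0,
\end{equation*}
while the threshold $x-H_{x,m}=-y-H_{-y,m}$ is strictly negative: the mean lies \emph{above} the threshold, no deviation is forced, and the event is \emph{typical}. One sees this even more crudely by noting that $T_{(m-y,m)}^{\n}\ge T_{(m,m)}^{\n}+W_m(m)-W_m(m-y)$ holds deterministically (extend the geodesic from $(m,m)$ by the horizontal segment), and the Brownian increment is $\ge 0$ with probability $1/2$, which already gives $T_{(m-y,m)}^{\n}+y\ge T_{(m,m)}^{\n}\ge T_{(m,m)}^{\n}-H_{-y,m}$ on an event of probability at least $1/2$. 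So neither your argument nor the inequality as literally displayed can hold for $x<0$. The root cause is the centering for the \emph{upper} passage time: $\EE T_{(m+x,m)}^{\n}\approx 2\sqrt{N(N-x)}\approx 2N-x$ is decreasing in $x$, so the centered quantity is $T_{(m+x,m)}^{\n}+x$ and the relevant concavity gap is $H_{-x,N}$ rather than $H_{x,m}$; this is precisely the reflection $m\mapsto N$, $x\mapsto -x$ of the set-up in Lemma~\ref{lem:1221}. You should not claim symmetry in the sign of $x$; the deviation analysis must be run with the corrected centering, and the event being estimated needs to be read accordingly for the Proposition~\ref{prop:47} computation to apply uniformly over both signs.
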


\subsection{Proof of Lemma \ref{lem:121}}
\label{sec:proof-lemma-refl}
The first task is to handle the case when $m$ is large, and for this, we have the following result.
\begin{lemma}
  \label{lem:124}
 There exist constants $C,c,g>0$ such that for all $\delta>0$, all $n$ and all $m$ satisfying $ n/2\geq m\geq n^{1-3\delta/4}$, we have
 \begin{equation}
    \label{eq:603}
    \PP(\sup_{|x|\geq (1+m)^{2/3}n^{\delta}}Z_{\0}^{\n}(m+x,m)-Z_{\0}^{\n}(m,m)\geq - g(1+m)^{1/3}n^\delta)\leq Ce^{-cn^{3\delta/8}}.
  \end{equation}  
\end{lemma}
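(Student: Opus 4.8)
The plan is to work from the decomposition $Z_{\0}^{\n}(m+x,m)=T_{\0}^{(m+x,m)}+T_{(m+x,m)}^{\n}$ and to show that, for $|x|\ge(1+m)^{2/3}n^{\delta}$, with overwhelming probability the first term drops (relative to its base value $T_{\0}^{(m,m)}$) by at least $H_{x,m}/2$, while the second term increases by at most $-x$ relative to its base value $T_{(m,m)}^{\n}$. Adding the two and using $H_{x,m}\ge g'(1+m)^{1/3}n^{\delta}$ from Lemma \ref{lem:128} then gives $Z_{\0}^{\n}(m+x,m)-Z_{\0}^{\n}(m,m)<-\tfrac{g'}{2}(1+m)^{1/3}n^{\delta}$, which is the claim with $g=g'/2$. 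Note that the base values cancel, so no two-sided control of $Z_{\0}^{\n}(m,m)$ itself is needed. Since the statement is vacuous unless $\{|x|\ge(1+m)^{2/3}n^{\delta},\ \0\le(m+x,m)\le\n\}$ is nonempty, we may (for $n$ large) assume $\delta$ small enough that $\tau:=(1+m)^{2/3}n^{\delta}$ satisfies $\tau\ll m$ and $\tau\le k:=n-m$.

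The origin-side bound is immediate: by Lemma \ref{lem:1221} (with threshold $\tau=(1+m)^{2/3}n^{\delta}$), with probability at least $1-C'e^{-c'n^{3\delta/4}}$ one has
\begin{equation}
  T_{\0}^{(m+x,m)}-x<T_{\0}^{(m,m)}-H_{x,m}/2
\end{equation}
simultaneously for all $x$ with $|x|\ge\tau$ and $\0\le(m+x,m)\le\n$.

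For the $\n$-side bound I would first use the symmetry of BLPP under the central reflection $(a,b)\mapsto\n-(a,b)$ (a reflected Brownian motion being, up to negation, again Brownian), which gives \emph{jointly} in $x$ the distributional identity $\bigl(\{T_{(m+x,m)}^{\n}\}_{x},\,T_{(m,m)}^{\n}\bigr)\stackrel{d}{=}\bigl(\{T_{\0}^{(k-x,k)}\}_{x},\,T_{\0}^{(k,k)}\bigr)$ with $k=n-m\in[n/2,n]$. I would then rerun the argument from the proof of Lemma \ref{lem:1221} with the span $m$ replaced by $k$ but keeping the \emph{smaller} threshold $\tau=(1+m)^{2/3}n^{\delta}$ in place of the span-$k$ value $(1+k)^{2/3}n^{\delta}\ge\tau$; as its proof sketch makes clear, that argument is valid for any threshold $\tau\ge1$ and yields a failure probability of $C\exp\bigl(-c\min_{|u|\ge\tau}(H_{u,k}\,k^{1/6}(k+u)^{-1/2})^{3/2}\bigr)$. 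Since $\tau\ll k$, Lemma \ref{lem:127} gives $H_{u,k}\asymp u^{2}/k$ near $u=\tau$ and one checks the minimiser is at $|u|=\tau$, where the exponent is at least $c(\tau^{2}/k^{4/3})^{3/2}$; using $\tau^{2}=(1+m)^{4/3}n^{2\delta}\ge n^{4/3+\delta}$ (here the hypothesis $m\ge n^{1-3\delta/4}$ enters) and $k^{4/3}\le n^{4/3}$, this is at least $c\,n^{3\delta/2}$, so the failure probability is at most $Ce^{-cn^{3\delta/2}}$. On the complement one gets, for all $|x|\ge\tau$ with $\0\le(m+x,m)\le\n$,
\begin{equation}
  T_{(m+x,m)}^{\n}<T_{(m,m)}^{\n}-x-H_{-x,k}/2\le T_{(m,m)}^{\n}-x,
\end{equation}
using $H_{-x,k}\ge0$.

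Intersecting the two good events (whose complement has probability at most $C'e^{-c'n^{3\delta/4}}+Ce^{-cn^{3\delta/2}}\le Ce^{-cn^{3\delta/8}}$) and adding the displays, for every valid $x$ with $|x|\ge(1+m)^{2/3}n^{\delta}$ we obtain $Z_{\0}^{\n}(m+x,m)<Z_{\0}^{\n}(m,m)-H_{x,m}/2\le Z_{\0}^{\n}(m,m)-\tfrac{g'}{2}(1+m)^{1/3}n^{\delta}$ by Lemma \ref{lem:128}; this is exactly the complement of the event in the statement, with $g=g'/2$ (for small $n$ one enlarges $C$). The main obstacle is precisely this $\n$-side step: one cannot cite Lemma \ref{lem:1221} as a black box, since for the span $k=n-m$ its built-in threshold $(1+k)^{2/3}n^{\delta}$ fails to cover all of $|x|\ge(1+m)^{2/3}n^{\delta}$, so one must re-examine its proof at the smaller threshold $\tau$ and verify that the resulting (a priori weaker) tail is still super-polynomially small in $n$ — which is exactly where the hypothesis $m\ge n^{1-3\delta/4}$ is needed.
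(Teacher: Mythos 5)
Your proof is correct and follows the same strategy as the paper: decompose $Z_{\0}^{\n}=T_{\0}^{(\cdot,m)}+T_{(\cdot,m)}^{\n}$, bound the origin-side piece via Lemma \ref{lem:1221}, handle the $\n$-side piece by reflection symmetry and a second application of the same one-sided estimate, and union-bound. However, your diagnosis of the ``main obstacle'' is mistaken: you claim Lemma \ref{lem:1221} cannot be cited as a black box for the $\n$-side because the span-$k$ threshold $(1+k)^{2/3}n^{\delta}$ exceeds $\tau=(1+m)^{2/3}n^{\delta}$, but Lemma \ref{lem:1221} holds for \emph{all} $\delta>0$ with $\delta$-independent constants, so one can simply apply it with $\delta'=\delta/2$; then $(1+k)^{2/3}n^{\delta/2}\le n^{2/3+\delta/2}\le \tau$ (the second inequality is exactly where the hypothesis $m\ge n^{1-3\delta/4}$ enters), and the resulting failure probability $Ce^{-cn^{3\delta'/4}}=Ce^{-cn^{3\delta/8}}$ already matches the target. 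This $\delta$-reparametrization is precisely what the paper does. Your workaround of re-deriving the tail at the smaller threshold $\tau$ is valid and actually yields a sharper exponent ($n^{3\delta/2}$ rather than $n^{3\delta/8}$), but the extra excavation of the proof of Lemma \ref{lem:1221} is unnecessary. One small inaccuracy along the way: the parenthetical claim that ``that argument is valid for any threshold $\tau\ge 1$'' is false as stated (for $\tau=O(1)$ and $k$ large the summand in the proof of Lemma \ref{lem:1221} no longer decays and the sum is not small); what is true, and what you actually use, is that it works for the specific $\tau$ in play, for which the computed exponent $\tau^3 k^{-2}\ge n^{3\delta/2}$ is large.
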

\begin{proof}
Since $m\leq n/2$, we have $n-m\geq n/2$. Now, by two applications of Lemma \ref{lem:1221} and Lemma \ref{lem:128}, for some constant $C,c,g>0$, we have the following inequalities.
  \begin{align}
    \label{eq:602}
    &\PP(\exists x: |x|\geq (1+m)^{2/3}n^\delta, T_{\0}^{(m+x,m)}- x \geq T_{\0}^{(m,m)} - g(1+m)^{1/3}n^{\delta}/2) \leq Ce^{-cn^{3\delta/4}},\nonumber\\
    &\PP(\exists x: |x|\geq n^{2/3+\delta/2}, T_{(m+x,m)}^{\n}+x \geq T_{(m,m)}^{\n} - gn^{1/3+\delta/2}/2) \leq Ce^{-cn^{3\delta/8}}.
  \end{align}
  Now, since  $(1+m)^{2/3}n^\delta\geq n^{2/3+\delta/2}$ and since $m\leq n$, a simple union bound yields the desired statement.  
\end{proof}
Now, it remains to handle small values of $m$. For this, we first have the following result.
\begin{lemma}
  \label{lem:126}
 There exist constants $g,C,c>0$ such that for all $\delta>0$, all $n$ and all $m$ satisfying $0\leq m< n^{1-3\delta/2}$, we have
  \begin{equation}
  \label{eq:609}
      \PP(\exists x: |x|\in [(1+m)^{2/3}n^\delta, n^{2/3}]: Z_{\0}^{\n}(m+x,m)-Z_{\0}^{\n}(m,m)\geq -g (1+m)^{1/3}n^{\delta})\leq Ce^{-cn^{3\delta/4}}.
\end{equation}
\end{lemma}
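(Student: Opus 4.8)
\textbf{Proof proposal for Lemma \ref{lem:126}.} The plan is to decompose the routed-profile increment into a ``left'' and a ``right'' contribution and control each by one of the preliminary BLPP estimates already recorded. Writing $Z_{\0}^{\n}(q) = T_{\0}^{q} + T_{q}^{\n}$, for any admissible $x$ (i.e.\ $-m \le x \le n-m$) we have the identity
\begin{equation}
  \label{eq:planA}
  Z_{\0}^{\n}(m+x,m) - Z_{\0}^{\n}(m,m) = \bigl(T_{\0}^{(m+x,m)} - T_{\0}^{(m,m)}\bigr) + \bigl(T_{(m+x,m)}^{\n} - T_{(m,m)}^{\n}\bigr).
\end{equation}
The first I would handle with Lemma \ref{lem:1221}: on an event of probability at least $1 - C'e^{-c'n^{3\delta/4}}$, simultaneously for all $|x| \ge (1+m)^{2/3}n^\delta$ one has $T_{\0}^{(m+x,m)} - x < T_{\0}^{(m,m)} - H_{x,m}/2$, i.e.\ the left bracket in \eqref{eq:planA} is $< x - H_{x,m}/2$. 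The second bracket I would handle with Lemma \ref{lem:125}, which is applicable precisely because $m < n^{1-3\delta/2}$: on an event of probability at least $1 - C'e^{-c'n^{\delta}}$, simultaneously for all $|x| \in [(1+m)^{2/3}n^\delta, n^{2/3}]$ one has $|T_{(m+x,m)}^{\n} + x - T_{(m,m)}^{\n}| < H_{x,m}/4$, so the right bracket in \eqref{eq:planA} is $< -x + H_{x,m}/4$.

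On the intersection of these two events, for every $x$ in the range $|x| \in [(1+m)^{2/3}n^\delta, n^{2/3}]$ (this range being contained in the domain of applicability of both estimates), adding the two bounds in \eqref{eq:planA} gives
\begin{equation}
  \label{eq:planB}
  Z_{\0}^{\n}(m+x,m) - Z_{\0}^{\n}(m,m) < \Bigl(x - \tfrac{H_{x,m}}{2}\Bigr) + \Bigl(-x + \tfrac{H_{x,m}}{4}\Bigr) = -\tfrac{H_{x,m}}{4}.
\end{equation}
Now I invoke Lemma \ref{lem:128}, which says $H_{x,m} \ge g'(1+m)^{1/3}n^{\delta}$ whenever $|x| \ge (1+m)^{2/3}n^\delta$. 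Plugging this into \eqref{eq:planB} shows that, on the intersection event, $Z_{\0}^{\n}(m+x,m) - Z_{\0}^{\n}(m,m) < -(g'/4)(1+m)^{1/3}n^{\delta}$ for all relevant $x$; choosing $g = g'/4$, the event in the statement is therefore contained in the union of the two complementary bad events, whose probabilities sum to at most $C'e^{-c'n^{3\delta/4}} + C'e^{-c'n^{\delta}} \le C e^{-c n^{3\delta/4}}$ (the $3\delta/4$ exponent being the dominant one). This completes the proof.

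I do not anticipate a genuine obstacle here: all the analytic work is buried in Lemmas \ref{lem:1221}, \ref{lem:125} and the convexity estimate \ref{lem:127}--\ref{lem:128}, and what remains is bookkeeping. The only points requiring a bit of care are (i) verifying that the range $|x|\in[(1+m)^{2/3}n^\delta,n^{2/3}]$ is nonempty and lies inside the domains where both moderate-deviation estimates are valid (using $m < n^{1-3\delta/2}$ and $m \le n/2$, so that $(1+m)^{2/3}n^\delta \lesssim n^{2/3}$ and $m+x$ stays in $[0,n]$), in which degenerate cases the claimed probability bound is trivial; and (ii) tracking that the slope-correction term $x$ cancels exactly between the two brackets, which is why the shifted quantity $T^{\n}_{(m+x,m)}+x$ (rather than $T^{\n}_{(m+x,m)}$) appears in Lemma \ref{lem:125}.
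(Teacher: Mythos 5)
Your proposal is correct and is exactly the paper's argument, just spelled out: the paper's proof consists of the single sentence ``combine Lemma \ref{lem:1221} with Lemma \ref{lem:125} and then apply Lemma \ref{lem:128},'' and your decomposition of $Z_{\0}^{\n}(m+x,m)-Z_{\0}^{\n}(m,m)$ into the two brackets, the $x$-cancellation, and the final invocation of the lower bound $H_{x,m}\ge g'(1+m)^{1/3}n^\delta$ are precisely the implicit steps. The bookkeeping (domain of validity, nonemptiness of the $x$-range under $m<n^{1-3\delta/2}$, and the observation that $3\delta/4<\delta$ so the slower-decaying exponent dominates) is also handled correctly.
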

\begin{proof}
    As a result of Lemma \ref{lem:1221}, we know that
\begin{equation}
  \label{eq:606}
  \PP(\exists x : |x|\geq (1+m)^{2/3}n^{\delta}: T_{\0}^{(x,m)}-x \geq T_{\0}^{(m,m)}-H_{x,m}/2) \leq Ce^{-cn^{3\delta/4}}. 
\end{equation}
By combining this with Lemma \ref{lem:125} and then using Lemma \ref{lem:128}, we immediately obtain the desired result.
\end{proof}
Further, we have the following result.
\begin{lemma}
  \label{lem:123}
There exist constants $g,C,c>0$ such that for all $\delta>0$, all $n$ and all $m$ satisfying $0\leq m< n^{1-3\delta/2}$, we have
    \begin{equation}
    \label{eq:605}
    \PP(\exists x: |x|\geq n^{2/3}: Z_{\0}^{\n}(m+x,m)-Z_{\0}^{\n}(m,m)\geq -g (1+m)^{1/3}n^{\delta})\leq Ce^{-cn^{3\delta/4}}.
  \end{equation}
\end{lemma}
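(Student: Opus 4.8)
The plan is to split, for $x$ in the admissible range $[-m,n-m]$ (outside of which $Z_{\0}^{\n}(m+x,m)$ is undefined because $(m+x,m)\notin[\0,\n]$),
\begin{equation*}
  Z_{\0}^{\n}(m+x,m)-Z_{\0}^{\n}(m,m) = \big(T_{\0}^{(m+x,m)}-T_{\0}^{(m,m)}\big) + \big(T_{(m+x,m)}^{\n}-T_{(m,m)}^{\n}\big),
\end{equation*}
and to bound the first difference using the off-diagonal deviation estimate Lemma \ref{lem:1221} and the second using Lemma \ref{lem:118}; recall that throughout this section $m\le n/2$, so that $n-m\ge n/2$ and Lemma \ref{lem:118} is applicable.

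First I would note that the hypothesis $m<n^{1-3\delta/2}$ forces $(1+m)^{2/3}n^{\delta}\le n^{2/3}$, so that every $x$ with $|x|\ge n^{2/3}$ also satisfies $|x|\ge (1+m)^{2/3}n^{\delta}$; this is precisely the regime in which Lemma \ref{lem:1221} applies and in which the elementary lower bound $H_{x,m}\ge g'(1+m)^{1/3}n^{\delta}$ of Lemma \ref{lem:128} holds, and moreover $[-m,n-m]\cap\{|x|\ge n^{2/3}\}$ is exactly the index set of Lemma \ref{lem:118}. Working on the complement of the bad event of Lemma \ref{lem:1221} gives, simultaneously for all such $x$, that $T_{\0}^{(m+x,m)}-T_{\0}^{(m,m)}<x-H_{x,m}/2$; working on the complement of the bad event of Lemma \ref{lem:118} gives the matching estimate for $T_{(m+x,m)}^{\n}-T_{(m,m)}^{\n}$, with the linear term entering with the opposite sign. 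Adding these, the $\pm x$ contributions cancel and one obtains, on the intersection of the two good events and for all $|x|\ge n^{2/3}$,
\begin{equation*}
  Z_{\0}^{\n}(m+x,m)-Z_{\0}^{\n}(m,m) < -\tfrac{3}{2}H_{x,m}\le -\tfrac{3}{2}g'(1+m)^{1/3}n^{\delta}.
\end{equation*}
Setting $g=\tfrac{3}{2}g'$, the event in the statement is contained in the union of the two bad events, so the proof is completed by a union bound using the probability bounds in Lemma \ref{lem:1221} and Lemma \ref{lem:118}, together with the observation that $m<n^{1-3\delta/2}$ gives $n/(1+m)\ge\tfrac{1}{2}n^{3\delta/2}$, hence $(n/(1+m))^{3/2}$ is much larger than $n^{3\delta/4}$, so that the term coming from Lemma \ref{lem:118} is absorbed into a bound of the form $Ce^{-cn^{3\delta/4}}$.

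The step I expect to require the most care is the sign bookkeeping in the middle paragraph: one must confirm that the linear-in-$x$ corrections produced by Lemma \ref{lem:1221} (for the $\0\to(m+x,m)$ passage time) and by Lemma \ref{lem:118} (for the $(m+x,m)\to\n$ passage time) enter the decomposition of $Z_{\0}^{\n}$ with opposite signs, so that what survives is the strictly negative curvature term $-\tfrac{3}{2}H_{x,m}$ rather than something like $2x-\tfrac{3}{2}H_{x,m}$; it is also worth separately checking the boundary case $m<n^{2/3}$, in which there is no admissible $x$ with $x<-n^{2/3}$ and only $x>0$ contributes. Both points are routine given the stated estimates, so the lemma should follow with no genuinely new input.
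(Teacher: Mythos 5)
Your decomposition and choice of lemmas match the paper's approach exactly (the paper's proof simply reads ``combine \eqref{eq:606} and Lemma \ref{lem:118}''), and you were right to flag the sign of the linear corrections as the step requiring care. But the cancellation you describe does not follow from Lemma \ref{lem:118} as stated, so this is a genuine gap rather than a routine check. Reading Lemma \ref{lem:118} literally, its good event yields $T_{(m+x,m)}^{\n}-T_{(m,m)}^{\n}<x-H_{x,m}$, where the linear term enters with the \emph{same} sign $+x$ as in the bound coming from \eqref{eq:606}. Adding the two gives
\[
Z_{\0}^{\n}(m+x,m)-Z_{\0}^{\n}(m,m)<2x-\tfrac{3}{2}H_{x,m},
\]
not $-\tfrac{3}{2}H_{x,m}$. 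Since $H_{x,m}=(\sqrt{m+x}-\sqrt{m})^2\le x$ for every $x\ge 0$, the right-hand side is $\ge x/2>0$ whenever $x>0$, so this combination never delivers the required negative drop $-g(1+m)^{1/3}n^{\delta}$ on the range $x\ge n^{2/3}$.

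The discrepancy appears to originate in the statement of Lemma \ref{lem:118} itself: the centering computation $\EE\bigl[T_{(m+x,m)}^{\n}-T_{(m,m)}^{\n}\bigr]\approx 2\sqrt{(n-m)(n-m-x)}-2(n-m)=-x-H_{-x,n-m}$ shows that the linear drift of the upper passage time is $-x$, so the linearly corrected quantity should read $T_{(m+x,m)}^{\n}+x$, not $T_{(m+x,m)}^{\n}-x$. (Consistently, for $x\in[-m,-n^{2/3}]$ with $n^{2/3}<m<n/2$, the ``bad event'' of Lemma \ref{lem:118} as written has probability close to $1$, because the mean of $T_{(m+x,m)}^{\n}-T_{(m,m)}^{\n}$ is of order $|x|$ while the threshold $x-H_{x,m}$ is negative; so the printed tail bound cannot be correct.) Once the sign is fixed, the cancellation you invoke does hold and the remainder of your argument is sound — but it is not deducible from the estimates as printed, and you should have traced through the signs and reported the contradiction rather than declaring both checkpoints routine.
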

\begin{proof}
  By assumption, we have $n/m\geq n^{3\delta/2}$ and thus $(n/m)^{3/2}\geq n^{9\delta/4}\geq n^{3\delta/4}$. We now combine \eqref{eq:606} and Lemma \ref{lem:118} to obtain the desired result.
\end{proof}

\begin{proof}[Proof of Lemma \ref{lem:121}]
  The proof is completed by combining Lemmas \ref{lem:124}, \ref{lem:126}, \ref{lem:123}.
\end{proof}

\subsection{Proof of Proposition \ref{lem:31}}
\label{sec:proof-prop-refl}
The goal now is to use Lemma \ref{lem:121} along with a Brownianity argument to prove Proposition \ref{lem:31}. For this, we shall need a result on the Brownianity of routed distance profiles at mesoscopic scales, and for this, we introduce some notation. For $m\leq n/2$ and $r>0$, we shall consider the process $\frakf^m_{r}\colon [-r (1+m)^{2/3}, r (1+m)^{2/3}]\rightarrow \RR$ defined by
\begin{equation}
  \label{eq:672}
  \frakf^m_{r}(x)=Z_{\0}^{\n,\bullet}(m+x,m) -Z_{\0}^{\n,\bullet}(m,m),
\end{equation}
and we note that the above is well-defined as long as $m-r (1+m)^{2/3}\geq 0$.
\begin{proposition}[{\cite[Theorem 1.2]{GH20}}]
  \label{prop:52}
  Let $B_{r}^m$ denote a Brownian motion of diffusivity $2$ on $[-r (1+m)^{2/3}, r (1+m)^{2/3}]$. Then there exist positive constants $g_1,g_2,g_3,g_4,g_5$ and $m_0>0$ such that for all $m_0\leq m\leq n/2$, $r$ satisfying $r\leq m^{1/3}/2$, and all measurable sets $A$ satisfying
  \begin{equation}
    \label{eq:613}
   e^{-g_1m^{1/12}}\leq  \PP(B_{r}^m\in A) \leq g_2\wedge e^{-g_3r^{12}},
 \end{equation}
 we have
    \begin{equation}
    \label{eq:615}
    \PP(\frakf^m_{r}\in A)\leq  g_4r^6\PP(B_{r}^m\in A)\exp( g_5r^7 (\log \PP(B_{r}^m\in A)^{-1})^{5/6}).
  \end{equation}
\end{proposition}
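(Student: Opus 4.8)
The engine of the argument is the structural observation, stressed already after \eqref{eq:688}, that the routed profile $x\mapsto Z_{\0}^{\n,\bullet}(m+x,m)$ splits as a sum of two \emph{independent} processes: $x\mapsto T_{\0}^{(m+x,m)}$, which depends only on $W_1,\dots,W_m$, and $x\mapsto T_{(m+x,m+1)}^{\n}$, which depends only on $W_{m+1},\dots,W_n$. By the O'Connell--Yor correspondence \cite{OY02}, the first of these is the top line $P_{1,m}$ of the line ensemble $\{P_{k,m}\}$ of Section \ref{sec:ensemble}, a Dyson Brownian motion, and the second is the top line of the reflected analogue built from the ``top'' last passage times; both ensembles therefore satisfy the Brownian Gibbs property of \cite{CH14}, and since their constituent increments are standard Brownian motions, each top line is locally a diffusivity-$1$ process. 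Summing the two independent pieces should produce something locally indistinguishable from a diffusivity-$2$ Brownian motion, and the proposition is the quantitative form of this over a mesoscopic window. No localization in the spirit of Lemma \ref{lem:121} is needed here, since the statement only concerns the restriction of the profile to the window $[-r(1+m)^{2/3},r(1+m)^{2/3}]$.

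The plan is a two-ensemble Brownian Gibbs resampling argument. Fix $W=[-r(1+m)^{2/3},r(1+m)^{2/3}]$ and reveal the two ensembles outside $[\![1,2]\!]\times W$ (retaining a second line of each so that the non-intersection constraint is visible). Conditionally on this exterior data, the restriction of $T_{\0}^{(m+\cdot,m)}$ to $W$ is a Brownian bridge $\mathcal{B}^{\mathrm{bot}}$ between the two revealed endpoint values, conditioned to stay above the second line $P_{2,m}$; independently, the top piece restricts to such a bridge $\mathcal{B}^{\mathrm{top}}$. Before the non-intersection conditioning, $\mathcal{B}^{\mathrm{bot}}+\mathcal{B}^{\mathrm{top}}$ is a diffusivity-$2$ Brownian bridge, and passing from a Brownian bridge to a Brownian motion costs only a Radon--Nikodym factor depending on the endpoint displacements. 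The non-intersection conditioning costs a Radon--Nikodym factor bounded by $1/Z_{\mathrm{acc}}$, where $Z_{\mathrm{acc}}$ is the probability that the unconditioned pair of bridges avoids its respective lower curves; this is exactly the ``acceptance probability'' of the Hammond--Dauvergne framework, for which one has lower bounds of the form $Z_{\mathrm{acc}}\geq \exp(-c(D^2/|W|+\cdots))$ in terms of the spread $D$ of the boundary data over $W$ (cf.\ the estimates \cite[Lemmas 2.4, 2.5]{Dau23} used in Appendix \ref{sec:brownian-regularity}). One then integrates out the exterior data, controlling the endpoint values $P_{1,m}(\pm r(1+m)^{2/3})$, the maxima of $P_{2,m}$ over $W$, and their top-ensemble analogues via the moderate deviation bounds of Proposition \ref{prop:47} (equivalently Lemma \ref{lem:74} after scaling), and optimizing the threshold that separates ``good'' boundary data from the rare bad event. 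This last optimization is what manufactures the exponents $r^6$, $r^7$ and $5/6$ in \eqref{eq:615}, while the restriction $\PP(B_r^m\in A)\geq e^{-g_1m^{1/12}}$ records the scale at which the finitely-many-line ensemble $\{P_{k,m}\}$ ceases to admit a Brownian comparison.

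The main obstacle is the sharpness of this bookkeeping carried out \emph{uniformly} over $m_0\le m\le n/2$ and $r\le m^{1/3}/2$. One must track how the spread of the boundary data scales ($D\sim r^2(1+m)^{1/3}$ over a window of length $\sim r(1+m)^{2/3}$, so $D^2/|W|\sim r^3(1+m)^{1/3}$) and verify that the acceptance lower bound, the bridge regularity, and the moderate deviation inputs all degrade only in the stated polynomial-in-$r$ fashion with no hidden dependence on $m$ (or on $n-m$, which governs the geometry of the top ensemble and is comparable to $n$ but may be much larger than $m$). A secondary subtlety is organizing the two-ensemble structure: the two bridges are independent but each avoids its \emph{own} lower curve, so $Z_{\mathrm{acc}}$ factors into two pieces; conditioning on one ensemble, performing the comparison for the other, and only then handling the conditioned-upon side --- at the price of one extra layer of tail estimates --- is the cleanest way to keep the combined Radon--Nikodym factor in the form required by \eqref{eq:615}.
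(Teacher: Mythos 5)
The paper does not in fact reprove this statement: it is cited wholesale as \cite[Theorem 1.2]{GH20}, with only a one-line remark that the restriction $m=\Theta(n)$ in the source can be relaxed to $m_0\le m\le n/2$ by the same proof, and a further remark that Proposition \ref{prop:24} could give something stronger but is not needed. Your proposal, by contrast, is an attempt to reconstruct the GH20 argument from first principles, so the two are genuinely different in scope.

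Your high-level architecture --- splitting $Z_{\0}^{\n,\bullet}$ into two independent pieces, recognizing each as the top line of a non-intersecting ensemble via O'Connell--Yor, and comparing to Brownian motion through a Brownian Gibbs resampling --- is the correct skeleton of the GH20 proof. But the crucial technical engine is elided. A naive resampling of the two top lines by Brownian bridges conditioned on non-intersection does not, on its own, yield a Radon--Nikodym factor that is polynomial in $r$ with only a $(\log\PP(B_r^m\in A)^{-1})^{5/6}$ subpolynomial correction; the acceptance probability $Z_{\mathrm{acc}}$ is random and can be arbitrarily small on a non-negligible event, and the simple lower bound $\exp(-cD^2/|W|)$ you quote does not control its inverse moments well enough to reach the stated conclusion. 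What produces the specific exponents $r^6$, $r^7$, $5/6$ and the floor $\PP(B_r^m\in A)\ge e^{-g_1 m^{1/12}}$ is Hammond's jump ensemble machinery (as in \cite{Ham22}, on which GH20 builds): one resamples not the Brownian bridge directly but a proxy that jumps over a coarse-grained obstruction, and the comparison to Brownian motion is performed for this proxy. Your sketch treats this step as routine bookkeeping, but it is the heart of the result and cannot be reduced to ``Brownian Gibbs plus moderate deviations.'' In short, your strategy is not wrong, but it stops precisely where the real work begins; the paper's choice to cite rather than reprove is the appropriate one, and if you wanted to reproduce the argument you would need to engage with the jump ensemble construction explicitly.
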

We note that in the source \cite{GH20}, the above result is stated for the case when $m=\Theta(n)$, but the same proof generalises to yield the above result. Also, we note that that Proposition \ref{prop:24} can be used to obtain a stronger version of Brownianity of $Z_{\0}^{\n,\bullet}$ than the above result; however, Proposition \ref{prop:52} will be entirely sufficient for our application.

Now, for any set $A\subseteq \RR$ and a real valued function $f$ defined on $I$, let $\nearmax^\alpha(f)$ be the largest possible size of a set $S\subseteq I$ with the following properties:
  \begin{enumerate}
  \item $|x-y|\geq \alpha^2$ for all $x,y\in S$
  \item $\max_{x\in I}f(x) - f(y)\leq \alpha$ for all $y\in S$.
  \end{enumerate}
Given the literature on the Brownianity of BLPP weight profiles, the following result is easy to obtain and we now sketch a proof.

\begin{lemma}
  \label{lem:129}
  Fix $\delta>0$. For all $m\geq n^{100\delta}$, all $\alpha \leq n^{\delta/2}m^{1/3}$, we have the following.
  \begin{equation}
    \label{eq:614}
    \PP(\nearmax^{\alpha}(\frakf_{n^\delta}^m)\geq n^{8\delta})\leq Ce^{-cn^{8\delta}}
  \end{equation}
\end{lemma}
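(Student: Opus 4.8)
The plan is to reduce the statement, via the Brownian comparison for routed weight profiles, to the classical fact that a Brownian motion has only geometrically many well-separated near-maxima. Write $\frakf=\frakf^m_{n^\delta}$, which is defined on $I:=[-n^\delta(1+m)^{2/3},n^\delta(1+m)^{2/3}]$. The first step is a purely deterministic reduction: a subset of $I$ with pairwise distances $\ge\alpha^2$ has cardinality at most $2n^\delta(1+m)^{2/3}\alpha^{-2}+1$, so $\{\nearmax^\alpha(\frakf)\ge n^{8\delta}\}$ is empty (probability $0$) unless $\alpha^2\le 3n^{-7\delta}(1+m)^{2/3}$; since this threshold is much smaller than $(n^{\delta/2}m^{1/3})^2$, it remains only to handle $\alpha$ in this small range, which I assume from now on.

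The second step is localization. Partition $I$ into $\Theta(n^\delta)$ consecutive closed sub-intervals $\{I_j\}$, each of length $\Theta((1+m)^{2/3})$. If $y$ is an $\alpha$-near-maximum of $\frakf$ on $I$, then it is, a fortiori, an $\alpha$-near-maximum of $\frakf|_{I_j}$ on the $I_j$ containing it, since $\max_{I_j}\frakf\le\max_I\frakf$; as the $\alpha^2$-separation is inherited, splitting an optimal configuration for $\nearmax^\alpha(\frakf)$ across the $I_j$ gives $\nearmax^\alpha(\frakf)\le\sum_j\nearmax^\alpha(\frakf|_{I_j})$. Hence $\{\nearmax^\alpha(\frakf)\ge n^{8\delta}\}\subseteq\bigcup_j\{\nearmax^\alpha(\frakf|_{I_j})\ge cn^{7\delta}\}$, and after a union bound over the $\Theta(n^\delta)$ sub-intervals it suffices to bound $\PP(\nearmax^\alpha(\frakf|_{I_j})\ge cn^{7\delta})$ for each $j$ by a suitably strong stretched-exponential.

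For a single $I_j$ I would invoke Brownianity. Because $m\ge n^{100\delta}$, the scale $(1+m)^{2/3}$ lies far below $m$, so Proposition \ref{prop:52} — together with its routine extension to windows centred off the diagonal, which goes through exactly as in \cite[Theorem 1.2]{GH20} since the relevant geodesic slopes are $1+o(1)$ — compares the recentred profile $x\mapsto Z_{\0}^{\n,\bullet}(m+a_j+x,m)-Z_{\0}^{\n,\bullet}(m+a_j,m)$ on $I_j$ to a diffusivity-$2$ Brownian motion $B$. It is important here to use $r=\Theta(1)$, so that the multiplicative correction $g_4r^6\exp\big(g_5r^7(\log\PP(B\in A)^{-1})^{5/6}\big)$ is only sub-exponential in $\log\PP(B\in A)^{-1}$ and does not absorb the gain. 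Taking $A=\{\nearmax^\alpha\ge cn^{7\delta}\}$, one checks the two-sided probability hypothesis of Proposition \ref{prop:52}: the upper bound $\PP(B\in A)\le g_2\wedge e^{-g_3r^{12}}$ follows from the Brownian estimate below, while the lower bound $\PP(B\in A)\ge e^{-g_1m^{1/12}}$ holds because $m^{1/12}\ge n^{100\delta/12}\gg n^{7\delta}$ and the Brownian near-max count satisfies a matching lower bound.

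The Brownian input is: for a diffusivity-$2$ Brownian motion $B$ on an interval of arbitrary length, $c_1e^{-C_1k}\le\PP(\nearmax^\alpha(B)\ge k)\le C_2e^{-c_2k}$ with universal constants. For the upper bound, condition on the global maximum $M$ and its location $x^\ast$; to the right of $x^\ast$ the process $t\mapsto M-B(x^\ast+t)$ is a $3$-dimensional Bessel process started at $0$, and the number of $\alpha^2$-separated times at which it lies in $[0,\alpha]$ is stochastically dominated by a geometric random variable — each excursion below level $\alpha$ has duration $\Theta(\alpha^2)$ with exponentially decaying tails, and by transience of $\mathrm{Bessel}(3)$ the number of such excursions before the process escapes to $\infty$ is geometric; the same holds on the left, and integrating out $M,x^\ast$ gives the bound, the lower bound coming from an explicit oscillating scenario. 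Feeding this into Proposition \ref{prop:52} and the union bound of the localization step yields the claim. I expect the main obstacle to be this Brownian near-maxima estimate — in particular making the $\mathrm{Bessel}(3)$/excursion-counting argument rigorous and extracting exactly the probability window demanded by Proposition \ref{prop:52} — rather than the (routine but bookkeeping-heavy) tracking of exponents through the localization, where one must take some care to land on the stated exponent.
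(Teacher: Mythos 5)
The paper's own proof is a two-liner: it cites \cite[Proposition 2.5]{CHH23} for the fact that $\PP(\nearmax^\alpha(B_r^m)\geq k)\leq De^{-dk}$ (uniformly in $\alpha\leq\sqrt{r}(1+m)^{1/3}$, which is the window in which Lemma \ref{lem:129} is stated), and then applies Proposition \ref{prop:52} directly with $r=n^\delta$ and $k=n^{8\delta}$. Your route is genuinely different -- you localize into $\Theta(n^\delta)$ sub-intervals and apply the Brownianity comparison at scale $r=\Theta(1)$ on each -- and I can see why you chose it: with $r=n^\delta$ and $\log\PP(B\in A)^{-1}\asymp n^{8\delta}$ the multiplicative correction $\exp(g_5 r^7(\log\PP(B\in A)^{-1})^{5/6})$ is $\exp(\Theta(n^{41\delta/3}))$, which is not obviously absorbed by the main factor $e^{-\Theta(n^{8\delta})}$, so taking $r=\Theta(1)$ is a sensible way to keep the correction under control. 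However, three things in your write-up are genuine gaps rather than streamlining choices.

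First, after the pigeonhole step your localized events are $\{\nearmax^\alpha(\frakf|_{I_j})\geq cn^{7\delta}\}$, and the best the $r=\Theta(1)$ comparison plus the Brownian estimate can give on each piece is $e^{-c'n^{7\delta}}$; after the union bound over $\Theta(n^\delta)$ pieces this is still $e^{-c''n^{7\delta}}$, which does \emph{not} imply the stated bound $Ce^{-cn^{8\delta}}$ for any positive $c$. (It does happen to be strong enough for the use of Lemma \ref{lem:129} downstream in the proof of Proposition \ref{lem:31}, where only $e^{-cn^{3\delta/8}}$ is needed, but it does not prove the lemma as written.) Second, Proposition \ref{prop:52} as stated compares $\frakf^m_r$ -- the profile \emph{centered} at the diagonal point $(m,m)$ -- with Brownian motion; applying it on the off-center sub-intervals $I_j$, whose midpoints sit at $m+a_j$ with $|a_j|$ up to $n^\delta(1+m)^{2/3}$, requires an off-center version that the paper never states. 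You call this extension routine, but establishing Brownianity comparisons for $\cP$ is precisely what a whole appendix of the paper is devoted to, so asserting it is not enough; the natural fix is to invoke Proposition \ref{prop:24}--\ref{prop:25} instead, since those already allow a general window $(\mathbf a,\mathbf a+t)$, but that substitutes a different and heavier tool. Third, the Brownian input $\PP(\nearmax^\alpha(B)\geq k)\lesssim e^{-ck}$ is available off the shelf as \cite[Proposition 2.5]{CHH23}; re-deriving it via a $\mathrm{Bessel}(3)$ excursion count is not wrong in spirit, but the sketch you give (``each excursion below level $\alpha$ has duration $\Theta(\alpha^2)$ with exponentially decaying tails'') is where, as you yourself flag, the actual work would be -- and there is no reason to do that work here when the estimate is already in the literature.

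In short: you have correctly identified that the result should follow from a Brownianity comparison plus a geometric bound on Brownian near-maxima, and your $r=\Theta(1)$ localization is a legitimately different (and arguably more robust) way to organize the comparison. But as it stands your argument loses a power of $n^\delta$ in the exponent, relies on an unproven off-diagonal extension of Proposition \ref{prop:52}, and replaces a cited lemma with an unfinished excursion-theory sketch. The paper's version sidesteps all three by citing \cite[Proposition 2.5]{CHH23} and applying Proposition \ref{prop:52} once, globally, with $r=n^\delta$.
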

\begin{proof}
  The proof proceeds by using Proposition \ref{prop:52} along with a Brownian computation. First, by \cite[Proposition 2.5]{CHH23}, there exist constants $D,d$ such that for all $k\geq 1$ and all $\alpha\leq \sqrt{r}(1+m)^{1/3}$, we have
  \begin{equation}
    \label{eq:616}
    \PP(\nearmax^{\alpha}(B_{r}^m)\geq k)\leq De^{-dk}.
  \end{equation}
 Now, on using the above with $k=n^{8\delta}$ and Proposition \ref{prop:52} with $r = n^\delta$, we obtain the desired result.
\end{proof}

Note that the above result concerns $\frakf_{n^\delta}^m$ which is defined in terms of $Z_{\0}^{\n,\bullet}$. However, our final goal is to prove Proposition \ref{lem:31} which is a statement about the behaviour of $Z_{\0}^{\n}$. We now present a lemma which will allow us to reduce the analysis of the latter to that of the former.
\begin{lemma}
  \label{lem:137}
Fix $\delta>0$. For some constants $C,c$, there is an event $E_n$ with probability at least $1-Ce^{-cn^{3\delta/8}}$ on which for any $m\geq n^{100\delta}$ and for any $|x|\leq (1+m)^{2/3}n^{\delta}+1$ satisfying
  \begin{equation}
    \label{eq:674}
    T_{\0}^{\n}-Z_{\0}^{\n}(m+x,m)\leq n^{\delta/2},
  \end{equation}
  there must exist an $x'$ satisfying $x'-x\in (0, 2n^{\delta})$ with the property that
  \begin{equation}
    \label{eq:675}
    T_{\0}^{\n}-Z_{\0}^{\n,\bullet}(m+x',m)\leq n^{\delta/2}.
  \end{equation}
\end{lemma}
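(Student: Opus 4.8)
The plan is to construct $x'$ explicitly from an optimal routed staircase and reduce the strict weight bound to a transversal‑fluctuation estimate. By the symmetry exchanging $\0$ and $\n$ we may assume $m\le n/2$, so that all points introduced below lie inside $[\0,\n]$. Fix an admissible $x$ and set $p=(m+x,m)$; let $\gamma$ be an optimal staircase from $\0$ to $\n$ routed through $p$, so $\wgt(\gamma)=Z_{\0}^{\n}(m+x,m)\ge T_{\0}^{\n}-n^{\delta/2}$. Write $\{m\}_{[a,b]}$ for the horizontal portion of $\gamma$ at level $m$; then $a\le m+x\le b$ and $\gamma$ leaves level $m$ through the vertical edge from $(b,m)$ to $(b,m+1)$. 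Splitting $\gamma$ at that edge and using the definition \eqref{eq:688} of $Z_{\0}^{\n,\bullet}$ gives $\wgt(\gamma)\le T_{\0}^{(b,m)}+T_{(b,m+1)}^{\n}=Z_{\0}^{\n,\bullet}(b,m)$, so with $x'=b-m$ one has
\begin{equation}
T_{\0}^{\n}-Z_{\0}^{\n,\bullet}(m+x',m)\ \le\ T_{\0}^{\n}-\wgt(\gamma)\ =\ T_{\0}^{\n}-Z_{\0}^{\n}(m+x,m)\ \le\ n^{\delta/2},
\end{equation}
which is the required weight bound. Note that $b=\operatorname{argmax}_{z\ge m+x}\big(W_m(z)+T_{(z,m+1)}^{\n}\big)$ (the level‑$m$ increment exploited by $\gamma$ on $[m+x,b]$ is exactly $W_m(b)-W_m(m+x)$), so $b\ge m+x$ holds automatically; what remains is to control $b-(m+x)$ both above and away from $0$.

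For the upper bound I would show $b<m+x+2n^{\delta}$ on an event $E_n$ of probability at least $1-Ce^{-cn^{3\delta/8}}$. For a fixed rational $p$, the event $\{b-p_1\ge n^{\delta}\}$ forces some $z\ge p_1+n^{\delta}$ with $W_m(z)-W_m(p_1)\ge T_{(p_1,m+1)}^{\n}-T_{(z,m+1)}^{\n}$; reflecting to turn $T_{(\cdot,m+1)}^{\n}$ into a point‑to‑point profile and using the single‑line moderate deviation estimate of Proposition \ref{prop:47} (for the nearby range of $z$, where the curvature correction is $O((z-p_1)^2/n)=o(1)$) together with Lemma \ref{lem:1221} (for the far range), the right side is $(z-p_1)$ up to a fluctuation of order $(z-p_1)^{1/2}$, while the left side has the same order, so this needs a joint Gaussian deviation of order $z-p_1\ge n^{\delta}$ and has probability $\le Ce^{-cn^{\delta}}$. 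Taking a union over $m\in[\![n^{100\delta},n/2]\!]$ and over an integer mesh of the $x$‑variable ($O(n^2)$ points), and using that $x\mapsto\operatorname{argmax}_{z\ge m+x}\big(W_m(z)+T_{(z,m+1)}^{\n}\big)$ is non‑decreasing, so that the bound at the mesh point just above $x$ controls $b$ at $x$ up to the mesh spacing $1$, yields $b-(m+x)<2n^{\delta}$ simultaneously. I would then intersect this with the event of Lemma \ref{lem:121} (and its $\0\leftrightarrow\n$ mirror), which has the same decay; this is the purpose of invoking Lemma \ref{lem:121}, since it pins every near‑maximiser of $Z_{\0}^{\n}(\cdot,m)$ — hence both $m+x$ and $m+x'$ — into the window $\{m\}_{[m-(1+m)^{2/3}n^{\delta},\,m+(1+m)^{2/3}n^{\delta}]}$ that is used downstream, and defines $E_n$.

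The one genuinely delicate point — which I expect to be the main technical obstacle — is the strict inequality $b>m+x$, equivalently that the optimal routed staircase does not leave level $m$ immediately at $(m+x,m)$. This fails exactly when $m+x$ is a forward‑record point of $h(z)=W_m(z)+T_{(z,m+1)}^{\n}$; for a fixed $x$ this has probability $0$ (near a deterministic point $W_m(\cdot)-W_m(m+x)$ is a fresh Brownian motion independent of $T_{(\cdot,m+1)}^{\n}$, so the sum strictly exceeds its value at $m+x$ on every right‑neighbourhood), but the random exceptional set of such $x$ need not be empty, and on it one only gets $Z_{\0}^{\n,\bullet}(m+x,m)=Z_{\0}^{\n}(m+x,m)\ge T_{\0}^{\n}-n^{\delta/2}$, with $b=m+x$. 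I would handle this via an approximation argument (approach $x$ from the right by non‑exceptional points, run the construction there, and pass to the limit using the continuity of $Z_{\0}^{\n}(\cdot,m)$ and $Z_{\0}^{\n,\bullet}(\cdot,m)$ — each a sum of two independent locally Brownian profiles — and the uniform window control from Lemma \ref{lem:121}); alternatively one checks that, in every place this lemma is applied, the closed range $x'-x\in[0,2n^{\delta}]$ is already sufficient, so that only the generic case is needed. The remainder of the argument — the explicit choice of $x'$, the one‑step fluctuation estimate for $b-(m+x)$, and the union bounds — is routine.
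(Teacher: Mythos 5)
Your construction of $x'=b-m$ from the exit position $b$ of an optimal routed staircase at level $m$, and the verification of \eqref{eq:675} via $\wgt(\gamma)\le T_{\0}^{(b,m)}+T_{(b,m+1)}^{\n}=Z_{\0}^{\n,\bullet}(b,m)$, coincide exactly with the paper's argument (your $b$ is the paper's $\xi(m)$). Your remark about the open lower bound is also a fair catch: the paper's own proof only yields $\xi(m)\in[m+x,\,m+x+2n^{\delta})$, i.e.\ $x'-x\in[0,2n^{\delta})$, and — as you note — the closed range is what is actually used downstream.

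The genuine gap is the opening reduction ``by the symmetry exchanging $\0$ and $\n$ we may assume $m\le n/2$.'' The point reflection $(a,j)\mapsto(n-a,n-j)$ sends $Z_{\0}^{\n,\bullet}(q)=T_{\0}^{q}+T_{q+(0,1)}^{\n}$ to the \emph{lower}-cut variant $T_{q}^{\n}+T_{\0}^{q-(0,1)}$ and reverses the sign of the horizontal coordinate, so it transforms the target (find $x'>x$ within $2n^{\delta}$ at which $Z_{\0}^{\n,\bullet}$ is large) into a different statement with the wrong routed profile and the wrong direction. The restriction also matters for your estimate: your fluctuation bound for $b-(m+x)$ needs the increment of $T_{(\cdot,m+1)}^{\n}$ over scales $\lesssim n^{\delta}$ to be $\approx$(distance)$+O(\sqrt{\text{distance}})$, which requires $(n-m)^{2/3}\gg n^{\delta}$, i.e.\ $n-m\gg n^{3\delta/2}$; the hypothesis only imposes $m\ge n^{100\delta}$, so levels $m$ within $n^{3\delta/2}$ of $n$ are not covered by your argument and are not recovered by the invalid symmetry. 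The paper avoids both issues by working exclusively with the \emph{bottom}-half passage times $T_{\0}^{(m+y,m)}$, whose vertical scale is $m\ge n^{100\delta}$ over the whole range: it defines $E_n$ as the event that $T_{\0}^{(m+y,m)}-Z_{\0}^{(m+y,m)}(m+y-n^{\delta},m)\ge n^{\delta/2}$ for all integers $y$ in a window (controlled by Lemma~\ref{lem:121} via rescaling and a union bound), and then on $E_n$ rules out $\xi(m)\ge m+x+2n^{\delta}$ by a purely deterministic subpath argument: with $\widetilde x$ the largest integer in $(x,x+2n^{\delta})$, the subpath $\widetilde\xi\subseteq\xi$ from $\0$ to $(m+\widetilde x,m)$ is forced through $(m+\widetilde x-n^{\delta},m)$ and contradicts $E_n$. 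This requires no top-half estimate and no symmetry reduction. Your direct computation via Proposition~\ref{prop:47} and Lemma~\ref{lem:1221} is a genuinely different (and heavier) route to the probability bound; it is salvageable, but you would need to drop the symmetry reduction and either restrict to $n-m\gg n^{3\delta/2}$ or switch, as the paper does, to a bottom-half argument for the remaining levels.
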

\begin{proof}
   By applying Lemma \ref{lem:121} for the degenerate case $m=0$ along with a Brownian scaling argument, we know that there are constants $C,c$ such that for all $m\geq n^{100\delta}$ and for any $y$ satisfying $-m/2\leq y-n^{\delta}\leq y\leq m/2$,
  \begin{equation}
    \label{eq:677}
    \PP(T_{\0}^{(m+y,m)}-Z_{\0}^{(m+y,m)}(m+y-n^{\delta},m)\geq n^{\delta/2})\geq 1-Ce^{-cn^{3\delta/8}}.
  \end{equation}
 Now, we define the event $E_n$ by
  \begin{equation}
    \label{eq:676}
    E_n=\{ T_{\0}^{(m+y,m)}-Z_{\0}^{(m+y,m)}(m+y-n^{\delta},m)\geq n^{\delta/2} \textrm{ for all } m\geq n^{100\delta}, |y|\leq ((1+m)^{2/3}+3)n^{\delta}, y\in \ZZ\}.
  \end{equation}
 By combining \eqref{eq:677} with a union bound, for constants $C,c$, we obtain
  \begin{equation}
    \label{eq:678}
    \PP(E_n)\geq 1-Ce^{-cn^{3\delta/8}}.
  \end{equation}
  Now, on the event $E_n$, suppose $x,m$ are such that the condition in \eqref{eq:674} in satisfied. This means that there must exist a staircase $\xi\colon \0\rightarrow \n$ such that $(m+x,m)\in \xi$ and additionally,
  \begin{equation}
    \label{eq:679}
    T_{\0}^{\n}-\wgt(\xi)\leq n^{\delta/2}.
  \end{equation}
  We claim that on the event $E_n$, we must in fact have
  \begin{equation}
    \label{eq:680}
    \xi(m)\in [x,x+2n^{\delta}).
  \end{equation}
  Note that this would complete the proof since we could simply take $x'=\xi(m)-m$ since we do know that $Z_{\0}^{\n,\bullet}(\xi(m),m)\geq \wgt(\xi) \geq T_{\0}^{\n}-n^{\delta/2}$. Now, let the point $\widetilde x\in \ZZ$ be the largest point additionally satisfying $x< \widetilde x < x+ 2n^{\delta}$.

  Note that since $x\in \xi$, we always have $\xi(m)\geq x$. Thus, with the aim of obtaining a contradiction to \eqref{eq:679}, suppose that on the event $E_n$, we have $\xi(m)> x+ 2n^{\delta}$. Since $(m+\widetilde x,m)\in \xi$ and since we have assumed \eqref{eq:679}, we must have $Z_{\0}^{\n}( m+\widetilde x,m)\geq T_{\0}^{\n}-n^{\delta/2}$. We now let $\widetilde\xi\subseteq \xi$ be the staircase satisfying $\widetilde \xi\colon \0\rightarrow (m+\widetilde x,m)$. Then we must have
  \begin{equation}
    \label{eq:683}
    \wgt(\widetilde \xi)\geq T_{\0}^{(m+\widetilde x,m)}- n^{\delta/2}.
  \end{equation}
  However, by the definition of $\widetilde x$, we have $\widetilde x - n^{\delta}\geq x+n^{\delta}-1\geq x$ and thus since $(m+x,m)\in \widetilde \xi$, we also have $(m+\widetilde x - n^{\delta},m)\in \widetilde \xi$. As a consequence of this and \eqref{eq:683}, we must have
  \begin{equation}
    \label{eq:684}
    Z_{\0}^{(m+\widetilde x,m)}(m+\widetilde x-n^{\delta},m)\geq T_{\0}^{(m+\widetilde x,m)}-n^{\delta/2},
  \end{equation}
  but this contradicts the definition of the event $E_n$ from \eqref{eq:676} since $\widetilde x \in \ZZ$ and $|\widetilde x|< |x| +2n^{\delta}< ((1+m)^{2/3}+3)n^{\delta}$. Thus, our assumption that $\xi(m)> x+2n^{\delta}$ must have been incorrect. This completes the proof.
\end{proof}

We are now ready to complete the proof of Proposition \ref{lem:31}.
\begin{proof}[Proof of Proposition \ref{lem:31}]
By Lemma \ref{lem:121}, for all large enough $n$, there is an event $\cE_{m,n}$ satisfying for some constants $C,c$,
  \begin{equation}
    \label{eq:622}
    \PP(\cE_{m,n})\geq 1-Ce^{-cn^{3\delta/8}},
  \end{equation}
  on which we have
  \begin{equation}
    \label{eq:619}
    \sup_{|x|\geq (1+m)^{2/3}n^{\delta}/2}Z_{\0}^{\n}(m+x,m)-Z_{\0}^{\n}(m,m)< - n^{\delta/2}.
  \end{equation}
  As a result of the above, on the event $\cE_{m,n}$, for all $n$ large enough, we have
  \begin{equation}
    \label{eq:686}
    |\peak(n^{\delta/2})\cap \{m\}_{\RR}|= |\peak(n^{\delta/2})\cap \{m\}_{[m-(1+m)^{2/3}n^\delta/2, m+(1+m)^{2/3}n^\delta/2]}|.
  \end{equation}
  We now consider two separate cases-- first, we handle the case when $m$ is small, that is $m< n^{100 \delta}$. Here, we make do with a crude bound. Indeed, for any $m$, on the event $\cE_{m,n}$, by using \eqref{eq:686}, %
  we immediately have the deterministic bound
  \begin{equation}
    \label{eq:627}
    |\peak(n^{\delta/2})\cap \{m\}_{\RR}|\leq  n^{\delta}(1+m)^{2/3}+1.
  \end{equation}
  Note that for $m\leq n^{100\delta}$ and for all large enough $n$, we have $n^{\delta}(1+m)^{2/3}+1\leq n^{100\delta}$, and as a result, for all $m\leq n^{100\delta}$, we obtain
  \begin{equation}
    \label{eq:628}
    \PP(|\peak(n^{\delta/2})\cap \{m\}_{\RR}|\geq n^{100\delta})\leq \PP(\cE_{m,n}^c)\leq Ce^{-cn^{3\delta/8}}.
  \end{equation}

  Thus, it now remains to consider values of $m$ satisfying $m\geq n^{100\delta}$ and for this, we shall need to work with the profile $Z_{\0}^{\n,\bullet}$. To begin, for $\alpha\in \RR$, we define $\peak^{\bullet}(\alpha)$ by replacing the routed distance profile $Z_{\0}^{\n}$ in the definition of $\peak(\alpha)$ by the profile $Z_{\0}^{\n,\bullet}$. The utility of this is that on the event $E_n$ from Lemma \ref{lem:137}, for all $m\geq n^{100\delta}$, we must have
  \begin{align}
    \label{eq:687}
    &|\peak(n^{\delta/2})\cap \{m\}_{[m-(1+m)^{2/3}n^\delta/2, m+(1+m)^{2/3}n^\delta/2]}|\nonumber\\
    &\leq (2n^{\delta}+1)|\peak^{\bullet}(n^{\delta/2})\cap \{m\}_{[m-(1+m)^{2/3}n^\delta, m+(1+m)^{2/3}n^\delta]}|.
  \end{align}
  Indeed, Lemma \ref{lem:137} implies that on the event $E_n$, for any $(i,m)$ which is an element of the set on the left hand side above, there must exist an $i'$ such that $i\leq i'\leq 2n^{\delta}+1$ such that $(i',m)$ is an element of the set on the right hand side above.

  We now analyse the set $\peak^{\bullet}(n^{\delta/2})$. First, note that deterministically, for any $\alpha, m$, we have the inequality %
  \begin{equation}
    \label{eq:358}
    |\peak^{\bullet}(\alpha)\cap \{m\}_{[m-(1+m)^{2/3}n^\delta, m+(1+m)^{2/3}n^\delta]}|\leq (2\alpha^2)\nearmax^{\alpha}(\frakf_{n^\delta}^m),
  \end{equation}
  where, in the above, we used the profile $\frakf_{n^\delta}^m$ as defined in \eqref{eq:672}.
  We shall use the above with $\alpha = n^{\delta/2}$.
 Now, by applying Lemma \ref{lem:129}, we know that for all such $m$,
  \begin{equation}
    \label{eq:618}
    \PP(\nearmax^{n^{\delta/2}}(\frakf_{n^\delta}^m) \geq n^{8\delta}) \leq Ce^{-cn^{8\delta}}.
  \end{equation}
  As a result, for all $n$ large enough and all $m\geq n^{100\delta}$, for some constants $C,c,C',c'$, we have
  \begin{align}
    \label{eq:621}
    &\PP(|\peak(n^{\delta/2})\cap \{m\}_{\RR}\geq n^{11\delta}) \nonumber\\
    &\leq \PP(\cE_{m,n}^c)+ \PP(E_n^c)+ \PP((2n^{\delta}+1)(2n^\delta)\nearmax^{n^{\delta/2}}(\frakf_{n^\delta}^m)\geq n^{11\delta})\nonumber\\
                                                             &\leq Ce^{-cn^{3\delta/8}}+ \PP(\nearmax^{n^{\delta/2}}(\frakf_{n^\delta}^m) \geq n^{8\delta})\nonumber\\
    &\leq C'e^{-cn^{3\delta/8}}.
  \end{align}
  The second term in the first inequality above is obtained by using \eqref{eq:687} and \eqref{eq:358}. To obtain the second inequality, we have used \eqref{eq:622} and Lemma \ref{lem:137}. Finally, the last inequality is obtained by using \eqref{eq:618}.

The proof is now completed by replacing $\delta$ by $2\delta$ and using \eqref{eq:621}, \eqref{eq:628} along with a union bound over all possible values of $m$.

\end{proof}

\section{Appendix 4: A twin peaks estimate for BLPP routed weight profiles}
\label{sec:twin-peaks}
The goal of this section is to prove the twin peaks result-- Proposition \ref{prop:26}.
We shall first prove certain preliminary results and then combine them at the end to obtain the desired result. In the setting of Proposition \ref{prop:26}, we note that by symmetry, it suffices to work with $\beta'n\leq m\leq n/2$, and we assume this throughout this section. We shall work with $f(\ell)=\log^{1/3}(\ell)$ and will often consider the transversal fluctuation event $\TF_{\ell,m}$ defined by
\begin{equation}
  \label{eq:340}
  \TF_{\ell,m}=\{|\Gamma_{\0}^{\n}(m)- m|\geq f(\ell)m^{2/3}\}.
\end{equation}
By the transversal fluctuation estimate for BLPP (see Proposition \ref{prop:53}), we have the following result.
\begin{lemma}
  \label{lem:89}
  There exists a constant $C$ such that for all $m,\ell,n,\delta$ as before, we have $\PP(\TF_{\ell,m})\leq C\ell^{-1/3}$.
\end{lemma}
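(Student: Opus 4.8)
The plan is to obtain Lemma \ref{lem:89} as an immediate consequence of the mesoscopic transversal fluctuation estimate Proposition \ref{prop:53}. The one observation needed is a containment of events. Since the notation $\LL_{\0}^{\mathbf{m}}$ refers to the full line through $(0,0)$ and $(m,m)$, i.e.\ the main diagonal, its horizontal slice at height $m$ is the single point $(m,m)$, so $B_{r}(\LL_{\0}^{\mathbf{m}})\cap\{m\}_{\RR}=[m-r,m+r]_{\{m\}}$ for every $r>0$. Hence, on $\TF_{\ell,m}$ the geodesic $\Gamma_{\0}^{\n}$ contains the point $(\Gamma_{\0}^{\n}(m),m)$, which has height $m\in[0,m]$ and horizontal coordinate differing from $m$ by at least $f(\ell)m^{2/3}>f(\ell)m^{2/3}/2$, so that
\begin{equation*}
  \TF_{\ell,m}\subseteq\bigl\{\Gamma_{\0}^{\n}\cap[0,m]_{\RR}\not\subseteq B_{f(\ell)m^{2/3}/2}(\LL_{\0}^{\mathbf{m}})\bigr\},
\end{equation*}
the factor $1/2$ serving only to turn the non-strict inequality defining $\TF_{\ell,m}$ into the strict distance inequality implicit in $B_{r}(\cdot)$.

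It then remains to apply Proposition \ref{prop:53} with $\alpha=f(\ell)/2$. Its hypotheses are $m_{0}\le m\le n$ and $\alpha\le m^{1/10}$; both hold for all $n$ large, since $\ell\le n$ and $\beta'n\le m\le n/2$ give $\alpha\le\tfrac12\log^{1/3}n\le(\beta'n)^{1/10}\le m^{1/10}$ and $m\ge\beta'n\ge m_{0}$. This yields $\PP(\TF_{\ell,m})\le C\exp(-c(f(\ell)/2)^{3})=C\exp(-(c/8)\log\ell)=C\ell^{-c/8}$, which is of the claimed form $C\ell^{-1/3}$ once one notes that $c$ is a fixed positive constant (and, in the application, one is free to fix the multiplicative constant in the definition of $f$ large enough, or equivalently to use $\alpha$ a large constant multiple of $\log^{1/3}\ell$, so that the exponent is at least $1/3$).

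I expect essentially no obstacle here: the content of the lemma is entirely contained in Proposition \ref{prop:53}, and the only thing to be careful about is the bookkeeping of constants — in particular matching the exponent $1/3$, which is harmless since everything downstream (Proposition \ref{prop:26}) only requires a bound of the form $\ell^{-1/3+o(1)}$ uniformly in the relevant range of $m$ and $\ell$. Should one prefer a self-contained derivation not invoking \cite{BBBK25}, the same estimate can be produced by the standard barrier argument: on $\TF_{\ell,m}$ there is a point $(x,m)\in\Gamma_{\0}^{\n}$ with $|x-m|\ge f(\ell)m^{2/3}$, so $T_{\0}^{(x,m)}+T_{(x,m)}^{\n}=T_{\0}^{\n}$, and by the moderate deviation estimate Proposition \ref{prop:47} — after reducing the off-diagonal slices to on-diagonal ones via the Brownian scaling Proposition \ref{prop:46} — such a through-path incurs a mean weight deficit of order $f(\ell)^{2}n^{1/3}$ (this is where $\beta'n\le m\le n/2$ is used, making $m^{1/3}$ and $(n-m)^{1/3}$ both of order $n^{1/3}$), compensated with probability at most $C\ell^{-c'}$; the sum over $x$ in the range $|x-m|\ge f(\ell)m^{2/3}$ is then controlled by a union bound over unit intervals exactly as in the proof of Lemma \ref{lem:1221}, with the boundary term dominating.
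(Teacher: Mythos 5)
Your argument is exactly the paper's intended proof: the paper gives no separate proof of Lemma \ref{lem:89} beyond the phrase ``by the transversal fluctuation estimate for BLPP (see Proposition \ref{prop:53})'', which is precisely the event-containment you spell out, $\TF_{\ell,m}\subseteq\{\Gamma_{\0}^{\n}\cap[0,m]_{\RR}\not\subseteq B_{f(\ell)m^{2/3}/2}(\LL_{\0}^{\mathbf m})\}$, followed by an application of Proposition \ref{prop:53} with $\alpha$ a fixed fraction of $f(\ell)$ and the routine check that $\alpha\le m^{1/10}$ holds for $m\ge\beta'n$ and $\ell\le n$. The one honest caveat you raise is correct and worth keeping in mind: Proposition \ref{prop:53} yields $\PP(\TF_{\ell,m})\le C\ell^{-c/8}$ with $c$ the (unspecified) constant from \cite{BBBK25}, and one only obtains the advertised exponent $1/3$ after replacing $f(\ell)=\log^{1/3}\ell$ by $K\log^{1/3}\ell$ for a suitably large absolute constant $K$; since $f$ is an auxiliary quantity internal to Section 9 whose precise value enters Lemmas \ref{lem:90}--\ref{lem:92} only through the subpolynomial factor $f(\ell)$ in the union bounds, this renormalisation is harmless and is implicitly what the paper is doing.
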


Now, with $k\in[\![1,2f(\ell)]\!]$, we divide the interval $[-f(\ell),f(\ell)]$ into $2f(\ell)$ many  intervals $J_k$ of length $1$ each. Indeed, we shall often work with the intervals
\begin{align}
  \label{eq:343}
  J_k=&[-f(\ell)+ (k-1), -f(\ell)+k],\nonumber\\
  \underline{J}_k=&[-f(\ell)+ (k-2), -f(\ell)+(k+1)].
\end{align}
and we note that $J_k$ is the middle interval if we divide $\underline{J}_k$ into three intervals of equal length.
 Define the event $\TP_{\ell,m}^k$ to be
  \begin{align}
    \label{eq:303}
 &\{\exists x\in \underline{J}_k:
|x-\argmax_{y\in \underline{J}_k}Z_{\0}^{\n,\bullet}(m+m^{2/3}y,m)|\geq \ell^{2/3-\delta}, |\max_{y\in \underline{J}_k}Z_{\0}^{\n,\bullet}(m+m^{2/3}y,m)-Z_{\0}^{\n,\bullet}(m+m^{2/3}x,m)|\leq
    \ell^{\delta}\}\nonumber\\
    &\bigcap \{\argmax_{y\in \underline{J}_k}Z_{\0}^{\n,\bullet}(m+m^{2/3}y,m)\in J_k\}.
\end{align}
With the above definition, we immediately have the following result.
\begin{lemma}
  \label{lem:90}
 For all $m,\ell,n,\delta$ as before, we have the inclusion
\begin{equation}
  \label{eq:304}
  \TP_{\ell,m}\cap (\TF_{\ell,m})^c\subseteq (\bigcup_{k=1}^{2f(\ell)}\TP_{\ell,m}^k) \cup \bigcup_{m=1}^{2f(\ell)} \{|\max_{y\in J_k}Z_{\0}^{\n,\bullet}(m+m^{2/3}y,m)-\max_{y\in J_k^c}Z_{\0}^{\n,\bullet}(m+m^{2/3}y,m)|\leq \ell^\delta\}.
\end{equation}
\end{lemma}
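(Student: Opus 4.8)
The plan is to prove the inclusion deterministically on the event $\TP_{\ell,m}\cap(\TF_{\ell,m})^c$, via a pigeonhole argument on the rescaled routed profile $g(y):=Z_{\0}^{\n,\bullet}(m+m^{2/3}y,m)$. First I would record the elementary fact that $\max_y g(y)=T_{\0}^{\n}$, attained (a.s.\ uniquely, by uniqueness of the point-to-point geodesic) at $y_0:=(\Gamma_{\0}^{\n}(m)-m)m^{-2/3}$: the bound $Z_{\0}^{\n,\bullet}(x,m)\leq T_{\0}^{\n}$ holds for every admissible $x$ because $\Gamma_{\0}^{(x,m)}\cup [m,m+1]_{\{x\}}\cup\Gamma_{(x,m+1)}^{\n}$ is a staircase from $\0$ to $\n$, while splitting $\Gamma_{\0}^{\n}$ at levels $m$ and $m+1$ yields $Z_{\0}^{\n,\bullet}(\Gamma_{\0}^{\n}(m),m)=T_{\0}^{\n}$. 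On $(\TF_{\ell,m})^c$ we have $y_0\in(-f(\ell),f(\ell))$, so $y_0$ lies in one of the unit intervals, say $J_{k_0}$ with $k_0$ in the range of the union in \eqref{eq:304}. Consequently $y_0\in J_{k_0}\subseteq\underline{J}_{k_0}$, and since $y_0$ is simultaneously the maximiser of $g$ over $\underline{J}_{k_0}$ and over the whole domain we get $\argmax_{\underline{J}_{k_0}}g=y_0\in J_{k_0}$ and $\max_{J_{k_0}}g=\max g=T_{\0}^{\n}$.

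Next I would feed in a point $x$ witnessing $\TP_{\ell,m}$, namely $|x-\Gamma_{\0}^{\n}(m)|\geq\ell^{2/3-\delta}$ and $|T_{\0}^{\n}-Z_{\0}^{\n,\bullet}(x,m)|\leq\ell^{\delta}$, and split according to whether its rescaling $y_1:=(x-m)m^{-2/3}$ belongs to $\underline{J}_{k_0}$. If $y_1\in\underline{J}_{k_0}$, then $\TP_{\ell,m}^{k_0}$ holds: the clause $\argmax_{\underline{J}_{k_0}}g\in J_{k_0}$ was verified above, and the other clause is witnessed by $y_1$ itself, which is at the required distance from $\argmax_{\underline{J}_{k_0}}g=y_0$ (this is just $|x-\Gamma_{\0}^{\n}(m)|\geq\ell^{2/3-\delta}$ in the rescaled variable) and satisfies $g(y_1)=Z_{\0}^{\n,\bullet}(x,m)\in[\max_{\underline{J}_{k_0}}g-\ell^{\delta},\max_{\underline{J}_{k_0}}g]$. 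If instead $y_1\notin\underline{J}_{k_0}$, then a fortiori $y_1\notin J_{k_0}$, so $\max_{J_{k_0}^c}g\geq g(y_1)=Z_{\0}^{\n,\bullet}(x,m)\geq T_{\0}^{\n}-\ell^{\delta}=\max_{J_{k_0}}g-\ell^{\delta}$, whereas trivially $\max_{J_{k_0}^c}g\leq\max g=\max_{J_{k_0}}g$; hence $0\leq\max_{J_{k_0}}g-\max_{J_{k_0}^c}g\leq\ell^{\delta}$, i.e.\ the $k_0$-th event in the second union of \eqref{eq:304} occurs. Either way the left-hand side of \eqref{eq:304} is contained in the right-hand side.

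I do not anticipate a real obstacle: the statement is a purely combinatorial decomposition and the paragraph above is essentially the complete argument. The only points requiring some care are the identification of $\max g$ with $T_{\0}^{\n}$ and of its (a.s.\ unique) maximiser with the geodesic position at level $m$, and keeping track of the normalisation of the threshold $\ell^{2/3-\delta}$: in $\TP_{\ell,m}$ it is measured in the spatial variable, whereas in $\TP_{\ell,m}^{k}$ it is measured in the variable $y$ rescaled by $m^{2/3}$, so the relevant separation there is $\ell^{2/3-\delta}m^{-2/3}$, which is $<1$ (hence compatible with $\underline{J}_k$ having unit-order length) precisely because $\ell\leq n$ and $m\asymp n$. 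It is also worth noting at the outset that, since $f(\ell)=\log^{1/3}(\ell)$ and $m\in[\beta'n,n/2]$, the window $m+m^{2/3}[-f(\ell)-1,f(\ell)+1]$ lies well inside $[0,n]$, so every profile value used above is well defined.
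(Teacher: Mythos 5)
Your proof is correct and follows essentially the same route as the paper: locate the cell $J_{k_0}$ containing the rescaled geodesic position, use $\max_x Z_\0^{\n,\bullet}(x,m)=T_\0^\n$ attained at $\Gamma_\0^\n(m)$, and split on whether the twin-peaks witness falls inside or outside $m+m^{2/3}\underline J_{k_0}$, with the two cases producing, respectively, $\TP_{\ell,m}^{k_0}$ and the $k_0$-th "macroscopic near-tie" event. Your observation about the normalisation of the separation threshold (that the $\geq\ell^{2/3-\delta}$ condition in the displayed definition of $\TP_{\ell,m}^k$ must be read in the unrescaled spatial variable, equivalently $\geq\ell^{2/3-\delta}m^{-2/3}$ in the $y$-variable) is the correct reading, consistent with how the paper later feeds $\varepsilon=\ell^{2/3-\delta}m^{-2/3}$ into Lemma~\ref{lem:139} when bounding $\PP(\TP_{\ell,m}^k)$.
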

\begin{proof}
  Recall that $T_{\0}^{\n}=\max_{x}Z_{\0}^{\n,\bullet}(x,m)$ and that $\Gamma_{\0}^{\n}(m)=\argmax_{x}Z_{\0}^{\n,\bullet}(x,m)$. As a result, on the event $(\TF_m)^c$, there must exist a $k^*\in [\![1,2f(\ell)]\!]$ for which $\Gamma_{\0}^{\n}(m)=\argmax_{x}Z_{\0}^{\n,\bullet}(x,m)\in m+m^{2/3}J_{k^*}$, and thus $k^*$ must satisfy
  \begin{equation}
    \label{eq:466}
    \Gamma_{\0}^{\n}(m)=\argmax_{y\in J_{k^*}}Z_{\0}^{\n,\bullet}(m+m^{2/3}y,m)= \argmax_{y\in \underline{J}_{k^*}}Z_{\0}^{\n,\bullet}(m+m^{2/3}y,m).
  \end{equation}
  Now, on the event $\TP_{\ell,m}\cap (\TF_{\ell,m})^c$, there must exist an $x^*$ such that $|x^*-\Gamma_{\0}^{\n}(m)|\geq \ell^{2/3-\delta}, |T_{\0}^{\n}-Z_{\0}^{\n,\bullet}(x^*,m)|\leq
 \ell^{\delta}$. Now, there are two cases-- if we have $x^*\in m+m^{2/3}\underline{J}_{k^*}$, then the event $\TP_{\ell,m}^{k^*}$ must have occurred. If instead, we have $x^*\notin m+m^{2/3}\underline{J}_{k^*}$, then the event
 \begin{equation}
   \label{eq:464}
\{|\max_{y\in J_{k^*}}Z_{\0}^{\n,\bullet}(m+m^{2/3}y,m)-\max_{y\in J_{k^*}^c}Z_{\0}^{\n,\bullet}(m+m^{2/3}y,m)|\leq \ell^\delta\}
\end{equation}
must have occurred instead. This completes the proof.
\end{proof}

Now, we present a lemma in which we obtain an estimate on the second term on the right hand side of \eqref{eq:304}.
  \begin{lemma}
    \label{lem:92}
    There exists a constant $C$ such that for all $m,\ell,n,\delta$ as before and all $k\in [\![1,2f(\ell)]\!]$, we have
    \begin{equation}
      \label{eq:342}
      \PP(|\max_{y\in J_k}Z_{\0}^{\n,\bullet}(m+n^{2/3}y,m)-\max_{y\in J_k^c}Z_{\0}^{\n,\bullet}(m+n^{2/3}y,m)|\leq \ell^\delta)\leq C\ell^{-1/3+2\delta}.
    \end{equation}
  \end{lemma}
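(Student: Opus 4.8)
## Proof proposal for Lemma \ref{lem:92}

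The plan is to reduce the event in \eqref{eq:342} to a twin-peaks statement for the routed profile $Z_{\0}^{\n,\bullet}(\cdot,m)$ and then invoke Proposition \ref{prop:26} (together with its symmetric counterpart) after rescaling. Indeed, the event
\[
\Big\{\big|\max_{y\in J_k}Z_{\0}^{\n,\bullet}(m+m^{2/3}y,m)-\max_{y\in J_k^c}Z_{\0}^{\n,\bullet}(m+m^{2/3}y,m)\big|\leq \ell^\delta\Big\}
\]
says that the global maximum of the profile $x\mapsto Z_{\0}^{\n,\bullet}(x,m)$ is attained, up to an additive error of $\ell^\delta$, both inside and outside the spatial window $m+m^{2/3}J_k$. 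Since $T_\0^\n = \max_x Z_{\0}^{\n,\bullet}(x,m)$, on this event we have two near-maximisers of the routed profile separated by a distance of order $m^{2/3}$ (the length of $J_k$ is $1$ in rescaled units, i.e.\ $m^{2/3}$ in unscaled units, while $\ell\leq n$ so $m^{2/3}\gg \ell^{2/3-\delta}$), unless $\Gamma_\0^\n(m)$ itself is nearly maximal at \emph{both} places — but a point can be within $\ell^\delta$ of the max at only one location if the argmax is unique, which it a.s.\ is. The key point is to locate $\Gamma_\0^\n(m)$: either $\Gamma_\0^\n(m)\in m+m^{2/3}J_k$, in which case the event forces a near-maximiser at distance $\geq \mathrm{dist}(J_k,J_k^c)$-rescaled $\times m^{2/3}$ from $\Gamma_\0^\n(m)$ — wait, $J_k$ and $J_k^c$ are adjacent, so this distance could be small; instead one should argue more carefully using that $J_k$ has length $m^{2/3}$ so the \emph{in-window} maximiser and some \emph{out-of-window} near-maximiser are at distance $\Omega(m^{2/3})$ from \emph{each other}, hence at least one of them is at distance $\Omega(m^{2/3}) \gg \ell^{2/3-\delta}$ from $\Gamma_\0^\n(m)$ wherever $\Gamma_\0^\n(m)$ lies.

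Concretely, I would proceed as follows. First, by the a.s.\ uniqueness of $\Gamma_\0^\n$ and the identity $\Gamma_\0^\n(m)=\argmax_x Z_{\0}^{\n,\bullet}(x,m)$, on the event in \eqref{eq:342} there exist points $x_1\in m+m^{2/3}J_k$ and $x_2\in m+m^{2/3}J_k^c$ with $|T_\0^\n - Z_{\0}^{\n,\bullet}(x_i,m)|\leq \ell^\delta$ for $i=1,2$. Since $x_1\in m+m^{2/3}J_k$ is confined to an interval of length $m^{2/3}$, while $x_2$ lies outside it, either $|x_2-\Gamma_\0^\n(m)|\geq \ell^{2/3-\delta}$ or $|x_1-\Gamma_\0^\n(m)|\geq \ell^{2/3-\delta}$ — indeed, if both were $<\ell^{2/3-\delta}$ then $|x_1-x_2|<2\ell^{2/3-\delta}$, forcing $x_2$ to be within $2\ell^{2/3-\delta}$ of the window $m+m^{2/3}J_k$; this only rules out a sliver of $J_k^c$ near the boundary, so one covers that remaining sliver by enlarging slightly and noting the boundary cells can be absorbed into a neighbouring window $J_{k'}$. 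In any case the event $\TP_{\ell,m}$ of Proposition \ref{prop:26} (or its reflection under $x\mapsto -x$, which is needed since $x_2$ may lie on either side) occurs, \emph{provided} $m\in[\![\beta'n,(1-\beta')n]\!]$ — and this holds with $\beta'=\beta'$ since we reduced to $\beta'n\le m\le n/2$ at the start of the section. Applying Proposition \ref{prop:26} with this $\delta$ (noting $\delta<1/6$ as required) gives $\PP(\TP_{\ell,m})\leq C\ell^{-1/3+2\delta}$, and the reflected version gives the same bound for the other side, yielding the claimed $C\ell^{-1/3+2\delta}$ after adjusting the constant.

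The main obstacle I anticipate is the boundary issue: a near-maximiser $x_2$ just outside the window $m+m^{2/3}J_k$ is \emph{close} to $\Gamma_\0^\n(m)$ when $\Gamma_\0^\n(m)$ sits near the edge of $J_k$, so the crude ``distance $\geq \ell^{2/3-\delta}$'' conclusion can fail for the pair $(x_1$ or $x_2, \Gamma_\0^\n(m))$ in isolation. The fix is that one of $x_1,x_2$ must be far from $\Gamma_\0^\n(m)$ because $x_1$ ranges over the \emph{full} width-$m^{2/3}$ window $J_k$ and $x_2$ is outside it, so $\{|x_1-\Gamma_\0^\n(m)|\vee|x_2-\Gamma_\0^\n(m)|\}\gtrsim m^{2/3}\gg \ell^{2/3-\delta}$; one then invokes Proposition \ref{prop:26} for whichever of $x_1,x_2$ realises this, taking the reflected statement when the far point lies to the left. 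A secondary, purely bookkeeping point is that Proposition \ref{prop:26} is stated with a lower bound $|x-\Gamma_\0^\n(m)|\geq \ell^{2/3-\delta}$ and \emph{no} upper bound, which is exactly what we need here since $x_1,x_2$ can be as far as $\Theta(f(\ell)m^{2/3})$ from $\Gamma_\0^\n(m)$ — this is precisely the strengthening over \cite[Theorem 1.3]{GH20} highlighted in Section \ref{sec:twin-peaks-out}, so no extra work is needed there.
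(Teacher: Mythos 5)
Your proposal is circular and, even set aside circularity, rests on a false step.

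\textbf{Circularity.} Lemma \ref{lem:92} is one of the three ingredients combined (together with Lemmas \ref{lem:89}, \ref{lem:90}, \ref{lem:91}) in the proof of Proposition \ref{prop:26} at the end of Section \ref{sec:twin-peaks}. You cannot invoke Proposition \ref{prop:26} to prove Lemma \ref{lem:92}; that is exactly the wrong direction in the dependency graph. The entire purpose of the decomposition in Lemma \ref{lem:90} is to reduce $\TP_{\ell,m}$ to the local twin-peaks events $\TP_{\ell,m}^k$ (handled by Lemma \ref{lem:91}) plus the ``max in $J_k$ versus max in $J_k^c$'' events (handled by Lemma \ref{lem:92}), so the latter must be proved from scratch.

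\textbf{The reduction would fail anyway.} You try to argue that the event $\{|\max_{J_k}Z^\bullet - \max_{J_k^c}Z^\bullet|\le\ell^\delta\}$ implies $\TP_{\ell,m}$ (or its reflection). This is false, because of precisely the boundary issue you notice and then dismiss. If the argmax $\Gamma_\0^\n(m)$ sits right at the boundary between $m+m^{2/3}J_k$ and $m+m^{2/3}J_k^c$, then the near-maximiser $x_1\in m+m^{2/3}J_k$ can simply be $\Gamma_\0^\n(m)$ itself, and the near-maximiser $x_2\in m+m^{2/3}J_k^c$ can be at distance $o(\ell^{2/3-\delta})$ on the other side of the boundary. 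Your proposed fix --- ``$x_1$ ranges over the full width-$m^{2/3}$ window $J_k$, so $|x_1-\Gamma_\0^\n(m)|\vee|x_2-\Gamma_\0^\n(m)|\gtrsim m^{2/3}$'' --- is incorrect: $x_1$ is a \emph{specific} near-maximiser, not a variable sweeping over $J_k$, and nothing prevents both $x_1$ and $x_2$ from sitting within $\ell^{2/3-\delta}$ of the common boundary point. The very reason the paper introduces the thrice-enlarged intervals $\underline{J}_k$ in \eqref{eq:343} and conditions on $\argmax_{\underline{J}_k}\in J_k$ inside the definition of $\TP_{\ell,m}^k$ in \eqref{eq:303} is to separate the near-boundary configuration (handled by $\TP_{\ell,m}^k$ via Lemma \ref{lem:91}) from the far-away configuration; Lemma \ref{lem:92} is the crude catch-all that both $J_k$ and $J_k^c$ are unlikely to contain near-global maxima, and it is emphatically \emph{not} an implication of $\TP_{\ell,m}$.

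\textbf{What the paper actually does.} The proof of Lemma \ref{lem:92} is a direct Brownian computation. Using the $\fL$ ensembles of Proposition \ref{prop:24} (applied separately to the two halves $\cP^\downarrow,\cP^\uparrow$ of the routed profile), one passes from $Z_{\0}^{\n,\bullet}$ to a surrogate $\widetilde Z_{\0}^{\n,\bullet}$ at the cost of the negligible events $(\cE^\uparrow)^c,(\cE^\downarrow)^c$. Conditionally on the data outside $J_k$, the resulting sum $B(x)$ is a Brownian bridge of diffusivity $4$ on $J_k$, and the event $\{\max_{J_k}B - \max_{J_k^c}B\in[-\ell^\delta m^{-1/3},\ell^\delta m^{-1/3}]\}$ is bounded via the explicit density \eqref{eq:325} of the running maximum of a Brownian bridge, together with the one-point tail \eqref{eq:293} to control the endpoint separation $|B(b_k)-B(a_k)|$. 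This is a genuinely elementary argument about Brownian bridges hitting a short interval, and it is the place where the $\ell^{-1/3}$ is produced --- not a corollary of a macroscopic twin-peaks statement.
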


\begin{proof}
  For the proof of this lemma, we shall require the $\fL^{\mathbf{a}}$ line ensembles from Appendix \ref{sec:brownian-regularity} and the result Proposition \ref{prop:24}. %
  Also, we define $a_k,b_k$ such that $J_k=[a_k,b_k]$. Now, consider the line ensembles $\cP^{\downarrow},\cP^{\uparrow}$ respectively consisting of $m+1$ and $n-m$ lines, with their top lines defined by
\begin{equation}
  \label{eq:308}
  \cP^{\downarrow}_1(x)=m^{-1/3}[T_\0^{(m+2m^{2/3}x,m)}-2m-2m^{2/3}x)]
\end{equation}
and
\begin{equation}
  \label{eq:309}
  \cP^{\uparrow}_1(x)= (n-m-1)^{-1/3}[T_{(m-2(n-m-1)^{2/3}x,m+1)}^{\n}-2(n-m-1)-2(n-m-1)^{2/3}x].
\end{equation}
We now consider the corresponding $\fL$ line ensembles from Proposition \ref{prop:24}. That is, $\fL^\downarrow$ (resp.\ $\fL^\uparrow$) is defined by using the line ensemble $\cP^\downarrow$ (resp.\ $\cP^\uparrow$) with the parameters $t=1$ (resp.\ $t= m^{2/3}(n-m-1)^{-2/3}$), $k=1$, $\mathbf{a}=a_k$ (resp.\ $\mathbf{a}=-b_k m^{2/3}(n-m-1)^{-2/3}$). Further, we use $\cE^\uparrow, \cE^\downarrow$ to denote the corresponding events measurable with respect to $\cP^\uparrow$ and $\cP^\downarrow$ respectively obtained via Proposition \ref{prop:24}. By definition, we have
\begin{equation}
  \label{eq:311}
  Z_{\0}^{\n,\bullet}(m+m^{2/3}x,m)= 2(n-1)+[m^{1/3}\cP^{\downarrow}_1(x)+ (n-m-1)^{1/3}\cP^\uparrow_1( -xm^{2/3}/(n-m-1)^{2/3})]
\end{equation}
and now, we define
  \begin{equation}
    \label{eq:321}
    \widetilde{Z}_{\0}^{\n,\bullet}(m+m^{2/3}x,m)= 2(n-1)+[m^{1/3}\fL^{\downarrow}_1(x)+ (n-m-1)^{1/3}\fL^\uparrow_1( -xm^{2/3}/(n-m-1)^{2/3})].
  \end{equation}
Now, for some constants $c,C_1,c_2$, we have
  \begin{align}
    \label{eq:324}
    &\PP(|\max_{y\in J_k}Z_{\0}^{\n,\bullet}(m+m^{2/3}y,m)-\max_{y\in J_k^c}Z_{\0}^{\n,\bullet}(m+m^{2/3}y,m)|\leq \ell^\delta)\nonumber\\
    &\leq C_1\PP(|\max_{y\in J_k}\widetilde{Z}_{\0}^{\n,\bullet}(m+m^{2/3}y,m)-\max_{y\in J_k^c}\widetilde{Z}_{\0}^{\n,\bullet}(m+m^{2/3}y,m)|\leq \ell^\delta)+\PP((\cE^\uparrow)^c)+\PP((\cE^\downarrow)^c)\nonumber\\
    &\leq C_1\PP(|\max_{y\in J_k}\widetilde{Z}_{\0}^{\n,\bullet}(m+m^{2/3}y,m)-\max_{y\in J_k^c}\widetilde{Z}_{\0}^{\n,\bullet}(m+m^{2/3}y,m)|\leq \ell^\delta)+e^{-cm}+ e^{-c(n-m)}\nonumber\\
    &\leq C_1\PP(|\max_{y\in J_k}\widetilde{Z}_{\0}^{\n,\bullet}(m+m^{2/3}y,m)-\max_{y\in J_k^c}\widetilde{Z}_{\0}^{\n,\bullet}(m+m^{2/3}y,m)|\leq \ell^\delta)+e^{-c_2n}.
  \end{align}
  where the first term in the third line is obtained by since $m\leq n/2$, the values of $t$ corresponding to both $\cP^\uparrow,\cP^\downarrow$ are bounded above by $1$. %
  The last two terms in the third line are obtained using the bound \eqref{eq:277} from Proposition \ref{prop:24}. Finally, in the last line, we use that $\beta'n\leq m\leq n/2$.

  Thus, in view of the above, since $\ell\leq n$, it suffices to show that
  \begin{equation}
    \label{eq:320}
    \PP(|\max_{y\in J_k}\widetilde{Z}_{\0}^{\n,\bullet}(m+ym^{2/3},m)-\max_{y\in J_k^c}\widetilde{Z}_{\0}^{\n,\bullet}(m+ym^{2/3},m)|\leq \ell^{\delta})\leq \ell^{-1/3+2\delta}.
  \end{equation}
  We now analyse the process
  \begin{equation}
    \label{eq:322}
    B(x)= \fL^{\downarrow}_1(x)+ m^{-1/3}(n-m-1)^{1/3}\fL^\uparrow_1( -xm^{2/3}/(n-m-1)^{2/3}).
  \end{equation}
  By the definition of $\fL^\downarrow, \fL^\uparrow$, we note the following properties:
\begin{enumerate}
\item The line ensembles $\fL^\downarrow,\fL^\uparrow$ are independent.
\item Conditional on its values outside $J_k$, $\fL^\downarrow_1\lvert_{J_k}$ is given by an independent Brownian bridge of rate $2$ interpolating between the endpoint values.
\item Conditional on its values outside $-m^{2/3}(n-m-1)^{-2/3}J_k$, $\fL^\uparrow_1\lvert_{-m^{2/3}(n-m-1)^{-2/3}J_k}$ is given by an independent Brownian bridge of rate $2$ interpolating between the endpoint values.
\end{enumerate}
As a result of the above, in order to sample $B\lvert_{J_k}$, we first sample the values $B(a_{k}),B(b_{k})$ and define $B\lvert_{J_k}$ by using an independent Brownian bridge of diffusivity $4$ to interpolate between the endpoint values. Now, to establish \eqref{eq:320}, we need only show that for some constant $C$, we have
\begin{equation}
  \label{eq:323}
    \PP(|\max_{y\in J_k}B(y)-\max_{y\in J_k^c}B(y)|\leq \ell^{\delta}m^{-1/3})\leq C\ell^{-1/3+2\delta}.
\end{equation}

Now, by using the tail bounds on $\fL^\uparrow,\fL^\downarrow$ from Proposition \ref{prop:25}, we know that for some constants $C,c$
\begin{equation}
  \label{eq:316}
  \PP( B(a_k),B(b_k)\in [-\log m/2,\log m/2])\geq 1-Ce^{-c(\log m)^{3/2}}.
\end{equation}
Now, recall the following basic fact about Brownian motion-- with $B'$ being a Brownian motion of diffusivity $\sigma^2$ on $[0,1]$, we know that for all $m\geq a\geq 0$,
\begin{equation}
  \label{eq:325}
  \PP(\max_{x\in [0,1]}B'(x)\geq m\lvert B'(1)=a)= e^{-2m(m-a)/\sigma^2}.
\end{equation}

We shall work with $\sigma^2=4$ since the Brownian motions involved in the definition of $B$ have diffusivity $4$. As a consequence of \eqref{eq:325}, we obtain that for any interval $J\subseteq \RR$ of length $L$, and for $a>0$,
\begin{equation}
  \label{eq:327}
  \PP(\max_{x\in [0,1]}B'(x)\in J\lvert B'(1)=a)\leq \PP(\max_{x\in [0,1]}B'(x)\in [a,a+L]\lvert B'(1)=a)= 1-e^{-L(a+L)/2}.
\end{equation}

By using the above, we obtain that %
\begin{align}
  \label{eq:326}
  &\PP(|\max_{y\in J_k}B(y)-\max_{y\in J_k^c}B(y)|\leq \ell^{\delta}m^{-1/3}\big \lvert B\lvert_{J_k^c} )\nonumber\\
  &= \PP(\max_{y\in J_k}B(y)\in [\max_{y\in J_k^c}B(y)-\ell^{\delta}m^{-1/3},\max_{y\in J_k^c}B(y)+\ell^{\delta}m^{-1/3}] \big \lvert B\lvert_{J_k^c})\nonumber\\
  &\leq  1-\exp\left(-2\ell^\delta m^{-1/3}(|B(b_{k})-B(a_{k})|+\ell^\delta m^{-1/3})\right).
\end{align}
Consider the event $E$ defined by
\begin{equation}
  \label{eq:329}
  E=\{|B(b_{k})-B(a_{k})|\leq \log m\},
\end{equation}
and note that as a consequence of \eqref{eq:316},
\begin{align}
  \label{eq:330}
  \PP(E)&\geq 1-\PP(|B(b_{k})|\geq \log m/2) - \PP(|B(a_{k})|\geq \log m/2)\nonumber\\
  &\geq 1-2Ce^{-c(\log m)^{3/2}}.
\end{align}
Thus, by using \eqref{eq:326}, we obtain that for some constant $C$,
\begin{align}
  \label{eq:328}
  \PP(|\max_{y\in J_k}B(y)-\max_{y\in J_k^c}B(y)|\leq \ell^{\delta}m^{-1/3})&\leq 1-\EE\left[\exp\left(-\ell^\delta m^{-1/3}(|B(b_{k})-B(a_{k})|+\ell^\delta m^{-1/3})/2\right)\right]\nonumber\\
                                                                                  &\leq 1- \PP(E)\exp\left(-(\ell^\delta m^{-1/3}/2)(\log m + \ell^\delta m^{-1/3}/2)\right)\nonumber\\
                                                                                  &\leq 1-\PP(E)(1-(\ell^\delta m^{-1/3}/2)(\log m + \ell^\delta m^{-1/3}/2))\nonumber\\
                                                                                  &\leq 1-\PP(E)(1-\ell^\delta m^{-1/3}\log m)\nonumber\\
  &\leq \ell^\delta m^{-1/3+\delta}\leq C\ell^{-1/3+2\delta}.
\end{align}
To obtain the last line, we have used that $\PP(E)$ goes to $1$ superpolynomially fast in $m$ and that $m\geq \beta'n\geq \beta'\ell$. This completes the proof.
\end{proof}
The goal now is to obtain a corresponding estimate on the first term on the right hand side of \eqref{eq:304}, and for this, we need to obtain an $\ell^{-1/3+o(1)}$ estimate on each of the probabilities $\PP(\TP_{\ell,m}^k)$ for $k\in [\![1,2f(\ell)]\!]$. For doing so, we shall use the following result from \cite{GH20}.
\begin{lemma}[{\cite[Theorem 1.3, Corollary 2.13]{GH20}}]
  \label{lem:139}
Fix $\beta'\in (0,1/2)$. There exist positive constants $c,\theta$ such that for all $m\in [\![\beta' n, (1-\beta')n]\!]$, $|R|\leq cm^{7/9}$, $r\leq m^\theta$, $\sigma\in (0,1)$, $\varepsilon>0$, $m$ large enough, and with $M$ being defined by $M=\argmax_{y\in [R-rm^{2/3},R+rm^{2/3}]} Z_{\0}^{\n,\bullet}(y,m)$, we have
  \begin{align}
    \label{eq:299.1}
    &\PP(M\in [R-rm^{2/3}/3, R+rm^{2/3}/3], \sup_{|x-M|\in [\varepsilon m^{2/3}, rm^{2/3}/3]}( Z_{\0}^{\n,\bullet}(x,m)+ \sigma|x-M|^{1/2})\geq T_{\0}^{\n})\nonumber\\
    &\leq \log(r \varepsilon^{-1})\max\{ \sigma \exp(-hR^2r+ hr^{19}(1+R^2+\log \sigma^{-1})^{5/6}\}, \exp\{-hm^{1/12}\}\}.
  \end{align}
\end{lemma}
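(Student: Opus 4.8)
The statement is \cite[Theorem 1.3, Corollary 2.13]{GH20}, and the proof I would give is essentially the one carried out there; here is its skeleton, in the present notation. The first step is to reduce \eqref{eq:299.1} to a single dyadic scale: I would cover the admissible range $|x-M|\in[\varepsilon m^{2/3},rm^{2/3}/3]$ by $O(\log(r\varepsilon^{-1}))$ dyadic blocks $[2^{j}\varepsilon m^{2/3},2^{j+1}\varepsilon m^{2/3}]$, bound for each block the probability that $M$ lies in the middle third of the window while some $x$ in that block satisfies $T_{\0}^{\n}-Z_{\0}^{\n,\bullet}(x,m)\le\sigma|x-M|^{1/2}$, and sum. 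The sum over $j$ supplies exactly the $\log(r\varepsilon^{-1})$ prefactor. A modulus-of-continuity estimate for $Z_{\0}^{\n,\bullet}(\cdot,m)$ — itself a by-product of the Brownian comparison used in the next step — lets one replace the continuum supremum inside a block by finitely many sample points with no loss in the exponent. Note that since $Z_{\0}^{\n,\bullet}(\cdot,m)\le T_{\0}^{\n}$ everywhere, on the event the window's local maximum $Z_{\0}^{\n,\bullet}(M,m)$ is itself within $O(\sigma\sqrt{rm^{2/3}})$ of the global maximum, so this is genuinely a ``twin near-peaks'' event localized near the window together with the requirement that the window's level rivals $T_{\0}^{\n}$.

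Second, I would transfer the single-block event to Brownian motion. Once the value $T_{\0}^{\n}$ has been recorded, the event concerns the routed profile $y\mapsto Z_{\0}^{\n,\bullet}(m+y,m)$ only on a window of diffusive width $O(r)$, so Proposition \ref{prop:52} applies: the restricted profile is comparable in law to a Brownian motion of diffusivity $4$ (the sum of the two independent diffusivity-$2$ locally Brownian pieces $T_{\0}^{(\cdot,m)}$ and $T_{(\cdot,m+1)}^{\n}$), up to a polynomial prefactor and a multiplicative correction of the shape $\exp(g_{5}r^{b}(\log p_{\mathrm B}^{-1})^{5/6})$, valid whenever the Brownian probability $p_{\mathrm B}$ of the transferred event lies in $[e^{-g_{1}m^{1/12}},\,g_{2}\wedge e^{-g_{3}r^{12}}]$. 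When $p_{\mathrm B}<e^{-g_{1}m^{1/12}}$ one simply bounds by $e^{-hm^{1/12}}$, which is the source of the second term in the maximum in \eqref{eq:299.1}. The fact that the power of $r$ in this correction is $19$ rather than the $7$ that a bare application of Proposition \ref{prop:52} would give, and the presence of $R^{2}$ inside $(1+R^{2}+\log\sigma^{-1})^{5/6}$, both arise from first having to condition on the window sitting at offset $\sim R$ from the vertex of the parabola (in the diffusive coordinates of \eqref{eq:299.1}) before invoking the comparison.

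Third, I would carry out the elementary Brownian-plus-parabola computation. Using $m,n-m=\Theta(n)$ (which holds since $\beta'n\le m\le n/2$), a Taylor expansion gives $\EE Z_{\0}^{\n,\bullet}(m+y,m)=2n-c_{0}y^{2}/n+(\text{lower order})$ with $c_{0}=\Theta(1)$, so after diffusive rescaling the transferred object is a diffusivity-$4$ Brownian motion carrying an added concave quadratic drift centred at the typical global maximizer. Two inputs then combine. (i) A standard near-maximum estimate for Brownian motion: decomposing the window around its argmax by Williams' path decomposition and time-reversing, the relevant post-argmax process is Bessel$(3)$-like, and the probability that it comes back within $\sigma\sqrt{t}$ of its running maximum at some $t$ in a fixed dyadic block is $O(\sigma)$ — the origin of the linear-in-$\sigma$ factor in \eqref{eq:299.1}. (ii) The cost of pinning the local argmax $M$ in the middle third of a window that lies on the downslope of the parabola: in the diffusive coordinates of \eqref{eq:299.1} the window sits on a stretch of the parabola with slope $\sim R$, so forcing $M$ into the middle third requires the Brownian part to climb by $\sim Rr$ over diffusive time $\sim r$, at Gaussian cost $e^{-hR^{2}r}$. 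The transversal-fluctuation event of Lemma \ref{lem:89} (equivalently Proposition \ref{prop:53}) is invoked beforehand to rule out the genuine global maximizer of $Z_{\0}^{\n,\bullet}(\cdot,m)$ lying in the window, so that the ``$M$ in the middle third'' and ``near-max at macroscopic distance'' requirements are mutually consistent. Assembling (i), (ii), the correction of Step~2, and the union bound of Step~1 yields \eqref{eq:299.1}.

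The main obstacle is Step~2: producing the Brownian comparison with exactly the quantitative form required — a window width $r$ allowed to grow like $m^{\theta}$, a conditioning on the parabolic offset $R$ built in, and the $(\log p_{\mathrm B}^{-1})^{5/6}$ correction tracked finely enough that the final exponent comes out as $\exp(-hR^{2}r+hr^{19}(1+R^{2}+\log\sigma^{-1})^{5/6})$. This is the technical heart of \cite[Corollary 2.13]{GH20}; the careful bookkeeping of how the $R^{2}$-conditioning inflates the correction term (and raises the exponent of $r$ from $7$ to $19$) is where I expect nearly all of the effort to lie, the Brownian estimate of Step~3 being comparatively routine and the reduction of Step~1 purely combinatorial.
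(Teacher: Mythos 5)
The paper does not prove Lemma \ref{lem:139}; it is imported by citation from \cite{GH20} (Theorem 1.3 and Corollary 2.13 there), and the surrounding text in Section \ref{sec:twin-peaks-out} and the remark after Lemma \ref{lem:139} explicitly treat it as an external input. Your sketch is a fair reconstruction of the argument in that source — dyadic decomposition for the $\log(r\varepsilon^{-1})$ factor, Brownian comparison of the routed profile via the Gibbs resampling, the Williams/Bessel-$(3)$ near-maximum estimate supplying the linear-in-$\sigma$ factor, and the Gaussian cost $e^{-hR^2 r}$ of pinning the local argmax against the parabolic tilt — so I have no objection; you are reproving the cited result rather than offering an alternative route.
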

Before moving on, we remark that, by using Proposition \ref{prop:24} and with some additional work, Lemma \ref{lem:139} can be strengthened and the $e^{O((\log \sigma^{-1})^{5/6})}$ term appearing therein can be removed. However, since our application is not sensitive to the presence of such subpolynomial errors, we make do with Lemma \ref{lem:139} in the interest of brevity.%

As an immediate application of Lemma \ref{lem:139}, we can now bound $\PP(\TP_{\ell,m}^k)$.
  \begin{lemma}
    \label{lem:91}
    There exists a constant $C$ such that for all $m,\ell,n,\delta$ as before and all $k\in [\![1,2f(\ell)]\!]$, we have
    \begin{equation}
      \label{eq:341}
      \PP(\TP_{\ell,m}^k)\leq C\ell^{-1/3+3\delta/2}
    \end{equation}
  \end{lemma}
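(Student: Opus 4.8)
The plan is to deduce the bound from the BLPP twin-peaks estimate Lemma \ref{lem:139}, the only work being to choose its parameters so that (up to negligible events) the event it controls contains $\TP_{\ell,m}^k$. First I would unpack $\TP_{\ell,m}^k$: it asks that the local maximiser $M_k:=\argmax_{y\in \underline J_k}Z_{\0}^{\n,\bullet}(m+m^{2/3}y,m)$ of the routed profile lie in the middle third $J_k$ of $\underline J_k$, and that there exist a point at (spatial) distance at least $\ell^{2/3-\delta}$ from $M_k$ at which the profile is within $\ell^\delta$ of its value at $M_k$. Writing $\underline J_k=[a_k,b_k]$, set $R_k=m^{2/3}(a_k+b_k)/2$ and $r=(b_k-a_k)/2=3/2$, so that the window $[R_k-rm^{2/3},R_k+rm^{2/3}]$ appearing in Lemma \ref{lem:139} is exactly $m^{2/3}\underline J_k$, its middle third is $m^{2/3}J_k$, and the local argmax in Lemma \ref{lem:139} coincides with $M_k$. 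Take $\varepsilon=\ell^{2/3-\delta}m^{-2/3}$, so that the lower threshold $\varepsilon m^{2/3}$ equals $\ell^{2/3-\delta}$; since $\ell\le n$ and $\beta'n\le m\le n/2$ one checks $\varepsilon\in(0,1)$, $|R_k|\le m^{2/3}f(\ell)=m^{2/3}\log^{1/3}\ell\le cm^{7/9}$ and $r\le m^{\theta}$ for $n$ large, so the hypotheses of Lemma \ref{lem:139} are met. (We may assume throughout that $\ell$ is at least a fixed power of $n$, the claim being vacuous for smaller $\ell$; this keeps the factor $\log(r\varepsilon^{-1})=O(\log n)$ at size $\ell^{o(1)}$.)

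For the height parameter $\sigma$, note that on $\TP_{\ell,m}^k$ the witness $x$ satisfies $Z_{\0}^{\n,\bullet}(x,m)\ge Z_{\0}^{\n,\bullet}(M_k,m)-\ell^\delta$, that $Z_{\0}^{\n,\bullet}(M_k,m)\le T_{\0}^{\n}$, and that $|x-M_k|^{1/2}\ge \ell^{1/3-\delta/2}$; hence, writing $G:=T_{\0}^{\n}-Z_{\0}^{\n,\bullet}(M_k,m)\ge 0$, one has $Z_{\0}^{\n,\bullet}(x,m)+\sigma|x-M_k|^{1/2}\ge T_{\0}^{\n}$ as soon as $\sigma\ell^{1/3-\delta/2}\ge G+\ell^\delta$. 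Thus I would stratify over dyadic ranges $G\in[2^j,2^{j+1})$ and, on the stratum with $2^{j+1}\ell^{-1/3+\delta/2}<1$, apply Lemma \ref{lem:139} with $\sigma=\sigma_j:=2^{j+1}\ell^{-1/3+\delta/2}$, obtaining a contribution of the form $\log(r\varepsilon^{-1})\max\{\sigma_j e^{-hR_k^2 r+hr^{19}(1+R_k^2+\log\sigma_j^{-1})^{5/6}},\,e^{-hm^{1/12}}\}$. Using $r=3/2$, the subadditivity of $x\mapsto x^{5/6}$, and $(\log\sigma_j^{-1})^{5/6}=o(\log\ell)$, the exponential factor is $\le e^{-cR_k^2+C}\cdot\ell^{o(1)}$, and the contribution of the $j=0$ stratum (where $\sigma_0\asymp\ell^{-1/3+3\delta/2}$) is $\le C\ell^{-1/3+3\delta/2}e^{-cR_k^2}$, which after summing over $k$ in Lemma \ref{lem:90} is exactly the bound we want (with room to absorb the $\ell^{o(1)}$ and $\log^{1/3}\ell$ factors into the $3\delta/2$, say by replacing $\delta$ by $\delta/2$ at the start).

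The hard part, and the one step that is \emph{not} routine, is handling the strata with large $G$: the naive sum $\sum_j\sigma_j$ is of order $1$, so summing the bounds above across all admissible $j$ would destroy the $\ell^{-1/3}$ gain, and on the relevant range $2^j\lesssim \ell^{1/3}\lesssim m^{1/3}$ the marginal probability $\PP(G\ge 2^j)$ is not small (when $\ell\ll n$ the window-maximum over $\underline J_k$ is routinely $\Theta(m^{1/3})$ below $T_{\0}^{\n}$). The resolution is that $\TP_{\ell,m}^k$ is really a \emph{local} twin-peaks event near the \emph{local} maximiser $M_k$, whose probability should be $O(\ell^\delta/\ell^{1/3-\delta/2})$ regardless of how far $M_k$ sits below $T_{\0}^{\n}$; I would obtain this either by invoking a localized form of Lemma \ref{lem:139} (its proof via Brownian--Gibbs resampling localizes), or, directly, by conditioning on the restriction of $Z_{\0}^{\n,\bullet}(\cdot,m)$ to the complement of $m+m^{2/3}\underline J_k$, using Proposition \ref{prop:24} to realise the interior as a conditioned Brownian bridge on a high-probability event, and running a Brownian-bridge near-twin computation (with the anomalous exterior/boundary configurations — e.g.\ endpoint values of size $\gg m^{1/3}\log^{1/3}\ell$ — controlled by the moderate-deviation estimates Proposition \ref{prop:47} and Lemma \ref{lem:74}). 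I expect this localization step to be the main technical obstacle; everything else is bookkeeping of sub-polynomial error terms.
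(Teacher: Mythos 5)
Your proposal follows the same route as the paper (reduce to Lemma~\ref{lem:139}), and you have put your finger on exactly the point that the paper's one-sentence proof glosses over: the event $\TP_{\ell,m}^k$ compares $Z_{\0}^{\n,\bullet}$ to its \emph{local} maximum over $\underline J_k$, while the event in Lemma~\ref{lem:139} compares $Z_{\0}^{\n,\bullet}(x,m)+\sigma|x-M|^{1/2}$ to the \emph{global} maximum $T_{\0}^{\n}$. Writing $G=T_{\0}^{\n}-Z_{\0}^{\n,\bullet}(M_k,m)\geq 0$, the containment $\TP_{\ell,m}^k\subseteq\{\text{Lemma~\ref{lem:139} event}\}$ requires $\sigma\,\ell^{1/3-\delta/2}\geq G+\ell^\delta$, and for a window $\underline J_k$ not containing the global argmax, $G$ is routinely of order $m^{1/3}\gtrsim\ell^{1/3}$; so the naive application does not give the containment, and your observation that the dyadic sum $\sum_j\sigma_j$ over $G$-strata is $\Theta(1)$ correctly shows the stratification alone does not save the $\ell^{-1/3}$ factor. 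That said, the heavy machinery you propose (a localized re-proof of Lemma~\ref{lem:139} via the Brownian--Gibbs resampling of Proposition~\ref{prop:24}) is more than is needed. The cheap fix, which is almost certainly what the paper intends and which is consistent with how Lemma~\ref{lem:90} is actually used, is simply to replace $\TP_{\ell,m}^k$ throughout by $\widehat{\TP}_{\ell,m}^k:=\TP_{\ell,m}^k\cap\{\Gamma_{\0}^{\n}(m)\in m+m^{2/3}J_k\}$. This costs nothing in Lemma~\ref{lem:90} — the only $k$ that is ever invoked there is $k^*$, for which $\Gamma_{\0}^{\n}(m)\in m+m^{2/3}J_{k^*}$ holds by construction — and on $\widehat{\TP}_{\ell,m}^k$ the local maximiser $M_k$ \emph{is} the global maximiser, so $G=0$ and the Lemma~\ref{lem:139} containment goes through. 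With $R$ the centre of $m^{2/3}J_k$, $r=3/2$ (your choice, which makes the Lemma~\ref{lem:139} window equal to $m^{2/3}\underline J_k$ and its middle third equal to $m^{2/3}J_k$; the paper's $r=1/2$ appears to be a slip), $\varepsilon=\ell^{2/3-\delta}m^{-2/3}$ and $\sigma=2\ell^{3\delta/2-1/3}$ (note $\sigma\ell^{1/3-\delta/2}=2\ell^\delta\geq\ell^\delta+G$; the paper's stated $\sigma=\ell^{\delta-1/3}$ gives only $\ell^{\delta/2}<\ell^\delta$ and is insufficient even with $G=0$, another apparent slip), the right-hand side of Lemma~\ref{lem:139} is $O(\ell^{3\delta/2-1/3+o(1)})$, which is the desired bound after absorbing sub-polynomial factors. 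So your diagnosis of the gap is accurate, but the resolution is much lighter than the localization programme you sketch; no new Brownian--Gibbs input is required beyond Lemma~\ref{lem:139} itself.
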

  \begin{proof}
    Recall that for some constant $\beta'$, we always have $m\geq \beta'n\geq \beta'\ell$. We now apply Lemma \ref{lem:139} with $R$ being the center of the interval $m^{2/3}J_k$, $r=1/2$, $\sigma=\ell^{\delta-1/3}$ and $\varepsilon = \ell^{2/3-\delta}m^{-2/3}$. 
    
  \end{proof}
 With Lemmas \ref{lem:92} and \ref{lem:91} at hand, we now complete the proof of Proposition \ref{prop:26}.
  \begin{proof}[Proof of Proposition \ref{prop:26}]
    In view of Lemma \ref{lem:89} and Lemma \ref{lem:90}, we need only show that
    \begin{equation*}
      \label{eq:347}
      \PP((\bigcup_{k=1}^{2f(\ell)}\TP_{\ell,m}^k) \cup \bigcup_{k=1}^{2f(\ell)} \{|\max_{y\in J_k}Z_{\0}^{\n,\bullet}(m+m^{2/3}y,m)-\max_{y\in J_k^c}Z_{\0}^{\n,\bullet}(m+m^{2/3}y,m)|\leq \ell^\delta\})\leq C\ell^{-1/3+2\delta}.
    \end{equation*}
      However, this follows immediately by noting that $f(\ell)= \log^{1/3}(\ell)$ grows subpolynomially in $\ell$ and by invoking Lemmas \ref{lem:92}, \ref{lem:91} along with a union bound. This completes the proof.
    \end{proof}

\section{Appendix 5: Volume accumulation estimate for finite geodesics in BLPP}
\label{sec:app-vol}
In this short appendix, we discuss the proof of Proposition \ref{prop:11}; note that by Brownian scaling, we can just work with the points $p=-\n,q=\n$. Broadly, the proof is the same as the one in \cite[Section 5]{BB23} used to prove the corresponding exponential LPP results \cite[Proposition 35, Proposition 53]{BB23}. However, there are some superficial differences in the sense that one needs to substitute the basic exponential LPP results used frequently therein with their BLPP counterparts. Though we do not provide the complete argument here adapted to BLPP, we now quickly go through the required substitutions. 

\subsubsection*{\textbf{Transversal fluctuation estimates}}
\cite[Section 5]{BB23} frequently uses moderate deviation estimates for transversal fluctuations in exponential LPP. These need to simply be replaced by the corresponding BLPP estimate (Proposition \ref{prop:38}).
\subsubsection*{\textbf{Moderate deviation parallelogram estimates}}
In the setting of exponential LPP, \cite[Section 5]{BB23} frequently uses moderate deviation ``parallelogram'' estimates (see \cite[Theorem 4.2]{BGZ19}). We note that \cite[Theorem 4.2]{BGZ19} has three parts-- (i) and (ii) yield uniform upper and lower tail estimates for well-separated points lying in a parallelogram, whereas (iii) therein provides a corresponding lower tail estimate for ``constrained'' passage times computed using only the vertex weights lying inside the parallelogram.

For the proof of Proposition \ref{prop:11}, the above results need to be substituted with the appropriate BLPP version. Such estimates for ``unconstrained'' passage times are directly available (see \cite[Propositions 3.15, 3.16]{GH20}) and these yield substitutes of (i) and (ii) in Proposition \cite[Theorem 4.2]{BGZ19}. Unfortunately, we have not been able to locate a statement of the ``constrained'' lower tail estimate (analogue of (ii) in \cite[Theorem 4.2]{BGZ19})) in the literature.

However, such an estimate can be obtained by following the same proof as in the exponential LPP case ((iii) in \cite[Theorem 4.2]{BGZ19}). Indeed, \cite[Appendix C]{BGZ19} presents a version of the tree-based argument from \cite{BSS14} used to obtain the above result; precisely the same strategy works to yield the corresponding estimate in BLPP as well, with applications of the exponential LPP transversal fluctuation estimate being simply substituted by the corresponding BLPP estimate Proposition \ref{prop:38}. We refrain from providing more details on this.

\subsubsection*{\textbf{Conditioning on a geodesic and the FKG inequality}} The barrier construction in \cite{BB23} crucially uses that if one conditions on an exponential LPP geodesic, then the vertex weights off the geodesic become stochastically smaller than the i.i.d.\ $\mathrm{Exp}(1)$ environment due to the FKG inequality (see p.30 in \cite[Section 5.3]{BB23}). In our application, the above is substituted by the corresponding BLPP statement (see \cite[Lemma 4.17]{GH20}).

\subsubsection*{\textbf{Regularity estimates for BLPP distance profiles}} \cite[Section 5]{BB23}, proves a semi-infinite geodesic version of Proposition \ref{prop:45} herein and as part of this proof, a simple regularity estimate is used for `Busemann functions' in exponential LPP (see \cite[Lemma 49]{BB23}). For the proof of the finite geodesic statement Proposition \ref{prop:45}, a regularity estimate for point-to-line weight profiles in exponential LPP is used (see \cite[Proposition 53]{BB23}). In our case, we shall require a corresponding regularity statement for BLPP weight profiles. However, such an estimate is available-- indeed, by using the Brownian regularity of the top line of the ensemble $\cP$ (\cite[Theorem 3.11]{CHH23}) and comparing to Brownian motion, one obtains the following well-known regularity estimate.
\begin{lemma}
  \label{lem:115}
  There is a constant $\alpha$ such that for any interval $[a,b]\subseteq [-1,1]$ and $t\leq n^\alpha$, we have
  \begin{equation}
    \label{eq:587}
    \PP(\max_{x\in [a,b]}\cP_1(x)-\min_{x\in [a,b]}\cP_1(x)\geq t\sqrt{b-a})\leq Ce^{-ct^2}.
  \end{equation}
\end{lemma}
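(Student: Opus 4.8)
The plan is to transfer the standard sub-Gaussian oscillation bound for Brownian motion to $\cP_1$ using the Brownian regularity of the top curve of $\cP$. Since $[a,b]\subseteq[-1,1]$ and $n$ is large, we are comfortably in the interior of the domain $[-n^{1/3}/2,\infty)$ of $\cP_1$, so the subtleties near the edge of the domain (cf.\ the remark after Proposition~\ref{prop:24}) do not arise; the scale parameter $n$ will enter only through the range of deviations for which the Brownian comparison is valid, and this is precisely what forces the restriction $t\le n^\alpha$.

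First I would record the Brownian input: if $B$ is a Brownian motion of diffusivity $2$, then by Brownian scaling (reducing to the unit interval) and the reflection principle, $\PP(\max_{[a,b]}B-\min_{[a,b]}B\ge s\sqrt{b-a})\le Ce^{-cs^2}$ for all $s>0$, with $C,c$ absolute. Next I would invoke \cite[Theorem 3.11]{CHH23} in the same prelimiting-BLPP form used elsewhere (e.g.\ in Proposition~\ref{prop:52}): from it one extracts constants $c'>0$ and $\theta\in(0,1)$ such that, uniformly over all $[a,b]\subseteq[-1,1]$ and all Borel sets $A$ of continuous functions on $[a,b]$,
$\PP(\cP_1|_{[a,b]}\in A)\le Ce^{-cn^{c'}}+C\,\PP(B|_{[a,b]}\in A)^{\theta}$,
where $B$ has diffusivity $2$ and the error term $Ce^{-cn^{c'}}$ accounts for the (low-probability) event on which the comparison of $\cP_1$ with Brownian motion degrades. (This uniformity over subintervals is either built into, or a routine consequence of, the statement of \cite[Theorem 3.11]{CHH23}, since passing to a smaller interval only improves the relevant Radon--Nikodym moment bounds.)

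Finally I would combine the two ingredients. Apply the comparison to the scale-invariant event $A=\{f:\ \max_{[a,b]}f-\min_{[a,b]}f\ge t\sqrt{b-a}\}$ and bound $\PP(B|_{[a,b]}\in A)\le Ce^{-ct^2}$ by the first step; this yields $\PP(\max_{[a,b]}\cP_1-\min_{[a,b]}\cP_1\ge t\sqrt{b-a})\le Ce^{-cn^{c'}}+C'e^{-c\theta t^2}$. Choosing $\alpha>0$ with $2\alpha<c'$ guarantees that for $t\le n^\alpha$ and all $n$ large one has $Ce^{-cn^{c'}}\le e^{-ct^2}$, so the whole expression is at most $C''e^{-c''t^2}$, which is the claim (after renaming constants). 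The main obstacle is bookkeeping around the prelimiting comparison: ensuring the favourable event, the exponent $\theta$, and the error rate $c'$ can all be taken uniform in $n$ and in $[a,b]$, and that the comparison is stated for (unconditioned) Brownian motion; if one only has the Brownian-bridge version, one passes to Brownian motion by additionally conditioning on the endpoint values $\cP_1(a),\cP_1(b)$, whose tails are controlled by Lemma~\ref{lem:74}, using again that the oscillation is unchanged under adding an affine function. As an alternative route bypassing \cite{CHH23}, one could instead run a dyadic chaining argument directly on $[a,b]$ using the sub-Gaussian increment bound of Lemma~\ref{lem:75} with $k=1$ (whose validity range also contributes the condition $t\le n^\alpha$), which again gives $Ce^{-ct^2}$.
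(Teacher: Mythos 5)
Your proposal is correct and takes the same route the paper indicates: the paper does not supply a detailed argument, merely asserting that the estimate follows from the Brownian regularity of $\cP_1$ via \cite[Theorem 3.11]{CHH23} together with the reflection-principle bound for Brownian motion, which is exactly what you carry out (the restriction $t\le n^\alpha$ arising, as you note, from the prelimiting nature of the comparison). Your alternative chaining route via Lemma~\ref{lem:75} is a sensible fallback but is not the one the paper alludes to.
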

As part of the proof of Proposition \ref{prop:11}, the above Gaussian tail estimate needs to be substituted for the corresponding Gaussian tail estimate used for exponential LPP `Busemann functions' in \cite[Lemma 49, (34)]{BB23}. We emphasize that since Lemma \ref{lem:115} is a Gaussian tail estimate just like the one in \cite[Lemma 49]{BB23}, the final exponent $3/11$ appearing in Proposition \ref{prop:11} matches the exponent appearing in \cite[Proposition 35]{BB23}. Note that this exponent is not expected to be optimal.

\printbibliography
\end{document}